\lstdefinelanguage{Sage}[]{Python}
{morekeywords={False,sage,True},sensitive=true}
\definecolor{dblackcolor}{rgb}{0.0,0.0,0.0}
\definecolor{dbluecolor}{rgb}{0.01,0.02,0.7}
\definecolor{dgreencolor}{rgb}{0.2,0.4,0.0}
\definecolor{dgraycolor}{rgb}{0.30,0.3,0.30}
\author{Jonathan Michael Fisher}
\title{The Topology and Geometry of Hyperk\"ahler Quotients}
\theoremstyle{plain}
\newtheorem{theorem}{Theorem}[section]
\newtheorem{conjecture}[theorem]{Conjecture}
\newtheorem{lemma}[theorem]{Lemma}
\newtheorem{proposition}[theorem]{Proposition}
\newtheorem{corollary}[theorem]{Corollary}
\theoremstyle{remark}
\newtheorem{example}[theorem]{Example}
\newtheorem{remark}[theorem]{Remark}
\theoremstyle{definition}
\newtheorem{definition}[theorem]{Definition}
\newtheorem{construction}[theorem]{Construction}
\newcommand{\double}[1]{\overline{\overline{#1}}}
\newcommand{\tx}{{\tilde{x}}}
\newcommand{\ty}{{\tilde{y}}}
\newcommand{\BH}{\mathbf{H}}
\newcommand{\BC}{\mathbf{C}}
\newcommand{\BP}{\mathbf{P}}
\newcommand{\BQ}{\mathbf{Q}}
\newcommand{\BR}{\mathbf{R}}
\newcommand{\BZ}{\mathbf{Z}}
\newcommand{\CA}{\mathcal{A}}
\newcommand{\CB}{\mathcal{B}}
\newcommand{\CE}{\mathcal{E}}
\newcommand{\CI}{\mathcal{I}}
\newcommand{\CJ}{\mathcal{J}}
\newcommand{\CN}{\mathcal{N}}
\newcommand{\CO}{\mathcal{O}}
\newcommand{\CP}{\mathcal{P}}
\newcommand{\CQ}{\mathcal{Q}}
\newcommand{\CS}{\mathcal{S}}
\newcommand{\CV}{\mathcal{V}}
\newcommand{\CW}{\mathcal{W}}
\newcommand{\CY}{\mathcal{Y}}
\newcommand{\FM}{\mathfrak{M}}
\newcommand{\FX}{\mathfrak{X}}
\newcommand{\Crit}{ \mathrm{Crit} }
\newcommand{\ftd}{\mathfrak{t}^\ast}
\newcommand{\Span}{\mathrm{span}}
\newcommand{\loj}{{\L}ojasiewicz}
\newcommand{\gli}{global {\L}ojasiewicz inequality}
\newcommand{\norm}[1]{\left| #1 \right|}
\newcommand{\normsq}[1]{\left| #1 \right|^2}
\newcommand{\suchthat}{ \ | \ }
\newcommand{\Ad}{ \mathrm{Ad} }
\newcommand{\Sym}{\mathrm{Sym}}
\newcommand{\ann}{\mathrm{ann}}
\newcommand{\khk}{\kappa_{\mathrm{HK}}}
\newcommand{\Hom}{\mathrm{Hom}}
\newcommand{\End}{\mathrm{End}}
\newcommand{\Rep}{\mathrm{Rep}}
\newcommand{\gl}{\mathfrak{gl}}
\newcommand{\FF}{\mathfrak{F}}
\newcommand{\fsp}{\mathfrak{sp}}
\newcommand{\fu}{\mathfrak{u}}
\newcommand{\fsu}{\mathfrak{su}}
\newcommand{\fk}{\mathfrak{k}}
\newcommand{\fb}{\mathfrak{b}}
\newcommand{\fg}{\mathfrak{g}}
\newcommand{\fgc}{\mathfrak{g}_\BC}
\newcommand{\ft}{\mathfrak{t}}
\newcommand{\fkd}{\mathfrak{k}^\ast}
\newcommand{\fgd}{\mathfrak{g}^\ast}
\newcommand{\grad}{\nabla}
\newcommand{\Tr}{\mathrm{Tr}}
\newcommand{\rk}{\mathrm{rk}}
\newcommand{\vev}[1]{\langle#1\rangle}
\newcommand{\red}{/\!/}
\newcommand{\reda}[1]{ \underset{#1}{/\!/} }
\newcommand{\rred}{/\!/\!/}
\newcommand{\rreda}[1]{ \underset{#1}{/\!/\!/} }
\newcommand{\diag}{\mathrm{diag}}
\newcommand{\Diff}{\mathrm{Diff}}
\newcommand{\Gr}{\mathrm{Gr}}
\newcommand{\Stab}{\mathrm{Stab}}
\newcommand{\stab}{\mathrm{stab}}
\newcommand{\im}{\mathrm{im}}
\newcommand{\FV}{\mathfrak{V}}
\newcommand{\bv}{\mathbf{v}}
\newcommand{\bw}{\mathbf{w}}
\newcommand{\bd}{\mathbf{d}}
\newcommand{\omegar}{\omega_\BR}
\newcommand{\omegac}{\omega_\BC}
\newcommand{\mur}{\mu_r}
\newcommand{\muc}{\mu_c}
\newcommand{\muhk}{\mu_\mathrm{HK}}
\newcommand{\into}{\hookrightarrow}
\newcommand{\onto}{\twoheadrightarrow}
\newcommand{\iso}{\cong}
\newcommand{\ev}[1]{\left\langle{#1}\right\rangle}
\newcommand{\res}[2]{\mathrm{res}\left(\frac{#1}{#2}\right)}
\newcommand{\Res}{\mathrm{Res}}
\newcommand{\Jac}{\mathrm{Jac}}
\newcommand{\npvz}{\norm{\normsq{\phi(v)}-\normsq{\phi_c}}}
\newcommand{\doublearrow}[2] {
  \begin{scope}
    \coordinate (avg) at ( $0.5*(#1)+0.5*(#2)$ );
    \coordinate (c1) at ( $(avg)!0.15cm!90:(#2)$ );
    \coordinate (c2) at ( $(avg)!0.15cm!270:(#2)$ );
    \draw[->,>=latex',draw] (#1) .. controls (c1) ..  (#2);
    \draw[->,>=latex',draw,dashed] (#2) .. controls (c2) .. (#1);
  \end{scope}
}
\newcommand{\soliddoublearrow}[2] {
  \begin{scope}
    \coordinate (avg) at ( $0.5*(#1)+0.5*(#2)$ );
    \coordinate (c1) at ( $(avg)!0.15cm!90:(#2)$ );
    \coordinate (c2) at ( $(avg)!0.15cm!270:(#2)$ );
    \draw[->,>=latex',draw] (#1) .. controls (c1) .. (#2);
    \draw[->,>=latex',draw] (#2) .. controls (c2) .. (#1);
  \end{scope}
}
\newcommand{\dotnoderight}[3] {
    \node[circle, draw, fill=black, inner sep=0pt, minimum width=2pt] (#1) at (#2) {};
    \node [right of=#1] (label#1) {#3};
}
\newcommand{\dotnodebelow}[3] {
    \node[circle, draw, fill=black, inner sep=0pt, minimum width=2pt] (#1) at (#2) {};
    \node [below of=#1] (label#1) {#3};
}
\begin{document}

\begin{preliminary}

\maketitle



\begin{abstract}
In this thesis we study the topology and geometry of hyperk\"ahler quotients,
as well as some related non-compact K\"ahler quotients, from the point of view of
Hamiltonian group actions. The main technical tool we employ is Morse theory with moment maps. 
We prove a {\L}ojasiewicz inequality 
which permits the use of Morse theory in the non-compact setting.
We use this to deduce Kirwan surjectivity for an interesting class of non-compact
quotients, and obtain a new proof of hyperk\"ahler Kirwan surjectivity for hypertoric varieties.
We then turn our attention to quiver varieties, obtaining an explicit
inductive procedure to compute the Betti numbers of the fixed-point sets of the natural $S^1$-action on these varieties.
To study the kernel of the Kirwan map, we adapt the Jeffrey-Kirwan residue
formula to our setting. The residue formula may be used to compute intersection pairings on
certain compact subvarieties, and in good cases these provide a complete description of
the kernel of the hyperk\"ahler Kirwan map. We illustrate this technique with several
detailed examples.
Finally, we investigate the Poisson geometry of a certain family of Nakajima varieties.
We construct an explicit Lagrangian fibration on these varieties by embedding them into
Hitchin systems. This construction provides an interesting class of toy models of Hitchin systems for which
the hyperk\"ahler metric may be computed explicitly.
\end{abstract}





\begin{acknowledgements}
  I am indebted to my supervisor Lisa Jeffrey for suggesting these problems to me, 
and for her support and encouragement. Without her vast knowledge and expertise, much
of this thesis would not have been possible. Special thanks go to my undergraduate supervisor 
Li-Hong Xu, who first exposed me to academic research and encouraged me to pursue graduate studies.

I have greatly benefited from the activities of the department and the neighboring Fields Institute. 
I would like to thank Marco Gualtieri, Joel Kamnitzer, Yael Karshon, Boris Khesin, and Eckhard Meinrenken 
for offering stimulating courses and seminars.

During my time in Toronto, I have had many enlightening discussions with
Alan Lai, Alejandro Cabrera, Peter Crooks, Bruce Fontaine, Kevin Luk, 
James Mracek, Brent Pym, Dan Rowe, and Jordan Watts. I would like to thank
Steven Rayan in particular for his help during the preparation of Chapters \ref{ch-quiver}
and \ref{ch-integrable}. Additional thanks go to Peter Crooks for his careful reading
of Chapters \ref{ch-residue} and \ref{ch-integrable}.

Finally, I would like to thank my partner Charis for her love and support.
\end{acknowledgements}


\tableofcontents




\end{preliminary}

\chapter{Introduction} \label{ch-intro}

\section{Hyperk\"ahler Quotients}
A hyperk\"ahler manifold is a Riemannian manifold $(M, g)$ together with a triple $(I, J, K)$
of parallel skew endomorphisms of $TM$ which satisfy the quaternion relations
\begin{equation}
  I^2 = J^2 = K^2 = IJK = -1.
\end{equation}
From these we obtain a triple $(\omega_I, \omega_J, \omega_K)$ of K\"ahler forms, with
$\omega_I(u,v) := g(Iu,v)$, etc. If a Lie group $G$ acts on $M$ preserving this structure,
we call it Hamiltonian if it is Hamiltonian with respect to each of these symplectic
forms, i.e. if there is a triple of moment maps $\muhk = (\mu_I, \mu_J, \mu_K): M \to \fg^\ast \otimes \BR^3$.
One may then define the hyperk\"ahler quotient $M \rreda{\alpha} G$ to be
\begin{equation}
  M \rreda{\alpha} G := \muhk^{-1}(\alpha) / G.
\end{equation}
It is straightforward to check that the complex 2-form $\omegac := \omega_J + \sqrt{-1} \omega_K$, 
is a holomorphic symplectic form, i.e. a closed non-degenerate (2,0)-form. Hence,
a hyperk\"ahler quotient may alternatively be thought of as a \emph{holomorphic symplectic quotient}.
This leads to the natural question: which techniques from Hamiltonian group actions
generalize to the setting of \emph{holomorphic} Hamiltonian group actions?
There are several fundamental obstacles which prevent an easy answer to this question:
\begin{inparaenum}[(i)]
  \item there is no hyperk\"ahler Darboux theorem, since the Riemann tensor is a local invariant;
  \item most interesting examples are non-compact; and
  \item the complex moment maps that we consider are never proper.
\end{inparaenum}

In this thesis, we focus on the special case $M = T^\ast \BC^n$ with its Euclidean metric.
This may be identified with the flat quaternionic vector space $\BH^n$ via the complex structures
\begin{equation}
  I(x,y) = (ix, iy), \ J(x,y) = (-\bar{y}, \bar{x}), \ K(x,y) = (-i\bar{y}, i\bar{x}).
\end{equation}
For any $G \subseteq Sp(n, \BH)$ the natural linear $G$-action on $M$ is Hamiltonian,
and we may take the hyperk\"ahler quotient $\FM_\alpha := M \rreda{\alpha} G$.
Although $T^\ast \BC^n$ is not interesting topologically, the hyperk\"ahler varieties
$\FM_\alpha$ constructed in this way turn out to be very interesting, both topologically
and geometrically. This class of hyperk\"ahler quotients includes hypertoric varieties and
Nakajima quiver varieties. In all the examples that we will consider, we will in fact
take $G \subset U(n) \subset Sp(n, \BH)$. In this case, there is a natural inclusion
from the symplectic quotient $\FX_\alpha := \BC^n \reda{\alpha} G$ into the 
hyperk\"ahler quotient $\FM_{(\alpha,0)}$. Following Proudfoot \cite{ProudfootThesis}, we call $\FM_{(\alpha,0)}$
the \emph{hyperk\"ahler analogue} of $\FX_\alpha$.

There is a canonical map $H^\ast(BG) \to H^\ast(\FM_\alpha)$, called the \emph{hyperk\"ahler Kirwan map}.
There is an analogous map for symplectic quotients which is well-known to be surjective in
many cases. Three fundamental problems which serve as motivation for the work in this
thesis are the following:
\begin{inparaenum}[(i)]
  \item to determine whether the hyperk\"ahler Kirwan map is surjective;
  \item to determine an effective method to compute the image of the hyperk\"ahler Kirwan map; and
  \item to determine an effective method to compute the Betti numbers of $\FM_\alpha$.
\end{inparaenum}
We will address all three problems.
Unfortunately, we do not obtain a complete solution to the first problem. However,
solutions to the second and third problems provide an indirect way to prove or
disprove Kirwan surjectivity in particular cases. The techniques we develop
in this thesis provide a toolkit for producing many new examples satisfying
hyperk\"ahler Kirwan surjectivity. In addition, several techniques we develop may
be applied to a larger class of non-compact quotients, not only to those arising
from the hyperk\"ahler quotient construction.

\section{Motivation from Quantum Field Theory} 
Many of the constructions and examples considered in this thesis were motivated 
either directly from or by analogy with quantum field theory.
The hyperk\"ahler quotient construction itself
has its origins in quantum field theory, having been first discovered in the context of
gauged supersymmetric sigma model \cite{HKLR}.

In Chapter \ref{ch-morse}, we study Morse theory with moment maps.
Morse theory with $|\mu|^2$ was developed by Kirwan \cite{Kirwan} by direct
analogy with Morse theory with the Yang-Mills functional in two dimensions \cite{YangMillsRiemannSurface}.
In two dimensions, one may also study the Yang-Mills-Higgs functional \cite{WilkinHiggs},
which is analogous to $|\muhk|^2$ on a hyperk\"ahler manifold. In fact, it was
in this context that we first encountered {\L}ojasiewicz estimates, which provided
the key idea of the main theorem of Chapter \ref{ch-morse}.

In four dimensions, one may also consider Yang-Mills instantons, which are
solutions to the Euclidean equations of motion with finite action. These give important
non-perturbative corrections to the quantum path integral and are the subject of intense
study in theoretical physics. The self-duality equations defining instantons take the
form of a hyperk\"ahler moment map, and the moduli space of instantons is in fact
an \emph{infinite-dimensional} hyperk\"ahler quotient.
In \cite{ADHM}, the moduli space of instantons on $\BR^4$
was famously constructed as a finite dimensional hyperk\"ahler quotient of Euclidean space.
This construction was later generalized to any ALE space \cite{KronheimerNakajima}, which
are themselves hyperk\"ahler quotients \cite{KronheimerALE}. In \cite{DouglasMoore},
a physical interpretation of these constructions was given in terms of gauge theories
arising from certain $D$-brane configurations in string theory. 
The ALE and instanton moduli spaces above are all examples of Nakajima quiver varieties \cite{Nakajima94},
and for this reason we focus on moduli spaces of quiver representations in Chapter \ref{ch-quiver}.

In Chapter \ref{ch-residue}, we revisit the residue formula in the context of hyperk\"ahler
quotients. The residue formula \cite{JeffreyKirwan95} was itself developed by analogy with Witten's
quantum field theory arguments \cite{WittenNL}, and to a certain extent made many of 
Witten's calculations rigorous. In Witten's recent work \cite{GukovWitten, WittenAC, WittenPI},
he takes the fruitful viewpoint that symplectic manifolds should be thought of as middle-dimensional
integration cycles in an ambient almost hyperk\"ahler manifold. This served as motivation
to re-examine the residue formula in the context of hyperk\"ahler quotients.

In Chapter \ref{ch-integrable}, we construct algebraic integrable systems from quiver
representations. We prove integrability by embedding these spaces into certain
Hitchin systems \cite{Hitchin87, HitchinStableBundles}. Hitchin systems are themselves
moduli spaces of solutions to the Hitchin equations, which are a dimensional reduction
of the self-duality equations. Using the quantum field theoretic techniques of
BPS state counting and wall-crossing, Gaiotto, Moore, and Neitzke have given a conjectural
construction of the hyperk\"ahler metric on Hitchin systems \cite{GMN}.
Although we will not have much to say about this, the primary motivation of Chapter \ref{ch-integrable}
was to construct an interesting class of toy models to which the techniques of Gaiotto-Moore-Neitzke
may be applied. We plan to investigate this in future work.




\section{Summary of Results}

In Chapter \ref{ch-morse}, we review the basic theory of Morse theory with moment maps.
We develop some basic techniques to organize Morse theory calculations, and especially
to streamline index calculations.
The main new result that we prove is the {\L}ojasiewicz estimate of
Theorem \ref{GlobalEstimate}. This gives good analytic control on the gradient flow
of $|\mu|^2$ on $\BC^n$. In the case that the group is abelian, this theorem
also applies to $|\muc|^2$ and $|\muhk|^2$. This yields a new proof of hyperk\"ahler
Kirwan surjectivity for hypertoric varieties, and Morse theory provides a very
short calculation of their cohomology rings (both equivariant and ordinary).
Much of the material in Sections \ref{sec-noncompact-quotient} and \ref{sec-toric} 
appeared in an earlier article \cite{FisherMorse}.

In Chapter \ref{ch-quiver}, we turn our attention to quiver varieties.
In Theorem \ref{thm-inductive-procedure}, we give a general procedure to compute the Poincar\'e
polynomials of subvarieties of quiver varieties defined by relations in the path
algebra. Nakajima quiver varieties are special cases, but our procedure applies
to a larger class of non-compact varieties. In \S\ref{sec-rank-3-star} 
we give a detailed example of this procedure.

In Chapter \ref{ch-residue}, we re-examine the Jeffrey-Kirwan residue formula in the
context of hyperk\"ahler quotients. We show that the residue formula can be interpreted 
as a procedure for producing cogenerators of cohomology rings of hyperk\"ahler quotients.
Theorem \ref{thm-linear-residue-formula} provides a specialization of the residue
formula applicable to quotients of $\BC^n$ by linear group actions. 
As a corollary,
this gives an algorithmic procedure to compute the intersection pairings and 
cohomology ring of \emph{any} compact symplectic quotient of the form $\BC^n \red G$.
Using the residue formula, we define a natural ring associated to any compact
symplectic quotient, which we call the \emph{VGIT ring}. 
Conjecture \ref{conjecture-vgit-kirwan} states that (under appropriate conditions),
the VGIT ring is equal to the image of the hyperk\"ahler Kirwan map. We give some
explicit examples in \S\ref{sec-residue-examples}.
Theorem \ref{thm-residue-hyperkahler-algorithm} provides an alternative procedure
to compute a complete set of cogenerators for any Nakajima quiver variety, which is
valid even if Conjecture \ref{conjecture-vgit-kirwan} fails.

Finally, in Chapter \ref{ch-integrable} we turn our attention away from topology
and focus on the symplectic geometry of hyperk\"ahler quotients.
In Theorem \ref{thm-integrable-parabolic}, we identify a certain class of hyperk\"ahler quiver
varieties with moduli spaces of parabolic Higgs bundles on $\BP^1$. Using
this identification, together with standard arguments about Hitchin systems,
we prove in Theorem \ref{thm-integrable-star-quiver} 
that these quiver varieties are algebraic completely integrable systems.
This gives an interesting family of toy models of Hitchin systems for which the hyperk\"ahler
metric may be computed explicitly.


\chapter{Morse Theory with Moment Maps} \label{ch-morse}

\section{Hamiltonian Group Actions}

\subsection{Symplectic Reduction}
We begin by reviewing basic notions in equivariant symplectic geometry. A classic reference
is \cite{GS84}.
A \emph{symplectic manifold} is a pair $(X, \omega)$ where $X$ is a smooth manifold and
$\omega$ is a symplectic form on $X$. Let $G$ be a compact Lie group with Lie algebra $\fg$, and
suppose that $G$ acts on a symplectic manifold $X$ by symplectomorphisms. For each
$\xi \in \fg$, we have the \emph{fundamental vector field} $v_\xi$ defined by
\begin{equation}
v_\xi(x) :=  \left. \frac{d}{dt} \right|_{t=0} e^{t\xi} \cdot x.
\end{equation}
The action is said to be \emph{Hamiltonian} if there exists a $G$-equivariant map
$\mu: X \to \fgd$ satisfying
\begin{equation}
d\ev{\mu,\xi} = i_{v_\xi} \omega,
\end{equation}
for all $\xi \in \fg$. The map $\mu$ is called the \emph{moment map}, and is unique
(if it exists) up to the addition of a central constant.
If $\alpha$ is a central value of $\mu$, then the \emph{symplectic reduction}
$X \reda{\alpha} G$ is defined to be
\begin{equation}
  X \reda{\alpha} G := \mu^{-1}(\alpha) / G.
\end{equation}
When the group $G$ is understood, we will often denote $X \reda{\alpha} G$ by $\FX_\alpha$.

\begin{theorem}
If $\alpha$ is a regular central value of $\mu$, then $\FX_\alpha$ is a symplectic orbifold,
and a symplectic manifold if $G$ acts freely on $\mu^{-1}(\alpha)$.
Now suppose that $(g, I, \omega)$ is a K\"ahler triple on $X$, and that the $G$-action
preserves this K\"ahler structure. Then this K\"ahler structure descends to the symplectic
reduction $X \red G$, making it into a complex orbifold.
\end{theorem}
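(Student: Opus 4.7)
The plan is to establish the symplectic orbifold structure first via the classical Marsden--Weinstein argument, and then to lift this to the K\"ahler category by identifying a preferred horizontal, $I$-invariant complement to the $G$-orbits inside $\mui(\alpha)$.

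\textbf{Step 1: Smoothness and orbifold structure.} Since $\alpha$ is a regular value, the implicit function theorem shows that $\mui(\alpha)$ is a smooth submanifold of $X$. Equivariance of $\mu$ together with the hypothesis that $\alpha$ is central (fixed under the coadjoint action) implies that $G$ preserves $\mui(\alpha)$. The moment map identity $\ev{d\mu_x(v),\xi} = \omega_x(v_\xi(x),v)$ shows that surjectivity of $d\mu_x$ is equivalent to injectivity of $\xi \mapsto v_\xi(x)$, so at each regular point the infinitesimal stabilizer $\fg_x$ vanishes and the $G$-action on $\mui(\alpha)$ is locally free. Compactness of $G$ makes the action proper, so the slice theorem produces an orbifold chart at every point, and a manifold chart wherever $G$ acts freely.

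\textbf{Step 2: Descent of $\omega$.} The key computation is the orthogonality relation
\begin{equation}
\ker d\mu_x = (T_x(G\cdot x))^{\omega_x},
\end{equation}
which is immediate from the moment map identity. Since $\alpha$ is central, the orbits in $\mui(\alpha)$ are isotropic, so $T_x(G\cdot x) \subseteq T_x\mui(\alpha)$, and the restriction $\omega|_{\mui(\alpha)}$ has kernel exactly the vertical subspace. Combined with $G$-invariance of $\omega$, this shows $\omega|_{\mui(\alpha)}$ is basic, so it descends to a well-defined non-degenerate form $\omega_{\mathrm{red}}$ on $\FX_\alpha$; closedness is preserved under pullback, so $\omega_{\mathrm{red}}$ is symplectic.

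\textbf{Step 3: The complex structure on $\FX_\alpha$.} In the K\"ahler case, I would define a horizontal distribution
\begin{equation}
H_x := \bigl(T_x(G\cdot x) \oplus I\,T_x(G\cdot x)\bigr)^{\perp_g} \subseteq T_x X,
\end{equation}
which is manifestly $I$-invariant. Using the identity $g(Iu,v) = \omega(u,v)$, one sees that
\begin{equation}
T_x\mui(\alpha) = (T_x(G\cdot x))^{\omega_x} = (I\,T_x(G\cdot x))^{\perp_g},
\end{equation}
so $H_x$ is precisely the $g$-orthogonal complement of the vertical subspace inside $T_x\mui(\alpha)$. Hence $H_x \into T_x\mui(\alpha) \onto T_{[x]}\FX_\alpha$ is a linear isomorphism, and transporting $I$ along this isomorphism defines a $G$-invariant almost complex structure $I_{\mathrm{red}}$ on $\FX_\alpha$ that is compatible with $\omega_{\mathrm{red}}$ and with the descended Riemannian metric (the latter descends by a separate, easy averaging argument since $g$ is $G$-invariant and $H_x$ is horizontal).

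\textbf{The main obstacle is integrability of $I_{\mathrm{red}}$.} The cleanest route is to identify $\FX_\alpha$ locally with an open subset of the holomorphic quotient $X^{\mathrm{ss}}/G_\BC$, using the local form of the Kempf--Ness theorem: the $G_\BC$-orbit of any $x \in \mui(\alpha)$ meets $\mui(\alpha)$ transversally in exactly $G \cdot x$, and the resulting map from a $G_\BC$-slice to $\FX_\alpha$ is a biholomorphism onto its image by construction of $H_x$. This exhibits $\FX_\alpha$ as a complex orbifold with complex structure agreeing with $I_{\mathrm{red}}$. Alternatively, one may compute the Nijenhuis tensor of $I_{\mathrm{red}}$ directly, reducing the vanishing to integrability of $I$ on $X$ via the K\"ahler identities. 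Either argument completes the proof.
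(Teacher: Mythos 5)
Your argument is correct; note that the thesis states this result without proof, as standard background from Marsden--Weinstein reduction and the K\"ahler quotient construction (citing Guillemin--Sternberg), so there is no in-paper proof to compare against. Your Steps 1 and 2 are the classical argument, and Step 3 correctly identifies $H_x$ as an $I$-invariant lift of $T_{[x]}\FX_\alpha$; the only place where you defer real work is integrability of $I_{\mathrm{red}}$, where either route you name goes through, with the caveat that on a general K\"ahler manifold $G_\BC$ need not act globally, so the Kempf--Ness-style argument should be phrased in terms of the local holomorphic foliation generated by the infinitesimal $\fg_\BC$-action rather than global $G_\BC$-orbits.
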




\begin{remark} \label{rmk-reduction-in-stages}
Suppose that $N \subset G$ is a normal subgroup, and let $H = G/N$. Then for appropriately
chosen moment map levels, there is a natural identification
\begin{equation}
  X \red G \iso (X \red N) \red H.
\end{equation}
This is called \emph{reduction in stages}.
\end{remark}

\subsection{Hyperk\"ahler Reduction}

A \emph{hyperk\"ahler manifold} is a tuple $(M, g, I, J, K)$ such that $g$ is a Riemannian
metric on $M$ and $I,J,K$ are skew-adjoint parallel endomorphisms of $TM$ satisfying the
quaternion relations. This endows $M$ with a triple $(\omega_I, \omega_J, \omega_K)$ of
symplectic forms, defined by
\begin{equation}
  \omega_I(u,v) = g(Iu,v), \   \omega_J(u,v) = g(Ju,v), \   \omega_K(u,v) = g(Ku,v). \ 
\end{equation}
The action of a compact Lie group $G$ on $M$ is called
\emph{hyperhamiltonian} if it is Hamiltonian  with respect to
each of these symplectic forms. If we denote the corresponding moment maps by
$\mu_I, \mu_J, \mu_K$ then we may package these together into a
\emph{hyperk\"ahler moment map} $\muhk: M \to \fg \otimes \fsp_1$ given by
\begin{equation}
  \muhk(x) =  \mu_I(x) \otimes i + \mu_J(x) \otimes j + \mu_K(x) \otimes k
\end{equation}
where $\{i,j,k\}$ is the standard basis of $\fsp_1 \iso \fsu_2 \iso \BR^3$.

Given a hyperk\"ahler manifold, it is convenient to think of it as having 
a pair $(\omegar, \omegac)$ of symplectic forms, where $\omegar := \omega_I$
is the \emph{real symplectic form} and $\omegac := \omega_J + i \omega_K$
is the \emph{complex symplectic form}. We denote the corresponding moment
maps by $\mur$ and $\muc$, respectively. The \emph{hyperk\"ahler quotient}
of $M$ by $G$, denoted $M \rreda{\alpha} G$, is
\begin{equation}
  M \rreda{\alpha} G := \muhk^{-1}(\alpha) / G.
\end{equation}
As in the symplectic case, when no confusion should arise, we will often denote
$M \rreda{\alpha} G$ by $\FM_\alpha$.

\begin{theorem}[\cite{HKLR}] If $\alpha$ is a regular central value, then $M \rreda{\alpha} G$
is a hyperk\"ahler orbifold, and a hyperk\"ahler manifold provided that the action of $G$ on
$\muhk^{-1}(\alpha)$ is free. The metric on $M \rreda{\alpha} G$ is complete provided
that the metric on $M$ is.
\end{theorem}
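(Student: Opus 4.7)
The plan is to identify, at each point of $\muhk^{-1}(\alpha)$, a canonical horizontal subspace that is manifestly quaternionic, and then transfer all the \hk{} data to the quotient through this identification. Since $\alpha$ is a regular value, $\muhk^{-1}(\alpha)$ is a smooth submanifold of $M$ of codimension $3\dim G$. The $G$-action on it is locally free by regularity (free under the additional hypothesis), and properness on $M$---automatic for compact $G$---gives the quotient the structure of a smooth orbifold, or a smooth manifold in the free case.

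The central computation is the following. Fix $x \in \muhk^{-1}(\alpha)$, set $V_x := T_x(G \cdot x)$, and define the horizontal subspace $H_x := T_x \muhk^{-1}(\alpha) \cap V_x^{\perp}$. Using the moment map relation $\omega_L(v_\xi, v) = g(L v_\xi, v)$ for each $L \in \{I, J, K\}$, one obtains $\ker d\mu_L|_x = (L V_x)^\perp$, whence
\begin{equation*}
T_x \muhk^{-1}(\alpha) = (I V_x + J V_x + K V_x)^\perp
\end{equation*}
and hence
\begin{equation*}
H_x = (V_x \oplus I V_x \oplus J V_x \oplus K V_x)^\perp,
\end{equation*}
where directness of the sum follows from $G$-invariance of $\muhk$ (which forces $V_x \subset T_x \muhk^{-1}(\alpha)$, so $V_x \perp I V_x + J V_x + K V_x$) together with injectivity of the adjoint of $d\muhk|_x$ on the complementary three copies. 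Thus $H_x$ is the orthogonal complement of a quaternionic subspace, hence is itself invariant under $I$, $J$, and $K$. Under the natural identification $H_x \iso T_{[x]} \FM_\alpha$, the quotient inherits a Riemannian metric and three compatible almost complex structures satisfying the quaternion relations.

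What remains is to verify that this almost \hk{} structure is genuinely \hk{}. Each $\omega_L$ pulls back to a basic form on $\muhk^{-1}(\alpha)$---invariant by $G$-equivariance of $\mu_L$, and horizontal because $i_{v_\xi}\omega_L|_{\muhk^{-1}(\alpha)} = d\ev{\mu_L,\xi}|_{\muhk^{-1}(\alpha)} = 0$---so it descends to a closed $2$-form on $\FM_\alpha$ compatible with the induced metric. Parallelism of each $L$ with respect to the quotient Levi-Civita connection follows from the Riemannian submersion $\muhk^{-1}(\alpha) \to \FM_\alpha$: the quotient connection is the horizontal projection of the ambient one, and this projection commutes with $L$ because $L$ preserves $H_x$. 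Alternatively, one may recognize $(\FM_\alpha, g, L)$ as the \Ka{} quotient of $(M, g, L)$ by $G$ at the moment-map level $\alpha_L$ for each $L$ separately, and deduce integrability from the standard \Ka{} reduction theorem.

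Completeness follows by a standard argument: $\muhk^{-1}(\alpha)$ is a closed submanifold of the complete manifold $M$, hence complete in its induced metric, and in a Riemannian submersion a geodesic in the base lifts to a horizontal geodesic in the total space, which therefore exists for all time and projects to a geodesic of $\FM_\alpha$ defined for all time. I expect the main obstacle to be not any one single step, but rather keeping the bookkeeping of the three complex structures straight: the quaternion relations, parallelism, and integrability must be verified simultaneously for the descended structures rather than sequentially, and the identification of $H_x$ above is precisely the device that allows this to be done symmetrically in $I$, $J$, $K$ without breaking the $\fsp_1$-action by privileging a single complex structure.
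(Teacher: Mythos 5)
The paper offers no proof of this statement---it is quoted directly from \cite{HKLR}---so the comparison is with the standard argument there, and your proposal is essentially that argument: the identification of the horizontal space $H_x = (V_x \oplus IV_x \oplus JV_x \oplus KV_x)^\perp$ via $\ker d\mu_L = (LV_x)^\perp$, the descent of the metric and of the three structures, and completeness via closedness of the level set plus the Riemannian submersion. Your parallelism step is sound: since $H_x$ and its orthogonal complement are both $L$-invariant, the orthogonal projection $P_H$ commutes with $L$, and O'Neill's formula identifies the quotient connection on basic lifts with $P_H \nabla^M$, so $\nabla^M L = 0$ descends.

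One sentence is imprecise, though it is offered only as an alternative: the hyperk\"ahler quotient is \emph{not} the K\"ahler quotient of $(M,g,L)$ at level $\alpha_L$ (that would be all of $\mu_L^{-1}(\alpha_L)/G$); rather it is the K\"ahler quotient, with respect to $L$, of the $L$-complex submanifold cut out by the \emph{other two} moment maps --- exactly as the paper notes immediately after the theorem with $M \rreda{\alpha} G = \muc^{-1}(\alpha_\BC) \reda{\alpha_\BR} G$. Since your primary route through the submersion does not rely on this aside, it does not affect the correctness of the proof.
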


Denote the real and complex parts of $\alpha$ by $\alpha_\BR$ and $\alpha_\BC$, respectively.
If $\alpha_\BC$ is regular, then $\muc^{-1}(\alpha_\BC)$ is a complex submanifold of $M$,
and hence is K\"ahler. Then we have
\begin{equation}
  M \rreda{\alpha} G = \mur^{-1}(\alpha_\BR) \cap \muc^{-1}(\alpha_\BC) / G
  = \muc^{-1}(\alpha_\BC) \reda{\alpha_\BR} G.
\end{equation}
Since the K\"ahler quotient $\muc^{-1}(\alpha_\BC) \reda{\alpha_\BR} G$ may be identified
with the geometric invariant theory quotient $\muc^{-1}(\alpha_\BC) \red G_\BC$, we may think
of the hyperk\"ahler quotient as a \emph{holomorphic symplectic reduction} with respect to
the action of $G_\BC$.

\subsection{Cuts and Modification} \label{sec-cuts-modification}

We review two basic constructions which we will be useful for our purposes.
Let $X$ be a symplectic manifold with Hamiltonian circle action.

\begin{definition} \label{dfn-cut-modification}
For any $\sigma \in \BR$ we define the \emph{symplectic cut} \cite{Lerman} 
$X_\sigma$ to be
\begin{equation}
  X_\sigma := \left( X \times \BC \right) \reda{\sigma} S^1,
\end{equation}
where the $S^1$ action is given by $s \cdot (x, z) = (s\cdot x, sz)$. Similarly, if
$M$ is hyperk\"ahler with a hyperhamiltonian circle action, we define its
\emph{hyperk\"ahler modification} \cite{DancerSwannModifying} $M_\sigma$ to be
\begin{equation}
  M_\sigma := \left( M \times T^\ast\BC \right) \rreda{\sigma} S^1.
\end{equation}
\end{definition}

Note that if $M$ is a hyperk\"ahler analogue of $X$, then there is a natural inclusion
$X_\sigma \into M_\sigma$.

We now consider the special case of $X = \BC^n$, with the standard circle action given
by $s \cdot z = sz$. For $\sigma > 0$, $\BC_\sigma^n \iso \BP^n$ as a variety,
but its K\"ahler metric and symplectic form depend on $\sigma$. If $G \subset U(n)$,
then $G$ acts on $\BC^n$ as well as on $\BC^n_\sigma$.
We will need the following lemma.
\begin{lemma} \label{lemma-cut}
Suppose that $G$ acts linearly on $X = \BC^N$ with proper moment map. Then for a sufficiently
large cut parameter $\sigma$, we have a natural identification
  \begin{equation}
    X \reda{\alpha} G \iso X_\sigma \reda{\alpha} G
  \end{equation}
as K\"ahler manifolds.
\end{lemma}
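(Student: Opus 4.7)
My approach is to apply reduction in stages to reinterpret $X_\sigma \reda{\alpha} G$ as the symplectic cut of $\FX_\alpha$ itself, and then observe that for sufficiently large $\sigma$ this cut has no divisor at infinity and simply returns $\FX_\alpha$.

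First, consider the commuting Hamiltonian actions of $G$ (on the first factor) and of $S^1$ (diagonally) on $X\times\BC$. Reduction in stages (Remark~\ref{rmk-reduction-in-stages}) gives
\[
  X_\sigma \reda{\alpha} G
  = (X\times\BC)\reda{(\sigma,\alpha)}(S^1\times G)
  = \bigl(\FX_\alpha \times \BC\bigr)\reda{\sigma} S^1,
\]
since $G$ acts trivially on the $\BC$-factor and so $(X\times\BC)\reda{\alpha} G = \FX_\alpha\times\BC$. Thus the claim reduces to showing that the symplectic cut of $\FX_\alpha$, with respect to the residual $S^1$-action descended from scalar multiplication on $X$, is K\"ahler-isomorphic to $\FX_\alpha$ for $\sigma$ large.

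Next, properness of $\mu_G$ makes $\FX_\alpha$ compact, so the descended function $\nu(y) := \tfrac12 |x|^2$ is bounded, with some maximum $\nu_{\max}$. For any $\sigma > \nu_{\max}$, the level set $\nu(y) + \tfrac12 |z|^2 = \sigma$ in $\FX_\alpha \times \BC$ lies entirely in $\FX_\alpha \times \BC^\times$, so the diagonal $S^1$-action is free and the cut is smooth with no divisor at infinity. I would then build the identification via the smooth section $s(y) := \bigl[y, \sqrt{2(\sigma - \nu(y))}\,\bigr]$. The pullback of $\omega_{\FX_\alpha} + \omega_\BC$ along its real lift $(y, \sqrt{2(\sigma - \nu(y))})$ reduces to $\omega_{\FX_\alpha}$ because $dz \wedge d\bar z$ vanishes on a real-valued section, confirming that $s$ is a symplectomorphism. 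For the complex structure, use the GIT description $X_\sigma \reda{\alpha} G = (\FX_\alpha \times \BC)\,/\!/_\sigma\, \BC^\times$: the $\BC^\times$-fixed locus $\FX_\alpha \times \{0\}$ is $\sigma$-unstable (its $\BC^\times$-orbits never meet the level set), so the semistable locus is $\FX_\alpha \times \BC^\times$ and the GIT quotient is holomorphically identified with $\FX_\alpha$ via $y \mapsto [y, 1]$.

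\textbf{Main obstacle.} The crucial step is showing that the symplectomorphism $s$ and the biholomorphism $y \mapsto [y, 1]$ assemble into a single K\"ahler isomorphism. These two maps $\FX_\alpha \to X_\sigma \reda{\alpha} G$ differ by composition with the scaling $y \mapsto (2(\sigma - \nu(y)))^{-1/2} \star y$ along the noncompact direction of the $\BC^\times$-action, which is manifestly non-holomorphic on $\FX_\alpha$. One must verify that the induced K\"ahler form on the GIT quotient agrees with $\omega_{\FX_\alpha}$ under the holomorphic identification — equivalently, that this scaling is a K\"ahler map from $\FX_\alpha$ onto the induced K\"ahler structure of the cut. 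This reduces to a direct computation on the horizontal distribution within the level set, and is the analogue in the present setting of Lerman's original verification that the symplectic cut carries a natural K\"ahler structure.
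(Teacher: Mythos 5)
Your opening move --- reduction in stages to rewrite $X_\sigma \reda{\alpha} G$ as $(\FX_\alpha \times \BC)\reda{\sigma} S^1$ --- is the same commuting-of-the-two-reductions idea the paper uses, and that part is fine. Where you diverge is in the choice of cutting circle. You cut with the scalar circle acting diagonally on $\BC^N$ (admittedly the natural reading of the paragraph defining $X_\sigma$), which in general is not contained in $G$; consequently the residual $S^1$-action on $\FX_\alpha \times \BC$ is nontrivial on the first factor, and you are left having to show that the symplectic cut of the compact K\"ahler manifold $\FX_\alpha$ at a level above the maximum of its moment map is isomorphic to $\FX_\alpha$ \emph{as a K\"ahler manifold}. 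The paper instead uses properness of $\mu_G$ to choose an embedding $\chi: S^1 \into G$ whose moment map is proper, and cuts with \emph{that} circle. Since $\chi(S^1) \subseteq G$, the residual circle action on $\FX_\alpha$ is trivial, the second-stage reduction is literally $\FX_\alpha \times (\BC \reda{\sigma-\alpha'} S^1) \iso \FX_\alpha \times \{\mathrm{pt}\}$, and the K\"ahler identification is immediate: the symplectic and holomorphic identifications coincide because the circle never moves the $\FX_\alpha$ coordinate.

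The step you flag as the ``main obstacle'' is a genuine gap, and calling it ``a direct computation on the horizontal distribution'' underestimates it. Your real section $s(y) = [y, \sqrt{2(\sigma-\nu(y))}]$ is a symplectomorphism and $y \mapsto [y,1]$ is a biholomorphism, but these are different maps and neither is a K\"ahler isomorphism: K\"ahler reduction modifies the metric by a potential along the noncompact $\BC^\times$-directions, so the reduced form transported through the biholomorphism is $\omega_{\FX_\alpha} + i\partial\bar\partial\rho$ for a generically nonconstant invariant $\rho$ (one already sees on horizontal spaces the appearance of $(t\cdot)^\ast\omega_{\FX_\alpha}$ for real $t \in \BC^\times$, which does not preserve $\omega_{\FX_\alpha}$ when the residual action is nontrivial). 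Two $S^1$-invariant K\"ahler forms in the same class on a compact complex manifold need not be related by any biholomorphism --- the biholomorphism group is finite-dimensional while the space of such forms is not --- so no soft argument produces the required third map that is simultaneously holomorphic and symplectic. Either adopt the paper's choice of cutting circle inside $G$, which dissolves the problem, or supply an actual construction of such a map; the latter is precisely the hard content your writeup defers.
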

\begin{proof} This is easy to see by reduction in stages. By the hypothesis that the
$G$-moment map is proper, we can find an embedding $\chi: S^1 \into G$ so that the
$S^1$-moment map is proper. We define an action of $G \times S^1$ on $X \times \BC$ by
\begin{equation} (k, s) \cdot (x, z) = (\chi(s) k x, sz) \end{equation}
Note that $(X \times \BC) \reda{\alpha} G \iso (X \reda{\alpha} G) \times \BC$ and the residual
$S^1$ action acts only on the second factor, so we have
\begin{equation} (X \times \BC) \reda{(\alpha,\sigma)} (G \times S^1)
    \iso (X \reda{\alpha} G ) \times (\BC \reda{\sigma-\alpha'} S^1) 
    \iso (X \reda{\alpha} G) \times \{\mathrm{pt}\}
    \iso X \reda{\alpha} G \end{equation}
provided that $\sigma - \alpha' > 0$, where $\alpha'$ is the projection of
$\alpha$ to the image of $d\chi$. On the other hand, we can perform the reductions
in the opposite order. Again, assuming $\sigma > \alpha'$, we have
\begin{equation} (X \times \BC) \reda{(\alpha,\sigma)} (G \times S^1)
    \iso ((X \times \BC) \reda{\sigma} S^1 ) \reda{\alpha} G
    \iso X_\sigma \reda{\alpha} G \end{equation}
\end{proof}

\subsection{Equivariant Cohomology and the Kirwan Map}
We will use equivariant cohomology throughout this thesis, so we review the basic constructions.
A basic reference for equivariant cohomology and localization is \cite{GS99}.

First we recall the Borel model of equivariant cohomology. Let $M$ be a $G$-space, and let
$EG$ be the classifying space of $G$. The equivariant cohomology\footnote{We will always take cohomology with real coefficients.}
$H_G^\ast(M)$ is defined to be
\begin{equation}
  H_G^\ast(M) := H^\ast(EG \times_G M)
\end{equation}
where $EG$ is the classifying space, and $EG \times_G M = (EG \times M) / G$ is the quotient
by the diagonal $G$-action. The functor $H_G^\ast(\cdot)$ is an extraordinary cohomology theory
on the category of $G$-spaces.

When $G$ is a Lie group acting smoothly on a manifold $M$, there is an alternative construction
of equivariant cohomology called the Cartan model. One simply defines $H_G^\ast(M)$ to be
the cohomology of the complex
\begin{equation}
  \Omega_G^\ast(M) := \left(\Omega^\ast(M) \otimes \Sym\ \fg^\ast \right)^G,
\end{equation}
where the equivariant differential $D$ is defined to be
\begin{equation}
  D = d \otimes 1 - \sum_a i_{a} \otimes \phi_a
\end{equation}
where $\{\phi_a\}$ is a basis of $\fg^\ast$ and $i_a$ denotes the interior product with the fundamental
vector field dual to the basis element $\phi_a$. In all of the cases that we will consider, the
Borel and Cartan models of equivariant cohomology are naturally isomorphic.

\begin{definition} Let $M$ be a symplectic manifold with Hamiltonian $G$-action.
The \emph{Kirwan map} is the natural map
\begin{equation}
 \kappa: H_G^\ast(M) \to H^\ast(M \red G)
\end{equation}
induced by the inclusion $\mu^{-1}(0) \into M$. Similarly, if $M$ is hyperk\"ahler
with a hyperhamiltonian $G$-action, the \emph{hyperk\"ahler Kirwan map} is the natural map
\begin{equation}
  \khk: H_G^\ast(M) \to H^\ast(M \rred G).
\end{equation}
\end{definition}

It is well-known that the Kirwan map is surjective for a large class of symplectic quotients \cite{Kirwan}.
However, the hyperk\"ahler Kirwan map is not so well-understood.

\begin{conjecture} The hyperk\"ahler Kirwan map is surjective.
\end{conjecture}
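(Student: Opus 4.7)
The natural strategy is to imitate Kirwan's proof of Kähler surjectivity, using $|\muhk|^2$ as a $G$-equivariantly perfect Morse--Bott function on $M$. The Łojasiewicz estimate of Theorem \ref{GlobalEstimate} is the analytic backbone: it guarantees that the negative gradient flow of $|\muhk|^2$ converges and defines a $G$-equivariant deformation retraction of $M$ onto a union of unstable manifolds attached to the critical sets. The plan is to set up this stratification, verify that $|\muhk|^2$ is minimally degenerate in Kirwan's sense, identify the minimum stratum with $\muhk^{-1}(0)$, and then read off surjectivity of $\khk$ from the associated Thom--Gysin sequences.

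Before running this directly, I would exploit the holomorphic reformulation $M \rred G \iso \muc^{-1}(0) \reda{0} G$ so as to reduce the problem to a Kähler Kirwan statement on the (generally singular) variety $\muc^{-1}(0)$. On this variety the residual real moment map $\mur$ is a genuine Kähler moment map for $G$, and on its smooth locus the non-compact techniques of \S\ref{sec-noncompact-quotient} should apply verbatim. The substantive piece would be to extend the stratification across the singular locus, which I would attempt via the embedding $\muc^{-1}(0) \subset M$ together with a transversality argument exploiting the fact that, for linear $G \subset U(n)$ acting on $M = T^\ast\BC^n$, the map $\muc$ is a quadratic polynomial.

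The critical-set calculation is the combinatorial heart of the argument. Critical points of $|\muhk|^2$ should be organized by a hyperkähler analogue of the Harder--Narasimhan filtration, with Morse indices computable from the Hessian decomposition into $I$-holomorphic and anti-holomorphic blocks; the quaternionic symmetry should force index contributions to come in matched pairs, as is already visible in the hypertoric case of \S\ref{sec-toric}. Combined with the Łojasiewicz convergence, this should yield the equivariant Euler classes of the normal bundles needed for perfection, and thereby the cancellation-free computation of $H^\ast(M \rred G)$ from $H_G^\ast(M)$.

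The real obstacle, and the reason this conjecture remains open in the present thesis, is that the Łojasiewicz inequality of Chapter \ref{ch-morse} controls $|\mur|^2$ in the non-abelian case but not $|\muc|^2$, so the two summands of $|\muhk|^2 = |\mur|^2 + |\muc|^2$ behave very differently along the gradient flow; equivalently, $\muc$ is never proper, and $\muc^{-1}(0) \reda{0} G$ is a Kähler quotient of a non-compact singular variety to which the standard Atiyah--Bott criteria do not apply. Resolving this would require either strengthening the Łojasiewicz estimate to cover $\muc$ using the hyperkähler $\BC^\times$-scaling, or constructing a smooth deformation of $\muc^{-1}(0)$ whose generic fibre is amenable to compact-type arguments, together with a spectral sequence controlling the specialization back to the central fibre. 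Until one of these analytic inputs is in place, the strategy above recovers only the abelian and hypertoric cases already handled in Chapter \ref{ch-morse}.
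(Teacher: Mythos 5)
The statement you were asked to prove is stated in the thesis as a \emph{conjecture}, and the thesis does not prove it: the author says explicitly that no complete solution is obtained, and even records that the analogous statement fails for certain infinite-dimensional quotients (rank 2 fixed-determinant Higgs bundles). Your submission, to its credit, does not actually claim a proof either --- it is a strategy sketch followed by an accurate diagnosis of why the strategy stalls. So there is no gap to point to beyond the one you yourself identify: the whole argument hinges on an analytic input (flow-closedness of $|\muc|^2$, equivalently a \loj-type estimate for the complex or hyperk\"ahler moment map in the non-abelian case) that is only available when $G$ is a torus. Your diagnosis matches the paper's own: the obstruction is the cross term $\ev{\mu_I,[\mu_J,\mu_K]}$ in $|\grad|\muhk|^2|$, equivalently the non-integrability of the quaternified distribution $\mathcal{D}_\BH$, and the fact that $\muc$ is never proper.

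Two smaller points of calibration against what the thesis actually does in the cases it can handle. First, the partial-result route is not Morse theory with $|\mur|^2$ on the variety $\muc^{-1}(0)$; it is Morse theory with $|\muc|^2$ on the ambient flat space $M = T^\ast\BC^n$ (following Jeffrey--Kiem--Kirwan), giving surjectivity of $H_G^\ast(M) \to H_G^\ast(\muc^{-1}(0))$, and only then is the gradient flow of $|\mur-\alpha_\BR|^2$ used to retract $\muc^{-1}(0)$ onto the hyperk\"ahler level set (Corollary \ref{HomotopyCorollary}). Second, when $\alpha_\BC$ is a regular central value --- the only case treated --- $\muc^{-1}(\alpha_\BC)$ is a smooth $G$-invariant complex submanifold, so the ``extend the stratification across the singular locus'' step you propose is not needed there; the genuine difficulty is non-compactness and non-properness, not singularity. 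Your opening claim that the \loj\ estimate of Theorem \ref{GlobalEstimate} controls the gradient flow of $|\muhk|^2$ should also be qualified: it does so only for abelian $G$, which is precisely why the conjecture remains open beyond the hypertoric case.
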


\begin{remark} This conjecture is definitely false if one considers certain infinite-dimensional
quotients. The moduli space of rank 2, fixed-determinant Higgs bundles provides an explicit
counterexample \cite{DWWW}.
\end{remark}

\begin{remark} While preparing this thesis, we became aware of the work of
McGerty and Nevins \cite{McGertyNevins}. By studying the Kirwan-Ness decomposition
on the quotient stack $\CY = X/G$, they obtain a modified version of Kirwan surjectivity for
the algebraic symplectic quotient $T^\ast X \rred G$. 
\end{remark}

\section{Morse Theory}

\subsection{The Moment Map as a Morse-Bott Function}

We begin by recalling (without proof) basic facts about Morse theory with moment maps. 
Standard references include Atiyah-Bott \cite{YangMillsRiemannSurface, AtiyahBottMomentMap}, 
Atiyah \cite{AtiyahConvexity}, and Kirwan \cite{Kirwan}. Let $G$ act on $(X, \omega)$ 
with moment map $\mu$, and let $T \subseteq G$ be a maximal torus. 
For $\beta \in \ft$, we define $\mu^\beta$ to be the pairing $\ev{\mu, \beta}$.
Denote by $T_\beta$ the closure in $T$ of 
$\{ e^{t\beta} \suchthat t \in \BR \}$. Throughout, we will assume that we have fixed some
compatible metric and almost complex structure.

\begin{proposition} For any $\beta \in \ft$, the function $\mu^\beta$ is Morse-Bott.
The critical set of $\mu^\beta$ is equal to the fixed-point set $X^{T_\beta}$. 
\end{proposition}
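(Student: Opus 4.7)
The plan is to prove the two statements in turn: first identify the critical set, then verify the Morse-Bott condition by linearizing the action at a fixed point.

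For the critical set, I would begin by using the moment map condition to write $d\mu^\beta = i_{v_\beta}\omega$. Since $\omega$ is non-degenerate, a point $p$ is critical if and only if $v_\beta(p) = 0$, i.e.\ $p$ is fixed by the one-parameter subgroup $\{e^{t\beta} : t \in \BR\}$. A point is fixed by this subgroup if and only if it is fixed by its closure $T_\beta$, so $\mathrm{Crit}(\mu^\beta) = X^{T_\beta}$. The inclusion $X^{T_\beta} \subseteq X^{T_\beta}$ in the other direction is trivial; the only mild subtlety is checking that $v_\beta = 0$ forces fixedness under the whole closure, which follows because the orbit $\overline{\{e^{t\beta}\cdot p\}} = \overline{\{p\}} = \{p\}$.

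For the Morse-Bott property, I would work locally at a fixed point $p \in X^{T_\beta}$. The fixed-point set $X^{T_\beta}$ is a smooth symplectic submanifold (this is a standard consequence of averaging the metric/almost complex structure so that $T_\beta$ acts by isometries and linearly in exponential coordinates). Using equivariant Darboux, or simply the equivariant linearization theorem for compact group actions, I can identify a $T_\beta$-invariant neighborhood of $p$ in $X$ with a neighborhood of the origin in $T_p X$ on which $T_\beta$ acts linearly and symplectically. Decompose $T_p X = V_0 \oplus \bigoplus_{\lambda \neq 0} V_\lambda$ into weight spaces for $T_\beta$, where $V_0 = T_p X^{T_\beta}$ and each nonzero weight space comes paired with its dual so as to be symplectic; in the chosen compatible complex structure each $V_\lambda$ is a complex subspace with real weight $\lambda(\beta)$.

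The key computation is that, in such linearizing coordinates, the quadratic part of $\mu^\beta$ at $p$ is (up to an additive constant) the moment map for the linear $T_\beta$-action on $T_p X$, namely
\begin{equation}
  \mu^\beta(z) - \mu^\beta(p) = \tfrac{1}{2}\sum_{\lambda \neq 0} \lambda(\beta)\, |z_\lambda|^2 + O(|z|^3),
\end{equation}
where $z_\lambda$ denotes the component in $V_\lambda$. The Hessian therefore vanishes precisely on $V_0$ and is non-degenerate on the $\omega$-orthogonal complement, which is exactly the Morse-Bott condition. As a byproduct, the index of the critical submanifold is twice the complex dimension of the sum of weight spaces with $\lambda(\beta) < 0$.

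The main obstacle is Step 3, justifying the equivariant linearization and the identification of the Hessian with the linearized moment map; the rest is bookkeeping. This is essentially an application of the equivariant Darboux theorem together with the uniqueness of moment maps for a linear torus action up to constants, but it is worth writing out carefully so that the coefficients $\lambda(\beta)$ appear with the correct sign and so that one sees explicitly that the null directions of the Hessian coincide with $T_p X^{T_\beta}$ rather than some larger subspace.
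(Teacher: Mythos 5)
Your argument is correct. Note that the paper states this proposition without proof, recalling it as a standard fact from Atiyah--Bott, Atiyah, and Kirwan; your proof is essentially the canonical one from those references: identify $\mathrm{Crit}(\mu^\beta)$ with the zero set of $v_\beta$ via nondegeneracy of $\omega$ (and pass to the closure $T_\beta$ using closedness of stabilizers), then linearize equivariantly at a fixed point so that the Hessian of $\mu^\beta$ becomes the quadratic moment map $\tfrac{1}{2}\sum_\lambda \lambda(\beta)\,|z_\lambda|^2$, whose null space is exactly $T_pX^{T_\beta}$. The only point worth spelling out in a full write-up is the one you already flag: that the Hessian at a critical point is intrinsically the bilinear form $(u,w)\mapsto \omega_p(L_\beta u, w)$ with $L_\beta$ the linearization of $v_\beta$, so its kernel is $\ker L_\beta = T_pX^{T_\beta}$ independently of the choice of compatible structure.
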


Let $C$ be a connected component of $X^{T_\beta}$. Since any $x \in C$
is fixed by $T_\beta$, we obtain an isotropy representation on $T_x X$. If we fix a compatible
metric and almost complex structure, then we may decompose $T_x X$ into a direct sum of
irreducibles
\begin{equation}
  T_x X \iso \bigoplus_{i \in I} \BC(\nu_i),
\end{equation}
where $I$ is some indexing set and $\nu_i$ are the weights of the isotropy representation,
counted with multiplicity. We call a weight $\nu$ \emph{positive} if $\ev{\beta, \nu} >0$
and \emph{negative} if $\ev{\beta, \nu} < 0$. Let $N^+ C$ and $N^- C$ be the 
subbundles of $TX|_{C}$ consisting of the positive
and negative weight spaces of the isotropy representation, respectively.

\begin{proposition} The bundles $N^+ C$ and $N^- C$ are respectively the
positive and negative eigenbundles of the Hessian of $\mu^\beta$ on $C$.
In particular, the Morse index of $\mu^\beta$ on $C$ is equal to twice the number
of negative weights, counted with multiplicity.
\end{proposition}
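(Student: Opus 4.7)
The plan is to compute the Hessian of $\mu^\beta$ at a critical point $x \in C$ as an explicit bilinear form on $T_x X$, and then identify it with the infinitesimal isotropy action of $\beta$ via the compatibility of $g$, $J$, and $\omega$. Once this identification is in place, the statement reduces to a one-line linear algebra computation on each weight space $\BC(\nu_i)$.

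First I would observe that since $x \in C = X^{T_\beta}$, the fundamental vector field $v_\beta$ vanishes at $x$, so the Hessian of $\mu^\beta$ is canonically defined as a symmetric bilinear form on $T_x X$ (in particular, independent of any choice of connection). Differentiating the defining equation $d\mu^\beta = \iota_{v_\beta}\omega$ at $x$ and using $v_\beta(x)=0$, one obtains
\begin{equation}
  \mathrm{Hess}(\mu^\beta)_x(u,w) \;=\; \omega_x\bigl(\nabla_u v_\beta,\, w\bigr),
\end{equation}
where $\nabla$ is any torsion-free connection. At a zero of $v_\beta$, the map $u \mapsto \nabla_u v_\beta$ is the (metric-independent) linearization of $v_\beta$, which agrees up to sign with the infinitesimal isotropy action $A_\beta \colon T_x X \to T_x X$ of $\beta$. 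Hence
\begin{equation}
  \mathrm{Hess}(\mu^\beta)_x(u,w) \;=\; -\omega_x(A_\beta u, w) \;=\; -g_x(J A_\beta u, w).
\end{equation}

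Next I would use that $T_\beta$ acts by isometries preserving both $J$ and $\omega$, so $A_\beta$ commutes with $J$, preserves $g$, and satisfies $A_\beta^\ast = -A_\beta$. The weight decomposition $T_x X \cong \bigoplus_i \BC(\nu_i)$ is thus orthogonal with respect to $g$, and on each summand $A_\beta$ acts as multiplication by $\ev{\beta,\nu_i}\,J$ (this is essentially the definition of the weights of the isotropy representation). Therefore $JA_\beta$ restricted to $\BC(\nu_i)$ equals $-\ev{\beta,\nu_i}\,\mathrm{id}$, and the Hessian becomes
\begin{equation}
  \mathrm{Hess}(\mu^\beta)_x\bigl|_{\BC(\nu_i)} \;=\; \ev{\beta,\nu_i}\, g\bigl|_{\BC(\nu_i)}.
\end{equation}
Each weight space is therefore an eigenspace of the Hessian (with respect to $g$), with sign equal to $\mathrm{sign}\,\ev{\beta,\nu_i}$. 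This exactly identifies $N^+C$ and $N^-C$ with the positive and negative eigenbundles of $\mathrm{Hess}(\mu^\beta)$ along $C$. Since each $\BC(\nu_i)$ has real dimension two and contributes to the index precisely when $\ev{\beta,\nu_i} < 0$, the Morse index equals twice the number of negative weights counted with multiplicity.

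The only real obstacle is a bookkeeping one: keeping the sign conventions consistent across the moment map equation $d\mu^\beta = \iota_{v_\beta}\omega$, the identification $\nabla v_\beta = -A_\beta$ at a zero, and the convention that weights $\nu_i \in \ft^\ast$ describe the action $e^\xi \cdot v = e^{\ev{\xi,\nu_i}J}v$ on $\BC(\nu_i)$. With conventions fixed at the outset, the calculation on each weight space is immediate and no further analytic input is needed.
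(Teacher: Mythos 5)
Your argument is correct, and it is the standard one: linearize $d\mu^\beta = i_{v_\beta}\omega$ at a zero of $v_\beta$, identify $\nabla v_\beta$ with the isotropy action $A_\beta$, and use compatibility of $g$, $J$, $\omega$ to diagonalize the Hessian over the weight decomposition. The paper itself offers no proof here — this proposition sits in a block of facts explicitly ``recalled without proof'' from Atiyah--Bott and Kirwan — so there is no competing argument to compare against; yours is essentially the proof found in those references. The one point you rightly flag is the sign bookkeeping: whether positive or negative weights end up in the negative eigenbundle is governed by the interlocking conventions for $d\mu^\beta = i_{v_\xi}\omega$ versus its negative, for $\omega(u,v)=g(Ju,v)$ versus $g(u,Jv)$, and for the sign of the weight pairing $\ev{\beta,\nu}$; a quick sanity check against the paper's own $\BP^n$ example (where the index at the $k$-th fixed point is $2k$, counting weights $i-k<0$) pins the conventions down and confirms the stated conclusion. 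Everything else — the connection-independence of the linearization at a zero, the symmetry of $\omega(A_\beta u, w)$ via $A_\beta$ being an infinitesimal symplectomorphism, and the $g$-orthogonality of distinct weight spaces — is handled correctly.
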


For the remainder of this section, we will assume that $\mu^\beta$ is proper and bounded below,
and that $X^{T_\beta}$ is compact. Let $C \subseteq X^{T_\beta}$ be a connected component of
the fixed-point set. Denote by $X^\pm_C$ the sublevel sets
\begin{equation}
  X^\pm_C = \{ x \in X \suchthat \mu^\beta(x) \leq \mu^\beta(C) \pm \epsilon \}
\end{equation}
where $\epsilon$ is some positive number chosen small enough so that the only
critical value occurring in $[\mu^\beta(C)-\epsilon, \mu^\beta(C)+\epsilon]$ is $\mu^\beta(C)$ itself.

\begin{proposition} \label{prop-AB-lemma}
The equivariant Thom-Gysin sequence for the Morse stratification with respect to
$\mu^\beta$ splits into short exact sequences
\begin{equation}
  0 \to H_T^{\ast-\lambda}(C) \to H_T^\ast(X^+_C) \to H_T^\ast(X^-_C) \to 0.
\end{equation}
The composition of $H_T^{\ast-\lambda}(C) \to H_T^\ast(X^+_C)$ with the restriction
$H_T^\ast(X^+_C) \to H_T^\ast(C)$ is given by multiplication by the equivariant
Euler class of the negative normal bundle to $C$. In particular, the function $\mu^\beta$
is equivariantly perfect.
\end{proposition}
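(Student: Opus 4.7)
The plan is to prove this as a standard application of the Atiyah-Bott lemma, following the original argument in \cite{AtiyahBottMomentMap}. First I would set up the Thom-Gysin long exact sequence for the pair $(X^+_C, X^-_C)$. Since $\mu^\beta$ is Morse-Bott and its only critical value in the relevant interval is $\mu^\beta(C)$, standard Morse theory (in the equivariant setting, using a $T$-invariant metric so that the negative gradient flow is $T$-equivariant) shows that $X^+_C$ deformation retracts $T$-equivariantly onto $X^-_C$ with the negative disk bundle $D(N^-C)$ attached along its boundary sphere bundle. The Thom isomorphism then gives
\begin{equation}
  \cdots \to H_T^{\ast-\lambda}(C) \xrightarrow{\tau} H_T^\ast(X^+_C) \to H_T^\ast(X^-_C) \to H_T^{\ast-\lambda+1}(C) \to \cdots,
\end{equation}
where $\lambda = \mathrm{rk}_\BR(N^-C)$ is the Morse index and $\tau$ is the equivariant Thom push-forward.

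Next I would identify the composition of $\tau$ with the restriction $H_T^\ast(X^+_C) \to H_T^\ast(C)$. By the standard fact that the restriction of the Thom class to the zero section is the equivariant Euler class, this composition is multiplication by $e_T(N^-C) \in H_T^\ast(C)$.

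The heart of the argument, and the main obstacle, is to show that multiplication by $e_T(N^-C)$ is injective, since this forces $\tau$ itself to be injective and thereby splits the long exact sequence into short exact sequences. Here I would invoke the Atiyah-Bott lemma: because every isotropy weight of $N^-C$ pairs negatively (hence non-trivially) with $\beta$, the subtorus $T_\beta$ acts on $N^-C$ with no trivial subbundle. Decomposing $N^-C$ into $T_\beta$-isotypic summands, the equivariant Euler class factors as a product of terms of the form $\nu_i + (\text{higher order})$ in $H_T^\ast(C) \iso H^\ast(C) \otimes \Sym\ \ft^\ast$, each of which has non-zero leading term in the polynomial variables and is therefore not a zero divisor. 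This gives the desired injectivity.

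Finally, equivariant perfection follows by induction on the Morse strata: at each stage the long exact sequence becomes a short exact sequence, so the equivariant Poincar\'e series satisfy $P_T(X^+_C; t) = P_T(X^-_C; t) + t^\lambda P_T(C; t)$, with no cancellation. Summing over all critical components yields the equivariant Morse equality
\begin{equation}
  P_T(X; t) = \sum_{C} t^{\lambda_C} P_T(C; t),
\end{equation}
which is the statement that $\mu^\beta$ is equivariantly perfect. The only subtle points are the properness of $\mu^\beta$ and boundedness below (already assumed) to ensure the Morse-theoretic sublevel set arguments and the inductive summation are well-defined.
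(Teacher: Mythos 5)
Your proof is correct and is precisely the standard Atiyah--Bott argument that the paper itself invokes: the proposition is stated there without proof, with references to Atiyah--Bott and Kirwan, and your write-up reproduces that argument (non-triviality of the $T_\beta$-weights on $N^-C$ forces the equivariant Euler class to be a non-zero-divisor, which splits the Thom--Gysin sequence and gives perfection by induction over critical values). The one small imprecision is in the non-zero-divisor step: rather than the identification $H_T^\ast(C) \iso H^\ast(C) \otimes \Sym\ \ft^\ast$, which presupposes equivariant formality of $C$, you should use the canonical splitting $H_T^\ast(C) \iso H_{T/T_\beta}^\ast(C) \otimes H^\ast(BT_\beta)$ coming from the fact that $T_\beta$ acts trivially on $C$, and take leading terms in $\Sym\ \ft_\beta^\ast$, where the restrictions of the isotropy weights are non-zero precisely because $\ev{\nu_i,\beta} \neq 0$.
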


\begin{theorem} \label{thm-formal-mu-perf}
Suppose that $X^T$ is compact and there exists some component $\beta$ such that $\mu^\beta$ is 
proper and bounded below. Then $X$ is equivariantly formal, i.e.
 $H_T^\ast(X) \iso H^\ast(BT) \otimes H^\ast(X)$ as a $H^\ast(BT)$-module. As a consequence,
the function $\mu^\beta$ is perfect, and the Poincar\'e polynomial of $X$ is given by
\begin{equation}
  P_t(X) = \sum_{C \subseteq X^{T_\beta}} t^{\lambda} P_t(C),
\end{equation}
where the sum is over connected components $C$ of the fixed-point set.
\end{theorem}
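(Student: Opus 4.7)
Plan: The proof combines the equivariant perfection from Proposition \ref{prop-AB-lemma} with a standard sandwich argument using the Serre spectral sequence of the Borel fibration $X \to ET \times_T X \to BT$.

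First I would reduce to the case $T_\beta = T$, so that $X^{T_\beta} = X^T$ and $T$ acts trivially on each critical component. Either one perturbs $\beta$ slightly to a nearby $\beta'$ whose one-parameter subgroup is dense in $T$, checking that properness and boundedness-below of $\mu^{\beta'}$ persist using compactness of $X^T$ to control the correction $\mu^{\beta'-\beta}$; or one inductively applies the theorem on each component $C \subseteq X^{T_\beta}$, using the residual $T/T_\beta$-action whose moment map is the projection of $\mu|_C$ to $(\ft/\Lie T_\beta)^\ast$.

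Once $T_\beta = T$, each component $C \subseteq X^T$ is pointwise fixed, so $H_T^\ast(C) \iso H^\ast(BT) \otimes H^\ast(C)$ and hence $P_t^T(C) = P_t(BT) P_t(C)$. By Proposition \ref{prop-AB-lemma},
\[
  P_t^T(X) \;=\; \sum_C t^{\lambda_C} P_t^T(C) \;=\; P_t(BT) \sum_C t^{\lambda_C} P_t(C).
\]
The ordinary Morse-Bott inequality $P_t(X) \leq \sum_C t^{\lambda_C} P_t(C)$ then gives $P_t^T(X) \geq P_t(BT) P_t(X)$. Conversely, the Serre spectral sequence of $X \to ET \times_T X \to BT$ has $E_2^{p,q} = H^p(BT) \otimes H^q(X)$ (since $BT$ is simply connected), and because $E_\infty$ is a subquotient of $E_2$ we obtain the reverse inequality $P_t^T(X) \leq P_t(BT) P_t(X)$.

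Sandwiching forces $P_t^T(X) = P_t(BT) P_t(X)$. Equality in the spectral sequence estimate forces collapse at $E_2$, establishing equivariant formality $H_T^\ast(X) \iso H^\ast(BT) \otimes H^\ast(X)$ as a free $H^\ast(BT)$-module. Equality in the Morse-Bott inequality forces $\mu^\beta$ to be perfect in ordinary cohomology and yields the claimed Poincar\'e polynomial formula. The main obstacle is the reduction step to generic $\beta$: one must propagate the properness and boundedness-below hypotheses either to a nearby perturbation (which is delicate since properness is not generally preserved under perturbation) or to a clean inductive framework on the lower-rank quotient $T/T_\beta$ acting on each critical component $C$. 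In the noncompact setting of interest in later sections, the \loj\ estimate proved later in the chapter will ultimately be what makes this control available.
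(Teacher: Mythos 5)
The paper states this theorem without proof, recalling it as standard background from Atiyah--Bott and Kirwan, so there is no in-paper argument to compare against; your proof is the classical sandwich argument and is essentially correct. Equivariant perfection of $\mu^\beta$ (Proposition \ref{prop-AB-lemma}) together with the ordinary Morse--Bott inequalities gives $P_t^T(X) \geq P_t(BT)\,P_t(X)$ coefficientwise, the Serre spectral sequence of $X \to ET\times_T X \to BT$ gives the reverse inequality, and equality forces both collapse at $E_2$ (hence formality and freeness over $H^\ast(BT)$ by Leray--Hirsch) and perfection of $\mu^\beta$. Two refinements to your reduction step: drop the perturbation option, which is indeed fragile since properness of $\mu^{\beta'}$ need not survive a perturbation of $\beta$; and observe that the inductive option terminates immediately rather than requiring a genuine induction. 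Each component $C$ of $X^{T_\beta}$ lies in a level set of the proper function $\mu^\beta$ and is closed, hence compact, and it carries a residual Hamiltonian $T$-action; equivariant formality of compact Hamiltonian $T$-manifolds is the already-known compact case, so $P_t^T(C) = P_t(BT)\,P_t(C)$ holds without needing $T$ to act trivially on $C$, and the sandwich goes through with the sum taken over components of $X^{T_\beta}$ exactly as stated. One small point worth recording: every component of $X^{T_\beta}$ is a compact symplectic manifold with a Hamiltonian torus action and therefore contains a $T$-fixed point, so compactness of $X^T$ guarantees that $X^{T_\beta}$ has only finitely many components and all the Poincar\'e series appearing above are well defined.
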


\begin{theorem} Under the same hypotheses as above, the restriction 
$H_T^\ast(X) \to H_T^\ast(X^T)$ is an injection.
\end{theorem}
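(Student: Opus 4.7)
The plan is to factor the restriction as $H_T^*(X) \to H_T^*(X^{T_\beta}) \to H_T^*(X^T)$ and prove injectivity of each arrow. For the first arrow I will run a Morse-theoretic induction along the critical components of $\mu^\beta$, using the short exact sequences of Proposition \ref{prop-AB-lemma}. For the second, I will use that each connected component $C$ of $X^{T_\beta}$ is compact—forced by properness of $\mu^\beta$, since $C$ lies in a single fiber of $\mu^\beta$—and iterate the same Morse argument on $C$ with respect to the residual $T/T_\beta$-action, where properness is automatic from compactness.

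Order the critical components $C_0, C_1, \ldots$ of $\mu^\beta$ by increasing critical value; by properness the critical values are discrete and the sublevel sets $X^+_{C_i}$ form an ascending exhaustion of $X$. By Proposition \ref{prop-AB-lemma} the transition maps $H_T^*(X^+_{C_{i+1}}) \to H_T^*(X^+_{C_i})$ are surjective, so the Milnor $\varprojlim{}^1$-term vanishes and $H_T^*(X) = \varprojlim_i H_T^*(X^+_{C_i})$. It therefore suffices to show by induction on $i$ that the restriction $H_T^*(X^+_{C_i}) \to \bigoplus_{j \leq i} H_T^*(C_j)$ is injective. The base case is immediate because $X^+_{C_0}$ deformation retracts onto $C_0$. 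For the inductive step, suppose $\alpha \in H_T^*(X^+_{C_i})$ restricts to zero on each $C_j$ with $j \leq i$; then by induction it restricts to zero on $X^+_{C_{i-1}} = X^-_{C_i}$, so the short exact sequence of Proposition \ref{prop-AB-lemma} produces a class $\gamma \in H_T^{\ast-\lambda}(C_i)$ with $\alpha = \iota_\ast \gamma$. Restricting back to $C_i$ forces $e(N^-C_i) \cdot \gamma = 0$.

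The subsidiary claim is that the equivariant Euler class $e(N^-C_i)$ is not a zero divisor in $H_T^*(C_i)$. Since $T_\beta$ acts on $N^-C_i$ with weights $\{\nu_k\} \subset \ft^*$ all satisfying $\ev{\beta, \nu_k} < 0$—and therefore all nonzero—the restriction of $e(N^-C_i)$ to any $T$-fixed point of $C_i$ is $\prod_k \nu_k \in \Sym\, \ft^*$, which is nonzero. Combined with equivariant formality of $C_i$ (valid because $C_i$ is compact Hamiltonian with nonempty $T$-fixed set, so Theorem \ref{thm-formal-mu-perf} applies in its classical compact form), Atiyah-Bott localization then implies $e(N^-C_i)$ is a non-zero-divisor; hence $\gamma = 0$ and $\alpha = 0$. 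For the second arrow, the compact analogue of the present theorem—obtained by rerunning the identical Morse induction on each compact $C_i$ with $\mu^{\beta'}$ for a generic $\beta' \in \Lie(T/T_\beta)$, whose fixed locus is $C_i^T$—gives the injectivity $H_T^*(C_i) \to H_T^*(C_i^T)$. The main obstacle is the non-zero-divisor property of the Euler class together with the inverse-limit argument used to pass from sublevel sets to all of $X$; properness of $\mu^\beta$ is exactly what reduces the non-compact setting to a sequence of compact problems for which the classical Morse-Kirwan machinery applies.
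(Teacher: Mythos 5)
Your argument is correct: the paper states this theorem without proof, as a standard fact recalled from Atiyah--Bott and Kirwan, and your proof is precisely the standard argument (Morse induction over the critical components of $\mu^\beta$ via the split Thom--Gysin sequences of Proposition \ref{prop-AB-lemma}, with the key point being that the equivariant Euler class of the negative normal bundle is a non-zero-divisor because its restriction to each $T$-fixed component is a product of weights that pair nontrivially with $\beta$). The only cosmetic caveat is that the weights appearing at a $T$-fixed point are full $T$-weights rather than the $\nu_k$ themselves, but they are nonzero for the reason you give, so the non-zero-divisor conclusion stands.
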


\subsection{Computing Isotropy Representations}
We saw in the previous subsection that Morse theory with a moment map amounts to
computing the fixed-point set as well as the isotropy representations. Not only does this allow
us to compute the Poincar\'e polynomial of our manifold, but in addition we may use
Proposition \ref{prop-AB-lemma} to inductively construct the cohomology ring.

The most important special case that we consider is the following. Suppose that $X$ is
a symplectic manifold that we understand well---in most cases, a vector space, flag variety,
or product thereof. Suppose that a compact group $H$ acts on $X$, and $G \subset H$
is a normal subgroup. Then $K := H/G$ has a residual action  on the symplectic reduction 
$\FX = X \red G$. The goal of this section is to give a description of Morse theory
on $\FX$ with respect to the $K$ action in terms of the $H$-action on $X$. We assume throughout
this section that $G$ acts freely on $\mu_G^{-1}(0)$, so that $\FX$ is smooth.

\begin{proposition} \label{prop-fixed-lift}
$x \in \FX^K$ if and only if there is a lift $\tx \in X$
and a unique map $\phi: H \to G$ (depending on the lift $\tx$) such that for all $h \in H$, 
$h \cdot \tx = \phi(h) \cdot \tx$.
Furthermore, the map $\phi$ satisfies the properties
$\phi|_G = 1_G$ and $\phi(gh) = g \phi(h) g^{-1} \phi(g)$ for all $g,h \in H$. 
\end{proposition}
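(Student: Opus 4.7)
The plan is to translate the $K$-fixed point condition on $\FX$ into a statement about $G$-orbits of lifts, and then to use the freeness of the $G$-action on $\mu_G^{-1}(0)$ to extract $\phi$ and verify its properties.

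For the ``if'' direction, I would observe that if $\tx$ and $\phi$ exist with $h \cdot \tx = \phi(h) \cdot \tx$ for all $h \in H$, then for any $k = [h] \in K$ we have $k \cdot x = [h \cdot \tx] = [\phi(h) \cdot \tx] = [\tx] = x$, since $\phi(h) \in G$. This uses only that $\mu_G^{-1}(0)$ is $H$-stable, which follows from $H$-equivariance of $\mu_G$.

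For the converse, given $x \in \FX^K$, I would pick any lift $\tx \in \mu_G^{-1}(0)$. For each $h \in H$, the $K$-invariance of $x$ forces $h \cdot \tx$ and $\tx$ to lie in the same $G$-orbit, so there exists $g \in G$ with $h \cdot \tx = g \cdot \tx$; by the assumed freeness of $G$ on $\mu_G^{-1}(0)$, this $g$ is unique, and I define $\phi(h) := g$. The required properties are then essentially immediate. Since $g \cdot \tx = g \cdot \tx$ for $g \in G$, uniqueness gives $\phi|_G = \mathrm{id}_G$. For the cocycle identity, I would compute
\begin{equation}
(gh) \cdot \tx \;=\; g \cdot \bigl( \phi(h) \cdot \tx \bigr) \;=\; \bigl( g \phi(h) g^{-1} \bigr) \cdot (g \cdot \tx) \;=\; \bigl( g \phi(h) g^{-1} \phi(g) \bigr) \cdot \tx.
\end{equation}
Normality of $G$ in $H$ ensures $g \phi(h) g^{-1} \in G$, so $g \phi(h) g^{-1} \phi(g) \in G$, and the uniqueness of $\phi(gh)$ yields the stated relation.

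The argument has essentially no analytical content; the only subtlety is keeping track of where the two hypotheses enter. Normality of $G$ in $H$ is exactly what makes the cocycle expression land inside $G$ (so that the formula for $\phi(gh)$ even makes sense), while freeness of the $G$-action on $\mu_G^{-1}(0)$ is what lets me turn the existential ``$h \cdot \tx$ lies in the $G$-orbit of $\tx$'' into a bona fide function $\phi$. If one weakens either assumption, the statement needs to be reformulated in terms of orbifold or stack data, but under the hypotheses already in force this is just bookkeeping.
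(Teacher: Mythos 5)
Your proof is correct and follows essentially the same route as the paper: extract $\phi(h)$ as the unique element of $G$ carrying $\tx$ to $h\cdot\tx$ using freeness, then verify the cocycle identity by the same three-step computation $(gh)\cdot\tx = g\phi(h)g^{-1}\cdot g\cdot\tx = g\phi(h)g^{-1}\phi(g)\cdot\tx$. The only additions are the (trivial) ``if'' direction and the explicit remark that normality of $G$ in $H$ keeps $g\phi(h)g^{-1}$ inside $G$, both of which the paper leaves implicit.
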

\begin{proof} Suppose that $x \in \FX^K$. Then for any representative $\tx$,
$h \cdot \tx$ must represent the same point in $\FX$, and hence differs from $\tx$ by an
element of $G$. Since the action of $G$ on $\mu_G^{-1}(0)$ is free,
they differ by a \emph{unique} element of $G$, which we will denote by $\phi(h)$.
This defines the map $\phi: H \to G$.
Now consider two elements $g,h \in H$. We compute
\begin{equation}
  \phi(gh) \cdot \tx = gh \cdot \tx = g \phi(h) \tx 
    = g \phi(h) g^{-1} g \tx = g \phi(h) g^{-1} \phi(g) \tx,
\end{equation}
and hence $\phi(gh) =g \phi(h) g^{-1} \phi(g)$. 
\end{proof}

\begin{proposition} Let $\phi$ be as in the preceding proposition, for some
lift $\tx \in X$ of a fixed point $x \in \FX^K$. Then the set $\ker \phi \subseteq H$ 
is a subgroup of $H$ and is naturally
isomorphic to $K$. Consequently, we have $H \iso K \ltimes G$, where the semidirect
product structure is given by $(k_1, g_1)(k_2, g_2) = (k_1 k_2, k_1 g_2 k_1^{-1} g_1)$
\end{proposition}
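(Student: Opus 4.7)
The plan is to exploit the cocycle identity $\phi(gh) = g\phi(h)g^{-1}\phi(g)$ from the previous proposition, together with the normalisation $\phi|_G = \mathrm{id}_G$, to verify that $\ker\phi$ is a subgroup, that it intersects $G$ trivially, and that it projects onto $K = H/G$. Once these three facts are in hand, the internal characterisation of semidirect products gives $H \cong K \ltimes G$ together with the advertised multiplication formula.

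First I would check the subgroup axioms. Closure under multiplication is immediate from the cocycle relation applied to $h_1, h_2 \in \ker\phi$, since both factors involving $\phi(h_i)$ collapse to the identity. For inverses, applying the cocycle to the pair $(h, h^{-1})$ with $h \in \ker\phi$ gives $e = \phi(e) = h\phi(h^{-1})h^{-1}$, from which $\phi(h^{-1}) = e$ follows by conjugation. Triviality of $\ker\phi \cap G$ is immediate from $\phi|_G = \mathrm{id}_G$, since any $h$ lying in both equals both $\phi(h) = e$ and itself.

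For surjectivity of $\ker\phi \twoheadrightarrow K$, the key observation is that for any $h \in H$, the element $h' := \phi(h)^{-1} h$ lies in $\ker\phi$. This is a short application of the cocycle identity with $g = \phi(h)^{-1} \in G$, together with the fact that $\phi$ restricts to the identity on $G$. Since $h'$ differs from $h$ by an element of $G$, every coset in $K$ has a representative in $\ker\phi$, and combined with injectivity this yields $\ker\phi \cong K$.

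The semidirect product decomposition and multiplication formula then follow by writing each $h \in H$ uniquely as a product of an element of $\ker\phi$ and an element of $G$, and using normality of $G$ to rearrange products of such pairs. The main subtlety is algebraic rather than conceptual: $\phi$ is a crossed homomorphism rather than a genuine homomorphism, so one has to be disciplined about the order of factors and the role of conjugation by elements of $\ker\phi$ when applying the cocycle relation. Beyond this, the argument is standard group theory, with the geometric content already packaged into the cocycle identity of the previous proposition.
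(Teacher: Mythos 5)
Your proposal is correct and follows essentially the same route as the paper: the key element $\phi(h)^{-1}h$ you use to show $\ker\phi$ surjects onto $K$ is exactly the map $\psi(k)=\phi(\tilde k)^{-1}\tilde k$ the paper uses to invert the quotient $\ker\phi\to K$, and the remaining steps (subgroup axioms via the cocycle identity, trivial intersection with $G$, and the resulting internal semidirect product) match the paper's "short calculations."
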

\begin{proof} A short calculation using the property $\phi(gh) = g\phi(h) g^{-1} \phi(g)$
verifies that $\ker \phi$ is indeed a subgroup of $H$.
We define a map $\psi: K \to \ker \phi$ by $\psi(k) = \phi(\tilde{k})^{-1} \tilde{k}$,
for any representative $\tilde{k} \in H$ of $k$. This is well-defined, since any other
representative of $k$ is of the form $\tilde{k} g$ with $g \in G$, and a short calculation
shows that $\phi(\tilde{k} g)^{-1} \tilde{k} g = \phi(\tilde{k})^{-1} \tilde{k}$.
It is easy to check that this is a group homomorphism, and is inverse to the quotient
map $\ker \phi \to K$. The isomorphism $H \stackrel{\iso}{\to} K \ltimes G$ is given by the map
$h \mapsto ([h], \phi(h))$.


\end{proof}

Now suppose that $x \in \FX^K$ is some particular fixed point with lift $\tx \in X$,
and let $\phi: H \to G$ be the induced map as in the preceding propositions.
Using $\phi$, we define a new $K$-action on $X$, denoted $\ast$, by
\begin{equation}
  k \ast y := \phi(k)^{-1} k \cdot y.
\end{equation}
The property $\phi(k_1k_2) = k_1 \phi(k_2) k_1^{-1} \phi(k_1)$ ensures that $k_1 \ast ( k_2 \ast y ) = (k_1k_2) \ast y$,
so this is indeed an action. This new action has the property that $\tx$ will be fixed
by all of $K$. Hence we have an isotropy representation of $K$ on $T_{\tx} X$. Moreover, this new $K$-action
on $X$ induces the original $K$-action on $\FX$, since for any $y \in X$, $k \cdot y$ and $\phi^{-1}(k) k \cdot y$
lie on the same $G$-orbit.

\begin{proposition} Let $x \in \FX^K$, $\tx \in X$ a representative, and $\phi$
as above. We have the following complex of $K$-representations
\begin{equation}
  0 \to \fg \stackrel{d\rho}{\longrightarrow} T_\tx X \stackrel{d\mu_G}{\longrightarrow} \fg^\ast \to 0, 
\end{equation}
where $\rho$ is the action map and $K$ acts on $\fg \iso \fg^\ast$ by conjugation.
Consequently, $T_x \FX$ is isomorphic as a $K$-representation to the cohomology of this complex.
\end{proposition}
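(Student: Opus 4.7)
The plan is to establish this in three logical stages: first verify it is a complex with the expected cohomology as ordinary vector spaces, then check $K$-equivariance under the twisted $\ast$-action.

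The first stage is essentially the standard Marsden--Weinstein tangent space identification. Since $\tx \in \mu_G^{-1}(0)$ and $G$ acts freely on $\mu_G^{-1}(0)$ by hypothesis, the map $d\rho: \fg \to T_\tx X$ sending $\xi \mapsto v_\xi(\tx)$ is injective with image equal to $T_\tx(G \cdot \tx)$, and $d\mu_G: T_\tx X \to \fg^\ast$ is surjective with kernel $T_\tx \mu_G^{-1}(0)$. The composition $d\mu_G \circ d\rho$ vanishes at $\tx$: from $G$-equivariance, $\mu_G(g \cdot y) = \Ad^\ast_g \mu_G(y)$, and differentiating at $\tx$ along $\xi \in \fg$ gives $d\mu_G(v_\xi(\tx)) = -\ad^\ast_\xi \mu_G(\tx) = 0$ since $\mu_G(\tx)=0$. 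The cohomology at the middle term is then $T_\tx\mu_G^{-1}(0) / T_\tx(G\cdot \tx) = T_x\FX$.

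For the second stage, I would exploit the identification $K \iso \ker\phi \subseteq H$ from the preceding proposition. The crucial observation is that under this embedding $\psi: K \to H$, the twisted $\ast$-action is literally the restriction of the original $H$-action: for $k \in K$ and $y \in X$, $\psi(k) \cdot y = \phi(\psi(k))^{-1}\psi(k) \cdot y = \psi(k) \cdot y$, consistent with the definition $k \ast y = \phi(k)^{-1}k \cdot y$ once one absorbs the $\phi$-twist into the choice of representative. Since $G$ is normal in $H$, conjugation gives a well-defined $H$-action on $\fg$ (and dually on $\fg^\ast$), and the standard identity $h_\ast v_\xi = v_{\Ad_h \xi}$ shows $d\rho$ is $K$-equivariant with $K$ acting on $\fg$ by $\Ad_{\psi(k)}$. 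Likewise, $H$-equivariance of the $G$-moment map $\mu_G$ (again using $G \lhd H$) makes $d\mu_G$ equivariant for the dual coadjoint action of $K$ on $\fg^\ast$.

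Finally, the induced $K$-representation on the cohomology matches the $K$-action on $T_x\FX$ because the $\ast$-action on $X$ descends to the original $K$-action on $\FX$ (as noted in the paragraph preceding the proposition, $k \cdot y$ and $\phi(k)^{-1}k \cdot y$ lie in a common $G$-orbit). I expect the main subtlety to be bookkeeping in the second stage: verifying that the $\phi$-twist does not introduce extra terms in the equivariance of $d\rho$ and $d\mu_G$ at $\tx$. This works precisely because $\phi(k) \cdot \tx = k \cdot \tx$, so at the fixed point $\tx$ the twist becomes inert, and because $\mu_G(\tx) = 0$ kills potential error terms proportional to $\Ad^\ast_{\phi(k)^{-1}} \mu_G(\tx)$.
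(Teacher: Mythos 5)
Your proposal is correct and follows essentially the same route as the paper's own proof: identify $K$ with $\ker\phi\subset H$ so that the twisted action becomes the restriction of the $H$-action, use normality of $G$ in $H$ for the adjoint action on $\fg$, invoke $H$-equivariance of the action and moment maps, and identify the middle cohomology with $T_x\FX$ via the standard Marsden--Weinstein description. The only difference is that you spell out why $d\mu_G\circ d\rho=0$ (using $\mu_G(\tx)=0$), a detail the paper leaves implicit.
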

\begin{proof} By the preceding proposition, we identify $K$ with $\ker\phi \subset H$. Then the twisted
$K$-action is nothing more than the restriction of the $H$-action to $K$ under this identification.
Since $G$ is a normal subgroup of $H$, then $\fg$ is invariant under the adjoint action of $H$,
so we may restrict this to a $K$-action on $\fg$. Since the action map and moment map are $H$-equivariant,
this gives us an equivariant complex
\begin{equation}
  0 \to \fg \stackrel{d\rho}{\longrightarrow} T_\tx X \stackrel{d\mu_G}{\longrightarrow} \fg^\ast \to 0
\end{equation}
as claimed. At the level of vector spaces, we have $T_x \FX \iso T_{\tx} \mu^{-1}(0) / T_\tx \CO_\tx$,
where $\CO_\tx$ is the $G$-orbit through $\tx$. Since $T_\tx \mu^{-1}(0) = \ker d\mu_G(\tx)$ and
$T_\tx \CO_\tx = \im \rho$, we find that the cohomology of this complex is isomorphic to $T_x\FX$.
\end{proof}

Since knowledge of the isotropy representation allows us to immediately compute the
Morse index, the above proposition gives an effective method to compute Morse indices
for a large class of Hamiltonian group actions. However, it will be convenient to have
a slightly more general statement for invariant subvarieties. Suppose that $E$ is a linear $H$-representation and 
that $f: X \to E$ is an $H$-equivariant map. Then $f^{-1}(0)$ is $H$-invariant and induces a 
subset\footnote{In all cases that we consider, $\FX$ will be a quasiprojective variety
and $V(f)$ will in fact be an algebraic subvariety.} $V(f) \subset \FX$. 

\begin{proposition} Let $f: X \to E$ an $H$-equivariant map to a linear representation of $H$,
$x \in V(f)^K$, $\tx \in X$ a representative, and $\phi$ as above. 
We have the following complex of $K$-representations
\begin{equation}
  0 \to \fg \to T_\tx X \stackrel{d\mu_G \oplus df}{\longrightarrow} \fg \oplus E \to 0,
\end{equation}
and $T_x V(f)$ is isomorphic as a $K$-representation to the cohomology of this complex.
\end{proposition}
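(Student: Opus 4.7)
The plan is to reduce this statement to the previous proposition by the same twist that was used there, with the additional data of the equivariant map $f$ incorporated naturally. The output will be a $K$-equivariant three-term complex whose cohomology computes the Zariski tangent space to $V(f)$ at $x$.

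First, I would set up the twisted $K$-action on $X$ given by $k \ast y := \phi(k)^{-1} k \cdot y$, exactly as in the proof of the previous proposition, so that $\tx$ becomes a genuine fixed point of this action. Since $f: X \to E$ is $H$-equivariant, the equality $f(\phi(k)^{-1} k \cdot y) = \phi(k)^{-1} k \cdot f(y)$ shows that $f$ is also equivariant for the twisted action, and in particular the zero locus $f^{-1}(0)$ is preserved. The twisted $K$-action therefore restricts to $\mu_G^{-1}(0) \cap f^{-1}(0)$, and it descends to the original $K$-action on $V(f) \subseteq \FX$ because each twist differs from an element of $K$ only by an element of $G$.

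Next I would verify that the claimed complex is a complex of $K$-representations. Under the identification $K \iso \ker\phi$ from the previous proposition, the twisted $K$-action on $\fg$ coincides with the restriction of the $H$-adjoint action; this makes sense because $G \subseteq H$ is normal, so $\fg \subseteq \fh$ is $\Ad(H)$-invariant. The differential $d\rho(\tx): \fg \to T_\tx X$ is $H$-equivariant in the usual sense (with $H$ acting on $\fg$ by $\Ad$ and on $T_\tx X$ via the derivative of the $H$-action), and likewise $d\mu_G$ and $df$ are $H$-equivariant to $\fg^\ast$ and to $E$ respectively. Passing to the twisted action and restricting to $K$, all three maps are $K$-equivariant, and $(d\mu_G \oplus df) \circ d\rho = 0$ follows from the moment map identity $d\mu_G(v_\xi) = 0$ on $\mu_G^{-1}(0)$ for $\xi \in \fg$ together with the vanishing $df \circ d\rho = 0$, since the $G$-action preserves $f^{-1}(0) \ni \tx$.

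Finally, the tangent space identification. The Zariski tangent space to the $H$-invariant variety $\mu_G^{-1}(0) \cap f^{-1}(0)$ at $\tx$ is, by definition, $\ker\bigl(d\mu_G \oplus df\bigr)(\tx)$, and the tangent space to the $G$-orbit through $\tx$ is $\im d\rho(\tx)$. Since $G$ is assumed to act freely on $\mu_G^{-1}(0)$, the map $d\rho$ is injective at $\tx$, so $H^0$ of the complex vanishes. The quotient
\begin{equation}
\ker\bigl(d\mu_G \oplus df\bigr)(\tx) \,/\, \im d\rho(\tx)
\end{equation}
is precisely $T_x V(f)$ (in the Zariski sense, which coincides with the smooth tangent space wherever $V(f)$ is smooth at $x$), and by construction this quotient is $H^1$ of the complex. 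The expected main obstacle is conceptual rather than technical: one must be careful that $V(f)$ need not be smooth, so the statement should be interpreted with the Zariski tangent space; the $H^2$ term $\coker(d\mu_G \oplus df)$ then measures the failure of $\mu_G^{-1}(0) \cap f^{-1}(0)$ to be a local complete intersection at $\tx$, and may be nonzero, but this does not affect the identification of $T_x V(f)$ with $H^1$.
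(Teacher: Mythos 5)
Your proposal is correct and takes essentially the same route as the paper, which simply says the argument is identical to the preceding proposition with $d\mu_G \oplus df$ in place of $d\mu_G$; you have spelled out the twisted action, the equivariance of $f$, and the identification of the middle cohomology with $T_x V(f)$ exactly as that argument requires. Your added caution about the Zariski tangent space and the possible nonvanishing of the cokernel is a reasonable refinement, though note that the paper's later use of this proposition (in the representation-ring identity subtracting $[E]$ in full) implicitly assumes the transverse case where that cokernel vanishes.
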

\begin{proof} The argument is identical to that of the previous proposition,
with $d\mu_G \oplus df$ in place of $d\mu_G$.
\end{proof}

Our basic toolkit is nearly complete. However, since we only care about the weights
and multiplicities of isotropy representations of maximal tori, it is most convenient to work at the level
of the representation ring\footnote{Recall that the representation ring of a group is 
the ring consisting of formal $\BZ$-linear combinations of isomorphism classes of representations,
with addition and multiplication given by direct sum and tensor product, respectively.
For a torus $T$ of rank $r$, we have $\Rep(T) \iso \BZ[t_1^{\pm 1}, \dots, t_r^{\pm 1}]$.}
of $K$. We summarize the preceding arguments in the following theorem.
\begin{theorem}  \label{thm-rep-ring}
In the representation ring of $K$, we have the following equality
\begin{equation}
  [T_x \FX] = [ T_\tx X ] - [ \fg_\BC ].
\end{equation}
If $f: X \to E$ is an $H$-equivariant map to a linear representation of $H$, then we also
have
\begin{equation}
  [T_x V(f)] = [T_\tx X ] - [\fg_\BC] - [E]
\end{equation}
\end{theorem}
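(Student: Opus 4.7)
The plan is to apply Euler characteristic additivity to the two complexes of $K$-representations supplied by the preceding two propositions, together with a simple identification of $[\fg] + [\fg^\ast]$ with $[\fg_\BC]$.

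The first step is to verify that the end terms of these complexes contribute no cohomology. Since $G$ is assumed to act freely on $\mu_G^{-1}(0)$, the infinitesimal action map $d\rho: \fg \to T_\tx X$ is injective at $\tx$, and the moment map differential $d\mu_G: T_\tx X \to \fg^\ast$ is surjective there. Consequently the only nonvanishing cohomology of
\begin{equation}
  0 \to \fg \stackrel{d\rho}{\longrightarrow} T_\tx X \stackrel{d\mu_G}{\longrightarrow} \fg^\ast \to 0
\end{equation}
sits in the middle and is identified, by the first of the two propositions, with $T_x \FX$. The same reasoning applies to the second complex, whose middle cohomology is $T_x V(f)$ (the maps $d\rho$ and $d\mu_G \oplus df$ have the same kernel/cokernel behavior on $\mu_G^{-1}(0) \cap f^{-1}(0)$, because $f$ and $\mu_G$ are independent).

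Next I would pass to the representation ring of $K$, where Euler characteristics are additive along complexes. This gives immediately
\begin{equation}
  [T_x \FX] = [T_\tx X] - [\fg] - [\fg^\ast],
\end{equation}
and analogously $[T_x V(f)] = [T_\tx X] - [\fg] - [\fg^\ast] - [E]$ for the second complex. To reach the form stated in the theorem, I would use the fact that $K$ acts on $\fg$ through the adjoint (conjugation) action, which preserves an invariant inner product; this identifies $\fg^\ast \iso \fg$ as $K$-representations. Moreover, $\fg_\BC = \fg \otimes_\BR \BC$ decomposes as a real $K$-representation as $\fg \oplus i\fg \iso \fg \oplus \fg$, so $[\fg_\BC] = 2[\fg] = [\fg] + [\fg^\ast]$. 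Substituting yields both formulas.

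I expect no substantial obstacle here, since the preceding propositions have already done the real work of constructing the complexes and identifying their cohomology with $T_x\FX$ and $T_x V(f)$. The only point requiring a little care is bookkeeping of conventions: one may either argue in the real representation ring as above, or work throughout in the complex representation ring by using the almost complex structure on $T_\tx X$ to complexify the differentials; the two viewpoints give the same formula.
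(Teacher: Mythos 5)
Your proposal is correct and follows essentially the same route as the paper, which gives no separate proof of this theorem but simply states that it "summarizes the preceding arguments": one takes the Euler characteristic of the two complexes from the preceding propositions, uses freeness of the $G$-action (and smoothness of $V(f)$) to kill the outer cohomology, and identifies $[\fg]+[\fg^\ast]$ with $[\fg_\BC]$ via the invariant inner product. Your closing remark about real versus complex representation-ring bookkeeping is exactly the right point of care, and nothing further is needed.
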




\subsection{Equivariant Morse Theory} \label{morse-sec-eq-morse}

We now recall some of the basic results of equivariant Morse theory as developed by
Atiyah-Bott \cite{YangMillsRiemannSurface} and Kirwan \cite{Kirwan}.
Let $M$ be a symplectic manifold with a Hamiltonian action of
a compact group $G$, and assume that $0$ is a regular value of the moment 
map.\footnote{This assumes that we wish to study the symplectic reduction at moment map
level $0$. We can always assume that this is the case, by making the replacement
$\mu \mapsto \mu-\alpha$.} Let $T \subset G$ be a maximal torus, and let
$\mu_T$ denote the restriction of the moment map. For $\beta \in \ftd$, we define
\begin{align}
  T_\beta &:= \overline{ \{\exp(t\beta) \suchthat t \in \BR \} }, \\
  Z_\beta &:= \mu^{-1}(\beta), \\
  C_\beta &:= G \cdot \left( Z_\beta \cap M^{T_\beta} \right), \\
  G_\beta &:= \Stab_G(\beta).
\end{align}
Let $\ft_+$ be a positive Weyl chamber and let $\CB$ be the collection of $\beta \in \ft_+$
such that $C_\beta$ is non-empty. Kirwan calls these \emph{minimal combinations of weights}.

\begin{proposition} \label{morse-prop-eq-critical}
The critical set of $|\mu|^2$ is equal to the union $\bigcup_{\beta \in \CB} C_\beta$.
\end{proposition}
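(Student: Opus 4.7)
The plan is to compute $d(|\mu|^2)$ explicitly using the defining property of the moment map, deduce a geometric characterization of critical points, and then use $G$-equivariance of $\mu$ to rotate the value $\mu(x)$ into the positive Weyl chamber.

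Fix an $\Ad$-invariant inner product on $\fg$, which identifies $\fg \iso \fgd$; using it, view $\mu$ as a $\fg$-valued map. First I would differentiate $f := \tfrac{1}{2}|\mu|^2$ at an arbitrary point $x \in M$. For any $v \in T_x M$,
\begin{equation}
df_x(v) = \ev{\mu(x), d\mu_x(v)} = \ev{d\mu_x(v), \mu(x)} = \omega_x\bigl(v_{\mu(x)}(x), v\bigr),
\end{equation}
where the last equality uses the moment map identity $d\mu^\xi = i_{v_\xi}\omega$ applied to $\xi = \mu(x)$. Since $\omega$ is non-degenerate, $x$ is a critical point of $|\mu|^2$ if and only if the fundamental vector field of $\mu(x)$ vanishes at $x$, i.e.\ if and only if $x$ is fixed by the one-parameter subgroup $\exp(t\mu(x))$ and hence by its closure $T_{\mu(x)}$.

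Next I would use $G$-equivariance to normalize the value $\mu(x)$. Let $x$ be critical. By the equivariance of $\mu$ and the standard fact that every coadjoint orbit in $\fgd$ meets $\ft_+$ in exactly one point, choose $g \in G$ with $\beta := \Ad^\ast(g)\mu(x) \in \ft_+$. Then $y := g\cdot x$ satisfies $\mu(y) = \beta$, so $y \in Z_\beta$. Translating the critical condition $v_{\mu(x)}(x)=0$ by $g$ gives $v_\beta(y)=0$, so $y \in M^{T_\beta}$. Hence $y \in Z_\beta \cap M^{T_\beta}$, and consequently $x \in g^{-1}\cdot(Z_\beta \cap M^{T_\beta}) \subseteq C_\beta$. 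Since $C_\beta$ is non-empty, $\beta \in \CB$, giving the inclusion $\Crit(|\mu|^2) \subseteq \bigcup_{\beta \in \CB} C_\beta$.

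For the reverse inclusion, suppose $x \in C_\beta$ for some $\beta \in \CB$, so that $g\cdot x \in Z_\beta \cap M^{T_\beta}$ for some $g \in G$. Then $\mu(g\cdot x) = \beta$ and $v_\beta(g\cdot x) = 0$. Translating back by $g^{-1}$ and using equivariance of the action and the moment map yields $v_{\mu(x)}(x) = 0$, so by the characterization from the first paragraph, $x \in \Crit(|\mu|^2)$. I expect no serious obstacle in this argument; the only subtlety worth being explicit about is the identification $\fg \iso \fgd$ and the observation that every $G$-orbit in $\fgd$ meets $\ft_+$, which is what lets one conclude that the index set $\CB$ of the stratification may be restricted to the positive Weyl chamber.
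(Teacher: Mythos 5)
Your proof is correct and is exactly the standard argument of Kirwan that the paper is recalling without proof: compute $d\tfrac12|\mu|^2 = i_{v_{\mu(x)}}\omega$ to see that criticality is equivalent to $v_{\mu(x)}(x)=0$, then conjugate $\mu(x)$ into the (closed) positive Weyl chamber to land in $Z_\beta \cap M^{T_\beta}$, with the reverse inclusion by undoing the conjugation. The only points worth being careful about, which you handle, are the $\mathrm{Ad}$-invariance of the inner product used to identify $\fg \iso \fgd$ (needed so that the norm and the conjugation interact correctly) and the passage from being fixed by $\exp(t\beta)$ to being fixed by its closure $T_\beta$, which holds because stabilizers are closed.
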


\begin{theorem} The function $|\mu|^2$ is Morse-Bott-Kirwan.
\end{theorem}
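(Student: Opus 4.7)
The plan is to verify Kirwan's minimal degeneracy condition stratum-by-stratum, building on the fact established above that the individual component $\mu^\beta$ is Morse--Bott even when $|\mu|^2$ is not. Since the critical sets $C_\beta$ for different $\beta \in \CB$ need not be disjoint in the topological sense (e.g. $C_0$ may have other $C_{\beta}$ in its closure) and a single $C_\beta$ need not be a smooth submanifold globally, the correct framework is to construct for each $\beta \in \CB$ a locally closed $G$-invariant submanifold $\Sigma_\beta \supseteq C_\beta$ on which $|\mu|^2$ attains its minimum along $C_\beta$ with non-degenerate transverse Hessian, and such that $T_x \Sigma_\beta$ at $x \in C_\beta$ is exactly the non-negative eigenspace of the Hessian of $|\mu|^2$.

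First I would fix $\beta \in \CB$ and compute the Hessian of $|\mu|^2$ at a point $x \in C_\beta$. Using the identity $|\mu|^2 = |\beta|^2 + 2\langle \mu - \beta,\beta\rangle + |\mu-\beta|^2$ and the fact that $\mu(x)=\beta$, one sees that to second order near $x$,
\begin{equation}
  |\mu|^2 - |\beta|^2 = 2(\mu^\beta - |\beta|^2) + |\mu-\beta|^2.
\end{equation}
The term $|\mu-\beta|^2$ vanishes to second order on $\ker d\mu^\perp$-directions coming from the $G$-orbit, so the Hessian of $|\mu|^2$ is governed by the Hessian of $2\mu^\beta$ plus a manifestly non-negative piece. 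By the proposition on isotropy weights, the negative eigenspace of $\mathrm{Hess}(\mu^\beta)$ at $x$ is $N^- C_\beta^{(\beta)}$, the subbundle of $T_xM$ spanned by $T_\beta$-weight vectors with negative $\beta$-pairing. A direct comparison shows this is precisely the negative eigenspace of $\mathrm{Hess}(|\mu|^2)$ at $x$, and these directions are transverse to $M^{T_\beta}$.

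Next I would build $\Sigma_\beta$ as follows. Let $Y_\beta \subseteq M^{T_\beta}$ be the union of connected components on which $\mu^\beta$ attains its minimum $|\beta|^2$, so that $Z_\beta \cap M^{T_\beta}$ is an open subset of $Y_\beta$. Let $N^+ Y_\beta \to Y_\beta$ denote the positive normal bundle (positive weight spaces under $T_\beta$); via a $G_\beta$-equivariant tubular neighborhood, the total space of a neighborhood of the zero section in $N^+ Y_\beta$ embeds as a locally closed submanifold $\Sigma_\beta^0$ of $M$ containing $Z_\beta \cap M^{T_\beta}$. The Hessian calculation above shows that on $\Sigma_\beta^0$ the function $|\mu|^2$ has its minimum locus along $Z_\beta \cap M^{T_\beta}$ with non-degenerate transverse Hessian, because one has excluded only the directions making negative contribution. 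Finally set $\Sigma_\beta := G \cdot \Sigma_\beta^0$; the $G$-invariance of $|\mu|^2$ then promotes this to the minimum-locus statement along $C_\beta = G\cdot(Z_\beta\cap M^{T_\beta})$, and smoothness of $\Sigma_\beta$ follows from the fact that $G \times_{G_\beta} \Sigma_\beta^0 \to \Sigma_\beta$ is a local diffeomorphism near $C_\beta$.

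The main obstacle is verifying the \emph{flow-invariance} property that characterizes minimal degeneracy: every negative gradient flow line of $|\mu|^2$ starting sufficiently close to $C_\beta$ in $\Sigma_\beta$ must remain in $\Sigma_\beta$ until it exits a prescribed neighborhood. This is what ensures the Thom--Gysin sequence of Proposition \ref{prop-AB-lemma} applies to the stratification by $|\mu|^2$. The key inputs are: (i) the gradient flow of $|\mu|^2$ preserves $M^{T_\beta}$ (because both $|\mu|^2$ and the metric are $T_\beta$-invariant, and $M^{T_\beta}$ is totally geodesic); (ii) the flow preserves the positive/negative eigenbundle splitting along $Y_\beta$, since it commutes with $T_\beta$ and the splitting is a splitting of $T_\beta$-representations; and (iii) the flow is $G$-equivariant, so passing from $\Sigma_\beta^0$ to $G \cdot \Sigma_\beta^0$ preserves invariance. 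Together these allow us to conclude that $|\mu|^2$ is Morse--Bott--Kirwan, with the strata indexed by $\CB$ as in Proposition \ref{morse-prop-eq-critical}.
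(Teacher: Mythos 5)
You are reconstructing Kirwan's proof here: the paper states this theorem without proof, citing \cite{Kirwan}, so there is no in-paper argument to compare against, and your outline (the identity $|\mu|^2-|\beta|^2=2(\mu^\beta-|\beta|^2)+|\mu-\beta|^2$ together with minimizing manifolds swept out by $G$ from the positive normal directions along $M^{T_\beta}$) is indeed the shape of Kirwan's argument. But two of your intermediate claims are false, and they sit exactly at the technical heart of the theorem. First, $Z_\beta\cap M^{T_\beta}=\mu^{-1}(\beta)\cap M^{T_\beta}$ is \emph{not} an open subset of $Y_\beta$: on each component of $Y_\beta$ the function $\mu^\beta$ is constant, but $Z_\beta\cap Y_\beta$ is the $\beta$-level set of the residual moment map on $Y_\beta$, which has positive codimension and for which $\beta$ need not be a regular value. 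Consequently, in the directions tangent to $Y_\beta$ the Hessian of $|\mu|^2$ along $C_\beta$ reduces to $2|d\mu(v)|^2$, which vanishes on all of $\ker d\mu\cap T_xY_\beta$ while the minimum set $\mu^{-1}(\beta)\cap Y_\beta$ need not even be smooth. Your claim of a ``non-degenerate transverse Hessian'' on $\Sigma_\beta^0$ therefore fails; this degeneracy is the very reason $|\mu|^2$ is only minimally degenerate rather than Morse--Bott, and Kirwan's condition accordingly asks only that $C_\beta$ be the minimum set of $|\mu|^2|_{\Sigma_\beta}$ and that $T_x\Sigma_\beta$ be \emph{maximal} among subspaces on which the Hessian is positive semidefinite.

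Second, the ``direct comparison'' identifying the negative eigenspace of the Hessian of $|\mu|^2$ with the negative $T_\beta$-weight bundle $N^-(M^{T_\beta})$ is off by the orbit directions. Since $C_\beta=G\cdot(Z_\beta\cap M^{T_\beta})$, the subspace $T_x(G\cdot x)$ contains negative-weight vectors that are tangent to $C_\beta$, and on the orbit the $G$-invariant function $|\mu|^2$ is constant, so these directions are isotropic for the Hessian but pair nontrivially with normal directions. The correct statement is the one recorded in Proposition \ref{prop-hessian}: the negative normal space is $N^-_x(M^{T_\beta})/T_x(G\cdot x)$, with index $\lambda(\mu^\beta;C_\beta)-\dim G+\dim G_\beta$ rather than $\lambda(\mu^\beta;C_\beta)$. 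Verifying that $T_x(G\cdot\Sigma_\beta^0)$ really is a maximal positive semidefinite subspace --- given that $2\,\mathrm{Hess}(\mu^\beta)$ and $2\,d\mu^{*}d\mu$ need not be simultaneously diagonalizable --- is the actual content of the lemmas in \cite[\S 4]{Kirwan} and is missing from your sketch. Finally, flow-invariance of the $\Sigma_\beta$ is not part of the definition of minimal degeneracy; the smoothness of the stratification by stable manifolds is a separate theorem (\cite[\S 10]{Kirwan} and the appendix there), and your items (i)--(iii) list plausible ingredients without assembling them into a proof.
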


\begin{proposition}
\label{prop-hessian}
Let $x \in Z_\beta \cap M^{T_\beta}$. Then the negative normal space to $Z_\beta \cap M^{T_\beta}$
at $x$ is isomorphic to $N_x^-(M^{T_\beta}) / T_x ( G \cdot x )$.
In particular, the Morse index at $x$ is equal to
\begin{equation}
  \lambda_\beta = \lambda(\mu^\beta; C_\beta) - \dim G + \dim G_\beta.
\end{equation}
\end{proposition}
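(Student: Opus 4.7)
The plan is to compute the Hessian of $|\mu|^2$ at a critical point $x \in Z_\beta \cap M^{T_\beta}$ and identify its negative eigenspace. Since $|\mu|^2 = \langle \mu, \mu\rangle$ and the gradient of $|\mu|^2$ vanishes at $x$, differentiating twice yields
\begin{equation*}
  \mathrm{Hess}(|\mu|^2)_x(u,v) \;=\; 2\langle d\mu_x(u),\, d\mu_x(v)\rangle \;+\; 2\,\mathrm{Hess}(\mu^\beta)_x(u,v).
\end{equation*}
The first term is a positive semidefinite quadratic form which vanishes precisely on $T_x Z_\beta = \ker d\mu_x$, and the second is the Hessian of the Morse-Bott function $\mu^\beta$, whose negative eigenbundle at $x$ is $N_x^-(M^{T_\beta})$ by the preceding proposition.

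Next I would decompose $T_x M$ according to the $T_\beta$-isotropy action as $T_x M = T_x M^{T_\beta} \oplus N_x^+(M^{T_\beta}) \oplus N_x^-(M^{T_\beta})$, and further split $T_x M^{T_\beta} = T_x(Z_\beta \cap M^{T_\beta}) \oplus W$, with $W$ the orthogonal complement inside $M^{T_\beta}$. The Hessian is analyzed piece by piece: on $T_x(Z_\beta \cap M^{T_\beta})$ both terms vanish (the first since $d\mu = 0$ on $Z_\beta$, the second since $\mu^\beta$ is constant on components of $M^{T_\beta}$), so this lies in the zero eigenspace; on $W$ the second term vanishes while $|d\mu_x|^2$ is strictly positive, giving positive eigenvalues; on $N_x^+$ the Hessian of $\mu^\beta$ is strictly positive and the first term is nonnegative, again giving positive eigenvalues. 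Hence the negative eigenspace is confined to $N_x^-(M^{T_\beta})$.

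To complete the identification, I would analyze the orbit directions using $G$-equivariance. For $\xi \in \fg$ orthogonal to $\fg_\beta$, the vector $v_\xi(x)$ lies in $N_x(M^{T_\beta})$ with the same $T_\beta$-weight as $\xi$, and by $G$-equivariance $d\mu_x(v_\xi(x)) = \mathrm{ad}^*_\xi\,\mu(x) = \mathrm{ad}^*_\xi\beta$. Since the non-zero $T_\beta$-weights of $\fg$ arise as pairings of roots with $\beta$ and hence come in matched pairs $\pm\lambda$, the image of $T_x(G\cdot x)$ in $N_x(M^{T_\beta})$ splits equally between $N_x^+$ and $N_x^-$, of total dimension $\dim G - \dim G_\beta$. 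These directions are tangent to $C_\beta = G\cdot(Z_\beta\cap M^{T_\beta})$, so they sit in the zero eigenspace of $\mathrm{Hess}(|\mu|^2)_x$, which forces the identity $|d\mu_x(v_\xi)|^2 = -\mathrm{Hess}(\mu^\beta)_x(v_\xi,v_\xi)$ along them. This yields the identification
\[
  (\text{negative normal at $x$}) \;\cong\; N_x^-(M^{T_\beta})\big/T_x(G\cdot x),
\]
and taking dimensions gives $\lambda_\beta = \lambda(\mu^\beta;C_\beta) - \dim G + \dim G_\beta$.

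The hard part is the last step: verifying that on $N_x^-$ the positive-semidefinite term $|d\mu_x|^2$ cancels the negativity of $\mathrm{Hess}(\mu^\beta)$ exactly along the orbit image, while leaving the total Hessian strictly negative on a complementary subspace. Concretely, this requires a weight-by-weight study of $d\mu_x\colon N_x^-\to\fg^*$ that uses nondegeneracy of $\omega$ and the relation $d\mu_x(v_\xi) = \mathrm{ad}^*_\xi\mu(x)$ to show that the restriction $T_x(G\cdot x)\cap N_x^- \to (\ker d\mu_x|_{N_x^-})^\perp$ is an isomorphism. This is essentially the content of the minimal degeneracy of $|\mu|^2$ along $C_\beta$, and it is the place where the interaction between the symplectic structure and the group action has to be used in full.
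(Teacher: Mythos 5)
The paper does not actually prove this proposition: it is recalled verbatim from Kirwan's book as part of the standard package of equivariant Morse theory, so there is no in-text argument to compare against. Your overall strategy is nonetheless the standard one — split $\mathrm{Hess}(|\mu|^2)_x = 2\,(d\mu_x)^*d\mu_x + 2\,\mathrm{Hess}(\mu^\beta)_x$, locate the negative directions inside $N_x^-(M^{T_\beta})$, and account for the correction term $\dim G - \dim G_\beta$ via the orbit directions sitting in the null space. Two points, however, need repair.

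First, the claim that the image of $T_x(G\cdot x)$ in $N_x(M^{T_\beta})$ ``splits equally between $N_x^+$ and $N_x^-$'' is incompatible with the formula you are proving. If only half of the $(\dim G - \dim G_\beta)$-dimensional transverse part of the orbit landed in $N_x^-$, quotienting $N_x^-$ by it would give index $\lambda(\mu^\beta) - \tfrac{1}{2}(\dim G - \dim G_\beta)$. The correct (and at first sight surprising) statement is that for each root $\alpha$ with $\langle\alpha,\beta\rangle \neq 0$ the \emph{entire} two-real-dimensional image of $(\fg_\alpha\oplus\fg_{-\alpha})\cap\fg$ under $\xi\mapsto v_\xi(x)$ lies in $N_x^-$: both signs of the root contribute to the negative side, for a total of $\dim G - \dim G_\beta$ dimensions removed from $N_x^-$. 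You can test this on the Grassmannian example in the paper ($\Gr(2,n) = \BC^{2n}\reda{\alpha}U(2)$, $\beta = \diag(0,-\alpha)$): there $N_x^-$ is all of the normal $\BC^n$, $\dim G - \dim G_\beta = 2$, the full two-dimensional image of the off-diagonal directions $\xi = \left(\begin{smallmatrix}0&z\\-\bar z&0\end{smallmatrix}\right)$ sits inside $N_x^-$, and the index is $2n-2$; an equal split would give the odd number $2n-1$. Your own argument only shows that $\mathrm{Hess}(\mu^\beta)(v_\xi,v_\xi) = -|d\mu_x(v_\xi)|^2 < 0$, which forces a nonzero $N^-$-component of $v_\xi(x)$ but not containment in $N^-$; establishing containment (e.g.\ via the complex-linear extension of the action map to $\fg_\BC$ in the K\"ahler setting, which sends the $\ad_\beta$-weight-$c$ space of $\fg_\BC$ into the weight-$c$ part of $T_xM$) is a genuine step you must supply.

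Second, the block-by-block reading of the Hessian is not quite legitimate as written: the positive semidefinite term $\langle d\mu_x(u), d\mu_x(v)\rangle$ is $T_\beta$-invariant but can have nonzero cross terms between $N_x^+$ and $N_x^-$ (weight spaces $\BC(\nu)$ and $\BC(-\nu)$ are isomorphic as real $T_\beta$-modules), so ``the negative eigenspace is confined to $N_x^-$'' does not follow from inspecting the diagonal blocks. The clean route to the upper bound $\lambda_\beta \leq \dim N_x^-$ is simply that adding a positive semidefinite form cannot increase the index of a quadratic form. The matching lower bound — that the Hessian is actually negative definite on a complement of $T_x(G\cdot x)\cap N_x^-$ inside $N_x^-$ and nondegenerate transverse to $T_xC_\beta$ — is exactly the minimal degeneracy statement you flag at the end, and it is the bulk of Kirwan's several-page argument; as it stands your proposal identifies where the work lies but does not do it.
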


We would like to do Morse theory with $|\mu|^2$, and to avoid analytic subtleties
one usually assumes that  $M$ is compact or that $\mu$ is proper. However, we will
need to work in a more general setting. As remarked in \cite[\S 9]{Kirwan}, 
it is enough to assume that the path of steepest descent through any point of $M$
is contained in a compact subset. This notion is extremely useful, so we adopt
the following definition.
\begin{definition} \label{DefnFlowClosed}
A function $f: M \to \BR$ is said to be \emph{flow-closed} if every
positive time trajectory of $-\grad f$ is contained in a compact set.
\end{definition}

For each component $C_\beta$ of the critical set, denote by $M^\beta_\pm$ sub-level sets
\begin{equation}
  M^\beta_\pm := \{ x \in M \suchthat |\mu|^2 \leq |\beta|^2\pm\epsilon \}
\end{equation}
where $\epsilon$ is taken sufficiently small so that the interval
$[|\beta|^2-\epsilon, |\beta|^2+\epsilon]$ contains no critical values other than 
$|\beta|^2$. 

\begin{theorem} \label{KirwanSurjectivity}
If $|\mu|^2$ is flow-closed,
then the stable manifolds $S_\beta$ of its gradient flow form a smooth stratification
of $M$. The Thom-Gysin sequence splits into short exact sequences
\begin{equation}
0 \to H^{\ast-\lambda_\beta}_G(C_\beta) \to H^\ast_G(M^\beta_+) \to H^\ast_G(M^\beta_-) \to 0
\end{equation}
for each $\beta \in \CB$. For any $\beta \in \CB$, the restriction
$H_G^\ast(M) \to H_G^\ast(M^\beta_\pm)$ is surjective. In particular, the Kirwan map is
surjective, and its kernel is generated by those elements of $f \in H_G^\ast(M)$ such that
for some $\beta$, the restriction of $f$ to $C_\beta$ is in the ideal generated by
the equivariant Euler class of the negative normal bundle to $C_\beta$.
\end{theorem}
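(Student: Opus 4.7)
The plan is to follow the classical Atiyah-Bott-Kirwan framework, replacing the usual hypothesis that $|\mu|^2$ is proper with the weaker flow-closed condition. Essentially, once the gradient flow is well-behaved enough to produce a stable manifold decomposition, the algebraic content of the proof --- namely equivariant perfection of $|\mu|^2$ --- carries through verbatim.

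First I would establish the stratification. By Proposition \ref{morse-prop-eq-critical} the critical set of $|\mu|^2$ decomposes as $\bigsqcup_{\beta \in \CB} C_\beta$, and the Morse-Bott-Kirwan property gives a local normal form near each $C_\beta$. The flow-closed hypothesis guarantees that for every $x \in M$ the positive trajectory $\gamma_x(t)$ of $-\grad |\mu|^2$ lies in a compact set, so a {\L}ojasiewicz-type argument (valid on compact sets) shows that $\gamma_x(t)$ has finite length and converges to a unique point in some $C_\beta$. The assignment $x \mapsto \lim_{t\to\infty} \gamma_x(t)$ defines the stable manifolds $S_\beta$, and smoothness plus codimension $\lambda_\beta$ follow from standard stable-manifold theory applied to the local normal form.

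Next I would verify the splitting of the Thom-Gysin sequence. The standard long exact sequence of the pair $(M_+^\beta, M_-^\beta)$ reads
\begin{equation}
  \cdots \to H_G^{\ast-\lambda_\beta}(C_\beta) \stackrel{\cup e(N^-_\beta)}{\longrightarrow} H_G^\ast(M_+^\beta) \to H_G^\ast(M_-^\beta) \to \cdots,
\end{equation}
and splits into short exact pieces precisely when multiplication by the equivariant Euler class $e(N^-_\beta)$ is injective. This is the Atiyah-Bott criterion: since $T_\beta$ acts trivially on $C_\beta$ and every weight of its action on $N^-_\beta$ pairs nontrivially with $\beta$, the class $e(N^-_\beta)$ factors through a nonzero element of $H^\ast(BT_\beta) \hookrightarrow H_G^\ast(C_\beta)$ and is therefore a non-zero-divisor.

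Surjectivity of $H_G^\ast(M) \to H_G^\ast(M_\pm^\beta)$ then proceeds by induction on the critical values, exhausting $M = \bigcup_\beta M_+^\beta$. The base case is the stratum of absolute minima, where the gradient flow gives a $G$-equivariant deformation retract of $M_+^0$ onto $\mu^{-1}(0)$. In the inductive step, the splitting shows that the restriction $H_G^\ast(M_+^\beta) \to H_G^\ast(M_-^\beta)$ is surjective, so any class extends from below; passing to the limit over the exhaustion gives global surjectivity. Taking $\beta=0$ yields $H_G^\ast(M) \twoheadrightarrow H_G^\ast(M_+^0) \iso H_G^\ast(\mu^{-1}(0)) \iso H^\ast(M \red G)$, which is Kirwan surjectivity. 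The description of the kernel is immediate by tracking the induction: a class lies in $\ker \kappa$ exactly when, at each step, its image on $M_+^\beta$ comes from $H_G^{\ast-\lambda_\beta}(C_\beta)$ via $\cup e(N^-_\beta)$, equivalently when its restriction to $C_\beta$ lies in the ideal generated by $e(N^-_\beta)$.

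The main obstacle is the analytic portion of the first step: verifying that trajectories actually converge and that sublevel sets interact well with the stratification in the flow-closed setting. This hinges on a {\L}ojasiewicz inequality strong enough to apply to the non-compact function $|\mu|^2$ --- precisely the estimate whose development is the central technical goal of this chapter. Once convergence and the consequent local triviality near each $C_\beta$ are in hand, the remaining steps are purely algebraic and reduce to the standard argument.
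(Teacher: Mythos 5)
Your proposal is correct and follows exactly the standard Atiyah--Bott--Kirwan argument that the paper itself relies on (it states this theorem without proof, citing Kirwan's observation in \S 9 of her book that flow-closedness suffices in place of properness): stratification from the minimally degenerate local normal form, splitting of the Thom--Gysin sequence via the non-zero-divisor property of the equivariant Euler class detected on the subtorus $T_\beta$, induction over critical values, and the kernel description from the Gysin pushforwards. No discrepancies with the paper's (implicit) route.
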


\begin{corollary} \label{thm-equivariant-poincare}
We have the following equality of Poincar\'e series
\begin{equation}
P_t^G(M) = \sum_\beta t^{\lambda_\beta} P_t^G(C_\beta),
\end{equation}
or equivalently,
\begin{equation}
P_t(M \red G) =  P_t^G(M) - \sum_{\beta\neq 0} t^{\lambda_\beta} P_t^G(C_\beta).
\end{equation}
\end{corollary}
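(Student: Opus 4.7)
The plan is to read off the first equality as a formal consequence of the short exact sequences in Theorem \ref{KirwanSurjectivity}, and then isolate the $\beta = 0$ term to derive the second equality. Concretely, for each $\beta \in \CB$ the short exact sequence
\begin{equation}
  0 \to H^{\ast-\lambda_\beta}_G(C_\beta) \to H^\ast_G(M^\beta_+) \to H^\ast_G(M^\beta_-) \to 0
\end{equation}
gives the numerical identity
\begin{equation}
  P_t^G(M^\beta_+) = P_t^G(M^\beta_-) + t^{\lambda_\beta} P_t^G(C_\beta).
\end{equation}
Ordering the critical values $|\beta|^2$ increasingly and summing these identities telescopes into
\begin{equation}
  P_t^G(M) = \sum_\beta t^{\lambda_\beta} P_t^G(C_\beta),
\end{equation}
where the sum is taken over $\beta \in \CB$. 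Since $|\mu|^2$ is flow-closed, the gradient flow retracts $M$ onto the union of stable manifolds, so no contributions at infinity are missed; this is where the flow-closed hypothesis from Definition \ref{DefnFlowClosed} is essential.

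Next I would separate out the $\beta = 0$ term. By Proposition \ref{morse-prop-eq-critical}, $0 \in \CB$ corresponds to the critical set $C_0 = \mu^{-1}(0)$, and by Proposition \ref{prop-hessian} the Morse index at $\beta = 0$ vanishes. Since $0$ is assumed to be a regular value of $\mu$, the action of $G$ on $\mu^{-1}(0)$ is locally free, so the Borel construction $EG \times_G \mu^{-1}(0)$ is a rational homology equivalent of the quotient $\mu^{-1}(0)/G = M \red G$, giving
\begin{equation}
  P_t^G(\mu^{-1}(0)) = P_t(M \red G).
\end{equation}
Substituting this into the previous identity and rearranging yields the second equation.

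The only subtle step is the rigorous justification that the telescoping sum over all critical components exhausts $P_t^G(M)$. In the compact (or proper) setting this is classical, but here one must argue that flow-closedness is enough: every point flows in finite positive time into a compact neighbourhood of some $C_\beta$, the $C_\beta$ form a locally finite collection of critical sets, and the Morse stratification exhausts $M$ up to measure zero. This lets one write $H_G^\ast(M)$ as the inverse limit of $H_G^\ast(M^\beta_+)$ as $|\beta|^2 \to \infty$, compatibly with the Poincaré series. Once that bookkeeping is in place, the corollary is immediate.
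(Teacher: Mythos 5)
Your proof is correct and is exactly the argument the paper intends: the corollary is stated without a separate proof precisely because it follows by reading the Poincar\'e series off the split short exact sequences of Theorem \ref{KirwanSurjectivity}, telescoping over the critical values, and identifying the $\beta=0$ term $H_G^\ast(\mu^{-1}(0))$ with $H^\ast(M\red G)$ via the locally free action. Your cautionary remarks about the bookkeeping in the non-proper setting are apt (though note the stable manifolds stratify $M$ exactly, not merely up to measure zero), and the flow-closedness hypothesis is indeed what makes the stratification and hence the telescoping valid.
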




By the above theorems, we can understand the cohomology of the quotient
in terms of the $G$-equivariant cohomology of the connected components of the critical
set. Luckily, we have the following.

\begin{proposition} 
Let $\beta \in \CB$. Then we have
  $H_G^\ast(C_\beta) \iso H_{G_\beta}^\ast\left( Z_\beta \cap M^{T_\beta} \right)$.
\end{proposition}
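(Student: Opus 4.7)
The plan is to exhibit a $G$-equivariant diffeomorphism
\begin{equation}
C_\beta \iso G \times_{G_\beta} \bigl( Z_\beta \cap M^{T_\beta} \bigr),
\end{equation}
after which the statement follows from the standard identification $EG \times_G (G \times_{G_\beta} Y) \iso EG \times_{G_\beta} Y$ for any $G_\beta$-space $Y$, which gives $H_G^\ast(G \times_{G_\beta} Y) \iso H_{G_\beta}^\ast(Y)$.

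First I would verify that $Z_\beta \cap M^{T_\beta}$ is $G_\beta$-invariant, so that the twisted product on the right makes sense. By equivariance of the moment map, $\mu(g \cdot x) = \Ad^\ast(g)\,\mu(x)$; if $x \in Z_\beta$ and $g \in G_\beta$, then $\mu(g \cdot x) = \Ad^\ast(g)\beta = \beta$, so $Z_\beta$ is $G_\beta$-invariant. For the fixed-point set, observe that $g \in G_\beta$ implies $g\,\exp(t\beta)\,g^{-1} = \exp(t\,\Ad(g)\beta) = \exp(t\beta)$, so $G_\beta$ centralizes $T_\beta$ and hence preserves $M^{T_\beta}$.

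Next I would construct the natural map
\begin{equation}
\Phi \colon G \times_{G_\beta} \bigl( Z_\beta \cap M^{T_\beta} \bigr) \to C_\beta, \qquad [g, x] \mapsto g \cdot x.
\end{equation}
Surjectivity is immediate from the definition $C_\beta = G \cdot (Z_\beta \cap M^{T_\beta})$. For injectivity, suppose $g_1 \cdot x_1 = g_2 \cdot x_2$ with $x_1, x_2 \in Z_\beta \cap M^{T_\beta}$. Then $\mu(g_2^{-1} g_1 \cdot x_1) = \mu(x_2) = \beta$, so $\Ad^\ast(g_2^{-1} g_1)\beta = \beta$, i.e.\ $g_2^{-1} g_1 \in G_\beta$; hence $[g_1, x_1] = [g_2, x_2]$. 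Thus $\Phi$ is a $G$-equivariant bijection.

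The main technical point is upgrading $\Phi$ to a diffeomorphism; I would do this by a slice argument. At any $x \in Z_\beta \cap M^{T_\beta}$, the moment-map computation above shows $\Stab_G(x) \subseteq G_\beta$, so the $G$-orbit direction is transverse to $Z_\beta \cap M^{T_\beta}$ and $\Phi$ has bijective differential. Combined with the bijectivity of $\Phi$ and properness of the $G$-action (coming from compactness of $G$), this yields the desired diffeomorphism. The Borel construction then gives
\begin{equation}
H_G^\ast(C_\beta) \iso H_G^\ast\!\bigl( G \times_{G_\beta} (Z_\beta \cap M^{T_\beta}) \bigr) \iso H_{G_\beta}^\ast\!\bigl( Z_\beta \cap M^{T_\beta} \bigr),
\end{equation}
as claimed. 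I expect no serious obstacle; the only point requiring care is the slice argument verifying that the orbit-theoretic bijection is actually a diffeomorphism rather than just a continuous bijection.
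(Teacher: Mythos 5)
Your argument is correct and is exactly the standard proof of this fact (it is Kirwan's identification $C_\beta \iso G \times_{G_\beta}(Z_\beta \cap M^{T_\beta})$ followed by the induction isomorphism $H_G^\ast(G\times_{G_\beta} Y) \iso H_{G_\beta}^\ast(Y)$); the thesis states the proposition without proof, citing it as part of the standard equivariant Morse theory package, so there is no alternative route to compare against. The only point deserving a word more care is the final upgrade from bijection to ($G$-equivariant) homeomorphism/diffeomorphism, but your moment-map computation already supplies the needed transversality, since $d\mu_x(v_\xi(x)) = \mathrm{ad}^\ast_\xi\beta$ vanishes precisely for $\xi$ in the Lie algebra of $G_\beta$.
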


This proposition yields an inductive procedure as follows. By standard results
in Hamiltonian group actions, the connected components of $M^{T_\beta}$
are symplectic submanifolds. The group $G_\beta=\Stab(\beta)$ acts on $M^{T_\beta}$ with
moment map given by the restriction of $\mu$. Let $H_\beta \subseteq G_\beta$ be
the kernel of the action map $G_\beta \to \Diff(M^{T_\beta})$, and let
$K_\beta = G_\beta / H_\beta$.

\begin{proposition} 
Suppose that $K_\beta$ acts locally freely on $Z_\beta \cap M^{T_\beta}$. Then we have
\begin{equation}
  H_{G_\beta}^\ast(Z_\beta \cap M^{T_\beta}) 
    \iso H^\ast(BH_\beta) \otimes H^\ast(M^{T_\beta} \red K_\beta).
\end{equation}
\end{proposition}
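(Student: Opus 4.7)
The plan is to decompose the equivariant cohomology using the exact sequence $1 \to H_\beta \to G_\beta \to K_\beta \to 1$, noting that $H_\beta$ acts trivially on $M^{T_\beta}$ (hence on $Z_\beta \cap M^{T_\beta}$) while $K_\beta$ acts locally freely on the level set. The idea is to combine two standard facts: (i) when a normal subgroup $N \trianglelefteq G$ acts trivially on $X$, the Borel construction splits as a product; and (ii) for a locally free action, equivariant cohomology (with real coefficients) agrees with the cohomology of the quotient.

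First I would establish the product decomposition in step (i). Since $H_\beta$ is a normal closed subgroup of $G_\beta$, I take the product model $EG_\beta \simeq E H_\beta \times E K_\beta$ with the $G_\beta$-action $h \cdot (e_H, e_K) = (h \cdot e_H, \pi(h) \cdot e_K)$, where $\pi: G_\beta \to K_\beta$ is the quotient map. For any $G_\beta$-space $Y$ on which $H_\beta$ acts trivially, quotienting $EH_\beta \times EK_\beta \times Y$ first by $H_\beta$ (which acts freely only on the first factor) and then by $K_\beta$ yields a homotopy equivalence
\begin{equation}
   EG_\beta \times_{G_\beta} Y \ \simeq\  BH_\beta \times (EK_\beta \times_{K_\beta} Y).
\end{equation}
Applying this to $Y = Z_\beta \cap M^{T_\beta}$ and using the Künneth theorem (valid over $\BR$), I obtain
\begin{equation}
   H_{G_\beta}^\ast(Z_\beta \cap M^{T_\beta}) \ \iso\  H^\ast(BH_\beta) \otimes H_{K_\beta}^\ast(Z_\beta \cap M^{T_\beta}).
\end{equation}

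Next I would handle step (ii). Since $K_\beta$ acts locally freely on $Z_\beta \cap M^{T_\beta}$, the map $EK_\beta \times_{K_\beta} (Z_\beta \cap M^{T_\beta}) \to (Z_\beta \cap M^{T_\beta})/K_\beta$ is a fibration with contractible (or rationally acyclic) fibers $BK_x$ for finite stabilizers $K_x$, so it induces an isomorphism in cohomology with real coefficients:
\begin{equation}
   H_{K_\beta}^\ast(Z_\beta \cap M^{T_\beta}) \ \iso\  H^\ast\bigl((Z_\beta \cap M^{T_\beta})/K_\beta\bigr).
\end{equation}
Finally, I identify the quotient with $M^{T_\beta} \red K_\beta$: the restriction $\mu|_{M^{T_\beta}}$ projected to $\fk_\beta^\ast$ is a moment map for the residual $K_\beta$-action (the $G_\beta$-moment map factors through $\fk_\beta^\ast$ since $H_\beta$ acts trivially), and its level set corresponding to $\beta$ is precisely $Z_\beta \cap M^{T_\beta}$. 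Combining the displays yields the claimed isomorphism.

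The main obstacle is book-keeping rather than content: one must verify that the restriction of $\mu$ to $M^{T_\beta}$ really provides a valid moment map for the $K_\beta$-action at the appropriate level, so that $(Z_\beta \cap M^{T_\beta})/K_\beta$ is literally the symplectic quotient $M^{T_\beta} \red K_\beta$ in the sense used elsewhere in the thesis. This amounts to checking that $\beta$, viewed as an element of $\fg^\ast$, pulls back consistently under the inclusions $\ft_\beta \hookrightarrow \fk_\beta \hookrightarrow \fg_\beta \hookrightarrow \fg$, which follows from $\beta$ being fixed by the coadjoint action of $G_\beta$ and from $H_\beta$ acting trivially on $M^{T_\beta}$.
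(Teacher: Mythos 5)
Your argument is correct and is essentially the paper's own proof: the thesis likewise quotients the Borel construction first by $H_\beta$ (using that it acts trivially on $Z_\beta \cap M^{T_\beta}$) to split off an $H^\ast(BH_\beta)$ factor, then uses local freeness of $K_\beta$ to replace equivariant cohomology by the cohomology of the quotient, identified with $M^{T_\beta} \red K_\beta$. The only cosmetic difference is that you use the product model $EH_\beta \times EK_\beta$ for $EG_\beta$ while the paper works directly with $(EG_\beta/H_\beta) \times Y$; your closing remark on checking that $\mu|_{M^{T_\beta}}$ descends to a $K_\beta$-moment map is a point the paper leaves implicit.
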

\begin{proof} Simply compute:
\begin{align}
  H_{G_\beta}^\ast( Z_\beta \cap M^{T_\beta} ) 
    &= H^\ast( EG_\beta \times_{G_\beta} (Z_\beta \cap M^{T_\beta}) ) \\
    &= H^\ast( ((EG_\beta/H_\beta) \times (Z_\beta \cap M^{T_\beta})) / K_\beta ) \\
    &= H^\ast( (EG_\beta/H_\beta) \times ((Z_\beta \cap M^{T_\beta}) / K_\beta) ) \\
    &\iso H^\ast(BH_\beta) \otimes H^\ast( (Z_\beta \cap M^{T_\beta}) / K_\beta ) \\
    &= H^\ast(BH_\beta) \otimes H^\ast( M^{T_\beta} \red K_\beta).
\end{align}
\end{proof}
\begin{remark} \label{rmk-symplectic-git}
There is a well-known equivalence between symplectic reduction on K\"ahler varieties
and geometric invariant theory \cite[Chapter 8]{MFK}. The fundamental result is that
the Morse stratification with respect to $|\mu|^2$ agrees with the algebraic Kirwan-Ness
stratification. In particular, the stable manifold of $\mu^{-1}(0)$ consists precisely of
the semistable points, as defined by GIT. We will tend to take the symplectic
point of view, but we will occasionally take advantage of the algebro-geometric description when
it is convenient to do so.
\end{remark}

\subsection{Examples}
Below we present some standard examples to illustrate Morse theory with moment maps,
and in particular to elucidate our method of computing Morse indices. In Section \ref{sec-rank-3-star} 
we present a very involved calculation which assumes familiarity with these techniques.
\begin{example} Complex projective space $\BP^n$ may be identified with the symplectic quotient
$\BC^{n+1} \red S^1$. We use homogeneous coordinates $[z_0: \cdots : z_n]$ on $\BP^n$.
Define an $S^1$ action on $\BC^{n+1}$ by
\begin{equation}
  s \cdot (z_0, \ldots, z_n) = (z_0, s z_1, s^2 z_2, \ldots, s^n z_n).
\end{equation}
This induces an $S^1$ action on $\BP^n$. Suppose that $[z_0: \cdots : z_n]$ is fixed.
Then according to Proposition \ref{prop-fixed-lift}, there is a map $\phi: S^1 \to S^1$
such that $s \ast z = \phi(s) \cdot z$ for all $s \in S^1$. Since $S^1$ is abelian,
$\phi$ is must be a genuine homomorphism, and hence is of the form $s \mapsto s^k$
for some $k$. The fixed-point set is non-empty only if $k \in \{0, \ldots, n\}$.
Given such a $\phi$, the twisted action is given by
\begin{equation}
  s \ast (z_0, \ldots, z_n) = (s^{-k} z_0, s^{1-k} z_1, \ldots, s^{n-k} z_n).
\end{equation}
Hence the fixed-points are isolated and of the form $[0 : \cdots : 0 : 1: 0 : \cdots : 0 ]$.
According to Theorem \ref{thm-rep-ring}, the isotropy representation then is given by
\begin{equation}
  [T_x \BP^n ] = \sum_{i = 0}^n s^{i-k} - 1,
\end{equation}
hence the Morse index is $\lambda_k = 2\# \{ i \suchthat i-k < 0 \} = k$. Hence
\begin{equation}
  P_t(\BP^n) = \sum_{k=0}^n t^{2k} = 1 + t^2 + \cdots + t^{2n}.
\end{equation}
\end{example}
\begin{example} The hyperk\"ahler ALE spaces may be constructed as hyperk\"ahler quotients \cite{KronheimerALE}.
We consider the type $A_n$ ALE space $\FM$, which is the hyperk\"ahler quotient of $T^\ast \BC^{n+1}$
by $(S^1)^n$, where the action of $(S^1)^n$ on $\BC^{n+1}$ is given by
\begin{equation}
  (s_1, \ldots, s_n) \cdot (x_1, \ldots, x_n, x_{n+1})
    = (s_1 x_1, \ldots, s_n x_n, (s_1 \cdots s_n)^{-1} x_{n+1}).
\end{equation}
The moment map equations are
\begin{align}
  |x_j|^2 + |y_{n+1}|^2 &= \alpha_j + |y_j|^2 + |x_{n+1}|^2, \\
  x_j y_j - x_{n+1} y_{n+1} &= 0,
\end{align}
for $j=1, \dots, n$. Let us take $0 < \alpha_1 < \cdots < \alpha_n$.
The $S^1$-action on $\FM$ is given by $s \cdot (x,y) = (sx, sy)$. As in the previous example,
a point $(x,y)$ represented by $(\tx, \ty)$ is fixed if and only if there is a
homomorphism $\phi: S^1 \to (S^1)^n$ such that
\begin{equation}
  \phi(s)^{-1} s \cdot (\tx, \ty) = (\tx, \ty)
\end{equation}
for all $s \in S^1$. Such a homomorphism is necessarily of the form
  $\phi(s) = (s^{w_1}, \ldots, s^{w_n})$,
and this must solve $(s\tx, s\ty) = \phi(s) \cdot (\tx, \ty)$ which gives the system
of equations
\begin{align}
  s x_j &= s^{w_j} x_j, \ j = 0, \ldots, n \\
  s y_j &= s^{-w_j} y_j, \ j = 0, \ldots, n \\
  s x_{n+1} &= s^{-w_1-\cdots-w_n} x_{n+1}, \\
  s y_{n+1} &= s^{w_1+\cdots+w_n} y_{n+1}.  
\end{align}
There are two possible cases: (i) $y_{n+1} = 0$ and (ii) $y_{n+1} \neq 0$.
In the first case, the moment map equations force that $x_j \neq 0$ for $j=1, \dots, n$,
so that $w_1 = \dots w_n = 1$. Then we must have $y_j = 0$ for $j=1, \dots, n$ as
well as $x_{n+1}=0$. Hence this corresponds to a unique fixed point. 
The isotropy representation is given by $s^{n-1} + s^{1-n}$,
and hence the Morse index is equal to 2 if $n>1$ and $0$ if $n=1$.

In the second case, since $y_{n+1} \neq 0$ we have $\sum w_i = 1$, and hence $x_{n+1}=0$.
For each $j$, the complex moment map equations force at least one of $x_j$, $y_j$ to be zero.
Subtracting the real moment map equation for index $k$ from that of index $j$ gives
\begin{equation}
  |x_j|^2 + |y_k|^2 = \alpha_j - \alpha_k + |y_j|^2 + |x_k|^2.
\end{equation}
For $j>k$, at least one of $x_j$ or $y_k$ must be non-zero. Let $\ell$ be the smallest
index such that $y_\ell = 0$. Then $x_{\ell+1} \neq 0$, and therefore $y_{\ell+1} = 0$, hence
$x_{\ell+2} \neq 0$, and so on. Hence we find
  $x_j = 0, y_j \neq 0, j < \ell,
  x_j \neq 0, y_j = 0, j \geq \ell, y_\ell = 0$.
Now in order to have $\sum w_i = 1$, we have to choose $w_\ell = 2\ell-n$.
Hence the fixed-points are parameterized by the integer $\ell \in \{1, \dots, n\}$,
and the isotropy representation is $s^{1+n-2\ell} + s^{1-n+2\ell}$. Hence the
Morse index is $0$ if $\ell \in [(n-1)/2, (n+1)/2]$ and $2$ otherwise.
Combining this with the previous result, we find that the Poincar\'e polynomial
of the $A_n$ ALE space is 
\begin{equation}
  P_t(\FM) = 1+nt^2.
\end{equation}
\end{example}
\begin{example} In this case we use equivariant rather than ordinary Morse theory. The Grassmannian
$\Gr(2,n)$ may be constructed as a symplectic quotient of the space of $2 \times n$ 
matrices by the left multiplication by $U(2)$, i.e. $\Gr(2,n) \iso \BC^{2n} \reda{\alpha} U(2)$,
where the reduction is taken at the matrix $\diag(\alpha, \alpha)$. Up to the action of
the Weyl group, the possible values $\beta$ that can occur as minimal combinations of
weights are $\beta = \diag(0,0), \diag(0, -\alpha), \diag(-\alpha,-\alpha)$.
The critical set corresponding to $\beta = \diag(0,0)$ is $\mu^{-1}(0)$, i.e. the
set whose ($U(2)$-equivariant) cohomology we would like to compute, so there is
nothing to be done.
The critical set corresponding to $\beta = \diag(-\alpha, -\alpha)$ is simply the
origin in $\BC^{2n}$. The negative normal bundle is equal to all of $\BC^n$, 
In particular, the Morse index is equal to $4n$.

The non-trivial critical set is the one corresponding to $\beta = \diag(0, -\alpha)$.
We have $G_\beta \iso U(1) \times U(1)$, with $T_\beta \iso \{1\} \times U(1)$.
$(\BC^{2n})^{T_\beta}$ consists of matrices of the form
\begin{equation}
  \left(
    \begin{array}{cccc}
      \ast & \ast & \cdots & \ast \\
      0    &  0   & \cdots &   0
    \end{array}
  \right)
\end{equation}
and $Z_\beta$ consists of those matrices of this form such that the norm-square of the
top row is equal to $\alpha$. Hence $Z_\beta / U(1) \times \{1\} \iso \BP^{n-1}$.
The index of $\mu^\beta$ is equal to $2n$, and subtracting the dimension of
$U(2) / \Stab(\beta)$ we find that the Morse index is equal to $2n-2$. Hence the 
Poincar\'e polynomial of $\Gr(2,n)$ is equal to
\begin{equation}
  P_t(\Gr(2,n)) = \frac{1-t^{4n}}{(1-t^2)(1-t^4)} - \frac{t^{2n-2}(1+t^{2n})}{(1-t^2)^2}.
\end{equation}
\end{example}
\section{Noncompact Quotients} \label{sec-noncompact-quotient}

\subsection{Complex and Hyperk\"ahler Moment Maps}
One might expect to be able to develop an analogous Morse theory for $|\muhk|^2$. However,
this appears not to be the case, except when $G$ is a 
torus \cite{KirwanHyperkahlerManuscript}. The problem is that in the
nonabelian situation, the norm of the gradient of $|\muhk|^2$ contains 
a term proportional to
the structure constants of the group that is difficult to understand
(see Remark \ref{CrossTermRemark}).
In \cite{JKK}, it was found instead that the 
function $|\muc|^2$ is better behaved, owing to the fact that $\muc$
is $I$-holomorphic. 

\begin{theorem}[{Jeffrey-Kiem-Kirwan 
\cite{JKK}, Kirwan \cite{KirwanHyperkahlerManuscript}}]
\label{SurjectivityCriterion} The function $f = |\muc|^2$ is minimally
degenerate. If $f$ is flow-closed, then the stable manifolds $S_C$
form a $K$-invariant stratification of $M$ and the equivariant Thom-Gysin
sequence splits into short exact sequences. Consequently, the restriction
$H_G^\ast(M) \to H_G^\ast(\muc^{-1}(0))$ is surjective.
If $G$ is a torus, the same 
conclusions hold for $|\muhk|^2$ provided that it is flow-closed.
\end{theorem}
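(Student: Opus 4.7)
The plan is to verify Kirwan's axioms for a \emph{minimally degenerate} function applied to $f = |\muc|^2$, and then invoke her general machinery to obtain both the smooth stratification and the splitting of the Thom-Gysin sequence. Kirwan surjectivity will then follow because $\muc^{-1}(0)$ is the minimum stratum (on which $f = 0$), so its $G$-equivariant cohomology arises as a quotient of $H_G^\ast(M)$ through the iterated short exact sequences, exactly as in Theorem \ref{KirwanSurjectivity}.

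First I would compute the gradient of $f$ using the K\"ahler structure $(g,I,\omegar)$. Writing $\muc = \mu_J + i \mu_K$ and using the standard identities $\grad \mu_J^\xi = J v_\xi$ and $\grad \mu_K^\xi = K v_\xi$, one gets
\begin{equation}
  \grad f = 2\bigl( J v_{\mu_J} + K v_{\mu_K} \bigr).
\end{equation}
The crucial input is that $\muc$ is $I$-holomorphic, which allows this expression to be rewritten as an $I$-complex vector field tangent to the $G_\BC$-orbits, roughly of the form $I v_\eta$ with $\eta$ determined algebraically by $\muc(x)$. Consequently $x$ is critical iff the infinitesimal action of $\muc(x)$ at $x$ vanishes---the holomorphic analogue of the condition that $\mu(x)$ lie in a minimal combination of weights, so the critical set decomposes into components $\{C_\beta\}$ indexed analogously to Proposition \ref{morse-prop-eq-critical}.

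Second, for each $C_\beta$ I would construct a locally closed $G$-invariant submanifold $\Sigma_\beta \supseteq C_\beta$ as the stable set of the $G_\BC$-orbit of $C_\beta$ under the gradient flow. The $I$-holomorphicity of $\muc$ ensures that $\Sigma_\beta$ is a complex submanifold, that $f|_{\Sigma_\beta}$ attains its minimum on $C_\beta$, and---the computational core of the argument---that the negative part of the Hessian at each point of $C_\beta$ is spanned by directions tangent to the $G_\BC$-orbit, hence contained in $T\Sigma_\beta$. This establishes minimal degeneracy. Once flow-closedness is assumed, the general theorem on minimally degenerate functions produces the smooth stratification of $M$ and the splitting
\begin{equation}
  0 \to H_G^{\ast-\lambda_\beta}(C_\beta) \to H_G^\ast(M_+^\beta) \to H_G^\ast(M_-^\beta) \to 0
\end{equation}
for each $\beta$, which inductively yields the surjection $H_G^\ast(M) \onto H_G^\ast(\muc^{-1}(0))$.

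The main obstacle is precisely the nonabelian complication alluded to in Remark \ref{CrossTermRemark}: for $|\muhk|^2$ the gradient norm contains a cross term built from structure constants of $\fg$, with no holomorphic structure available to control it; this is what forces the switch to $|\muc|^2$. When $G$ is a torus, this obstruction disappears because $[\fg,\fg]=0$, so $|\muhk|^2 = |\mur|^2 + |\muc|^2$ decomposes into pieces whose gradient flows commute and can be analyzed independently. In this case I would combine Kirwan's original argument for $|\mur|^2$ with the analysis above for $|\muc|^2$; equivalently, one may reduce in stages by first imposing $\muc = 0$ (a K\"ahler subvariety by $I$-holomorphicity) and then applying the classical Kirwan theorem to $|\mur|^2$ on the resulting K\"ahler space, transferring the stratification and surjectivity across the identification $\muhk^{-1}(0) = \muc^{-1}(0) \cap \mur^{-1}(0)$.
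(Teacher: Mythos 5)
Your overall skeleton (verify minimal degeneracy, invoke Kirwan's machinery for minimally degenerate functions to get the stratification and the Thom--Gysin splitting, then read off surjectivity onto the minimum stratum) is the right one, but the step you call the crucial input is false. From $\grad f = 2(Jv_{\mu_J} + Kv_{\mu_K})$ and $JK=I$ one gets $\grad f = 2J\bigl(v_{\mu_J} - Iv_{\mu_K}\bigr) = 2Jv_{\overline{\muc}}$: the gradient is $J$ applied to a $G_\BC$-orbit direction, so it lies in $J\mathcal{D} + K\mathcal{D}$, \emph{not} in $\mathcal{D}_\BC = \mathcal{D} + I\mathcal{D}$, and it is not of the form $Iv_\eta$. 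This is precisely the obstruction the surrounding text emphasizes: the flow of $|\muc|^2$ does not stay in $G_\BC$-orbits but in the non-integrable distribution $\mathcal{D}_\BH$. If your claim were true, flow-closedness would follow immediately from Proposition \ref{SjamaarTrick} applied orbitwise, and Conjecture \ref{FlowClosedConjecture} --- whose partial resolution is the main theorem of the chapter --- would be trivial. (The identity $\grad f = 2Jv_{\overline{\muc}}$ does yield your criticality criterion $v_{\overline{\muc(x)}}(x)=0$, so the description of $\Crit f$ survives; the dynamical consequence you draw from it does not.)

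This error propagates into your second step: the minimising manifolds $\Sigma_\beta$ cannot be built as stable sets of $G_\BC$-orbits swept out by the gradient flow, since the flow leaves those orbits. Your description of $\Sigma_\beta$ is also internally inconsistent --- if $f|_{\Sigma_\beta}$ attains its minimum on $C_\beta$, the negative directions of the Hessian must be \emph{transverse} to $\Sigma_\beta$, not contained in $T\Sigma_\beta$ as you assert. The actual source of minimal degeneracy in \cite{JKK} is the $I$-holomorphicity of $\muc$ as a map to the vector space $\fg_\BC^\ast$: the Hessian of the norm-square of a holomorphic map at a critical point is a positive semidefinite piece plus the real part of a complex-bilinear form, and it is this algebraic structure, together with the $G$-action, that produces the minimising manifolds. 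Finally, for the torus case the relevant fact is Lemma \ref{CrossTermVanish} (orthogonality of the gradients of $|\mu_I|^2, |\mu_J|^2, |\mu_K|^2$), which controls $|\grad|\muhk|^2|$; your reduction-in-stages remark recovers the surjectivity statement of Corollary \ref{SurjectivityForTori} but does not establish that $|\muhk|^2$ itself is minimally degenerate, which is part of the claim being proved.
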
  

Note that $\muc^{-1}(0)$ is a $G$-invariant complex submanifold of $M$ (provided $0$ is regular),
so that $M \rred G$ $\iso$  $\muc^{-1}(0) \red G$. Hence surjectivity of map
$H_G^\ast(\muc^{-1}(0)) \to H^\ast(M \rred G)$ can be studied by
the usual methods (but see Corollary \ref{HomotopyCorollary}).
In principle, this reduces the question of Kirwan surjectivity for
hyperk\"ahler quotients to the following conjecture.

\begin{conjecture}[\cite{JKK}]
\label{FlowClosedConjecture}
If $\muc$ is a complex moment map associated to a linear action by a compact 
group $G$ on a vector space, then $|\muc|^2$ is flow-closed. 
\end{conjecture}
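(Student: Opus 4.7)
The plan is to extend the proof of Theorem \ref{GlobalEstimate} to the nonabelian setting and to isolate the step at which the argument breaks. Since $G \subset U(n)$ acts linearly and isometrically on $T^\ast\BC^n \iso \BC^{2n}$, the complex moment map $\muc$ is a $G$-equivariant quadratic polynomial, so $|\muc|^2$ is a quartic polynomial on $\BC^{2n}$. Because $\muc$ is $I$-holomorphic and $G_\BC$-equivariant, the negative gradient $-\nabla|\muc|^2$ is, up to the factor $I$, the fundamental vector field of a $\fg_\BC$-valued function of position; hence any trajectory $\gamma(t)$ remains in a single $G_\BC$-orbit, and $|\muc(\gamma(t))|$ is monotonically nonincreasing.

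Since $\muhk=(\mur,\muc)$ is proper modulo the $G$-action on $\BC^{2n}$, flow-closedness would follow if one can show that $|\mur(\gamma(t))|$ remains bounded along the flow. The natural computation is
\begin{equation*}
  \tfrac{d}{dt}|\mur(\gamma(t))|^2 \;=\; c\,\langle \mur,\,[\muc,\overline{\muc}]\rangle \;+\; (\text{terms vanishing when } [\fg,\fg]=0),
\end{equation*}
which one obtains by combining the explicit form of $\nabla|\muc|^2$ with the $\fsp_1$-equivariance of $\muhk$. In the abelian case the right-hand side vanishes identically, $|\mur|^2$ is conserved along the flow, and boundedness of the trajectory follows immediately from boundedness of $|\muc|^2$; this is essentially the content of Theorem \ref{GlobalEstimate} in that setting. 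In the nonabelian case the commutator term is exactly the obstruction.

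To control the commutator, I would try two complementary strategies. The first is to seek a modified Lyapunov function $L=|\muc|^2+\phi(\mur)$ whose derivative along the flow is manifestly nonpositive, choosing $\phi$ so that the $\mathrm{SU}(2)$-equivariance of $\muhk$ forces cancellation of the structure-constant cross term. The second is to stratify $\BC^{2n}$ by $G$-orbit type and to run an inductive argument on the dimension of the generic stabilizer: on each stratum the relevant subgroup has strictly smaller semisimple part, with the abelian base case furnished by Theorem \ref{GlobalEstimate}; the stratumwise {\L}ojasiewicz inequalities, available since $\muc$ is polynomial, would then need to be glued using constructibility of $G_\BC$-orbits in $\BC^{2n}$.

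The hard part in either approach is exactly the commutator $[\muc,\overline{\muc}]$ flagged in Remark \ref{CrossTermRemark}: it is indefinite in sign and couples components of $\muc$ in ways that resist a uniform {\L}ojasiewicz-type estimate. Moreover, the failure of the analogous Kirwan-surjectivity statement for rank-two Higgs bundles \cite{DWWW} warns that any successful proof must genuinely exploit finite-dimensional algebro-geometric structure, such as constructibility of $G_\BC$-orbits or polynomial boundedness of the Kempf-Ness gradient flow on a linear representation, rather than purely Riemannian or variational input.
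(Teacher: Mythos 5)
This statement is stated in the paper as a conjecture precisely because the general (nonabelian) case is open; the paper proves it only for tori, via the global \loj\ inequality of Theorem \ref{GlobalEstimate} together with Proposition \ref{FlowClosedness}. Your proposal is a research plan rather than a proof: by your own admission neither of the two strategies is carried out, and the place where both stall is exactly the cross term $\ev{\mu_I,[\mu_J,\mu_K]}$ identified in Remark \ref{CrossTermRemark}. So there is no complete argument here to assess as correct.

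Beyond that, two of your foundational claims are false. First, the gradient trajectories of $|\muc|^2$ do \emph{not} stay in a single $G_\BC$-orbit: since $|\muc|^2=|\mu_J|^2+|\mu_K|^2$, one has $\nabla|\muc|^2=2Jv_{\mu_J}+2Kv_{\mu_K}$, which lies in $J\mathcal{D}+K\mathcal{D}$ and is in general transverse to the integrable distribution $\mathcal{D}+I\mathcal{D}$ tangent to the $G_\BC$-orbits; this is precisely the ``no quaternification of $G$'' obstruction discussed after Proposition \ref{SjamaarTrick}, and it holds already for abelian $G$. Second, $\muhk$ is not proper modulo $G$ (e.g.\ $\muhk^{-1}(0)\subset T^\ast\BC^2$ for the weight-one $S^1$-action contains the unbounded family $(x,y)=((t,0),(0,t))$), so bounding $|\mur|$ and $|\muc|$ along a trajectory would not yield flow-closedness. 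For the same reason your description of the abelian case is off: there, conservation of $|\mu_I|^2$ along the flow does not by itself bound the trajectory; the actual mechanism is the \loj\ gradient inequality $|\nabla f|\geq k|f-f_c|^{3/4}$, which forces finite arc length. Any viable attack on the nonabelian case must either control the commutator term quantitatively or produce such a gradient inequality directly, and neither step is supplied here.
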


This was proved for the special case of $S^1$ actions in 
\cite{JKK}, but the method of proof does not
admit any obvious generalization. We will prove the following.

\begin{theorem} Conjecture \ref{FlowClosedConjecture} is true when $G$
is a torus.
\end{theorem}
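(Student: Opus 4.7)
The plan is to deduce flow-closedness of $|\muc|^2$ on $T^\ast \BC^n$ from the abelian case of the global \loj\ inequality of Theorem~\ref{GlobalEstimate}, via the classical finite-length argument for gradient trajectories. The reason the torus hypothesis is essential is that when $G = T$ is abelian, the real and imaginary parts of $\muc = \mu_J + i\mu_K$ are genuine real moment maps for the same $T$-action (with respect to the \Ka\ forms $\omega_J$ and $\omega_K$), so that $|\muc|^2 = |\mu_J|^2 + |\mu_K|^2$ is itself a moment-map-squared for a linear torus action on the flat space $T^\ast \BC^n$. This is exactly the setting to which Theorem~\ref{GlobalEstimate} applies. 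In the nonabelian case the identification breaks down because of the structure-constant cross-terms in $\|\nabla|\muc|^2\|^2$ noted in Remark~\ref{CrossTermRemark}, which is why the argument is limited to tori.

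Granting Theorem~\ref{GlobalEstimate}, the argument runs as follows. Let $f := |\muc|^2$ and let $\gamma: [0, T_{\max}) \to T^\ast\BC^n$ be a maximal trajectory of $-\nabla f$. Since $f \geq 0$ and $f \circ \gamma$ is nonincreasing, the limit $f_\infty := \lim_{t \to T_{\max}} f(\gamma(t))$ exists. Theorem~\ref{GlobalEstimate} supplies constants $C > 0$ and $\theta \in [1/2, 1)$ for which
\begin{equation*}
  \|\nabla f(\gamma(t))\| \; \geq \; C \, \bigl(f(\gamma(t)) - f_\infty\bigr)^{\theta}
\end{equation*}
holds along the trajectory. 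The standard \loj\ computation
\begin{equation*}
  -\frac{d}{dt}\bigl(f(\gamma(t)) - f_\infty\bigr)^{1-\theta}
    \; = \; (1-\theta)\bigl(f - f_\infty\bigr)^{-\theta}\|\nabla f\|^{2}
    \; \geq \; (1-\theta) \, C \, \|\nabla f(\gamma(t))\|
\end{equation*}
integrates to $\int_0^{T_{\max}} \|\nabla f(\gamma(t))\|\,dt < \infty$, so $\gamma$ has finite length and its image is precompact in $T^\ast \BC^n$. By Definition~\ref{DefnFlowClosed} this is flow-closedness.

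The real difficulty is of course hidden in the appeal to Theorem~\ref{GlobalEstimate}: the usual \loj\ inequality for real-analytic functions is only local, with constants that can degenerate at infinity, so on a noncompact space it is a priori possible for a trajectory to escape along a direction where $\|\nabla f\|$ decays too quickly for the finite-length argument to succeed. The step I would expect to require the most care is verifying that the abelian hypothesis genuinely allows one to cast $|\muc|^2$ in the form of a linear-moment-map-squared on $\BC^{2n}$ covered by Theorem~\ref{GlobalEstimate} --- essentially, exhibiting $(\mu_J, \mu_K)$ as the moment map for a single linear action of an enlarged torus on the underlying complex vector space with a compatible \Ka\ structure. Once this identification is in place, the remainder of the argument is essentially formal.
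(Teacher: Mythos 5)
Your overall skeleton is exactly the paper's: Proposition \ref{FlowClosedness} is precisely the finite-length argument you write out (a global \loj\ inequality with exponent $<1$ plus boundedness below implies every trajectory has finite length, hence is contained in a compact set), and the content of the theorem is then entirely carried by the torus case of Theorem \ref{GlobalEstimate}. That part of your write-up is fine. The gap is in the step you yourself flag as delicate: the claim that for abelian $G$ the pair $(\mu_J,\mu_K)$ can be exhibited as the moment map of a single linear action of an enlarged torus on one Hermitian vector space, so that $|\muc|^2$ literally becomes a $|\mu|^2$ covered by the K\"ahler case of Theorem \ref{GlobalEstimate}. No such identification exists. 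The maps $\mu_J$ and $\mu_K$ are moment maps for the \emph{same} $T$-action but with respect to two \emph{different} complex structures ($J$ and $K$) on $T^\ast V$; with respect to the fixed complex structure $I$ the component functions of $\muc$ are sums of holomorphic and antiholomorphic quadratics (e.g.\ $\mathrm{Re}(x_1y_1)$), not Hermitian forms, so they cannot be components of a moment map for any unitary representation on $(T^\ast V, g, I)$, nor is there an enlarged torus whose fundamental vector fields realize both families of Hamiltonian vector fields for a single K\"ahler form. Had such a reduction existed, the whole hyperk\"ahler flow-closedness problem would collapse to the symplectic one with no abelian hypothesis needed anywhere else.

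What the paper actually does is different and is where the abelian hypothesis genuinely enters. Each of $|\mu_J|^2$ and $|\mu_K|^2$ \emph{separately} is a moment-map-norm-squared for a unitary representation (of the original torus, on $T^\ast V$ equipped with the complex structure $J$, respectively $K$), so each separately satisfies the global \loj\ inequality with exponent $3/4$ by the main estimate. One then combines them using Lemma \ref{CrossTermVanish}: for abelian $G$ the cross term $\ev{\nabla|\mu_J|^2,\nabla|\mu_K|^2}=4\ev{\mu_I,[\mu_J,\mu_K]}$ vanishes, so $|\nabla|\muc|^2|^2=|\nabla|\mu_J|^2|^2+|\nabla|\mu_K|^2|^2$, and the elementary inequality $a^{3/4}+b^{3/4}\geq(a+b)^{3/4}$ converts the two separate estimates into one for $|\muc|^2=|\mu_J|^2+|\mu_K|^2$. (This vanishing of $\ev{\mu_I,[\mu_J,\mu_K]}$ is exactly the obstruction in the nonabelian case, per Remark \ref{CrossTermRemark}.) You should replace your "enlarged torus" reduction with this two-estimates-plus-orthogonality argument; the rest of your proof then goes through unchanged.
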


This is an immediate consequence of Proposition \ref{FlowClosedness}
and Theorem \ref{GlobalEstimate}.
To appreciate why this result requires some effort, let us contrast it with
the analogous statement for $|\mu|^2$, where $\mu$ is an ordinary moment
map associated to a linear action on a Hermitian vector space $V$. It 
is immediate from the definitions that $\nabla |\mu|^2 = 2 I v_{\mu}$,
where $v_\mu$ denotes the fundamental vector field of $\mu$.
Hence, the gradient trajectory through a point $x$ always remains in the
$G_\BC$-orbit through $x$. Thus it suffices to restrict attention to the
$G_\BC$-orbits. Flow-closedness is then a consequence of the following
proposition, which follows easily from the $KAK$ decomposition of
reductive Lie groups \cite{Knapp}.

\begin{proposition}[{Sjamaar \cite[Lemma 4.10]{SjamaarConvexity}}] 
\label{SjamaarTrick}
Suppose $G_\BC$ acts linearly
on $V$, let $\{g_n\}$ be a sequence of points in $G_\BC$, and 
let $\{x_n\}$ be a bounded sequence in $V$. Then the sequence
$\{g_n x_n\}$ is bounded if $\{\mu(g_n x_n)\}$ is bounded.
\end{proposition}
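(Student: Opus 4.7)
The plan is to reduce to the case of an abelian group acting via the Cartan $KAK$ decomposition, and then to handle the abelian case by induction on the dimension of the maximal torus, using the weight decomposition of $V$.

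Fix a maximal torus $T \subset G$ with Lie algebra $\ft$, and set $A := \exp(i\ft) \subset G_\BC$. By the $KAK$ decomposition of complex reductive groups, each $g_n$ factors as $g_n = k_n a_n k_n'$ with $k_n, k_n' \in G$ and $a_n \in A$. Since $G$ acts unitarily, $|g_n x_n| = |a_n(k_n' x_n)|$; since $\mu$ is $G$-equivariant and we take an $\Ad G$-invariant norm on $\fg^\ast$, $|\mu(g_n x_n)| = |\mu(a_n(k_n' x_n))|$. Setting $y_n := k_n' x_n$, which is still bounded, we reduce to the claim: if $\{y_n\}$ is bounded in $V$ and $\{\mu(a_n y_n)\}$ is bounded with $a_n \in A$, then $\{a_n y_n\}$ is bounded.

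Decompose $V = \bigoplus_\lambda V_\lambda$ into $T$-weight spaces, write $y_n = \sum_\lambda y_{n,\lambda}$, and set $a_n = \exp(i\eta_n)$ with $\eta_n \in \ft$. Then $|a_n y_n|^2 = \sum_\lambda e^{-2\lambda(\eta_n)} |y_{n,\lambda}|^2$ and $\langle \mu(a_n y_n), \zeta\rangle = \tfrac{1}{2}\sum_\lambda \lambda(\zeta)\, e^{-2\lambda(\eta_n)} |y_{n,\lambda}|^2$ for $\zeta \in \ft$. Suppose for contradiction that $|a_n y_n| \to \infty$. Since $|y_n|$ is bounded this forces $|\eta_n| \to \infty$; pass to a subsequence with $y_n \to y$ and $\hat\eta_n := \eta_n/|\eta_n| \to \hat\eta$. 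Partition the weights as $\Lambda^\pm := \{\lambda : \pm\lambda(\hat\eta) > 0\}$ and $\Lambda^0 := \{\lambda : \lambda(\hat\eta) = 0\}$, inducing a splitting $V = V_+ \oplus V_0 \oplus V_-$. For $\lambda \in \Lambda^+$, $\lambda(\eta_n) \to +\infty$, so the $V_+$-piece of $a_n y_n$ tends to $0$ by exponential decay. For the $V_-$-piece, pair $\mu$ against $-\hat\eta$: letting $c := \min\{-\lambda(\hat\eta) : \lambda \in \Lambda^-\} > 0$, the $\Lambda^+$-terms contribute $o(1)$, the $\Lambda^0$-terms contribute $0$, and the $\Lambda^-$-terms are non-negative with total at least $\tfrac{c}{2}\sum_{\lambda \in \Lambda^-} e^{-2\lambda(\eta_n)}|y_{n,\lambda}|^2$; boundedness of $\mu(a_n y_n)$ then forces the norm of the $V_-$-component to be bounded.

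The $V_0$-component is handled by induction on $\dim T$ (base case $\dim T = 0$ trivial). For $\lambda \in \Lambda^0$ one has $\lambda(\eta_n) = \lambda(\eta_n^\perp)$, where $\eta_n^\perp$ is the component of $\eta_n$ in $\ft_0 := \hat\eta^\perp$; hence on $V_0$ the element $a_n$ acts through the strictly smaller subtorus $T_0 := \exp(i\ft_0)$. Projecting $\mu(a_n y_n) \in \ft^\ast$ onto $\ft_0^\ast$, the $\Lambda^+$-contributions tend to $0$, the $\Lambda^-$-contributions are bounded by the previous step, and the $\Lambda^0$-contributions sum to $\mu_{T_0}(\exp(i\eta_n^\perp)\, y_n^0)$; therefore the latter is bounded. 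The inductive hypothesis, applied to the $T_0$-action on $V_0$ with the bounded sequence $y_n^0$, yields $|\exp(i\eta_n^\perp) y_n^0|$ bounded in $V_0$. Combining the three pieces produces a bound on $|a_n y_n|$, contradicting the assumption $|a_n y_n| \to \infty$. The principal obstacle is the bookkeeping in the inductive step—verifying that on $V_0$ the $A$-action really does factor through $T_0$, and that the $\ft_0^\ast$-projection of $\mu(a_n y_n)$ agrees with $\mu_{T_0}(\exp(i\eta_n^\perp) y_n^0)$ modulo terms already controlled by the $V_\pm$ analysis.
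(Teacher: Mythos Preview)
Your proof is correct. Note, however, that the paper does not actually prove this proposition: it is stated with attribution to Sjamaar and accompanied only by the remark that it ``follows easily from the $KAK$ decomposition of reductive Lie groups.'' Your argument is fully consistent with that hint---you use $KAK$ to reduce to the abelian case, and then supply the details of that case via a weight-space decomposition and induction on the rank of the torus. One small point worth making explicit: the hyperplane $\ft_0 = \hat\eta^\perp$ need not be the Lie algebra of a closed subtorus (since $\hat\eta$ may be irrational), but this does not affect the argument, which is really an induction on the dimension of a real abelian Lie algebra acting diagonally with real weights; the moment-map formula and all the estimates go through verbatim at that level of generality.
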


Now consider the gradient flow of $|\muc|^2$ on $T^\ast V$. Since 
$|\muc|^2 = |\mu_J|^2 + |\mu_K|^2$, we see that
$\nabla |\muc|^2 = 2 J v_{\mu_J} + 2 K v_{\mu_K}$.
The problem is immediate: due to the simultaneous appearance of $J$ and $K$,
the gradient trajectories appear to lie
on the orbits of a ``quaternification'' of $G$,
but in general no suitable quaternification of $G$ exists.
More precisely, if we let  $\mathcal{D}$ denote
the distribution on $V$ generated by the $G$ action
(i.e.\ the integrable distribution whose leaves are the $G$-orbits), then
$\mathcal{D}_\BC := \mathcal{D} + I \mathcal{D}$ is integrable,
whereas the distribution
$\mathcal{D}_\BH := \mathcal{D} + I \mathcal{D} 
 + J \mathcal{D} + K \mathcal{D}$ on $T^\ast V$ is not integrable in general.

In a few specific examples, a detailed study of $\mathcal{D}_\BH$ leads to
definite conclusions about the gradient flow, but at present
we cannot prove a general result in this direction. In any case,
we will not pursue this approach in the present work (but see 
Remarks \ref{CrossTermRemark} and \ref{SubriemannianRemark}, which are related
to this problem).

With these considerations in mind, it would be illuminating to have a proof
of the flow-closedness of $|\mu|^2$ that relies neither on arguments
involving $G_\BC$ nor on properness, as such a proof might
generalize to the hyperk\"ahler setting. 
This is exactly the content the main result of this chapter, Theorem \ref{GlobalEstimate}. The key idea is to
relax the assumption of properness to the weaker condition of satisfying
a certain gradient inequality. 

In the case of ADHM spaces \cite{ADHM}, a gradient-like flow
was studied by Boyer and Mann \cite{BoyerMann}, following an idea of Taubes
\cite{TaubesModuli} to perturb $\nabla |\muhk|^2$ in such a way that its
flow is better behaved analytically. However, this approach
does not seem to be applicable to reduction at non-zero values of the moment
map. The idea of using gradient estimates to study the flow of $\grad|\muhk|^2$
directly was inspired in part by \cite{WilkinHiggs}.

\subsection{\loj\ Inequalities} \label{LojasiewiczInequalities} 

We begin by giving a precise definition of the type of inequality we
wish to consider, as well as its most important consequence.

\begin{definition} Let $(M, g)$ be a complete Riemannian manifold. A smooth
real-valued function $f$ on $M$ is said to satisfy a
\emph{global \loj\ inequality} if for any real number $f_c$ in the closure of
the image of $f$, there exist constants $\epsilon > 0$, $k > 0$, and
$0 < \alpha < 1$, such that
\begin{equation} |\grad f(x)| \geq k|f(x) - f_c|^\alpha, \end{equation}
for all $x \in M$ such that $|f(x) - f_c| < \epsilon$.
\end{definition}

\begin{remark} The term global \loj\ inequality is borrowed from
\cite{JKSGlobalLojasiewicz}; however, we use it in a different way, as
we are concerned specifically with bounding the gradient of $f$.
\end{remark}

\begin{proposition} 
\label{FlowClosedness}
Suppose $f$ satisfies a \gli\ and is bounded below.
Then $f$ is flow-closed.
\end{proposition}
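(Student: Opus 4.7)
The plan is to use the \loj\ inequality to show that every downward gradient trajectory has \emph{finite length}, and then invoke completeness of $M$ (via Hopf--Rinow) to conclude that the trajectory is contained in a compact set.

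Fix a trajectory $x(t)$ of $-\grad f$ starting at some $x_0 \in M$, defined on its maximal forward interval of existence $[0, T)$. Along the flow one has $\frac{d}{dt} f(x(t)) = -|\grad f(x(t))|^2 \leq 0$, so $f \circ x$ is monotonically non-increasing. Since $f$ is bounded below, $f(x(t))$ converges to some limit $f_c \in \BR$ as $t \to T$. By hypothesis, there exist $\epsilon > 0$, $k > 0$, and $\alpha \in (0,1)$ such that $|\grad f(y)| \geq k |f(y) - f_c|^\alpha$ whenever $|f(y) - f_c| < \epsilon$. Choose $T_0 \in [0,T)$ with $0 \leq f(x(t)) - f_c < \epsilon$ for all $t \in [T_0, T)$.

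The key computation is to consider the auxiliary function $g(t) = (f(x(t)) - f_c)^{1-\alpha}$ on $[T_0, T)$ (with the convention that it vanishes whenever $f(x(t)) = f_c$). Differentiating,
\begin{equation}
  \frac{d}{dt} g(t) = -(1-\alpha)(f(x(t)) - f_c)^{-\alpha} |\grad f(x(t))|^2,
\end{equation}
and applying the \loj\ inequality in the form $(f - f_c)^{-\alpha} |\grad f|^2 \geq k |\grad f|$ yields
\begin{equation}
  \frac{d}{dt} g(t) \leq -(1-\alpha) k\, |\grad f(x(t))|.
\end{equation}
Integrating from $T_0$ to any $t < T$ gives
\begin{equation}
  \int_{T_0}^{t} |\grad f(x(s))|\, ds \leq \frac{g(T_0)}{(1-\alpha)k} = \frac{(f(x(T_0)) - f_c)^{1-\alpha}}{(1-\alpha)k}.
\end{equation}
The left-hand side is exactly the Riemannian length of $x|_{[T_0, t]}$, so the forward trajectory past $T_0$ has uniformly bounded length. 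On the compact interval $[0, T_0]$ the trajectory certainly has finite length, so $x([0, T))$ has finite total length and therefore lies in a closed Riemannian ball of some finite radius about $x_0$.

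Since $M$ is complete, Hopf--Rinow guarantees that this closed ball is compact. This has two consequences: first, the trajectory cannot escape to infinity, so by the standard extension argument the maximal interval is $T = \infty$; second, the entire positive-time trajectory lies in a compact set, which is the definition of flow-closedness. The only place where real care is needed is ensuring the integral bound is uniform in $t$, which is immediate from the monotonicity of $g$; no serious obstacle arises.
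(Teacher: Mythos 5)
Your proof is correct and follows essentially the same route as the paper: both arguments convert the \loj\ inequality into the finite-length bound $\int_{T_0}^{t}|\grad f(x(s))|\,ds \leq \frac{(f(x(T_0))-f_c)^{1-\alpha}}{(1-\alpha)k}$ (you by differentiating $(f\circ x - f_c)^{1-\alpha}$, the paper by the change of variables $t \mapsto f(x(t))$ in the length integral, which is the identical computation) and then conclude compactness of the trajectory's closure from completeness of $M$. Your extra remarks on the maximal interval of existence and Hopf--Rinow make explicit what the paper leaves implicit, but do not change the argument.
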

\begin{proof} Let $x(t)$ be a trajectory of $-\grad f$.
Since $f(x(t))$ is decreasing and bounded below,
$\lim_{t \to \infty} f(x(t))$ exists. Call this limit $f_c$.
Let $\epsilon, k,$ and $\alpha$ be the constants appearing in the global
\loj\ inequality for $f$ with limit $f_c$.
For large enough $T$, we have $|f(x(t)) - f_c| < \epsilon$ 
whenever $t > T$. Consider $t_2 > t_1 > T$, and let $f_1 = f(x(t_1))$ and 
$f_2 = f(x(t_2))$. Since $\dot{x} = - \grad f$, we have that
\begin{equation} d(x(t_1), x(t_2)) \leq \int_{t_1}^{t_2} |\grad f(x(t))| dt, \end{equation}
where $d(x,y)$ denotes the Riemannian distance.
By the change of variables $t \mapsto f(x(t))$, we obtain
\begin{align}
d(x(t_1), x(t_2)) &\leq \int_{f_2}^{f_1} |\grad f|^{-1} df \\
&\leq k^{-1} \int_{f_2}^{f_1} |f - f_c|^{-\alpha} df \\
&= k^{-1} (1-\alpha)^{-1} \left(|f_1-f_c|^{1-\alpha} - |f_2-f_c|^{1-\alpha} \right)\\
&< k^{-1} (1-\alpha)^{-1} |f(x(T)) - f_c|^{1-\alpha}.
\end{align} 
Since $\alpha < 1$, the last expression can be made arbitrarily small by
taking $T$ sufficiently large, so we see that $\lim_{t \to \infty} x(t)$
exists,
and in particular the gradient trajectory is contained in a compact set.
\end{proof}

This argument establishes flow-closedness directly from the
{\L}ojasiewicz inequality, without appealing to compactness.
Thus it would be sufficient to show that $|\mu|^2$ satisfies such an inequality.
To motivate why we might expect this to be case,
we recall the classical \loj\ inequality.
\begin{theorem}[\loj\ Inequality \cite{BierstoneMilman,Lojasiewicz}]
\label{LojasiewiczInequality} 
Let $f$ be a real analytic function on a domain in $\BR^N$, and
let $c$ be a critical point of $f$. Then there is an open neighbourhood $U$ of $c$
such that for any compact subset $K \subset U$,
there are constants $k > 0$ and $0 < \alpha < 1$ such that the inequality
\begin{equation} \norm{\nabla f(x)} \geq k \left|f(x) - f(c)\right|^\alpha \end{equation}
holds for all $x \in K$.
\end{theorem}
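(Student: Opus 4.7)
The plan is to sketch the classical proof, which proceeds via Hironaka's resolution of singularities combined with a direct calculation in the monomial case. Translating coordinates, assume $c = 0$ and $f(c) = 0$. It suffices to exhibit constants $\alpha \in (0,1)$ and $k > 0$ together with a single open neighbourhood $U$ of $0$ on which $|\nabla f(x)| \geq k|f(x)|^\alpha$ holds, since the uniform statement on an arbitrary compact $K \subset U$ follows at once.

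The heart of the argument applies Hironaka's resolution of singularities to the real analytic function $f$ near $0$. One obtains a proper, surjective real analytic map $\pi : \tilde U \to U$ which is a diffeomorphism over $U \setminus f^{-1}(0)$ and such that, in suitable local coordinates $y = (y_1,\ldots,y_N)$ on $\tilde U$,
\begin{equation}
(f \circ \pi)(y) = u(y)\, y_1^{a_1} \cdots y_r^{a_r},
\end{equation}
with $u$ a nowhere-vanishing analytic unit and $a_1,\ldots,a_r$ positive integers. Logarithmic differentiation gives $\partial_j(f \circ \pi) = \bigl(a_j/y_j + \partial_j u / u\bigr)(f \circ \pi)$, so that in a small polydisc,
\begin{equation}
|\nabla(f \circ \pi)(y)| \;\geq\; \frac{C\,|(f \circ \pi)(y)|}{\min_j |y_j|}.
\end{equation}
Setting $M = \max_i a_i$ and using $|y_j|^{-1} \geq |y_j|^{-a_j/M}$ for $|y_j| < 1$, a short monomial computation converts this bound into the gradient inequality $|\nabla(f \circ \pi)(y)| \geq C'\,|(f \circ \pi)(y)|^{1 - 1/M}$ on a neighbourhood of $0$ in $\tilde U$, whose exponent is strictly less than $1$.

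The final step is to pull this inequality back to $U$ via the chain rule $\nabla(f \circ \pi)(y) = D\pi(y)^T \nabla f(\pi(y))$, which gives $|\nabla f(\pi(y))| \geq |\nabla(f \circ \pi)(y)|/\|D\pi(y)\|$. After possibly refining the resolution, the Jacobian $D\pi$ is itself monomial-bounded and its vanishing is compatible with the normal-crossing structure of $f\circ\pi$, so that careful bookkeeping of monomial exponents produces $|\nabla f(x)| \geq k|f(x)|^\alpha$ on $U$ for some new $\alpha \in (0,1)$. The main technical obstacle is precisely this transfer: one must verify that inverting $D\pi$ does not push the exponent past $1$, and this hinges on the compatibility between the normal-crossing divisor of $f\circ\pi$ and the exceptional divisor of $\pi$, which is exactly the content of Hironaka's theorem. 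An alternative route, closer to \loj's original argument, replaces Hironaka with the curve selection lemma for semi-analytic sets combined with an induction on dimension via the stratification theorem; in that approach the key difficulty shifts to producing, from a sequence witnessing failure of the inequality, an analytic curve $\gamma$ with $\gamma(0)=c$ along which the Puiseux expansion of $f\circ\gamma$ controls $|\nabla f|$ through the chain rule.
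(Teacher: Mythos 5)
The paper does not prove Theorem \ref{LojasiewiczInequality}: it is quoted as a classical result, with the references \cite{BierstoneMilman,Lojasiewicz} standing in for the proof, and the new analytic content of the chapter (Theorem \ref{GlobalEstimate}) is established by separate, elementary arguments. So there is no in-paper proof to compare against; your sketch is a reconstruction of the cited literature, and it follows the resolution-of-singularities route of Bierstone--Milman rather than \loj's original semianalytic argument, which you correctly identify as the alternative.

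Two steps of the sketch need repair. First, the exponent bookkeeping in the monomial computation is wrong as stated: with $(f\circ\pi)(y)=u(y)\,y_1^{a_1}\cdots y_r^{a_r}$ and $m=\min_{j\le r}|y_j|$, the correct chain is $|\nabla(f\circ\pi)|\ge C\,|f\circ\pi|/m$ together with $|f\circ\pi|\ge c\,m^{A}$ where $A=\sum_j a_j$ (each $|y_j|\ge m$), which gives $m\le C'|f\circ\pi|^{1/A}$ and hence the exponent $\alpha=1-1/A<1$. Your choice $M=\max_j a_j$ does not yield $1/m\ge c\,|f\circ\pi|^{-1/M}$, because $|f\circ\pi|$ can be far smaller than $m^{M}$: take $r=2$, $a_1=a_2=1$, $y_1=y_2=m$, where $|f\circ\pi|\sim m^2\ll m^{M}=m$. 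Second, the step you single out as the main technical obstacle --- ``inverting $D\pi$'' and worrying that this pushes the exponent past $1$ --- is not actually an obstacle. The chain rule gives $|\nabla(f\circ\pi)(y)|\le \|D\pi(y)\|\,|\nabla f(\pi(y))|$, so the desired lower bound on $|\nabla f|$ only requires an \emph{upper} bound on $\|D\pi\|$, which is automatic on a compact neighbourhood of the exceptional fibre because $\pi$ is analytic and proper; no lower bound on the singular values of $D\pi$, and no compatibility between the exceptional divisor and the normal crossings of $f\circ\pi$, is needed for this direction of the inequality. With those two corrections, plus the routine remark that finitely many normal-crossing charts cover $\pi^{-1}(c)$ so one may take the worst exponent and constant over the charts, your outline is a complete and correct account of the standard proof.
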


If $f$ is a \emph{proper} real analytic function, then this immediately
implies that $f$ satisfies a global \loj\ inequality as defined above.
Since our primary concern is Morse theory, we can relax the assumption
of analyticity as follows.
\begin{proposition} Suppose $f$ is a proper Morse function. Then $f$ 
satisfies a global \loj\ inequality.
\end{proposition}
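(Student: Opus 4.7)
The plan is to verify the two possibilities for $f_c$: either $f_c$ is a regular value of $f$, in which case $|\grad f|$ is bounded below by a positive constant near $f^{-1}(f_c)$ and the inequality is automatic, or $f_c$ is a critical value, in which case the Morse lemma yields the estimate with the optimal exponent $\alpha = 1/2$.

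First I would observe that properness of $f: M \to \BR$ implies that the image of $f$ is closed in $\BR$, so every $f_c$ in the closure of the image is actually attained. Moreover, for any $r > 0$ the set $f^{-1}([f_c-r, f_c+r])$ is compact. Since $f$ is Morse, its critical points are isolated, so only finitely many of them, say $c_1, \dots, c_m$, lie in $f^{-1}(f_c)$; similarly, the set of critical values in $[f_c - r, f_c + r]$ is finite, so after shrinking $r$ we may assume that $f_c$ is the only critical value of $f$ in the interval $(f_c - r, f_c + r)$.

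Next I would handle the critical points via the Morse lemma. Around each $c_i$, choose a coordinate chart $(V_i, x_1, \dots, x_N)$ with $c_i$ at the origin in which $f = f_c + \sum_{j} \epsilon_j x_j^2$, $\epsilon_j \in \{\pm 1\}$. A direct calculation shows that the coordinate gradient satisfies
\begin{equation}
\sum_j \left(\frac{\partial f}{\partial x_j}\right)^2 = 4 \sum_j x_j^2 \geq 4 \left| \sum_j \epsilon_j x_j^2 \right| = 4 |f - f_c|.
\end{equation}
On a compact neighborhood $\overline{U_i} \subset V_i$ of $c_i$, the Riemannian metric is comparable to the Euclidean one in these coordinates, so there is a constant $k_i > 0$ with $|\grad f(x)| \geq k_i |f(x) - f_c|^{1/2}$ for all $x \in \overline{U_i}$.

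The final step is to patch the local estimates together. Shrink $\epsilon \in (0, r)$ so that $f^{-1}([f_c-\epsilon, f_c+\epsilon]) \cap \{c_1, \dots, c_m\} \subset \bigcup_i U_i$ and so that the complement $K = f^{-1}([f_c - \epsilon, f_c + \epsilon]) \setminus \bigcup_i U_i$ contains no critical points at all (using that $f_c$ is the only critical value in $(f_c - \epsilon, f_c + \epsilon)$). Then $K$ is compact and $|\grad f|$ is continuous and nowhere vanishing on $K$, so there exists $k_0 > 0$ with $|\grad f| \geq k_0$ on $K$, and hence $|\grad f| \geq k_0 \epsilon^{-1/2} |f - f_c|^{1/2}$ on $K$. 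Taking $k = \min(k_0 \epsilon^{-1/2}, k_1, \dots, k_m)$ and $\alpha = 1/2$ yields the desired global \loj\ inequality at $f_c$. The regular-value case is even easier: on the compact set $f^{-1}([f_c - \epsilon, f_c+\epsilon])$ (with $\epsilon$ small enough to avoid critical values), $|\grad f|$ has a positive lower bound, and any $\alpha \in (0,1)$ works.

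The arguments are essentially bookkeeping, so there is no serious obstacle; the only point requiring care is the comparison between the Euclidean gradient in the Morse chart and the genuine Riemannian gradient, which is handled by compactness of $\overline{U_i}$.
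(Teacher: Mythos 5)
Your proof is correct and follows essentially the same route as the paper: localize at the critical points via the Morse lemma, obtain a local gradient estimate there, and use properness to patch the local estimates with the trivial bound on the compact, critical-point-free remainder. The only difference is that the paper invokes the classical {\L}ojasiewicz inequality for the (real analytic) Morse normal form, whereas you compute the local estimate directly and thereby get the explicit exponent $\alpha = 1/2$; this is a harmless, slightly more self-contained variant.
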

\begin{proof} By the Morse lemma, near each critical point we can choose
coordinates in which $f$ is real analytic. Hence $f$ satisfies the classical
\loj\ inequality near each critical point, and since $f$ is proper this can
be extended to a global inequality.
\end{proof}

For a moment map $\mu$, the function $|\mu|^2$ is in general
neither Morse nor Morse-Bott, but minimally degenerate
(in the sense of \cite{Kirwan}). Nonetheless, we can still
obtain a global \loj\ inequality whenever $\mu$ is proper.
\begin{proposition} Suppose $\mu$ is a moment map associated to an action 
of a compact Lie group, and suppose furthermore that $\mu$ is proper.
Then $|\mu|^2$ satisfies a \gli.
\end{proposition}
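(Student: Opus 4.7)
The plan is to upgrade the classical (local) \loj\ inequality (Theorem \ref{LojasiewiczInequality}) to a global one near each limit value of $f := |\mu|^2$, using properness to supply the compactness that the classical theorem lacks. Two standing observations drive the argument: properness of $\mu$ gives properness of $f$, so sublevel sets $f^{-1}([a,b])$ are compact; and since the moment map is real analytic, so is $f$, so Theorem \ref{LojasiewiczInequality} applies in coordinates near each critical point.

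Fix a value $f_c$ in the closure of the image of $f$, and let $C_c := \Crit(f) \cap f^{-1}(f_c)$. As a closed subset of the compact set $f^{-1}(f_c)$, $C_c$ is compact (and possibly empty, if $f_c$ is not a critical value). Cover $C_c$ by finitely many coordinate charts $U_1, \dots, U_N$ on each of which the classical \loj\ inequality holds with constants $k_i > 0$, $0 < \alpha_i < 1$; taking $\alpha := \max_i \alpha_i \in (0,1)$ and $k_0 := \min_i k_i > 0$, after slightly shrinking the charts we obtain
\begin{equation}
|\grad f(x)| \geq k_0 \, |f(x) - f_c|^\alpha
\end{equation}
for all $x$ in the open neighborhood $W := \bigcup_i U_i$ of $C_c$.

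To extend this to the full tube $T_\epsilon := \{x \suchthat |f(x) - f_c| < \epsilon\}$, it suffices to bound $|\grad f|$ from below on $K := \overline{T_\epsilon} \setminus W'$ for some slightly smaller neighborhood $W' \subset W$ of $C_c$. By properness, $K$ is compact; and for $\epsilon$ small enough $K$ contains no critical points, since any sequence $x_n \in K$ with $\grad f(x_n) = 0$ and $f(x_n) \to f_c$ would, by properness, admit a subsequence converging to some $x_\infty$ with $\grad f(x_\infty) = 0$ and $f(x_\infty) = f_c$, hence $x_\infty \in C_c \subset W'$, contradicting $x_n \in K$. Thus $|\grad f| \geq m > 0$ on $K$, and combining this with the estimate on $W$ yields a \gli\ with exponent $\alpha$ and a uniform constant $k > 0$. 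The only subtlety is the bookkeeping needed to uniformize $\alpha$ across the finite cover and to match it with the bounded-below-gradient regime on $K$, but this is routine; the essential content is that properness reduces the proposition to the classical \loj\ inequality. It is precisely this use of properness that will fail in the \hk\ setting, motivating the direct global estimate proved in the main theorem of this chapter.
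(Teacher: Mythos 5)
Your globalization argument is essentially the paper's proof with the compactness bookkeeping written out: the paper simply says that the local (classical) \loj\ inequality near the critical set can be extended to a global one using properness, and your finite-cover-plus-uniform-lower-bound-on-the-complement argument is the correct way to do that. The uniformization of the exponent via $\alpha = \max_i \alpha_i$ (valid once $\epsilon \le 1$), the compactness of $C_c$ and of the tube, and the absence of critical points in $\overline{T_\epsilon}\setminus W'$ for small $\epsilon$ are all handled correctly.

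There is, however, one unjustified step, and it happens to be the only non-routine ingredient. You assert that ``since the moment map is real analytic, so is $f$,'' and then invoke the classical \loj\ inequality. But the proposition is stated for a moment map on an arbitrary smooth symplectic manifold, which carries no real-analytic structure; a moment map is a priori only smooth, and the classical \loj\ inequality genuinely fails for general smooth functions (one can build $C^\infty$ functions violating any such gradient bound near a critical point). The correct justification, and the one the paper relies on, is Lerman's observation that in the coordinates furnished by the equivariant local normal form (Marle--Guillemin--Sternberg) around each orbit, the moment map is given by an explicit algebraic formula and hence \emph{is} real analytic in those coordinates, so that the classical \loj\ inequality applies to $|\mu|^2$ there. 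With that local input supplied, your properness argument completes the proof exactly as in the paper.
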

\begin{proof} 
Lerman \cite{LermanGradientFlow} proves that the moment map is locally real analytic
with respect to the coordinates induced by the local normal form, and uses this fact to
show that $|\mu|^2$ satisfies the classical \loj\ inequality.
Since $\mu$ is proper, this can be extended to a global inequality.
\end{proof}

Since we would like to drop the assumption of properness,  it is natural to ask
whether there are examples of moment maps which are \emph{not} proper
but nevertheless satisfy a global \loj\ inequality. 
The answer to this question is in the affirmative, at least when the action 
is linear. The following is the main theorem of this chapter, which we prove in 
Section \ref{Proof}.
\begin{theorem} 
\label{GlobalEstimate}
Let $\mu$ be a moment map associated to a unitary representation
of a compact group $G$ on a Hermitian vector space $V$.
Then $f = |\mu|^2$ satisfies a 
\gli. In detail, for every $f_c \geq 0$, there exist constants 
$k > 0$ and $\epsilon > 0$ such that
\begin{equation} \label{GlobalInequality}
|\nabla f(x)| \geq k|f(x) - f_c|^\frac{3}{4}
\end{equation}
whenever $|f(x) - f_c| < \epsilon$.
If $G$ is a torus, then the same holds for the functions $|\muc|^2$
and $|\muhk|^2$ associated to the action of $G$ on $T^\ast V$.
\end{theorem}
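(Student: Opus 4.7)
The plan is to exploit the fact that for a linear action on $V$, the moment map $\mu$ is a homogeneous polynomial of degree $2$, so $f = |\mu|^2$ is a homogeneous polynomial of degree $4$. Under the scaling $x \mapsto tx$ for $t>0$, one has $f(tx) = t^4 f(x)$ and $|\nabla f(tx)| = t^3 |\nabla f(x)|$. Consequently, the scale-invariant quantity $|\nabla f(x)|^{4/3}/f(x)$ descends to a function on the unit sphere $S \subset V$, and the desired exponent $3/4$ is precisely the one compatible with this homogeneity. I will use this together with the gradient identity $\nabla f = 2Iv_\mu$, where $v_\mu(x)$ is the fundamental vector field generated by $\mu(x) \in \fg$, so that $|\nabla f(x)|^2 = 4|v_\mu(x)|^2$.

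First, I would establish the inequality on a large but bounded region. On any closed ball $\bar B_R \subset V$, Lerman's analyticity result \cite{LermanGradientFlow} combined with the classical {\L}ojasiewicz inequality (Theorem \ref{LojasiewiczInequality}) yields a local inequality $|\nabla f(x)| \geq k_R|f(x)-f_c|^{\alpha_R}$ near each critical point with $f=f_c$, for some $0 < \alpha_R < 1$; compactness then extends this to $\bar B_R$. For the sublocal exponent, one may in fact take $\alpha_R = 3/4$ after shrinking $\epsilon$, because the degeneracy of $f = |\mu|^2$ at the origin is exactly governed by the quartic homogeneity.

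Second, I would handle the behavior at infinity using the scaling argument. Suppose $x_n \to \infty$ with $f(x_n) \to f_c$, and write $x_n = r_n\hat{x}_n$ with $\hat{x}_n \in S$ and $r_n \to \infty$. Then $f(\hat{x}_n) = f(x_n)/r_n^4 \to 0$, so $\hat{x}_n$ accumulates at the real-analytic set $Z := \{\hat{x} \in S : f(\hat{x}) = 0\} = \mu^{-1}(0) \cap S$. On the compact sphere $S$, the classical {\L}ojasiewicz inequality near $Z$ gives $|\nabla f(\hat{x})| \geq k f(\hat{x})^{\alpha}$ for some $\alpha < 1$; the crucial point is that for $f=|\mu|^2$ this $\alpha$ can be chosen equal to $3/4$, which I would verify using the explicit form $|v_\mu|^2$ and a Cauchy--Schwarz-type estimate (in the torus case, expanding in weight coordinates and bounding $|v_\mu|^2 = \sum_i |x_i|^2 \langle \alpha_i,\mu(x)\rangle^2$ against powers of $|\mu(x)|$). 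Lifting by homogeneity: $|\nabla f(x_n)| = r_n^3 |\nabla f(\hat{x}_n)| \geq k r_n^3 f(\hat{x}_n)^{3/4} = k f(x_n)^{3/4}$, with the $r_n$-dependence canceling exactly because of the choice of exponent $3/4$. Combining Steps $1$ and $2$ yields the inequality everywhere, provided $|f(x) - f_c| < \epsilon$ for some uniform $\epsilon$.

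For the hyperk\"ahler case with $G$ a torus, I observe that $|\muc|^2$ and $|\muhk|^2$ are again homogeneous polynomials of degree $4$ on the real vector space $T^\ast V$, and that the identity $\nabla |\muc|^2 = 2Jv_{\mu_J} + 2Kv_{\mu_K}$ still expresses the gradient in terms of fundamental vector fields. Crucially, in the abelian setting there is no ``cross term'' from the bracket (compare Remark \ref{CrossTermRemark}), so the pointwise bound $|\nabla |\muc|^2|^2 \geq c|\muc|^3$ reduces to the same sort of weight-space computation as in the real case, and the same combination of compactness on the sphere of $T^\ast V$ plus homogeneity completes the proof. The main obstacle is pinning down exactly why the local {\L}ojasiewicz exponent on $S$ near $Z$ can be taken to be $3/4$ rather than some larger number; this is where the specific quadratic structure of $\mu$, rather than a generic polynomial estimate, is essential, and it is also the reason the nonabelian hyperk\"ahler case falls outside the scope of this argument.
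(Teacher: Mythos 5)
There is a genuine gap, and it sits at the foundation of your argument: the homogeneity premise is false for the moment maps the theorem must cover. A moment map for a linear action is only unique up to a central constant, $\mu(x) = \sum_a \tfrac{1}{2}\ev{ie_ax,x} - \alpha$, and the theorem (and its application to reduction at nonzero levels in Corollary \ref{SurjectivityForTori}) requires $\alpha \neq 0$. Then $f = |\mu|^2$ is \emph{not} homogeneous of degree $4$, $f(tx) \neq t^4 f(x)$, the rescaled points $\hat{x}_n$ no longer accumulate on $\mu^{-1}(0)\cap S$, and the exact cancellation of the $r_n$-dependence that your Step 2 relies on disappears. This is precisely the point of Remark \ref{FirstNeemanRemark}: the homogeneous case is Neeman's theorem, where both sides scale identically and one can work on the unit sphere; dropping that assumption is what the present theorem adds, and it is what Lemma \ref{KeyEstimate2} and the four-case analysis of the affine map $\phi(v) = Av - \alpha$ in Lemma \ref{RhoPhiLemma} are for. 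Your argument as written proves (at best) the already-known homogeneous statement.

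Even granting homogeneity, the step you defer --- ``the local {\L}ojasiewicz exponent on $S$ near $Z$ can be taken to be $3/4$'' --- is the entire content of the theorem, not a verification to be filled in later; the classical inequality and the general results of Kurdyka et al.\ give no usable control on the exponent. Your proposed Cauchy--Schwarz bound on $|v_\mu|^2 = \sum_i |x_i|^2\ev{\alpha_i,\mu(x)}^2$ fails exactly where it is needed: near a point $\hat{y}$ of the sphere with nontrivial stabilizer, the coordinates $x_i$ in the weight directions spanning $\stab(\hat{y})$ are small, so the component of $\mu(x)$ along $\stab(\hat{y})$ contributes almost nothing to $|v_\mu|$, and no pointwise estimate of the form $|v_{\mu(x)}(x)| \geq c\,|x|\,|\mu(x)|$ holds there. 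The paper's proof handles this by splitting $\fg = \stab(\hat{y}) \oplus \stab(\hat{y})^\perp$ (inequality (\ref{KeyInequality3}) of Lemma \ref{KeyEstimate1}), treating the free directions by Proposition \ref{FreeCase} and the stabilizer directions by induction on the dimension of the torus (Proposition \ref{InductionStep}), with the cross-term vanishing (Lemma \ref{CrossTermVanish}) making the hyperk\"ahler abelian case go through. Your proposal contains no mechanism for the stabilizer directions, so the key estimate would fail on a neighborhood of every non-free point of the sphere.
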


\begin{remark} \label{FirstNeemanRemark}
This theorem is a generalization of
\cite[Theorem A.1]{NeemanQuotientVarieties}.
However, in \cite{NeemanQuotientVarieties}, it is assumed that the 
constant term in the moment map is chosen so that $f$ is homogeneous
(see equation (\ref{OrdinaryMomentMapFormula})),
leading to an inequality of the form
\begin{equation} |\grad f| \geq k f^\frac{3}{4}, \end{equation}
which holds on all of $V$. Both sides are homogeneous of the same degree,
so that it suffices to prove the inequality on the unit sphere, allowing
the use of compactness arguments. In Theorem \ref{GlobalEstimate}, we
make no such assumption, and this complicates several steps of the proof.
\end{remark}

\begin{remark} If $X \subset V$ is a $G$-invariant subvariety, then it is easy to
see that Theorem \ref{GlobalEstimate} implies that the restriction
of $f$ to $X$ satisfies a global \loj\ inequality. Similarly,
if $X \subset T^\ast V$ is a $G$-invariant hyperk\"ahler subvariety, we deduce the inequality
for the restrictions of $|\muc|^2$ and $|\muhk|^2$.
\end{remark}

\begin{remark} In \cite{GradientSemialgebraic} and \cite{KurdykaGradients} it
is shown that certain classes of functions satisfy similar global {\L}ojasiewicz 
inequalities; indeed this was the motivation to consider such inequalities. 
However, these general theorems cannot rule out the possibility $\alpha \geq 1$,
which is not sharp enough to prove the boundedness of all gradient
trajectories. In this sense, the real content of 
Theorem \ref{GlobalEstimate} is the bound on the exponent. 
\end{remark}


In what follows,
let $G$ be a compact group acting unitarily on a Hermitian vector space $V$,
with moment map $\mu$. Note that since the action is linear, we have
$H_G^\ast(V) = H_G^\ast(\mathrm{point}) =: H_G^\ast$. For $\alpha_\BR \in \fgd$
and $\alpha_\BC \in \fgd_\BC$, we denote
\begin{align}
\FX(\alpha_\BR) &:= V \reda{\alpha_\BR} G, \\
\FM(\alpha_\BR, \alpha_\BC) &:= T^\ast V \rreda{(\alpha_\BR, \alpha_\BC)} G.
\end{align}

\begin{corollary}
\label{DeformationRetract} Let $f = |\mu|^2$.
Then for each component $C$ of the 
critical set of $f$, the gradient flow defines a $G$-equivariant homotopy equivalence from 
the stable manifold $S_C$ to the critical set $C$. If $G$ is abelian, then
the same holds for the functions $|\muc|^2$ and $|\muhk|^2$.
\end{corollary}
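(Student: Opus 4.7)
The plan is to use the global Łojasiewicz inequality of Theorem \ref{GlobalEstimate}, together with the explicit distance bound already established in the proof of Proposition \ref{FlowClosedness}, to upgrade pointwise convergence of gradient trajectories into a continuous $G$-equivariant deformation retraction of $S_C$ onto $C$.

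Let $\phi_t$ denote the negative gradient flow of $f = |\mu|^2$. The first step is to observe that by Theorem \ref{GlobalEstimate} and Proposition \ref{FlowClosedness}, the limit $r(x) := \lim_{t \to \infty} \phi_t(x)$ exists for every $x$, giving a well-defined map $r \colon S_C \to C$. Since $f$ and the Hermitian metric are $G$-invariant, the flow $\phi_t$ is $G$-equivariant, and hence so is $r$. Because $C$ is a connected component of the critical set, $f$ takes a single value $f_c$ on $C$, so the Łojasiewicz constants $k, \alpha, \epsilon$ attached to $f_c$ govern every trajectory in $S_C$ simultaneously.

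The heart of the argument is the continuity of $r$, which is where I expect the only real work. Fix $x \in S_C$ with image $c = r(x)$, and let $\delta > 0$. First choose $T$ large enough that $d(\phi_T(x), c) < \delta/3$ and
\begin{equation}
k^{-1}(1-\alpha)^{-1}\bigl(2|f(\phi_T(x)) - f_c|\bigr)^{1-\alpha} < \delta/3.
\end{equation}
By smooth dependence of $\phi_T$ on initial data and continuity of $f$, there is a neighborhood $U$ of $x$ in $S_C$ such that every $y \in U$ satisfies both $d(\phi_T(y), \phi_T(x)) < \delta/3$ and $|f(\phi_T(y)) - f_c| \leq 2|f(\phi_T(x)) - f_c| < \epsilon$. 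Since $f$ is decreasing along $\phi_t(y)$ and its limit is $f_c$, one has $f(\phi_t(y)) \in [f_c, f(\phi_T(y))]$ for all $t \geq T$, so the Łojasiewicz regime is never exited. Applying the distance bound from Proposition \ref{FlowClosedness} to the trajectory of $y$ from time $T$ onward yields $d(\phi_T(y), r(y)) < \delta/3$, and the triangle inequality gives $d(r(y), c) < \delta$.

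With continuity in hand, I would finish by defining the deformation $H \colon S_C \times [0,1] \to S_C$ by $H(x, s) = \phi_{s/(1-s)}(x)$ for $s \in [0,1)$ and $H(x, 1) = r(x)$; continuity at $s=1$ follows from the same estimate with the time parameter allowed to vary. Since points of $C$ are fixed by the flow, $(i, r)$ realizes a $G$-equivariant strong deformation retraction of $S_C$ onto $C$. The hyperkähler statement requires no new idea: when $G$ is a torus, Theorem \ref{GlobalEstimate} provides the analogous inequality for $|\muc|^2$ and $|\muhk|^2$, and the argument above applies verbatim. The main obstacle is precisely the continuity step, which works only because the Łojasiewicz constants are uniform over a sublevel neighborhood of $f_c$, so that a single quantitative estimate simultaneously controls the long-time behavior of every nearby trajectory in $S_C$.
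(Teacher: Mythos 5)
Your argument is essentially the paper's: both establish joint continuity of the extended flow map $S_C \times [0,\infty] \to S_C$ at points $(x,\infty)$ by combining continuous dependence on initial conditions at a fixed large time $T$ with the uniform \loj\ distance estimate from Proposition \ref{FlowClosedness} to control the tails of all nearby trajectories simultaneously. The one slip is your threshold $|f(\phi_T(y)) - f_c| \leq 2\,|f(\phi_T(x)) - f_c|$, which degenerates when $x \in C$ (the right-hand side is then zero); replacing it with an absolute bound $|f(\phi_T(y)) - f_c| < \eta$ for a preselected small $\eta$, as the paper does, repairs this without changing anything else.
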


\begin{remark}
If $\mu$ is assumed to be proper then this is a special case of the main 
theorem of \cite{LermanGradientFlow}. However, the essential
ingredient of the proof is the \loj\ inequality.
We give the proof below for completeness, but the details
do not differ significantly from \cite{LermanGradientFlow}.
\end{remark}

\begin{proof}
We define a continuous map $F: S_C \times [0, \infty) \to S_C$ by
$(x, t) \mapsto x(t)$, where $x(t)$ is the trajectory of $-\grad f$ beginning
at $x$, evaluated at time $t$.
By Proposition \ref{FlowClosedness}, we can extend
this to a map $F: S_C \times [0, \infty] \to S_C$ by
$(x, \infty) \mapsto \lim_{t \to \infty} x(t)$. This map is the identity
when restricted to $C$, and maps $S_C \times \{\infty\}$ to $C$.
We must verify that this extended map is continuous.

We must show that for any $x_0 \in S_C$ and any sequence $\{(x_n, t_n)\}$ of 
points in $S_C \times [0, \infty]$ satisfying 
$\lim_{n\to\infty} x_n = x_0 \in S_C$ and
$\lim_{n\to\infty} t_n = \infty$ that $\lim_{n\to\infty} x_n(t_n)$ exists.
We will show that it
is equal to $x_c := \lim_{t\to\infty} x_0(t)$. 
Let $f_c$ be the value of $f$ on $C$ and let $\epsilon, k$ be the 
constants appearing in Theorem \ref{GlobalEstimate}.
Given such a sequence,
let $\eta > 0$ and assume $\eta < \epsilon$ but is otherwise arbitrary;
let $T > 0$ be chosen large enough so that $|f(x_0(t)) - f_c| < \eta$
for $t > T$; and let $N > 0$ be chosen to that $t_n > T$ for $n > N$.
The map $S_C \to S_C$ given by $x \mapsto x(T)$ is continuous, 
so we can find $\delta > 0$ such that
\begin{equation} |x - x_0| < \delta \implies |x(T) - x_0(T)| < \eta. \end{equation}
Since $x \mapsto f(x(T))$ is continuous, we can shrink $\delta$ if 
necessary so that
\begin{equation} |x - x_0| < \delta \implies |f(x(T)) - f(x_0(T))| < \eta. \end{equation}
Choose $N$ larger if necessary such that $|x_n - x_0| < \delta$ for $n > N$.
We would like to show that $|x_n(t_n) - x_c| \to 0$ as $n \to \infty$.
For $n > N$ we have
\begin{align}
|x_n(t_n) - x_c|  &= |x_n(t_n) - x_n(T) + x_n(T) - x_0(T) + x_0(T) - x_c| \\
&\leq |x_n(t_n) - x_n(T)| + |x_n(T) - x_0(T)| + |x_0(T) - x_c|.
\end{align}
By our choice of $N$, the second term is bounded by $\eta$, and 
we may apply the argument in the proof of Proposition \ref{FlowClosedness}
to show that the third term is bounded by
$k^{-1} |f(x_0(T)) - f_c|^\frac{1}{4} < 4k^{-1} \eta^\frac{1}{4}$.
Finally, to bound the first term we again apply the argument of Proposition
\ref{FlowClosedness} to obtain the bound
\begin{equation} 4k^{-1} \left(|f(x_n(T)) - f_c|^\frac{1}{4} 
 - |f(x_n(t_n)) - f_c|^\frac{1}{4} \right) < 
 4k^{-1} |f(x_n(T)) - f_c|^\frac{1}{4}. \end{equation}
Since $|x_n - x_0| < \delta$, we have
\begin{align}
|f(x_n(T)) - f_c| &= |f(x_n(T)) - f(x_0(T)) + f(x_0(T)) - f_c| \\
&\leq |f(x_n(T)) - f(x_0(T))| + |f(x_0(T)) - f_c| \\
&< 2 \eta.
\end{align}
Hence the first term is bounded by 
$4 k^{-1} (2\eta)^\frac{1}{4} < 8k^{-1} \eta^\frac{1}{4}$, and we obtain
\begin{equation} |x_n(t_n) - x_c| \leq 12k^{-1} \eta^\frac{1}{4} + \eta. \end{equation}
Since we may take $\eta$ arbitrarily small, we see that
$|x_n(t_n) - x_c| \to 0$.
\end{proof}

\begin{corollary} \label{HomotopyCorollary}
If $\alpha_\BC \in \fg_\BC^\ast$ is regular central, then for any central
$\alpha_\BR \in \fgd$, the set $\mur^{-1}(\alpha_\BR) \cap \muc^{-1}(\alpha_\BC)$ is
a $G$-equivariant deformation retract of $\muc^{-1}(\alpha_\BC)$, and
in particular
\begin{equation}
  H_G^\ast(\muc^{-1}(\alpha_\BC)) \iso H^\ast( \FM(\alpha_\BR, \alpha_\BC) ). 
\end{equation}
\end{corollary}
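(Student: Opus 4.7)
The plan is to apply the Morse-theoretic machinery developed above---particularly Theorem \ref{GlobalEstimate} and Corollary \ref{DeformationRetract}---to the function $\Phi := |\mur - \alpha_\BR|^2$ restricted to the smooth $G_\BC$-invariant complex submanifold $Z := \muc^{-1}(\alpha_\BC) \subset T^\ast V$. The regularity of $\alpha_\BC$ is used in two essential ways: it guarantees that $Z$ is smooth, and it forces the infinitesimal action $\fg_\BC \to T_p(T^\ast V)$ to be injective at every $p \in Z$, so $G_\BC$ (hence $G$) acts locally freely on $Z$.

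First I would establish flow-closedness of $\Phi|_Z$. Viewing $T^\ast V$ as a Hermitian vector space on which $G$ acts unitarily, $\mur - \alpha_\BR$ is an ordinary moment map (this is where centrality of $\alpha_\BR$ is used), so Theorem \ref{GlobalEstimate} supplies a \gli\ for $\Phi$ on all of $T^\ast V$. The remark following that theorem transfers the inequality to the $G$-invariant subvariety $Z$, and Proposition \ref{FlowClosedness} then implies that $\Phi|_Z$ is flow-closed.

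Next I would identify the critical set. Since $Z$ inherits a K\"ahler structure from the $I$-structure on $T^\ast V$, with $\mur|_Z$ the induced K\"ahler moment map, one has
\begin{equation}
  \grad(\Phi|_Z)(p) = 2 I \, v_{\mur(p) - \alpha_\BR}(p),
\end{equation}
and the fundamental vector field is automatically tangent to $Z$ because $G$ preserves $Z$. Hence $p$ is critical if and only if $v_{\mur(p)-\alpha_\BR}(p) = 0$, and local freeness of the $G$-action on $Z$ forces $\mur(p) = \alpha_\BR$. Thus the critical set of $\Phi|_Z$ is exactly $\mur^{-1}(\alpha_\BR) \cap Z$, on which $\Phi|_Z$ attains its global minimum $0$. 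Every gradient trajectory of $-\grad(\Phi|_Z)$ therefore converges into $\mur^{-1}(\alpha_\BR) \cap Z$, so the stable manifold of the minimum is all of $Z$, and the construction in the proof of Corollary \ref{DeformationRetract} produces a continuous $G$-equivariant map $Z \times [0,\infty] \to Z$ restricting to the identity on $\mur^{-1}(\alpha_\BR) \cap Z$ and sending $Z \times \{\infty\}$ into it---this is the desired strong deformation retract. The cohomological consequence then follows, since $G$ acts locally freely on $\mur^{-1}(\alpha_\BR) \cap Z = \muhk^{-1}(\alpha)$ (again by regularity of $\alpha_\BC$), so the equivariant cohomology of the preimage equals the ordinary cohomology of the orbifold quotient $\FM(\alpha_\BR, \alpha_\BC)$.

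The main obstacle is the critical-set identification: without the regularity of $\alpha_\BC$, points of $Z$ with continuous $G$-stabilizer would produce additional critical strata of higher Morse index---the ``unstable'' strata of the Kirwan-Ness picture---and the stable manifold of the minimum would only be a proper (open dense) subset of $Z$, precluding a deformation retract onto the minimum. Regularity of $\alpha_\BC$ is precisely what eliminates these extraneous strata, so that the whole of $Z$ flows down to $\mur^{-1}(\alpha_\BR) \cap Z$.
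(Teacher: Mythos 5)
Your proposal is correct and follows essentially the same route as the paper's own proof: restrict $|\mur-\alpha_\BR|^2$ to the complex submanifold $\muc^{-1}(\alpha_\BC)$, use the \loj\ inequality of Theorem \ref{GlobalEstimate} for flow-closedness, observe that regularity of $\alpha_\BC$ forces local freeness and hence that the only critical component meeting $\muc^{-1}(\alpha_\BC)$ is the absolute minimum, and invoke Corollary \ref{DeformationRetract}. You merely spell out in more detail the gradient computation and the critical-set identification that the paper states in one line.
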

\begin{proof} Since $\alpha_\BC$ is regular central, $\muc^{-1}(\alpha_\BC)$ is
a $G$-invariant complex submanifold of $T^\ast V$, and furthermore $G$
acts on $\muc^{-1}(\alpha_\BC)$ with at most discrete stabilizers. Hence the
only component of the critical set of $|\mur-\alpha_\BR|^2$ that intersects
$\muc^{-1}(\alpha_\BC)$ is the absolute minimum, which occurs on $\mur^{-1}(\alpha_\BR)$.
By Corollary \ref{DeformationRetract}, the gradient flow of $-|\mur-\alpha_\BR|^2$
gives the desired $G$-equivariant deformation retract from $\muc^{-1}(\alpha_\BC)$ to
$\mur^{-1}(\alpha_\BR) \cap \muc^{-1}(\alpha_\BC)$. Thus
\begin{equation} H_G^\ast(\muc^{-1}(\alpha_\BC)) 
\iso H_G^\ast(\mur^{-1}(\alpha_\BR) \cap \muc^{-1}(\alpha_\BC))
\iso H^\ast( \FM(\alpha_\BR, \alpha_\BC) ). \end{equation}
\end{proof}

\begin{corollary} \label{SurjectivityForTori}
If $\alpha$ is a regular central value of $\mu$, then the
Kirwan map $H_G^\ast \to H^\ast(\FX(\alpha))$ is surjective. If $G$ is
a torus and $(\alpha_\BR, \alpha_\BC)$ is a regular value of the hyperk\"ahler
moment map, then the hyperk\"ahler Kirwan map $H_G^\ast \to H^\ast(\FM(\alpha_\BR,\alpha_\BC))$ is surjective.
\end{corollary}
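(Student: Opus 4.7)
The plan is to assemble this as a direct corollary of Theorem \ref{GlobalEstimate} together with the surjectivity criteria of Theorem \ref{KirwanSurjectivity} and Theorem \ref{SurjectivityCriterion}, which take flow-closedness as their only nontrivial hypothesis. Since every result in sight is translation-invariant in the moment map level, I would begin by replacing $\mu$ with $\mu - \alpha$ (respectively $\muhk$ with $\muhk - (\alpha_\BR, \alpha_\BC)$), so that the reduction is taken at zero and the relevant function becomes $|\mu - \alpha|^2$ (respectively $|\muc - \alpha_\BC|^2$).

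For the first assertion, Theorem \ref{GlobalEstimate} applied to the shifted moment map gives that $|\mu - \alpha|^2$ satisfies a global {\L}ojasiewicz inequality on $V$. This function is manifestly bounded below by zero, so Proposition \ref{FlowClosedness} guarantees that it is flow-closed. Theorem \ref{KirwanSurjectivity} then yields surjectivity of the Kirwan map $H_G^\ast(V) \to H^\ast(\FX(\alpha))$, and since $V$ is contractible we have $H_G^\ast(V) \iso H_G^\ast$.

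For the torus case, the same argument applies to the complex moment map: the torus part of Theorem \ref{GlobalEstimate} provides a global {\L}ojasiewicz inequality for $|\muc - \alpha_\BC|^2$ on $T^\ast V$, and again Proposition \ref{FlowClosedness} delivers flow-closedness. Theorem \ref{SurjectivityCriterion} then gives surjectivity of the restriction map $H_G^\ast(T^\ast V) \to H_G^\ast(\muc^{-1}(\alpha_\BC))$. Since $(\alpha_\BR, \alpha_\BC)$ is assumed regular, $\alpha_\BC$ is a regular value of $\muc$, so Corollary \ref{HomotopyCorollary} identifies the target with $H^\ast(\FM(\alpha_\BR, \alpha_\BC))$ via the $G$-equivariant deformation retract produced by the negative gradient flow of $|\mur - \alpha_\BR|^2$ on $\muc^{-1}(\alpha_\BC)$. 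Composing, and again using that $T^\ast V$ is contractible, gives surjectivity of the hyperk\"ahler Kirwan map.

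There is no real obstacle here: all of the analytic work has already been absorbed into Theorem \ref{GlobalEstimate}, and everything downstream is formal. The only point worth checking carefully is that one may indeed apply Theorem \ref{SurjectivityCriterion} at the shifted level $\alpha_\BC \neq 0$, but this is automatic since the hypotheses of that theorem depend only on flow-closedness of the relevant norm-squared, which is translation-invariant.
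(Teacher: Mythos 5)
Your proposal is correct and follows essentially the same route as the paper: Theorem \ref{GlobalEstimate} plus Proposition \ref{FlowClosedness} give flow-closedness of $|\mu-\alpha|^2$ (resp.\ $|\muc-\alpha_\BC|^2$), Theorems \ref{KirwanSurjectivity} and \ref{SurjectivityCriterion} give surjectivity onto the equivariant cohomology of the level set, and Corollary \ref{HomotopyCorollary} supplies the identification $H_G^\ast(\muc^{-1}(\alpha_\BC)) \iso H^\ast(\FM(\alpha_\BR,\alpha_\BC))$. Your extra remark that the shift in the moment map level is harmless is already built into the statement of Theorem \ref{GlobalEstimate}, which allows an arbitrary central constant in the moment map.
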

\begin{proof} Theorem \ref{GlobalEstimate} and Proposition \ref{FlowClosedness}
show that $|\mu-\alpha|^2$ is flow-closed, so we obtain surjectivity of 
$H_G^\ast \to H^\ast(\FX(\alpha))$ by Theorem \ref{KirwanSurjectivity}.

In the hyperk\"ahler case, if $G$ is a torus
then $|\muc-\alpha_\BC|^2$ is flow-closed, so by Theorem \ref{SurjectivityCriterion}
we obtain surjectivity of $H_G^\ast \to H^\ast_G(\muc^{-1}(\alpha_\BC))$.
By Corollary \ref{HomotopyCorollary}, the map
$H_G^\ast(\muc^{-1}(\alpha_\BC)) \to H^\ast(\FM(\alpha_\BR, \alpha_\BC))$ is an
isomorphism, so we have surjectivity of $H_G^\ast \to H^\ast(\FM(\alpha_\BR, \alpha_\BC))$.
\end{proof} 

\begin{remark}
Konno proved surjectivity of the map $H_G^\ast \to H^\ast(\FM(\alpha_\BR,\alpha_\BC))$ when $G$ is a
torus using rather different arguments \cite{KonnoToric}.
Konno also computed the kernel of the Kirwan map, giving an explicit description
of the cohomology ring $H^\ast(\FM(\alpha_\BR, \alpha_\BC))$. We study this case in 
detail in Section \ref{sec-toric}, and we will see that Morse theory allows us to compute the kernel very easily. 
\end{remark}

\subsection{Proof of the {\L}ojasiewicz Inequality} \label{Proof}
Let $G$ be a compact Lie group with Lie algebra $\fg$, and
suppose $G$ acts unitarily on a Hermitian vector space $G$. 
Without loss of generality we will regard $G$ as a subgroup of $U(V)$
and identify $\fg$ with a Lie subalgebra of $\mathfrak{u}(V)$,
which we identify with the Lie algebra of skew-adjoint matrices. 
We will use the trace norm on $\mathfrak{u}(V)$ to induce an invariant
inner product on $\fg$, and use this to identify $\fg$ with its dual.
For any $\xi \in \fg$ there is
a fundamental vector field $v_\xi$ which is given by $v_\xi(x) = \xi x$.
We denote by $\stab(x)$ the Lie algebra of the stabilizer of a point
$x \in V$; i.e.
\begin{equation}
\stab(x) = \{ \xi \in \fg \suchthat v_\xi(x) = 0\}.
\end{equation}
If we fix an orthonormal basis $\{e_a\}$ of $\fg$, then a moment map
is given by
\begin{equation} \label{OrdinaryMomentMapFormula}
\mu(x) = \sum_a \frac{1}{2} \ev{i e_a x,x} - \alpha,
\end{equation}
where $i = \sqrt{-1}$ is the complex structure on $V$ and $\alpha$
is any central element of $\fg$. Then for $f = |\mu|^2$,
we have
\begin{equation}
\label{GradFormula}
\nabla f(x) = 2i v_{\mu(x)}(x) = \sum_a 2 i \mu^a(x) e_a x,
\end{equation}
and since $i$ is unitary, we have that
\begin{equation}
\label{GradEquality}
|\nabla f| = 2|v_\mu|. 
\end{equation}

\begin{lemma} \label{CrossTermVanish}
Suppose $G$ is abelian, and consider its action on
$T^\ast V$. Let $f_i = |\mu_i|^2$ for $i = I,J,K$. Then
$\ev{\nabla f_i,\nabla f_j} = 0$ for $i \neq j$.
\end{lemma}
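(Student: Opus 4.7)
The plan is to expand each inner product $\langle \nabla f_i, \nabla f_j\rangle$ explicitly using the formula for the gradient, convert the resulting expression to a symplectic pairing via the quaternion relations, and then use the hypothesis that $G$ is abelian to conclude that the pairing vanishes.

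First, I would note that the gradient formula (\ref{GradFormula}) applies verbatim to each of the three moment maps: writing $I,J,K$ for the three complex structures on $T^\ast V$ and $v_{\mu_I}, v_{\mu_J}, v_{\mu_K}$ for the corresponding fundamental vector fields (generated by $\mu_I(x), \mu_J(x), \mu_K(x)$ viewed as elements of $\fg$ at each point), one gets
\begin{equation}
\nabla f_I = 2 I v_{\mu_I}, \qquad \nabla f_J = 2 J v_{\mu_J}, \qquad \nabla f_K = 2 K v_{\mu_K}.
\end{equation}
For the cross term $\langle \nabla f_I, \nabla f_J\rangle = 4 g(Iv_{\mu_I}, Jv_{\mu_J})$, I would use skew-adjointness of $J$ together with the quaternion relation $JI = -K$ to rewrite
\begin{equation}
g(Iv_{\mu_I}, Jv_{\mu_J}) = -g(JIv_{\mu_I}, v_{\mu_J}) = g(Kv_{\mu_I}, v_{\mu_J}) = \omega_K(v_{\mu_I}, v_{\mu_J}),
\end{equation}
and symmetrically for the other two cross terms. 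So in each case the vanishing reduces to showing that $\omega_k(v_\xi, v_\eta) = 0$ for arbitrary $\xi,\eta \in \fg$ and any of the three symplectic forms.

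The final step uses that $G$ is abelian. Since the coadjoint action of a torus is trivial, equivariance of each moment map $\mu_i$ becomes $G$-invariance, so $d\mu_i(v_\eta) = 0$ for every $\eta \in \fg$. Pairing with an arbitrary $\xi \in \fg$ and using the defining property $i_{v_\xi}\omega_i = d\mu_i^\xi$, this reads $\omega_i(v_\xi, v_\eta) = d\mu_i^\xi(v_\eta) = 0$. Applying this pointwise with $\xi = \mu_i(x)$, $\eta = \mu_j(x)$ then gives the desired orthogonality. No serious obstacle is expected; the only thing to be careful about is the sign bookkeeping in passing from $g(Iu,Jv)$ to $\omega_K(u,v)$, which is the place where the abelian hypothesis would fail to suffice in the nonabelian case (see Remark \ref{CrossTermRemark}, which the next paragraph evidently introduces).
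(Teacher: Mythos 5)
Your proposal is correct and follows essentially the same route as the paper: expand the gradients via (\ref{GradFormula}), use skew-adjointness and the quaternion relations to convert each cross term to a symplectic pairing $\omega_k(v_{\mu_i},v_{\mu_j})$, and then invoke moment-map equivariance plus abelianness. The only cosmetic difference is that you kill the pairing by noting $d\mu_k(v_\eta)=0$ (the moment map is $G$-invariant when $G$ is abelian), whereas the paper carries the computation one step further to exhibit the pairing as $\ev{\mu_J,[\mu_K,\mu_I]}$ before setting the bracket to zero --- the same equivariance identity either way.
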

\begin{proof} We compute:
\begin{align}
\ev{\nabla f_J,\nabla f_K} 
&= 4 \ev{J v_{\mu_J},K v_{\mu_K}} \\
&= 4 \ev{I v_{\mu_J},v_{\mu_K}} \\
&= 4 \omega_I(v_{\mu_J}, v_{\mu_K}) \\
&= 4 \ev{\mu_J,d\mu_I(v_{\mu_K})} \\
&= 4 \ev{\mu_J,[\mu_K, \mu_I]} \\
&= 0.
\end{align}
Similar computations show that the other two cross terms vanish.
\end{proof}
\begin{remark} \label{CrossTermRemark}
The proof of this lemma makes it clear why the assumption
that $G$ is abelian is so useful in the hyperk\"ahler setting.
The cross term 
\begin{equation} \ev{\nabla f_J,\nabla f_K} = 4\ev{\mu_I,[\mu_J, \mu_K]} \end{equation}
is exactly the obstruction to proving an estimate in the general nonabelian case.
The function on the right hand side is very natural, and seems to be
genuinely hyperk\"ahler, having no analogue in
symplectic geometry. Numerical experiments suggest
that it is small in magnitude compared to $|\nabla f_J| + |\nabla f_K|$,
but we do not know how to prove this. A theorem in this direction might
be enough to prove flow-closedness (and hence Kirwan surjectivity) in 
general. 
\end{remark}

\begin{remark} \label{SubriemannianRemark}
In light of the discussion following Proposition
\ref{SjamaarTrick}, the cross term $\ev{\mu_I,[\mu_J, \mu_K]}$
should have an interpretation in terms of the geometry of the 
non-integrable distribution $\mathcal{D}_\BH$.
Subriemannian geometry may have a key role to play in proving
Kirwan surjectivity for hyperk\"ahler quotients by nonabelian groups.
\end{remark}

\begin{proposition} 
\label{ReduceToTorus} 
Let $T \subset G$ be a maximal torus of $G$, and let $\mu_G$ and $\mu_T$ be
the corresponding moment maps. Let $f_G = |\mu_G|^2$ and $f_T = |\mu_T|^2$.
Suppose that for any $f_c \geq 0$, $f_T$ satisfies a global \loj\ inequality. 
Then $f_G$ satisfies a global \loj\ inequality with the same constants and 
exponent.
\end{proposition}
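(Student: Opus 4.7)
The strategy is to exploit $G$-equivariance of the moment map together with $G$-invariance of the metric so as to reduce, pointwise, the desired inequality for $f_G$ to the assumed inequality for $f_T$.

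First I would observe the standard fact that every coadjoint orbit in $\fg$ (identified with $\fg^\ast$ via the chosen invariant inner product) meets $\ft$: given $x \in V$, there exists $g \in G$ such that $\Ad(g)\mu_G(x) \in \ft$. By $G$-equivariance of $\mu_G$, this means $\mu_G(g \cdot x) \in \ft$, and since $\mu_T$ is the composition of $\mu_G$ with orthogonal projection $\fg \to \ft$, this projection acts as the identity at the point $g\cdot x$, i.e.\ $\mu_T(g \cdot x) = \mu_G(g \cdot x)$. Consequently $f_T(g\cdot x) = f_G(g\cdot x)$, and because $f_G$ is itself $G$-invariant we get $f_T(g\cdot x) = f_G(x)$.

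Next I would compare the two gradients at the translated point $g\cdot x$. Using formula \eqref{GradFormula}, or rather \eqref{GradEquality}, we have $|\nabla f_G| = 2|v_{\mu_G}|$ and $|\nabla f_T| = 2|v_{\mu_T}|$. At $g\cdot x$ the moment map values agree in $\fg$, so the fundamental vector fields agree: $v_{\mu_G(g\cdot x)}(g\cdot x) = v_{\mu_T(g\cdot x)}(g\cdot x)$. Therefore $|\nabla f_G(g\cdot x)| = |\nabla f_T(g\cdot x)|$. On the other hand, $G$ acts by isometries, so $|\nabla f_G(g\cdot x)| = |\nabla f_G(x)|$. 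Combining, we obtain the pointwise identity
\begin{equation}
|\nabla f_G(x)| = |\nabla f_T(g\cdot x)|, \qquad f_G(x) = f_T(g\cdot x).
\end{equation}

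Finally, for any $f_c \geq 0$, let $k,\epsilon,\alpha$ be the constants from the assumed global \loj\ inequality for $f_T$ at the limit value $f_c$. Whenever $|f_G(x)-f_c|<\epsilon$, the identities above show $|f_T(g\cdot x)-f_c|<\epsilon$, so the inequality for $f_T$ applies at $g\cdot x$ and yields
\begin{equation}
|\nabla f_G(x)| = |\nabla f_T(g\cdot x)| \geq k|f_T(g\cdot x)-f_c|^\alpha = k|f_G(x)-f_c|^\alpha,
\end{equation}
which is exactly the \gli\ for $f_G$ with the same constants and exponent. There is no real obstacle here; the only subtlety worth noting is that the choice of $g$ depends on $x$, but since the final inequality involves only $G$-invariant quantities and $|\nabla f_G(x)|$, this pointwise dependence is harmless.
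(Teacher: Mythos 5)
Your proposal is correct and follows essentially the same route as the paper's proof: conjugate $\mu_G(x)$ into $\ft$, observe that $\mu_G$ and $\mu_T$ then coincide at $g\cdot x$ (so both the function values and the gradient norms agree there), and transport the inequality back to $x$ using $G$-invariance. You merely make explicit the isometry/invariance step that the paper leaves implicit.
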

\begin{proof}
Since $G$ is compact, for each
$x \in V$ we can find some $g \in G$ so that $\Ad_{g} \mu_G(x) \in \ft$. 
By equivariance of the moment map, we have $\Ad_{g} \mu_G(x) = \mu_G(g x) \in \ft$.
Hence $\mu_G(g x) = \mu_T(g x)$, so that $|v_{\mu_G(gx)}| = |v_{\mu_T(gx)}|$.
Using equality (\ref{GradEquality}), this tells us that
$|\nabla f_G(gx)| = |\nabla f_T(gx)|$. 
Since $f_G(gx) = |\mu_G(gx)|^2 = |\mu_T(gx)|^2 = f_T(gx)$,
we deduce the \loj\ inequality for $f_G$ from the inequality for $f_T$.
\end{proof}

We assume for the remainder of this section that $G$ is a torus.

\begin{proposition}
\label{StrongerCondition} 
Fix $f_c \geq 0$, and suppose that for each 
$\mu_c \in \fg$ satisfying $|\mu_c|^2 = f_c$, there exist
constants $\epsilon' > 0$ and $c' > 0$ (depending on $\mu_c$) such that 
\begin{equation} \label{GlobalInequalityWeaker}
|\nabla f(x)| \geq c'|f(x) - f_c|^\frac{3}{4}
\end{equation}
whenever $|\mu(x) - \mu_c| < \epsilon'$. Then $f$ satisfies a global
\loj\ inequality, i.e., there exist constants $\epsilon > 0$ and
$c > 0$ so that the inequality (\ref{GlobalInequalityWeaker}) holds whenever 
$|f(x) - f_c| < \epsilon$. 
\end{proposition}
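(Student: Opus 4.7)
My plan is to deduce the global inequality from the local hypothesis by a compactness argument on the target $\fg$, exploiting the fact that a level set of $|\cdot|^2$ on $\fg$ is either a sphere or a point.

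First I would introduce the level set $S_c := \{\xi \in \fg \suchthat |\xi|^2 = f_c\}$, which is compact: it is a sphere of radius $\sqrt{f_c}$ when $f_c > 0$, and the single point $\{0\}$ when $f_c = 0$. The local hypothesis assigns to each $\mu_c \in S_c$ an open ball $B(\mu_c, \epsilon'_{\mu_c}) \subset \fg$ and a constant $c'_{\mu_c} > 0$ with the property that $|\nabla f(x)| \geq c'_{\mu_c} |f(x)-f_c|^{3/4}$ for all $x$ with $\mu(x) \in B(\mu_c, \epsilon'_{\mu_c})$. Cover $S_c$ by the smaller balls $B(\mu_c, \epsilon'_{\mu_c}/2)$ and extract a finite subcover indexed by $\mu_c^{(1)}, \ldots, \mu_c^{(N)}$, with corresponding radii $\epsilon'_i$ and constants $c'_i$. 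Set $\delta := \tfrac{1}{2}\min_i \epsilon'_i$ and $c := \min_i c'_i$.

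Next I would verify the promotion step: there exists $\epsilon > 0$ such that $|f(x)-f_c| < \epsilon$ forces $\mu(x)$ to lie in one of the original balls $B(\mu_c^{(i)}, \epsilon'_i)$. When $f_c = 0$, simply take $\epsilon < \min_i (\epsilon'_i)^2$; then $|\mu(x)|^2 < \epsilon$ gives $|\mu(x)| < \epsilon'_i$ for every $i$. When $f_c > 0$, observe that $|f(x)-f_c| < \epsilon$ with $\epsilon < f_c$ forces $\big||\mu(x)|-\sqrt{f_c}\big| \leq \epsilon/\sqrt{f_c}$, so the radial projection $\sqrt{f_c}\,\mu(x)/|\mu(x)|$ lies in $S_c$ within distance $\epsilon/\sqrt{f_c}$ of $\mu(x)$. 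This projection lies in some $B(\mu_c^{(i)}, \epsilon'_i/2)$ by the cover property, and choosing $\epsilon < \sqrt{f_c}\,\delta$ ensures $\mu(x) \in B(\mu_c^{(i)}, \epsilon'_i)$.

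Finally, for any such $x$ the local inequality at $\mu_c^{(i)}$ applies, yielding
\begin{equation}
  |\nabla f(x)| \geq c'_i |f(x)-f_c|^{3/4} \geq c\, |f(x)-f_c|^{3/4},
\end{equation}
which is the desired global \loj\ inequality with constants $\epsilon$ and $c$. There is essentially no hard step here; the only thing to watch is the uniform passage from a neighborhood in $\mu$-space to a neighborhood in $f$-space, which is handled cleanly by the sphere-versus-point dichotomy above. This is the reason the proposition is stated as a reduction: the real analytic content lies in establishing the hypothesis (\ref{GlobalInequalityWeaker}) pointwise in $\mu_c$, which is the task taken up in the subsequent parts of Section \ref{Proof}.
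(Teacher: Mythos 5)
Your proof is correct and follows essentially the same route as the paper: cover the compact level set $\{|\xi|^2 = f_c\}$ by the balls from the local hypothesis, pass to a finite subcover, take the minimum of the constants, and observe that $|f(x)-f_c|$ small forces $\mu(x)$ into the finite union. Your explicit half-radius/radial-projection bookkeeping just makes precise the step the paper dispatches with ``the finite union contains an $\epsilon$-neighborhood of $S$,'' and your handling of the degenerate case $f_c=0$ is a welcome extra care.
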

\begin{proof} Suppose that for each $\mu_c$ as above we can find
constants $\epsilon(\mu_c)$ and $c(\mu_c)$ so that inequality
(\ref{GlobalInequalityWeaker}) holds. Let
$U(\mu_c)$ be the $\epsilon(\mu_c)$-ball in $\fg$ centered at $\mu_c$.
These open sets cover the sphere $S$ of radius $\sqrt{f_c}$ in $\fg$, and by
compactness we can choose a finite subcover. Denote this finite subcover by
$\{U_i\}_{i=1}^n$, with centers $\mu_i$ and constants $c_i$, and
let $c = \min_i c_i$. The finite union $\cup_i U_i$ 
contains an $\epsilon$-neighborhood of $S$ for some sufficiently small
$\epsilon$.  Since $f(x) = |\mu(x)|^2$, if we choose
$\epsilon' > 0$ sufficiently small then $|f(x) - f_c| < \epsilon'$ implies
that $\left| |\mu(x)| - \sqrt{f_c} \right| < \epsilon$, so that 
$\mu(x) \in \cup_i U_i$. In particular, there is some $j$ such that
$\mu(x) \in U_j$, and by inequality (\ref{GlobalInequalityWeaker}) we have
\begin{equation} |\nabla f(x)| \geq c_j |f(x) - f_c|^\frac{3}{4} 
\geq c|f(x) - f_c|^\frac{3}{4}, \end{equation}
as desired.
\end{proof} 

Before giving the proof of Theorem \ref{GlobalEstimate}, we isolate some of
the main steps in the following lemmas.
Let us introduce the following notation. For $x \in V \setminus \{0\}$,
let $\hat{x}$ denote its projection to the unit sphere, i.e.\
$\hat{x} = x/|x|$ or equivalently $x = |x|\hat{x}$.
Since the action is linear, we have that $v_\xi(x) = |x| v_\xi(\hat{x})$
and $\stab(x) = \stab(\hat{x})$.

\begin{lemma} 
\label{KeyEstimate1} 
Fix $\hat{y}$ 
in the unit sphere in $V$. Let $P$ be the orthogonal projection from $\fg$ to 
$\stab(\hat{y})^\perp$, and $Q = 1-P$. Then there is a neighborhood 
$U$ of $\hat{y}$ such that for any $\xi \in \fg$, inequalities
\begin{align}
\label{KeyInequality1}
 |v_\xi(x)| &\geq c |x| | P \xi | \\
\label{KeyInequality2}
 |v_\xi(x)| &\geq c' |v_{P \xi}(x)| \\
\label{KeyInequality3}
 |v_\xi(x)| &\geq c'' \left(|v_{P\xi}(x)| + |v_{Q\xi}(x)| \right)
\end{align}
hold for all $x$ such that $\hat{x} \in U$.
The constants $c,c',c''$ are 
positive and depend only on $\hat{y}$ and $U$ but not on $x$ or $\xi$.
\end{lemma}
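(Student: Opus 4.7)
The plan is to exploit the weight space decomposition of $V$ under the torus action. Writing $V = \bigoplus_\chi V_\chi$ for the decomposition into weight spaces and $x = \sum_\chi x_\chi$ for the corresponding decomposition of a vector, the formula $v_\xi(x) = \xi x$ diagonalizes so that
\[
|v_\xi(x)|^2 = \sum_\chi |\chi(\xi)|^2 |x_\chi|^2.
\]
Let $A = \{\chi \suchthat \hat{y}_\chi \neq 0\}$ denote the set of weights appearing in $\hat{y}$. Then $\stab(\hat{y}) = \bigcap_{\chi \in A} \ker \chi$, so (using the chosen identification $\fg \iso \fgd$) $\stab(\hat{y})^\perp$ is spanned by the weights in $A$. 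The crucial consequence is that $\chi(Q\xi) = 0$ for every $\chi \in A$, which will prevent any cancellation between the $P\xi$ and $Q\xi$ contributions to $v_\xi$ on the weight spaces that carry most of the mass of $\hat{y}$.

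First I would prove (\ref{KeyInequality1}). The restriction to $\stab(\hat{y})^\perp$ of the evaluation $\fg \to \BR^A$, $\eta \mapsto (\chi(\eta))_{\chi \in A}$, is injective, so there is a constant $c_0 > 0$ with $\sum_{\chi \in A} |\chi(\eta)|^2 \geq c_0^2 |\eta|^2$ for all $\eta \in \stab(\hat{y})^\perp$. Shrink to a neighborhood $U$ of $\hat{y}$ on which $|\hat{x}_\chi|^2 \geq \tfrac{1}{2}|\hat{y}_\chi|^2$ for each $\chi \in A$, and set $m = \min_{\chi \in A} \tfrac{1}{2}|\hat{y}_\chi|^2 > 0$. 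Using $\chi(\xi) = \chi(P\xi)$ on $A$, one obtains
\[
|v_\xi(\hat{x})|^2 \geq \sum_{\chi \in A} |\chi(P\xi)|^2 |\hat{x}_\chi|^2 \geq m c_0^2 |P\xi|^2.
\]
Since $v_\xi(x) = |x| v_\xi(\hat{x})$ by linearity, this gives (\ref{KeyInequality1}) with $c = c_0\sqrt{m}$.

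For (\ref{KeyInequality2}) I would establish a matching upper bound. Because each $\chi$ is a linear functional, $|\chi(P\xi)| \leq M_\chi |P\xi|$, and since there are only finitely many weights and $|\hat{x}_\chi| \leq 1$, we get $|v_{P\xi}(\hat{x})|^2 \leq C_1 |P\xi|^2$ for a constant $C_1$ depending only on the representation. Combining with the lower bound from the previous step yields (\ref{KeyInequality2}) with $c' = c_0 \sqrt{m/C_1}$. Inequality (\ref{KeyInequality3}) then follows by a trivial rearrangement: from $v_\xi = v_{P\xi} + v_{Q\xi}$ we have $|v_{Q\xi}| \leq |v_\xi| + |v_{P\xi}|$, hence $|v_{P\xi}| + |v_{Q\xi}| \leq 2|v_{P\xi}| + |v_\xi| \leq (2/c' + 1)|v_\xi|$, and one takes $c'' = 1/(2/c'+1)$.

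The main obstacle is (\ref{KeyInequality1}): one must rule out the possibility that $v_{P\xi}$ and $v_{Q\xi}$ cancel to make $|v_\xi|$ abnormally small, even though $|Q\xi|$ may be arbitrarily large compared with $|P\xi|$. The identity $\chi(Q\xi) = 0$ for $\chi \in A$ is what resolves this, since it says that $Q\xi$ contributes nothing to $v_\xi$ in precisely those weight spaces where $|\hat{x}_\chi|$ is bounded below. Without the torus hypothesis no such clean weight decomposition is available, which is consistent with Proposition \ref{ReduceToTorus} having already reduced the problem to this case.
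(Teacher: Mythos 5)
Your proof is correct, and it rests on the same mechanism as the paper's: isolate the contribution of $P\xi$ to $v_\xi$ by passing to a part of $V$ on which $Q\xi$ acts trivially, then upgrade non-degeneracy at $\hat{y}$ to a uniform bound on a neighborhood. The packaging is different enough to be worth noting. You diagonalize the action via the weight decomposition $V = \bigoplus_\chi V_\chi$ and use the identity $\chi(Q\xi) = 0$ for every weight $\chi$ occurring in $\hat{y}$; the paper instead projects onto the smallest $G$-invariant subspace $W$ containing $\hat{y}$ and shows that the Gram matrix $H_{ab}(x) = \langle P_W e_a x, P_W e_b x\rangle$, indexed by a basis of $\stab(\hat{y})^\perp$, is positive definite at $x = \hat{y}$. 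Your injectivity of the evaluation map $\eta \mapsto (\chi(\eta))_{\chi \in A}$ on $\stab(\hat{y})^\perp$ is precisely the positive-definiteness of $H(\hat{y})$, and your explicit choice of $U$ (where $|\hat{x}_\chi|^2 \geq \tfrac{1}{2}|\hat{y}_\chi|^2$ for $\chi \in A$) replaces the paper's appeal to continuity of $H$. The weight-space formulation buys transparency: the non-cancellation between $v_{P\xi}$ and $v_{Q\xi}$, which is the entire content of (\ref{KeyInequality1}), is visible termwise rather than hidden in an operator identity $e_a P_W x = 0$. Your derivations of (\ref{KeyInequality2}) and (\ref{KeyInequality3}) coincide with the paper's.
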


\begin{remark} A version of this lemma appears as part of the proof of
\cite[Theorem A.1]{NeemanQuotientVarieties}, though it is not stated
exactly as above. We repeat the argument below so that our proof of Theorem
\ref{GlobalEstimate} is self-contained.
\end{remark}

\begin{proof}
Fix $\hat{y}$ and let $P$ and $Q$ be as above. 
Let $W$ be the smallest $G$-invariant subspace of $V$ containing $\hat{y}$,
and let $P_W: V \to W$ be the orthogonal projection. Note that $W$ is
generated by vectors of the form $\xi_1 \cdots \xi_l \hat{y}$, with
$\xi_i \in \fgc$. Since $P_W$ is a projection, $|v_\xi| \geq |P_W v_\xi|$, 
so to establish inequality 
(\ref{KeyInequality1}) it suffices to show that 
$|P_W v_\xi| \geq c|P \xi|$.
Note that $P_W$
is equivariant, i.e.\ $\xi P_W = P_W \xi$ for all $\xi \in \fg$. Note also
that since $K$ is abelian, if $\xi \in \stab(\hat{y})$ then $\xi \in \ann(W)$,
since $\xi \xi_1 \cdots \xi_l \hat{y} = \xi_1 \cdots \xi_l \xi \hat{y} = 0$.
For any orthonormal basis $\{e_a\}_{i=1}^d$ of $\fg$ chosen so that
$\{e_a\}_{i=1}^n$ is an orthonormal basis of $\stab(\hat{y})^\perp$ and
$\{e_a\}_{i=n+1}^d$ is an orthonormal basis of $\stab(\hat{y})$, we have
$P\xi = \sum_{a=1}^d \xi^a Pe_a = \sum_{a=1}^n \xi^a e_a$.
Similarly, we find
\begin{equation} P_W v_\xi(x) = \sum_{a=1}^d P_W \xi^a e_a x 
= \sum_{a=1}^d \xi^a e_a P_W x
= \sum_{a=1}^n \xi^a e_a P_W x = P_W v_{P\xi}(x). \end{equation}
Taking norms, we see that
\begin{equation} |P_W v_\xi(x)|^2 
= \sum_{a=1}^n \sum_{b=1}^n \xi^a \xi^b \ev{P_W e_a x,P_W e_b x}
 = (P\xi)^T H(x) (P\xi), \end{equation}
where $H(x)$ is the matrix with entries 
$H_{ab}(x)  = \ev{P_W e_a x,P_W e_b x}$ for $a,b = 1, \ldots, n$.
By construction,
this matrix is positive definite at $\hat{y}$, so for a sufficiently 
small neighborhood $U$ of $\hat{y}$, we obtain
$|P_W v_\xi(\hat{x})|^2 \geq c |P\xi|^2$,
with the positive constant $c$ depending only on $\hat{y}$ and the choice of
neighborhood $U$. For any $x$ with $\hat{x} \in U$, we obtain
$|v_\xi(x)| = |x| |v_\xi(\hat{x})| \geq c |x| |P\xi|$,
which is inequality (\ref{KeyInequality1}).

We can deduce inequality (\ref{KeyInequality2}) from inequality 
(\ref{KeyInequality1}) as follows. We have
\begin{equation} |v_{P\xi}(\hat{x})| = |\sum_{a=1}^n \xi^a e_a \hat{x}|
 \leq |P\xi|\sum_{a=1}^n |e_a \hat{x}|. \end{equation}
Shrinking $U$ if necessary, we can assume that the functions
$|e_a \hat{x}|$ are bounded on $U$, and so we obtain
$|v_{P\xi}(\hat{x})| \leq c'|P\xi| \leq c^{-\frac{1}{2}} c' |v_\xi(\hat{x})|$.
Hence for any $x$ with $\hat{x} \in U$ we have
\begin{equation}
  |v_{P\xi}(x)| = |x| |v_{P\xi}(\hat{x})| \leq c^{-\frac{1}{2}} c' |x| |v_\xi(\hat{x})| = c^{-\frac{1}{2}} c' |v_\xi(x)|,
\end{equation}
which is inequality (\ref{KeyInequality2}).

To establish inequality (\ref{KeyInequality3}), first note the following
consequence of the triangle inequality. If $v,w$ are vectors in some
normed vector space, and $|v + w| \geq a |v|$ with $a > 0$, then we have
\begin{equation} \label{MinorUsefulInequality}
 |v| + |w| = |v| + |v+w - v| \leq 2|v| + |v+w| 
\leq \left(1 + \frac{2}{a} \right)|v+w|.
\end{equation}
Since $\xi = P\xi + Q\xi$, we have
$v_\xi(x) = v_{P\xi}(x) + v_{Q\xi}(x)$, so applying inequality
(\ref{KeyInequality2}) together with the inequality
(\ref{MinorUsefulInequality}) above, we obtain
\begin{equation} |v_\xi(x)| \geq c''' \left( |v_{P\xi}(x)| + |v_{Q\xi}(x)| \right), \end{equation}
with the constant $c'''$ depending only on $\hat{y}$ and the neighborhood
$U$.
\end{proof}

\begin{lemma}
\label{KeyEstimate2}
Let $f = |\mu|^2$ and fix $\mu_c \in \fg$. Then there exist
constants $c > 0$ and $\epsilon > 0$ (depending on $\mu_c$) such that whenever
$|\mu(x) - \mu_c| < \epsilon$, we have 
\begin{equation} \label{KeyInequality4}
|x|^2 |f(x)| \geq c \left|f(x) - f_c \right|^\frac{3}{2}.
\end{equation}
\end{lemma}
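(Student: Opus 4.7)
The plan is to exploit the quadratic structure of $\mu$ by writing $\mu(x) = \eta(x) - \alpha$, where $\eta(x) := \mu(x) + \alpha$ is homogeneous of degree $2$ in $x$, and setting $\beta := \mu_c + \alpha$ so that $\mu(x) - \mu_c = \eta(x) - \beta$. Let $C := \max_{|\hat y|=1}|\eta(\hat y)|$, so that $|\eta(x)| \leq C|x|^2$ for all $x$, and write $\eta(x) = |x|^2 \nu(\hat x)$ with $\hat x = x/|x|$. The basic algebraic identity $f(x) - f_c = \ev{\mu(x) - \mu_c, \mu(x) + \mu_c}$, together with these homogeneity facts, will suffice after splitting into cases according to whether $\beta$ vanishes.

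First I would dispose of the case $\beta \neq 0$. The constraint $|\mu(x) - \mu_c| < \epsilon$ is the same as $|\eta(x) - \beta| < \epsilon$, which for $\epsilon < |\beta|/2$ forces $|\eta(x)| \geq |\beta|/2$, and hence $|x|^2 \geq |\beta|/(2C) > 0$ uniformly. If additionally $\mu_c \neq 0$, then for small $\epsilon$ we have $|\mu(x)| \geq |\mu_c|/2$, so $|x|^2 f(x)$ is bounded below by a positive constant independent of $x$; meanwhile Cauchy--Schwarz gives $|f(x) - f_c| \leq (2|\mu_c| + \epsilon)\epsilon$, which is arbitrarily small, so the inequality holds for $c$ small. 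The remaining subcase $\mu_c = 0$, $\alpha \neq 0$ reduces to $|x|^2 \geq c|\mu(x)|$, which follows from the lower bound on $|x|^2$ together with $|\mu(x)| < \epsilon$.

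The interesting case is $\beta = 0$, i.e.\ $\mu_c = -\alpha$, where $|x|$ may degenerate. Here $\mu(x) - \mu_c = \eta(x)$, so $|\mu(x) - \mu_c| = |x|^2 |\nu(\hat x)|$. If $\mu_c = 0$ also (forcing $\alpha = 0$), then $\mu = \eta$ is homogeneous and the inequality reduces to $|x|^2 \geq c|\eta(x)|$, which is immediate from the definition of $C$. If $\mu_c \neq 0$, then $|\mu(x)| \geq |\mu_c|/2$ gives $f(x) \geq |\mu_c|^2/4$, and the identity with Cauchy--Schwarz yields $|f(x) - f_c| \leq 3|\mu_c| \cdot |\eta(x)|$, so
\begin{equation*}
|f(x) - f_c|^{3/2} \leq (3|\mu_c|)^{3/2} |x|^3 |\nu(\hat x)|^{3/2}.
\end{equation*}
The key trick is to distribute the single available constraint as $|x||\nu(\hat x)|^{1/2} = (|x|^2 |\nu(\hat x)|)^{1/2} < \sqrt{\epsilon}$, giving $|x||\nu(\hat x)|^{3/2} = |\nu(\hat x)| \cdot |x||\nu(\hat x)|^{1/2} \leq C\sqrt{\epsilon}$; combined with $|x|^2 f(x) \geq |x|^2 |\mu_c|^2/4$, this produces the desired $3/2$ estimate for $c$ sufficiently small in terms of $|\mu_c|$, $C$, and $\epsilon$.

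The main obstacle is precisely this last subcase: the simultaneous degeneration $|x| \to 0$ and $|\nu(\hat x)| \to 0$ constrained only by $|x|^2 |\nu(\hat x)| < \epsilon$ defeats any naive compactness argument (one cannot restrict to the unit sphere as in the homogeneous setting of \cite{NeemanQuotientVarieties}), and the exponent $3/4$ in Theorem~\ref{GlobalEstimate} (equivalently $3/2$ here) emerges precisely from the algebraic splitting $|x|^2 |\nu(\hat x)| = (|x||\nu(\hat x)|^{1/2})^2$, which saturates the constraint.
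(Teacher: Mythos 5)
Your proof is correct, and it follows essentially the same route as the paper's. The paper substitutes $v_{ij} = x_i\bar{x}_j$ so that $\mu$ becomes an affine map $\phi(v) = Av - \alpha$, bounds $|v| \leq N|x|^2$, and reduces to the inequality $|v|\,|\phi(v)|^2 \geq c\,\bigl||\phi(v)|^2 - f_c\bigr|^{3/2}$, which is then proved by a four-case analysis on whether $v_c = 0$ and whether $\phi_c = 0$ (Lemma \ref{RhoPhiLemma}). Your four cases correspond exactly to these under $v_c \leftrightarrow \beta = \mu_c + \alpha$ (since $Av_c = \beta$, with $A$ injective after projection) and $\phi_c \leftrightarrow \mu_c$, and your handling of the critical case $\beta = 0$, $\mu_c \neq 0$ --- splitting $|\eta|^{3/2} = |x|^2\cdot|\nu(\hat{x})|\cdot|\eta|^{1/2}$ --- is the same exponent-splitting device the paper uses when it combines $|\phi(v)^2 - \phi_c^2| \leq c_1|\phi(v)-\phi_c||\phi_c|$ with $|\phi(v)-\phi_c| \leq |A||v|$. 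The only genuine difference is that you bypass the linearization in the $v$-coordinates and the reduction to injective $A$, exploiting the degree-$2$ homogeneity of $\eta = \mu + \alpha$ directly via $\eta(x) = |x|^2\nu(\hat{x})$, which makes the argument slightly more self-contained. One trivial edge case to note: if $\eta \equiv 0$ your constant $C$ vanishes and the bound $|x|^2 \geq |\beta|/(2C)$ is undefined, but then $\mu$ is constant and the lemma is vacuous or trivial, so this is cosmetic.
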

\begin{proof}
Fix some particular $\mu_c$. Recall that $\mu$ is quadratic in the coordinate 
$x$ with no linear terms. Thus $\mu$ is affine in the coordinates
$v_{ij} = x_i \bar{x}_j$, and we may write
$\mu(x) = \phi(v)$, where $\phi(v) = Av - \alpha$, for some linear 
transformation $A: V \otimes V \to \fg$. We have
$|v_{ij}| = |x_i||x_j| \leq \frac{1}{2}\left(|x_i|^2 + |x_j|^2 \right)$,
so that
$|v| \leq \frac{1}{2} \sum_{i,j} |x_i|^2 + |x_j|^2 = N |x|^2$,
where $N = \dim V$. Thus
$|x|^2 |f(x)| = |x|^2 |\phi(v)|^2 \geq N^{-1} |v| |\phi(v)|^2$,
so it suffices to show that
\begin{equation} |v| |\phi(v)|^2 \geq c \left| |\phi(v)|^2 - f_c \right|^\frac{3}{2}, \end{equation}
whenever $|\phi(v) - \mu_c| < \epsilon$. This follows immediately
from Lemma \ref{RhoPhiLemma}, which we state and prove below.
\end{proof}

\begin{lemma}
\label{RhoPhiLemma} Let $V_1$ and $V_2$ be inner product spaces, and consider
an affine map $\phi: V_1 \to V_2$ given by $\phi(v) = Av - \alpha$
for some linear map $A: V_1 \to V_2$ and constant $\alpha \in V_2$. 
Then for any $\phi_c$ in the image of $\phi$, there exist constants $c > 0$ and 
$\epsilon > 0$ such that
\begin{equation} |v| |\phi(v)|^2 \geq c \npvz^\frac{3}{2} \end{equation}
whenever $|\phi(v) - \phi_c| < \epsilon$.
\end{lemma}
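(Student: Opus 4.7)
The plan is to reduce the stated inequality to a linear-algebraic estimate via a change of variables, and then dispatch a short case analysis. Let $v_c \in \phi^{-1}(\phi_c)$ be the point of minimum norm, which exists uniquely as the orthogonal projection of $0$ onto the nonempty affine subspace $\phi^{-1}(\phi_c)$, and in particular satisfies $v_c \perp \ker A$. Setting $u := \phi(v) - \phi_c = A(v - v_c)$, direct expansion gives
\begin{equation*}
|\phi(v)|^2 - |\phi_c|^2 = |u|^2 + 2\langle u, \phi_c\rangle,
\end{equation*}
so whenever $|u| < \epsilon$ one has $\npvz \leq |u|(|u| + 2|\phi_c|) \leq (\epsilon + 2|\phi_c|)|u|$, and hence $\npvz^{3/2} \leq C|u|^{3/2}$. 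Consequently it suffices to produce constants $c', \epsilon > 0$ such that $|v|\,|\phi(v)|^2 \geq c'|u|^{3/2}$ whenever $|u| < \epsilon$.

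Next, I decompose $v - v_c = p + q$ with $p \in (\ker A)^\perp$ and $q \in \ker A$. Then $u = Ap$, and the injectivity of $A$ restricted to $(\ker A)^\perp$ yields positive constants $\lambda, \Lambda$ with $\lambda|p| \leq |u| \leq \Lambda|p|$. Since $v_c \perp \ker A$, the Pythagorean theorem gives $|v|^2 = |v_c + p|^2 + |q|^2 \geq |v_c + p|^2$, so the kernel component $q$ only enlarges $|v|$ and may safely be ignored. The reduced claim is then settled case by case. If $\phi_c \neq 0$, shrinking $\epsilon$ ensures $|\phi(v)|^2 \geq |\phi_c|^2/4$, so it suffices to show $|v| \geq c''|u|^{3/2}$; when $v_c \neq 0$, one has $|v| \geq |v_c + p| \geq |v_c|/2$ once $|p|$ is small, while $|u|^{3/2} \to 0$; when $v_c = 0$, one has $|v| \geq |p| \geq |u|/\Lambda$, which dominates $|u|^{3/2}$ for small $|u|$. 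If $\phi_c = 0$, the inequality reduces to $|v|\,|u|^2 \geq c|u|^3$, equivalently $|v| \geq c|u|$, which again is immediate in either subcase (from $|v|$ bounded below when $v_c \neq 0$, or from $|v| \geq |p| \geq |u|/\Lambda$ when $v_c = 0$).

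The main obstacle is organizational rather than conceptual: one must coordinate the subcases $\phi_c = 0$ vs.\ $\phi_c \neq 0$ and $v_c = 0$ vs.\ $v_c \neq 0$ without inadvertently shrinking $\epsilon$ to zero. The key insight that makes the argument go through is that passing to $u = \phi(v) - \phi_c$ replaces $\npvz$ with a polynomial in $u$ whose leading behavior is linear (when $\phi_c \neq 0$) or quadratic (when $\phi_c = 0$), after which $\ker A$ becomes harmless because it contributes only to enlarging $|v|$ on the left-hand side.
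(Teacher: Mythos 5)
Your proof is correct and follows essentially the same route as the paper's: the same case split on whether $v_c$ and $\phi_c$ vanish, the same difference-of-squares estimate $\left| |\phi(v)|^2 - |\phi_c|^2 \right| \le |u|\left(|u| + 2|\phi_c|\right)$ with $u = \phi(v)-\phi_c = A(v-v_c)$, and the same lower bounds on $|v|$ (bounded below when $v_c \neq 0$, comparable to $|u|$ when $v_c = 0$), with your orthogonal decomposition of $v - v_c$ into $\ker A$ and $(\ker A)^\perp$ components playing the role of the paper's without-loss-of-generality reduction to injective $A$. One presentational caution: the blanket reduction to showing $|v|\,|\phi(v)|^2 \ge c'|u|^{3/2}$ is actually false in the subcase $\phi_c = 0$, $v_c = 0$ (there it would demand $|v| \ge c'|u|^{-1/2}$), but you correctly abandon it in the $\phi_c = 0$ branch and prove the original cubic bound $|v|\,|u|^2 \ge c\,|u|^3$ directly, so the argument as a whole is sound.
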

\begin{proof} To avoid unnecessary clutter, we use the shorthand 
$v^2 := |v|^2$ below.
First note that if $P$ is a projection such that $AP = A$,
then we have $\phi(v) = \phi(Pv)$, and so
\begin{equation} |v| |\phi(v)|^2 = |v| |\phi(Pv)|^2 \geq |Pv| |\phi(Pv)|^2. \end{equation}
Taking $P$ to be the orthogonal projection from $V_1$ to $(\ker A)^\perp$, 
without loss of generality we can assume that $A$ is injective. Similarly, 
without loss of generality we may assume that $A$ is surjective.
Suppose that $\phi_c \in V_2$ is fixed. Pick $v_c \in V_1$ so that
$\phi(v_c) = \phi_c$. There are four possible cases.

Case 1: $v_c = 0, \phi_c = 0$. In this case, $\alpha = 0$, and
$\phi(v) = Av$, so that $|\phi(v)| \leq |A| |v|$. Thus
$|v| |\phi(v)|^2 \geq |A|^{-1} |\phi(v)|^3$,
as desired. We may take $\epsilon$ to be any positive number, and
$c = |A|^{-1}$.

Case 2: $v_c = 0, \phi_c \neq 0$. Take $\epsilon \leq |\phi_c|/2$.
Then
\begin{align}
|\phi(v)^2 - \phi_c^2|
&= |\ev{\phi(v)-\phi_c,\phi(v)+\phi_c}| \\
&\leq |\phi(v) - \phi_c| |\phi(v) + \phi_c | \\
&\leq |\phi(v) - \phi_c| \left( 2|\phi_c| + \epsilon \right) \\
&\leq \frac{5}{2} |\phi(v) - \phi_c| |\phi_c|,
\end{align}
so we have
$|\phi(v)^2 - \phi_c^2| \leq c_1 |\phi(v) - \phi_c| |\phi_c|$,
where $c_1$ is a numerical constant independent of $\phi_c$.
Since $|\phi(v) - \phi_c| < |\phi_c|/2$, we also have
$|\phi(v)^2 - \phi_c^2| \leq c_2 |\phi_c|^2$.
Combining these two inequalities, we have
$|\phi(v)^2 - \phi_c^2|^\frac{3}{2} \leq c_3 |\phi(v) - \phi_c| |\phi_c|^2$.
Since $\phi(v) - \phi_c = A(v - v_c) = Av$, we have
$|\phi(v) - \phi_c| \leq |A| |v|$. Putting this back into the previous
inequality, we obtain
$|\phi(v)^2 - \phi_c^2|^\frac{3}{2} \leq |A| |v| |\phi_c|^2$.
On the other hand, with our choice of $\epsilon$ we have
$|\phi| \geq \frac{1}{2} |\phi_c|$, so that
$|\phi(v)^2 - \phi_c^2|^\frac{3}{2} \leq 4 |A| |v| |\phi|^2$, 
as desired.

Case 3: $v_c \neq 0, \phi_c = 0$. Take 
$\epsilon = |A^{-1}|^{-1} |A|^{-1} |\alpha| / 2$.
Since in this case $A v_c = \alpha$, we have $\epsilon \leq |A^{-1}|^{-1}|v_c|/2$,
 and
\begin{align}
|v_c| &= |v - (v-v_c)| \\
&\leq |v| + |v-v_c| \\
&= |v| + |A^{-1} \phi(v)| \\
&\leq |v| + |A^{-1}| \epsilon \\
&\leq |v| + \frac{|v_c|}{2}.
\end{align}
Thus $|v| \geq |v_c|/2 \geq |A|^{-1} |\alpha|/2$. Then
\begin{equation} |\phi(v)|^3 \leq \epsilon |\phi(v)|^2 
= \frac{1}{2} |A^{-1}|^{-1}|A|^{-1} |\alpha| |\phi(v)|^2
\leq |A^{-1}|^{-1} |v| |\phi(v)|^2, \end{equation}
which is the desired inequality.

Case 4: $v_c \neq 0, \phi_c \neq 0$. Let $\epsilon'$ be chosen as in case (3),
and let $\epsilon = \min\{\epsilon', |\phi_c|/2\}$. As in the previous cases,
this choice of $\epsilon$ guarantees that $|v| \geq |v_c|/2$, and that
$|\phi(v)| \geq |\phi_c|/2$. As before,
\begin{align}
|\phi(v)^2 - \phi_c^2| &\leq |\phi(v) - \phi_c| |\phi_v + \phi_c| \\
&\leq \epsilon ( 2 |\phi_c| + \epsilon) \\
&\leq \frac{5}{4} |\phi_c|^2. 
\end{align}
Similarly, since $|v-v_c| \leq |v_c|/2$ and 
$|\phi(v) - \phi_c| \leq |A||v - v_c|$, we also have
\begin{equation}|\phi(v)^2 - \phi_c^2| \leq (3/4) |A| |v_c| |\phi_c|.\end{equation}
Putting these together, we have that
\begin{equation} |\phi(v)^2 - \phi_c^2|^\frac{3}{2} 
\leq c|v_c| |\phi_c|^2 \leq c' |v| |\phi(v)|^2, \end{equation}
where $c$ and $c'$ are numerical constants independent of $\phi_c$.
\end{proof}
Lemmas \ref{KeyEstimate1} and
\ref{KeyEstimate2} allow us to prove the following local estimates, which
are essential in the proof of Theorem \ref{GlobalEstimate}.
\begin{proposition} 
\label{FreeCase} 
Let $\mu_c \in \fg$ be fixed and $y$ is some point in $V$ with discrete 
stabilizer. Then there exists an open neighborhood $U$ of $\hat{y}$ and 
constants $c > 0$ and $\epsilon > 0$ such that
\begin{equation}
 |\nabla f(x)|^2 \geq k|f(x) - f_c|^\frac{3}{2} 
\end{equation}
for all $x \in V \setminus \{0\}$ such that $\hat{x} \in U$ and
$|\mu(x) - \mu_c| < \epsilon$.
\end{proposition}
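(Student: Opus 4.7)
The plan is to simply combine Lemmas \ref{KeyEstimate1} and \ref{KeyEstimate2}, which were designed precisely for this purpose. The key observation is that the hypothesis ``$y$ has discrete stabilizer'' means $\stab(\hat{y}) = \stab(y) = 0$, so the orthogonal projection $P: \fg \to \stab(\hat{y})^\perp$ appearing in Lemma \ref{KeyEstimate1} is simply the identity on all of $\fg$. This is what makes this case the ``easy'' one, to be generalized later to points with positive-dimensional stabilizer.

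First I would choose the neighborhood $U$ of $\hat{y}$ supplied by Lemma \ref{KeyEstimate1}. Applying inequality (\ref{KeyInequality1}) with the choice $\xi = \mu(x)$ (and using $P = \mathrm{id}$) yields
\begin{equation}
  |v_{\mu(x)}(x)| \geq c\, |x|\, |\mu(x)|
\end{equation}
for all nonzero $x$ with $\hat{x} \in U$, where $c > 0$ depends only on $\hat{y}$ and $U$. Combining this with the identity $|\nabla f(x)| = 2|v_{\mu(x)}(x)|$ from equation (\ref{GradEquality}), and squaring, gives
\begin{equation}
  |\nabla f(x)|^2 \;\geq\; 4c^2\, |x|^2\, |\mu(x)|^2 \;=\; 4c^2\, |x|^2\, f(x).
\end{equation}

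Next, I would apply Lemma \ref{KeyEstimate2} to the fixed value $\mu_c$: this produces constants $c' > 0$ and $\epsilon > 0$ (depending on $\mu_c$) such that
\begin{equation}
  |x|^2\, f(x) \;\geq\; c'\, |f(x) - f_c|^{3/2}
\end{equation}
whenever $|\mu(x) - \mu_c| < \epsilon$. Chaining these two inequalities yields the desired estimate with constant $k = 4 c^2 c'$, on the intersection of the conditions $\hat{x} \in U$ and $|\mu(x) - \mu_c| < \epsilon$, which is exactly the region described in the statement.

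There is essentially no obstacle here, because the two preceding lemmas have already absorbed all the work: Lemma \ref{KeyEstimate1} handles the linear-algebraic step of bounding $|v_\xi|$ below (trivialized by the discrete-stabilizer hypothesis), and Lemma \ref{KeyEstimate2} provides the crucial nonlinear estimate relating $|x|^2 f(x)$ to $|f(x) - f_c|^{3/2}$ via the affine-in-$v$ structure of $\mu$ and Lemma \ref{RhoPhiLemma}. The genuine analytic difficulty of the paper lies in those earlier lemmas and will reappear only when one tries to remove the discrete-stabilizer hypothesis, which I expect to be the content of a subsequent proposition.
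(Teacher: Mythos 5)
Your proof is correct and follows essentially the same route as the paper: apply Lemma \ref{KeyEstimate1} with $P=\mathrm{id}$ (valid since the discrete stabilizer gives $\stab(\hat{y})=0$) to get $|v_{\mu(x)}(x)|\geq c|x||\mu(x)|$, combine with $|\nabla f|=2|v_{\mu}|$, and chain with Lemma \ref{KeyEstimate2}. No gaps.
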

\begin{proof}
Suppose $y \in V$ and $\stab(y) = 0$. Then by
Lemma \ref{KeyEstimate1}, there is a neighborhood $U$ of $\hat{y}$ so that
\begin{equation}
  |v_\xi(x)|^2 \geq c |x|^2 |\xi|^2
\end{equation}
for all $x$ such that $\hat{x} \in U$. Take $\xi = \mu(x)$ and apply
Lemma \ref{KeyEstimate2} to find $c'$ and $\epsilon$ so that
\begin{equation}
 |v_{\mu(x)}(x)|^2 \geq c' \left| |\mu(x)|^2 - f_c \right|^\frac{3}{2}, 
\end{equation}
whenever $|\mu(x) - \mu_c| < \epsilon$.
Since $|\nabla f(x)| = 2|v_{\mu(x)}(x)|$, this gives the desired inequality.
\end{proof}
\begin{proposition} 
\label{InductionStep}
Let $y \in V$ and suppose $\stab(y)$ is a proper nontrivial subspace of $\fg$. 
Then there are proper
subtori $G_1, G_2$ of $G$ such that $G \iso G_1 \times G_2$, a neighborhood 
$U$ of $\hat{y}$, and a constant $c > 0$ such that
\begin{equation}
 |\nabla f(x)| \geq c\left( |\nabla f_{G_1}(x)| + |\nabla f_{G_2}(x)| \right)
\end{equation}
for all $x \in V \setminus \{0\}$ with $\hat{x} \in U$, 
where $f_{G_1} = |\mu_{G_1}|^2$ and $f_{G_2} = |\mu_{G_2}|^2$.
\end{proposition}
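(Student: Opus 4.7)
The plan is to reduce the statement to Lemma \ref{KeyEstimate1} by exhibiting $G$ as a direct product of subtori compatible with the orthogonal splitting of $\fg$ along the stabilizer. Set $\fg_1 := \stab(\hat{y}) = \stab(y)$, which by hypothesis is a proper nontrivial subspace of $\fg$. Since $G$ is a compact torus, $\fg_1$ is rational and is the Lie algebra of the closed subtorus $G_1 := \Stab_G(\hat{y})^\circ$. Because the character lattice of any compact torus is a free abelian group, the short exact sequence $1 \to G_1 \to G \to G/G_1 \to 1$ splits, producing a closed complementary subtorus $G_2$ with $G \iso G_1 \times G_2$. Both $G_1$ and $G_2$ are proper since $\fg_1$ is both proper and nontrivial.

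Next I would arrange for $\fg_2 := \Lie G_2$ to be the orthogonal complement of $\fg_1$ in $\fg$. A rational complement chosen above may fail to be orthogonal with respect to the trace inner product, but since any two inner products on the finite-dimensional space $\fg$ are equivalent, we may replace the original by a $G$-invariant inner product declaring $\fg = \fg_1 \oplus \fg_2$ to be orthogonal. This modification only changes $|\nabla f|$, $|\nabla f_{G_1}|$, and $|\nabla f_{G_2}|$ by bounded factors, so the inequality will still hold with a possibly smaller constant. With this choice, the orthogonal projections $P : \fg \to \fg_2$ and $Q : \fg \to \fg_1$ of Lemma \ref{KeyEstimate1} are dual to the inclusions $\fg_i \hookrightarrow \fg$, so that $\mu_{G_1} = Q \circ \mu$ and $\mu_{G_2} = P \circ \mu$.

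The desired inequality then follows directly from Lemma \ref{KeyEstimate1} applied with $\xi = \mu(x)$: for $\hat{x}$ in a sufficiently small neighborhood of $\hat{y}$, one has $|\nabla f(x)| = 2|v_{\mu(x)}(x)| \geq 2c''(|v_{P\mu(x)}(x)| + |v_{Q\mu(x)}(x)|) = c''(|\nabla f_{G_1}(x)| + |\nabla f_{G_2}(x)|)$, where we have used $|\nabla f_{G_i}(x)| = 2|v_{\mu_{G_i}(x)}(x)|$. The main technical obstacle is the compatibility between the orthogonal decomposition used in Lemma \ref{KeyEstimate1} and the rational structure needed to integrate to honest subtori, which is exactly what the change of inner product handles; the rest of the argument is essentially bookkeeping. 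Together with Proposition \ref{FreeCase}, this proposition will feed into an induction on $\dim G$ to establish Theorem \ref{GlobalEstimate}.
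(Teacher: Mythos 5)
Your core argument is the paper's: set $\fg_1 = \stab(\hat{y})$, $\fg_2 = \fg_1^\perp$, note $\mu_G = \mu_{G_1} \oplus \mu_{G_2}$ under these projections, and apply inequality (\ref{KeyInequality3}) of Lemma \ref{KeyEstimate1} with $\xi = \mu(x)$ together with $|\nabla f_{G_i}| = 2|v_{\mu_{G_i}}|$. That part is correct and complete.

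The one place you deviate --- choosing a rational complement by splitting the sequence of tori and then \emph{changing the inner product} to make it orthogonal --- is both unnecessary and the weakest step of your write-up. It is unnecessary because the orthogonal complement of $\stab(\hat{y})$ with respect to the trace inner product is already rational: the inner product is $\ev{\xi,\eta} = \sum_j \beta_j(\xi)\beta_j(\eta)$ with $\beta_j$ the (integral) weights of $V$, and $\stab(\hat{y})$ is cut out by the integral conditions $\beta_j(\xi)=0$ for $j$ with $\hat{y}_j \neq 0$, so its orthogonal complement is a rational subspace and integrates to a closed subtorus. And it is not safe as stated: replacing the inner product on $\fg$ changes the identification $\fgd \iso \fg$, hence replaces $\mu(x) \in \fg$ by $A^{-1}\mu(x)$ for some positive-definite $A$, and $\nabla f(x) = 2iv_{\mu(x)}(x)$ becomes $2iv_{A^{-1}\mu(x)}(x)$. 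Since $A^{-1}$ need not preserve $\stab(x)$, the new gradient can be nonzero where the old one vanishes, so the two are not comparable by uniform constants; the claim that the norms change only ``by bounded factors'' would need a real argument. Dropping the detour and taking $\fg_2 = \fg_1^\perp$ directly (as the paper does) removes the issue. A last cosmetic point, which affects the paper equally: with $\fg = \fg_1 \oplus \fg_2$ the multiplication map $G_1 \times G_2 \to G$ could a priori have finite kernel, but only the Lie-algebra splitting and the resulting decomposition of $\mu_G$ are ever used, so this is harmless.
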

\begin{proof} Let $\fg_1 = \stab(y)$ and 
$\fg_2 = \fg_1^\perp$. Since $\fg$ is abelian, both
$\fg_1$ and $\fg_2$ are Lie subalgebras, and $\fg = \fg_1 \oplus \fg_2$.
Then $\mu_G = \mu_{G_1} \oplus \mu_{G_2}$, and 
$|\mu_G|^2 = |\mu_{G_1}|^2 + |\mu_{G_2}|^2$, so the result follows immediately
from Lemma \ref{KeyEstimate1} and inequality (\ref{KeyInequality3}).
\end{proof}
\begin{proof}[Proof of Theorem \ref{GlobalEstimate}]
By Proposition \ref{ReduceToTorus} we will assume that $G$ is a torus.
By Proposition \ref{StrongerCondition}, it suffices to show that for
each $\mu_c \in \fg$, there is some $\epsilon > 0$ so that
inequality (\ref{GlobalInequality}) holds when
$|\mu(x) - \mu_c| < \epsilon$.
Additionally, since $0$ is always a critical point of $f$, it suffices
to prove the estimate only on $V \setminus \{0\}$.
Furthermore, it suffices 
to show that each point $\hat{y}$ of the unit sphere has a neighborhood $U$
such that the estimate holds for all $x$ with $\hat{x} \in U$,
since by compactness we can choose finitely many
such neighborhoods to cover the unit sphere, and this yields the
inequality on $V \setminus \{0\}$.

We will prove the estimate by induction on the dimension of $G$. First
suppose $\dim G = 1$. Then we may assume without loss of generality that 
$G$ acts locally freely on $V \setminus \{0\}$, since otherwise
the fundamental vector field $v_\xi(x)$ vanishes on a nontrivial subspace
and we can restrict our attention to its orthogonal complement.
Then Proposition \ref{FreeCase} yields the desired neighborhoods and estimates.

Now assume that $\dim G > n$ and we have proved the estimate for tori
of dimension $\leq n$. Without loss of generality, we can assume that there is 
no nonzero vector in $V$ which is fixed by all of $G$ (since we can restrict to
its orthogonal complement).
Let $\hat{y}$ be some point in the unit sphere, and let
$\fg_1 = \stab(\hat{y})$. If $\fg_1 = 0$, we may apply
Proposition \ref{FreeCase} to get a neighborhood $U_{\hat{y}}$ and a constant 
$k_{\hat{y}}$ such that the estimate holds on $U_{\hat{y}}$.
Otherwise,  let $\fg_2 = \fg_1^\perp$ so that 
$\fg = \fg_1 \oplus \fg_2$,  corresponding to subtori $G_1$ and $G_2$.
Then we may apply Proposition
\ref{InductionStep} to find a neighborhood $U_{\hat{y}}$ so that
\begin{equation} |\nabla f(x)| 
\geq k \left( |\nabla f_{G_1}(x)| + |\nabla f_{G_2}(x)| \right), \end{equation}
holds for all $x$ with $\hat{x} \in U_{\hat{y}}$.
Let $P_i: \fg \to \fg_i$, $i = 1, 2$ be the orthogonal projections, 
$\mu_{c,i} = P_i \mu_c$,
and $f_{c,i} = |\mu_{c,i}|^2$. Since $\mu_{G_i}$ are the moment maps for the
action of $G_i$, which are tori of dimension $\leq n$, we may apply
the induction hypothesis to find
a neighborhood $U$ of $\hat{y}$ and constants $\epsilon > 0$, $c > 0$
so that
\begin{align}
|\nabla f_{G_1}(x)| &\geq& k' |f_{G_1}(x) - f_{c,1}|^\frac{3}{4}, \\
|\nabla f_{G_2}(x)| &\geq& k' |f_{G_2}(x) - f_{c,2}|^\frac{3}{4},
\end{align}
for all $x$ such that $\hat{x} \in U$ and $|\mu_{G_i}(x) - \mu_{c,i}| < \epsilon$.
For any non-negative numbers $a,b$, we have
$a^\frac{3}{4} + b^\frac{3}{4} \geq (a + b)^\frac{3}{4}$,
so we obtain
\begin{equation} |\nabla f(x)|
\geq k''\left(|f_{G_1}(x) - f_{c,1}| + |f_{G_2}(x) - f_{c,2}|\right)^\frac{3}{4}
\geq k''|f(x) - f_c|^\frac{3}{4}, \end{equation}  
whenever $|\mu(x) - \mu_c| < \epsilon$, as desired.

To obtain the estimate for the functions $|\muc|^2$ and $|\muhk|^2$, 
we simply note that by Lemma \ref{CrossTermVanish}, the norm of the gradient 
is bounded below by a sum of terms of the form $\left| \nabla |\mu_i|^2 \right|$,
and since we can bound each term individually we obtain a bound for the sum.
\end{proof}

\begin{remark} \label{SecondNeemanRemark}
As pointed out in Remark \ref{FirstNeemanRemark}, Theorem \ref{GlobalEstimate}
is a generalization of \cite[Theorem A.1]{NeemanQuotientVarieties}
and much of the argument is similar.
The main new ingredients are Lemma \ref{KeyEstimate2} and
Proposition \ref{FreeCase}, which are absolutely
essential in handling the general case of a nonzero constant term
in the moment map.
\end{remark}
\section{Application to Hypertoric Varieties} \label{sec-toric}

\subsection{Toric Varieties} 
Let $T$ be a subtorus of the standard $N$-torus $(S^1)^N$,
with quotient $K := (S^1)^N / T$. We have a short exact sequence
\begin{equation} \label{ToriSES}
1  \to  T  \xrightarrow{i}  (S^1)^N  \xrightarrow{\pi}  K  \to  1.
\end{equation}
Taking Lie algebras, we have
\begin{equation}\label{SES}
0  \to  \ft \xrightarrow{i}  \BR^N  \xrightarrow{\pi}  \fk  \to 0,
\end{equation}
\begin{equation}
\label{DualSES}
0 \to \fkd \xrightarrow{\pi^\ast} \BR^N \xrightarrow{i^\ast} \ftd \to 0.
\end{equation}
Recall the standard Hamiltonian action of $(S^1)^N$ on $\BC^N$.
This restricts to a Hamiltonian action of $T$ on $\BC^N$, and hence induces 
an action on  $T^\ast \BC^N$. Given $\alpha \in \ft^\ast$, the \emph{toric variety} 
associated to this data is the K\"ahler quotient
\begin{equation}
  \FX_\alpha := \BC^N \reda{\alpha} T,
\end{equation}
and the \emph{hypertoric variety} associated to this data is the hyperk\"ahler quotient
\begin{equation}
  \FM_\alpha := T^\ast \BC^N \rreda{(\alpha,0)} T.
\end{equation}
Note that $\FX_\alpha$ has a residual Hamiltonian action of $K$, and that $\FM_\alpha$
has a residual hyperhamiltonian action of $K$.

It is convenient to organize the data determining $\FX_\alpha$ and $\FM_\alpha$ into
a \emph{hyperplane arrangement} as follows.
Let $\{e_j\}$ be the standard basis of $\BR^N$. Then we obtain a collection
$\{u_j\}$ of \emph{weights} defined by $u_j := i^\ast(e_j) \in \ftd$, as well as 
a collection of \emph{normals} $\{n_i\}$ defined by $n_i = \pi(e_i) \in \fk$.
Note that we allow repetitions, i.e. $u_i$ and $u_j$ are considered to be
distinct elements of $A$ for $i \neq j$ even if $u_i = u_j$ as elements of 
$\ftd$, and similarly for the normals.
Pick some $d \in \BR^N$ such that $i^\ast(d) = \alpha$.
Then we can define affine hyperplanes $H_i$ by
\begin{equation}
H_i = \{ x \in \fkd \suchthat \ev{n_i, x} - d_i = 0 \},
\end{equation}
as well as half-spaces
\begin{equation}
H_i^\pm = \{ x \in \fkd \suchthat \pm(\ev{n_i,x} - d_i) \geq 0 \}.
\end{equation}
This arrangement of hyperplanes will be denoted by $\CA$. An example is pictured 
in Figure \ref{fig-hyperplane-example} below.

\begin{figure}[h]
\centering
\begin{tikzpicture}



  \coordinate (L1a) at ( 0.0, -0.5) {};
  \coordinate (L1b) at ( 0.0,  3.0) {};

  \coordinate (L2a) at (-0.5,  0.0) {};
  \coordinate (L2b) at ( 3.0,  0.0) {};

  \coordinate (L3a) at (-0.5,  1.5) {};
  \coordinate (L3b) at ( 3.0,  1.5) {};

  \coordinate (L4a) at (-0.5,  3.0) {};
  \coordinate (L4b) at ( 3.0, -0.5) {};

  \coordinate (V1) at (intersection of L1a--L1b and L2a--L2b);
  \coordinate (V2) at (intersection of L4a--L4b and L2a--L2b);
  \coordinate (V3) at (intersection of L3a--L3b and L4a--L4b);
  \coordinate (V4) at (intersection of L1a--L1b and L3a--L3b);

  \draw[fill,color=gray] (V1)--(V2)--(V3)--(V4)--cycle;

  \draw[thick] (L1a)--(L1b);
  \draw[thick] (L2a)--(L2b);
  \draw[thick] (L3a)--(L3b);
  \draw[thick] (L4a)--(L4b);



\end{tikzpicture}
\caption{The hyperplane arrangement corresponding to the hyperk\"ahler analogue of 
$\widetilde{\BP^2}$, the blow-up of $\BP^2$ at a point. The lines are the hyperplanes 
$H_i$ and the shaded area is the common intersection $\cap H_i^+$, which is the
moment polytope of $\widetilde{\BP^2}$.}
\label{fig-hyperplane-example}
\end{figure}
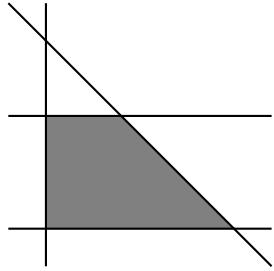


It is well-known that the geometry and topology of toric varieties is
controlled by their moment polytopes, or dually by their normal fans \cite{Fulton}. Hyperplane arrangements
play an analogous role in hypertoric geometry. In fact, most of the geometry and 
topology can be recovered from the \emph{matroid} underlying the hyperplane arrangement.
We will show that Morse theory on hypertoric varieties is more naturally
understood in terms of the \emph{weight matroid} of the arrangement, which is dual
to the matroid of the hyperplane arrangement.
\begin{definition} \label{def-weight-matroid}
Let $\CB = \{u_1, \dots, u_N\}$  be the collection of weights of $T$ acting on $\BC^N$.
For any subset $\CJ \subset \CB$, the \emph{rank} of $\CJ$, denoted $\rk(\CJ)$, is defined to 
be equal to the dimension of the span of $\CJ$. A subset $\CJ$ is called a \emph{flat} if 
for any $u_k \in \CB \setminus \CJ$, $\rk(\CJ \cup \{u_k\}) > \rk(\CJ)$. A flat is called a 
\emph{hyperplane} if  $\rk(\CJ) = \dim(T)-1$. The \emph{weight matroid} is the set of all
flats.
\end{definition}

\begin{remark} The relationship between hypertoric geometry and matroid theory
was developed in \cite{HauselSturmfels}. The weight matroid of $\CB$ and the hyperplane
arrangement $\CA$ are related by \emph{Gale duality}.
\end{remark}

\begin{remark} In our notation, the collection of weights $\CB$ will be the set
$\{u_1, \dots, u_N\}$, where $u_j = i^\ast(e_j)$ as above.
\end{remark}

\subsection{Analysis of the Critical Sets}  
We now consider a quotient of the form $\FM(\alpha_\BR, \alpha_\BC)$ with
$\alpha_\BC$ a regular value of $\muc$. Note that this includes quotients
of the form $\FM(\alpha_\BR, 0)$ as a special case, since we can
always rotate the hyperk\"ahler frame. We identify $T^\ast \BC^N$ with 
$\BC^N \times \BC^N$ and use coordinates $(x, y)$. Shifting $\muc$ by $\alpha_\BC$, 
we can take it to be
\begin{equation} \label{MomentMapFormula} 
  \muc(x,y) = \sum_{i=1}^N x_i y_i u_i - \alpha_\BC,
\end{equation}
and we will consider Morse theory with the function $f = |\muc|^2$.
If $\CJ \subseteq \CB$ is a flat, we define a subspace $V_\CJ \subset \BC^N$ by
\begin{equation} \label{DefnVJ}
V_\CJ := \bigoplus_{u_j \in \CJ} V(u_j).
\end{equation}
and a torus $T_\CJ$ by $T_\CJ = T / N_\CJ$, where $N_\CJ$ is the kernel of the action
map on $V_\CJ$. We define $\FM_\CJ$ to be the hyperk\"ahler quotient 
\begin{equation} \label{DefnMJ}
\FM_\CJ := T^\ast V_\CJ \rreda{\alpha_\CJ} T_\CJ,
\end{equation}
where $\alpha_\CJ$ is an induced moment map level which we define below.\footnote{$\alpha$
can always be chosen so that the moment map levels $\alpha_\CJ$ which appear for flats $\CJ$
are all regular.}

The inclusion $i:T \into (S^1)^N$ induces a surjective map of rings
$i^\ast: H_{(S^1)^N}^\ast \to H_T^\ast$, so that 
$H_T^\ast \iso \BQ[u_1, \ldots, u_N] / \ker i^\ast$ as a graded ring.
By abuse of notation we will write $u_j$ to denote
its image in $H_T^\ast$.\footnote{Note that we also use the symbol $u_j$ to
denote the vectors $i^\ast(e_j)$, but no confusion should arise as it
should be clear from context which of the two meanings is intended.}
If $\mathcal{J} \subseteq \CB$ is a flat, then we define a class $u_\CJ \in H_T^\ast$ by
\begin{equation} \label{EulerClassJ}
u_\CJ := \prod_{\beta \in \CJ^c} \beta,
\end{equation}
and note that the product is taken over the \emph{complement} of $\CJ$.

\begin{proposition} 
\label{CriticalSets} For a generic parameter $\beta$,
the critical set of $f$ is the disjoint union of sets $C_\CJ$,
where the union runs over all flats $\CJ$ of the weight matroid,
where the sets $C_\CJ$ are defined by
\begin{equation}
C_\CJ = \left( \bigcap_{\beta \in \CJ} \{ \beta \cdot \muc = 0\} \right) 
\cap \left( \bigcap_{\beta \not\in \CJ} \{(x_\beta, y_\beta) = 0\} \right).
\end{equation}
The Morse index of $C_\CJ$ is given by $\lambda_\CJ = 2(N - \# \CJ)$.
Up to a nonzero constant, the $T$-equivariant Euler class of the negative
normal bundle to $C_\CJ$ is given by the restriction of the class
$u_\CJ$ to $H_T^\ast(C_\CJ)$, where $u_\CJ$ is defined by (\ref{EulerClassJ}).
The $T$-equivariant Poincar\'e series of $C_\CJ$ is equal to
$(1-t^2)^{-r} P_t(\FM_\CJ)$, where $r$ is the codimension of $T_\CJ$ in $T$ and
$\FM_\CJ$ is the quotient defined by (\ref{DefnMJ}). 
\end{proposition}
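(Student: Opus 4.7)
The plan is to analyze $\grad f = 0$ where $f = |\muc|^2$, exploiting that $T$ is abelian. By Lemma~\ref{CrossTermVanish}, $\grad f(p) = 0$ iff $v_{\mu_J}(p) = v_{\mu_K}(p) = 0$, equivalently $\muc(p)$ annihilates $\stab(p)$. For $p = (x,y) \in T^\ast\BC^N$ the stabilizer Lie algebra is $\bigcap_{j \in S(x,y)} \ker u_j$ with $S(x,y) := \{j : (x_j,y_j) \neq 0\}$, so criticality amounts to $\muc(p) \in \Span\{u_j : j \in S(x,y)\}_\BC$. Substituting $\muc = \sum x_j y_j u_j - \alpha_\BC$, this reduces to $\alpha_\BC \in \Span\{u_j : j \in S(x,y)\}_\BC$. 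For generic $\alpha_\BC$ this forces $\{u_j : j \in S(x,y)\}$ to have full rank, and grouping critical points by the matroid closure $\CJ$ of this set gives the stated decomposition of the critical locus into the pieces $C_\CJ$ indexed by flats.

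For the Morse index at $p \in C_\CJ$, I would use the $T$-invariant splitting $\BC^N = V_\CJ \oplus V_{\CJ^c}$. Writing $\muc_0 := \muc(p) \in \Span(\CJ)_\BC$ and expanding $|\muc|^2$ in the normal coordinates $(x_j, y_j)_{u_j \notin \CJ}$, the first-order term vanishes by criticality and the quadratic term is
\begin{equation*}
  \sum_{u_j \notin \CJ} 4\,\mathrm{Re}\bigl(\lambda_j\, dx_j \, dy_j\bigr),
  \qquad \lambda_j := \ev{u_j, \overline{\muc_0}}.
\end{equation*}
For generic $\alpha_\BC$ each $\lambda_j \neq 0$, and $\mathrm{Re}(\lambda_j x_j y_j)$ on $\BC^2 \iso \BR^4$ has signature $(2,2)$, as one checks by diagonalization. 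Each $u_j \notin \CJ$ thus contributes $2$ to the Morse index, giving $\lambda_\CJ = 2(N - \#\CJ)$; the correction $-\dim G + \dim G_\beta$ from Proposition~\ref{prop-hessian} vanishes since $T$ is a torus.

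For the equivariant Euler class, the full normal bundle at $p$ is the $T$-representation $T^\ast V_{\CJ^c}$ with weights $\{\pm u_j\}_{u_j \notin \CJ}$, whose total equivariant Euler class is $\prod_{u_j \notin \CJ}(-u_j^2)$. The negative eigenspace is a $T$-invariant real subbundle of complex rank $N - \#\CJ$; combining this rank constraint with the available $T$-weights and the factorization of $\prod(-u_j^2)$ pins down its equivariant Euler class as $u_\CJ = \prod_{u_j \notin \CJ} u_j$ up to a nonzero non-equivariant scalar. For the Poincar\'e series, note that $C_\CJ$ coincides with the level set defining $\FM_\CJ$ inside $T^\ast V_\CJ$, on which $T$ acts through $T_\CJ$ with the kernel $N_\CJ$ acting trivially. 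Regularity of $\alpha_\CJ$ makes the $T_\CJ$-action locally free, so the Borel construction gives $H^\ast_T(C_\CJ) \iso H^\ast(BN_\CJ) \otimes H^\ast(\FM_\CJ)$, yielding $P_t^T(C_\CJ) = (1-t^2)^{-r} P_t(\FM_\CJ)$ with $r = \dim N_\CJ$. The main technical obstacle is the Euler class step: the negative eigenspace is only a real $T$-invariant $2$-plane (not a holomorphic line) inside each normal $\BC^2$, so one cannot directly read off its weight, and the resulting scalar ambiguity must be absorbed into the stated ``nonzero constant''.
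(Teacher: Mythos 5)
Your identification of the critical set has the duality backwards, and this is a genuine error rather than a notational slip. The vanishing of $v_{\mu_J(p)}(p)$ means, by definition of the stabilizer, that $\mu_J(p)$ (transported to $\ft$ by the metric) lies \emph{in} $\stab(p) = \bigcap_{j \in S}\ker u_j = \Span\{u_j : j \in S\}^\perp$; equivalently $\ev{u_j, \muc(p)} = 0$ for every $j \in S(x,y)$. You instead impose $\muc(p) \in \ann(\stab(p)) = \Span\{u_j : j \in S\}_\BC$, which is the orthogonal complement of the correct condition. The consequences diverge immediately: with your condition, genericity of $\alpha_\BC$ would force $S(x,y)$ to span, so you would only recover the full-rank flats, whereas the proposition indexes the components of $\Crit f$ by \emph{all} flats. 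The origin is the simplest counterexample: $|\nabla f|^2 = 4\sum_i(|x_i|^2+|y_i|^2)\,|u_i\cdot\muc|^2$ vanishes at $(x,y)=0$ for any $\alpha_\BC$, yet your criterion would demand $\alpha_\BC \in \Span\,\emptyset = 0$. With the correct condition, on the stratum of support $S$ one must solve $\sum_{j\in S}x_jy_ju_j = P_S\,\alpha_\BC$ (orthogonal projection onto $\Span\{u_j : j \in S\}$), which is always solvable; the set of all weights pairing to zero with $\muc(p)$ is then automatically a flat $\CJ \supseteq S$, giving the decomposition over all flats, with disjointness for generic $\alpha_\BC$ because $\muc \equiv -(\alpha_\BC)_\CJ^\perp$ on $C_\CJ$ separates distinct flats.

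The remainder of your argument is essentially the paper's: the signature-$(2,2)$ analysis of $\mathrm{Re}(\lambda_j x_j y_j)$ giving index $2\#\CJ^c$, the Euler class determined up to a nonzero scalar by the real $T$-invariant negative $2$-planes, and the splitting $T \iso T_\CJ \times N_\CJ$ for the equivariant Poincar\'e series all match, and would go through once the critical locus is correctly identified. Note, however, that the same duality slip reappears in ``$\muc_0 \in \Span(\CJ)_\BC$'': in fact $\muc_0 = -(\alpha_\BC)_\CJ^\perp$ lies in $\Span(\CJ)^\perp$, and for $u_j \notin \CJ$ the nonvanishing of $\lambda_j = \ev{u_j,\muc_0}$ is exactly the statement that $u_j$ is not in the flat $\CJ$ (for generic $\alpha_\BC$), not an extra hypothesis. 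Finally, the step $P^{T_\CJ}_t(C_\CJ) = P_t(\FM_\CJ)$ needs the equivariant deformation retract of Corollary \ref{HomotopyCorollary}, since $C_\CJ$ is only the complex moment-map level set rather than the full hyperk\"ahler one.
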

\begin{proof} Using equation (\ref{MomentMapFormula}), we see that
\begin{equation} |\nabla f(x,y)|^2 = 
 4 \sum_{i=1}^N \left(|x_i|^2 + |y_i|^2 \right) |u_i \cdot \muc(x,y)|^2. \end{equation}
Since this is a sum of non-negative terms, if $\nabla f(x,y) = 0$, each term in 
the sum must be 0. Thus for each weight $u_i$, we must have either that $(x_i, y_i) = 0$ or
that $u_i \cdot \muc = 0$. Let us fix some particular critical point $(x_c,y_c) \in T^\ast \BC^N$, and 
let $\CJ$ be the set of weights for which $u_i \cdot \muc(x_c,y_c) = 0$.
By construction $\CJ$ is a flat and $(x_c, y_c) \in C_\CJ$. 
Hence every critical point is contained in $C_\CJ$ for some flat $\CJ$.
Conversely, $\nabla f = 0$ on $C_\CJ$ by construction, so we see that
$\Crit f = \cup_\CJ C_\CJ$, where the union runs over the flats $\CJ \subseteq \CB$.

To see that the union is disjoint, write $\muc = \mu_\CJ + \mu_{\CJ^c}$, where
\begin{align}
\label{DefnMuJ}
\mu_\CJ(x,y) &= \sum_{u_i \in \CJ} u_i x_i y_i - (\alpha_\BC)_\CJ, \\
\label{DefnMuJc}
\mu_{\CJ^c}(x,y) &= \sum_{u_i \not\in J} u_i x_\beta y_\beta - (\alpha_\BC)_\CJ^\perp,
\end{align}
$(\alpha_\BC)_\CJ$ is the projection of $\alpha_\BC$ to $\ft_J^\ast$, and 
$(\alpha_\BC)_\CJ^\perp = \alpha_\BC - (\alpha_\BC)_\CJ$. Then at $(x_c, y_c)$, we have
$\mu_\CJ(x_c, y_c) = 0$ and $\mu_{\CJ^c}(x_c, y_c) = -(\alpha_\BC)_\CJ^\perp$.
Thus on $C_\CJ$, $\muc$ takes the value $-(\alpha_\BC)_\CJ^\perp$.
For generic $\alpha_\BC$, we have $(\alpha_\BC)_\CJ^\perp \neq (\alpha_\BC)_{\CJ'}^\perp$
for $\CJ \neq \CJ'$, hence $C_\CJ \cap C_{\CJ'} = \emptyset$ for $\CJ \neq \CJ'$.

To determine the Morse index of $C_\CJ$, we compute
\begin{equation} |\muc(x,y)|^2 = |\mu_\CJ(x,y)|^2 + |\mu_{\CJ^c}(x,y)|^2 
 + 2 \mathrm{Re} \ev{\mu_\CJ(x,y),\mu_{\CJ^c}(x,y)}. \end{equation}
The term $|\mu_\CJ(x,y)|^2$ has an absolute minimum at $(x_c, y_c)$,
and so does not contribute to the Morse index. Since $(\alpha_\BC)_\CJ^\perp$ is
orthogonal to $\mu_\CJ(x,y)$ for all $(x,y)$, the third term can be
rewritten as
\begin{equation} 2 \mathrm{Re} \ev{\mu_\CJ(x,y),\mu_{\CJ^c} + (\alpha_\BC)_\CJ^\perp}. \end{equation}
Looking at the expressions
(\ref{DefnMuJ}) and (\ref{DefnMuJc}) for $\mu_\CJ$ and $\mu_{\CJ^c}$, we see that
at $(x_c, y_c)$, $\mu_\CJ$ vanishes to first order, whereas 
$\mu_{\CJ^c} + (\alpha_\BC)_\CJ^\perp$
vanishes to second order. Hence the inner product of these terms vanishes to 
third order and does not affect the Morse index. Thus the Morse index is
determined solely by the second term, which is
\begin{equation} \label{RelevantTerm}
|\mu_{\CJ^c}(x,y)|^2 
 = |(\alpha_\BC)_\CJ^\perp|^2 - 2 \mathrm{Re} 
 \sum_{i \in \CJ^c} \ev{(\alpha_\BC)_\CJ^\perp,u_i} x_i y_i + \textrm{fourth order}.
\end{equation}
For generic $\alpha_\BC$, we have $\ev{(\alpha_\BC)_\CJ^\perp,u_i} \neq 0$ for all
$u_i \in \CJ^c$, and since each term in the sum is the real part of the 
holomorphic function $\ev{(\alpha_\BC)_\CJ^\perp,u_i} x_i y_i$ it must contribute
$2$ to the Morse index. Hence the Morse index is 
$\lambda_\CJ := 2\# \CJ^c = 2(N - \#\CJ)$. Since the $j$th factor of $(S^1)^N$
acts on $(x_j, y_j)$ with weight $(1, -1)$, this also shows that the
equivariant Euler class is given by a nonzero multiple of $u_\CJ$
(defined by (\ref{EulerClassJ})), as claimed.

Finally, we compute the equivariant Poincar\'e series of $C_\CJ$.
Let $V_\CJ \in \BC^N$ be defined as above, and let $N_\CJ \subset T$ be the
subtorus that acts trivially on $V_\CJ$. Then we have an isomorphism
$T \iso T_\CJ \times N_\CJ$. Let $r$ be the dimension of $N_\CJ$, which is the
codimension of $T_\CJ$ in $T$. The moment map for the action of $T_\CJ$ on
$T^\ast V_\CJ$ is given by the restriction of $\mu_\CJ$ (as defined by
equation (\ref{DefnMuJ})) to $T^\ast V_J$. Hence $C_\CJ = \mu_\CJ^{-1}(0) \cap T^\ast V_\CJ$, and
\begin{equation} P^T_t(C_\CJ) = P^{T_\CJ \times N_\CJ}_t(C_\CJ) 
= (1-t^2)^{-r} P^{T_\CJ}_t(C_\CJ) = (1-t^2)^{-r} P_t(\FM_\CJ). \end{equation}
\end{proof} 

\begin{remark} Note that the critical sets $C_\CJ$ are all nonempty, and
that the Morse indices do not depend on $\alpha_\BC$ (as long as it is generic).
This is due to the fact that $\muc$ is holomorphic. In the real case,
i.e. $|\mur - \alpha_\BR|^2$, the critical sets and Morse indices have a much
more sensitive dependence on the level $\alpha_\BR$.
\end{remark}

By Theorem \ref{SurjectivityCriterion} and Corollary \ref{SurjectivityForTori},
the hyperk\"ahler Kirwan map 
$\kappa: H_T^\ast \to H^\ast(\FM)$
is surjective, and its kernel is the ideal generated by the equivariant
Euler classes of the negative normal bundles to the components of the critical
set. Since we described these explicitly in Proposition \ref{CriticalSets},
we immediately obtain the following description of $H^\ast(\FM)$.
\begin{theorem} \label{HypertoricCohomologyRing}
Assume $\alpha_\BC$ is generic. Then the cohomology ring $H^\ast(\FM)$ is isomorphic to $H^\ast(BT) / \ker \kappa$,
where $\kappa$ is the hyperk\"ahler Kirwan map. Its kernel is the ideal generated by the classes $u_\CJ$,
for every proper flat $\CJ \subset \CB$.
\end{theorem}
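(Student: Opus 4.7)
The plan is to combine three ingredients already established in the excerpt: the surjectivity of the hyperk\"ahler Kirwan map for torus actions, the explicit description of the critical sets and their negative normal bundles, and the general principle that the kernel of a Morse-theoretically surjective Kirwan map is generated by the equivariant Euler classes of the negative normal bundles.

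First, I would invoke Corollary \ref{SurjectivityForTori}: since $T$ is a torus and $(\alpha_\BR, \alpha_\BC)$ is regular, Theorem \ref{GlobalEstimate} and Proposition \ref{FlowClosedness} ensure $|\muc - \alpha_\BC|^2$ is flow-closed, so the hyperk\"ahler Kirwan map $\kappa : H_T^\ast \to H^\ast(\FM)$ is surjective. This gives the ring isomorphism $H^\ast(\FM) \cong H_T^\ast/\ker\kappa$ immediately (using $H_T^\ast(\BC^N) = H_T^\ast \cong H^\ast(BT)$ since the ambient space is contractible).

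Next I would identify $\ker\kappa$ Morse-theoretically. By Theorem \ref{SurjectivityCriterion}, $|\muc - \alpha_\BC|^2$ is minimally degenerate and its Morse stratification on $T^\ast \BC^N$ is $T$-equivariantly perfect, so the Thom--Gysin sequences split into short exact sequences for each non-minimum stratum. By the inductive argument of Theorem \ref{KirwanSurjectivity}, the kernel of the restriction map $H_T^\ast \to H_T^\ast(\muc^{-1}(\alpha_\BC))$ is the ideal generated by classes in $H_T^\ast$ which restrict to the $T$-equivariant Euler class of the negative normal bundle on each non-minimum critical component. By Proposition \ref{CriticalSets}, the non-minimum components are precisely the $C_\CJ$ for proper flats $\CJ \subsetneq \CB$, and on each such component the equivariant Euler class of the negative normal bundle is (up to a nonzero scalar) the restriction of the global class $u_\CJ = \prod_{\beta \notin \CJ} \beta \in H_T^\ast$.

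Finally, Corollary \ref{HomotopyCorollary} identifies $H_T^\ast(\muc^{-1}(\alpha_\BC)) \cong H^\ast(\FM)$, so $\ker\kappa$ is exactly the ideal generated by the $u_\CJ$ for proper flats $\CJ$, finishing the proof. The one point that deserves care, rather than being a genuine obstacle, is the indexing convention: the flat $\CJ = \CB$ would give $u_\CB = 1$, which must be excluded because $C_\CB$ is the absolute minimum stratum $\muc^{-1}(\alpha_\BC)$ itself and carries no negative normal directions --- this is why the statement restricts to \emph{proper} flats. Everything else is a direct assembly of the cited results.
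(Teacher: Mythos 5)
Your proposal is correct and follows essentially the same route as the paper: surjectivity of the hyperk\"ahler Kirwan map via Theorem \ref{SurjectivityCriterion} and Corollary \ref{SurjectivityForTori} (with Corollary \ref{HomotopyCorollary} supplying the identification $H_T^\ast(\muc^{-1}(\alpha_\BC)) \iso H^\ast(\FM)$), combined with the Morse-theoretic description of $\ker\kappa$ as the ideal generated by the equivariant Euler classes of the negative normal bundles, which Proposition \ref{CriticalSets} identifies with the classes $u_\CJ$. Your remark on excluding the non-proper flat $\CJ = \CB$ (whose stratum is the absolute minimum and whose class $u_\CB$ is the empty product $1$) is a correct and worthwhile clarification that the paper leaves implicit.
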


\begin{remark} The cohomology ring $H^\ast(\FM)$ was first computed by Konno
\cite[Theorem 3.1]{KonnoToric}. The relations defining $\ker \kappa$
obtained by Konno are not identical to those in Theorem 
\ref{HypertoricCohomologyRing}, 
but it is not difficult to see that they are equivalent.
It was pointed out to us by Proudfoot that this equivalence is a 
special case of Gale duality \cite{OrientedMatroids}.  
\end{remark}
 
\begin{remark} Under the assumption that $\FM$ is smooth (and not just an
orbifold), the same result holds with $\BZ$ coefficients \cite{KonnoToric}.
In principle, it should be possible to extend our Morse-theoretic arguments
to obtain this stronger result using the ideas and results of
\cite[\S 7]{TolmanWeitsman}.
\end{remark}

Quotients of the form $\FM(\alpha_\BR, 0)$ inherit an additional $S^1$-action
induced by the $S^1$ action on $T^\ast \BC^N$ given by $t \cdot(x, y) = (x, ty)$.
This action preserves the K\"ahler structure and rotates the holomorphic 
symplectic form (i.e. $t^\ast \omegac = t \omegac$). 
Let us fix some particular $\alpha_\BR$ and denote $\FM := \FM(\alpha_\BR, 0)$.
We would like to understand the $S^1$-equivariant 
cohomology $H_{S^1}^\ast(\FM)$ (which, unlike the ordinary cohomology of 
$\FM$, \emph{does} depend on the choice of $\alpha_\BR$). 
To compute the $S^1$-equivariant cohomology, it is more convenient 
to work directly with $|\muhk|^2 = |\mur|^2 + |\muc|^2$, where
\begin{align}
\label{MuRFormula} \mur &= \sum_i u_i \left(|x_i|^2 - |y_i|^2 \right) - \alpha_\BR, \\
\label{MuCFormula} \muc &= \sum_i u_i x_i y_i.
\end{align}
By Theorems \ref{SurjectivityCriterion} and \ref{GlobalEstimate} $|\muhk|^2$ 
is minimally degenerate and flow-closed, and since it is also $S^1$-invariant 
we can consider
the $T \times S^1$-equivariant Thom-Gysin sequence. The usual arguments
of the Kirwan method extend to the $S^1$-equivariant setting, so
we obtain surjectivity of map $\kappa_{S^1}: H_{T \times S^1}^\ast \to H_{S^1}^\ast(\FM)$,
and its kernel is generated by the $T \times S^1$-equivariant Euler classes
of the negative normal bundles to the components of the critical set.

To find the components of the critical set of $|\muhk|^2$ and to 
compute the equivariant
Euler classes, we can repeat the arguments of Proposition \ref{CriticalSets} 
almost without modification. The components of the critical set 
are again indexed by flats $\CJ$. The only important difference
is that since we now work with 
$T \times S^1$-equivariant cohomology, we have to be more careful 
in computing the equivariant Euler classes. Let us make the identification
$H_{T \times S^1}^\ast \iso H_T^\ast[u_0]$.
When we expand $|\muhk|^2$ about a critical point as in 
the proof of Proposition \ref{CriticalSets}, the 
relevant term is now (cf. equation (\ref{RelevantTerm}))
\begin{equation}
  - 2 \sum_{u_i \in \CJ^c} \ev{(\alpha_\BR)_\CJ^\perp,u_i} \left(|x_i|^2 - |y_i|^2 \right), 
\end{equation}
where $(\alpha_\BR)_\CJ^\perp$ is defined in a manner analogous to $(\alpha_\BC)_\CJ^\perp$ as 
in the proof of Proposition \ref{CriticalSets}.
We see that the $x_i$ term appears with an overall negative sign
if $\ev{(\alpha_\BR)_\CJ^\perp,u_i} > 0$; otherwise it is the $y_i$ term
that appears with a negative sign. Since $S^1$ acts on $x$ with weight
$0$ and acts on $y$ with weight $1$, and since $T$ acts on $x$ and $y$ with
oppositely signed weights, we find that the equivariant Euler class
is given (up to an overall constant) by
\begin{equation}
\tilde{u}_\CJ := \left(\prod_{u_i \in \CJ^+} u_i \right) \left(\prod_{u_i \in \CJ^-} (u_0 - u_i) \right),
\end{equation}
where 
\begin{equation}
\CJ^\pm := \{ u_i \in \CJ^c \suchthat \pm \ev{(\alpha_\BR)_\CJ^\perp,u_i} > 0 \}.
\end{equation}
We thus obtain the following.
\begin{theorem} The $S^1$-equivariant cohomology $H_{S^1}^\ast(\FM)$ is
isomorphic to 
\begin{equation}
 H_{T \times S^1}^\ast / \ker\kappa_{S^1},
\end{equation}
where $\kappa_{S^1}$ is the $S^1$-equivariant Kirwan map. Its kernel
is the ideal generated by the classes $\tilde{u}_\CJ$, for every proper flat $\CJ$. 
\end{theorem}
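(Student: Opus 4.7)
The plan is to run the equivariant Morse-theoretic machinery developed earlier in the chapter with the $T\times S^1$-action on $T^\ast\BC^N$ in place of the $T$-action, and to show that the only new ingredient needed is a careful bookkeeping of the $S^1$-weights on negative normal bundles.

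First I would set up the analytic framework. The function $|\muhk|^2 = |\mur|^2 + |\muc|^2$ is $T\times S^1$-invariant (since the $S^1$-action rotates $\omegac$, the norm-squared $|\muc|^2$ is $S^1$-invariant, and $\mur$ is actually $S^1$-invariant as a map). By Theorem~\ref{SurjectivityCriterion} applied to $|\muc|^2$ together with the obvious Morse-Bott behavior of $|\mur|^2$ (or equivalently by the argument used for $|\muhk|^2$ in the torus case, see Theorem~\ref{SurjectivityCriterion}), the function $|\muhk|^2$ is minimally degenerate, and Theorem~\ref{GlobalEstimate} combined with Proposition~\ref{FlowClosedness} shows it is flow-closed. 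Because the function and the stratification are $T\times S^1$-invariant, the usual Kirwan argument (Theorem~\ref{KirwanSurjectivity}) promotes verbatim to the $T\times S^1$-equivariant setting, yielding surjectivity of $\kappa_{S^1}\colon H_{T\times S^1}^\ast \to H_{S^1}^\ast(\FM)$ and identifying $\ker\kappa_{S^1}$ with the ideal generated by the $T\times S^1$-equivariant Euler classes of the negative normal bundles to the non-minimal critical components.

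Next I would identify the critical components. The computation of $\grad|\muhk|^2$ at a critical point splits into independent sums indexed by the weights $u_i$, exactly as in the proof of Proposition~\ref{CriticalSets}: at each weight, either $(x_i,y_i)=0$ or $u_i\cdot\muhk=0$. Hence the components are again indexed by flats $\CJ$ of the weight matroid, with the same underlying sets $C_\CJ$ as in Proposition~\ref{CriticalSets} (up to the minor replacement of $\muc$ by $\muhk$, which does not alter the vanishing set structure once $\alpha_\BR$ is generic).

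The key new step is the equivariant Euler class computation. Expanding $|\muhk|^2$ about a point of $C_\CJ$ as in the proof of Proposition~\ref{CriticalSets}, the second-order part orthogonal to $C_\CJ$ becomes
\begin{equation}
  -2\sum_{u_i\in\CJ^c}\ev{(\alpha_\BR)_\CJ^\perp,u_i}\bigl(|x_i|^2-|y_i|^2\bigr) + (\text{positive definite real part of }|\mu_{\CJ^c}^\BC|^2 \text{ terms}),
\end{equation}
so for each $u_i\in\CJ^c$ the negative normal direction is the $x_i$-line when $\ev{(\alpha_\BR)_\CJ^\perp,u_i}>0$, and the $y_i$-line when $\ev{(\alpha_\BR)_\CJ^\perp,u_i}<0$. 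Since $T$ acts on $x_i$ with weight $u_i$ and on $y_i$ with weight $-u_i$, while $S^1$ acts on $x_i$ trivially and on $y_i$ with weight $1$ (so the $T\times S^1$-weight of the $y_i$-line is $-u_i+u_0$, i.e.\ $u_0-u_i$), the $T\times S^1$-equivariant Euler class is, up to a nonzero scalar,
\begin{equation}
  \tilde u_\CJ = \Bigl(\prod_{u_i\in\CJ^+} u_i\Bigr)\Bigl(\prod_{u_i\in\CJ^-}(u_0-u_i)\Bigr).
\end{equation}
Substituting these Euler classes into the description of $\ker\kappa_{S^1}$ from the previous paragraph gives the theorem.

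The main obstacle, and the only point that requires genuine care, is the sign and weight bookkeeping in the Euler class calculation: one must verify that the quadratic term $|\mur|^2$ contributes a well-defined negative normal direction (one real direction in each pair $(x_i,y_i)$) and then promote this to a complex line inside $T^\ast\BC^N$ using the complex structure, while tracking how the $S^1$-action rotates the chosen direction. Once this is done correctly, surjectivity and the description of $\ker\kappa_{S^1}$ follow formally from the equivariant Morse machinery already in hand.
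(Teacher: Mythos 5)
Your proposal is correct and follows essentially the same route as the paper: Morse theory with the $T\times S^1$-invariant function $|\muhk|^2$ (minimally degenerate and flow-closed by Theorems \ref{SurjectivityCriterion} and \ref{GlobalEstimate}), critical components indexed by flats exactly as in Proposition \ref{CriticalSets}, and the same sign/weight bookkeeping on the term $-2\sum_{u_i\in\CJ^c}\ev{(\alpha_\BR)_\CJ^\perp,u_i}(|x_i|^2-|y_i|^2)$ to identify the $T\times S^1$-equivariant Euler class with $\tilde u_\CJ$. No substantive differences from the paper's argument.
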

 
\begin{remark} The $S^1$-equivariant cohomology rings were first
computed by Harada and Proudfoot 
\cite{HaradaProudfoot}. Note that unlike the ordinary cohomology ring,
the $S^1$-equivariant cohomology ring depends explicitly on the 
parameter $\alpha$.
\end{remark}

\subsection{Hyperk\"ahler Modifications}
Recall from Section \ref{sec-cuts-modification} that there is a natural operation called
\emph{hyperk\"ahler modification}, which is the hyperk\"ahler analogue of symplectic cutting.
If $\FM = T^\ast \BC^N \rred T$ is a hypertoric variety with a residual torus action by $K$, 
then for any choice of  $S^1 \into K$ we can consider the modification
\begin{equation} \label{DefnMTilde}
  \tilde{\FM} := \FM \times T^\ast \BC \rred S^1 \iso T^\ast \BC^{N+1} \rred \tilde{T},
\end{equation}
where we have set $\tilde{T} = T \times S^1$. We also consider the quotient
\begin{equation} \label{DefnMHat}
  \hat{\FM} := \FM \rred S^1 \iso T^\ast \BC^N \rred \tilde{T},
\end{equation}
which are both hypertoric varieties.
As we saw in the previous section, for any hypertoric variety, equivariant Morse theory 
with $|\muc|^2$ is controlled entirely by the underlying matroid. Hence if we can deduce a
relation between the matroids of $\FM, \tilde{\FM}$, and $\hat{\FM}$, we will be able to
determine a relation between their Poincar\'e polynomials and cohomology rings.

Let $\CB$, $\tilde{\CB}$, and $\hat{\CB}$ denote the respective collections of weights defining
the hypertoric varieties $\FM, \tilde{\FM}$, and $\hat{\FM}$, and let
$\mu, \tilde{\mu}$, and $\hat{\mu}$ denote the respective (complex) moment maps.
We can relate collections of weights $\tilde{\CB}$ and $\hat{\CB}$, corresponding
to the modification and quotient of $M$, to the weights $\CB$ as follows.
\begin{lemma} \label{Modification}
Let $\tilde{\CB} = \{\tilde{u}_j\}_{j=1}^{N+1}$. 
Then $\hat{\CB} = \{\tilde{u}_j\}_{j=1}^{N}$ and $\CB = \{u_j\}_{j=1}^N$, where
$u_j$ is the image of $\tilde{u}_j$ after quotienting by $\Span\{\tilde{u}_{N+1}\}$.
\end{lemma}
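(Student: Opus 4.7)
The plan is to use reduction in stages, together with a choice of lift of $S^1 \into K$, to express both $\tilde\FM$ and $\hat\FM$ as hyperk\"ahler quotients by a common torus, after which the weights can be read off directly from the embedding into coordinate tori.

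First I choose a lift $\chi: S^1 \to (S^1)^N$ of the given inclusion $S^1 \into K = (S^1)^N/T$; such a $\chi$ exists because the cocharacter $\BZ \to \fk_\BZ$ classifying $S^1 \into K$ can be lifted along the surjection $\BZ^N \onto \fk_\BZ$, as $\BZ$ is projective. The $S^1$-action on $\FM$ is then the descent of the cotangent lift of $s \cdot x = \chi(s) x$ on $T^\ast \BC^N$, which commutes with the $T$-action. Reduction in stages gives
\begin{equation}
\hat\FM \iso T^\ast \BC^N \rred (T \times S^1), \qquad
\tilde\FM \iso T^\ast \BC^{N+1} \rred (T \times S^1),
\end{equation}
where $T \times S^1$ embeds in $(S^1)^N$ via $(t,s) \mapsto t\chi(s)$ and in $(S^1)^{N+1}$ via $(t,s) \mapsto (t\chi(s), s)$. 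In particular $\hat T = \tilde T = T \times S^1$.

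Finally I read off the weights. Identifying the character space of $T \times S^1$ with $\ftd \oplus \BR$, the $j$-th coordinate character of $(S^1)^{N+1}$ restricts to
\begin{equation}
\tilde u_j = (u_j, \chi_j), \quad j = 1, \ldots, N, \qquad \tilde u_{N+1} = (0, 1),
\end{equation}
where $\chi_j \in \BZ$ is the weight of the $j$-th component of $\chi$. The analogous restriction for $\hat\FM$ drops the $(N+1)$-st character and otherwise gives the same expressions, so $\hat\CB = \{\tilde u_j\}_{j=1}^N$. Moreover $\Span\{\tilde u_{N+1}\} = \{0\} \oplus \BR$, so the quotient is canonically identified with $\ftd$ and $(u_j, \chi_j) \mapsto u_j$, recovering $\CB = \{u_j\}_{j=1}^N$. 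The only real subtlety is bookkeeping: once the lift $\chi$ is fixed and the embeddings of $T \times S^1$ into $(S^1)^N$ and $(S^1)^{N+1}$ are written down, the lemma follows by inspection.
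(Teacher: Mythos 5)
Your proof is correct and is essentially the paper's argument in different notation: your lift $\chi$ and the embeddings $(t,s)\mapsto t\chi(s)$ and $(t,s)\mapsto(t\chi(s),s)$ are exactly the paper's matrices $\hat B$ (adjoin a column of $S^1$-weights to $B$) and $\tilde B$ (adjoin the row $(0,\dots,0,\pm 1)$), with the weights read off as rows/restricted characters in both cases. The only additions are your explicit justification that the lift exists and the harmless sign convention on $\tilde u_{N+1}$, neither of which changes the argument.
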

\begin{proof} The weights are determined by the embeddings
$\ft \into \BR^N$, $\tilde{\ft} \into \BR^{N+1}$, and $\hat{\ft} \into \BR^N$.
Note that $\tilde{\ft} \iso \ft \oplus \BR$.
If we pick a basis of $\ft$ (and use the standard basis of $\BR^N$),
we can represent the embedding $\ft \into \BR^N$ by some matrix $B$.
The $S^1$ action on $M$ is determined by
specifying its weights on $\BC^N$; this is equivalent to adjoining a column
to $B$. This gives the matrix $\hat{B}$ determining $\tilde{\ft} \into \BR^N$.
Finally, to obtain the modification $\tilde{M}$, we let the $S^1$ act
on an additional copy of $\BC$ with weight $-1$. This amounts to
adjoining the row $(0, \ldots, 0, -1)$ to $\hat{B}$, to obtain
$\tilde{B}$ determining $\tilde{\ft} \into \BR^{N+1}$.
Since the weights $u_j$, $\tilde{u}_j$, and $\hat{u}_j$ correspond to
the rows of $B$, $\tilde{B}$, and $\hat{B}$, respectively,
the result follows.
\end{proof}


By the above lemma, we have $\CB, \hat{\CB} \iso \{1, \dots, N\}$
and $\tilde{\CB} \iso \{1, \dots, N+1\}$ as sets.
Hence any subset $\CJ \subset \CB$ may be regarded as a subset of $\hat{\CB}$ as
well as of $\tilde{\CB}$. We will use this identification for the remainder of this section.
\begin{lemma} \label{CriticalRelation}
Let $\CJ \subseteq \CB$. Then
\begin{enumerate}
  \item $\CJ$ is a flat of $\CB$ if and only if $\CJ \cup\{N+1\}$ is a flat of $\tilde{\CB}$.
  \item $\CJ$ is a flat of $\hat{\CB}$ if $\CJ$ is a flat of $\CB$.
  \item $\CJ$ is a flat of $\hat{\CB}$ if and only if at least one of $\CJ$ or $\CJ \cup \{N+1\}$ is a flat of $\tilde{\CB}$.
\end{enumerate}
\end{lemma}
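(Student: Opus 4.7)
The plan is to translate each of the three flatness conditions into a purely linear-algebraic statement about the lifted weights $\tilde{u}_j \in \tilde{\ft}^*$, and then do a short case analysis on whether $w := \tilde{u}_{N+1}$ lies in the span of the chosen subset. By Lemma \ref{Modification}, the weight $u_j$ of $\CB$ is the image of $\tilde{u}_j$ under the projection $\pi : \tilde{\ft}^* \to \tilde{\ft}^* / \Span(w)$. Writing $V_S := \Span\{\tilde{u}_j \suchthat j \in S\} \subseteq \tilde{\ft}^*$ for any $S \subseteq \{1, \dots, N\}$, the key observation is that $u_k$ lies in the span of $\{u_j \suchthat j \in S\}$ if and only if $\tilde{u}_k$ lies in $V_S + \Span(w)$.

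With this dictionary in place, the flatness conditions unpack as: $S$ is a flat of $\CB$ iff $\tilde{u}_k \notin V_S + \Span(w)$ for every $k \in \{1,\dots,N\} \setminus S$; $S$ is a flat of $\hat{\CB}$ iff $\tilde{u}_k \notin V_S$ for every $k \in \{1, \dots, N\} \setminus S$; $S$ is a flat of $\tilde{\CB}$ iff $\tilde{u}_k \notin V_S$ for every $k \in \{1, \dots, N+1\} \setminus S$; and $S \cup \{N+1\}$ is a flat of $\tilde{\CB}$ iff $\tilde{u}_k \notin V_S + \Span(w)$ for every $k \in \{1, \dots, N\} \setminus S$. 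Part (1) is then immediate, since the first and fourth conditions are literally identical. Part (2) is equally immediate from the inclusion $V_S \subseteq V_S + \Span(w)$, which makes the first condition strictly stronger than the second.

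For part (3), I would split on whether $w \in V_S$. If $w \in V_S$, then $V_S + \Span(w) = V_S$, so the fourth condition collapses to the second, i.e.\ $S \cup \{N+1\}$ is a flat of $\tilde{\CB}$ iff $S$ is a flat of $\hat{\CB}$; moreover in this case $S$ cannot itself be a flat of $\tilde{\CB}$, since that would require $w = \tilde{u}_{N+1} \notin V_S$. If instead $w \notin V_S$, then the extra $k = N+1$ clause in the third condition is automatic, so $S$ is a flat of $\tilde{\CB}$ iff it is a flat of $\hat{\CB}$. Combining the two cases gives both directions of (3): $S$ is a flat of $\hat{\CB}$ precisely when exactly one of $S$ or $S \cup \{N+1\}$ is a flat of $\tilde{\CB}$.

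The argument is essentially elementary linear algebra and I do not anticipate a real obstacle. The only step that requires any care is setting up the initial dictionary, and in particular identifying that the condition "$u_k$ is not in the span of $\{u_j : j \in S\}$ in the quotient" translates to "$\tilde{u}_k \notin V_S + \Span(w)$ in $\tilde{\ft}^*$", since this is what allows the flat conditions for the three matroids to be compared inside the common ambient vector space $\tilde{\ft}^*$.
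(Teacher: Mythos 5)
Your argument is correct and is essentially the paper's proof in a cleaner package: the paper argues element by element with explicit lifts and the quotient by $\Span\{\tilde{u}_{N+1}\}$, while you encode the same facts as a dictionary in the ambient space and then make the identical case split on whether $\tilde{u}_{N+1}$ lies in $V_\CJ$. One correction: your closing sentence claims that $\CJ$ is a flat of $\hat{\CB}$ precisely when \emph{exactly} one of $\CJ$, $\CJ\cup\{N+1\}$ is a flat of $\tilde{\CB}$; this is false (in your Case B both can be flats simultaneously --- this is exactly case (2) of the paper's subsequent trichotomy), and the lemma only asserts, and your case analysis only proves, ``at least one.'' That slip does not affect the validity of the proof of the stated lemma, since the backward direction for $\CJ\cup\{N+1\}$ in Case B follows at once from parts (1) and (2), which you have already established.
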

\begin{proof} 
Let $\tilde{\CB} = \{\tilde{u}_j\}_{j=1}^{N+1}$. 
By the previous lemma, 
$\hat{\CB} = \{\tilde{u}_j\}_{j=1}^N$ and $\CB = \{u_j\}_{j=1}^N$,
where $u_j$ is the image of $\tilde{u}_j$ after quotienting by $\Span\{u_{N+1}\}$.

To prove (1), first suppose that $\CJ$ is a flat of $\CB$.
Suppose that there is some $\tilde{u}_i \in \tilde{\ft}_{\CJ\cup\{N+1\}}$.
If $i = N+1$ then $i \in \CJ \cup\{N+1\}$ and there is nothing to check,
so suppose that $i \neq N+1$. Applying the quotient map, we see that
$u_i \in \ft_\CJ$ (since $\tilde{u}_{N+1}$ goes to 0), and since $\CJ$ was a flat
of $\CB$ we see that $i \in \CJ \subset \CJ \cup \{N+1\}$. Hence
$\CJ \cup \{N+1\}$ is a flat of $\tilde{\CB}$. 

Conversely, suppose that $\CJ \cup \{N+1\}$ is a flat of $\tilde{\CB}$.
Suppose that there is some $u_i \in \ft_\CJ$. Then if
$u_i = \sum_{j \in \CJ} a_j u_j$, we see that
$\tilde{u}_i - \sum_{j \in \CJ} a_j \tilde{u}_j$ is in the kernel of the projection,
and so is some multiple of $\tilde{u}_{N+1}$. Hence 
$\tilde{u}_i \in \tilde{\ft}_{\CJ \cup \{N+1\}}$. Since $\CJ \cup \{N+1\}$ is a flat
of $\tilde{\CB}$, we must have $i \in \CJ \cup \{N+1\}$.
But $i \neq N+1$ by assumption, so we have $i \in \CJ$.

To prove (2), suppose that $\CJ$ is a flat of $\CB$.
If $\tilde{u}_i \in \tilde{\ft}_\CJ$, then applying the quotient map we find
$u_i \in \ft_\CJ$. Hence $i \in \CJ$.

To prove (3), first suppose that $\CJ \cup \{N+1\}$ is a flat of
$\tilde{\CB}$. Then by (1), $\CJ$ is a flat of $\CB$, and by (2) $\CJ$ is a flat of
for $\hat{\CB}$. On the other hand, if $\CJ$ is a flat of
$\tilde{\CB}$, then it is certainly a flat of $\hat{\CB}$. This establishes one direction.

Conversely, suppose that $\CJ$ is a flat of $\hat{\CB}$.
If $\tilde{u}_{N+1} \not\in \tilde{\ft}_\CJ$ then $\CJ$ is a flat of
$\tilde{\CB}$; otherwise $\tilde{u}_{N+1} \in \tilde{\ft}_{\CJ}$ and thus
$\CJ \cup \{N+1\}$ is a flat of $\tilde{\CB}$.
\end{proof}  

We rephrase the preceding lemma as the following trichotomy:
\begin{lemma} \label{Trichotomy}
Let $\CJ \subseteq \{1, \cdots, N\}$ be a flat of $\hat{\CB}$.
Then exactly one of the following cases occurs:
\begin{enumerate}
\item $\CJ$ is a flat of $\tilde{\CB}$ and $\CJ$ is not a flat of $\CB$.
\item $\CJ$ is a flat of $\CB$ and both $\CJ$ and $\CJ \cup \{N+1\}$ are flats of $\tilde{\CB}$.
\item $\CJ$ is a flat of $\CB$ and $\CJ \cup\{N+1\}$ is a flat of $\tilde{\CB}$, while $\CJ$ is not.
\end{enumerate}
Moreover, every critical subset for $\CB$ and $\tilde{\CB}$ occurs as exactly one of the above.
\end{lemma}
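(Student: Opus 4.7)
The plan is to deduce this trichotomy directly from Lemma~\ref{CriticalRelation}. Given a flat $\CJ$ of $\hat{\CB}$, part~(3) of that lemma guarantees that at least one of $\CJ$ or $\CJ \cup \{N+1\}$ is a flat of $\tilde{\CB}$, so the case split is governed by which of these two subsets is a flat of $\tilde{\CB}$: only $\CJ$, only $\CJ \cup \{N+1\}$, or both.

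First I would dispatch each subcase by appealing to Lemma~\ref{CriticalRelation}(1), which says that $\CJ$ is a flat of $\CB$ if and only if $\CJ \cup \{N+1\}$ is a flat of $\tilde{\CB}$. In the first subcase, $\CJ \cup \{N+1\}$ fails to be a flat of $\tilde{\CB}$, so $\CJ$ is not a flat of $\CB$: this is case~(1). In the remaining two subcases, $\CJ \cup \{N+1\}$ is a flat of $\tilde{\CB}$, hence $\CJ$ is a flat of $\CB$; these yield cases~(2) and~(3), distinguished by whether $\CJ$ is itself a flat of $\tilde{\CB}$. Mutual exclusivity is immediate from the resulting dichotomy on the pair of Boolean predicates ``$\CJ$ is a flat of $\tilde{\CB}$'' and ``$\CJ \cup \{N+1\}$ is a flat of $\tilde{\CB}$''.

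For the moreover clause I would verify by inspection that every flat of $\CB$ and every flat of $\tilde{\CB}$ is accounted for. Any flat $\CJ$ of $\CB$ is also a flat of $\hat{\CB}$ by Lemma~\ref{CriticalRelation}(2), and by~(1) the set $\CJ \cup \{N+1\}$ is automatically a flat of $\tilde{\CB}$; thus $\CJ$ falls into case~(2) or case~(3) according to whether $\CJ$ is itself a flat of $\tilde{\CB}$. A flat of $\tilde{\CB}$ either avoids $N+1$, in which case it has the form $\CJ \subseteq \{1,\dots,N\}$ and is a flat of $\hat{\CB}$ by Lemma~\ref{CriticalRelation}(3), placing it in case~(1) or case~(2); or it contains $N+1$, in which case it has the form $\CJ \cup \{N+1\}$ with $\CJ$ a flat of $\CB$ by~(1) (hence of $\hat{\CB}$ by~(2)), placing it in case~(2) or case~(3).

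The main obstacle here is really only bookkeeping: everything reduces mechanically to the three parts of Lemma~\ref{CriticalRelation}, and the only care required is in checking that the three cases partition the flats of $\hat{\CB}$ and that the combined case list exhausts the flats of both $\CB$ and $\tilde{\CB}$ without double counting.
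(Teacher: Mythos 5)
Your proof is correct and matches the paper's intent: the paper states Lemma \ref{Trichotomy} as a direct rephrasing of Lemma \ref{CriticalRelation} and gives no separate argument, and your case analysis on the Boolean pair ``$\CJ$ is a flat of $\tilde{\CB}$'' / ``$\CJ \cup \{N+1\}$ is a flat of $\tilde{\CB}$'' is exactly the bookkeeping the paper leaves implicit. Nothing is missing.
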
 

\begin{theorem} If $\tilde{\FM}$ is a hyperk\"ahler modification of
$\FM$ and $\hat{\FM}$ is the corresponding quotient, then
\begin{equation} \label{PoincareRelation}
P_t(\tilde{\FM}) = P_t(\FM) + t^2 P_t(\hat{\FM}).
\end{equation}
\end{theorem}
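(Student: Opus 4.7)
The plan is to apply the Morse-theoretic recursion of Proposition \ref{CriticalSets} to each of the three hypertoric varieties and reduce the theorem to a term-by-term matching of the correction sums. Applying the recursion to $\FM$, so that $P_t^T(T^\ast\BC^N) = (1-t^2)^{-n}$ (with $n=\dim T$) is the sum of the Morse contributions $t^{2(N-\#\CJ)}(1-t^2)^{-(n-\rk\CJ)}P_t(\FM_\CJ)$ over flats $\CJ$ of $\CB$, and isolating the minimum $\CJ=\CB$ (whose contribution is $P_t(\FM)$) gives
\[
P_t(\FM) = (1-t^2)^{-n} - \sum_{\CJ \subsetneq \CB} t^{2(N-\#\CJ)}(1-t^2)^{-(n-\rk\CJ)} P_t(\FM_\CJ),
\]
and similarly for $\hat{\FM}$ and $\tilde{\FM}$ with the obvious substitutions $n \leadsto n+1$ and, for $\tilde{\FM}$, $N \leadsto N+1$. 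Because $(1-t^2)^{-n} + t^2(1-t^2)^{-(n+1)} = (1-t^2)^{-(n+1)}$, the leading terms cancel in $P_t(\tilde{\FM}) - P_t(\FM) - t^2 P_t(\hat{\FM})$, so the theorem reduces to a matching of the correction sums.

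Next I use Lemma \ref{Trichotomy} to split the sum over flats of $\tilde{\CB}$ into the three cases of the trichotomy. For any flat of $\tilde{\CB}$ not containing $N+1$ (Cases 1 and 2), the data defining $\tilde{\FM}_\CJ$ and $\hat{\FM}_\CJ$ are literally the same: $\tilde{V}_\CJ = \hat{V}_\CJ$ as subspaces of $\BC^N$, and $\tilde{N}_\CJ = \hat{N}_\CJ$ inside $\tilde{T}$, so $\tilde{\FM}_\CJ = \hat{\FM}_\CJ$. Combined with $\rk_{\tilde{\CB}}\CJ = \rk_{\hat{\CB}}\CJ$, a direct computation shows that the contribution of such a $\CJ$ to the $\tilde{\FM}$-correction equals $t^2$ times its contribution to the $\hat{\FM}$-correction, and these cancel against each other in the identity to be proved.

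The heart of the proof is to identify $\tilde{\FM}_{\CJ\cup\{N+1\}}$ in Cases 2 and 3, where $\CJ$ is a flat of $\CB$. A computation of stabilizers gives $\tilde{T}_{\CJ\cup\{N+1\}} = T_\CJ \times S^1$, and reduction in stages yields
\[
\tilde{\FM}_{\CJ \cup \{N+1\}} = (\FM_\CJ \times T^\ast\BC) \rred S^1,
\]
where $S^1$ acts on $\FM_\CJ$ through the descent of the chosen embedding $S^1\hookrightarrow K$ to $K_\CJ$. A short argument shows that this induced $S^1$-action is trivial precisely when $\CJ$ is also a flat of $\tilde{\CB}$ (Case 2) and nontrivial otherwise (Case 3). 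In Case 2, the modification degenerates to $\FM_\CJ \times \{\mathrm{pt}\} = \FM_\CJ$ (since $T^\ast\BC \rred S^1$ at a regular level is a single point), and the rank identity $\rk_{\tilde{\CB}}(\CJ\cup\{N+1\}) = \rk_\CB\CJ + 1$ produces exactly the missing $\FM$-contribution at $\CJ$. In Case 3 it is a genuine hyperkähler modification $\widetilde{\FM_\CJ}$, and since $\#\CJ < N$ the inductive hypothesis (the induction being on $N$) gives $P_t(\widetilde{\FM_\CJ}) = P_t(\FM_\CJ) + t^2 P_t(\widehat{\FM_\CJ})$; a second reduction-in-stages argument identifies $\widehat{\FM_\CJ} = \FM_\CJ \rred S^1$ with $\hat{\FM}_\CJ$ (both are hyperkähler quotients of $T^\ast V_\CJ$ by the same subtorus of $(S^1)^{\#\CJ}$), producing the needed contribution in Case 3. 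The base case $N$ minimal is immediate.

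The main obstacle is the matroid bookkeeping linking the three matroids: the ranks $\rk_\CB\CJ$, $\rk_{\hat{\CB}}\CJ$, and $\rk_{\tilde{\CB}}(\CJ\cup\{N+1\})$ differ by at most one depending on whether $\tilde{u}_{N+1}$ lies in certain spans, and the trichotomy of Lemma \ref{Trichotomy} is exactly the combinatorial shadow of whether the induced $S^1$-action on $\FM_\CJ$ is trivial. Translating this combinatorial distinction into the geometric identification of $\tilde{\FM}_{\CJ\cup\{N+1\}}$ as either $\FM_\CJ$ or $\widetilde{\FM_\CJ}$, and matching the resulting powers of $(1-t^2)$ through the rank shifts, is what makes the induction go through.
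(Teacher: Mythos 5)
Your proposal is correct and follows essentially the same route as the paper's proof: induction on $N$, cancellation of the leading $(1-t^2)^{-d}$ terms via the identity $(1-t^2)^{-d} + t^2(1-t^2)^{-(d+1)} = (1-t^2)^{-(d+1)}$, and a term-by-term matching of the higher critical-set contributions organized by the trichotomy of Lemma \ref{Trichotomy}, with Case 2 yielding $\tilde{\FM}_{\CJ\cup\{N+1\}}\cong\FM_\CJ$ and Case 3 handled by the inductive hypothesis. Your added observation that $\tilde{\FM}_{\CJ\cup\{N+1\}}$ is always $(\FM_\CJ\times T^\ast\BC)\rred S^1$ and degenerates precisely when the induced circle action is trivial is a nice unifying gloss on the paper's Cases 2 and 3, but it is the same argument.
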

\begin{proof} We will prove this by induction on
 $N = \# \CB$, the number of the number of weights (equivalently, the number of
hyperplanes in $\mathcal{A}$). The base case can be verified
easily, so we assume that the result is true for modifications
$(\tilde{\FM}', \FM', \hat{\FM}')$, where $\FM'$ is a quotient of $T^\ast \BC^{N'}$,
with $N' < N$.

By Theorems \ref{SurjectivityCriterion} and
\ref{GlobalEstimate}, the functions $f = |\muc|^2$, 
$\tilde{f} = |\tilde{\mu}_\BC|^2$, and $\hat{f} = |\hat{\mu}_\BC|^2$ 
are equivariantly perfect.
$\FM$ is a quotient of $T^\ast \BC^N$ by a torus of rank $d$,
while $\tilde{M}$ and $\hat{\FM}$ are quotients of $T^\ast \BC^{N+1}$
and $T^\ast \BC^N$, respectively, by a torus of rank $d+1$.
Hence we have
\begin{align}   
\frac{1}{(1-t^2)^d} 
&= \sum_C t^{\lambda_C} P^T_t(C), \\
\frac{1}{(1-t^2)^{d+1}} 
&= \sum_{\tilde{C}} t^{\tilde{\lambda}_C} P^{\tilde{T}}_t(\tilde{C}), \\
\frac{1}{(1-t^2)^{d+1}} 
&= \sum_{\hat{C}} t^{\hat{\lambda}_C} P^{\hat{T}}_t(\hat{C}).
\end{align}
Since $0$ is the absolute minimum in each case, we obtain
\begin{equation} \frac{1}{(1-t^2)^d} = P_t(\FM) + \sum_{C, f(C) > 0} t^{\lambda_C} P^T_t(C), \end{equation}
and similarly for $\tilde{\FM}$ and $\hat{\FM}$. Since 
\begin{equation} \frac{1}{(1-t^2)^{d+1}} = \frac{1}{(1-t^2)^d} + \frac{t^2}{(1-t^2)^{d+1}} = 0, \end{equation}
we just have to show that
\begin{equation} \sum_{\tilde{C}, \tilde{f}(\tilde{C}) > 0} t^{\tilde{\lambda}_C} P^{\tilde{T}}_t(\tilde{C})
= \sum_{C, f(C) > 0} t^{\lambda_C} P^T_t(C)
 +  \sum_{\hat{C}, \hat{f}(\hat{C}) > 0} t^{\hat{\lambda}_C+2} P^{\hat{T}}_t(\hat{C}). \end{equation}
to obtain the desired recurrence relation among the Poincar\'e polynomials
of $\FM$, $\tilde{\FM}$, and $\hat{\FM}$. By Lemma \ref{Trichotomy}, there
is a trichotomy relating the critical sets of $\tilde{f}$ to the critical
sets of $f$ and $\hat{f}$. We will consider each case separately.

Case (1): We have $\tilde{C} = \tilde{C}_\CJ$ where $\CJ$ is a flat of
$\tilde{\CB}$ and $\hat{\CB}$ but not of $\CB$. From Proposition \ref{CriticalSets}, we have 
$\tilde{\lambda}_\CJ = \hat{\lambda}_\CJ + 2$ and $\tilde{C}_\CJ = \hat{C}_\CJ \times (0, 0)$. Hence 
$t^{\tilde{\lambda}_\CJ} P^{\tilde{T}}_t(\tilde{C}_\CJ) = t^{\hat{\lambda}_\CJ+2} P^{\hat{T}}_t(\hat{C}_\CJ)$.

Case (2): We have $\tilde{C} = \tilde{C}_\CJ$ where $\CJ$ is a flat of $\hat{\CB}$ and 
$\CJ \cup \{N+1\}$ is a flat of $\tilde{\CB}$. Then $\tilde{\lambda}_\CJ = \hat{\lambda}_\CJ + 2$
and $\tilde{\lambda}_{\CJ\cup\{N+1\}} = \lambda_\CJ$. The terms involving $\tilde{C}_\CJ$ and $\hat{C}_\CJ$ are equal as in case (1).
Since both $\CJ$ and $\CJ \cup \{N+1\}$ are flats of $\tilde{\CB}$, it must
be that $\tilde{u}_{N+1} \not\in \tilde{\ft}_\CJ$. Hence
$\tilde{T}_{\CJ \cup \{N+1\}} \iso \tilde{T}_\CJ \times S^1$, where the 
last factor is generated by $\tilde{u}_{N+1}$, and we find that
$\tilde{\FM}_{\CJ \cup \{N+1\}} \iso \FM_\CJ$. Hence 
$P^{\tilde{T}}_t(\tilde{C}_{\CJ \cup \{N+1\}}) = P^T_t(C_\CJ)$.

Case (3): $\tilde{C} = \tilde{C}_{\CJ \cup \{N+1\}}$, where
$\CJ$ is critical for $\CB$ and $\hat{\CB}$ but not for $\tilde{\CB}$.
By Proposition \ref{CriticalSets}, we have
$P^{\tilde{T}}_t(\tilde{C}_{\CJ \cup\{N+1\}}) = (1-t^2)^{-r} P_t(\tilde{\FM}_{\CJ \cup \{N+1\}})$,
$P^T_t(C_J) = (1-t^2)^{-r} P(\FM_J)$, and 
$P^{\hat{T}}_t(\hat{C}_\CJ) = (1-t^2)^{-r} P_t(\hat{\FM}_\CJ)$, where $r$ is
the codimension of $T_\CJ$ in $T$. Thus we have to show that
\begin{equation} \label{InductionRelation}
P_t(\tilde{\FM}_{\CJ \cup \{N+1\}}) = P_t(\FM_\CJ) + t^2 P_t(\hat{\FM}_\CJ).
\end{equation}
But $\tilde{\FM}_{\CJ \cup \{N+1\}}$ is a modification of $\FM_\CJ$, and $\hat{\FM}_\CJ$
is the corresponding quotient of $\FM_\CJ$. Since 
$\CJ$ is a proper subset of $\{1, \ldots, N\}$, the relation
(\ref{InductionRelation}) is true by induction.
\end{proof}

The relation (\ref{PoincareRelation}) is equivalent to
the following recurrence relation among the Betti numbers
of $\FM$, $\tilde{\FM}$, and $\hat{\FM}$:
\begin{equation}
  \tilde{b}_{2k} = b_{2k} + \hat{b}_{2k} + \hat{b}_{2k-2}.
\end{equation}
If we let $d_k$ denote the number of bounded $k$-dimensional facets of
the polyhedral complex generated by the half-spaces $H_i^\pm$ 
(with $\tilde{d}_k$ and $\hat{d}_k$ defined similarly), 
it is easy to see that these satisfy the same relation:
\begin{equation}
  \tilde{d}_k = d_k + \hat{d}_k + \hat{d}_{k-1}.
\end{equation}
Since any toric hyperk\"ahler orbifold can be constructed out of 
a finite sequence of modifications starting with $T^\ast \BC^n$, an
easy induction argument then yields the following explicit description
of $P_t(\FM)$.
\begin{corollary}[{Bielawski-Dancer \cite[Theorem 7.6]{BielawskiDancer}}]
\label{HypertoricPoincare}
The Poincar\'e polynomial of $\FM$ is given by
\begin{equation} 
  P_t(\FM) = \sum_k d_k (t^2 - 1)^k.
\end{equation}
\end{corollary}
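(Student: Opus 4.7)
The plan is to give an induction on the number of hyperkähler modifications needed to produce $\FM$, using the two recurrence relations displayed just above the corollary: the Betti recurrence $\tilde{b}_{2k} = b_{2k} + \hat{b}_{2k} + \hat{b}_{2k-2}$ (equivalent to $P_t(\tilde{\FM}) = P_t(\FM) + t^2 P_t(\hat{\FM})$) and the bounded-facet recurrence $\tilde{d}_k = d_k + \hat{d}_k + \hat{d}_{k-1}$.

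For the base case I would take a minimal example, such as a single point obtained by a $T^\ast\BP^0$-type reduction, for which the arrangement has a single $0$-dimensional bounded facet ($d_0 = 1$, $d_k = 0$ for $k \geq 1$) and $P_t = 1$, so both sides agree trivially.

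For the inductive step, suppose that $\tilde{\FM}$ arises as the modification of $\FM$ with companion quotient $\hat{\FM}$ (as in equations (\ref{DefnMTilde}), (\ref{DefnMHat})), and assume inductively that the formula holds for both $\FM$ and $\hat{\FM}$, which involve fewer hyperplanes. The key algebraic trick is the expansion $t^2 = 1 + (t^2 - 1)$. Using the Betti recurrence, I compute
\begin{align*}
P_t(\tilde{\FM})
  &= P_t(\FM) + t^2 P_t(\hat{\FM}) \\
  &= \sum_k d_k (t^2 - 1)^k + \bigl(1 + (t^2-1)\bigr)\sum_k \hat{d}_k (t^2-1)^k \\
  &= \sum_k d_k (t^2-1)^k + \sum_k \hat{d}_k (t^2-1)^k + \sum_k \hat{d}_k (t^2-1)^{k+1} \\
  &= \sum_k \bigl(d_k + \hat{d}_k + \hat{d}_{k-1}\bigr)(t^2-1)^k \\
  &= \sum_k \tilde{d}_k (t^2-1)^k,
\end{align*}
where the last equality uses the facet recurrence. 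This closes the induction.

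The main obstacle I anticipate is not the algebra, which is essentially automatic once both recurrences are in hand, but rather verifying that every toric hyperkähler orbifold of interest can in fact be built from a minimal example by a finite sequence of modifications of the form $\FM \rightsquigarrow \tilde{\FM}$ with companion $\hat{\FM}$, and that at each stage the choice of modifying circle can be arranged so that the resulting parameters remain generic (so that Proposition \ref{CriticalSets} and the Betti recurrence apply). This reduces to a combinatorial statement about building up hyperplane arrangements one hyperplane at a time, which should follow from Lemma \ref{Modification}.
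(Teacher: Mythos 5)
Your proposal is correct and is exactly the argument the paper intends: the corollary is deduced by induction on the number of modifications, starting from $T^\ast\BC^n$ (whose arrangement has a single bounded facet, the vertex of the coordinate arrangement, so $d_0=1$ and $P_t=1$), with the inductive step being precisely your $t^2 = 1+(t^2-1)$ manipulation combining the two recurrences. The paper likewise asserts without detailed proof that every toric hyperk\"ahler orbifold arises from a finite sequence of modifications, so your flagged "obstacle" is treated at the same level of rigor there.
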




\chapter{Quivers with Relations} \label{ch-quiver}

\section{Moduli of Quiver Representations}

\subsection{Quiver Representations}
In this chapter, we will study varieties associated to quivers. The basic results concerning theory of
quiver representations can be found in \cite{CrawleyBovey}, and the theory of moduli spaces
of representations from the algebro-geometric point of view was developed in \cite{King}.
We will focus on hyperk\"ahler varieties constructed from quivers, studying a mild generalization
of the varieties introduced by Nakajima \cite{Nakajima94}. For the most part, we will not rely
on any deep results from these works, and will instead work directly with the tools developed
in Chapter \ref{ch-morse}.

\begin{definition}
A \emph{quiver} $\CQ = (\CI, \CE)$ is a finite directed graph with vertex
set $\CI$ and edge set $\CE$. We will assume that the vertices of $\CQ$ are labelled by by $\{\bigcirc, \Box\}$
so that $\CI = \CV \sqcup \CW$, where $\CV$ consists of the vertices labelled
``$\bigcirc$'' and $\CW$ consists of the vertices labelled ``$\Box$''. 
A \emph{dimension vector}
$\bd = (d_i)_{i\in\CI}$ is an assignment of a non-negative integer $d_i$ to each $i \in \CI$.
We will often write $\bd = (\bv, \bw)$ so that $d_i = v_i$ for $i \in \CV$ and $d_i = w_i$ for $i \in \CW$.
\end{definition}

Given a quiver with dimension vector $(\CQ, \bd=(\bv,\bw))$ we define vector spaces 
\begin{align}
  V &= \bigoplus_{i \in \CV} V_i, \\
  W &= \bigoplus_{i \in \CW} W_i, \\
  \Rep(\CQ, \bd) &= \bigoplus_{i \to j \in \CE} \Hom(D_i, D_j),
\end{align}
where $V_i = \BC^{\bv_i}$, $W_i = \BC^{\bw_i}$, and $D_i = V_i$ for $i \in \CV$ and $D_i = W_i$ for $i \in \CW$.
The vector space $\Rep(\CQ, \bd)$ is the space of representations of $\CQ$ of dimension $\bd$.
We will often write this as $\Rep(\CQ, \bv, \bw)$ to emphasize the decomposition $\bd = (\bv, \bw)$.
We define groups
\begin{align}
  G_\bv &= \prod_{i \in \CV} U(v_i), \\
  G_\bw &= \prod_{i \in \CW} U(w_i), \\
  G_\bd &= G_\bv \times G_\bw,
\end{align}
which act naturally on $\Rep(\CQ, \bv, \bw)$. We denote by $\mu_\bv, \mu_\bw, \mu_\bd$
the corresponding moment maps.
We also consider the \emph{double} 
$\double{\CQ}$ of a quiver, defined by $\double{\CQ} = (\CV, \CE \sqcup \overline{\CE})$
where $\overline{\CE}$ denotes reversing the orientation of all the edges in $\CE$. 


For a fixed dimension vector $\bd = (\bv, \bw)$ we now define several moduli spaces of 
representations as follows:
\begin{align}
    \FX_\alpha(\bv, \bw) &:= \Rep(\CQ, \bv, \bw) \reda{\alpha} G_\bv, \\
    \double{\FX}_\alpha(\bv, \bw) &= \Rep(\double{\CQ}, \bv, \bw) \reda{\alpha} G_\bv, \\
    \FM_\alpha(\bv, \bw) &:= T^\ast \Rep(\CQ, \bv, \bw) \rreda{\alpha} G_\bv.
\end{align}
We call $\FX_\alpha(\bv,\bw)$ the \emph{K\"ahler quiver variety}, $\double{\FX}_\alpha(\bv,\bw)$
the \emph{doubled quiver variety}, and $\FM_\alpha(\bv,\bw)$ the \emph{hyperk\"ahler quiver variety}.
Note that there are natural inclusions\footnote{It is easy to check that $\double{\FX}$ is
holomorphic Poisson, $\FM \into \double{\FX}$ is a symplectic leaf, and $\FX \into \FM$ is
a Lagrangian subvariety.}
\begin{equation}
  T^\ast \FX_\alpha(\bv,\bw) \subset \FM_\alpha(\bv,\bw) \subset \double{\FX}_\alpha(\bv,\bw)
\end{equation}
 and that all three of these spaces have a residual Hamiltonian action 
of $G_\bw$. Hence we may consider their quotients $\FX_\alpha(\bv,\bw) \red G_\bw$ and
$\FM_\alpha(\bv,\bw) \rred G_\bw$. Of course, these quotients do not depend on the particular
choice of decomposition $\CI = \CV \sqcup \CW$---this is just reduction in stages (cf. Remark \ref{rmk-reduction-in-stages}).
However, we will see in the following sections that we will be able to understand these
quotients by studying the $G_\bw$-equivariant stratification on varieties of the form
$\FX_\alpha(\bv, \bw)$.

\begin{example} The quiver pictured in Figure \ref{fig-adhm-quiver} is called the
ADHM quiver. By the ADHM construction \cite{ADHM}, the hyperk\"ahler quiver variety 
$\FM_\alpha(k,n)$ may be identified with the moduli space of framed $SU(n)$ instantons 
on $\BR^4$ of charge $k$. This construction was later generalized by
Kronheimer and Nakajima \cite{KronheimerNakajima} to construct Yang-Mills instanton
moduli spaces from any affine $ADE$ quiver.
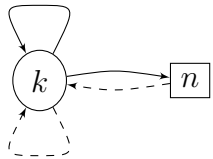
\begin{figure}[h]
\centering
\begin{tikzpicture}
  \node[ellipse,draw] (k) at (0,0) {$k$};
  \node[rectangle,draw] (n) at (2,0) {$n$};
  \coordinate (offsetx) at (0.5,0);
  \coordinate (offsety) at (0,1);  

  \doublearrow{k}{n};

  \coordinate (c1) at ($(k) + (offsety) + (offsetx)$);
  \coordinate (c2) at ($(k) + (offsety)$);
  \coordinate (c3) at ($(k) + (offsety) - (offsetx)$);

  \coordinate (c4) at ($(k) - (offsety) + (offsetx)$);
  \coordinate (c5) at ($(k) - (offsety)$);
  \coordinate (c6) at ($(k) - (offsety) - (offsetx)$);

  \path[->,>=latex',draw,smooth] (k) .. controls (c1) .. (c2) .. controls (c3) .. (k);
  \path[->,>=latex',draw,smooth,dashed] (k) .. controls (c4) .. (c5) .. controls (c6) .. (k);

\end{tikzpicture}
\caption{The ADHM quiver, with doubled edges indicated by dashed arrows.}
\label{fig-adhm-quiver}
\end{figure}
\end{example}

\begin{example} \label{ex-polygon-4}
See the quiver pictured in Figure \ref{fig-simple-quiver}. We have
$\bv = (2)$ and $\bw = (1,1,1,1)$. Then $\FX_\alpha(\bv,\bw) \iso \Gr(2,4)$,
and $\FX_\alpha(\bv,\bw) \red G_\bw \iso \Gr(2,4) \red (S^1)^4$. If we interchange
the labels $\Box \leftrightarrow \bigcirc$, then we have $\FX_\alpha(\bv,\bw) \iso (\BP^1)^4$,
and $\FX_\alpha(\bv,\bw) \red G_\bw \iso (\BP^1)^4 \rred SU(2)$.
This is the most basic example of a polygon space \cite{Klyachko}, and its hyperk\"ahler
analogue is a hyperpolygon space \cite{KonnoPolygon, HaradaProudfoot}.
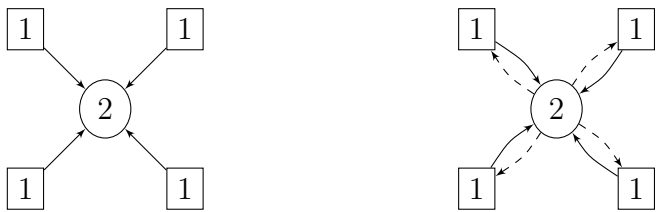
\begin{figure}[h]
\centering
\begin{tikzpicture}
  \node[ellipse,draw] (sink1) at (-3,0) {$2$};
  \node[ellipse,draw] (sink2) at ( 3,0) {$2$};

  \foreach \s in {1,2,3,4} {
    \coordinate (o\s) at (45+90*\s:1.5);
    \node[rectangle,draw] (m\s) at ( $(sink1) + (o\s)$) {$1$};
    \node[rectangle,draw] (n\s) at ( $(sink2) + (o\s)$) {$1$};
    \path[->,>=latex',draw] (m\s) -- (sink1);
    \doublearrow{n\s}{sink2}
  }
\end{tikzpicture}
\caption{Left: a star quiver with dimension vector; right: its double.}
\label{fig-simple-quiver}
\end{figure}
\end{example}

\begin{remark} The definition of quiver variety that we use here is a slightly different
than the usual the notion of a Nakajima quiver variety \cite{Nakajima94}. 
To obtain a Nakajima variety from the construction above, first start with a quiver
$\CQ$ all of whose vertices are labelled by $\bigcirc$. Then construct a new quiver
$\CQ'$ from $\CQ$ by adding a ``shadow vertex'' $i'$, labelled by $\Box$, for
each vertex $i$ of $\CQ$, as well as a single edge from $i'$ to $i$.
Then the hyperk\"ahler quiver variety $\FM_\alpha(\bv,\bw)$ associated to $\CQ'$ by the above construction
coincides with the Nakajima variety associated to $\CQ$. We illustrate this in Figure
\ref{fig-nakajima-shadow}.
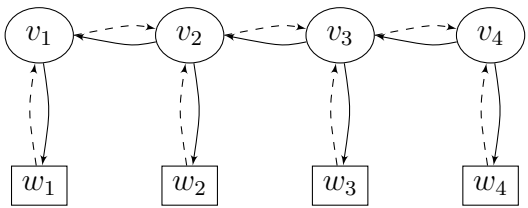
\begin{figure}[h]
\centering
\begin{tikzpicture}
  \coordinate (dx) at (2,0);
  \coordinate (dy) at (0,2);
  \coordinate (shift) at (5,0);

  \foreach \s in {1,2,3,4} {

    \node[ellipse,draw] (v\s) at ( $(0,0) + \s*(dx)$ ) {$v_\s$};
    \node[rectangle,draw] (w\s) at ( $(0,0) + \s*(dx)-(dy)$ ) {$w_\s$};
    \doublearrow{v\s}{w\s};

    \ifnum \s=1
    \else
     \pgfmathsetmacro{\t}{\s-1};
     \doublearrow{v\s}{v\t};
    \fi
  }
\end{tikzpicture}
\caption{A Nakajima quiver constructed by adding ``shadow'' vertices.}
\label{fig-nakajima-shadow}
\end{figure}
\end{remark}

\subsection{Relations in the Path Algebra}

We wish to consider certain natural subvarieties of quiver varieties, constructed in
the following way. Suppose that $E$ is a linear representation of $G_\bd$, and 
$f: \Rep(\CQ, \bd) \to E$ is an equivariant algebraic map. Then 
$f^{-1}(0) \subset \Rep(\CQ, \bd)$ is $G_\bd$-invariant, hence it defines a subvariety 
\begin{equation}
V(f) \subset \FX_\alpha(\bv, \bw).
\end{equation} By equivariance, this subvariety is preserved by the
$G_\bw$-action, so we also obtain a subvariety
\begin{equation}
V(f) \red G_\bw \subset \FX_\alpha(\bv,\bw) \red G_\bw.
\end{equation}
Understanding compact subvarieties of this form will be the primary focus of this chapter.
To understand the relevance of these subvarieties, let us recall the path algebra of a quiver.
\begin{definition} Let $\CQ$ be a quiver. The \emph{path algebra} $\BC[\CQ]$ of $\CQ$ is the
unital associative algebra generated by directed paths in $\CQ$. The product of paths
$p_1$ and $p_2$ is defined to be the composition of $p_1$ and $p_2$ if they are composable,
and zero otherwise.
\end{definition}

\begin{construction} Let $f \in \BC[\CQ]$. Then for any dimension vector $\bd$, $f$
induces a $G_\bd$-equivariant map $\Rep(\CQ, \bd) \to E_\bd$, where $E_\bd$ is a linear
representation of $G_\bd$. 
We can write $f$ (uniquely) as
\begin{equation}
  f = \sum_{(i,j) \in \CI \times \CI} f_{ij}
\end{equation}
where each summand $f_{ij}$ is a linear combination of paths from vertex $i$ to vertex $j$.
Hence $f_{ij}$ induces a map $\Rep(\CQ, \bd) \to \Hom(D_i, D_j)$. Hence we get an induced map
(which, by abuse of notation, we denote by $f$)
\begin{equation}
  f: \Rep(\CQ, \bd) \to \bigoplus_{(i,j): f_{ij} \neq 0} \Hom(D_i, D_j).
\end{equation}
It is easy to see that this map is $G_\bd$-equivariant.
\end{construction}

Hence given any polynomial $f$ in the edges, we obtain an equivariant map $f: \Rep(\CQ, \bd) \to E$,
and the subvariety $V(f) \subset \FX_\alpha(\CQ, \bd)$ can be interpreted as the subvariety
of representations which satisfy the relation $f=0$, i.e. we can interpret $V(f)$ as a moduli
space of representations of the quotient path algebra $\BC[\CQ] / \ev{f}$.

\begin{remark} \label{rem-hyperkahler-double}
The complex moment map on $Rep(\double{\CQ}, \bd)$ is a polynomial in the edges, and the hyperk\"ahler
quiver variety is the subvariety of $\double{\FX}_\alpha(\CQ,\bd)$ defined by the corresponding
relation.
\end{remark}

\begin{proposition} Given an equivariant map $f: \Rep(\CQ, \bd) \to E$ as above, there
is a natural $G_\bw$-equivariant bundle $\CE \to \FX_\alpha(\bv,\bw)$ with section $s$
such that $V(f) = s^{-1}(0)$. In particular, $V(f)$ is 
smooth if $s$ is transverse to $0$. Similarly, $V(f) \red G_\bw$ is the vanishing
set of a section of a vector bundle over $\FX_\alpha(\bv,\bw) \red G_\bw$.
\end{proposition}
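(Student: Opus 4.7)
The plan is to construct $\CE$ as a standard associated (mixing) bundle. Define
\begin{equation}
  \CE := \mu_\bv^{-1}(\alpha) \times_{G_\bv} E \longrightarrow \mu_\bv^{-1}(\alpha)/G_\bv = \FX_\alpha(\bv,\bw),
\end{equation}
where $G_\bv$ acts diagonally (on $E$ through its given linear representation, obtained by restricting the $G_\bd$-action). Since $G_\bv$ acts freely on $\mu_\bv^{-1}(\alpha)$ by hypothesis, this is an honest vector bundle of rank $\dim E$. Because $G_\bw$ commutes with $G_\bv$ inside $G_\bd = G_\bv \times G_\bw$ and preserves both $\mu_\bv^{-1}(\alpha)$ and (linearly) $E$, the diagonal $G_\bw$-action on $\mu_\bv^{-1}(\alpha) \times E$ descends to $\CE$ and covers the residual $G_\bw$-action on $\FX_\alpha(\bv,\bw)$. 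This makes $\CE$ a $G_\bw$-equivariant vector bundle.

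Next I would define the section $s : \FX_\alpha(\bv,\bw) \to \CE$ by $s([\tx]) = [(\tx, f(\tx))]$. Well-definedness is immediate from the $G_\bv$-equivariance of $f$: for $g \in G_\bv$, $(g\tx, f(g\tx)) = (g\tx, g\cdot f(\tx))$ represents the same point of $\CE$ as $(\tx, f(\tx))$. The $G_\bw$-equivariance of $s$ follows identically using the $G_\bw$-equivariance of $f$. By construction, $s([\tx]) = 0$ in the fiber over $[\tx]$ iff $f(\tx) = 0$, so $s^{-1}(0) = V(f)$. The smoothness statement is then just the standard fact that the zero locus of a section of a vector bundle transverse to the zero section is a smooth submanifold of codimension $\dim E$.

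For the quotient statement, the cleanest approach is to repeat the construction one level up. Letting $\beta$ denote an appropriate regular value of the $G_\bw$-moment map on $\FX_\alpha(\bv,\bw)$, reduction in stages (Remark \ref{rmk-reduction-in-stages}) identifies
\begin{equation}
  \FX_\alpha(\bv,\bw) \red G_\bw \iso \bigl(\mu_\bv^{-1}(\alpha) \cap \mu_\bw^{-1}(\beta)\bigr) / G_\bd.
\end{equation}
Since $\CE$ is $G_\bw$-equivariant and $G_\bw$ acts freely on the reduced locus, the restriction $\CE|_{\mu_\bw^{-1}(\beta)}$ descends to a vector bundle $\CE' \to \FX_\alpha(\bv,\bw) \red G_\bw$; equivalently, $\CE' = (\mu_\bv^{-1}(\alpha) \cap \mu_\bw^{-1}(\beta)) \times_{G_\bd} E$. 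The $G_\bw$-equivariant section $s$ likewise descends to a section $\bar s$ of $\CE'$ whose vanishing locus is exactly $V(f) \red G_\bw$.

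There is no real obstacle here; the only thing to be careful about is checking that the various equivariance conditions and quotient identifications really do match up. A minor caveat worth mentioning in the write-up is that if $G_\bv$ acts with finite stabilizers rather than freely, the above still works in the orbifold sense, with $\CE$ an orbifold vector bundle.
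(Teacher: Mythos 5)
Your construction is exactly the one the paper uses: $\CE = \mu_\bv^{-1}(\alpha) \times_{G_\bv} E$ as the associated bundle of the principal $G_\bv$-bundle $\mu_\bv^{-1}(\alpha) \to \FX_\alpha(\bv,\bw)$, with the section induced by $G_\bv$-equivariance of $f$. Your write-up is correct and simply more explicit than the paper's about the $G_\bw$-equivariance, the transversality remark, and the descent to the $G_\bw$-quotient, all of which the paper leaves implicit.
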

\begin{proof}
Let $P = \mu_\bv^{-1}(\alpha)$, where $\mu_\bv$ is the moment map for the $G_\bv$ action.
This is the total space of a principal $G_\bv$-bundle over $\FX_\alpha(\bv, \bw)$. 
Hence we may take the associated bundle $\CE = P \times_{G_\bv} E$.
Since $f$ is $G_\bv$-equivariant, it induces a section $s$ of $\CE$. 
\end{proof}

\begin{remark} Given $V(f) \subset \FX$ we can always compactify via symplectic cuts to
obtain $\overline{V(f)} \subset \overline{\FX}$, where $\overline{\FX}$ is projective.
Moreover, $\overline{V(f)}$ may be identified with the vanishing set of a section of
a bundle $E \to \overline{\FX}$. If this bundle happens to be positive, then Sommese's
Theorem \cite[Chapter 7]{Lazarsfeld2004II} implies that the restriction
$H^\ast(\overline{\FX}) \to H^\ast(\overline{V(f)})$ is surjective in degrees less than
$\dim \FX - \rk E$. It may be possible to deduce hyperk\"ahler Kirwan surjectivity by
arguments along these lines. We plan to investigate this in future work.
\end{remark}

\section{Morse Theory on Quiver Varieties}

\subsection{Circle Actions} \label{sec-quiver-morse-circle}
We define a \emph{circle compact variety} to be a K\"ahler variety together with a 
Hamiltonian circle action such that the fixed-point set is compact, and the moment map
is proper and bounded below. Now let $\CQ = (\CI, \CE)$ be a quiver. For each edge
$e \in \CE$, pick a non-negative integer $w_e$. Then we can define an $S^1$-action on
$\Rep(\CQ, \bd)$ by
\begin{equation}
  s \cdot x_e = s^{w_e} x_e.
\end{equation}
This commutes with the $G_\bd$-action, and hence descends to an $S^1$-action on $\FX_\alpha(\bv,\bw)$
(as well as $\FX_\alpha(\bv,\bw) \red G_\bw$). The moment map for this action is given by
\begin{equation}
  \mu_{S^1}(x) = \frac{1}{2} \sum_{e \in \CE} w_e |x_e|^2
\end{equation}
which is non-negative. We have the following lemma.
\begin{lemma} Every quiver variety $\FX_\alpha(\bv,\bw)$ admits an $S^1$-action of the
form $s \cdot [x_e] = [s^{w_e} x_e]$ such that the moment map is proper and bounded below.
\end{lemma}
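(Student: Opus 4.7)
The proof is straightforward once one selects the right weights: the plan is simply to take all $w_e$ strictly positive (for instance $w_e = 1$), and then verify that the two required properties pass from $\Rep(\CQ,\bd)$ down to the K\"ahler quotient.

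Boundedness below is immediate: with $w_e \geq 0$, the function $\mu_{S^1}(x) = \frac{1}{2}\sum_{e\in\CE} w_e |x_e|^2$ is already non-negative on $\Rep(\CQ,\bd)$, hence so is its descent $\bar\mu_{S^1}$ to $\FX_\alpha(\bv,\bw)$. The content is therefore in properness of $\bar\mu_{S^1}$.

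For properness I would argue as follows. With every $w_e$ strictly positive, the quadratic form $\mu_{S^1}$ on the finite-dimensional vector space $\Rep(\CQ,\bd)$ is positive definite, so its sublevel sets $\mu_{S^1}^{-1}([0,N])$ are closed and bounded, hence compact. The moment map constraint $\mu_\bv^{-1}(\alpha)$ is closed, and $\mu_{S^1}$ is $G_\bv$-invariant, so the intersection $K_N := \mu_{S^1}^{-1}([0,N]) \cap \mu_\bv^{-1}(\alpha)$ is a compact $G_\bv$-invariant set. The quotient map $\pi: \mu_\bv^{-1}(\alpha) \to \FX_\alpha(\bv,\bw)$ is continuous and $\mu_{S^1}$ descends to $\bar\mu_{S^1}$, so by $G_\bv$-invariance
\begin{equation}
\bar\mu_{S^1}^{-1}([0,N]) = \pi(K_N),
\end{equation}
which is the continuous image of a compact set and therefore compact. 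Since every compact subset of $\BR$ is contained in some $[0,N]$, this establishes properness of $\bar\mu_{S^1}$.

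There is no real obstacle in this argument—the only point to check carefully is that properness genuinely survives the symplectic reduction, which works here because $G_\bv = \prod_{i\in\CV} U(v_i)$ is compact, so that the quotient topology on $\FX_\alpha(\bv,\bw)$ is well-behaved and images of compact saturated sets are compact. One could just as well choose any other positive weight assignment; the flexibility in the choice of $\{w_e\}$ is exactly what will later be exploited when using this $S^1$-action as a Morse-theoretic tool on subvarieties cut out by relations in the path algebra.
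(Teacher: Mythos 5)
Your proposal is correct and follows essentially the same route as the paper: take $w_e=1$ for every edge, observe that $\mu_{S^1}$ is then non-negative and proper on $\Rep(\CQ,\bv,\bw)$ before quotienting, and conclude that it descends to a proper, bounded-below map on $\FX_\alpha(\bv,\bw)$. The only difference is that you spell out the descent-of-properness step (compactness of $K_N$ and $\bar\mu_{S^1}^{-1}([0,N])=\pi(K_N)$), which the paper leaves implicit; this is a correct and welcome elaboration rather than a new argument.
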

\begin{proof} By the preceding discussion, we need only show that the weights $w_e$
may always be chosen so that $\mu_{S^1}$ is proper on $\FX_\alpha(\bv,\bw)$.
We can always arrange this by taking $w_e=1$ for every $e \in \CE$, since in this
case $\mu_{S^1}$ is proper on $\Rep(\CQ,\bv,\bw)$, before quotienting, and hence
descends to a proper map on $\FX_\alpha(\bv,\bw)$.
\end{proof}

\begin{remark} \label{rem-different-circle-action}
Note that the action given by taking $w_e=1$ for all $e \in \CE$ is, in some sense, the
worst case scenario. 
It is often the case that we can pick a different $S^1$-action, which
is more convenient for certain purposes. The most important
special case is when $\CQ$ is an acyclic quiver. Then for its double we have
$\Rep(\double{\CQ}, \bv,\bw) \iso T^\ast \Rep(\CQ, \bv, \bw)$ and it suffices to take
the weight one action on the fibers, i.e. $t \cdot (x,y) = (x, ty)$. In worked examples,
we will always make it clear what $S^1$-action we have chosen.
\end{remark}

The only remaining condition we must check for circle compactness of $\FX_\alpha(\bv,\bw)$
is that the fixed-point set is compact. Since $\mu_{S^1}$ is proper, the fixed-point set is a union of compact subvarieties.
What remains to be shown is that this is a finite union, i.e. that there are only finitely many
connected components of the fixed-point set. We will do this by enumerating the connected components
combinatorially. 

Suppose that we have fixed an $S^1$-action on $\Rep(\CQ, \bd)$ as above, and denote this
action by $x_e \mapsto s \cdot x_e$. Now suppose that there is some $S^1$ action on $D = V \oplus W$.
This induces a new $S^1$-action on $\Rep(\CQ, \bd)$ which we denote by $x_e \mapsto s \ast x_e$.

\begin{definition} Given two $S^1$-actions on $\Rep(\CQ, \bd)$, denoted $\cdot$ and $\ast$,
a \emph{compatible representation} is a representation $x \in \Rep(\CQ, \bd)$ such that
for all $s \in S^1$, $s \cdot x = s \ast x$.
\end{definition}

The set of compatible representations may be naturally identified with
a vector subspace isomorphic to $\Rep(\tilde{\CQ}, \tilde{\bd})$, for some
auxiliary quiver $(\tilde{\CQ}, \tilde{\bd})$. We can take the vertex set
$\tilde{I}$ to consist of the direct summands $D_i(\lambda)$ in the weight space
decomposition of $D_i$, for $i \in \CI$, and the dimension of each vertex to be
$\dim D_i(\lambda)$. For each edge $e \in \CE$ mapping $i$ to $j$, we add edges
in $\tilde{\CQ}$ from $D_i(\lambda)$ to $D_i(\lambda')$, for all $\lambda,\lambda'$
such that $\lambda' - \lambda$ is equal to the weight of the edge $e$ under the $\cdot$ action.

\begin{example} Consider the quiver with one vertex and a single edge taking this vertex
to itself. Let the $S^1$-action be given by weight 1 on this edge. Consider the case of
dimension vector $(3)$. In order to produce a compatible representation with at least
one edge, the $S^1$-action on $\BC^3$ must have at least two distinct weights differing by $1$. The inequivalent choices
of weights are then $(0, 0,1)$, $(0,1,1)$, $(0,1,2)$, and $(0,1,a)$ with $a \geq 3$.
These lead to the quivers in the Figure \ref{fig-example-compatible-subrep} below.
\begin{figure}[h]
\centering
\begin{tikzpicture}
  \node[ellipse,draw] (A1) at (0,0) {$2$};
  \node[ellipse,draw] (A2) at (1.5,0) {$1$};
  \path[->,>=latex',draw] (A1) -- (A2);

  \node[ellipse,draw] (B1) at (3,0) {$1$};
  \node[ellipse,draw] (B2) at (4.5,0) {$2$};
  \path[->,>=latex',draw] (B1) -- (B2);

  \node[ellipse,draw] (C1) at (6,0) {$1$};
  \node[ellipse,draw] (C2) at (7.5,0) {$1$};
  \node[ellipse,draw] (C3) at (9,0) {$1$};
  \path[->,>=latex',draw] (C1) -- (C2) -- (C3);

  \node[ellipse,draw] (D1) at (10.5,0) {$1$};
  \node[ellipse,draw] (D2) at (12,0) {$1$};
  \path[->,>=latex',draw] (D1) -- (D2);

\end{tikzpicture}
\caption{Compatible representations of the quiver with one node and a single edge loop, with dimension vector $\bd = (3)$.}
\label{fig-example-compatible-subrep}
\end{figure}
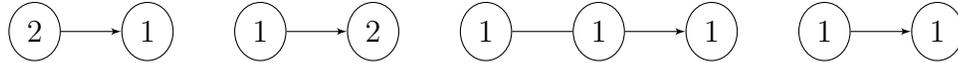
\end{example}

\begin{remark} \label{rem-finite-subrep}
It is clear that there are only finitely many quivers produced by this construction for
any finite quiver $\CQ$ with dimension vector $\bd$.
\end{remark}

\begin{proposition} \label{prop-connected-components}
Let $\CQ$ be a quiver and suppose that we have picked an $S^1$-action on $\Rep(\CQ, \bv, \bw)$
as above, inducing an $S^1$-action on $\FX_\alpha(\bv, \bw)$. Then every connected component
of the fixed-point set consists of equivalence classes of compatible representations,
for some auxiliary $S^1$-action on $V \oplus W$ which acts trivially on $W$.
Consequently, the $S^1$-fixed point set of $\FX_\alpha(\bv,\bw)$ consists of finitely
many connected components.
\end{proposition}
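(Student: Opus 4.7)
The plan is to adapt the lifting argument of Proposition \ref{prop-fixed-lift} to the $S^1$-action on $\FX_\alpha(\bv,\bw)$, and then to deduce the finiteness of connected components from standard algebro-geometric properties.

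First, let $[x] \in \FX_\alpha(\bv,\bw)^{S^1}$. Since the $S^1$-action on $\Rep(\CQ,\bv,\bw)$ commutes with that of $G_\bv$, the combined acting group is the direct product $H = G_\bv \times S^1$, and I would apply Proposition \ref{prop-fixed-lift} with this $H$, with $G = G_\bv$, and with $K = S^1$. For $\alpha$ chosen so that $G_\bv$ acts locally freely on $\mu_\bv^{-1}(\alpha)$, this produces a lift $\tilde x \in \mu_\bv^{-1}(\alpha)$ together with a map $\phi : H \to G_\bv$ satisfying $\phi|_{G_\bv} = \mathrm{id}$ and $h \cdot \tilde x = \phi(h)\tilde x$ for all $h \in H$. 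Because $S^1$ and $G_\bv$ commute in $H$, and $\phi(S^1)$ lies in an abelian one-parameter subgroup of $G_\bv$, the cocycle identity $\phi(h_1 h_2) = h_1 \phi(h_2) h_1^{-1}\phi(h_1)$ collapses on $S^1$ to $\phi(s_1 s_2) = \phi(s_1)\phi(s_2)$, so $\phi|_{S^1} : S^1 \to G_\bv$ is a genuine homomorphism.

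Next, I would interpret this homomorphism as defining the desired auxiliary action on $D = V \oplus W$. Since $\phi|_{S^1}$ factors through a maximal torus of $G_\bv$, it decomposes each $V_i$ into $S^1$-weight spaces; paired with the trivial action on $W$, this defines an auxiliary $S^1$-action $\ast$ on $D$ that is trivial on $W$. The identity $s \cdot \tilde x = \phi(s)\tilde x$ is then precisely the compatibility condition $s \cdot \tilde x = s \ast \tilde x$, so $\tilde x$ is a compatible representation. Replacing $\tilde x$ by $g \tilde x$ conjugates $\phi$ by $g$, so the weight decomposition of $V$, and hence the associated auxiliary quiver $(\tilde\CQ, \tilde\bd)$, is well-defined up to isomorphism on each connected component of $\FX_\alpha(\bv,\bw)^{S^1}$; conversely, every compatible representation lying in $\mu_\bv^{-1}(\alpha)$ descends to a fixed point.

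For finiteness of the number of connected components, I invoke the identification (Remark \ref{rmk-symplectic-git}) of the K\"ahler quotient $\FX_\alpha(\bv,\bw)$ with the GIT quotient of the finite-dimensional affine variety $\Rep(\CQ,\bv,\bw)$ by $G_{\bv,\BC}$. This realizes $\FX_\alpha(\bv,\bw)$ as a quasi-projective variety, in particular a Noetherian scheme on which $S^1$ acts algebraically; the fixed-point locus of such an action is closed and of finite type, hence has only finitely many irreducible, and therefore connected, components. I expect the main obstacle to be verifying the collapse of the cocycle identity, which rests on the direct-product structure $H = G_\bv \times S^1$ together with the abelian nature of one-parameter subgroups in $G_\bv$; once this is in hand, everything else is a direct specialization of Proposition \ref{prop-fixed-lift} and standard facts about algebraic torus actions.
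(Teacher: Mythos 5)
Your proposal is correct. The structural half — lifting a fixed point via Proposition \ref{prop-fixed-lift} applied to $H = G_\bv \times S^1$, using commutativity to see that $\phi|_{S^1}$ is an honest homomorphism landing (after conjugation) in a maximal torus, and reading off the weight-space decomposition as the compatibility condition — is essentially identical to the paper's argument; the only cosmetic point is that the cocycle identity actually collapses to $\phi(s_1s_2)=\phi(s_2)\phi(s_1)$, and you then need the (easy) observation that the image of $\phi|_{S^1}$ is abelian because $\phi(s_1s_2)=\phi(s_2s_1)$, rather than appealing to it lying in a one-parameter subgroup. Where you genuinely diverge is the finiteness claim. The paper gets finiteness combinatorially: Remark \ref{rem-finite-subrep} observes that only finitely many auxiliary quivers $(\tilde\CQ,\tilde\bd)$ arise, so the fixed locus is covered by finitely many subvarieties of compatible representations, each with finitely many components. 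You instead pass to the GIT model (Remark \ref{rmk-symplectic-git}), extend the circle action to an algebraic $\Cstar$-action on a quasi-projective quotient, identify the $S^1$- and $\Cstar$-fixed loci by holomorphicity, and invoke Noetherianity of the closed fixed subscheme. Your route is softer and shorter, and it is valid; what it does not buy you is the explicit combinatorial enumeration of the components by auxiliary quivers, which is precisely what the paper needs downstream (e.g.\ in \S\ref{sec-rank-3-star}) to actually list the fixed-point components and run the Morse-theoretic induction. So the paper's heavier bookkeeping is not redundancy but the payload of the section, while your argument would suffice if finiteness alone were the goal.
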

\begin{proof} 
By Proposition \ref{prop-fixed-lift}, on any connected component of 
$\FX_\alpha(\bv,\bw)^{S_1}$ we can choose a homomorphism $\phi: S^1 \to G_\bv$ such that
for each edge $x$, we have
\begin{equation} \label{eqn-conn-comp-fixed}
  s \cdot x = s^{w_e} x = \phi(g) \cdot x = \phi(g)_j x \phi(g)_i^{-1}.
\end{equation}
Moreover, since $S^1$ is abelian, we can assume (by conjugation by an element of $G_\bv$ if 
necessary) that $\phi$ maps to the maximal torus of $G_\bv$. Hence the vector space
$V_i$ associated to each vertex $i \in \CV$ decomposes into weight spaces, and
equation (\ref{eqn-conn-comp-fixed}) implies that for each edge $x$ mapping $i$ to $j$,
the component of $x$ mapping weight space $V_i(\lambda)$ to $V_j(\nu)$ is identically zero
unless $\nu-\lambda=w_e$. Hence every representation on this connected component is a compatible
representation as described above.
\end{proof}

\begin{example} We consider the double of the polygon space of Example \ref{ex-polygon-4},
with the $S^1$-action given by the fiberwise action as in Remark \ref{rem-different-circle-action}.
We assume that the moment map level is chosen so that the polygon space is non-empty.
Figure \ref{fig-rank-2-types} depicts all possible components of the fixed-point set.
(Not all of these will be non-empty, as this depends on the particular choice of moment
map level.)
\begin{figure}[h]
\centering
\begin{tikzpicture}
  \node[ellipse,draw] (sink1) at (-2.5,1.5) {$2$};
  \node[ellipse,draw] (sink2) at ( 0,1.5) {$1$};
  \node[ellipse,draw] (sink3) at (-4.5,-2.0) {$1$};
  \node[ellipse,draw] (sink4) at  ( 0, -2.0) {$1$};

  \foreach \s in {1,2,3,4} {
    \coordinate (o\s) at (45+90*\s:1.5);
    \node[ellipse,draw] (m\s) at ( $(sink1) + (o\s)$) {$1$};
    \path[->,>=latex',draw] (m\s) -- (sink1);
  }

  \node[ellipse,draw] (source1) at ( $(sink2) + (2,0)$ ) {$1$};
  \foreach \s in {-1,0,1} {
    \coordinate (o\s) at ( $(1,0) + \s*(0,-1)$ );
    \node[ellipse,draw] (m\s) at ( $(sink2) + (o\s)$) {$1$};
    \path[->,>=latex',draw] (m\s) -- (sink2);
    \path[->,>=latex',draw,dashed] (source1) -- (m\s);
  }
  \node[ellipse,draw] (tail1) at ( $(source1) + (1,0)$ ) {$1$};
  \path[->,>=latex',draw] (tail1) -- (source1);

  \node[ellipse,draw] (source2) at ( $(sink3) + (2,0)$ ) {$1$};
  \foreach \s in {-1,1} {
    \coordinate (o\s) at ($(1,0) + \s*(0,1)$);
    \node[ellipse,draw] (m\s) at ( $(sink3) + (o\s)$) {$1$};
    \node[ellipse,draw] (n\s) at ( $(source2) + (o\s)$) {$1$};
    \path[->,>=latex',draw] (m\s) -- (sink3);
    \path[->,>=latex',draw] (n\s) -- (source2);
    \path[->,>=latex',draw,dashed] (source2) -- (m\s);

  }

  \node[ellipse,draw] (source3) at ( $(sink4) + (2,0)$ ) {$1$};
  \foreach \s in {-1,0,1} {
    \coordinate (o\s) at ( $(1,0) + \s*(0,-1)$ );
    \node[ellipse,draw] (m\s) at ( $(source3) + (o\s)$) {$1$};
    \path[->,>=latex',draw] (m\s) -- (source3);
  }
  \node[ellipse,draw] (middle) at ( $(sink4) + (1,0)$ ) {$1$};
  \path[->,>=latex',draw] (middle) -- (sink4);
  \path[->,>=latex',draw,dashed] (source3) -- (middle);

\end{tikzpicture}
\caption{The four possible types of fixed-point set for the $S^1$-action on the doubled
polygon space.}
\label{fig-rank-2-types}
\end{figure}
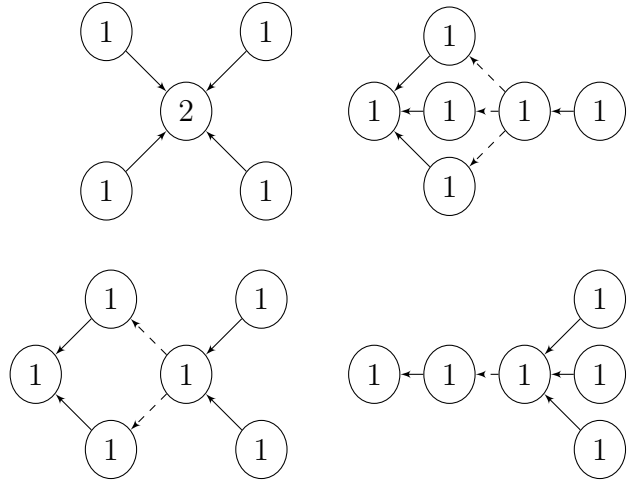
\end{example}

\subsection{Morse Theory with $\mu_\bw$ and $|\mu_\bw|^2$} \label{sec-quiver-morse-gw}

In this section we would like to understand the $G_\bw$-equivariant Morse stratification on
$\FX_\alpha(\bv,\bw)$ with respect to the $G_\bw$-moment map. We will assume throughout that
$\mu_\bw$ is proper on $\FX_\alpha(\bv, \bw)$.

Before we proceed, let us note that there is a natural direct sum operation
\begin{equation}
  \Rep(\CQ, \bv, \bw) \oplus \Rep(\CQ, \bv', \bw') \into \Rep(\CQ, \bv+\bv', \bw+\bw'),
\end{equation}
given by sending a pair of representations $(x,x')$ to the representation
$x\oplus x'$. We define a \emph{partition} of $(\bv,\bw)$ to be 
a finite sequence of dimension vectors $(\bv_1,\bw_1), \dots, (\bv_l, \bw_l)$ such that
$(\bv_1,\bw_1) + \dots + (\bv_l,\bw_l) = (\bv,\bw)$ . Denote by $\CP(\bv,\bw)$ the set of all partitions of 
$(\bv,\bw)$. For each $\lambda \in \CP(\bv,\bw)$, there is a subspace of $\Rep(\CQ, \bv,\bw)$ consisting
of those representations which decompose as a direct sum according to the partition
$\lambda$. Such representations can be enumerated diagrammatically, as shown by
the following lemma, which follows directly from the definition of direct sum.
\begin{lemma} Let $\lambda \in \CP(\bv,\bw)$. Then the representations of $\CQ$ that decompose
according to $\lambda$ are naturally identified with representations of  $|\lambda|$
disjoint copies of $\CQ$, where $|\lambda|$ denotes the length of the partition.
\end{lemma}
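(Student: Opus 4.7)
The plan is to prove this by unpacking definitions and exhibiting the natural bijection explicitly. Given a partition $\lambda = ((\bv_1,\bw_1), \dots, (\bv_l,\bw_l))$ of $(\bv,\bw)$, I would first note that a representation of the disjoint union $\CQ \sqcup \cdots \sqcup \CQ$ (with $l$ copies) whose dimension vector on the $k$-th copy equals $(\bv_k, \bw_k)$ is, by definition of the representation space, nothing more than a tuple $(x_1, \dots, x_l)$ with $x_k \in \Rep(\CQ, \bv_k, \bw_k)$. Thus the representation space of the disjoint union is canonically $\bigoplus_{k=1}^{l} \Rep(\CQ, \bv_k, \bw_k)$.

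Next I would iterate the direct sum embedding introduced just before the lemma statement to obtain a natural inclusion
\begin{equation}
\Phi_\lambda : \bigoplus_{k=1}^l \Rep(\CQ, \bv_k, \bw_k) \into \Rep(\CQ, \bv, \bw),
\qquad (x_1, \dots, x_l) \mapsto x_1 \oplus \cdots \oplus x_l,
\end{equation}
after fixing, for each vertex $i \in \CI$, an ordered decomposition $D_i = \bigoplus_{k=1}^l D_{i,k}$ of the associated vector space with $\dim D_{i,k}$ equal to the $i$-th component of $(\bv_k,\bw_k)$. The image of $\Phi_\lambda$ consists precisely of those $x$ whose edge maps $x_e: D_i \to D_j$ (for each $e: i \to j$) are block-diagonal with respect to these decompositions, which is by definition the locus of representations that decompose according to $\lambda$. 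Composing $\Phi_\lambda$ with the identification from the previous paragraph gives the claimed bijection.

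There is no real obstacle here; the entire content is bookkeeping, and the naturality statement is essentially a tautology once one fixes the block decompositions. I would only remark, for later use, that $\Phi_\lambda$ is equivariant with respect to the obvious block-diagonal inclusion $\prod_k G_{\bd_k} \into G_\bd$, so the identification descends compatibly to the level of moduli, which is presumably why the lemma is isolated at this point in the chapter.
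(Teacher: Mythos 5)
Your proof is correct and matches the paper's treatment: the paper states this lemma "follows directly from the definition of direct sum" and offers no further argument, and your write-up is exactly that definition-unpacking, with the block-diagonal embedding $\Phi_\lambda$ made explicit. The closing remark on equivariance under $\prod_k G_{\bd_k} \into G_\bd$ is a useful observation that is indeed what makes the lemma relevant to the subsequent Morse-theoretic analysis.
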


\begin{example} Consider the star quiver as in Figure \ref{fig-simple-quiver} with
dimension vector $\bd = (2, 1, 1, 1, 1)$, and consider the partition
$\bd = (1, 1, 0, 0,0) + (1, 0, 1, 0, 0) + (0, 0, 0, 1,1)$. Then we can represent the space of
representations which decompose according to this partition by Figure \ref{fig-simple-partition}.
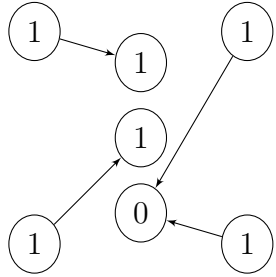
\begin{figure}[h]
\centering
\begin{tikzpicture}
  \node[ellipse,draw] (sink1) at (0, 1) {$1$};
  \node[ellipse,draw] (sink2) at (0, 0) {$1$};
  \node[ellipse,draw] (sink3) at (0,-1) {$0$};

  \foreach \s in {1} {
    \coordinate (o\s) at (45+90*\s:2);
    \node[ellipse,draw] (m\s) at ( $(sink2) + (o\s)$) {$1$};
    \path[->,>=latex',draw] (m\s) -- (sink1);
  }

  \foreach \s in {2} {
    \coordinate (o\s) at (45+90*\s:2);
    \node[ellipse,draw] (m\s) at ( $(sink2) + (o\s)$) {$1$};
    \path[->,>=latex',draw] (m\s) -- (sink2);
  }

  \foreach \s in {3,4} {
    \coordinate (o\s) at (45+90*\s:2);
    \node[ellipse,draw] (m\s) at ( $(sink2) + (o\s)$) {$1$};
    \path[->,>=latex',draw] (m\s) -- (sink3);
  }
\end{tikzpicture}
\caption{Diagrammatic depiction of the representations decomposing according to the partition
described above.}
\label{fig-simple-partition}
\end{figure}
\end{example}

\begin{theorem} \label{quiver-thm-direct-sum}
Let $\beta \in t_\bw$ be a topological generator of a torus $T_\beta \subseteq T_\bw$.
Then $[x] \in \FX_\alpha(\bv,\bw)$ is fixed by $T_\beta$ only if there is a representative $x$ 
which decomposes as a direct sum for some partition $\lambda \in \CP(\bv,\bw)$.
Conversely, if $x$ decomposes as a non-trivial direct sum, then $[x]$ is fixed by some non-trivial sub-torus
of $T_\bw$.
\end{theorem}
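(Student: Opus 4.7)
The plan is to identify torus fixed points on $\FX_\alpha(\bv, \bw)$ with direct sum decompositions of lifted representations by extracting a weight space decomposition via Proposition \ref{prop-fixed-lift}.

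For the forward implication, suppose $[x]$ is fixed by $T_\beta$, and fix a lift $\tx \in \mu_\bv^{-1}(\alpha)$. I would apply Proposition \ref{prop-fixed-lift} with $H = G_\bv \times T_\beta$, $G = G_\bv$, and $K = T_\beta$ to obtain a homomorphism $\phi: T_\beta \to G_\bv$ satisfying $t \cdot \tx = \phi(t) \cdot \tx$ for all $t \in T_\beta$; equivalently, the combined homomorphism $\Phi: T_\beta \to G_\bv \times G_\bw$ given by $\Phi(t) = (\phi(t)^{-1}, t)$ fixes $\tx$. Because $\phi(T_\beta)$ is a connected abelian subgroup of $G_\bv$, it lies in some maximal torus, so each $D_i$ carries a well-defined $T_\beta$-weight decomposition $D_i = \bigoplus_\lambda D_i(\lambda)$ induced by $\Phi$. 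The fixedness condition $\Phi(t)_j \tx_e \Phi(t)_i^{-1} = \tx_e$ for every edge $e : i \to j$ then forces the matrix block of $\tx_e$ from $D_i(\lambda_i)$ to $D_j(\lambda_j)$ to vanish unless $\lambda_i = \lambda_j$, so $\tx$ decomposes as $\bigoplus_\lambda \tx^\lambda$. Collecting the dimension vectors $\bd_\lambda = (\dim D_i(\lambda))_i$ for weights with non-zero summands produces the required partition of $(\bv, \bw)$.

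For the converse, assume $x = \bigoplus_{k=1}^l x_k$ is a non-trivial direct sum with $(\bv, \bw) = \sum_k (\bv_k, \bw_k)$. I would consider the torus $T' = (S^1)^l$ acting on $\Rep(\CQ, \bd)$ so that $(t_1, \dots, t_l)$ scales every vector of the $k$-th summand---both $V_{i,k}$ and $W_{i,k}$---by $t_k$. Each edge of $x$ is supported in a single summand and transforms by $t_k \cdot t_k^{-1} = 1$, so this action fixes $x$. It factors through diagonal inclusions $\iota_\bv : T' \to T_\bv$ and $\iota_\bw : T' \to T_\bw$, and from $(\iota_\bv(t), \iota_\bw(t)) \cdot x = x$ I would deduce $\iota_\bw(t) \cdot x = \iota_\bv(t)^{-1} \cdot x$, placing $\iota_\bw(t) \cdot x$ in the $G_\bv$-orbit of $x$. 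Hence $\iota_\bw(T') \subseteq T_\bw$ fixes $[x]$, and this image is non-trivial whenever $\bw \neq 0$ because then some $\bw_k$ is non-zero and the corresponding factor of $T'$ embeds into $T_\bw$; the case $\bw = 0$ renders the statement vacuous.

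The main step requiring care is the passage from twisted $G_\bv$-fixedness to an honest weight decomposition of $\tx$, which depends on knowing that $\phi(T_\beta)$ sits inside some maximal torus and that $\phi$ is a genuine homomorphism (the latter follows from the commutativity of the $G_\bv$- and $G_\bw$-actions on $\Rep(\CQ, \bd)$ together with abelian-ness of $T_\beta$). Once this is arranged, the rest is the familiar observation that a torus-equivariant family of linear maps splits as a direct sum indexed by pairs of weights, and only diagonal pairs survive the fixedness condition.
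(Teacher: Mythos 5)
Your proposal is correct and follows essentially the same route as the paper: in the forward direction you extract the homomorphism $\phi: T_\beta \to G_\bv$ (via Proposition \ref{prop-fixed-lift}), pass to the weight decomposition of $V \oplus W$ under the combined torus, and observe that only diagonal weight blocks of each edge survive; in the converse you build the fixing subtorus of $T_\bw$ directly from the partition. The only difference is that you spell out the conjugation into a maximal torus and the edge case $\bw = 0$ more explicitly than the paper does, which is a harmless refinement rather than a different argument.
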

\begin{proof}  Since $[x]$ is fixed by $T_\beta$,
for each $s \in T_\beta$, there is some $\phi(s) \in G_\bv$ such that
\begin{equation}
  s \cdot x = \phi(s) \cdot x,
\end{equation}
and in particular we get a homomorphism $\phi: T_\beta \to G_\bv$. 
Define subtorus $T \subset G_\bd$ as the image of $s \mapsto s \phi(s)^{-1}$.
Then we can decompose $V \oplus W$ into weight spaces under the action of $T$,
and the above equation implies that for each edge $e$, the component $x_e$
decomposes as a direct sum.

Conversely, if $x$ decomposes according to a nontrivial partition $\lambda$, we can always
choose a subtorus $T_\beta \subset T_\bw$ and a map $\phi: T_\beta \to T_\bv$ such that the
weight space decomposition of $V \oplus W$ corresponds to $\lambda$. Hence $[x]$ will be
fixed by $T_\beta$.
\end{proof}

\begin{remark} The above theorem gives an explicit enumeration of the connected components
of the critical set of $|\mu_\bw|^2$ in terms of simpler quivers, since every
critical point of $|\mu_\bw|^2$ is conjugate to a point fixed by some $T_\beta \subseteq T_\bw$. 
By the results of Section \ref{morse-sec-eq-morse}, we may explicitly compute the Morse
indices, and inductively compute the equivariant cohomology. 
\end{remark}

\begin{remark} In the special case $\CW = \CI, \CV = \emptyset$, all of the edges
are even and so representations fixed by $T_\beta$ decompose as direct sums in the ordinary sense.
Additionally, in this special case we may drop the hypothesis that $\mu_\bw$ is proper,
since by Theorem \ref{GlobalEstimate} the gradient flow of $|\mu_\bw|^2$ always converges.
This special case was studied in detail in \cite{HaradaWilkin}, where it was proved
that the $G_\bd$-equivariant Morse stratification agrees with the algebraic stratification
induced by the Harder-Narasimhan filtration.
\end{remark}

\subsection{Cohomology of Subvarieties}

The results of Section \ref{sec-quiver-morse-circle} show that any quiver variety
$\FX_\alpha(\bv,\bw)$ is circle compact, and moreover that the circle action can
be chosen so that the fixed-point set is a finite union of simple quiver varieties.
The results of Chapter \ref{ch-morse}  and Section \ref{sec-quiver-morse-gw} then allow us to compute the Betti numbers of
the connected components of the fixed-point set, and in addition we may compute
the cohomology \emph{rings} of these compact quiver varieties using the more sophisticated techniques of Chapter \ref{ch-residue}.

However, hyperk\"ahler quiver varieties are defined as \emph{subvarieties} cut out by the complex
moment map equation, so there is still some work to be done. In this section we will describe
a technique that allows us to piggyback the results of Chapter \ref{ch-morse} and the preceding
sections, so as to settle this question for a large class of subvarieties of quiver varieties. 
This technique is heavily computational, and we will give a detailed case study in Section \ref{sec-rank-3-star} below.

Let us fix a quiver $\CQ$ with dimension vector $\bd = (\bv,\bw)$,
and assume that we have fixed a circle action so that $\FX_\alpha(\bv,\bw)$ is circle compact. Let 
$f: \Rep(\CQ, \bv, \bw) \to E$ be some $G_\bd \times S^1$-equivariant polynomial in the edges,
and assume that $V(f) \subset \FX_\alpha(\bv,\bw)$ is smooth. Note that
\begin{equation}
  V(f)^{S^1} = V(f) \cap \FX_\alpha(\bv,\bw)^{S^1}
\end{equation}
and hence
\begin{equation}
  \Crit\left( \left.|\mu_\bw|^2\right|_{V(f)} \right) = V(f) \cap \Crit |\mu_\bw|^2,
\end{equation}
so that the Morse stratifications with respect to the $S^1$-action and to $|\mu_\bw|^2$ on $V(f)$
can understood by the combinatorial arguments of Sections \ref{sec-quiver-morse-circle} and
\ref{sec-quiver-morse-gw}. Putting everything together, we have the following theorem.
\begin{theorem} \label{thm-inductive-procedure}
Let $(\CQ, \bv, \bw)$ be a quiver with dimension vector, and suppose that we have chosen
a circle action on $\Rep(\CQ,\bv,\bw)$ as in the preceding sections so that $\FX_\alpha(\bv,\bw)$
is circle compact and such that $f: \Rep(\CQ, \bv, \bw) \to E$ is $S^1 \times G_\bd$-equivariant,
and such that $V(f)$ is smooth. Then we may compute $P_t( V(f) \red G_\bw)$ by the following procedure.
\begin{enumerate}
  \item By Morse theory with the $S^1$-action, express the Poincar\'e polynomial of $V(f) \red G_\bw$
    in terms of the Poincar\'e polynomials of the connected components of $V(f)^{S_1} \red G_\bw$.
    Working one connected component at a time, we may now assume that $\FX_\alpha(\bv,\bw)$ is compact.
  \item By equivariant formality, $P_t^{G_\bw}( V(f) ) = P_t(BG_\bw) P_t (V(f))$.
  \item Compute $P_t(V(f))$ by Morse theory with a generic component of $\mu_\bw$.
  \item Compute $P_t( V(f) \red G_\bw )$ by Morse theory with $|\mu_\bw|^2$.
    The equivariant cohomology of the higher critical sets of $|\mu_\bw|^2$ may be computed
    from the ordinary cohomology of subvarieties of simple quiver varieties,
    which we compute by induction.
\end{enumerate}
\end{theorem}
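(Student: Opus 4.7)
The plan is to verify that each of the four steps of the procedure is a legitimate application of the Morse-theoretic machinery developed earlier, and that the tools compose correctly. The unifying observation is that the $S^1 \times G_\bd$-equivariance of $f$ ensures $V(f)$ inherits every Hamiltonian structure from the ambient representation space: the $S^1$-action preserves $V(f)$ and commutes with $G_\bw$, so it descends to $V(f) \red G_\bw$; and the $G_\bw$-moment map $\mu_\bw$ restricts to a moment map on $V(f)$.

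For Step 1, I would argue that $\mu_{S^1}$ descends to a proper, bounded-below function on $V(f) \red G_\bw$, whose fixed-point set is $V(f)^{S^1} \red G_\bw$. By Proposition \ref{prop-connected-components} and the $S^1$-invariance of $V(f)$, this fixed-point set is a finite union of subvarieties of compact quiver varieties, and Corollary \ref{thm-equivariant-poincare} applied to the $S^1$-action yields the claimed decomposition of $P_t(V(f) \red G_\bw)$. Working one component $C$ of $\FX_\alpha(\bv,\bw)^{S^1}$ at a time, the ambient quiver variety may be replaced by the compact subvariety $C$, reducing everything to the compact setting.

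For Step 2, on the compact component, equivariant formality of the $G_\bw$-action follows from Theorem \ref{thm-formal-mu-perf} applied to a generic component of $\mu_\bw$ (which is automatically proper and bounded-below on the compact component), together with the fact that equivariant formality for a compact connected Lie group reduces to the maximal-torus case. Step 3 then computes $P_t(V(f) \cap C)$ by ordinary Morse theory with $\mu_\bw^\beta$ for generic $\beta$: by Theorem \ref{quiver-thm-direct-sum}, the critical set is a union of subvarieties of strictly simpler quiver varieties (arising from partitions of $(\bv,\bw)$), and the isotropy-representation calculation of Theorem \ref{thm-rep-ring} provides the Morse indices. Finally, Step 4 invokes equivariant Morse theory with $|\mu_\bw|^2$, which is equivariantly perfect and flow-closed by the results of Section \ref{morse-sec-eq-morse} together with Theorem \ref{GlobalEstimate}; the higher critical sets, again described by Theorem \ref{quiver-thm-direct-sum}, are identified with $G_\bw$-orbits of subvarieties of smaller quiver varieties, whose equivariant cohomology is accessible by induction.

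The main obstacle --- and the reason this is phrased as a procedure rather than a closed-form result --- is ensuring that the induction terminates and that the combinatorial bookkeeping remains tractable. Termination is guaranteed because at each step the resulting quiver varieties correspond to strict partitions of the original dimension vector, and the partition poset is finite. The genuine difficulty is practical: enumerating all partitions $\lambda \in \CP(\bv,\bw)$ together with all auxiliary $S^1$-actions on $V \oplus W$ (cf.\ the discussion before Proposition \ref{prop-connected-components}), and carrying out the isotropy representation calculations of Theorem \ref{thm-rep-ring} at each level, is where the real work lies. This is precisely why the detailed worked example in \S\ref{sec-rank-3-star} is needed to illustrate the procedure in a non-trivial case.
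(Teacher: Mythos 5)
Your proposal is correct and follows essentially the same route as the paper, which itself offers only a terse justification resting on the identities $V(f)^{S^1} = V(f) \cap \FX_\alpha(\bv,\bw)^{S^1}$ and $\Crit\bigl(|\mu_\bw|^2|_{V(f)}\bigr) = V(f) \cap \Crit|\mu_\bw|^2$ together with the combinatorics of \S\ref{sec-quiver-morse-circle}--\ref{sec-quiver-morse-gw}; your step-by-step verification is a faithful elaboration of exactly that argument. The only quibble is a citation: the decomposition in Step 1 comes from perfection of the $S^1$-moment map (Theorem \ref{thm-formal-mu-perf}), not from Corollary \ref{thm-equivariant-poincare}, which concerns $|\mu|^2$.
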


\begin{remark} The crucial hypothesis is that we have chosen a decomposition
$\CI = \CV \sqcup \CW$ so that $V(f) \subset \FX_\alpha(\bv,\bw)$ is \emph{smooth},
so that our reduction in stages procedure involves only smooth varieties. 
We do not claim that it is always possible to do this, but we are not aware of any
counterexamples. We suspect that it is always possible for acyclic quivers.
We will see in the following section that it is always possible for star-shaped quivers.
\end{remark}
\section{Rank Three Star Quivers} \label{sec-rank-3-star}

\subsection{Star Quivers}

In this section we will illustrate the techniques developed in this chapter by giving a
detailed calculation of the cohomology of the fixed-point sets of a particular family
of hyperk\"ahler quiver varieties. We will revisit this family in Chapters \ref{ch-residue}
and \ref{ch-integrable}.

The family that we consider are the hyperk\"ahler varieties associated to the 
quiver pictured in Figure \ref{fig-rank-three-star}. In the rank two case
(where the sink has dimension two rather than three), the K\"ahler quiver varieties
are polygon spaces \cite{Klyachko, HausmannKnutson}. Their hyperk\"ahler analogues
are called hyperpolygon spaces, and have been well-studied \cite{KonnoPolygon, HaradaProudfoot}.
In particular, hyperk\"ahler Kirwan surjectivity is known to hold and their cohomology
rings have been explicitly computed.

In the rank three case, the calculations become considerably more involved. It will take
the remainder of this chapter to compute their Betti numbers, and we will not show that
hyperk\"ahler Kirwan surjectivity holds until Chapter \ref{ch-residue}. Our Morse
theory calculation closely parallels that of rank three parabolic Higgs
bundles \cite{GGM05}. This is no coincidence, as we shall see in Chapter \ref{ch-integrable}.
\begin{figure}[h]
\centering
\begin{tikzpicture}[auto, node distance=1.7cm]
\tikzset{
  unframed/.style={circle,fill=white!20,draw},
  framed/.style={rectangle,fill=white!20,draw},
  dummy/.style={draw=none,fill=none},
  myarrow/.style={->,>=latex',draw},
  myline/.style={>=latex',draw}
}
  \node[unframed] (0) at (0, 0) {$3$};

  \node[unframed] (1) at (4, 1.5) {$1$};
  \node[unframed] (2) at (4, 0.5) {$1$};
  \node[unframed] (3) at (4, -0.5) {$1$};
  \node[unframed] (4) at (4, -1.5) {$1$};

  \path[myarrow] (2,1.5) -- (0);
  \path[myarrow] (2,0.5) -- (0);
  \path[myarrow] (2,-0.5) -- (0);
  \path[myarrow] (2,-1.5) -- (0);

  \path[myline] (2,1.5) -- node {$x_1$} (1);
  \path[myline] (2,0.5) -- node {$x_2$} (2);
  \path[myline] (2,-0.5) -- node {$...$} (3);
  \path[myline] (2,-1.5) -- node {$x_n$} (4);
\end{tikzpicture}
\caption{Rank three star quiver.}
\label{fig-rank-three-star}
\end{figure}
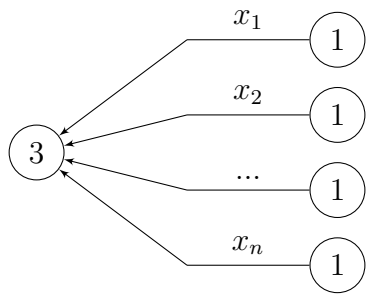

Let $x_1, \dots, x_n$ denote the incoming edges,
and let $y_1, \dots, y_n$ denote the outgoing edges on the doubled quiver. We will
explicitly write these as column and row vectors, respectively. Note that the overall
diagonal $S^1$ subgroup of $G_\bd$ acts trivially on $\Rep(\CQ, \bd)$, so strictly
speaking we should quotient only by $PG_\bd := G_\bd / S^1$. We have $\mathfrak{pg}_\bw \iso \mathfrak{su}_3 \oplus \BR^n$
and hence we take (central) moment map levels $\alpha \in \BR^n$.
We always assume that the components $\alpha_i$ of $\alpha$ are taken to be positive.
We denote the K\"ahler and hyperk\"ahler quotients by $\FX_\alpha(n)$ and $\FM_\alpha(n)$, respectively.
In what follows, we adopt the following notation. We denote by $[n]$ the set $\{1, \dots, n\}$,
and for a subset $S \subseteq [n]$, we denote by $\alpha_S$ the sum
\begin{equation}
  \alpha_S = \sum_{i \in S} \alpha_i.
\end{equation}
When it is clear from context, we will denote $\alpha_{[n]} = \sum_{i=1}^n \alpha_i$ simply
by $\alpha$.

\begin{proposition} The real and complex moment maps on $T^\ast \Rep(\CQ, \bd)$ are given by
\begin{align}
  \mur &= \frac{1}{2} \left( (xx^\ast - y^\ast y)_0, |y_1|^2-|x_1|^2, \dots, |y_n|^2-|x_n|^2 \right) \in \mathfrak{su}_3 \oplus \BR^n, \\
  \muc &= \left( (xy)_0, -y_1 x_1, \dots, -y_n x_n \right) \in \mathfrak{sl}_3 \oplus \BC^n,
\end{align}
where $(\cdot)_0$ denotes the traceless part of a matrix. For a regular value $(\alpha,0)$
of $\muhk$, both $\FX_\alpha(n)$ and $\FM_\alpha(n)$ are smooth.
\end{proposition}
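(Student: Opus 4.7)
The plan is to dispatch the two assertions separately: first the explicit moment map formulas, which amount to a direct calculation, and then the smoothness statement, which requires an analysis of stabilizers on the level set.

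For the moment map formulas, the representation space $\Rep(\CQ, \bd) = \bigoplus_{i=1}^n \Hom(\BC, \BC^3)$ is identified with $(\BC^3)^n$, and its cotangent bundle carries the natural hyper\-k\"ahler structure coming from the identification $T^\ast \BC^3 \iso \BH^3$. The group $G_\bd = U(3) \times U(1)^n$ acts linearly and unitarily, with the $U(3)$ factor acting on each $x_i \in \BC^3$ by $g \cdot x_i = gx_i$ and on each row vector $y_i$ by $g \cdot y_i = y_i g^{-1}$, and with the $i$-th $U(1)$ factor acting as $s \cdot x_i = s^{-1} x_i$, $s \cdot y_i = s y_i$. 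Applying the standard formula $\mu^\xi(v) = \tfrac{1}{2}\langle i\xi \cdot v, v\rangle$ to each factor and identifying $\mathfrak{u}(3)^\ast$ with Hermitian matrices via the trace pairing gives the Hermitian moment map $\tfrac{1}{2}(xx^\ast - y^\ast y)$ for the $U(3)$ factor and $\tfrac{1}{2}(|y_i|^2 - |x_i|^2)$ for the $i$-th $U(1)$ factor. Since the diagonal $S^1 \subset G_\bd$ acts trivially on $\Rep(\CQ,\bd)$, only the traceless part of the $U(3)$ component is seen by $\mathfrak{pg}_\bd \iso \mathfrak{su}_3 \oplus \BR^n$, yielding the claimed formula for $\mur$. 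The complex moment map $\muc$ is obtained by combining the two real moment maps corresponding to $J$ and $K$ into the holomorphic pairing; concretely it is the restriction to $T^\ast \Rep(\CQ,\bd)$ of the canonical complex moment map, giving $(xy)_0$ at the central vertex and $-y_i x_i$ at the outer vertices.

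For smoothness, I would verify that $PG_\bd$ acts with trivial stabilizers on $\muhk^{-1}(\alpha, 0)$ (equivalently, that the infinitesimal stabilizer at every point is zero), from which smoothness follows by the standard quotient theorem. The key observation is that because $\alpha_i > 0$, the real moment map equation $\tfrac{1}{2}(|y_i|^2 - |x_i|^2) = \alpha_i$ forces $y_i \neq 0$ at every point of the level set. Suppose $\xi = (A, t_1,\ldots,t_n) \in \mathfrak{u}(3)\oplus i\BR^n$ represents a class in $\mathfrak{pg}_\bd$ that fixes $(x,y)$; then $(A - t_i I)x_i = 0$ and $y_i(A - t_i I) = 0$ for every $i$. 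Since each $y_i$ is a nonzero left eigenvector of $A$ with eigenvalue $t_i$ and $A$ is skew-Hermitian, the $y_i$ decompose $\BC^3$ into $A$-eigenspaces; combining this with the complex moment map constraints $y_i x_i = 0$ and $(xy)_0 = 0$ then forces $A$ to be a scalar matrix and the $t_i$ all equal to that scalar, which is exactly the diagonal $\BR \subset \mathfrak{g}_\bd$ we have quotiented out.

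The main obstacle will be making the stabilizer analysis genuinely tight in the hyper\-k\"ahler case: one must use both moment map equations simultaneously, because neither alone rules out every nontrivial $\xi$. The argument for $\FX_\alpha(n)$ (where $y = 0$) is easier, since there the real moment map for $U(3)$ reads $(xx^\ast)_0 = \text{const}$ and the condition $(A - t_i I)x_i = 0$ together with positivity forces the $x_i$ to span $\BC^3$ in a compatible eigenbasis, giving the conclusion directly. For $\FM_\alpha(n)$, the complex moment map equations $(xy)_0 = 0$ and $y_i x_i = 0$ encode orthogonality and trace conditions that must be combined with the eigenspace decomposition from the real equation; the cleanest way to organize this is probably to work in a basis diagonalizing $A$ and exploit that $y_i x_i = 0$ means $x_i$ lies in the kernel of the functional $y_i$, i.e.\ in the sum of $A$-eigenspaces other than the one containing $y_i$, which quickly forces $A$ and all the $t_i$ to agree.
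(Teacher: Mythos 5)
Your derivation of the moment map formulas is correct and is exactly the ``direct calculation'' the paper alludes to (the paper itself gives no detail here and simply cites Nakajima for smoothness). The gap is in your smoothness argument. You assert that the eigenvector conditions $(A-t_iI)x_i=0$, $y_i(A-t_iI)=0$ combined with the complex moment map equations force $A$ to be scalar. That is not a consequence of the moment map equations alone. Take $A=\diag(\lambda_1,\lambda_2,\lambda_3)$ skew-Hermitian with distinct eigenvalues, partition $[n]=S_1\sqcup S_2\sqcup S_3$, and for $i\in S_j$ set $y_i=c_i e_j^\ast$, $t_i=\lambda_j$, $x_i=0$. All the stabilizer conditions hold, $y_ix_i=0$ and $(xy)_0=0$ hold trivially, and the real moment map equations are solvable whenever $\alpha_{S_1}=\alpha_{S_2}=\alpha_{S_3}$. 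For such $\alpha$ the level set contains points with a two-dimensional stabilizer in $\mathfrak{pg}_\bd$. What excludes this is precisely the hypothesis that $(\alpha,0)$ is a regular value: regularity is \emph{equivalent} to the vanishing of the infinitesimal stabilizer at every point of the level set, so the statement you are trying to extract from the equations is already part of the hypothesis, and your argument as written proves nothing beyond it.

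The actual content of the smoothness claim is therefore the passage from locally free to free: one must rule out nontrivial \emph{finite} stabilizers, which your purely Lie-algebra-level analysis never touches, and without which one only obtains an orbifold. The efficient route (and the content of the Nakajima theorem the paper cites) is: any non-central stabilizing element $g\in G_\bd$, whether of finite order or not, decomposes $\BC^3$ and the outer vertices into its eigenspaces and thereby exhibits $(x,y)$ as a nontrivial direct sum of subrepresentations; a decomposable representation is stabilized by an entire subtorus (cf.\ Theorem \ref{quiver-thm-direct-sum}), hence has positive-dimensional stabilizer, contradicting regularity. Equivalently, the stabilizer is the unitary group of the endomorphism algebra of the representation, which is connected, so discrete implies central. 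A minor further point: reading the stated formula for $\mur$ literally with $\alpha_i>0$ forces $y_i\neq0$ as you say, whereas the paper's later usage ($|x_i|^2=\alpha_i+|y_i|^2$) forces $x_i\neq0$; either way one of the two families spans $\BC^3$ by the central real equation, so this sign convention does not change the structure of the argument, but you should fix one convention and use it consistently.
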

\begin{proof}
The formulas for the moment maps follow by direct calculation. Smoothness follows from
\cite[Theorem 2.8]{Nakajima94} (this is a general fact about quiver varieties).
\end{proof}

\subsection{Critical Sets for the Circle Action}
The first order of business is to reduce the problem to computing the Betti numbers of
compact subvarieties, by choosing an appropriate circle action. We define the action on
$T^\ast \Rep(\CQ, \bd)$ to be
\begin{equation}
  s \cdot (x, y) = (x, sy).
\end{equation}
The moment map for this action is
\begin{equation}
  \mu_{S^1}(x,y) = \frac{1}{2} \sum_i |y_i|^2.
\end{equation}

\begin{lemma} This action makes $\FM_\alpha(n)$ circle compact.
\end{lemma}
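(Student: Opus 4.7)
The plan is to verify the three conditions in the definition of circle compactness: (i) the moment map $\mu_{S^1}$ is non-negative (hence bounded below), which is immediate from its explicit formula $\mu_{S^1} = \tfrac12 \sum_i |y_i|^2$; (ii) $\mu_{S^1}$ descends to a proper function on $\FM_\alpha(n)$; and (iii) the fixed-point set is compact.

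For properness, I would exploit the real moment map equations to show that a bound on $\sum_i |y_i|^2$ forces a bound on the full norm $\|(x,y)\|^2$. The component of $\mur$ along the $\BR^n$ factor gives $|y_i|^2 - |x_i|^2 = 2\alpha_i$ for each $i$, so $\sum_i |x_i|^2 = \sum_i |y_i|^2 - 2\alpha$. Thus any sublevel set $\{\mu_{S^1} \leq C\}$ in $\muhk^{-1}(\alpha,0)$ is contained in a bounded (and closed) subset of $T^\ast \Rep(\CQ,\bd)$, hence is compact. Quotienting by the compact group $PG_\bd$ preserves compactness, so $\mu_{S^1}$ is proper on $\FM_\alpha(n)$.

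For compactness of the fixed-point set, I would combine Proposition \ref{prop-connected-components} with properness of $\mu_{S^1}$. The proposition (applied with the $S^1$-action given by $s \cdot(x,y) = (x,sy)$, which is of the type considered there) shows that the fixed-point set of $\FM_\alpha(n)$ decomposes as a finite disjoint union of connected components indexed by compatible representations. On each connected component the moment map $\mu_{S^1}$ is constant, so each component lies inside a level set of a proper function, hence is compact as a closed subset of a compact set. A finite union of compact sets is compact.

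The main technical point is really the combination of the two moment map equations: without the real moment map constraint coupling $|x_i|^2$ to $|y_i|^2$, the function $\sum |y_i|^2$ would not control the full norm on $T^\ast \Rep$, and properness could fail. Once this coupling is noted, the remaining arguments are essentially formal and follow directly from the general circle-action machinery developed in Section \ref{sec-quiver-morse-circle}.
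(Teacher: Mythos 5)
Your proof is correct and follows essentially the same route as the paper: the key step in both is the real moment map equation expressing $\sum_i |x_i|^2$ as $\sum_i |y_i|^2$ plus a constant, so that $\mu_{S^1}$ controls the full norm on $T^\ast\Rep(\CQ,\bd)$ and is therefore proper (your sign $|y_i|^2 - |x_i|^2 = 2\alpha_i$ is opposite to the paper's convention $|x_i|^2 = \alpha_i + |y_i|^2$, but this is immaterial for properness). The paper's proof of this lemma records only the properness computation and leaves compactness of the fixed-point set to the subsequent combinatorial enumeration of fixed-point types; your explicit appeal to Proposition \ref{prop-connected-components} together with properness supplies that step correctly.
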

\begin{proof} From the moment map equations,\footnote{To avoid factors of $1/2$, we are taking the
reduction at moment map level $\alpha/2$.} we have
\begin{equation}
  |x_i|^2 = \alpha_i + |y_i|^2,
\end{equation}
for each $i \in [n]$. Since the moment map for the $S^1$-action is given by $\frac{1}{2} |y|^2$,
we have that 
\begin{equation}
  |x|^2 + |y|^2 = \sum_i |x_i|^2 + |y|^2 = \sum_i \alpha_i + 2|y|^2 = \sum_i \alpha_i + 2\mu_{S^1}(x,y),
\end{equation} 
and hence $\mu_{S^1}$ is proper on $\FM_\alpha(n)$.
\end{proof}

Now that we have a circle action making $\FM_\alpha(n)$ circle compact, we must enumerate
the connected components of the fixed-point set. A point $[x,y] \in \FM_\alpha(n)$ will
be fixed only if there is a homomorphism $\phi: S^1 \to U(3) \times (S^1)^n$ such that
\begin{align}
  x_i &= g(s) x_i h_i^{-1}(s), \\
  s y_i &= h_i(s) y_i g(s)^{-1}, 
\end{align}
for each $i \in [n]$, where we have denoted the components of $\phi$ by $g \in U(2)$ and
$h_i \in S^1$. Since the moment map equations force $x_i \neq 0$ for each $i \in [n]$,
we have that $x_i$ is an eigenvector of $g(s)$ with eigenvalue $h_i(s)$. The second
equation shows that $y_i^\ast$ is an eigenvector of $g(s)$ with eigenvalue $s^{-1} h_i(s)$.

There are four possible cases depending on the multiplicities of the eigenvalues of 
$g(s)$. In the first case, suppose that $g(s)$ has a single eigenvalue of multiplicity three.
Then this forces all $y_i$ corresponding to $\FX_\alpha(n) \subset \FM_\alpha(n)$.
We call this type $(3)$.

Next suppose that $g(s)$ has two distinct eigenvalues. In order to avoid the previous case,
at least one of the $y_i$ must be non-zero. Hence the eigenvalues of $g(s)$ are
of the form $(\lambda, \lambda s^{-1})$. There are two possible cases for their
multiplicities: $(\lambda, \lambda, s^{-1}\lambda)$ and $(\lambda, s^{-1} \lambda, s^{-1} \lambda)$.
We call these type $(2,1)$ and $(1,2)$, respectively. Each of these types corresponds to
many connected components of the critical set, enumerated by the possible choices of
$h_i(s)$. Since every $x_i$ is non-zero, $h_i(s)$ can only take the value $\lambda$ or
$s^{-1} \lambda$. Hence each particular connected component is indexed by 
the set $S = \{ i \in [n] \suchthat h_i(s) = \lambda \}$. These are pictured in
Figure \ref{fig-critical-types-21-12} below.

\begin{figure}[h]
\centering
\begin{tikzpicture}[auto, node distance=1.7cm]
\tikzset{
  unframed/.style={ellipse,fill=white!20,draw},
  framed/.style={rectangle,fill=white!20,draw},
  plain/.style={draw=none,fill=none},
  myarrow/.style={->,>=latex',draw},
  myline/.style={>=latex',draw}
}
  \node[unframed] (sink1) at (4,0) {$1$};
  \node[unframed] (source1) at ($(sink1) + (4,0)$) {$2$};

  \node[unframed] (sink2) at ($(sink1)+(8,0)$) {$2$};
  \node[unframed] (source2) at ($(sink2) + (4,0)$) {$1$};


  \foreach \s in {1,2} {

    \node[plain]    (s2) at ($(sink\s)+(2,-2.5)$) {$S$};
    \node[plain]    (s3) at ($(sink\s)+(6, -2.5)$) {$S^c$};

    \node[unframed] (21) at ($(sink\s)+(2,1.5)$) {$1$};
    \node[unframed] (22) at ($(sink\s)+(2, 0.5)$) {$1$};
    \node[plain]    (23) at ($(sink\s)+(2, -0.5)$) {$...$};
    \node[unframed] (24) at ($(sink\s)+(2, -1.5)$) {$1$};

    \node[unframed] (31) at ($(source\s)+(2, 1.5)$) {$1$};
    \node[unframed] (32) at ($(source\s)+(2, 0.5)$) {$1$};
    \node[plain]    (33) at ($(source\s)+(2, -0.5)$) {$...$};
    \node[unframed] (34) at ($(source\s)+(2, -1.5)$) {$1$};

    \path[myarrow] (31) -- node[above] {$x_j$} (source\s);
    \path[myarrow] (32) -- (source\s);
    \path[myarrow] (34) -- (source\s);
    
    \path[myarrow] (21) -- node[above] {$x_i$} (sink\s);
    \path[myarrow] (22) -- (sink\s);
    \path[myarrow] (24) -- (sink\s);

    \path[myarrow,dashed] (source\s) -- node[above] {$y_i$} (21);
    \path[myarrow,dashed] (source\s) -- (22);
    \path[myarrow,dashed] (source\s) -- (24);
  }

\end{tikzpicture}
\caption{Left: type $(1,2)$; right: type $(2,1)$.}
\label{fig-critical-types-21-12}
\end{figure}
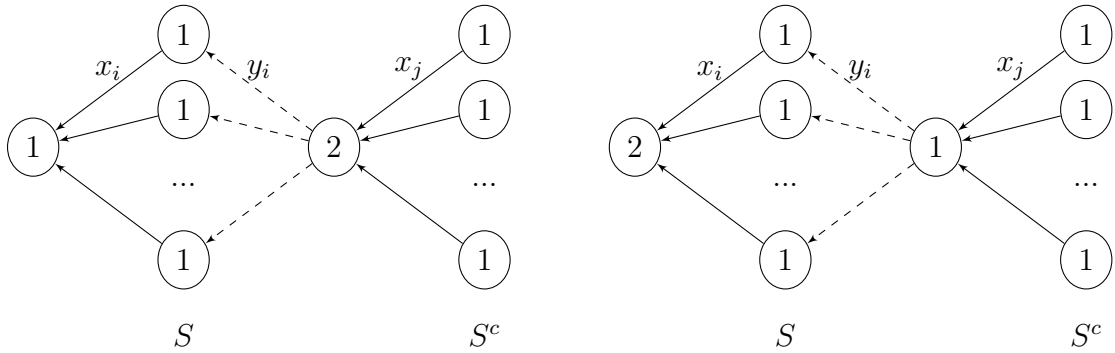

In the last case, we assume that $g(s)$ has three distinct eigenvalues. 
We can assume that they are of the form $(\lambda, s^{-1} \lambda, s^{-a}\lambda)$ and with $a \neq 1, 0, -1$.
Now, if $\alpha$ is assumed to be generic so that $\FM_\alpha(n)$ is smooth, by Theorem
\ref{quiver-thm-direct-sum} no representation $[x,y] \in \FM_\alpha(n)$ admits
a non-trivial direct sum decomposition. Hence the quiver representing this connected
component must be connected, and the only way to obtain this is to take $a=2$.
We call this a type $(1,1,1)$ critical set. The type $(1,1,1)$ critical sets are
indexed by the choice of 
\begin{align}
  S_1 &= \{ i \in [n] \suchthat h_i(s) = \lambda \}, \\
  S_2 &= \{i \in [n] \suchthat h_i(s) = s^{-1} \lambda\}, \\
  S_3 &= \{i \in [n] \suchthat h_i(s) = s^{-2} \lambda\}.
\end{align}
The diagrammatic representation of the type $(1,1,1)$ critical set is pictured in
Figure \ref{fig-critical-type-1-1-1}.
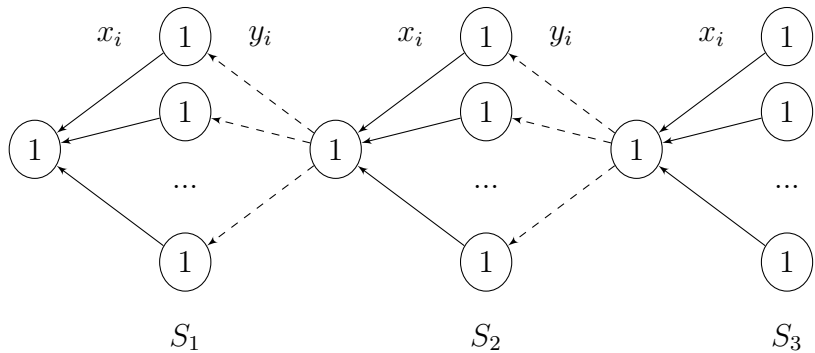
\begin{figure}[h]
\centering
\begin{tikzpicture}[auto, node distance=1.7cm]
\tikzset{
  unframed/.style={ellipse,fill=white!20,draw},
  framed/.style={rectangle,fill=white!20,draw},
  plain/.style={draw=none,fill=none},
  myarrow/.style={->,>=latex',draw},
  myline/.style={>=latex',draw}
}
  \node[unframed] (1) at (0,0) {$1$};
  \node[unframed] (2) at (4,0) {$1$};
  \node[unframed] (3) at (8,0) {$1$};

  \node[plain]    (s1) at (2, -2.5) {$S_1$};
  \node[plain]    (s2) at (6, -2.5) {$S_2$};
  \node[plain]    (s3) at (10, -2.5) {$S_3$};

  \node[plain]    (x1) at (1, 1.5) {$x_i$};
  \node[plain]    (x2) at (5, 1.5) {$x_i$};
  \node[plain]    (x3) at (9, 1.5) {$x_i$};

  \node[plain]    (y1) at (3, 1.5) {$y_i$};
  \node[plain]    (y2) at (7, 1.5) {$y_i$};

  \node[unframed] (11) at (2, 1.5) {$1$};
  \node[unframed] (12) at (2, 0.5) {$1$};
  \node[plain]    (13) at (2, -0.5) {$...$};
  \node[unframed] (14) at (2, -1.5) {$1$};

  \node[unframed] (21) at (6, 1.5) {$1$};
  \node[unframed] (22) at (6, 0.5) {$1$};
  \node[plain]    (23) at (6, -0.5) {$...$};
  \node[unframed] (24) at (6, -1.5) {$1$};

  \node[unframed] (31) at (10, 1.5) {$1$};
  \node[unframed] (32) at (10, 0.5) {$1$};
  \node[plain]    (33) at (10, -0.5) {$...$};
  \node[unframed] (34) at (10, -1.5) {$1$};

  \path[myarrow] (31) -- (3);
  \path[myarrow] (32) -- (3);
  \path[myarrow] (34) -- (3);

  \path[myarrow] (21) -- (2);
  \path[myarrow] (22) -- (2);
  \path[myarrow] (24) -- (2);
  \path[myarrow,dashed] (3) -- (21);
  \path[myarrow,dashed] (3) -- (22);
  \path[myarrow,dashed] (3) -- (24);

  \path[myarrow] (11) -- (1);
  \path[myarrow] (12) -- (1);
  \path[myarrow] (14) -- (1);
  \path[myarrow,dashed] (2) -- (11);
  \path[myarrow,dashed] (2) -- (12);
  \path[myarrow,dashed] (2) -- (14);

\end{tikzpicture}
\caption{Type $(1,1,1)$ critical set.}
\label{fig-critical-type-1-1-1}
\end{figure}

We summarize the above arguments in the following theorem.
\begin{proposition} Let $\FM_\alpha$ be a rank 3 star quiver variety, and assume
that $\alpha$ is generic. Then the connected components of $\FM_\alpha^{S^1}$ can be 
classified into three types: $(3)$, $(2,1)$, $(1,2)$, and $(1,1,1)$. Type $(3)$ consists
of the single component $\FX_\alpha(n)$. The type $(2,1)$ and $(1,2)$ critical sets are
indexed by a subset $S \subset[n]$, and the type $(1,1,1)$ critical sets are indexed by
a pair of disjoint subsets $S_1, S_2 \subset [n]$.
\end{proposition}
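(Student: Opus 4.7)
The plan is to apply Proposition \ref{prop-fixed-lift} to the residual $S^1$-action on $\FM_\alpha(n)$, which lifts each fixed point to a homomorphism $\phi: S^1 \to U(3) \times (S^1)^n$, and then to enumerate the possibilities by the spectral type of the $U(3)$-component. Writing $\phi(s) = (g(s), h_1(s), \dots, h_n(s))$, the defining relation $s \cdot (x,y) = \phi(s) \cdot (x,y)$ (with $S^1$ acting trivially on $x$ and with weight one on $y$) yields $x_i = g(s) x_i h_i(s)^{-1}$ and $s y_i = h_i(s) y_i g(s)^{-1}$ for every $i$. After conjugating by an element of $U(3)$ I may assume $g(s)$ is diagonal, and the positivity $\alpha_i > 0$ together with the real moment map equation $|x_i|^2 = \alpha_i + |y_i|^2$ forces $x_i \neq 0$, so $x_i$ is a nonzero $h_i(s)$-eigenvector of $g(s)$; similarly, whenever $y_i \neq 0$, the covector $y_i^\ast$ is an $s^{-1} h_i(s)$-eigenvector of $g(s)$.

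Next I would split into cases according to the multiplicity pattern of the eigenvalues of $g(s)$. If $g(s) = \lambda(s) I$ is scalar (type $(3)$), then $s y_i = y_i$ for all $i$, so $y = 0$ and the fixed set is exactly the K\"ahler subvariety $\FX_\alpha(n)$. If $g(s)$ has exactly two eigenvalues, then since at least one $y_i$ must be nonzero (otherwise we fall into the previous case) the two eigenvalues must differ by $s^{-1}$, and so are of the form $\lambda, \lambda s^{-1}$; the assignments $h_i(s) \in \{\lambda, \lambda s^{-1}\}$ are parameterized by the subset $S = \{i \suchthat h_i(s) = \lambda\}$, and the two possible multiplicity patterns give types $(2,1)$ and $(1,2)$.

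The only remaining case is when $g(s)$ has three distinct eigenvalues, which I would write as $\lambda, \lambda s^{-a}, \lambda s^{-b}$ for distinct nonzero integers $a,b$. The crucial step, and the main obstacle, is to show that $\{a,b\} = \{1,2\}$. For this I would invoke Theorem \ref{quiver-thm-direct-sum} together with the genericity of $\alpha$: a $y_i$ can be nonzero only between eigenspaces whose eigenvalues differ by $s^{-1}$, so if the exponents $\{0,a,b\}$ are not three consecutive integers, the eigenspace decomposition of $\BC^3$ splits $(x,y)$ as a nontrivial direct sum, contradicting the absence of decomposable points on a smooth quiver variety at generic level. Hence the eigenvalues must be $\lambda, \lambda s^{-1}, \lambda s^{-2}$, and the component is labelled by the ordered pair of disjoint subsets $S_1 = \{i \suchthat h_i(s) = \lambda\}$, $S_2 = \{i \suchthat h_i(s) = \lambda s^{-1}\}$ (with $S_3 = [n] \setminus (S_1 \sqcup S_2)$ determined).

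Finiteness of the set of components is then immediate, since each of the four types is parameterized by a finite combinatorial datum on the finite index set $[n]$. I expect the most delicate point to be the exponent-gap argument in the $(1,1,1)$ case: it is the only step that genuinely uses the genericity assumption on $\alpha$, rather than direct spectral bookkeeping, and it is what distinguishes the rank three situation from the purely additive combinatorics of the rank two hyperpolygon spaces.
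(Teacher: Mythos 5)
Your proposal is correct and follows essentially the same route as the paper: lift the fixed point to a homomorphism $\phi = (g, h_1, \dots, h_n)$, use $|x_i|^2 = \alpha_i + |y_i|^2$ to force $x_i \neq 0$, case-split on the eigenvalue multiplicities of $g(s)$, and in the three-distinct-eigenvalue case invoke indecomposability (Theorem \ref{quiver-thm-direct-sum} plus genericity of $\alpha$) to force the exponents to be consecutive, hence $(\lambda, s^{-1}\lambda, s^{-2}\lambda)$. Your slightly more careful phrasing of the exponent-gap/connectivity argument is a fair reconstruction of what the paper asserts more tersely.
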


In the remaining sections, we will show how to use Theorem \ref{thm-inductive-procedure} to
compute the Betti numbers of these critical sets. The homotopy type of $\FM_n(\alpha)$ does
not depend on the particular choice of $\alpha$, as long as it is generic,\footnote{This is
a well-known fact about hyperk\"ahler quotients. Various special cases appear in the literature,
e.g. \cite[Corollary 4.2]{Nakajima94},
and in our case it is easily deduced from Corollary \ref{HomotopyCorollary}.}
so we can simplify the calculation by choosing $\alpha$ to be \emph{extreme}---we can ensure 
that the type $(3)$ and $(1,2)$ critical sets are always empty. This reduces
the problem to calculating the Betti numbers of the type $(2,1)$ and $(1,1,1)$ critical sets,
and of these only type $(2,1)$ turns out to be non-trivial. 
However, it should be emphasized
that the method of extreme stability is in no way whatsoever essential---it is just a trick to reduce
the amount of calculation necessary.

\subsection{The Type $(3)$ Critical Set}

As noted above, the type (3) critical set is just the K\"ahler quiver variety $\FX_\alpha \subset \FM_\alpha$,
and thus has index $0$, since it is the absolute minimum of $\mu_{S^1}$. Its Poincar\'e polynomial can be computed straightforwardly
using the equivariant Morse theory of Chapter \ref{ch-morse}. However, we can avoid this calculation
entirely simply by choosing $\alpha$ to be extreme.

\begin{proposition} Suppose that $\alpha$ satisfies
$\alpha_n > \sum_{i=1}^{n-1} \alpha_i$.
Then the K\"ahler quiver variety $\FX_\alpha(n)$ is empty.
\end{proposition}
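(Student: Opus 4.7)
The plan is to show directly that the moment map equations defining $\FX_\alpha(n)$ admit no solution when $\alpha_n > \sum_{i=1}^{n-1}\alpha_i$. On the K\"ahler slice $y=0$, the $\mathfrak{su}_3$-part of the equation $\mur = \alpha/2$ forces $xx^\ast$ to be a scalar multiple of the $3\times 3$ identity, while the $(S^1)^n$-part fixes the column norms $\|x_i\|^2 = \alpha_i$ (after the usual sign conventions that make $\alpha_i>0$ consistent). Comparing traces pins the scalar:
\[
  xx^\ast \;=\; \frac{\alpha}{3}\,I_{3\times 3}, \qquad \alpha := \sum_{i=1}^n \alpha_i.
\]

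The heart of the argument is a single Cauchy--Schwarz-type test. Since $\alpha_n > 0$, the unit vector $v := x_n/\|x_n\|$ is well-defined in $\BC^3$. Evaluating the scalar identity above at $v$ gives $\langle v, xx^\ast v\rangle = \alpha/3$ on the one hand, while expanding $xx^\ast = \sum_i x_i x_i^\ast$ and dropping the non-negative terms with $i<n$ yields
\[
  \langle v, xx^\ast v\rangle \;=\; \sum_{i=1}^n |\langle v, x_i\rangle|^2 \;\geq\; |\langle v, x_n\rangle|^2 \;=\; \|x_n\|^2 \;=\; \alpha_n
\]
on the other. Hence any point of $\FX_\alpha(n)$ would have to satisfy $3\alpha_n \leq \alpha$, equivalently $2\alpha_n \leq \sum_{i<n}\alpha_i$. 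The extreme hypothesis $\alpha_n > \sum_{i<n}\alpha_i$ instead gives $2\alpha_n > \alpha_n + \sum_{i<n}\alpha_i = \alpha$, contradicting $3\alpha_n \leq \alpha$. Therefore $\FX_\alpha(n) = \emptyset$.

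The only step that warrants care is unpacking the sign and normalization bookkeeping in the moment-map formulas, so that the K\"ahler-slice equations really reduce to $xx^\ast = (\alpha/3)\,I$ together with $\|x_i\|^2 = \alpha_i > 0$; once this is in hand the argument is essentially a one-line inequality applied to the distinguished vector $v = x_n/\|x_n\|$. (As a sanity check, one sees from the same estimate that the sharp criterion for potential nonemptiness is $\alpha_i \leq \alpha/3$ for every $i$, which is strictly implied by the extreme hypothesis.)
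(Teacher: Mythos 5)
Your proof is correct, and it takes a genuinely different route from the one in the paper. The paper sends each column to its traceless projector $v_i := (x_i x_i^\ast)_0 \in \mathfrak{su}_3$, computes $|v_i|^2 = \tfrac{2}{3}\alpha_i^2$ from the real moment map equation $|x_i|^2 = \alpha_i$, and then applies the triangle inequality to the closure condition $\sum_i v_i = 0$, obtaining $\alpha_n \leq \sum_{i<n}\alpha_i$ --- exactly the negation of the hypothesis, and formally parallel to the polygon (rank $2$) inequality; the same computation is recycled verbatim for the type $(1,2)$ critical sets later in the chapter. You instead test the scalar identity $xx^\ast = (\alpha/3)I$ against the unit vector $v = x_n/\|x_n\|$ and drop the nonnegative cross terms, which is a clean Rayleigh-quotient argument. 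What your version buys is a strictly sharper necessary condition, $\alpha_n \leq \alpha/3$, i.e.\ $2\alpha_n \leq \sum_{i<n}\alpha_i$, whereas the paper's triangle inequality only yields $\alpha_n \leq \sum_{i<n}\alpha_i$; both comfortably contradict the extreme hypothesis, and both arguments are elementary and generalize to arbitrary rank. Your setup of the moment-map equations ($xx^\ast = (\alpha/3)I$ with $\|x_i\|^2 = \alpha_i$) matches the paper's exactly, so there is no bookkeeping issue; the only caveat is your parenthetical claim that $\alpha_i \leq \alpha/3$ for all $i$ is the \emph{sharp} criterion for nonemptiness --- your argument establishes necessity only, and sufficiency would require a separate (Horn-type) argument, but since you flag it as a sanity check it does not affect the proof.
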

\begin{proof} The moment map equations are
\begin{align}
  \sum_i (x_i x_i^\ast)_0 &= 0, \\
  |x_i|^2 &= \alpha_i.
\end{align}
Now consider the map sending $x_i$ to $v_i := (x_i x_i^\ast)_0 \in \mathfrak{su}_3$. 
The first moment map equation demands that $\sum v_i = 0$, and hence
\begin{equation}
  |v_n| = \left| \sum_{i=1}^{n-1} v_i \right| \leq \sum_{i=1}^{n-1} |v_i|.
\end{equation}
On the other hand, we have
\begin{align}
  |v_i|^2 &= \Tr\left(v_i v_i^\ast\right) \\ 
   &= \Tr \left( \left(x_i x_i^\ast - \frac{1}{3} |x_i|^2 \right) 
       \left(x_i x_i^\ast - \frac{1}{3} |x_i|^2 \right) \right) \\
   &= |x_i|^4 - \frac{2}{3} |x_i|^4 + \frac{1}{3} |x_i|^4 \\
   &= \frac{2}{3} |x_i|^4 = \frac{2}{3} \alpha_i^2.
\end{align}
Hence if we choose $\alpha_n > \sum_{i=1}^{n-1} \alpha_i$, then $|v_n|$ violates the above upper bound,
and there can be is no solution to the moment map equations.
\end{proof}

\subsection{The Type $(1,2)$ Critical Sets}

Next we consider the type $(1,2)$ critical sets. Their Poincar\'e polynomials can be computed
in a manner analogous to the technique described below for the type $(2,1)$. However, we can 
always eliminate this case by choosing $\alpha$ to be extreme.

\begin{proposition} \label{quiver-prop-type-1-2}
The type $(1,2)$ critical set indexed by $S \subset [n]$ is empty unless the following
conditions hold: 
\begin{inparaenum}[(i)]
  \item $\alpha_{S^c} > 2 \alpha_S$, 
  \item $\#S \geq \max(1, 6-n)$, and
  \item $\#S^c \geq 2$.
\end{inparaenum}
The isotropy representation at any point in this connected component of the critical set
is given by
\begin{equation}
  n+\#S-6 + (n+\#S-6)t + (\#S^c-2)t^2 + (\#S^c-2) t^{-1}
\end{equation}
In particular, the complex dimension of this connected component is $n+\#S-6$ and
its Morse index is $2(\#S^c - 2)$.
\end{proposition}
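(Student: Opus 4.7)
The plan is to combine the structural description of type-$(1,2)$ critical representations established earlier in the section with a careful isotropy computation via Theorem \ref{thm-rep-ring}.

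\textbf{Block decomposition and emptiness.} At such a critical point, the sink decomposes as $\BC^3 = V_1 \oplus V_2$ with $\dim V_i = i$, and the stabilizing homomorphism $\phi: S^1 \to G_\bd$ can be normalized to have $g_v(s) = \diag(1, s^{-1}, s^{-1})$ with $h_i(s) = 1$ for $i \in S$ and $h_i(s) = s^{-1}$ for $i \in S^c$; a representative $(x,y)$ has $x_i \in V_1$ for $i \in S$, $x_i \in V_2$ for $i \in S^c$, $y_i \in V_2^*$ (vanishing on $V_1$) for $i \in S$, and $y_i = 0$ for $i \in S^c$. Writing $\mur$ in the induced block form and using that $(xx^* - y^*y)_0 = 0$ forces $xx^* - y^*y = (\alpha_{[n]}/3)\, I_3$, the $V_1$-block collapses to the scalar equation $\sum_{i \in S}|x_i|^2 = \alpha_{[n]}/3$. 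Combined with $|x_i|^2 \geq \alpha_i$ for $i \in S$, this yields $\alpha_{S^c} \geq 2\alpha_S$ (strict for regularity) and $\#S \geq 1$. The $V_2$-block equation $\sum_{S^c} x_i x_i^* - \sum_S y_i^* y_i = (\alpha_{[n]}/3)\, I_2$ has a positive multiple of $I_2$ on the right while $\sum_S y_i^* y_i$ is positive semi-definite, so $\#S^c \geq 2$ is necessary for solvability. The remaining condition $\#S \geq 6 - n$ will fall out of the dimension calculation below as non-negativity of the weight-$0$ multiplicity.

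\textbf{Tangent space as $S^1$-representation.} I would apply Theorem \ref{thm-rep-ring} with $\FM$ viewed as the subvariety of the K\"ahler quotient $T^*\Rep \red PG_\bd$ cut out by $\muc$. The twisted $S^1$-action on $T_{(x,y)} T^*\Rep$, given by $s \ast (\delta x_i, \delta y_i) = (g_v(s)^{-1} \delta x_i\, h_i(s),\; s h_i(s)^{-1} \delta y_i\, g_v(s))$, partitions into weights as follows: $\delta x_i$ for $i \in S$ contributes weight $0$ (dim 1 from $V_1$) and weight $+1$ (dim 2 from $V_2$); $\delta x_i$ for $i \in S^c$ contributes weights $-1$ and $0$; $\delta y_i$ for $i \in S$ contributes weights $+1$ and $0$; $\delta y_i$ for $i \in S^c$ contributes weights $+2$ and $+1$. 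Summing yields the character $(3\#S + 2\#S^c)(1 + t) + \#S^c(t^{-1} + t^2)$. The gauge subtraction $[\mathfrak{pg}_{\bd,\BC}]$ has character $(n+4) + 2t + 2t^{-1}$, computed from the adjoint action of $\diag(1, s^{-1}, s^{-1})$ on $\fgl_3$ (yielding $5 + 2t + 2t^{-1}$) plus the trivial action on $\BC^n/\BC_{\mathrm{diag}}$.

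\textbf{The key subtlety.} The subtraction for the complex moment-map target is \emph{not} $[\mathfrak{pg}_{\bd,\BC}]$ again but rather $[\mathfrak{pg}_{\bd,\BC}] \cdot t$, because the $S^1$-action rescales $\omegac$ by $s$, so $\muc$ itself carries intrinsic $S^1$-weight $+1$ and its target must be twisted by the defining character of $S^1$. The target therefore contributes $2 + (n+4)t + 2t^2$. Subtracting both the gauge and target contributions from $[T_{(x,y)} T^*\Rep]$ yields exactly $(n + \#S - 6) + (n + \#S - 6)\,t + (\#S^c - 2)\,t^2 + (\#S^c - 2)\,t^{-1}$; reading off the weight-$0$ multiplicity and twice the weight-$(-1)$ multiplicity gives the stated complex dimension and Morse index. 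The pairing $0 \leftrightarrow 1$ and $-1 \leftrightarrow 2$ (weights summing to $1$) reflects the $\omegac$-induced non-degenerate pairing on $T_x \FM$ and provides a strong consistency check. The principal pitfall — and where the argument demands the most care — is precisely this weight-$+1$ twist on the target: without it one obtains a demonstrably wrong character, including negative ``dimensions'' for perfectly valid parameter ranges, so the force of the proof rests on correctly identifying and applying this twist.
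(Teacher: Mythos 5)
Your proposal is correct and follows essentially the same route as the paper: the same normalization of $\phi$ with $g(s)=\diag(1,s^{-1},s^{-1})$, the same block-wise analysis of the real moment map equations for the emptiness conditions, and the same character subtraction $[T\Rep]-[\fg_\BC]-[E]$ via Theorem \ref{thm-rep-ring}, with your characters $(2n+\#S)(1+t)+\#S^c(t^{-1}+t^2)$, $(n+4)+2t+2t^{-1}$, and $2+(n+4)t+2t^2$ matching the paper's exactly. Your explicit observation that $[E]=t\cdot[\fg_\BC]$ because $\muc$ carries intrinsic $S^1$-weight $+1$ is a nice clarification of a step the paper performs silently, but it is the same computation.
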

\begin{proof}
A type $(1,2)$ critical set labelled by $S \subset [n]$ corresponds to a map $\phi: S^1 \to U(3) \times (S^1)^n$
given by 
\begin{equation}
 g(s) = \diag(1,  s^{-1},  s^{-1}), \ h_i(s) = 1\ \textrm{for}\ i \in S, \ h_i(s) = s^{-1}\ \textrm{for}\ i \in S^c.
\end{equation}
For $i \in S$, we have $x = (\ast, 0, 0)^T$, $y_i = (0, \ast, \ast)$, and for $i \in S^c$
we have $x_i = (0, \ast, \ast)^T$, $y_i = 0$. Then the real moment map equation 
$\sum_i x_i x_i^\ast - y_i^\ast y_i = (\alpha/3) 1_{3 \times 3}$ gives
\begin{equation} \label{Eqn-12-1}
\sum_{i\in S} |x_i|^2 = \alpha / 3,
\end{equation}
\begin{equation} \label{Eqn-12-2}
\sum_{i \in S^c} |x_i|^2 - \sum_{i \in S} |y_i|^2 = 2 \alpha / 3. 
\end{equation}
Using the real moment map equation $|x_i|^2 = |y_i|^2 + \alpha_i$, we find
\begin{equation} 
  \sum_i |y_i|^2 = \sum_{i \in S} |y_i|^2 = \sum_{i \in S} ( |x_i|^2 - \alpha_i) =\alpha/3 - \alpha_S = \frac{\alpha_{S^c} - 2\alpha_S}{3}, 
\end{equation}
from which we deduce $\alpha_{S^c} > 2 \alpha_S$. From equation \ref{Eqn-12-1} we see
that $S$ is nonempty, so $\# S \geq 1$. From the real moment map equation, we also see 
that $\sum_{i \in S^c} x_i x_i^\ast$ has rank 2, so $\# S^c \geq 2$.
Finally, the map $\phi$ determines representations
\begin{align}
  [T_x \Rep(\CQ)] &= 2n + \#S + (2n + \#S)t + (\#S^c)t^{-1} + (\#S^c)t^2 \\
  [\fg] &= n+4 + 2t + 2t^{-1} \\
  [E] &= 2 + (n+4)t + 2t^2.
\end{align}
Using Theorem \ref{thm-rep-ring} we obtain the isotropy representation as desired.
\end{proof}

Now using condition (1) above, we can ensure that neither the type $(3)$ nor the type
$(1,2)$ critical sets contribute to $P_t(\FM_\alpha(n))$.

\begin{proposition} Suppose that $\alpha$ satisfies $\alpha_n > 2 \sum_{i=1}^{n-1} \alpha_i$. 
Then there are no type $(1,2)$ critical sets.
\end{proposition}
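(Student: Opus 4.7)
The plan is to show that under the hypothesis, each of the conditions for a non-empty type $(1,2)$ critical set from Proposition \ref{quiver-prop-type-1-2} leads to a contradiction. The hypothesis $\alpha_n > 2 \sum_{i<n} \alpha_i$ is equivalent to $3\alpha_n > 2\alpha$, i.e.\ $\alpha_n > \tfrac{2}{3}\alpha$, which is the form I would use. I will split into cases based on whether $n \in S$ or $n \in S^c$, using the moment map equations already derived in the proof of Proposition \ref{quiver-prop-type-1-2}.

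First consider the case $n \in S$. The equation $\sum_{i\in S}|x_i|^2 = \alpha/3$ together with the individual real moment maps $|x_i|^2 = |y_i|^2 + \alpha_i \geq \alpha_i$ yields
\begin{equation*}
\alpha_n \leq |x_n|^2 \leq \sum_{i \in S}|x_i|^2 = \tfrac{\alpha}{3},
\end{equation*}
which contradicts $\alpha_n > \tfrac{2}{3}\alpha$.

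The harder case is $n \in S^c$, where the scalar (trace) equation $\sum_{i\in S^c}|x_i|^2 - \sum_{i \in S}|y_i|^2 = 2\alpha/3$ alone is not enough. Instead, I would use the full matrix form of the real moment map restricted to the 2-dimensional block: writing $\tilde{x}_i$, $\tilde{y}_i$ for the components of $x_i$, $y_i$ in rows/columns 2 and 3, we have the matrix identity
\begin{equation*}
\sum_{i\in S^c} \tilde{x}_i \tilde{x}_i^\ast = \tfrac{\alpha}{3} I_2 + \sum_{i \in S} \tilde{y}_i^\ast \tilde{y}_i
\end{equation*}
in $\mathfrak{u}(2)$. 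Let $P^\perp$ be the rank-one orthogonal projection of $\BC^2$ onto the line perpendicular to $\tilde{x}_n$, so that $P^\perp \tilde{x}_n = 0$. Conjugating the matrix equation by $P^\perp$ and taking the trace, the $\tilde{x}_n$ contribution on the left vanishes and the identity becomes
\begin{equation*}
\sum_{i \in S^c \setminus\{n\}} |P^\perp \tilde{x}_i|^2 = \tfrac{\alpha}{3} + \sum_{i \in S} |P^\perp \tilde{y}_i^\ast|^2 \geq \tfrac{\alpha}{3}.
\end{equation*}

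The key point is that since $y_i = 0$ for $i \in S^c$, we have $|\tilde{x}_i|^2 = |x_i|^2 = \alpha_i$ for $i \in S^c$, and therefore
\begin{equation*}
\sum_{i \in S^c \setminus \{n\}}|P^\perp \tilde{x}_i|^2 \leq \sum_{i \in S^c \setminus \{n\}} \alpha_i \leq \sum_{i<n}\alpha_i = \alpha - \alpha_n.
\end{equation*}
Combining the two bounds gives $\alpha - \alpha_n \geq \tfrac{\alpha}{3}$, i.e.\ $\alpha_n \leq \tfrac{2}{3}\alpha$, again contradicting the hypothesis. Since both cases are impossible, no type $(1,2)$ critical set exists.

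The only subtle step is Case B, where one must go beyond the scalar trace relations and exploit the positivity of the matrix equation. Once the projection $P^\perp$ is introduced, the required inequality is almost immediate. Everything else is a direct consequence of the moment-map equations recorded in the proof of Proposition \ref{quiver-prop-type-1-2}.
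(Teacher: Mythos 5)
Your proof is correct, and in the main case it takes a genuinely different route from the paper's. The paper first uses condition (i) of Proposition \ref{quiver-prop-type-1-2} (namely $\alpha_{S^c} > 2\alpha_S$) to force $n \in S^c$, and then repeats the type $(3)$ argument: it asserts $\sum_{i\in S^c} x_i x_i^\ast = \tfrac{\alpha_{S^c}}{3}1_{3\times 3}$, so that the traceless parts $v_i = (x_ix_i^\ast)_0$ sum to zero over $S^c$, and concludes via the triangle inequality $|v_n| \le \sum_{i\in S^c\setminus\{n\}}|v_i|$ together with $|v_i| = \sqrt{2/3}\,\alpha_i$. Your Case B replaces this with the exact block identity $\sum_{i\in S^c}\tilde x_i\tilde x_i^\ast = \tfrac{\alpha}{3}I_2 + \sum_{i\in S}\tilde y_i^\ast\tilde y_i$ and the rank-one projection $P^\perp$ orthogonal to $\tilde x_n$, using only the positive semidefiniteness of the $\tilde y$-term. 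This is in fact the more careful argument: the paper's asserted identity cannot hold literally (the left-hand side annihilates $e_1$, and its $2\times 2$ block carries the correction $\sum_{i\in S}\tilde y_i^\ast\tilde y_i$, which is nonzero on any nonempty type $(1,2)$ critical set since $\sum_{i\in S}|y_i|^2 = (\alpha_{S^c}-2\alpha_S)/3 > 0$), so the cancellation $\sum_{i\in S^c}v_i=0$ underlying the triangle-inequality step needs exactly the repair your projection supplies --- the $\tilde y$-contribution lands on the favorable side of the inequality and can simply be dropped. Your Case A ($n\in S$) also differs slightly, contradicting the scalar equation $\sum_{i\in S}|x_i|^2=\alpha/3$ directly rather than deducing $n\in S^c$ from condition (i), but both dispositions of that case are immediate.
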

\begin{proof} We repeat essentially the same argument as in the type $(3)$ case.
From the proof of Proposition \ref{quiver-prop-type-1-2} above, we have that 
$|x_i|^2 = \alpha_i$ for $i \in S^c$, and that 
\begin{equation}
  \sum_{i \in S^c} x_i x_i^\ast = \frac{\alpha_{S^c}}{3} 1_{3 \times 3}.
\end{equation}
Now, the condition $\alpha_{S^c} > 2\alpha_S$ implies that necessarily $n \in S^c$.
As in the type $(3)$ case, consider the vectors $v_i := (x_i x_i^\ast)_0$. The
moment map equation implies $\sum v_i = 0$, so by the triangle inequality we find
\begin{equation}
  |v_n| \leq \sum_{i=1}^{n-1} |v_i|,
\end{equation}
but as in the type $(3)$ case this contradicts $\alpha_n > 2 \sum_{i=1}^{n-1} \alpha_i > \sum_{i=1}^{n-1} \alpha_i$.
\end{proof}

\subsection{The Type $(2,1)$ Critical Sets}

Finally, we encounter the first case which cannot be eliminated by extreme stability.

\begin{proposition} \label{quiver-prop-type-2-1}
The type $(2,1)$ critical set is empty unless the following conditions hold:
\begin{inparaenum}[(i)]
  \item $2\alpha_{S^c} > \alpha_S$, 
  \item $\#S \geq 3$, and 
  \item $\#S^c \geq 1$.
\end{inparaenum}
The isotropy representation of any point is given by
\begin{equation}
  2\#S-6 + (2\#S-6)t + (2\#S^c-2)t^2 + (2\#S^c-2)t^{-1}. 
\end{equation}
In particular, if it is non-empty then its complex dimension is $2\#S-6$ and its
Morse index is $2(2\#S^c-2)$.
\end{proposition}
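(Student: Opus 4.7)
The plan is to mirror the argument used in Proposition \ref{quiver-prop-type-1-2}, with the roles of the eigenspaces of $g(s)$ swapped. A type $(2,1)$ critical set labelled by $S \subseteq [n]$ corresponds to a homomorphism $\phi: S^1 \to U(3) \times (S^1)^n$ given by
\begin{equation*}
g(s) = \diag(1, 1, s^{-1}), \qquad h_i(s) = 1 \text{ for } i \in S, \qquad h_i(s) = s^{-1} \text{ for } i \in S^c.
\end{equation*}
Solving $x_i = g(s) x_i h_i(s)^{-1}$ and $s y_i = h_i(s) y_i g(s)^{-1}$ separately for $i \in S$ and $i \in S^c$ shows that on this component one has $x_i = (\ast, \ast, 0)^T$ and $y_i = (0, 0, \ast)$ when $i \in S$, while $x_i = (0, 0, \ast)^T$ and $y_i = 0$ when $i \in S^c$.

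For the emptiness conditions I would substitute these forms into the real moment map equation $\sum_i (x_i x_i^\ast - y_i^\ast y_i) = (\alpha/3) 1_{3 \times 3}$. The block structure of the fixed-point configuration decouples this into an upper $2 \times 2$ equation $\sum_{i \in S} x_i x_i^\ast = (\alpha/3)\, I_{2 \times 2}$ (where now $x_i \in \BC^2$) and a lower scalar equation $\sum_{i \in S^c} |x_i|^2 - \sum_{i \in S} |y_i|^2 = \alpha/3$. Using the one-dimensional moment map equations $|x_i|^2 - |y_i|^2 = \alpha_i$ to eliminate $|x_i|^2$ for $i \in S^c$ gives
\begin{equation*}
\sum_{i \in S} |y_i|^2 = \frac{2\alpha_{S^c} - \alpha_S}{3},
\end{equation*}
so non-negativity of the left side forces condition (i). The upper-block equation demands that $\sum_{i \in S} x_i x_i^\ast$ has rank $2$, which forces $\#S \geq 2$; the stronger bound $\#S \geq 3$ is needed for the residual rank-two hyperpolygon with $\#S$ legs to be non-empty and smooth, as can be checked either directly or by noting that the dimension formula $2\#S - 6$ derived below must be non-negative. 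Finally, $\#S^c = 0$ would force $-\sum_{i \in S} |y_i|^2 = \alpha/3 > 0$, which is impossible, giving (iii).

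For the isotropy representation I would apply Theorem \ref{thm-rep-ring}, which yields $[T_x V(\muc)] = [T_{\tilde x} T^\ast \Rep(\CQ, \bd)] - [\mathfrak{pg}_\BC] - [E]$ as virtual characters of the twisted $\ast$-action $s \ast y := \phi(s)^{-1} s \cdot y$. Computing the weights of $\phi$ on each $x_i$ and $y_i$ (with $y$ shifted by one from the rotational action on the cotangent fibre), the per-leg contributions are $3 + 3t$ for $i \in S$ and $1 + t + 2t^{-1} + 2t^2$ for $i \in S^c$, summing to
\begin{equation*}
[T_{\tilde x} T^\ast \Rep(\CQ, \bd)] = (2\#S + n)(1 + t) + 2\#S^c (t^{-1} + t^2).
\end{equation*}
The adjoint $\phi$-twist on $\fgl_3$ produces the character $5 + 2t + 2t^{-1}$, and after discarding the overall diagonal $\BC^\times$ one obtains $[\mathfrak{pg}_\BC] = (n + 4) + 2t + 2t^{-1}$. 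Since $\muc$ is $S^1$-equivariant of weight one, $[E]$ is the same character shifted by $t$, namely $2 + (n+4)t + 2t^2$. Subtracting yields the claimed expression $(2\#S - 6)(1 + t) + (2\#S^c - 2)(t^{-1} + t^2)$; the weight-zero coefficient gives the complex dimension, and twice the coefficient of $t^{-1}$ gives the Morse index.

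The main obstacle is not analytic but bookkeeping: correctly tracking the twisted $S^1$-action on both $x_i$ and $y_i$ (remembering the weight-one shift on the $y$-fibres) and correctly accounting for the overall diagonal $\BC^\times$ in passing from $\fg_\BC$ to $\mathfrak{pg}_\BC$. The only other subtlety worth a short remark is the distinction between $\#S \geq 2$ (forced by linear algebra alone) and the sharper $\#S \geq 3$ (forced by the non-degeneracy/smoothness of the rank-two hyperpolygon factor); both are consistent with the dimension formula.
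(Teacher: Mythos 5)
Your proposal is correct and follows essentially the same route as the paper: the same homomorphism $\phi$, the same block structure for $(x_i,y_i)$ on $S$ versus $S^c$, the same use of the real moment map equations for conditions (i) and (iii), and the same application of Theorem \ref{thm-rep-ring} with matching characters for $[T_{\tilde x}\Rep]$, $[\mathfrak{pg}_\BC]$, and $[E]$. Your explicit remark that linear algebra only forces $\#S\geq 2$ and that $\#S\geq 3$ comes from the dimension formula is in fact slightly more careful than the paper's proof, which only records $\#S\geq 2$.
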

\begin{proof}
A type $(2,1)$ critical set labelled by $S \subseteq [n]$ corresponds to a homomorphism
$\phi: S^1 \to U(3) \times (S^1)^n$ given by
\begin{equation}
 g(s) = \diag(1,  1,  s^{-1}), \ h_i(s) = 1\ \textrm{for}\ i \in S, \ h_i(s) = s^{-1}\ \textrm{for}\ i \in S^c.
\end{equation}
For $i \in S$, we have $x = (\ast, \ast, 0)^T$, $y_i = (0, 0, \ast)$, and for $i \in S^c$
we have $x_i = (0, 0, \ast)^T$, $y_i = 0$. Then the real moment map equation 
$\sum_i x_i x_i^\ast - y_i^\ast y_i = (\alpha/3) 1_{3 \times 3}$ gives
\begin{equation} \label{Eqn-21-1} \sum_{i \in S} |x_i|^2 = 2 \alpha / 3, \end{equation}
\begin{equation} \label{Eqn-21-2} \sum_{i \in S^c} |x_i|^2 - \sum_{i \in S} |y_i|^2 = \alpha / 3. \end{equation}
Using the real moment map equation $|x_i|^2 = |y_i|^2 + \alpha_i$ and rearranging, we find
\begin{equation} \sum_i |y_i|^2 = \frac{2\alpha_{S^c} - \alpha_S}{3}, \end{equation}
from which we deduce $2\alpha_{S^c} > \alpha_S$. From equation \ref{Eqn-21-2} we see
that $S^c$ is nonempty, so $\# S \geq 1$. From the real moment map equation, we also see 
that $\sum_{i \in S} x_i x_i^\ast$ has rank 2, so $\# S \geq 2$.
Finally, the subgroup determines representations
\begin{align}
 [T_x \Rep(\CQ)] &= n + 2\#S + (n+2\#S)t + (2\#S^c) t^2 + (2\#S^c)t^{-1} \\
 [\fg] &= n+4 + 2t + 2t^{-1} \\
 [E] &= 2 + (n+4)t + 2t^2 
\end{align}
where $E \iso \mathfrak{psl}_2 \oplus \BC^n$ is the target space of the complex moment map.
Hence by Theorem \ref{thm-rep-ring} we obtain the desired expression for the 
isotropy representation.
\end{proof}

We will compute the Poincar\'e polynomials of the type $(2,1)$ critical sets by Theorem \ref{thm-inductive-procedure}.
To do this, we have to choose a decomposition $\CI = \CV \sqcup \CW$ in order to apply
reduction in stages. To ensure that the subvarieties $V(f)$ (defined by the residual
part of the complex moment map) are smooth, we make the following choice: $\CW = S$.
The decorated quiver corresponding to this decomposition is pictured in Figure \ref{fig-type-2-1-stages}.

\begin{figure}[h]
\centering
\begin{tikzpicture}[auto, node distance=1.7cm]
\tikzset{
  unframed/.style={ellipse,fill=white!20,draw},
  framed/.style={rectangle,fill=white!20,draw},
  plain/.style={draw=none,fill=none},
  myarrow/.style={->,>=latex',draw},
  myline/.style={>=latex',draw}
}
  \node[unframed] (2) at (4,0) {$2$};
  \node[unframed] (3) at (8,0) {$1$};

  \node[plain]    (s2) at (6, -2.5) {$S$};
  \node[plain]    (s3) at (10, -2.5) {$S^c$};

  \node[plain]    (x2) at (5, 1.5) {$x_i$};
  \node[plain]    (x3) at (9, 1.5) {$x_i$};

  \node[plain]    (y2) at (7, 1.5) {$y_i$};

  \node[framed] (21) at (6, 1.5) {$1$};
  \node[framed] (22) at (6, 0.5) {$1$};
  \node[plain]    (23) at (6, -0.5) {$...$};
  \node[framed] (24) at (6, -1.5) {$1$};

  \node[unframed] (31) at (10, 1.5) {$1$};
  \node[unframed] (32) at (10, 0.5) {$1$};
  \node[plain]    (33) at (10, -0.5) {$...$};
  \node[unframed] (34) at (10, -1.5) {$1$};

  \path[myarrow] (31) -- (3);
  \path[myarrow] (32) -- (3);
  \path[myarrow] (34) -- (3);

  \path[myarrow] (21) -- (2);
  \path[myarrow] (22) -- (2);
  \path[myarrow] (24) -- (2);
  \path[myarrow] (3) -- (21);
  \path[myarrow] (3) -- (22);
  \path[myarrow] (3) -- (24);

\end{tikzpicture}
\caption{The reduction in stages used for the type $(2,1)$ critical set.}
\label{fig-type-2-1-stages}
\end{figure}
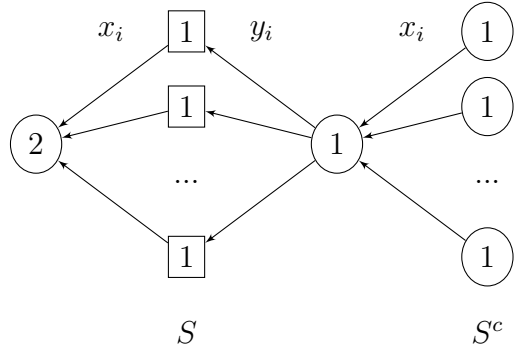

To simplify things, let us note that up to isomorphism we can ignore the
edges $x_i$ for $i \in S^c$. This is pictured in Figure \ref{fig-type-2-1-stages-simple}.

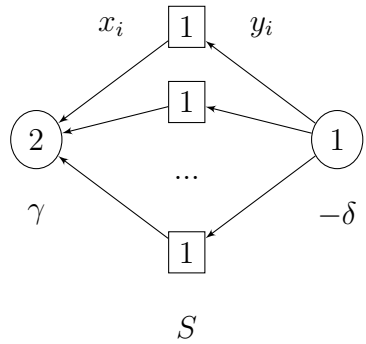
\begin{figure}[h]
\centering
\begin{tikzpicture}[auto, node distance=1.7cm]
\tikzset{
  unframed/.style={ellipse,fill=white!20,draw},
  framed/.style={rectangle,fill=white!20,draw},
  plain/.style={draw=none,fill=none},
  myarrow/.style={->,>=latex',draw},
  myline/.style={>=latex',draw}
}
  \node[unframed] (2) at (4,0) {$2$};
  \node[unframed] (3) at (8,0) {$1$};
  
  \coordinate (dy) at (0,1);

  \node[plain] (a2) at ( $(2)-(dy)$ ) {$\gamma$};
  \node[plain] (a3) at ( $(3)-(dy)$ ) {$-\delta$};

  \node[plain]    (s2) at (6, -2.5) {$S$};

  \node[plain]    (x2) at (5, 1.5) {$x_i$};

  \node[plain]    (y2) at (7, 1.5) {$y_i$};

  \node[framed] (21) at (6, 1.5) {$1$};
  \node[framed] (22) at (6, 0.5) {$1$};
  \node[plain]  (23) at (6, -0.5) {$...$};
  \node[framed] (24) at (6, -1.5) {$1$};

  \path[myarrow] (21) -- (2);
  \path[myarrow] (22) -- (2);
  \path[myarrow] (24) -- (2);
  \path[myarrow] (3) -- (21);
  \path[myarrow] (3) -- (22);
  \path[myarrow] (3) -- (24);

\end{tikzpicture}
\caption{Simplification of the type $(2,1)$ critical set, with the moment map level
$(\gamma,\delta)$ as indicated.}
\label{fig-type-2-1-stages-simple}
\end{figure}

\begin{lemma} The type $(2,1)$ critical set indexed by $S \subset [n]$ is isomorphic
to the moduli space of representations of the quiver with the edges
$x_i$ deleted for $i \in S^c$, as in Figure \ref{fig-type-2-1-stages-simple}. 
Under this isomorphism, the moment map level at the leftmost vertex is given by
$(\gamma/2)1_{2 \times 2}$ where $\gamma = 2\alpha/3$,
 and at the rightmost vertex by $-\delta$, where $\delta = (2\alpha_{S^c} - \alpha_S) / 3$.
\end{lemma}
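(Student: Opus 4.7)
The plan is to push the analysis of Proposition \ref{quiver-prop-type-2-1} one step further and then perform a reduction in stages argument to eliminate the redundant edges.

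First, I would reinterpret the description from Proposition \ref{quiver-prop-type-2-1} as a statement about the fixed representation space. The $T_\beta$-fixed points, after the splitting $\BC^3 = \BC^2 \oplus \BC$ into the $1$- and $s^{-1}$-eigenspaces of $g(s)$, are exactly tuples $\bigl((\tilde x_i, \tilde y_i)_{i\in S},\,(\tilde x_i)_{i\in S^c}\bigr)$ with $\tilde x_i\in\BC^2$, $\tilde y_i\in\BC$ for $i\in S$ and $\tilde x_i\in\BC$ for $i\in S^c$. The residual gauge group, being the centralizer of $\phi$, is $U(2)\times U(1)\times(S^1)^n$ modulo the overall diagonal $S^1$. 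The full moment map equation $\sum x_ix_i^\ast-y_i^\ast y_i=(\alpha/3)1_{3\times 3}$ splits block-diagonally: the $2\times 2$ block gives $\sum_{i\in S}\tilde x_i\tilde x_i^\ast=(\alpha/3)1_{2\times 2}$ at vertex $0$, the scalar block gives $\sum_{i\in S^c}|\tilde x_i|^2-\sum_{i\in S}|\tilde y_i|^2=\alpha/3$ at vertex $\infty$, while the framing equations $|\tilde x_i|^2-|\tilde y_i|^2=\alpha_i$ (for $i\in S$) and $|\tilde x_i|^2=\alpha_i$ (for $i\in S^c$) are unchanged. This is precisely the data of the quiver in Figure \ref{fig-type-2-1-stages-simple} plus the extra edges $\tilde x_i$ for $i\in S^c$.

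Next I would apply reduction in stages to kill the extra edges. For each $i\in S^c$, the only moment map equation involving $\tilde x_i$ besides the one at vertex $\infty$ is the framing equation $|\tilde x_i|^2=\alpha_i$, and the framing circle $U(1)_i$ acts freely on the sphere $\{|\tilde x_i|^2=\alpha_i\}\subset\BC$. Thus the symplectic quotient $\{|\tilde x_i|^2=\alpha_i\}/U(1)_i$ is a single point; performing this reduction first, for every $i\in S^c$, eliminates the variables $\tilde x_i$ and gauge factors $U(1)_i$ for $i\in S^c$, and the resulting space is isomorphic to the symplectic reduction of the data $(\tilde x_i,\tilde y_i)_{i\in S}$ by the residual group $U(2)\times U(1)_\infty\times(S^1)^{|S|}$ (mod the overall diagonal). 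This residual problem is exactly the representation variety of the simplified quiver.

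Finally, I would compute the induced moment map levels. Since no edge from an $S^c$ framing attaches to vertex $0$, the level there is unchanged and equals $(\alpha/3)1_{2\times 2}=(\gamma/2)1_{2\times 2}$ with $\gamma=2\alpha/3$. At vertex $\infty$, substituting $|\tilde x_i|^2=\alpha_i$ for $i\in S^c$ into the vertex-$\infty$ equation yields
\begin{equation}
-\sum_{i\in S}|\tilde y_i|^2 \;=\; \frac{\alpha}{3}-\alpha_{S^c} \;=\; -\frac{2\alpha_{S^c}-\alpha_S}{3} \;=\; -\delta,
\end{equation}
which is the claimed level. The framing levels at $i\in S$ are untouched. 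The only real work is the bookkeeping of moment map levels and checking that the $U(1)_i$-reduction for $i\in S^c$ is compatible with the $U(1)_\infty$ action on the surviving variables; because the action is free on the $\alpha_i$-sphere and the remaining $U(1)_\infty$ acts only on the $\tilde y_i$ with $i\in S$ after reduction, no obstruction arises.
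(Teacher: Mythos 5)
Your proposal is correct and follows essentially the same route as the paper: both arguments observe that for $i\in S^c$ the framing circle acts freely on the sphere $|x_i|^2=\alpha_i$ so the reduced space is a point and those edges can be deleted, and then compute the induced levels $\gamma=2\alpha/3$ and $-\delta$ by substituting into the block-diagonal moment map equations. Your sign bookkeeping $\frac{\alpha}{3}-\alpha_{S^c}=-\frac{2\alpha_{S^c}-\alpha_S}{3}=-\delta$ is in fact cleaner than the paper's (which has a stray sign in the intermediate expression), and your explicit appeal to reduction in stages just makes precise what the paper states informally.
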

\begin{proof} For each $i \in S^c$, the moment map equation is given by $|x_i|^2 = \alpha_i$.
From Proposition \ref{quiver-prop-type-2-1}, the moment map equation at the vertex is
\begin{equation}
  \sum_{i \in S^c} |x_i|^2 - \sum_{i \in S} |y_i|^2 = \frac{\alpha}{3}
\end{equation}
Now, since for $i \in S^c$ $x_i$ is a $1 \times 1$ matrix of norm $\alpha_i$, up to the 
$S^1$ action we simply have $x_i = \sqrt{\alpha_i}$ and there are no additional moduli.
Hence we can simply delete these edges. Rearranging the above equation, we have
\begin{equation}
  -\sum_{i \in S} |y_i|^2 = \frac{\alpha}{3} - \sum_{i \in S^c} |x_i|^2 = \frac{2\alpha_{S^c}-\alpha_S}{3} = -\delta,
\end{equation}
as claimed. The moment map equation
\begin{equation}
  \sum_i x_i x_i^\ast = \frac{\alpha}{3} 1_{3 \times 3} + \sum_i y_i^\ast y_i,
\end{equation}
also implies $\sum_{i \in S} x_i x_i^\ast = (\alpha/3) 1_{2 \times 2}$, so $\gamma = 2\alpha/3$
as claimed.
\end{proof}

For the remainder of this section, we let $\CQ = (\CV \sqcup \CW, \CE)$ denote the type
$(2,1)$ quiver pictured in Figure \ref{fig-type-2-1-stages-simple}. The first step
of Theorem \ref{thm-inductive-procedure} is to compute the Poincar\'e polynomial
of $V(f) \subset \FX_\alpha(\CQ, \bv, \bw)$ where $f: \Rep(\CQ, \bv, \bw) \to E \iso \BC^2$ is the residual part of the complex
moment map, given by
\begin{equation}
  f(x,y) = \sum_{i \in S} x_i y_i.
\end{equation}

\begin{proposition} \label{quiver-pro-vf-poincare} The
Poincar\'e polynomial of $V(f) \subset \FX_\alpha(\CQ,\bv,\bw)$ is given by 
\begin{equation}
  P_t( V(f) ) = P_t( Gr(2,n) ) P_t( \BP^{n-3} )
\end{equation}
\end{proposition}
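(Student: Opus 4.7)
The plan is to identify $\FX_\alpha(\CQ,\bv,\bw)$ explicitly as a product $\Gr(2,n) \times \BP^{n-1}$ (where I will write $n := \#S$ throughout this proposition, matching the notation), realize $V(f)$ as a projective subbundle of this product, and conclude by the projective bundle formula.

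First I would observe that for the simplified type $(2,1)$ quiver of Figure \ref{fig-type-2-1-stages-simple}, $\Rep(\CQ,\bv,\bw) \iso \BC^{2n} \oplus \BC^n$, with coordinates $(x_i)_{i \in S}$ on the first summand and $(y_i)_{i \in S}$ on the second. The group $G_\bv = U(2) \times U(1)$ acts so that $U(2)$ moves only the $x$'s and $U(1)$ moves only the $y$'s; both the symplectic form and the moment map split as a direct sum under this decomposition, so the K\"ahler quotient factors:
\begin{equation}
  \FX_\alpha(\CQ,\bv,\bw) \iso \left( \BC^{2n} \reda{\gamma/2} U(2) \right) \times \left( \BC^n \reda{\delta} U(1) \right) \iso \Gr(2,n) \times \BP^{n-1}.
\end{equation}
Positivity of $\gamma$ and $\delta$, needed for this identification and for smoothness, follows from hypothesis (i) of Proposition \ref{quiver-prop-type-2-1}.

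Next I would describe $V(f)$ inside this product. Viewing $x = (x_1,\ldots,x_n)$ as a surjective linear map $x: \BC^n \to \BC^2$ (well-defined up to the $U(2)$-action), the equation $f(x,y) = \sum_{i \in S} x_i y_i = 0$ becomes simply $x(y) = 0$, so
\begin{equation}
  V(f) \iso \{ ([x],[y]) \suchthat y \in \ker x \} \subset \Gr(2,n) \times \BP^{n-1}.
\end{equation}
This is precisely the projectivization $\BP(\CS)$ of the tautological rank-$(n-2)$ subbundle $\CS \subset \CO^n$ on $\Gr(2,n)$, naturally embedded in $\Gr(2,n) \times \BP^{n-1}$ via the inclusion $\CS \into \CO^n$. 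In particular, $V(f)$ is a smooth Zariski-locally trivial $\BP^{n-3}$-bundle over $\Gr(2,n)$ (note $n \geq 3$ by hypothesis (ii) of Proposition \ref{quiver-prop-type-2-1}).

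Finally, by the Leray--Hirsch/projective bundle formula,
\begin{equation}
  P_t(V(f)) = P_t(\Gr(2,n)) \cdot P_t(\BP^{n-3}),
\end{equation}
which is the desired formula. There is no real obstacle in this argument; the essential point that makes the computation painless is recognizing that the residual complex moment map equation cuts out a \emph{projective subbundle} of $\FX_\alpha(\CQ,\bv,\bw)$ rather than a more complicated complete intersection, so that no Morse-theoretic argument is needed at this stage.
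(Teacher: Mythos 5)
Your argument is correct, and it is genuinely different from the one in the paper. The paper proves this proposition by Morse theory with a generic component of $\mu_\bw$: it enumerates the $T_\bw$-fixed points of $V(f)$ (which are isolated and indexed by triples $(a,b,c)$ with $a<b$ and $c\neq a,b$), computes the isotropy representations via Theorem \ref{thm-rep-ring}, reads off the Morse indices for the component $(1,\epsilon,\dots,\epsilon^{n-1})$, and sums $t^{\lambda(a,b,c)}$. Your route — split the $G_\bv = U(2)\times U(1)$ action to get $\FX_\alpha(\CQ,\bv,\bw) \iso \Gr(2,\#S)\times \BP^{\#S-1}$, observe that $f=0$ says $y\in\ker x$, and apply the projective bundle formula to $\BP(\ker x)\to\Gr(2,\#S)$ — is precisely the ``quicker way'' that the paper itself points out in the remark immediately following the proposition, where it is also explained why the Morse-theoretic proof is preferred: the fibration over $\Gr(2,n)$ is a coincidence of this particular quiver, whereas the fixed-point/index computation is systematic and applies to subvarieties $V(f)$ of quiver varieties with no such bundle structure; moreover the fixed-point data it produces is reused in the subsequent $|\mu_\bw|^2$ analysis. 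What your approach buys in exchange is brevity and strictly more information at this step: Leray--Hirsch gives the cohomology \emph{ring} of $V(f)$, not just its Betti numbers. Two small points of hygiene: the rank-$(\#S-2)$ bundle $\ker x$ is the annihilator of the tautological $2$-plane (i.e.\ the tautological subbundle under $\Gr(2,n)\iso\Gr(n-2,n)$) rather than the usual tautological subbundle of $\Gr(2,n)$, which does not affect the argument; and the $U(1)$ factor acts on the $y$'s through the inverse, so the relevant reduction is at level $-\delta$ with moment map $-\tfrac12\sum|y_i|^2$, which is nonempty exactly because $\delta>0$ — the conclusion $\BC^{\#S}\red U(1)\iso\BP^{\#S-1}$ is unaffected.
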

\begin{proof} Pick a generic component $\beta$ of $\ft_\bw$ to that $T_\beta = T_\bw$.
(We will make a particular choice of $\beta$ once we need it to compute Morse indices.)
Let $h = \ev{\mu_\bw, \beta}$ so that $h$ is a perfect Morse-Bott function and 
$\Crit(h) = \FX_\alpha(\CQ, \bv,\bw)^{T_\bw}$. To proceed, we have to compute the $T_\bw$-fixed
point set of $V(f)$. As in the proof of Theorem \ref{quiver-thm-direct-sum},
a point $[x,y] \in \FX_\alpha(\CQ, \bv,\bw)$ is fixed if there is a map $\phi: T_\bw \to G_\bv$
such that $t \cdot (x,y) = \phi(t) \cdot (x,y)$.
If we denote $t = (t_1, \dots, t_n)$ then, writing this out in components, we have the
equations 
\begin{align}
  g_1(t) x_i  &= t_i x_i, \\
  y_i g_2(t)^{-1} &= t_i^{-1} y_i.  
\end{align}
Now, by the moment map equations, at least two of the $x_i$ must be non-zero and
linearly independent, say $x_a$ and $x_b$ with $a<b$. Then the first of the above
equations force $g_1(t) = \diag(t_a, t_b)$. On the other hand, the moment map 
equations also force at least one of the $y_i$ to be non-zero, say $y_c$.
The second equation then forces $g_2(t) = t_c$, and this determines $\phi$ completely.
Furthermore, the residual complex moment map equation is $\sum_{i \in S} x_i y_i = 0$, 
and this can only be satisfied if $c \neq a,b$. Hence the fixed-points are isolated
and are indexed by tuples $(a,b,c)$ with $a<b$ and $c\neq a,b$.

To compute Morse indices, we first compute the isotropy representations using 
Theorem \ref{quiver-thm-direct-sum}. We find
\begin{align}
  [T_x \Rep(\CQ, \bv,\bw)] &= \sum_l \left( t_a t_l^{-1} + t_b t_l^{-1} + t_l t_c^{-1} \right) \\
  [\fg_\bv] &= t_a t_b^{-1} + t_b t_a^{-1} + 3 \\
  [E] &= t_a t_c^{-1} + t_b t_c^{-1}.
\end{align}
If we take the particular generic component $(1, \epsilon, \epsilon^2, \dots, \epsilon^{n-1})$
for $\epsilon$ small, then a weight of the form $t_i t_j^{-1}$ is negative if and only if
$j < i$. Hence the Morse index is given by
\begin{equation}
  \lambda(a,b,c) = 2n -6 + 2a + 2b + 2c - 2d,
\end{equation}
where $d = 0$ if $c < a$, $1$ if $a < c < b$, and $0$ if $c > b$. Hence 
\begin{equation}
  P_t(V) = \sum_{a=1}^{n-1} \sum_{b=a+1}^n \sum_{c \neq a,b} t^{\lambda(a,b,c)}
\end{equation}
and with a bit of algebraic manipulation this yields the claimed formula for $P_t(V(f))$.
\end{proof}

\begin{remark} A quicker way to calculate $P_t(V)$ is to simply observe that the
projection $[x,y] \mapsto [x]$ makes $V(f)$ the total space of a fiber bundle over
$Gr(2,n)$ with fiber $\BP^{n-3}$. However, Morse theory with the $T_\bw$-action provides
a systematic way to compute $V(f)$ for a large class of subvarieties of quiver varieties
without relying on such coincidences.
\end{remark}

Now that we have calculated the Poincar\'e polynomial of $P_t(V(f))$, we must next
enumerate the connected components of the critical set of $|\mu_\bw|^2$. By Theorem \ref{quiver-thm-direct-sum},
these are enumerated by the possible direct sum decompositions. At the left-most vertex,
which has dimension $2$, there are two possibilities: either it does not decompose,
or it decomposes as $1+1$. We call these two possibilities type $(2)$ and type $(1,1)$
respectively.

First let us consider type $(2)$. Under a direct sum decomposition of this form, the
only possibility is that some of the edges are identically zero. Define
\begin{align}
  S = \{i \in [n] \suchthat x_i \neq 0\ \textrm{generically} \}, \\
  T = \{i \in [n] \suchthat y_i \neq 0\ \textrm{generically} \}.
\end{align}
The type $(2)$ critical set corresponding to the choice $S,T \subset[n]$ is pictured in 
Figure \ref{fig-type-2-1-stratum-2}.

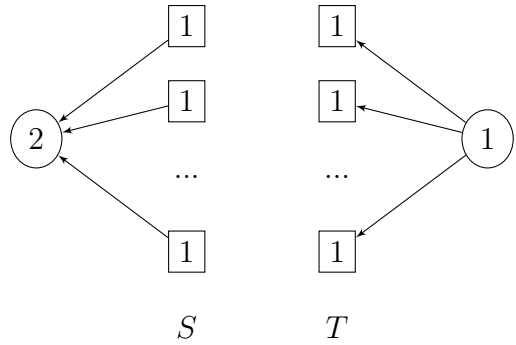
\begin{figure}[h]
\centering
\begin{tikzpicture}[auto, node distance=1.7cm]
\tikzset{
  unframed/.style={ellipse,fill=white!20,draw},
  framed/.style={rectangle,fill=white!20,draw},
  plain/.style={draw=none,fill=none},
  myarrow/.style={->,>=latex',draw},
  myline/.style={>=latex',draw}
}  
  \node[unframed] (1) at (-2, 0) {$2$};
  \node[framed] (L1) at (0, 1.5) {$1$};
  \node[framed] (L2) at (0, 0.5) {$1$};
  \node[plain]  (L3) at (0, -0.5) {$...$};
  \node[framed] (L4) at (0, -1.5) {$1$};

  \node[plain]  (S) at (0, -2.5) {$S$};

  \path[myarrow] (L1) -- (1);
  \path[myarrow] (L2) -- (1);
  \path[myarrow] (L4) -- (1);

  \node[unframed] (2) at (4, 0) {$1$};

  \node[framed]   (R1) at (2, 1.5) {$1$};
  \node[framed]   (R2) at (2, 0.5) {$1$};
  \node[plain]    (R3) at (2, -0.5) {$...$};
  \node[framed]   (R4) at (2, -1.5) {$1$};
  \node[plain]    (T)  at (2, -2.5) {$T$};
  
  \path[myarrow] (2) -- (R1);
  \path[myarrow] (2) -- (R2);
  \path[myarrow] (2) -- (R4);

\end{tikzpicture}
\caption{The type $(2)$ stratum of the type $(2,1)$ critical set.}
\label{fig-type-2-1-stratum-2}
\end{figure}

Now, suppose that $S \cap T \neq \emptyset$, and let $i \in S \cap T$.
Since $x_i \neq 0$ generically, this forces $g_1 = \diag(t_i, t_i)$.
Similarly, since $y_i \neq 0$ generically, we have $g_2 = t_i$.
Hence the generic stabilizer is given by the subtorus
$(s_1, \dots, s_n)$ of $T_\bw$, where $s_l = t_i$ for $l \in S \cup T$,
and $s_l = t_l$ for $l \neq S \cup T$. However, the fixed-point set
of this stabilizer subgroup consists of 
\begin{equation}
  \{x_i = 0 \suchthat i \not\in (S\cup T) \} \cup \{y_i = 0 \suchthat i \not\in (S\cup T)\},
\end{equation}
so in fact $S = T$. This divides the type $(2)$ critical sets into two subcases,
$S = T$ and $S \cap T = \emptyset$, which we call type $A$ and type $B$
respectively.

Next we consider the remaining case, type $(1,1)$. Now we define $S$ and $T$ as
in the type $(2)$ case, and the decomposition $2=1+1$ further decomposes
$S$ as $S = S_1 \sqcup S_2$, as pictured in Figure \ref{fig-type-2-1-stratum-1-1}.
Now, as in the type $(2)$ case there are additional subcases. By arguments
analogous to those above, either $T$ is
equal to one of the $S_i$ (which we may assume is $S_1$, simply by relabelling),
or $T$ is disjoint from both $S_1$ and $S_2$. We denote these subcases by
type $C$ and type $D$, respectively.

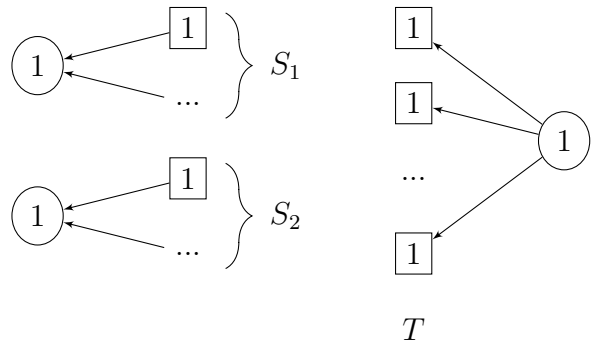
\begin{figure}[h]
\centering
\begin{tikzpicture}[auto, node distance=1.7cm]
\tikzset{
  unframed/.style={ellipse,fill=white!20,draw},
  framed/.style={rectangle,fill=white!20,draw},
  plain/.style={draw=none,fill=none},
  myarrow/.style={->,>=latex',draw},
  myline/.style={>=latex',draw}
}
  \node[unframed] (1a) at (-4, 1) {$1$};
  \node[unframed] (1b) at (-4, -1) {$1$};
  \node[framed] (L1) at (-2, 1.5) {$1$};
  \node[plain]  (L2) at (-2, 0.5) {$...$};
  \node[framed] (L3) at (-2, -0.5) {$1$};
  \node[plain]  (L4) at (-2, -1.5) {$...$};

  \path[myarrow] (L1) -- (1a);
  \path[myarrow] (L2) -- (1a);
  \path[myarrow] (L3) -- (1b);
  \path[myarrow] (L4) -- (1b);

  \node[plain] at (-0.7, 1) {$S_1$};
  \node[plain] at (-0.7, -1) {$S_2$};

  \draw[decorate,decoration={brace, amplitude=10pt}] (-1.5, 1.7) -- (-1.5, 0.3);
  \draw[decorate,decoration={brace, amplitude=10pt}] (-1.5, -0.3) -- (-1.5, -1.7);

  \node[unframed] (2) at (3, 0) {$1$};

  \node[framed]   (R1) at (1, 1.5) {$1$};
  \node[framed]   (R2) at (1, 0.5) {$1$};
  \node[plain]    (R3) at (1, -0.5) {$...$};
  \node[framed]   (R4) at (1, -1.5) {$1$};
  \node[plain]    (T)  at (1, -2.5) {$T$};
  
  \path[myarrow] (2) -- (R1);
  \path[myarrow] (2) -- (R2);
  \path[myarrow] (2) -- (R4);

\end{tikzpicture}
\caption{The type $(1,1)$ stratum on the type $(2,1)$ quiver.}
\label{fig-type-2-1-stratum-1-1}
\end{figure}

Summarizing the above arguments, we have the following theorem.

\begin{theorem}
The critical sets for $|\mu_\bw|^2$ on the type $(2,1)$ are classified into the types 
$A, B, C$, and $D$ described above. Hence The Poincar\'e polynomial of the type $(2,1)$ critical set on 
$\FM_\alpha(n)$ may be computed as 
\begin{equation}
  P_t( V(f) \red G_\bw) = P_t(BG_\bw) P_t(V(f)) - R_A - R_B - R_c - R_D,
\end{equation}
where $R_A, R_B, R_C, R_D$ are corrections due to the type $A,B,C,D$ critical sets
of $|\mu_\bw|^2$, respectively. These may be computed by Propositions
\ref{quiver-prop-type-a}, \ref{quiver-prop-type-b}, \ref{quiver-prop-type-c}, and 
\ref{quiver-prop-type-d} below.
\end{theorem}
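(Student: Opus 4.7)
The plan is to invoke the inductive procedure of Theorem \ref{thm-inductive-procedure} on the quiver depicted in Figure \ref{fig-type-2-1-stages-simple}, so that once we know $P_t(V(f))$ (already supplied by Proposition \ref{quiver-pro-vf-poincare}) the remaining task is purely to organize the higher critical strata of $|\mu_\bw|^2$ on $V(f)$ and show they split into exactly the four families $A,B,C,D$.

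First I would establish the classification. By Theorem \ref{quiver-thm-direct-sum}, every non-minimal critical component consists of $G_\bv$-orbits of representations that decompose as a non-trivial direct sum compatibly with the $T_\bw$-action. Since the dimension vector on $\CV$ is $\bv=(2,1)$, the only way the rank-one vertex can split is trivially, while the rank-two vertex admits exactly two possibilities: it remains indecomposable (type $(2)$) or splits as $1+1$ (type $(1,1)$). In the type $(2)$ case I would let $S=\{i \in [n] \mid x_i \not\equiv 0\}$ and $T=\{i \in [n] \mid y_i \not\equiv 0\}$, and repeat the isotropy argument used in the proof of Proposition \ref{quiver-pro-vf-poincare}: if some $i \in S \cap T$ then the generic stabilizer of the component forces the $x_j$ and $y_j$ with $j \notin S \cup T$ to vanish, which in turn forces $S=T$ (type $A$); otherwise $S \cap T=\emptyset$ (type $B$). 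In the type $(1,1)$ case the same analysis applied to the further splitting $S=S_1 \sqcup S_2$ forces $T$ to either coincide (up to relabelling) with one of the $S_i$ (type $C$) or to be disjoint from both (type $D$). This exhausts the critical components.

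Next I would derive the Poincar\'e polynomial identity. Since $V(f)$ is a compact $G_\bw$-invariant smooth subvariety of the circle-compact variety $\FX_\alpha(\CQ,\bv,\bw)$, and $\mu_\bw$ is proper on $V(f)$, the function $|\mu_\bw|^2$ is equivariantly perfect by Theorem \ref{thm-formal-mu-perf} (applied to a generic component and then bootstrapped via the argument of Section \ref{morse-sec-eq-morse}; alternatively, by the standard Kirwan argument since $V(f)$ is compact). Consequently the equivariant Thom--Gysin sequence splits, giving
\begin{equation}
P_t^{G_\bw}(V(f)) = P_t^{G_\bw}(V(f) \red G_\bw) + \sum_{C \ne 0} t^{\lambda_C}\, P_t^{G_\bw}(C),
\end{equation}
where the sum runs over the non-minimal components classified above. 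Because $G_\bw$ acts locally freely on $\mu_\bw^{-1}(0) \cap V(f)$ (by genericity of $\alpha$ and smoothness of $V(f)$), the first term on the right equals $P_t(V(f) \red G_\bw)$. By equivariant formality (step (2) of Theorem \ref{thm-inductive-procedure}), the left-hand side equals $P_t(BG_\bw)\,P_t(V(f))$. Rearranging and grouping the higher-stratum contributions according to the four types yields the asserted formula with
\begin{equation}
R_X := \sum_{C \text{ of type } X} t^{\lambda_C}\, P_t^{G_\bw}(C), \qquad X \in \{A,B,C,D\}.
\end{equation}

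The only real obstacle is the isotropy/support bookkeeping that shows types $A$--$D$ are mutually exclusive and jointly exhaustive, i.e.\ that the generic stabilizer argument really forces the alternatives $S=T$ versus $S \cap T=\emptyset$ (and analogously in the $(1,1)$ case); this is technical but straightforward, and is essentially a refinement of the fixed-point enumeration used in Proposition \ref{quiver-pro-vf-poincare}. The computation of each individual $R_X$ is deferred to Propositions \ref{quiver-prop-type-a}--\ref{quiver-prop-type-d}, where the same inductive procedure is applied recursively to the simpler quivers labelling each type.
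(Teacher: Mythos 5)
Your proposal is correct and follows essentially the same route as the paper: the classification into types $A$--$D$ via Theorem \ref{quiver-thm-direct-sum} (splitting the rank-two vertex or not, then analyzing the supports $S$, $T$ and the generic-stabilizer argument forcing $S=T$ versus $S\cap T=\emptyset$), and the Poincar\'e polynomial identity via equivariant perfection of $|\mu_\bw|^2$ together with equivariant formality, exactly as in the discussion the paper summarizes before stating the theorem.
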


\subsubsection{Type $A$ Critical Sets}
We have a subset $S \subset [n]$ satisfying $S^c \neq \emptyset$.
This critical set is given by $x_i = 0, y_i = 0$ for $i \in S^c$.

\begin{lemma} The stabilizer of a generic point of the type $A$ critical set
is the subgroup $T_S \subseteq T_\bw$ consisting of elements of the form $t_i = s$ for $i \in S$ with
no constraint on the $t_i$ for $i \in S^c$. The map $\phi: T_S \to T_\bv$ is given by 
$g_1 = \diag(s^{-1},s^{-1}), g_2 = s^{-1}$. The isotropy representation is given by
\begin{equation}
  3(\#S)-7 + \sum_{i \in S^c} (2s t_i^{-1} + s^{-1} t_i).
\end{equation}
\end{lemma}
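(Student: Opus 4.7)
The plan is to verify the stabilizer description by invoking the general fixed-point lifting result (Proposition \ref{prop-fixed-lift}), and then to extract the isotropy representation from the virtual-character identity of Theorem \ref{thm-rep-ring} applied to the subvariety $V(f)$.

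For the stabilizer, fix a generic point of the type $A$ critical set: here $x_i \in V_1$ and $y_i \in \Hom(V_2, W_i)$ are both nonzero for $i \in S$ (and satisfy $\sum_{i \in S} x_i y_i = 0$ in $\Hom(V_2, V_1)$), while $x_i = y_i = 0$ for $i \in S^c$. Proposition \ref{prop-fixed-lift} says that any $t \in T_\bw$ stabilizing this point determines a lift $\phi(t) = (g_1(t), g_2(t)) \in G_\bv$ with $t \cdot (x,y) = \phi(t) \cdot (x,y)$. Since $y_i$ is a nonzero scalar for each $i \in S$, the equation on $y_i$ immediately forces $g_2(t) = t_i^{-1}$; as this must hold for every $i \in S$, all the $t_i$ with $i \in S$ coincide with a common value $s$, and $g_2 = s^{-1}$. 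The $x_i$-equations then read $g_1(t)\, x_i = s^{-1} x_i$, and because $\#S \geq 3$ (Proposition \ref{quiver-prop-type-2-1}) the $x_i$ generically span $V_1$, forcing $g_1(t) = s^{-1} I_2$. The vanishing of $x_i, y_i$ for $i \in S^c$ imposes no further constraint, yielding precisely the $T_S$ and $\phi$ in the lemma.

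For the isotropy representation I would apply the subvariety form of Theorem \ref{thm-rep-ring}:
\begin{equation*}
[T_x V(f)] \;=\; [T_{\tilde x}\Rep(\CQ,\bv,\bw)] \;-\; [\fg_\bv^{\BC}] \;-\; [E]
\end{equation*}
as virtual $T_S$-representations under the twisted action, where $E \iso \Hom(V_2, V_1) \iso \BC^2$ is the target of the residual complex moment map $f$. The twisted characters follow directly from $\phi$: both $[V_1]$ and $[V_2]$ acquire character $s$ (from $\phi^{-1}$), while $[W_i] = s$ for $i \in S$ and $[W_i] = t_i$ for $i \in S^c$ (since $\phi$ is trivial on the framing). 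Multiplying these out edge by edge: for each $i \in S$ the summands $\Hom(W_i, V_1)$ and $\Hom(V_2, W_i)$ contribute $2 + 1 = 3$ trivial characters, while for each $i \in S^c$ they contribute $2 s t_i^{-1}$ and $s^{-1} t_i$ respectively, so
\begin{equation*}
[T_{\tilde x}\Rep(\CQ,\bv,\bw)] \;=\; 3\#S \;+\; \sum_{i \in S^c}\bigl(2 s t_i^{-1} + s^{-1} t_i\bigr).
\end{equation*}

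Finally, because $\phi$ lands in the center of $G_\bv$, its adjoint action on $\fg_\bv^{\BC} = \mathfrak{gl}(V_1) \oplus \mathfrak{gl}(V_2)$ is trivial, so $[\fg_\bv^{\BC}]$ is a trivial representation of complex dimension $5$; likewise $\phi$ acts on $E = \Hom(V_2, V_1)$ by the scalar $\phi|_{V_1} \phi|_{V_2}^{-1} = s^{-1} \cdot s = 1$, so $[E]$ is trivial of dimension $2$. Subtracting these yields exactly $(3\#S - 7) + \sum_{i \in S^c}(2 s t_i^{-1} + s^{-1} t_i)$, as claimed. The only real hazard is the sign convention for the $T_\bw$-action on the edges: the stated form of $\phi$ pins down the convention $t \cdot x_i = t_i^{-1} x_i$, $t \cdot y_i = t_i y_i$, after which both the stabilizer derivation and the character computation above are entirely mechanical; this bookkeeping is the main obstacle to executing the proof cleanly.
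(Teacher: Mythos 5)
Your proposal is correct and follows essentially the same route as the paper: identify the lift $\phi$ via Proposition \ref{prop-fixed-lift} (using that the $y_i$ are nonzero scalars to pin down $g_2$ and that the $x_i$, $i\in S$, span $V_1$ to pin down $g_1$), then compute $[\Rep(\CQ,\bv,\bw)]$, $[\fg_\bv]=5$, $[E]=2$ under the twisted action and subtract via Theorem \ref{thm-rep-ring}. The paper's proof is just a terser version of the same computation, and your character bookkeeping (including the sign convention $t\cdot x_i = t_i^{-1}x_i$, $t\cdot y_i = t_i y_i$) agrees with it.
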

\begin{proof} The generic stabilizer follows from the definition of the type $A$ critical set.
To compute the isotropy representation, we simply compute
\begin{align}
  [\Rep(\CQ, \bv,\bw)] &= 3(\#S) + \sum_{i \in S^c} (2 s t_i^{-1} + s^{-1} t_i) \\
  [\fg_\bv] &= 5 \\
  [E] &= 2
\end{align}
and appeal to Theorem \ref{thm-rep-ring}.
\end{proof}

\begin{lemma} The Morse index $\lambda$ is given by
\begin{equation}
  \lambda = 2\#S^c + 2\#\{j \in S^c \ | \ \mu_S + \alpha_j > 0 \}.
\end{equation}
where $\mu_S = (\gamma - \delta - \alpha_S) / (\# S)$.
The equivariant topology of the type $A$ critical set
is that of the type $(2,1)$ critical set with stability parameters
$(\gamma, \alpha', \delta)$, where $\alpha' = \{\alpha_i'\}_{i \in S}$ 
and $\alpha_i' = \alpha_i + \mu_S$.
Hence its Poincar\'e polynomial may be computed inductively.
\end{lemma}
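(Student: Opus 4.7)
The plan is to derive the Morse index from the isotropy representation already computed in the previous lemma, and then to identify the type $A$ critical set itself as a smaller type $(2,1)$ quiver variety with shifted stability parameters so the recursion of Theorem \ref{thm-inductive-procedure} becomes explicit.

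For the Morse index, I would invoke Proposition \ref{prop-hessian}: since $G_\bw$ is a torus, the correction $\dim G - \dim G_\beta$ vanishes, and $\lambda_\beta$ equals twice the number of isotropy weights $\omega$ satisfying $\langle \omega, \beta\rangle < 0$, where $\beta = \mu_\bw(x)$ is the value of the moment map on the critical set. To determine $\beta$, I would note that at a type $A$ point $x_j = y_j = 0$ for $j \in S^c$, which forces the $t_j$-component of $\beta$ to equal $\pm \alpha_j$; and that summing the moment map equations at the left and right vertices over $i \in S$ forces a constant value $\mu_S = (\gamma - \delta - \alpha_S)/\#S$ on each of the $S$-coordinates, so that $\beta$ lies in $\ft_S$ as required. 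Substituting these values, the isotropy weights $st_j^{-1}$ (multiplicity $2$) and $s^{-1}t_j$ (multiplicity $1$) pair with $\beta$ to $\mu_S+\alpha_j$ and $-(\mu_S+\alpha_j)$ respectively. These two weights being negatives of one another means that for each $j \in S^c$ exactly one contributes to the negative-weight count: one always gets a contribution of $2$ from whichever sign is negative, plus an additional contribution of $2$ whenever the multiplicity-$2$ weight itself is the negative one, i.e.\ when $\mu_S + \alpha_j > 0$. Summing yields $\lambda = 2\#S^c + 2\#\{j \in S^c : \mu_S + \alpha_j > 0\}$.

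For the equivariant topology, I would observe that the vanishing conditions $x_j = y_j = 0$ for $j \in S^c$ identify the type $A$ critical set $C_A(S)$, as a subvariety of $V(f)$, with the moduli space attached to the subquiver obtained by deleting the framing vertices and edges indexed by $S^c$. This is once again a type $(2,1)$ quiver, now with framing set $S$. The moment map constraint $\tilde\mu_{\bw,i} = \mu_S$ on the $S$-coordinates translates into a shift of the $i$-th framing level from $\alpha_i$ to $\alpha_i' = \alpha_i + \mu_S$, while the levels $\gamma$ and $\delta$ at the two internal vertices are unchanged. Thus $C_A(S)$ is isomorphic as a $T_\bw$-space to the type $(2,1)$ critical set with stability parameters $(\gamma,\alpha',\delta)$. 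Since $T_S$ acts trivially on $C_A(S)$, the equivariant Poincar\'e polynomial factorizes as
\begin{equation}
   P_t^{T_\bw}(C_A(S)) = P_t(BT_S)\cdot P_t^{T_\bw/T_S}(C_A(S)),
\end{equation}
and the effective $T_\bw/T_S$-action matches the framing-torus action (modulo the global diagonal) on the smaller type $(2,1)$ quiver, so the Poincar\'e polynomial of $C_A(S)$ may be computed by the inductive procedure of Theorem \ref{thm-inductive-procedure}, with the framing set having shrunk from $[n]$ to $S$.

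The main subtlety is carefully tracking sign conventions so that the expression for $\mu_S$, the identification of $\beta \in \ft_\bw^\ast \iso \ft_\bw$, and the pairings $\langle \omega,\beta\rangle$ all come out consistently; even a single sign error would flip the inequality $\mu_S + \alpha_j > 0$ and spoil the answer. One must also check that the shift $\alpha_i \mapsto \alpha_i + \mu_S$ used in the recursive type $(2,1)$ problem indeed reproduces the restriction of the ambient moment-map equations to $C_A(S)$, so that the recursion is truly geometric and not merely formal.
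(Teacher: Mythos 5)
Your proposal is correct and follows essentially the same route as the paper: criticality forces $\mu(x)\in\stab(x)$, summing the moment map equations over $i\in S$ gives $(\#S)\mu_S=\gamma-\delta-\alpha_S$ while $\mu_j=-\alpha_j$ on $S^c$, and the index is then read off from the isotropy representation $\sum_{j\in S^c}(2st_j^{-1}+s^{-1}t_j)$; you also supply the identification of the critical set with the smaller type $(2,1)$ problem at level $(\gamma,\alpha',\delta)$, which the paper's proof leaves implicit. The one point you assert rather than derive is the direction of the inequality $\mu_S+\alpha_j>0$ attached to the multiplicity-two weight — you rightly flag this as the sign-convention pressure point, and the paper is no more explicit there.
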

\begin{proof} 
The condition of being critical forces $\mu(x)$ to lie in $\stab(x)$, and hence
$\mu_i = \mu_{i'}$ for $i,i' \in S$. Denote this common value by $\mu_S$.
Using the moment map equations, we find
\begin{equation}
 (\#S) \mu_S = \sum_{i \in S} (|x_i|^2 - |y_i|^2 - \alpha_i) = \gamma - \delta - \alpha_S.
\end{equation}
Since $x_j, y_j=0$ for $j\in S^c$, we have $\mu_j = -\alpha_j$ for $j \in S^c$. 
Appealing to the isotropy representation, we obtain the desired formula for the Morse index.
\end{proof}

\begin{proposition} \label{quiver-prop-type-a} 
The correction $R_A$ of a type $A$ critical set labelled by 
$S=T \subset [n]$ is given by
\begin{equation}
  R_A(S) = \frac{t^\lambda}{(1-t^2)^{\# S^c}} P_t( C^{(2,1)}(\gamma, \alpha', \delta))
\end{equation}
where $\lambda$ is the Morse index as computed above and $C^{(2,1)}(\gamma,\alpha',\delta)$ denotes
a type $(2,1)$ critical set with moment map level $(\gamma, \alpha', \delta)$, as computed
above.
\end{proposition}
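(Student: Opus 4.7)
The plan is to combine the general Morse-theoretic contribution formula from Section \ref{morse-sec-eq-morse} with the reduction-in-stages identity $H_{G_\beta}^\ast(Z_\beta \cap M^{T_\beta}) \iso H^\ast(BH_\beta) \otimes H^\ast(M^{T_\beta} \red K_\beta)$, and then explicitly identify the quotient $C_A \red K_\beta$ with a smaller type $(2,1)$ critical set. By the equivariant perfection of $|\mu_\bw|^2$, the contribution of $C_A$ to the equivariant Poincar\'e polynomial is $t^\lambda P_t^{G_\bw}(C_A)$, with $\lambda$ already computed in the preceding lemma, so everything reduces to computing $P_t^{G_\bw}(C_A)$.

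First I would identify the generic stabilizer $H_\beta \subseteq G_\bw$. Since every point of $C_A$ has $x_i = y_i = 0$ for $i \in S^c$, the subtorus $(S^1)^{S^c} \subset G_\bw$ indexed by $S^c$ acts trivially on $C_A$. A dimension count using the remaining moment map equations shows that the complementary subtorus $(S^1)^S$ acts locally freely on $C_A$, so $H_\beta \iso (S^1)^{\#S^c}$ and $K_\beta \iso (S^1)^S / \mathrm{diag}$. This immediately yields the factor $P_t(BH_\beta) = 1/(1-t^2)^{\#S^c}$.

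Next I would identify $C_A \red K_\beta$ as a type $(2,1)$ critical set with shifted parameters. After discarding the trivial edges for $i \in S^c$, the representation space $\Rep(\CQ, \bv, \bw)|_{C_A}$ is exactly the one pictured in Figure \ref{fig-type-2-1-stages-simple} but with $S$ in place of $[n]$, and the residual group is $U(2) \times U(1)/U(1)$ acting with moment map levels $(\gamma, \cdot, \delta)$ at the $U(2)$ and $U(1)$ vertices. The key point is the level at each $i \in S$: the preceding lemma showed that criticality forces $\mu_i = \mu_S$ for all $i \in S$, so after further quotienting by the diagonal subgroup of $(S^1)^S$ (which identifies this common value with zero), the effective level at vertex $i$ is shifted to $\alpha_i + \mu_S = \alpha_i'$. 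This is precisely the definition of $C^{(2,1)}(\gamma, \alpha', \delta)$.

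Combining the two steps yields
\begin{equation}
  R_A(S) = t^\lambda P_t^{G_\bw}(C_A) = \frac{t^\lambda}{(1-t^2)^{\#S^c}} P_t\bigl(C^{(2,1)}(\gamma, \alpha', \delta)\bigr),
\end{equation}
as claimed. The main technical obstacle will be Step 2, namely verifying rigorously that the moment-map levels shift as stated and that the auxiliary $(S^1)^{\#S^c}$ factor cleanly splits off as a trivially-acting subgroup --- i.e., that there is no residual coupling between the $S^c$-edges (after they have been set to zero) and the remaining data. Once this bookkeeping is in place, the rest is formal and gives an inductive formula for $P_t(C^{(2,1)}(\gamma, \alpha, \delta))$ in the parameter $n = \#S$.
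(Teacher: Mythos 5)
Your proposal follows essentially the same route as the paper: quote the splitting $H_{G_\beta}^\ast(C) \iso H^\ast(BH_\beta) \otimes H^\ast(C \red K_\beta)$, identify the trivially-acting subtorus to get the $(1-t^2)^{-\#S^c}$ factor, and recognize $C_A \red K_\beta$ as a smaller type $(2,1)$ critical set at the shifted level $\alpha_i' = \alpha_i + \mu_S$ (which, as you say, comes from criticality forcing $\mu_i = \mu_S$ for $i \in S$). The conclusion and the final formula are correct.

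One bookkeeping slip worth fixing: your Step 1 asserts that $(S^1)^S$ acts locally freely on $C_A$, yet you then set $K_\beta \iso (S^1)^S/\mathrm{diag}$ --- these are inconsistent. The diagonal circle of $(S^1)^S$ does \emph{not} act locally freely: on $C_A$ its action is compensated by $g_1 = \diag(s^{-1},s^{-1})$, $g_2 = s^{-1}$ in $G_\bv$, so it lies in the generic stabilizer. The kernel of the $T_\bw$-action is the $(1+\#S^c)$-dimensional torus $T_S = (S^1)^{S^c} \times \mathrm{diag}\bigl((S^1)^S\bigr)$; the exponent $\#S^c$ in the proposition arises because the whole computation is carried out modulo the overall diagonal of $T_\bw$ (which acts trivially on all of $\FX_\alpha(\bv,\bw)$), and $T_S$ modulo that overall diagonal is isomorphic to $(S^1)^{\#S^c}$. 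Your final objects $H_\beta \iso (S^1)^{\#S^c}$ and $K_\beta \iso (S^1)^S/\mathrm{diag}$ are the right ones, but the justification should go through the generic stabilizer $T_S$ rather than a locally-free claim for $(S^1)^S$.
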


\subsubsection{Type $B$ Critical Sets}
We have disjoint subsets $S_1, S_2 \subset [n]$ satisfying $\#S_1 \geq 3$ and $\#S_2 \geq 1$.
The coordinates $(x_i, y_i)$ satisfy the conditions $x_i = 0$ for $i \in S_1^c$ and 
$y_i = 0$ for $i \in S_2^c$.

\begin{lemma} The stabilizer of a generic point of the type $B$ critical set
is the subgroup $T_S \subseteq T_\bw$ consisting of elements of the form $t_i = s_1$ for $i \in S_1$
$t_i = s_2$ for $i \in S_2$ with no constraint on the $t_i$ for $i \in (S_1 \cup S_2)^c$. 
The map $\phi: T_S \to T_\bv$ is given by $g_1 = \diag(s_1^{-1},s_1^{-1}), g_2 = s_2^{-1}$.
The isotropy representation is given by 
\begin{equation}
   (2\#S_1+\#S_2-5) + (\#S_1+2\#S_2-2) s_1 s_2^{-1} + \sum_{i \not\in S_1 \cup S_2} ( 2 s_1 t_i^{-1} + s_2^{-1} t_i)
\end{equation}
\end{lemma}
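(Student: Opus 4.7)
The plan is in two stages: first I will identify the stabilizer $T_S$ and the homomorphism $\phi: T_S \to T_\bv$, and then compute the isotropy representation via Theorem \ref{thm-rep-ring}. Fix a generic representative $(x,y)$ of a point in the type $B$ critical set indexed by the disjoint pair $(S_1, S_2)$, so that $x_i \neq 0$ precisely for $i \in S_1$ and $y_i \neq 0$ precisely for $i \in S_2$, with both vanishing on the complement.

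To determine the stabilizer, I would apply Proposition \ref{prop-fixed-lift} to characterize those $t \in T_\bw$ for which $t \cdot (x,y) = \phi(t) \cdot (x,y)$ for some $\phi(t) \in G_\bv$. Writing out components (exactly as in the proof of Proposition \ref{quiver-pro-vf-poincare}) gives the equations $g_1(t) x_i = t_i x_i$ for $i \in S_1$ and $t_i y_i g_2(t)^{-1} = y_i$ for $i \in S_2$, while the vanishing coordinates impose no condition. Because the stratum is of type $(2)$, Theorem \ref{quiver-thm-direct-sum} forces $g_1(t)$ to act as a scalar on $\BC^2$ (otherwise a non-trivial $T_S$-weight decomposition of $\BC^2$ would exhibit a proper direct-sum splitting of $(x,y)$). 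The first equation then compels this scalar to equal $t_i$ for every $i \in S_1$, which in turn forces $t_i = t_j$ throughout $S_1$; I set $s_1$ equal to this common value. The second equation, being a scalar equation in the $1$-dimensional spaces $L_i$, forces $g_2(t) = t_i$ for every $i \in S_2$, giving the common value $s_2$. Coordinates outside $S_1 \cup S_2$ are unconstrained, so this identifies $T_S$ and $\phi$ as claimed, up to the inversion convention already present in the preceding type-$A$ and type $(2,1)$ statements.

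With $T_S$ and $\phi$ in hand, I would apply Theorem \ref{thm-rep-ring} to the $G_\bv$-equivariant map $f: \Rep(\CQ, \bv, \bw) \to E$ whose vanishing locus cuts out the reduced type-$(2,1)$ quiver variety inside $\FX_\alpha(\CQ, \bv, \bw)$; here $f(x,y) = \sum_i x_i y_i \in E := \Hom(\BC, \BC^2)$, which is $G_\bv$-equivariant via $e \mapsto g_1 e g_2^{-1}$. Restricting to $T_S$, each edge $x_i$ contributes the $T_\bv \times T_\bw$-weight $s_1 t_i^{-1}$ with multiplicity $2$ (from the $\BC^2$ target) and each $y_i$ contributes $s_2^{-1} t_i$ with multiplicity $1$. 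Splitting the index set into $S_1$, $S_2$, and $(S_1 \cup S_2)^c$ and substituting $t_i = s_1$ on $S_1$ and $t_i = s_2$ on $S_2$ gathers the contributions into
\[
[T_{(x,y)} \Rep] = (2\#S_1 + \#S_2) + (\#S_1 + 2\#S_2)\, s_1 s_2^{-1} + \sum_{i \not\in S_1 \cup S_2} \left(2 s_1 t_i^{-1} + s_2^{-1} t_i\right).
\]
Since $g_1$ is scalar and $g_2$ is $1 \times 1$, the complexified adjoint $\fg_\bv \iso \gl_2 \oplus \gl_1$ contributes $[\fg_\bv] = 5$ trivial weights, and $E$ contributes $[E] = 2 s_1 s_2^{-1}$. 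Subtracting both from $[T\Rep]$ yields exactly the claimed expression.

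The main obstacle will be verifying that the type $(2)$ hypothesis genuinely forces $g_1(t)$ to be a scalar for every $t$ in the entire stabilizer $T_S$ (and not merely along some subtorus). This ultimately hinges on the smoothness of $\FM_\alpha(n)$ together with Theorem \ref{quiver-thm-direct-sum}, but some care is needed because the ambient space here is $V(f) \subset \FX_\alpha(\CQ, \bv, \bw)$ rather than $\FM_\alpha(n)$ itself. A useful sanity check is to specialise to $S_2 = \emptyset$, in which case the isotropy representation should reduce to the one obtained in the preceding type-$A$ lemma.
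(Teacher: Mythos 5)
Your proposal is correct and follows essentially the same route as the paper: the stabilizer is read off from the nonvanishing pattern defining the type $B$ stratum (with the type $(2)$ condition forcing $g_1$ to be scalar), and the isotropy representation is obtained by listing the edge weights $2\,s_1 t_i^{-1}$ and $s_2^{-1}t_i$, substituting $t_i = s_1$ on $S_1$ and $t_i = s_2$ on $S_2$, and subtracting $[\fg_\bv]=5$ and $[E]=2s_1s_2^{-1}$ via Theorem \ref{thm-rep-ring}, exactly as in the paper's computation. One small caveat: your proposed sanity check at $S_2=\emptyset$ is vacuous, since type $B$ requires $\#S_2\geq 1$ (the right-vertex moment map forces some $y_i\neq 0$), and the degeneration matching type $A$ is rather $S_1=S_2$ with $s_1=s_2$.
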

\begin{proof} 
The form of the stabilizer follows from the definition of the type $B$ critical set,
and to obtain the isotropy representation, we simply compute
\begin{align}
  [\Rep(\CQ, \bv,\bw)] &= \sum_{i \in S^1} (2 + s_1 s_2^{-1}) 
      + \sum_{i \in S_2} (2 s_1 s_2^{-1} + 1) 
      + \sum_{i \not\in S_1 \cup S_2} (2 s_1 t_i^{-1} + s_2^{-1} t_i ) \\
  [\fg_\bv] &= 5 \\
  [E] &= 2s_1 s_2^{-1}
\end{align}
and appeal to Theorem \ref{thm-rep-ring}.
\end{proof}

\begin{lemma} The Morse index is given by
$2\lambda_0 + 4\lambda_1 + 2\lambda_2$, where $\lambda_0 = \#S_1+2\#S_2-2$ if $\mu_{S_1} > \mu_{S_2}$
and $\lambda_0 = 0$ otherwise, and
\begin{align}
  \lambda_1 &= \#\{i \not\in S_1 \cup S_2 \suchthat \mu_{S_1} + \alpha_i > 0 \},  \\
  \lambda_2 &= \#\{i \not\in S_1 \cup S_2 \suchthat \mu_{S_2} + \alpha_i < 0 \},
\end{align}
and where $\mu_{S_1} = (\gamma - \alpha_{S_1}) / (\#S_1)$ and 
$\mu_{S_2} = -(\delta+\alpha_{S_2}) / (\#S_2)$.
The equivariant topology of the type $B$ critical set
is that of a rank 2 polygon space with moment map level $\alpha' = \{\alpha_i'\}_{i \in S}$ 
and $\alpha_i' = \alpha_i + \mu_{S_1}$.
\end{lemma}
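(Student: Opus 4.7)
The plan is to follow the strategy used for the Type $A$ critical set, adapted to the more intricate stabilizer structure of Type $B$. Since $T_\bw$ is abelian, the Morse index at a critical point reduces to twice the number of complex-weight directions in the normal bundle whose pairing with the value $\beta \in \ft_\bw$ of the shifted moment map has the appropriate sign (with the convention fixed by the Type $A$ computation). The isotropy representation has been computed in the preceding lemma as three families of non-trivial weights, so the task reduces to (i) identifying $\beta$ explicitly as an element of $\ft_S$ and (ii) pairing $\beta$ against each family.

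To carry out step (i), I would use criticality to force the components of $\mu_\bw - \alpha$ into $\ft_S$, so they take the form $\mu_i = \mu_{S_1}$ for $i \in S_1$, $\mu_i = \mu_{S_2}$ for $i \in S_2$, and $\mu_i = -\alpha_i$ for $i \not\in S_1 \cup S_2$ (at the latter vertices $x_i = y_i = 0$). Taking traces of the moment map equations at the left and right vertices of the simplified Type $(2,1)$ quiver, together with the framing equations on $S_1$ and $S_2$ (with $y_i = 0$ on $S_1$ and $x_i = 0$ on $S_2$), expresses $\mu_{S_1}$ and $\mu_{S_2}$ in terms of $\gamma$, $\delta$, $\alpha_{S_1}$, and $\alpha_{S_2}$, reproducing the stated formulas. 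For step (ii), the three families of weights $s_1 s_2^{-1}$, $s_1 t_i^{-1}$, and $s_2^{-1} t_i$ pair against $\beta$ to give $\mu_{S_1} - \mu_{S_2}$, $\mu_{S_1} + \alpha_i$, and $-\mu_{S_2} - \alpha_i$ respectively; combined with the complex multiplicities $\#S_1 + 2\#S_2 - 2$, $2$, and $1$ from the isotropy representation, this produces the three contributions $2\lambda_0$, $4\lambda_1$, and $2\lambda_2$ to the total Morse index.

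For the identification of the equivariant topology, I would exploit the disjointness $S_1 \cap S_2 = \emptyset$, which ensures that the non-vanishing edges split into two independent subsystems: $\{x_i\}_{i \in S_1}$ touches only the left vertex and the framings in $S_1$, while $\{y_i\}_{i \in S_2}$ touches only the right vertex and the framings in $S_2$. The left subsystem, together with the constraint $\sum_{i \in S_1} x_i x_i^\ast = (\gamma/2) I_{2 \times 2}$ and the shifted framing masses $\alpha_i' = \alpha_i + \mu_{S_1}$ (obtained by substituting $|y_i|^2 = -(\alpha_i + \mu_{S_1})$ into the framing moment map equation on $S_1$), is precisely the defining data of a rank $2$ polygon space with $\#S_1$ legs and mass vector $\alpha'$. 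The right subsystem collapses to a copy of $\BP^{\#S_2 - 1}$ whose contribution is absorbed into the final polynomial expression for $R_B$. The main obstacle throughout is the sign bookkeeping: verifying that the convention ``$\mu_{S_1} > \mu_{S_2}$ produces the non-zero $\lambda_0$'' rather than the opposite requires careful identification of $\beta$ via the induced inner product on $\ft_S$, the same subtlety already present in the Type $A$ calculation.
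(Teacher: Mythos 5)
Your proposal follows the paper's argument essentially verbatim: criticality forces $\mu_\bw$ into the stabilizer, so its components are constant on $S_1$ and on $S_2$; summing the framing moment map equations over $S_1$ and $S_2$ (using the vertex equations to evaluate $\sum_{S_1}|x_i|^2 = \gamma$ and $\sum_{S_2}|y_i|^2 = \delta$) yields the stated $\mu_{S_1}$, $\mu_{S_2}$; and pairing the three weight families of the isotropy representation against $\beta = (\mu_{S_1},\mu_{S_2},-\alpha_i)$ with their multiplicities gives $2\lambda_0 + 4\lambda_1 + 2\lambda_2$. This is exactly what the paper does, just with the weight-pairing step spelled out more explicitly.

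One point needs correcting: the right subsystem does \emph{not} collapse to a $\BP^{\#S_2-1}$. At a critical point one is sitting on $Z_\beta \cap M^{T_\beta}$, i.e.\ on the level set $\mu_\bw = \beta$, so each framing equation $-|y_i|^2 - \alpha_i = \mu_{S_2}$ pins the norm of each $y_i$ \emph{individually}; the remaining phase torus is then absorbed by the residual $(S^1)^{S_2}$-action, so the $y$-subsystem contributes only a point, with its stabilizer feeding into the $(1-t^2)^{-(1+\#S^c)}$ classifying-space factor of $R_B$. If you instead imposed only the vertex constraint $\sum_{i\in S_2}|y_i|^2 = \delta$ and quotiented by the single right-vertex circle you would indeed get $\BP^{\#S_2-1}$, but that is the wrong level set for the Morse-theoretic critical set, and carrying the extra factor $P_t(\BP^{\#S_2-1})$ into $R_B$ would give an incorrect correction term and contradict the lemma's own assertion that the equivariant topology is that of a polygon space alone.
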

\begin{proof} 
The condition of being critical forces $\mu(x)$ to lie in $\stab(x)$, and hence
$\mu_i = \mu_{i'}$ for $i,i' \in S_1$ and similarly for $i,i' \in S_2$. 
Denote these common values by $\mu_{S_1}, \mu_{S_2}$, respectively.
Using the moment map equations, we find
\begin{align}
 (\#S_1) \mu_{S_1} &= \sum_{i \in S_1} (|x_i|^2 - \alpha_i) = \gamma - \alpha_{S_1}, \\
 (\#S_2) \mu_{S_2} &= \sum_{i \in S_1} (-|y_i|^2 - \alpha_i) = -\delta - \alpha_{S_2},
\end{align}
Since $x_j, y_j=0$ for $j\in S^c$, we have $\mu_j = -\alpha_j$ for $j \in S^c$. 
Appealing to the isotropy representation, we obtain the desired formula for the Morse index.
\end{proof}

\begin{lemma} The Poincar\'e polynomial of the polygon space  $\CP(\alpha)$  may be computed
inductively by equivariant Morse theory.
\end{lemma}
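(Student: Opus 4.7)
The plan is to apply the inductive procedure of Theorem \ref{thm-inductive-procedure} to the polygon space presented as the K\"ahler quiver variety $\CP(\alpha) = \FX_\alpha(\bv,\bw) \red G_\bw$, where $\bv = (2)$ and $\bw = (1,\dots,1)$; the underlying quiver is the star quiver used throughout this chapter with the dimension at the central vertex dropped from $3$ to $2$. Since $\FX_\alpha(\bv,\bw) \cong \Gr(2,n)$ is already compact, no circle compactification is required and ``step (1)'' of the procedure is vacuous. The Poincar\'e polynomial $P_t(\Gr(2,n))$ is standard, and by equivariant formality $P_t^{G_\bw}(\FX_\alpha) = P_t(B(S^1)^n) \cdot P_t(\Gr(2,n))$, disposing of steps (2) and (3).

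The substance of the argument is the equivariant Morse theory with $|\mu_\bw|^2$ on $\Gr(2,n)$. By Theorem \ref{quiver-thm-direct-sum}, the connected components of the critical set are enumerated by non-trivial direct sum decompositions at the central vertex, of which there are only two types; the analysis exactly mirrors that of the rank three type $(2,1)$ critical set, but with the third branch absent. The type $(2)$ components are labelled by a subset $S \subseteq [n]$ and consist of representations supported on $S$; arguing as in Proposition \ref{quiver-prop-type-a}, the critical-point equations force the $\mu_i$ for $i \in S$ to share a common value $\mu_S$ determined by the stability parameter, and the resulting subvariety is equivariantly a smaller rank $2$ polygon space on the index set $S$ with shifted moment map level $\alpha_i' = \alpha_i + \mu_S$. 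This is where the induction enters: I feed the Poincar\'e polynomials of the smaller polygon spaces back into the recursion. The type $(1,1)$ components are labelled by pairs of disjoint non-empty subsets $S_1 \sqcup S_2 \subseteq [n]$ and are essentially products of projective spaces $\BP^{\#S_1 - 1} \times \BP^{\#S_2 - 1}$, whose equivariant cohomology is explicit.

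Morse indices are read off exactly as in Propositions \ref{quiver-prop-type-a} and \ref{quiver-prop-type-b}: for each component, the homomorphism $\phi:T_\beta \to T_\bv$ from Proposition \ref{prop-fixed-lift} determines the isotropy representation via Theorem \ref{thm-rep-ring}, and counting negative weights gives a formula of the shape $\lambda = 2\#\{j \in S^c \suchthat \mu_S + \alpha_j > 0\}$ plus an analogous contribution from the $(1,1)$ strata. Feeding these data into Corollary \ref{thm-equivariant-poincare} yields a closed recursion for $P_t(\CP(\alpha))$ whose base cases $n \leq 3$ are either empty, a point, or a trivially identified projective space.

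The main obstacle is bookkeeping rather than conceptual: one must verify that the shifted stability parameters $\alpha'$ appearing in the type $(2)$ strata remain generic, so that the smaller polygon spaces entering the recursion are themselves smooth. This is automatic for generic initial $\alpha$ by exactly the wall-crossing argument used in Proposition \ref{quiver-prop-type-a}. Termination of the induction is immediate since the rank of the central vertex is fixed at $2$ and $\#S$ decreases strictly at each step.
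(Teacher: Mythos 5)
Your overall strategy is the same as the paper's: realize $\CP(\alpha)$ as $\Gr(2,n)\red(S^1)^n$, run equivariant Morse theory with $|\mu_\bw|^2$, and sort the higher critical strata into those where the central $\BC^2$ does not split (indexed by $S\subseteq[n]$, contributing smaller polygon spaces with shifted levels) and those where it splits as $1+1$ (indexed by disjoint $S_1,S_2$). The type $(2)$ analysis and the resulting recursion via Corollary \ref{thm-equivariant-poincare} agree with the paper.

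There is, however, a genuine error in your identification of the type $(1,1)$ critical sets. What is isomorphic to $\BP^{\#S_1-1}\times\BP^{\#S_2-1}$ is the \emph{fixed-point locus} $M^{T_\beta}$: configurations with $x_i\in L_1$ for $i\in S_1$, $x_i\in L_2$ for $i\in S_2$, $x_i=0$ otherwise, subject only to the central $U(2)$ moment map condition $\sum_{i\in S_1}|x_i|^2=\sum_{i\in S_2}|x_i|^2=\gamma/2$. The critical set of $|\mu_\bw|^2$ is the smaller locus $Z_\beta\cap M^{T_\beta}$ (cf.\ Proposition \ref{morse-prop-eq-critical}): criticality forces $\mu_\bw(x)\in\stab(x)$, so the leaf moment map values $\tfrac12|x_i|^2-\alpha_i$ must be \emph{constant} on $S_1$ and on $S_2$, and combined with the central-vertex equation this pins down every $|x_i|^2$ individually. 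The result is a single torus orbit, whose equivariant cohomology is that of a point (times the $(1-t^2)^{-r}$ factor from the residual stabilizer), exactly as the paper asserts. Substituting $P_t(\BP^{\#S_1-1}\times\BP^{\#S_2-1})$ for $1$ in these terms would inflate the subtracted contributions and yield incorrect Betti numbers, so the recursion you would obtain is not the right one. (The product-of-projective-spaces phenomenon you are remembering occurs for the type $(1,1,1)$ fixed components of the \emph{circle action on the hyperk\"ahler} variety, where the coordinates $x_iy_i$ genuinely furnish projective moduli; in the purely K\"ahler polygon space there are no $y$'s and no such moduli survive the criticality condition.)
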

\begin{proof}
The calculation is very similar to those carried out in the remainder of this chapter,
so we will just give a sketch.
Let $(\CQ, \bd)$ be the star quiver with dimension vector $(2, 1, \dots, 1)$.
We choose the decomposition $\bd = \bv + \bw$, where $\bv = (2, 0, \dots, 0)$ 
and $\bw = (0, 1, \dots, 1)$. Then $G_\bv = U(2)$ and $G_\bw = T_\bw = (S^1)^n$.
If a subtorus $T_\beta \subset T_\bw$ stabilizes a point $[x]$, then we can choose
some homomorphism $\phi: T_\beta \to G_\bv$ so that for all $t \in T_\beta$ we have
\begin{equation}
  t \cdot x = \phi(t) \cdot x,
\end{equation}
for some representative $x$ of $[x]$. We can assume that $\phi(t) = \diag(t^{\nu_1}, t^{\nu_2})$
for characters $\nu_1, \nu_2$ of $T_\beta$. Then there are two possible cases:
$\nu_1 = \nu_2$, and $\nu_1 \neq \nu_2$.
In the first case, the stabilizer $T_\beta$ must take the form $h_i(t) = t^\nu$,
for all $i \in S$ for some subset $S \subset [n]$.
The equivariant topology of the critical set is that of the polygon space
$\CP(\alpha|_S)$, where $\alpha|_S$ denotes the vector
$ \{\alpha_i + \mu_S\}_{i \in S}$, where $\mu_S = \alpha_{S^c} / (\# S)$.
In the second case, the critical set corresponds to a choice of disjoint subsets $S_1, S_2 \subset [n]$
where $h_i(t) = t^{\nu_j}$ for $i \in S_j$. The Morse index is straightforward to compute, and
one finds that the equivariant topology of the critical set is that of a point.
By Corollary \ref{thm-equivariant-poincare}, we have
\begin{equation} \label{eqn-rnk2-polygon-cor}
  P_t(\CP(\alpha)) = \frac{P_t(\Gr(2,n))}{(1-t^2)^{n-1}} - \sum_{C} t^{\lambda_C} P_t^G(C),
\end{equation}
where the sum is over the higher critical sets $C$ of $|\mu|^2$, which we have just enumerated.
Since the terms on the right hand side can be expressed in terms of simpler polygon spaces,
we obtain a recursion relation for their Poincar\'e polynomials.
\end{proof}

\begin{remark} Klyachko \cite{Klyachko} obtained a similar recursion relation for the special moment
map level $(1,\dots,1)$. Later, Konno \cite{KonnoPolygon} obtained an explicit formula by studying
the circle action on hyperpolygons. In this case, the fixed-point set consists of
$\CP(\alpha)$ together with a finite union of projective spaces. Hence the Poincar\'e
polynomial of $\CP(\alpha)$ can be expressed as a difference of the Poincar\'e polynomial
of its hyperk\"ahler analogue and a weight sum of Poincar\'e polynomials of projective spaces.
By taking an extreme moment map level so that $\CP(\alpha)$ is empty,
Konno obtains an explicit formula for the Poincar\'e polynomial of the hyperpolygon space.
Since the latter is independent of the moment map level, substituting it into the former
yields an explicit expression for the Poincar\'e polynomial of $\CP(\alpha)$.
\end{remark}

\begin{proposition} \label{quiver-prop-type-b} 
The correction $R_B$ of a type $B$ critical set labelled by 
$S_1,S_2 \subset [n]$ is given by
\begin{equation}
  R = \frac{t^\lambda}{(1-t^2)^{1+\#S^c}} P_t(\CP(\alpha'))
\end{equation}
where $\lambda$ is the Morse index as computed above and $\CP(\alpha')$ is the polygon
space with moment map level $\alpha'$ as computed above.
\end{proposition}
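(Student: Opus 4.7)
The plan is to apply Corollary \ref{thm-equivariant-poincare}, which gives $R_B = t^\lambda P_t^{G_\bw}(C_B)$, where $\lambda$ is the Morse index from the preceding lemma, $C_B$ is the component of $\Crit|\mu_\bw|^2$ indexed by the pair $(S_1, S_2)$, and $G_\bw$ acts effectively on $V(f)$ as $PT_\bw := T_\bw/\Delta_T$ (the overall diagonal $\Delta_T \subset T_\bw$ is absorbed into the overall $S^1 \subset G_\bd$ which acts trivially on $\Rep(\CQ, \bd)$, as noted at the start of \S\ref{sec-rank-3-star}). Everything therefore reduces to computing $P_t^{PT_\bw}(C_B)$.

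First I would identify $C_B$ explicitly. Being critical for $|\mu_\bw|^2$ forces $\mu_\bw - \alpha \in \mathrm{Lie}(\mathrm{Stab})$, and combined with the defining vanishings $x_i = 0$ for $i \notin S_1$ and $y_i = 0$ for $i \notin S_2$, this gives the norm conditions $|x_i|^2 = \alpha_i + \mu_{S_1} = \alpha_i'$ for $i \in S_1$ and $|y_i|^2 = -\alpha_i - \mu_{S_2}$ for $i \in S_2$. Together with the $G_\bv$-moment map constraint $\sum x_i x_i^\ast = (\gamma/2) I$, this shows $C_B = C_B^L \times C_B^R$, where $C_B^L \cong \CP(\alpha')$ is the polygon space built from $(x_i)_{i \in S_1}$, and $C_B^R$ is the torus $T^{\#S_2 - 1}$ obtained as the quotient of $\prod_{i \in S_2} S^1_{|y_i|}$ by the diagonal $U(1)_R$ from the right-vertex gauge group.

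Next I would compute $P_t^{PT_\bw}(C_B)$ via the decomposition $T_\bw = T_{S_1} \times T_{S_2} \times T_{S^c}$ with $S^c = T \setminus (S_1 \cup S_2)$, where $T$ denotes the framing set of the simplified type $(2,1)$ quiver. The factor $T_{S^c}$ acts trivially on $C_B$; the factor $T_{S_2}$ acts on $C_B^R$ transitively through the free quotient $T_{S_2}/\Delta_{S_2}$; and the factor $T_{S_1}$ acts on $C_B^L = \CP(\alpha')$ with its diagonal $\Delta_{S_1}$ already absorbed into the centre of $U(2)$. Equivariant K\"unneth then yields
\[
  H^*_{T_\bw}(C_B) \cong H^*(BH_B) \otimes H^*(\CP(\alpha')),
\]
with generic stabilizer $H_B$ of dimension $2+\#S^c$. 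Passing to $PT_\bw = T_\bw/\Delta_T$ (noting $\Delta_T \subset H_B$) lowers the rank of the stabilizer by one, yielding
\[
  P_t^{PT_\bw}(C_B) = \frac{P_t(\CP(\alpha'))}{(1-t^2)^{1+\#S^c}},
\]
and multiplication by $t^\lambda$ recovers the stated formula.

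The main technical point will be the careful bookkeeping of the stabilizers, particularly verifying that $\Delta_T \subset H_B$ so that quotienting by $\Delta_T$ genuinely reduces the exponent by one. This is not a serious difficulty, but it is the essential ingredient that produces the exponent $1+\#S^c$ in place of the naive $2+\#S^c$, and the same mechanism will recur in the analysis of the remaining type $C$ and type $D$ strata.
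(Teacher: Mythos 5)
Your proposal is correct and follows essentially the same route as the paper, which derives this proposition directly from the preceding lemmas (identifying the equivariant topology of the type $B$ critical set with that of $\CP(\alpha')$ and computing the generic stabilizer) together with the general formula $H^\ast_{G_\beta}(Z_\beta \cap M^{T_\beta}) \iso H^\ast(BH_\beta) \otimes H^\ast(M^{T_\beta} \red K_\beta)$ and the Thom--Gysin splitting; your explicit product decomposition and the accounting of the trivially-acting diagonal $\Delta_T$ (rank $2+\#S^c$ stabilizer dropping to $1+\#S^c$) is exactly the mechanism behind the stated exponent. One small imprecision: $C_B^L$ is the $T_{S_1}$-moment-map level set sitting over $\CP(\alpha')$ (a locally free $T_{S_1}/\Delta_{S_1}$-space with quotient $\CP(\alpha')$), not $\CP(\alpha')$ itself --- were $T_{S_1}$ genuinely acting trivially on $\CP(\alpha')$ you would get $(1-t^2)^{-\#S_1}$ rather than $(1-t^2)^{-1}$ from that factor --- but your K\"unneth bookkeeping with $\dim H_B = 2+\#S^c$ shows you are using the correct picture.
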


\subsubsection{Type $C$ Critical Sets}
For the type $C$ critical set, we have disjoint subsets $S_1, S_2 \subset [n]$ satisfying
$\#S_1 \geq 2$, $\#S_2 \geq 1$.

\begin{lemma} The stabilizer of a generic point of the type $C$ critical set
is the subgroup $T_S \subseteq T_\bw$ consisting of elements of the form $t_i = s_j$ for $i \in S_j$,
with no constraint on $t_i$ for $i \not\in S_1 \cup S_2$.
The map $\phi: T_S \to T_\bv$ is given by $g_1 = \diag(s_1^{-1},s_2^{-1}), g_2 = s_1^{-1}$.
The isotropy representation is given by 
\begin{equation}
  \begin{split}
  2(\#S_1) + \#S_2-4 + (\#S_1+\#S_2-1) s_1^{-1} s_2 + (\#S_2-1) s_1 s_2^{-1} \\
    + \sum_{i \not\in S_1 \cup S_2} s_1 t_i^{-1} + s_2 t_i^{-1} + s_1^{-1} t_i.
  \end{split}
\end{equation}
\end{lemma}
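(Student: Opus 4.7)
The plan is to mirror the arguments already used for the type $A$ and type $B$ lemmas: identify the generic stabilizer from the structure of a type $C$ representation, read off the homomorphism $\phi$, and then apply Theorem \ref{thm-rep-ring} to assemble the isotropy character.

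First, I would describe a generic representative in the type $C$ stratum. Under the type $(1,1)$ decomposition $V_1 = V_1^{(1)} \oplus V_1^{(2)}$ of the dimension-$2$ vertex, a generic point has $x_i \in V_1^{(1)} \setminus \{0\}$ for $i \in S_1$, $x_i \in V_1^{(2)} \setminus \{0\}$ for $i \in S_2$, and, by the defining property of type $C$ (namely $T = S_1$), the edge $y_i$ is nonzero exactly for $i \in S_1$. A subtorus of $T_\bw$ stabilizes such a point up to $G_\bv$ if and only if its action can be absorbed by a one-parameter subgroup of $G_\bv$; inspecting the nonzero edges forces $t_i = s_1$ for $i \in S_1$ and $t_i = s_2$ for $i \in S_2$, while leaving $t_i$ for $i \notin S_1 \cup S_2$ free. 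This gives the claimed form of $T_S$. The homomorphism $\phi : T_S \to G_\bv$ provided by Proposition \ref{prop-fixed-lift} is then determined by solving $t \cdot x = \phi(t) \cdot x$ on each nonzero edge, and this yields the stated $g_1 = \diag(s_1^{-1}, s_2^{-1})$ and $g_2 = s_1^{-1}$.

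Next, I would apply Theorem \ref{thm-rep-ring}, which gives
\begin{equation}
  [T_x V(f)] = [T_{\tx} \Rep(\CQ, \bv, \bw)] - [\fg_\bv] - [E],
\end{equation}
where $E \cong \Hom(V_2, V_1)$ is the target of the residual complex moment map $f(x,y) = \sum_{i \in S} x_i y_i$. The representations $\Rep$, $\fg_\bv$, and $E$ decompose into weight spaces under $G_\bv \times T_\bw$, with each edge $x_i$ carrying the weights $\gamma_1^{(a)} \tau_i^{-1}$ for $a = 1, 2$ and each edge $y_i$ carrying $\gamma_2^{-1} \tau_i$. Substituting $\phi$ into the twisted action and splitting the sums according to $i \in S_1$, $i \in S_2$, or $i \notin S_1 \cup S_2$ yields Laurent polynomials in $s_1, s_2, t_i$, which combine to produce the claimed isotropy character.

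The main subtlety is the contribution from $E$: the asymmetric attachment of $V_2$ to only $V_1^{(1)}$ (via the $y_i$ edges with $i \in S_1$) breaks the symmetry between the two summands of $V_1$, so the two weights $\gamma_1^{(1)} \gamma_2^{-1}$ and $\gamma_1^{(2)} \gamma_2^{-1}$ of $E$ twist to distinct monomials rather than the common monomial that appears in the type $A$ and type $B$ cases, where $\phi_1$ is scalar. This asymmetry also explains the simultaneous appearance of both $s_1^{-1} s_2$ and $s_1 s_2^{-1}$ in the final answer. A useful consistency check is that the total dimension counted from the isotropy character matches $\dim V(f)$, computed independently as the codimension of $E$ in the K\"ahler quotient $\FX_\alpha(\CQ, \bv, \bw)$.
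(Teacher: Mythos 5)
Your proposal is correct and follows essentially the same route as the paper: read off the generic stabilizer $T_S$ and the homomorphism $\phi$ (with $g_1=\diag(s_1^{-1},s_2^{-1})$, $g_2=s_1^{-1}$) from the structure of a type $C$ representative with $T=S_1$, decompose $[\Rep(\CQ,\bv,\bw)]$, $[\fg_\bv]$, and $[E]=[\Hom(V_2,V_1)]$ into weight spaces under the twisted action, and apply Theorem \ref{thm-rep-ring}. Your observation that the asymmetric attachment of $V_2$ to $V_1^{(1)}$ makes the two weights of $E$ distinct (unlike types $A$ and $B$, where $\phi_1$ is scalar) is exactly the point the paper's terse computation of $[E]$ encodes.
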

\begin{proof} 
The form of the stabilizer follows from the definition of the type $C$ critical set,
and to obtain the isotropy representation, we simply compute
\begin{align}
  \begin{split}
  [\Rep(\CQ, \bv,\bw)] &= \sum_{i \in S^1} (2 + s_1^{-1} s_2) 
      + \sum_{i \in S_2} (1 + s_1^{-1} s_2 + s_1 s_2^{-1} ) \\
   &\ \ \   + \sum_{i \not\in S_1 \cup S_2} (s_1 t_i^{-1} + s_2 t_i^{-1} + s_1^{-1} t_i ) 
  \end{split} \\
  [\fg_\bv] &= 3 + s_1 s_2^{-1} + s_1^{-1} s_2 \\
  [E] &= 1 + s_1^{-1} s_2
\end{align}
and appeal to Theorem \ref{thm-rep-ring}.
\end{proof}

\begin{lemma} 
Let $\mu_{S_1} = (\gamma/2-\delta-\alpha_{S_1})/(\#S_1)$ and 
$\mu_{S_2} = (\gamma/2-\alpha_{S_2})/(\#S_2)$. Then the Morse index is given by
$2\lambda_0 + 2\lambda_1 + 2\#(S_1\cup S_2)^c$, where 
\begin{align}
  \lambda_0 &= \left\{ \begin{array}{ll}
    \#S_2-1,       & \mu_{S_1} > \mu_{S_2} \\ 
    \#S_1+\#S_2-1, & \mu_{S_1} < \mu_{S_2}
  \end{array} \right. \\
  \lambda_1 &= \#\{i \not\in S_1 \cup S_2 \suchthat \mu_{S_2} + \alpha_i > 0 \}, 
\end{align}
The equivariant topology of the type $C$ critical set is that of the projective space $\BP^{\#S_1-2}$.
\end{lemma}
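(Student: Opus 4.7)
The plan is to mirror the arguments used for the type $A$ and type $B$ critical sets in the preceding lemmas. The first step is to derive the formulas for $\mu_{S_1}$ and $\mu_{S_2}$. Since being critical for $|\mu_\bw|^2$ forces $\mu_\bw(x) \in \stab(x) = \Lie T_S$, the components $\mu_i$ must be equal across each $S_j$. Writing $\mu_i = |x_i^{(1)}|^2 - |y_i|^2 - \alpha_i$ for $i \in S_1$ and $\mu_i = |x_i^{(2)}|^2 - \alpha_i$ for $i \in S_2$ (the latter since $y_i$ vanishes on $S_2$), summing over each $S_j$, and using the $V_1^{(1)}$, $V_1^{(2)}$, and $V_2$ moment map identities $\sum_{i \in S_1} |x_i^{(1)}|^2 = \gamma/2$, $\sum_{i \in S_2} |x_i^{(2)}|^2 = \gamma/2$, $\sum_{i \in S_1} |y_i|^2 = \delta$, yields the claimed expressions for $\mu_{S_1}$ and $\mu_{S_2}$.

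Next, I would extract the Morse index by reading off the non-trivial weights of the isotropy representation stated in the preceding lemma and evaluating the pairing with $\beta$ using the sign convention fixed by the types $A$ and $B$ lemmas. The weight $s_1^{-1}s_2$ of multiplicity $\#S_1+\#S_2-1$ contributes precisely when $\mu_{S_1}<\mu_{S_2}$, while $s_1 s_2^{-1}$ of multiplicity $\#S_2-1$ contributes precisely when $\mu_{S_1}>\mu_{S_2}$, producing the case-dependent value $2\lambda_0$. For each $i \not\in S_1\cup S_2$, the opposite weights $s_1 t_i^{-1}$ and $s_1^{-1} t_i$ always contribute a total of $2$, accounting for the $2\#(S_1\cup S_2)^c$ term, while $s_2 t_i^{-1}$ contributes $2$ exactly when $\mu_{S_2}+\alpha_i>0$, giving $2\lambda_1$.

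The identification of the equivariant topology requires more care. The $T_S$-fixed locus in $\FX_\alpha(\bv,\bw)$ decomposes as a product $X_1 \times X_2 \times X_3$ with $X_1, X_2 \iso \BP^{\#S_1-1}$ parametrizing $(x_i^{(1)})_{i \in S_1}$ and $(y_i)_{i \in S_1}$, and $X_3 \iso \BP^{\#S_2-1}$ parametrizing $(x_i^{(2)})_{i \in S_2}$. The subvariety $V(f)^{T_S}$ is cut out by the single complex equation $\sum_{i \in S_1} x_i^{(1)} y_i = 0$, which does not involve $X_3$. Under the decomposition $K_\beta = K_\beta|_{S_1} \times K_\beta|_{S_2}$, the factor $X_3 \red K_\beta|_{S_2}$ reduces to a single point, since $K_\beta|_{S_2}$ is the full residual torus acting on $\BP^{\#S_2-1}$. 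For the remaining $(1,1)$-divisor inside $X_1 \times X_2$, one exhausts $K_\beta|_{S_1} \times U(V_2)$ to bring all $y_i$ to be real and positive; the constraint $\sum x_i^{(1)} y_i = 0$ then becomes a single complex linear condition on the residual $(x_i^{(1)}) \in \BP^{\#S_1-1}$, carving out the projective hyperplane $\BP^{\#S_1-2}$.

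The main obstacle will be making the last step rigorous: after gauge fixing $y_i = s_i > 0$, the coefficient $s_i = \sqrt{|x_i^{(1)}|^2 - \beta_{S_1}}$ in fact depends on $|x_i^{(1)}|$, so the constraint $\sum s_i x_i^{(1)} = 0$ is only morally a linear equation. A cleaner route is to realize $V(f)^{T_S} \red K_\beta$ as a K\"ahler quotient of $\BC^{2\#S_1+\#S_2}$ by an effective torus of rank $\#S_1 + \#S_2 + 1$ constrained by $f=0$, and then identify the resulting Delzant polytope with that of $\BP^{\#S_1-2}$. As a cross-check, the claimed cohomology can be verified by computing the Poincar\'e polynomial inductively via Theorem \ref{thm-inductive-procedure} and comparing with $P_t(\BP^{\#S_1-2})$, using the base cases $\#S_1 = 2, 3$ where the critical set reduces to a point and a projective line respectively.
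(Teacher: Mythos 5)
Your derivation of $\mu_{S_1}$ and $\mu_{S_2}$ and your extraction of the Morse index from the isotropy representation are exactly the paper's argument: its proof consists of precisely these two steps, with the second compressed into ``appealing to the isotropy representation.'' Your bookkeeping of which weights contribute in each regime is consistent with the sign convention forced by the type $A$ and type $B$ lemmas, so this part is fine.

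Where you diverge is the identification of the critical set with $\BP^{\#S_1-2}$: the paper's proof does not address that claim at all, so here you are supplying an argument the paper omits. Your picture is correct --- the $S_2$-factor reduces to a point, and everything happens in $(X_1\times X_2)\cap\{f=0\}$ reduced by the residual rank-$(\#S_1-1)$ torus --- and you are right to distrust the naive gauge-fixing, since after normalizing $y_i>0$ the coefficients depend on $|x_i^{(1)}|$ through the moment map equations. But your proposed repair is also not quite right as stated: the locus $f=0$ is preserved by the torus yet is not a union of orbit closures, so the \emph{constrained} quotient is not a toric variety and has no Delzant polytope to read off. The clean fix is to apply the toric/GIT identification to the ambient reduction first: the invariants $p_i=x_i^{(1)}y_i$ realize $(X_1\times X_2)\red K_\beta|_{S_1}$ as $\BP^{\#S_1-1}$ with homogeneous coordinates $[p_1:\cdots:p_{\#S_1}]$ (they generate the relevant ring of semi-invariants on the $(x^{(1)},y)$-coordinates), and only then does $f=0$ literally become the hyperplane $\sum_i p_i=0$, giving $\BP^{\#S_1-2}$. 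A dimension count and the base case $\#S_1=2$, where the reduction is a single point, confirm this. With that adjustment your argument is complete and strictly stronger than what the paper records.
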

\begin{proof} 
The condition of being critical forces $\mu(x)$ to lie in $\stab(x)$, and hence
$\mu_i = \mu_{i'}$ for $i,i' \in S_1$ and similarly for $i,i' \in S_2$. 
Denote these common values by $\mu_{S_1}, \mu_{S_2}$, respectively.
Using the moment map equations, we find
\begin{align}
 (\#S_1) \mu_{S_1} &= \sum_{i \in S_1} (|x_i|^2 - |y_i|^2 - \alpha_i) = \frac{\gamma}{2} - \delta - \alpha_{S_1}, \\
 (\#S_2) \mu_{S_2} &= \sum_{i \in S_1} (|x_i|^2 - \alpha_i) = \frac{\gamma}{2} - \alpha_{S_2}, \\
\end{align}
Since $x_j, y_j=0$ for $j\in (S_1 \cup S_2)^c$, we have $\mu_j = -\alpha_j$ for $j \in S^c$. 
Appealing to the isotropy representation, we obtain the desired formula for the Morse index.
\end{proof}

\begin{proposition} \label{quiver-prop-type-c} 
The correction $R_C$ of a type $C$ critical set labelled by 
$S,T \subset [n]$ is given by
\begin{equation}
  R_C = \frac{t^\lambda}{(1-t^2)^{1+\#(S_1\cup S_2)^c}} P_t(\BP^{\#S_1-2})
\end{equation}
where $\lambda$ is the Morse index as computed above.
\end{proposition}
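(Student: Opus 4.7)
The plan is to apply the equivariant Morse-theory framework of Section \ref{morse-sec-eq-morse} directly, feeding in the data already established in the preceding lemma --- namely the generic stabilizer, the map $\phi$ into $G_\bv$, the isotropy representation, the Morse index, and the identification of the equivariant topology of the critical set with $\BP^{\#S_1-2}$. By Corollary \ref{thm-equivariant-poincare}, the contribution of a connected component $C_\beta$ of the critical set of $|\mu_\bw|^2$ to $P_t^{G_\bw}(V(f)) - P_t(V(f) \red G_\bw)$ is exactly $t^{\lambda_C}\, P_t^{G_\bw}(C_\beta)$. Since $\lambda_C$ is already the $\lambda$ supplied by the preceding lemma, the remaining task is to compute $P_t^{G_\bw}(C_\beta)$ for the component labelled by the disjoint pair $(S_1, S_2)$.

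To do this I would invoke the Proposition immediately after Theorem \ref{thm-equivariant-poincare}: since $G_\bw$ is abelian, $G_{\bw,\beta} = G_\bw$, and we have
\[
 H_{G_\bw}^\ast(C_\beta) \cong H^\ast(BH_\beta) \otimes H^\ast(M^{T_\beta} \red K_\beta),
\]
where $H_\beta \subseteq G_\bw$ is the kernel of the $G_\bw$-action on $C_\beta$ and $K_\beta = G_\bw/H_\beta$. Because $G_\bw$ is abelian, $H_\beta$ coincides with the generic stabilizer $T_S$ identified in the preceding lemma, of dimension $2 + \#(S_1 \cup S_2)^c$. However, the overall diagonal $S^1 \subset G_\bd$ was absorbed into the $G_\bv$-reduction when we replaced $G_\bd$ by $PG_\bd$ at the start of the section, and this $S^1$ sits inside $T_S$. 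After correcting for this redundancy, the effective dimension of $H_\beta$ is $1 + \#(S_1\cup S_2)^c$, so $P_t(BH_\beta) = (1-t^2)^{-(1+\#(S_1\cup S_2)^c)}$. The other tensor factor is identified with $P_t(\BP^{\#S_1-2})$ by the preceding lemma. Combining these with $t^{\lambda}$ yields the claimed expression for $R_C$.

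The main obstacles lie not in this final assembly, which is a routine application of Theorem \ref{thm-inductive-procedure}, but in the two substantive inputs that underpin the preceding lemma. The first is correctly identifying the generic $G_\bw$-stabilizer while carefully accounting for the central $S^1$ already absorbed in passing to $PG_\bd$; without this correction one arrives at the wrong exponent of $(1-t^2)$. The second is verifying that the $K_\beta$-quotient of the type $C$ critical subvariety is topologically $\BP^{\#S_1-2}$, which I would do by parametrizing the critical locus inside $\BC^{\#S_1}\oplus\BC^{\#S_2}\oplus\BC^{\#S_1}$, imposing the three real moment-map constraints together with the single complex relation $\sum_{i\in S_1} x_i y_i = 0$ coming from $f$, and quotienting by the residual $U(1)^2 \times U(1) \times K_\beta$ action; a quick dimension count shows the resulting quotient has complex dimension $\#S_1-2$, and the identification with a projective space then follows by eliminating the $\vec x^{(2)}$ coordinates (which are rigid) and recognizing the remaining $(\vec x^{(1)}, \vec y)$ data as a projectivization after the residual torus quotient. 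Once these two points are in hand, the proposition follows immediately.
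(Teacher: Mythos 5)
Your proposal is correct and follows essentially the same route as the paper, which states this proposition without further proof precisely because it is the immediate assembly of the two preceding lemmas via Corollary \ref{thm-equivariant-poincare} and the splitting $H_{G_\beta}^\ast(C_\beta) \iso H^\ast(BH_\beta)\otimes H^\ast(M^{T_\beta}\red K_\beta)$. In particular you correctly identify the one genuinely delicate point, namely that the generic stabilizer $T_S$ of dimension $2+\#(S_1\cup S_2)^c$ must be reduced by the trivially acting diagonal circle (absorbed in passing to $PG_\bd$), which is exactly what produces the exponent $1+\#(S_1\cup S_2)^c$ appearing in $R_C$, consistent with the analogous corrections for types $A$, $B$, and $D$.
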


\subsubsection{Type $D$ Critical Sets}
We have disjoint subsets $S_1, S_2, S_3 \subset [n]$ satisfying
$\#S_1 \geq 1$, $\#S_2 \geq 1, \#S_3 \geq 1$.

\begin{lemma} The stabilizer of a generic point of the type $D$ critical set
is the subgroup $T_S \subseteq T_\bw$ consisting of elements of the form $t_i = s_j$ for $i \in S_j$,
with no constraint on $t_i$ for $i \not\in S_1 \cup S_2 \cup S_3$.
The map $\phi: T_S \to T_\bv$ is given by $g_1 = \diag(s_1^{-1},s_2^{-1}), g_2 = s_3^{-1}$.
The isotropy representation is given by 
\begin{align}
  \begin{split}
    & (\#S_1 + \#S_2 + \#S_3-3) + (\#S_1-1) s_1^{-1} s_2 + (\#S_2-1) s_1 s_2^{-1} \\ 
    & + (\#S_1+\#S_3-1) s_1 s_3^{-1} + (\#S_2 + \#S_3-1) s_2 s_3^{-1}  \\
    & + \sum_{i \not\in S_1 \cup S_2 \cup S_3} (s_1 t_i^{-1} + s_2 t_i^{-1} + s_3^{-1} t_i )
  \end{split}
\end{align}
\end{lemma}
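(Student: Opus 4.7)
The proof will closely parallel the three previous lemmas (for types $A$, $B$, $C$), differing only in the specific bookkeeping dictated by the three-part direct sum decomposition defining type $D$. The plan is to first identify the generic stabilizer and homomorphism $\phi$ from the combinatorics of the decomposition, and then to compute the isotropy representation by applying Theorem \ref{thm-rep-ring} to the twisted action of the stabilizer.

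First I would determine the form of the stabilizer. By the definition of the type $D$ stratum, the left vertex of dimension $2$ decomposes as $V_1 \oplus V_2$ with $x_i \in \Hom(W_i, V_1)$ nonzero generically for $i \in S_1$, $x_i \in \Hom(W_i, V_2)$ nonzero generically for $i \in S_2$, and $y_i \in \Hom(V_R, W_i)$ nonzero generically for $i \in S_3$, while all other $x_i$ and $y_i$ vanish identically. Arguing as in the type $C$ case, an element $t \in T_\bw$ stabilizes a generic representation precisely when there is some $\phi(t) \in T_\bv$ with $t\cdot (x,y) = \phi(t)\cdot(x,y)$; expanding edge by edge forces $t_i = s_j$ (a single parameter) for all $i \in S_j$ with $j \in \{1,2,3\}$, while the $t_i$ for $i \notin S_1\cup S_2\cup S_3$ remain free. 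This also pins down $\phi$: since $x_i$ for $i \in S_j$ ($j=1,2$) sits in the $V_j$ summand of the left vertex and must transform by $t_i = s_j$, we get $g_1 = \diag(s_1^{-1}, s_2^{-1})$, and similarly $y_i$ for $i \in S_3$ forces $g_2 = s_3^{-1}$.

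Next I would compute the isotropy representation. Following Proposition \ref{prop-fixed-lift}, the twisted $T_S$-action on $V_L = V_1 \oplus V_2$ and $V_R$ has characters $s_1, s_2$ and $s_3$ respectively, while $W_i$ carries character $s_j$ for $i \in S_j$ and $t_i$ otherwise. From here it is a direct calculation to expand
\begin{align*}
[\Rep(\CQ,\bv,\bw)] &= \sum_i [\Hom(W_i, V_L)] + \sum_i [\Hom(V_R, W_i)], \\
[\fg_\bv] &= [\End V_L] + [\End V_R], \\
[E] &= [\Hom(V_R, V_L)],
\end{align*}
where $E \cong \BC^2$ is the target of the residual complex moment map $f(x,y) = \sum_i x_i y_i$, viewed as an element of $\Hom(V_R, V_L)$. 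Substituting the characters of $V_L, V_R, W_i$ and grouping terms yields contributions $1 + s_1^{-1}s_2 + s_1 s_3^{-1}$ for each $i \in S_1$, $s_1 s_2^{-1} + 1 + s_2 s_3^{-1}$ for $i \in S_2$, $s_1 s_3^{-1} + s_2 s_3^{-1} + 1$ for $i \in S_3$, and $s_1 t_i^{-1} + s_2 t_i^{-1} + s_3^{-1} t_i$ for $i \notin S_1 \cup S_2 \cup S_3$. Subtracting $[\fg_\bv] = 3 + s_1^{-1}s_2 + s_1 s_2^{-1}$ and $[E] = s_1 s_3^{-1} + s_2 s_3^{-1}$ gives the claimed formula by Theorem \ref{thm-rep-ring}.

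There is no real obstacle here: the calculation is formally identical to the type $C$ argument, with the one substantive difference being that $S_3 = T$ is now disjoint from both $S_1$ and $S_2$, which alters only the characters of $y_i$ and the resulting coefficients. The only point that deserves care is to remember that the \emph{twisted} action is the correct one for applying Theorem \ref{thm-rep-ring}, so that $V_j$ has character $s_j$ (and not $s_j^{-1}$) as a representation of the stabilizer; this is precisely what ensures that the $1$-terms counting the dimension of the critical set come out as $\#S_1 + \#S_2 + \#S_3 - 3$ rather than with the wrong sign.
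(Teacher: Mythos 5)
Your proposal is correct and follows essentially the same route as the paper: read off the stabilizer and the homomorphism $\phi$ from the direct-sum decomposition defining the type $D$ stratum, then compute $[\Rep(\CQ,\bv,\bw)]$, $[\fg_\bv]$, and $[E]$ in the representation ring of the twisted $T_S$-action and subtract via Theorem \ref{thm-rep-ring}. Your per-vertex character contributions and the identification $E \iso \Hom(V_R,V_L)$ agree exactly with the paper's computation, and your remark about using the twisted action (so that $V_j$ carries character $s_j$) is the right point of care.
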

\begin{proof} 
The form of the stabilizer follows from the definition of the type $C$ critical set,
and to obtain the isotropy representation, we simply compute
\begin{align}
\begin{split}
  [\Rep(\CQ, \bv,\bw)] &= \sum_{i \in S^1} (1 + s_2 s_1^{-1} + s_1 s_3^{-1}) 
      + \sum_{i \in S_2} (1 + s_1 s_2^{-1} + s_2 s_3^{-1} ) \\ 
   &  + \sum_{i \in S_3} (1 + s_1 s_3^{-1} + s_2 s_3^{-1}) 
      + \sum_{i \not\in S_1 \cup S_2} (s_1 t_i^{-1} + s_2 t_i^{-1} + s_3^{-1} t_i )
\end{split} \\
  [\fg_\bv] &= 3 +s_1 s_2^{-1} + s_1^{-1} s_2 \\
  [E] &= s_1 s_3^{-1} + s_2 s_3^{-1}
\end{align}
and appeal to Theorem \ref{thm-rep-ring}.
\end{proof}

\begin{lemma} 
Let $\mu_{S_1} = (\gamma/2-\alpha_{S_1})/(\#S_1)$,
$\mu_{S_2} = (\gamma/2-\alpha_{S_2})/(\#S_2)$, and $\mu_{S_3} = -(\delta+\alpha_{S_3})/(\#S_3)$,
and assume that $\mu_{S_1} > \mu_{S_2}$.
Then the Morse index is given by $2(\#S_2-1) + 2\lambda_0 + 2\lambda_1 + 2\lambda_2+2\lambda_3$, where
\begin{align}
  \lambda_0 &= (\#S_1 + \#S_3-1) \theta(\mu_{S_1}-\mu_{S_3}) + (\#S_2 + \#S_3-1) \theta(\mu_{S_2}-\mu_{S_3}), \\
  \lambda_1 &= \#\{i \not\in S_1 \cup S_2 \cup S_3 \suchthat \mu_{S_1} + \alpha_i > 0 \},  \\
  \lambda_2 &= \#\{i \not\in S_1 \cup S_2 \cup S_3 \suchthat \mu_{S_2} + \alpha_i > 0 \}, \\
  \lambda_3 &= \#\{i \not\in S_1 \cup S_2 \cup S_3 \suchthat \mu_{S_3} + \alpha_i < 0 \},
\end{align}
where $\theta$ denotes the Heaviside step function.
The equivariant topology of the type $D$ critical set is that of a point.
\end{lemma}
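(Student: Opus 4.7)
The approach mirrors the proofs of the Type $A$, $B$, and $C$ lemmas above, so I would follow the same template: derive the constraints on $\mu_\bw$ at the critical points, read the Morse index off the given isotropy representation by pairing weights against the moment map direction, and then identify the topological type of the stratum modulo the residual action.

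First I would determine the values of $\mu_\bw$. The critical condition $\mu_\bw(x) \in \stab(x)$ forces $\mu_{\bw,i}(x)$ to be constant on each $S_j$, with common value $\mu_{S_j}$, and to equal $-\alpha_i$ for $i \notin S_1 \cup S_2 \cup S_3$. The three claimed formulas then follow by summing $\mu_{\bw,i}(x) = |x_i|^2 - |y_i|^2 - \alpha_i$ over each $S_j$, using $y_i = 0$ for $i \in S_1 \cup S_2$ and $x_i = 0$ for $i \in S_3$, and then invoking the residual real moment map constraints $\sum_{i \in S_1} |x_i|^2 = \gamma/2$, $\sum_{i \in S_2} |x_i|^2 = \gamma/2$, and $\sum_{i \in S_3} |y_i|^2 = \delta$ at the three split-off vertices of $T_\bv$.

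Next I would compute the Morse index by pairing each weight in the isotropy representation against $\beta = (\mu_{S_1}, \mu_{S_2}, \mu_{S_3}, \{-\alpha_i\}_{i \notin S_1 \cup S_2 \cup S_3}) \in \ft_\bw$, using the same sign convention as in the analogous Type $A$, $B$, $C$ computations. Since the ambient group is the torus $T_\bw$, the correction $-\dim G + \dim G_\beta$ in Proposition \ref{prop-hessian} vanishes, so the Morse index equals twice the multiplicity of negative weights. Under $\mu_{S_1} > \mu_{S_2}$, the weights $s_1^{\pm 1} s_2^{\mp 1}$ produce the leading $2(\#S_2 - 1)$ term; the weights $s_1 s_3^{-1}$ and $s_2 s_3^{-1}$ are controlled by the signs of $\mu_{S_j} - \mu_{S_3}$, producing $2\lambda_0$ via the Heaviside factors; and the weights $s_1 t_i^{-1}$, $s_2 t_i^{-1}$, and $s_3^{-1} t_i$ for $i \notin S_1 \cup S_2 \cup S_3$ contribute $2\lambda_1$, $2\lambda_2$, and $2\lambda_3$ from the signs of $\mu_{S_1} + \alpha_i$, $\mu_{S_2} + \alpha_i$, and $-(\mu_{S_3} + \alpha_i)$ respectively.

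Finally, for the equivariant topology, I would observe that the residual real moment map equations pin down the norms $|x_i|$ for $i \in S_1 \cup S_2$ and $|y_i|$ for $i \in S_3$ uniquely, while the stabilizer $T_S$ together with the $T_\bv$ quotient absorbs every remaining phase, so the stratum is a single equivariant orbit and its equivariant topology is that of a point. The main obstacle will be the careful sign bookkeeping in the Morse index calculation, especially identifying correctly which of $s_1 s_2^{-1}$ or $s_1^{-1} s_2$ produces the $2(\#S_2 - 1)$ contribution under the asymmetric hypothesis $\mu_{S_1} > \mu_{S_2}$, and ensuring that the Heaviside contributions from the $s_j s_3^{-1}$ weights combine correctly when $\mu_{S_3}$ sits between $\mu_{S_1}$ and $\mu_{S_2}$.
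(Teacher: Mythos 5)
Your proposal follows the paper's own argument essentially verbatim: the critical condition $\mu_\bw(x)\in\stab(x)$ forces the components of $\mu_\bw$ to be constant on each $S_j$, summing the moment map equations over each $S_j$ yields the stated values of $\mu_{S_1},\mu_{S_2},\mu_{S_3}$, and the index is then read off the isotropy representation computed in the preceding lemma (with the sign convention, consistent with the Type $B$ case, that $s_1 s_2^{-1}$ contributes when $\mu_{S_1}>\mu_{S_2}$, so it is indeed the multiplicity $\#S_2-1$ weight that gives the leading term). The only difference is that you supply the sign bookkeeping and the point-topology argument explicitly, which the paper leaves implicit.
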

\begin{proof} 
The condition of being critical forces $\mu(x)$ to lie in $\stab(x)$, and hence
$\mu_i = \mu_{i'}$ for $i,i' \in S_1$ and similarly for $i,i' \in S_2$. 
Denote these common values by $\mu_{S_1}, \mu_{S_2}$, respectively.
Using the moment map equations, we find
\begin{align}
 (\#S_1) \mu_{S_1} &= \sum_{i \in S_1} (|x_i|^2 - \alpha_i) = \frac{\gamma}{2} - \alpha_{S_1}, \\
 (\#S_2) \mu_{S_2} &= \sum_{i \in S_1} (|x_i|^2 - \alpha_i) = \frac{\gamma}{2} - \alpha_{S_2}, \\
 (\#S_3) \mu_{S_3} &= \sum_{i \in S_1} (-|y_i|^2 - \alpha_i) = -\delta - \alpha_{S_3},
\end{align}
Appealing to the isotropy representation, we obtain the desired formula for the Morse index.
\end{proof}

\begin{proposition} \label{quiver-prop-type-d} 
The correction $R_D$ of a type $D$ critical set labelled by subsets $S_1, S_2, S_3 \subset [n]$
is equal to
\begin{equation}
  R_D = \frac{t^\lambda}{(1-t^2)^{2+\#(S_1\cup S_2 \cup S_3)^c}}
\end{equation}
where $\lambda$ is the Morse index as computed above.
\end{proposition}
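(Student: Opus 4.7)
The plan is to combine the preceding lemma with the general framework of equivariant Morse theory from Section~\ref{morse-sec-eq-morse}. By Corollary~\ref{thm-equivariant-poincare}, a type $D$ critical component $C$ contributes $t^{\lambda}P_t^{G_\bw}(C)$ to the equivariant Poincar\'e series, and by the proposition identifying the $G_\bw$-equivariant cohomology of a $T_\bw$-fixed set this factors as $P_t(BH)\cdot P_t(C/K)$, where $H\subseteq G_\bw$ is the generic stabilizer of a point of $C$ and $K=G_\bw/H$ is the effectively acting quotient. The Morse index $\lambda$ was computed in the preceding lemma, so the only remaining tasks are to identify $H$ and to verify that $P_t(C/K)=1$.

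First I would read off $P_t(BH)$ from the description of the generic stabilizer in the preceding lemma. That lemma presents $H\subset T_\bw$ as the subtorus parameterized by $(s_1,s_2,s_3)$ together with the free coordinates $t_i$ for $i\notin S_1\cup S_2\cup S_3$, which naively has rank $3+\#(S_1\cup S_2\cup S_3)^c$. However, the overall diagonal $S^1\subset G_\bd$ acts trivially on $\Rep(\CQ,\bd)$, and in the reduction in stages used to define $V(f)\subset\FX_\alpha(\CQ,\bv,\bw)$ this diagonal is absorbed into $G_\bv$, so the effective rank of $H$ contributing to $P_t(BH)$ is one less, namely $2+\#(S_1\cup S_2\cup S_3)^c$. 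This is the same bookkeeping that appears uniformly in Propositions~\ref{quiver-prop-type-a}--\ref{quiver-prop-type-c}, and it produces the claimed denominator $(1-t^2)^{2+\#(S_1\cup S_2\cup S_3)^c}$.

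Second I would verify the claim (asserted in the preceding lemma) that the equivariant topology of $C$ is that of a point, i.e.\ that $C/K$ is a single point. After conjugating by $\phi:T_S\to T_\bv$ with $g_1=\diag(s_1^{-1},s_2^{-1})$ and $g_2=s_3^{-1}$, every edge $x_i$ with $i\in S_1\cup S_2$ and every $y_i$ with $i\in S_3$ is forced into a one-dimensional subspace of its $\Hom$ factor; the moment map conditions of the preceding lemma determine the scalars $\mu_{S_j}$ and hence pin down the norms $|x_i|$, $|y_i|$ on each piece; and the residual action of $K$ then kills all the remaining phases, since each ``row'' of the critical-set quiver picture of Figure~\ref{fig-critical-type-1-1-1} is acted on transitively by a single $K$-factor. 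Thus there are no continuous moduli left, $C/K$ is a point, and $P_t(C/K)=1$.

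Combining $t^\lambda$, $P_t(BH)=(1-t^2)^{-(2+\#(S_1\cup S_2\cup S_3)^c)}$, and $P_t(C/K)=1$ gives the stated formula. The only step requiring any real attention is the bookkeeping of the trivially-acting diagonal $S^1$; since exactly the same adjustment was already made in types $A$, $B$, and $C$, I do not anticipate any substantive obstacle here, and the argument is essentially a direct analog of the proofs of Propositions~\ref{quiver-prop-type-a}--\ref{quiver-prop-type-c}.
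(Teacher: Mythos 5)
Your argument is correct and is exactly the (unwritten) argument the paper intends: the proposition is a direct packaging of the two preceding lemmas via Corollary \ref{thm-equivariant-poincare} and the factorization $P_t^{G_\bw}(C)=P_t(BH)\,P_t(C\red K)$, with the rank of the stabilizer dropped by one for the trivially acting diagonal circle, exactly as in types $A$--$C$. The only slip is a citation: the relevant picture for the type $D$ stratum is Figure \ref{fig-type-2-1-stratum-1-1}, not Figure \ref{fig-critical-type-1-1-1} (which depicts the $(1,1,1)$ fixed components of the $S^1$-action); this does not affect the argument.
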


\subsection{The Type $(1,1,1)$ Critical Sets}

\begin{proposition} Let $S_1, S_2, S_3$ be a decomposition of $[n]$ indexing a critical
set of type $(1,1,1)$. Then these subsets must satisfy
\begin{inparaenum}[(i)]
  \item $\alpha_{S_2 \cup S_3} > 2 \alpha_{S_1}$,
  \item $2 \alpha_{S_3} > \alpha_{S_1 \cup S_2}$,
  \item $\#S_1 \geq 2$,
  \item $\#S_2 \geq 2$, and (v) $\#S_3 \geq 1$.
\end{inparaenum}
The isotropy representation is given by
\begin{equation} 
  (\#S_1+\#S_2-4)(1+t) + (n-3)(t^{-1} + t^2) + (\#S_3-1)(t^{-2} + t^3). 
\end{equation}
The dimension of the critical set is $\#S_1+\#S_2-4$ and its Morse index
is $2(n+\#S_3-4)$. It is isomorphic to $\BP^{\#S_1-2} \times \BP^{\#S_2-2}$
\end{proposition}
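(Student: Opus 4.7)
The plan is to follow the template established in Propositions \ref{quiver-prop-type-1-2} and \ref{quiver-prop-type-2-1}. First I would pin down the homomorphism $\phi: S^1 \to U(3) \times (S^1)^n$ defining this $(1,1,1)$ stratum, namely $g(s) = \diag(1, s^{-1}, s^{-2})$ together with $h_i(s) = s^{-(k-1)}$ for $i \in S_k$. The fixed-point equations $s x_i = g(s) x_i h_i(s)^{-1}$ and $s y_i = h_i(s) y_i g(s)^{-1}$ then pin down the sparsity pattern: for $i \in S_k$ with $k \in \{1,2\}$ the vector $x_i$ is supported in the $k$-th coordinate of $\BC^3$ and $y_i$ in the $(k+1)$-st, while for $i \in S_3$ only $x_{i,3}$ is active and $y_i$ vanishes entirely, since no eigenvalue of $g(s)$ equals $s^{-1}h_i(s) = s^{-3}$.

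Substituting this sparse form into $\mur$ yields a diagonal matrix whose three entries give $\sum_{S_1}|x_{i,1}|^2 = \alpha/3$, $\sum_{S_2}|x_{i,2}|^2 - \sum_{S_1}|y_{i,2}|^2 = \alpha/3$, and $\sum_{S_3}|x_{i,3}|^2 - \sum_{S_2}|y_{i,3}|^2 = \alpha/3$; all off-diagonal entries of $\mur$ and of $xy$ vanish identically. Combined with the framing identities $|x_i|^2 - |y_i|^2 = \alpha_i$, these yield condition (i) from the $(1,1)$-entry, (ii) from the $(3,3)$-entry, and (v) since $\sum_{S_3}|x_{i,3}|^2 > 0$. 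The traceless part of $xy$ collapses to the two bilinear relations $\sum_{S_1} x_{i,1}y_{i,2} = 0$ and $\sum_{S_2} x_{i,2} y_{i,3} = 0$; if $\#S_1 = 1$, say $S_1 = \{i_0\}$, then one member of the surviving pair would have to vanish, but $x_{i_0,1} = 0$ contradicts $|x_{i_0,1}|^2 = \alpha/3 > 0$ and $y_{i_0,2} = 0$ forces the non-generic relation $\alpha_{i_0} = \alpha/3$, yielding (iii); the symmetric argument gives (iv).

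With the weight decomposition of $T_{\tilde x} T^\ast \Rep(\CQ, \bd)$ now explicit, I would apply Theorem \ref{thm-rep-ring} to compute $[T_x C] = [T_{\tilde x} T^\ast \Rep] - [\fg_\bd^{\BC}] - [E]$ in the representation ring of $S^1$, remembering to subtract one trivial factor in passing from $\fg_\bd$ to $\mathfrak{pg}_\bd$ and to twist $E = \mathfrak{sl}_3 \oplus \BC^n$ by the weight-one scaling of $y$. Collecting contributions weighted by the $g$-weights $(0,-1,-2)$ will produce the claimed isotropy representation, whose coefficient of $t^0$ gives dimension $\#S_1 + \#S_2 - 4$ and whose negative-power coefficients sum to $n + \#S_3 - 4$, yielding Morse index $2(n+\#S_3-4)$.

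The step requiring the most care is the identification of the critical set with $\BP^{\#S_1-2} \times \BP^{\#S_2-2}$. My approach is to pass to the GIT quotient by the complexified Cartan $(\Cstar)^3 \times (\Cstar)^n / \Cstar_{\mathrm{diag}}$ and exploit the fact that the $S_j$-blocks largely decouple. The $S_3$-block is rigidified by the framing constraints $|x_{i,3}|^2 = \alpha_i$ and the $(\Cstar)^{S_3}$-action, contributing only a point. For the $S_1$-block I would use $\Cstar_1 \times (\Cstar)^{S_1}$ to normalize $(x_{i,1})_{i \in S_1} = (1,\ldots,1)$, leaving a one-dimensional stabilizer that combines with $\Cstar_2$ to give a single residual scaling on $(y_{i,2})_{i \in S_1}$; together with the linear constraint $\sum y_{i,2} = 0$ this produces $\BP^{\#S_1-2}$, and the parallel analysis on $S_2$ yields $\BP^{\#S_2-2}$. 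The subtle bookkeeping is to verify that the residual scalings on the $(y_{i,2})$- and $(y_{i,3})$-factors are genuinely independent after modding out couplings through $\Cstar_2$, $\Cstar_3$, and $\Cstar_{\mathrm{diag}}$, a consistency check furnished by matching the total dimension $\#S_1 + \#S_2 - 4$ against the isotropy computation.
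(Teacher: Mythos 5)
Your proposal is correct and, for the bulk of the argument, follows the same route as the paper: the same homomorphism $g(s)=\diag(1,s^{-1},s^{-2})$, the same sparsity pattern for $(x_i,y_i)$, and the same application of Theorem \ref{thm-rep-ring} with $[E]$ the weight-one twist of $\mathfrak{sl}_3\oplus\BC^n$. In fact you supply more detail than the paper does on conditions (i)--(v): the paper's proof of this proposition only records the isotropy computation and the projective-space identification, leaving the inequalities to the reader by analogy with the $(2,1)$ and $(1,2)$ cases, whereas your derivation of (i), (ii), (v) from the diagonal entries of $\mur$ and of (iii), (iv) from the surviving bilinear relations $\sum_{S_1}x_{i,1}y_{i,2}=0$, $\sum_{S_2}x_{i,2}y_{i,3}=0$ plus genericity is exactly the intended argument and is sound. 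The one place you genuinely diverge is the identification with $\BP^{\#S_1-2}\times\BP^{\#S_2-2}$: the paper maps $[x,y]$ to the invariant ratios $[x_iy_i:i\in S_1]\times[x_jy_j:j\in S_2]$ in $\BP^{\#S_1-1}\times\BP^{\#S_2-1}$ and cuts by the two linear relations, while you build GIT normal forms by gauge-fixing the $x$'s and tracking the residual $(\Cstar)^3/\Cstar_{\mathrm{diag}}$, whose two independent characters $g_1g_2^{-1}$ and $g_2g_3^{-1}$ give the two projectivizations; these are equivalent (your normalized $y_{i,2}$ \emph{is} the invariant $x_{i,1}y_{i,2}$), and the paper's version is slightly quicker while yours makes the independence of the residual scalings explicit. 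One typo to fix: the $x$-equation should read $x_i = g(s)\,x_i\,h_i(s)^{-1}$ with no factor of $s$ on the left, since the circle acts only on $y$; your stated sparsity conclusions are consistent with the correct equation, not with the one you wrote.
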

\begin{proof}
A fixed-point of this type corresponds to the map $\phi: S^1 \to G_\bv \iso U(3) \times (S^1)^n$
given by $g(t) = \diag(1, t^{-1}, t^{-2})$ and
\begin{equation}
  h_i = \left\{ \begin{array}{ll}
    1, & i \in S_1 \\
    t^{-1}, & i \in S_2 \\
    t^{-2}, & i \in S_3
    \end{array} \right.
\end{equation}
To compute the isotropy representation, we compute
\begin{align}
  [T_x \Rep(Q)] &= n + (\#S_1+\#S_2)t + (\#S_1)t^2 + (\#S_2+\#S_3)t^{-1} + (\#S_3)t^{-2} \\
 [\mathfrak{pg}_\bv] &= n+2 + 2t + 2t^{-1} + t^2 + t^{-2} \\
 [E] &= 2 + (n+2)t + 2t^2 + t^3 + t^{-1}
\end{align}
and appeal to Theorem \ref{thm-rep-ring}. The isomorphism with the product of projective
spaces is given as follows. First, we consider the map to $\BP^{\#S_1-1} \times \BP^{\#S_2-1}$
given by
\begin{equation}
  [x_i, y_i] \mapsto [ x_i y_i: i \in S_1] \times [ x_j y_j: j \in S_2].
\end{equation}
This is easily seen to be a well-defined embedding (at least for generic $\alpha$).
However, the complex moment map equation gives the relations $\sum_{i \in S_1} x_i y_i =0$
and $\sum_{i \in S_2} x_i y_i = 0$, so the image is the subvariety defined by these equations,
which is isomorphic to $\BP^{\#S_1-2} \times \BP^{\#S_2-2}$.
\end{proof}

\subsection{The Poincare Polynomial}

The preceding sections described the computation of the Morse indices of the $S^1$-fixed
point sets, as well as inductive methods to compute the Poincar\'e polynomials
of types $(2,1)$ and $(1,1,1)$. Since we can ensure that only these types appear by choosing
the moment map level $\alpha$ to be extreme, this gives an explicit inductive procedure
to compute $P_t(\FM_\alpha(n))$ by Morse theory.\footnote{Recall that the Betti numbers
of $\FM_\alpha(n)$ do not depend on $\alpha$, as long as $\alpha$ is generic.}
We have implemented the this procedure in the computer software Sage.
For small values of $n$, we summarize the results in the following theorem.
\begin{theorem} \label{thm-rank-3-betti-numbers}
For small values of $n$, the Poincar\'e polynomials of the rank 3 star quiver varieties $\FM_\alpha(n)$
are the following:
\begin{align*}
P_t(\FM_\alpha(4)) &= 1 \\ 
P_t(\FM_\alpha(5)) &= 1 + 5t^2 + 11t^4 \\ 
P_t(\FM_\alpha(6)) &= 1 + 6t^2 + 22 t^4 + 51 t^6 + 66 t^8 \\ 
P_t(\FM_\alpha(7)) &= 1 + 7t^2 + 29 t^4 + 85 t^6 + 190 t^8 + 308 t^{10} + 302 t^{12} \\ 
P_t(\FM_\alpha(8)) &= 1 + 8t^2 + 37 t^4 + 121 t^6 + 311 t^8 + 653 t^{10} + 1115 t^{12} + 1450 t^{14} + 1191 t^{16}
\end{align*}
\end{theorem}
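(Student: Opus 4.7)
The plan is to execute the inductive algorithm assembled in the preceding sections, so that the theorem becomes a (substantial) calculation rather than a new structural result. Since the homotopy type of $\FM_\alpha(n)$ is independent of generic $\alpha$, I would fix an extreme moment map level satisfying $\alpha_n > 2\sum_{i<n}\alpha_i$. By the earlier propositions this empties both the type $(3)$ and the type $(1,2)$ components of the $S^1$-fixed-point set, so the Morse decomposition for the circle action reduces to
\[
P_t(\FM_\alpha(n)) = \sum_{C} t^{\lambda_C} P_t(C),
\]
where $C$ ranges over the type $(2,1)$ and type $(1,1,1)$ components only. The indices $\lambda_C$ are already tabulated (Propositions \ref{quiver-prop-type-2-1} and the type $(1,1,1)$ proposition), so the remaining task is to compute $P_t(C)$ for each admissible label.

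The type $(1,1,1)$ pieces are immediate: each admissible ordered triple $(S_1,S_2,S_3)$ with $S_1\sqcup S_2\sqcup S_3=[n]$ contributes
\[
t^{2(n+\#S_3-4)}\, P_t(\BP^{\#S_1-2})\, P_t(\BP^{\#S_2-2}),
\]
and we sum over all triples satisfying conditions (i)--(v) of the corresponding proposition. The type $(2,1)$ pieces are the substantive part. For each admissible $S\subset[n]$ I would apply Theorem \ref{thm-inductive-procedure} to the reduction-in-stages picture of Figure \ref{fig-type-2-1-stages-simple}: step (ii) supplies the ambient Poincar\'e polynomial in closed form via Proposition \ref{quiver-pro-vf-poincare}, namely $P_t(\Gr(2,n))\,P_t(\BP^{n-3})$, and step (iv) subtracts the four corrections $R_A,R_B,R_C,R_D$ of Propositions \ref{quiver-prop-type-a}--\ref{quiver-prop-type-d}. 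The corrections $R_C$ and $R_D$ involve only projective spaces or points and are elementary. The correction $R_A$ is a type $(2,1)$ Poincar\'e polynomial on strictly smaller data $(\gamma,\alpha',\delta)$ and so feeds back into the same induction; the correction $R_B$ invokes a rank-two polygon space, whose Poincar\'e polynomial is computed by an entirely parallel but simpler Morse induction (or, alternatively, by Konno's closed formula).

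The base case is $n=4$: the type $(1,1,1)$ conditions $\#S_1,\#S_2\geq 2$ cannot be met, and the type $(2,1)$ conditions force $\#S=3$ and $\#S^c=1$, giving a single component of complex dimension $2\#S-6=0$ and Morse index $0$, so $P_t(\FM_\alpha(4))=1$. Feeding this into the recursion produces $n=5$, and so on up to $n=8$. The main obstacle is purely combinatorial bookkeeping rather than any remaining conceptual step: the number of admissible subsets grows rapidly, the Morse indices depend on the sign conditions $\mu_{S_i}\gtrless \mu_{S_j}$ that arise in the $R_A$, $R_B$, $R_C$, $R_D$ formulas, and the $R_A$/$R_B$ inductions need the polygon-space and smaller type $(2,1)$ tables cached. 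For this reason I would implement the full recursion in a computer algebra system (the author's Sage implementation does exactly this), and sanity-check the output by verifying that the total degree agrees with the complex dimension $2(2n-8)$ of $\FM_\alpha(n)$, that each coefficient is a non-negative integer, and that the Euler characteristic matches an independent computation (for instance via the Jeffrey--Kirwan residue methods of Chapter \ref{ch-residue}).
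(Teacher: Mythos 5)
Your proposal follows essentially the same route as the paper: fix an extreme generic level to empty the type $(3)$ and $(1,2)$ components, sum the circle-action Morse contributions of the type $(2,1)$ and $(1,1,1)$ components using the tabulated indices, evaluate the type $(2,1)$ pieces via Theorem \ref{thm-inductive-procedure} with the corrections $R_A,\dots,R_D$ (recursing on smaller type $(2,1)$ data and polygon spaces), and run the resulting recursion in a computer algebra system. The only cosmetic difference is your choice of consistency checks (dimension, positivity, Euler characteristic) versus the paper's comparison with Hausel's generating function.
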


\begin{remark} Using the Weil conjectures, Hausel produced an
explicit generating function for the Poincar\'e polynomials of Nakajima quiver
varieties \cite{HauselBetti}. As a consistency check, we have verified that our calculation
is consistent with Hausel's generating function for small values of $n$.
Note that unlike the method of this chapter, Hausel's method does not give any direct information about the $S^1$-fixed point set.
\end{remark}

\chapter{The Residue Formula Revisited} \label{ch-residue}

\section{Generating Functions for Intersection Pairings}

\subsection{Cogenerators of Rings}
Let $k$ be a field of characteristic $0$,\footnote{For our purposes we will only need to consider $\BQ, \BR$, and $\BC$.}
$V$ a graded $k$-vector space, and $V^\ast$ its dual. 
Let $R = k[V] \iso \Sym\ V^\ast$ and $R^\vee = k[V^\ast] \iso \Sym\ V$. Note that $R$ has
a natural grading $R = \bigoplus_{d \geq 0} R^d$, where $R^d := \Sym^d~ V^\ast$ consists of
homogeneous polynomials of degree $d$.
The natural pairing
between $V$ and $V^\ast$ gives $R^\vee$ the structure of an $R$-module, with $\Sym^d~V^\ast$
acting on $R^\vee$ as homogeneous linear differential operators of degree $d$.

\begin{lemma} \label{lemma-fourier}
Suppose that $I \subseteq R$ is a homogeneous ideal such that $S = R / I$ is 
finite-dimensional over $k$. Then there exists a finitely-generated $R$-submodule 
$S^\vee \subset R^\vee$ such that $I = \ann_R(S^\vee)$. Consequently, the action of $R$ on $S^\vee$ descends
to an action of $S$ on $S^\vee$.
\end{lemma}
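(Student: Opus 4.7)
The plan is to construct $S^\vee$ as the \emph{Macaulay inverse system} of $I$, namely
\[ S^\vee := \{ f \in R^\vee \suchthat p \cdot f = 0 \text{ for all } p \in I \}, \]
and then to identify it with the graded dual of $S$ via a natural perfect pairing. First I would verify that $S^\vee$ is an $R$-submodule: if $f \in S^\vee$ and $q \in R$, then for any $p \in I$ we have $p \cdot (q \cdot f) = (pq) \cdot f = 0$, since the action of $R$ on $R^\vee$ is by constant-coefficient differential operators (which commute) and $pq \in I$.

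The key technical step is to reinterpret $S^\vee$ via duality. Consider the bilinear pairing $\ev{\cdot,\cdot}: R \times R^\vee \to k$ defined by $\ev{p,f} := (p \cdot f)(0)$. By construction this pairing vanishes on $R^d \times R^{\vee,e}$ when $d \neq e$ and restricts to the standard perfect pairing on each graded piece $R^d \times R^{\vee,d}$ (here I invoke the implicit hypothesis that the graded components of $V$ are finite-dimensional, which holds in all intended applications). Moreover, the pairing is \emph{adjoint} in the sense that $\ev{pq,f} = \ev{p, q \cdot f}$ for all $p,q \in R$ and $f \in R^\vee$. Using adjointness and the fact that $I$ is an ideal, I would establish the equivalence $f \in S^\vee$ if and only if $\ev{r,f} = 0$ for all $r \in I$: one direction is immediate by taking $q = 1$; conversely, if $\ev{r,f} = 0$ for all $r \in I$, then for any $p \in I$ and $q \in R$ we have $\ev{q, p \cdot f} = \ev{qp, f} = 0$, and non-degeneracy of the graded pairing forces $p \cdot f = 0$. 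Thus $S^\vee = I^\perp$, the annihilator of $I$ under the graded perfect pairing.

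From this identification the remaining conclusions follow mechanically. The perfect graded pairing identifies $S^{\vee,d}$ with the annihilator of $I^d$ in $R^{\vee,d}$, hence with $(R^d/I^d)^\ast = (S^d)^\ast$; summing over degrees gives $S^\vee \iso S^\ast$ as graded vector spaces, so $\dim S^\vee = \dim S < \infty$ and $S^\vee$ is trivially finitely generated as an $R$-module. For the assertion $I = \ann_R(S^\vee)$, the containment $I \subseteq \ann_R(S^\vee)$ is immediate from the definition; conversely, if $q$ annihilates $S^\vee$, then $\ev{q,f} = \ev{1, q\cdot f} = 0$ for every $f \in I^\perp$, so $q \in (I^\perp)^\perp = I$ by graded biduality. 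The final clause is then automatic: since $I$ annihilates $S^\vee$, the $R$-action factors through $R/I = S$.

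The only subtle point—and the step I expect to require the most care—is establishing the equivalence $S^\vee = I^\perp$, which is what bridges the differential-operator definition of $S^\vee$ to its clean description as the graded dual of $S$; once this is in hand, the rest reduces to standard graded-duality bookkeeping.
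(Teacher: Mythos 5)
Your proposal is correct and follows essentially the same route as the paper: the paper defines $S^\vee = \Hom_R(R/I, R^\vee)$ but immediately identifies it with the Macaulay inverse system $\{f \in R^\vee \suchthat I \cdot f = 0\}$, and then deduces $I = \ann_R(S^\vee)$ from the degreewise perfect pairing between $R^d$ and $(R^\vee)^d$ exactly as you do. Your write-up merely makes explicit the adjointness of the pairing and the biduality step that the paper leaves implicit.
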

\begin{proof} We define $S^\vee := \Hom_R(R/I, R^\vee)$. Since an element
$\phi \in \Hom_R(R/I, R^\vee)$ is uniquely determined by its image $\phi(1)$, $S^\vee$ is
naturally a sub-module of $R^\vee$. An element $r \in R^\vee$ is the image of $1$ under
$\phi \in \Hom_R(S, R^\vee)$ if and only if $I \cdot r = 0$. Hence
  \begin{equation}
    S^\vee \iso \{ r \in R^\vee \ | \ I \cdot r = 0 \}.
  \end{equation}
By construction, $I \subseteq \ann_R(S^\vee)$. To see that this is in fact an equality,
simply note that for any degree $d$ the vector spaces $R^d$ and $(R^\vee)^d$ are naturally dual.
Since $I$ is homogeneous, we have that $(S^\vee)^d$ is the annihilator of $I^d$ under
the dual pairing, and conversely.
\end{proof}

\begin{remark} The conclusion of this lemma is definitely not true if we drop the 
hypothesis that $I$ is homogeneous. For example, if $R = k[u]$ and $I = \ev{u-1}$ then
$S = R/I \iso k$, but $S^\vee = 0$, since the differential equation
\begin{equation} \left( \frac{\partial}{\partial u} - 1 \right)f(u) = 0 \end{equation}
has no non-trivial polynomial solutions. Hence $\ann(S^\vee) = R \neq I$.
\end{remark}

\begin{definition} Let $S = R/I$ be as above. We call $S^\vee = \Hom_R(S, R^\vee)$ the
\emph{Fourier dual} of $S$.
 A \emph{cogenerator of $S$} is an 
element $f \in S^\vee$. A set $\{f_1, \dots, f_n\}$ of cogenerators is said to be 
\emph{complete} if this set generates $S^\vee$ as an $R$-module.
\end{definition}

\begin{definition} A finite-dimensional graded $k$-algebra is called \emph{level}
if every non-zero element divides a non-zero element of top degree. It is said to
satisfy \emph{Poincar\'e duality} if it is both level and one-dimensional in top degree.
\end{definition}

\begin{remark} A connected compact oriented manifold $M$ satisfies Poincar\'e duality, which
implies that the ring $H^\ast(M)$ satisfies Poincar\'e duality as defined above.
In the non-compact setting, levelness can be a suitable replacement for Poincar\'e duality
in many arguments. This property is known to hold for hypertoric varieties, 
and is conjectured to hold for a large class of holomorphic symplectic quotients \cite{ProudfootGIT}.
\end{remark}

\begin{proposition} Let $I \subseteq R$ be as above. Then the quotient ring $S = R/I$ is 
level if and only if the dual $R$-module $S^\vee$ is generated in top degree, and $S$
satisfies Poincar\'e duality if and only if $S^\vee$ is generated by a single polynomial
of top degree.
\end{proposition}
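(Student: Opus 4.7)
The plan is to unpack the module structure on $S^\vee$ until the two conditions on $S^\vee$ become transparent statements about the multiplication map in $S$. The first step is to make precise the duality already implicit in Lemma \ref{lemma-fourier}. Since $I$ is homogeneous, $S = R/I$ is a finite-dimensional graded $k$-algebra, say with top degree $D$. The natural pairing $R^d \otimes (R^\vee)^d \to k$ is perfect in each degree, and under this pairing $(S^\vee)^d$ is exactly the annihilator of $I^d$; hence $(S^\vee)^d$ is naturally identified with $\Hom_k(S^d, k)$, so that $S^\vee$ is the graded $k$-linear dual of $S$. Moreover, because the $R$-action on $S^\vee$ factors through $S$, we may describe it intrinsically: for $s \in S^d$ and $f \in (S^\vee)^e$ with $e \geq d$, the element $s \cdot f \in (S^\vee)^{e-d}$ is the functional $t \mapsto f(st)$ on $S^{e-d}$. (Proving this compatibility is the one computation I would do explicitly, and it reduces to the defining relation $\langle \partial_v \varphi, w \rangle = \langle \varphi, vw \rangle$ for $v \in V^\ast$, $w,\varphi \in R^\vee$.)

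Next I would translate ``$S^\vee$ is generated in top degree'' into a statement about $S$. Since $S^\vee$ is concentrated in degrees $0, \dots, D$, being generated as an $R$-module (equivalently $S$-module) in top degree means that for every $0 \leq e \leq D$, the action map $S^{D-e} \otimes (S^\vee)^D \to (S^\vee)^e$ is surjective. Under the identifications $(S^\vee)^D \iso \Hom_k(S^D, k)$ and $(S^\vee)^e \iso \Hom_k(S^e, k)$, this action map is precisely the dual of the multiplication $\mu_e: S^{D-e} \otimes S^e \to S^D$ (packaged as $S^e \to \Hom_k(S^{D-e}, S^D)$). Standard linear algebra then gives: the dual map is surjective iff $S^e \to \Hom_k(S^{D-e}, S^D)$ is injective, i.e.\ iff for every nonzero $t \in S^e$ there exists $s \in S^{D-e}$ with $st \neq 0$ in $S^D$. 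This is exactly the levelness condition in degree $e$, establishing the first equivalence.

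For the Poincar\'e duality statement, the additional content is the one-dimensionality of $S^D$. If $S$ is level and $\dim_k S^D = 1$, then $\dim_k (S^\vee)^D = 1$ as well, so a single nonzero element $f \in (S^\vee)^D$ generates $(S^\vee)^D$ over $k = S^0$, and by the first equivalence it generates all of $S^\vee$ as an $S$-module, hence as an $R$-module. Conversely, if $S^\vee$ is generated by a single $f$ of top degree, then in particular $S^\vee$ is generated in top degree, so $S$ is level; moreover $(S^\vee)^D$ is contained in the span of $S^d \cdot f$ for $d \leq 0$, i.e.\ in $k \cdot f$, forcing $\dim_k (S^\vee)^D = \dim_k S^D = 1$.

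I do not anticipate a serious obstacle; the proposition is essentially an exercise in Matlis-style graded duality once the action of $R$ on $S^\vee$ is unwound. The only point where one must be a little careful is verifying that the $R$-module action on $S^\vee$ inherited from the differential-operator action on $R^\vee$ agrees with the dual of multiplication in $S$, because this is what lets levelness of $S$ be read off as surjectivity of the action map on cogenerators. Everything else is bookkeeping on finite-dimensional graded vector spaces.
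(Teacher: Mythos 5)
Your proof is correct, and it rests on the same underlying fact as the paper's: the degreewise pairing between $S^d$ and $(S^\vee)^d$ is perfect and the $R$-action on $S^\vee$ is adjoint to multiplication in $S$. The difference is in execution. The paper argues through annihilators: it sets $S_0^\vee$ equal to the submodule generated by $(S^\vee)^D$, invokes Lemma \ref{lemma-fourier} to reduce the question to whether $\ann(S^\vee)=\ann(S_0^\vee)$, and then runs a contrapositive argument producing an $f\notin I$ with $fg=0$ in $S^D$ for all $g$ of complementary degree; it only writes out the direction ``not generated in top degree $\Rightarrow$ not level'' and leaves the converse and the Poincar\'e-duality clause to the reader. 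You instead identify $(S^\vee)^e$ with $\Hom_k(S^e,k)$ and the action map $S^{D-e}\otimes(S^\vee)^D\to(S^\vee)^e$ with the dual of the multiplication pairing, so that ``generated in top degree'' becomes surjectivity of that map and hence, by finite-dimensional duality, injectivity of $S^e\to\Hom_k(S^{D-e},S^D)$ --- which is levelness in degree $e$ verbatim. This gives both directions simultaneously and makes the one-dimensionality argument for the Poincar\'e-duality clause immediate, so your write-up is, if anything, more complete than the paper's. The one point you rightly flag as needing a computation --- that the differential-operator action descends to the dual of multiplication --- is exactly the adjunction $\langle D_v\varphi,w\rangle=\langle\varphi,vw\rangle$, and it holds with no issues in characteristic zero. (Both you and the paper implicitly check levelness only on homogeneous elements; that is the standard convention for graded algebras and is harmless here.)
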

\begin{proof} 
Let $S_0^\vee \subseteq S^\vee$ be the $R$-submodule of $S^\vee$ generated by $(S^\vee)^n$,
where $n$ is the top non-vanishing degree of $S$.
Then $S^\vee$ is generated in top degree if and only if $S^\vee = S_0^\vee$. By Lemma \ref{lemma-fourier}
 this is true if and only if $\ann(S^\vee) = \ann(S_0^\vee)$. Note that
we always have the containment $\ann(S^\vee) \subseteq \ann(S_0^\vee)$, so $S^\vee$ is 
not generated in top degree if and only if there exists $f \in R, f \not\in I$ such
that $f \cdot S_0^\vee = 0$. If there is such an $f$, then for all $g$ of complementary
degree we have
\begin{equation} (fg) \cdot (S^\vee)^n = (fg) \cdot (S_0^\vee) = g \cdot \left( f \cdot (S_0^\vee) \right) = 0. \end{equation}
By the lemma, the image of $f$ in $S$ does not divide any non-zero element of
top-degree, hence $S$ is not level.
\end{proof}

In what follows, $R$ will be the torus-equivariant cohomology of a point and $I$ is the kernel
of the Kirwan map. The surprising result is that while the cohomology ring
$S$ might have a very complicated presentation in terms of generators and relations,
in many cases we can compute generators for the Fourier dual module rather easily.
Hence, Fourier duality gives a convenient way to encode the entire cohomology ring
as a single polynomial.

\begin{example} \label{ex-pn}
  Let $R = \BQ[u]$ and $I = \ev{u^{n+1}}$ with $\deg u = 2$. Then 
  $R^\vee = \BQ[\lambda]$ where $\lambda$ is dual to $u$ and $u$ acts on $R^\vee$ as
  $d/d\lambda$. The dual module $S^\vee$ consists of
  all polynomials in $\lambda$ of total degree less than or equal to $2n$, where
  $\deg \lambda = 2$. Hence $S^\vee$ is generated as an $R$-module by $\lambda^n$.
  It is well-known that $S$ is isomorphic to the cohomology ring of $\BP^n$, and
  we will see that its Jeffrey-Kirwan residue is (up to an overall constant) given by $\lambda^n$.
\end{example}



We will give many more examples in Section \S\ref{sec-residue-examples}.

Let us record two easy lemmas, which we will need later in the chapter.

\begin{lemma} \label{lemma-residue-quotient-u}
Let $I$ be a homogeneous ideal of $R[u]$ such that $S = R[u]/I$ is finite-dimensional.
Then the Fourier dual of $S / \ev{u}$ is given by
\begin{equation}
  \{ f \in S^\vee \suchthat \frac{\partial f}{\partial u} = 0 \}.
\end{equation}
\end{lemma}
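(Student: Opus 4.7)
The plan is to reduce the statement to a direct application of Lemma \ref{lemma-fourier}. First I would note the isomorphism
\begin{equation}
S/\ev{u} \iso R[u]/(I + \ev{u}),
\end{equation}
and observe that $I + \ev{u}$ is again a homogeneous ideal of $R[u]$ (since both $I$ and the principal ideal $\ev{u}$ are homogeneous), and that the hypothesis of finite-dimensionality over $k$ passes to this further quotient. Therefore Lemma \ref{lemma-fourier} applies and identifies the Fourier dual of $S/\ev{u}$ with
\begin{equation}
\{ f \in (R[u])^\vee \suchthat (I + \ev{u}) \cdot f = 0 \}.
\end{equation}

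Next I would unpack this annihilator condition. Since the condition of being annihilated by a sum of ideals is equivalent to being annihilated by each summand separately, and since a principal ideal $\ev{u}$ annihilates $f$ if and only if its single generator $u$ does (by commutativity of $R[u]$), the description reduces to
\begin{equation}
\{ f \in (R[u])^\vee \suchthat I \cdot f = 0 \text{ and } u \cdot f = 0 \}.
\end{equation}
The first condition is precisely $f \in S^\vee$, again by Lemma \ref{lemma-fourier}. The second condition translates, under the convention that $u$ acts on the Fourier dual as the first-order differential operator corresponding to its dual coordinate (as in Example \ref{ex-pn}), into the differential equation $\partial f / \partial u = 0$. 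Combining these two conditions yields the claimed description.

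There is no real obstacle here; the only mild subtlety is bookkeeping of the two roles played by $u$ (as a polynomial variable in $R[u]$ and as the differential operator it induces on $(R[u])^\vee$), but the identification is exactly the one fixed by the Fourier duality set up before Lemma \ref{lemma-fourier}. Once that convention is made explicit, the lemma is an immediate corollary.
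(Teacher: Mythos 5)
Your proposal is correct and follows the paper's own argument exactly: identify $S/\ev{u}$ with $R[u]/\ev{I,u}$, apply Lemma \ref{lemma-fourier}, and observe that annihilation by $\ev{I,u}$ means annihilation by $I$ (i.e.\ membership in $S^\vee$) together with annihilation by $u$ (i.e.\ $\partial f/\partial u = 0$). The paper's proof is just a terser version of the same reasoning.
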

\begin{proof} We have that $S / \ev{u} = R[u] / \ev{I, u}$, so its Fourier dual consists
of those elements of $R[u]$ which are simultaneously annihilated by $I$ and $u$, i.e.
those elements of $S^\vee$ which do not depend on $u$.
\end{proof}

\begin{lemma} \label{lemma-residue-varpi}
Let $\varpi \in R$ be homogeneous, let $I$ be a homogeneous ideal of $R$ such that $S=R/I$
is finite dimensional, and suppose that $\{f_i\}_{i\in I}$ is a complete set of generators of
$S^\vee$. Then a complete set of generators of $(S/\ann_S(\varpi))^\vee$ is given by
$\{\varpi \cdot f_i\}_{i \in I}$.
\end{lemma}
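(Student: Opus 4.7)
The plan is to exhibit an explicit $R$-module isomorphism between $(S/\ann_S(\varpi))^\vee$ and the submodule $\varpi \cdot S^\vee \subseteq S^\vee$, under which the class of $f_i$ corresponds to $\varpi \cdot f_i$. Once we have this identification, the claim about generators is immediate: if $\{f_i\}$ spans $S^\vee$ over $R$, then $\{\varpi \cdot f_i\}$ spans $\varpi \cdot S^\vee$.

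First, let $\mu: S \to S$ denote multiplication by $\varpi$. This is an $R$-linear endomorphism whose kernel is $\ann_S(\varpi)$ and whose image is $\varpi S$. Hence $\mu$ factors canonically as
\begin{equation}
 S \onto S/\ann_S(\varpi) \stackrel{\iso}{\to} \varpi S \into S.
\end{equation}
Apply the contravariant functor $(\cdot)^\vee = \Hom_R(\cdot, R^\vee)$. The key input I would use is that this functor is exact on finite-dimensional graded $R$-modules (equivalently, $R^\vee$ is the graded injective hull of $k$, and $(\cdot)^\vee$ agrees degree-by-degree with the ordinary $k$-linear dual). Exactness implies that the surjection above dualizes to an injection $(S/\ann_S(\varpi))^\vee \into S^\vee$ and that the inclusion $\varpi S \into S$ dualizes to a surjection $S^\vee \onto (\varpi S)^\vee$. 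In particular, $(S/\ann_S(\varpi))^\vee$ is naturally realized as an $R$-submodule of $S^\vee$.

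Second, I would compute the dual $\mu^\vee$ of multiplication by $\varpi$. Given $\phi \in S^\vee$, the map $\mu^\vee \phi$ satisfies $(\mu^\vee \phi)(s) = \phi(\varpi s) = \varpi \cdot \phi(s)$, where the last equality uses that $\phi$ is $R$-linear. Hence $\mu^\vee$ is itself multiplication by $\varpi$ on $S^\vee$. Combining this with the factorization obtained above, the image of $\mu^\vee$ is precisely $\varpi \cdot S^\vee$, and it coincides with the image of the injection $(S/\ann_S(\varpi))^\vee \into S^\vee$. This gives the desired equality $\varpi \cdot S^\vee = (S/\ann_S(\varpi))^\vee$ as $R$-submodules of $S^\vee$, from which the statement about generators follows at once.

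The only non-routine ingredient is the exactness of $(\cdot)^\vee$ in the graded finite-dimensional setting, and this is the step I expect to require the most care; it is however standard graded Matlis duality for the polynomial ring $R$, and it can be verified in an elementary way by passing degree-by-degree to ordinary $k$-linear duality via the perfect pairing $R^d \otimes (R^\vee)^d \to k$ used implicitly in Lemma \ref{lemma-fourier}. No other nontrivial estimate or computation is required.
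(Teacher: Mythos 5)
Your proof is correct, but it takes a genuinely different route from the paper's. The paper's proof is a direct annihilator computation: it checks that $a\cdot(\varpi\cdot f_i)=(a\varpi)\cdot f_i=0$ for $a\in\ann_S(\varpi)$ and, conversely, that any $r$ with $[r]\neq 0$ in $S/\ann_S(\varpi)$ fails to annihilate some $\varpi\cdot f_i$, concluding that $\bigcap_i\ann_R(\varpi\cdot f_i)=\ev{I,\ann_S(\varpi)}$ and then appealing to the annihilator--module correspondence of Lemma \ref{lemma-fourier}. You instead dualize the multiplication-by-$\varpi$ endomorphism of $S$ through its factorization $S\onto S/\ann_S(\varpi)\iso\varpi S\into S$ and invoke exactness of $\Hom_R(-,R^\vee)$ on finite-dimensional graded modules. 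Your approach buys something real: it produces an actual equality of submodules $\varpi\cdot S^\vee=(S/\ann_S(\varpi))^\vee$ inside $S^\vee$, whereas the annihilator computation alone only shows that the submodule generated by the $\varpi\cdot f_i$ sits inside $(S/\ann_S(\varpi))^\vee$ with the correct annihilator; passing from equality of annihilators to completeness of the generating set is exactly a surjectivity statement, and it is the surjectivity of $\iota^\vee:S^\vee\onto(\varpi S)^\vee$ (i.e.\ the injectivity of $R^\vee$) that supplies it. The price is that you must justify the exactness of the duality, and the step you single out is indeed the right one to worry about; your degree-by-degree reduction to the perfect pairing $R^d\otimes(R^\vee)^d\to k$ is the standard graded Matlis/Macaulay inverse-system argument and works here because $\varpi$ is homogeneous, so that $\ann_S(\varpi)$, $S/\ann_S(\varpi)$, and $\varpi S$ are all graded and finite-dimensional. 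One tiny point to make explicit: the identification of $\mu^\vee$ with the action of $\varpi$ on $S^\vee\subset R^\vee$ uses the evaluation $\phi\mapsto\phi(1)$ from Lemma \ref{lemma-fourier}, under which $\mu^\vee\phi\mapsto\phi(\varpi)=\varpi\cdot\phi(1)$; with that spelled out the argument is complete.
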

\begin{proof} If $a \in \ann_S(\varpi)$, then $a \cdot \varpi \in I$, so that
\begin{equation}
  a \cdot (\varpi \cdot f_i) = (a \varpi) \cdot f_i = 0,
\end{equation}
and hence $\ann_R( D_\varpi f_i ) \subset \ev{I, \ann_S(\varpi)}$.
Conversely, suppose that $r \in R$ such that $[r] \in S / \ann_S(\varpi)$ is non-zero.
Then $r \varpi \not\in I$, and hence $(r\varpi) \cdot f_i \neq 0$ for some $i \in I$.
Hence $\bigcap_{i \in I} \ann_R( \varpi \cdot f_i) = \ev{I, \ann_S(\varpi)}$.
\end{proof}

\subsection{Generating Functions}
Let $G$ be a compact Lie group and $T \subset G$ a maximal torus. 
Denote the Weyl group $N(T)/T$ by $W$. Let $R = H^\ast(BT) \iso \BQ[u_1, \ldots, u_r]$,
with $\deg X_i = 2$ and $r$ the rank of $G$. Then $H^\ast(BG) \iso R^W$ and we regard
elements of $H^\ast(BG)$ as Weyl-invariant polynomials in the $u_i$. Then 
$R^\vee = \BQ[\lambda_1, \ldots, \lambda_r]$ for dual variables $\lambda_i$, and for
any polynomial $f = \sum a_I x^I$ we have the Fourier dual differential operator
\begin{equation}
  D_f := \sum a_I \left( -i\frac{\partial}{\partial \lambda} \right)^I.
\end{equation}

Suppose that $G$ acts linearly on $X = \BC^N$ and assume that $X$ has a $G$-invariant
Hermitian inner product. Then for any regular central $\alpha \in \fg^\ast$ the 
symplectic reduction $\FX_\alpha := X \reda{\alpha} G$ is a symplectic orbifold.
We assume throughout this section that the moment map $\mu: X \to \fg^\ast$ is proper
so that $\FX_\alpha$ is compact.
\begin{definition}
We define the \emph{generating function $Z: \ft^\ast \to \BQ$} by
  \begin{equation}
    Z(\lambda) = \frac{1}{|W|} \int_{\FX_\alpha}  
      \kappa\left(\sum_{s \in W} e^{i(\lambda-\alpha)(s \cdot u)}\right) e^\omega,
  \end{equation}
where $\kappa: H^\ast(BG) \to H^\ast(\FX_\alpha)$ is the Kirwan map.
\end{definition}
Note that although $Z$ is defined as a Weyl-invariant formal power series in $\lambda$, 
due to the finite-dimensionality of $\FX_\alpha$ it is actually a polynomial. 
It is a generating function in the following sense.
\begin{lemma} \label{lemma-generating}
For any Weyl-invariant polynomial $f \in R^W$ we have
  \begin{equation}
    D_f Z(\alpha) = \int_{\FX_\alpha} \kappa(f) e^\omega.
  \end{equation}
Hence the intersection pairings in $\FX_\alpha$ completely determine $Z$, and conversely.
\end{lemma}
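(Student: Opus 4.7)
The plan is to prove the identity by directly computing the action of $D_f$ on the exponential integrand and then appealing to Weyl-invariance to collapse the Weyl sum. First I would expand $Z$ as a formal power series in $\lambda$ term-by-term inside the integral; since $\FX_\alpha$ is finite-dimensional, only finitely many terms contribute, so questions of convergence are avoided. The key computation is that each operator $-i\,\partial/\partial\lambda_j$ applied to the exponential $e^{i(\lambda-\alpha)(s \cdot u)}$ produces a factor of $(s \cdot u)_j$, so for a monomial $f = u^I$ one obtains
\begin{equation}
  D_f\, e^{i(\lambda-\alpha)(s \cdot u)} = f(s\cdot u)\, e^{i(\lambda-\alpha)(s \cdot u)},
\end{equation}
and by linearity the same identity holds for arbitrary $f \in R$.

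Next I would evaluate at $\lambda = \alpha$. The exponential collapses to $1$, so
\begin{equation}
  D_f Z(\alpha) = \frac{1}{|W|} \int_{\FX_\alpha} \kappa\!\left( \sum_{s \in W} f(s \cdot u) \right) e^\omega.
\end{equation}
Since $f$ is Weyl-invariant, each term $f(s \cdot u)$ equals $f(u)$, the $|W|$ cancels, and we obtain $\int_{\FX_\alpha} \kappa(f) e^\omega$ as claimed.

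For the second sentence of the lemma, I would first check that $Z$ is itself Weyl-invariant in $\lambda$: changing variables $\lambda \mapsto t\lambda$ in the defining expression and using that $\alpha$ is central allows the Weyl sum to be reindexed to recover $Z(\lambda)$. Hence $Z$ is a Weyl-invariant polynomial whose Taylor coefficients at $\alpha$ are determined by the quantities $D_f Z(\alpha)$ as $f$ ranges over $R^W$. By the forward direction, these coefficients are precisely the intersection pairings $\int_{\FX_\alpha} \kappa(f) e^\omega$, so $Z$ is determined by the pairings and conversely.

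The computation itself is routine; the only point that requires mild care is the interchange of the integral with the differential operator and with the power series expansion of the exponential, but since $H^\ast(\FX_\alpha)$ is finite-dimensional the exponential truncates to a polynomial and these manipulations are purely formal. I do not anticipate any genuine obstacle.
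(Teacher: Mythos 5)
Your proposal is correct and follows essentially the same route as the paper: differentiate the exponential to bring down $f(s\cdot u)$, evaluate at $\lambda=\alpha$, and use Weyl-invariance of $f$ to cancel the $|W|$; the paper merely applies the Weyl-invariance before evaluating rather than after. Your added observation that $Z$ itself is Weyl-invariant in $\lambda$ (using centrality of $\alpha$) is a reasonable extra care for the converse direction, which the paper leaves implicit.
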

\begin{proof} 
Simply note that for any polynomial $f(u)$ we have
  \begin{equation}
    D_f e^{i(\lambda-\alpha)(u)} = f(u) e^{i(\lambda-\alpha)(u)}.
  \end{equation}
Now assume that $f$ is Weyl-invariant. Average over the Weyl group, apply the Kirwan map, 
multiply both sides by $e^\omega$, and integrate over $\FX_\alpha$ to obtain
  \begin{equation}
    D_f Z(\lambda) = \frac{1}{|W|} \int_{\FX_\alpha} \kappa 
      \left( f(u) \sum_{s \in W} e^{i(\lambda-\alpha)(s \cdot u)} \right) e^\omega.
  \end{equation}
Now evaluate at $\lambda=\alpha$ to obtain the desired result. This shows that the 
intersection pairings are the Taylor coefficients of $Z$ about $\lambda=\alpha$, and
since $Z$ is a polynomial, its Taylor coefficients determine it completely.
\end{proof}

\begin{theorem} $f \in \ker \kappa$ if and only if $D_f Z = 0$. Hence the $R^W$-module
generated by $Z$ has annihilator equal to $\ker \kappa$, and therefore it is the
Fourier dual of $H^\ast(\FX_\alpha)$.
\end{theorem}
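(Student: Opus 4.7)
The proof breaks into two implications, of which the second is the substantive one. I would first handle the forward direction $f \in \ker\kappa \Rightarrow D_f Z \equiv 0$, which is entirely formal. Since $D_f$ is a constant-coefficient differential operator in $\lambda$, it commutes with integration over $\FX_\alpha$ and with application of $\kappa$, and a direct computation gives
$$D_f\, e^{i(\lambda-\alpha)(s\cdot u)} = f(s\cdot u)\, e^{i(\lambda-\alpha)(s\cdot u)}.$$
Weyl-invariance of $f$ replaces $f(s\cdot u)$ by $f(u)$, which then factors out of the Weyl-sum and the integrand, and $\kappa(f) = 0$ kills the result.

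The converse is where the real content sits. The plan is to use the vanishing of the polynomial $D_f Z(\lambda)$ to extract enough intersection pairings, then invoke Poincar\'e duality. Since $D_f Z \equiv 0$, for every $g \in R^W$ we have $D_g(D_f Z)(\alpha) = 0$; because the $D$'s are commuting constant-coefficient operators, $D_g D_f = D_{fg}$, so Lemma~\ref{lemma-generating} converts this into
$$\int_{\FX_\alpha} \kappa(f)\,\kappa(g)\, e^\omega \;=\; 0 \qquad \text{for all } g \in R^W.$$
Two ingredients now combine: by Corollary~\ref{SurjectivityForTori}, Kirwan surjectivity implies that $\kappa(g)$ sweeps out all of $H^\ast(\FX_\alpha)$; and $e^\omega$ is a unit in $H^\ast(\FX_\alpha)$ because its positive-degree part is nilpotent (so $e^\omega$ has inverse $e^{-\omega}$), so multiplication by $e^\omega$ is an automorphism. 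Together these give $\int_{\FX_\alpha} \kappa(f)\, \eta = 0$ for every $\eta \in H^\ast(\FX_\alpha)$, and Poincar\'e duality on the compact symplectic orbifold $\FX_\alpha$ forces $\kappa(f) = 0$.

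For the final assertion, the two implications above say exactly that $\ann_{R^W}(Z) = \ker\kappa$, so the cyclic submodule $R^W \cdot Z \subseteq (R^W)^\vee$ is isomorphic to $R^W/\ker\kappa = H^\ast(\FX_\alpha)$. By Lemma~\ref{lemma-fourier} applied to the homogeneous ideal $\ker\kappa$, the Fourier dual of $H^\ast(\FX_\alpha)$ is the submodule of $(R^W)^\vee$ annihilated by $\ker\kappa$, and $R^W\cdot Z$ sits inside this. Since $\FX_\alpha$ satisfies Poincar\'e duality the Fourier dual is cyclic of the same $k$-dimension as $R^W\cdot Z$, so the inclusion is an equality.

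The one step requiring care is the converse direction: individually, Kirwan surjectivity, invertibility of $e^\omega$, and Poincar\'e duality are all standard, but the argument only closes when they are assembled in the order above to deduce $\kappa(f)=0$ from the vanishing of all integrals $\int \kappa(f)\kappa(g)e^\omega$. Everything else is bookkeeping with commuting constant-coefficient differential operators.
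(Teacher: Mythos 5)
Your proposal is correct and follows essentially the same route as the paper: the equivalence $f\in\ker\kappa \Leftrightarrow \int_{\FX_\alpha}\kappa(fg)e^\omega=0$ for all $g$ (via Kirwan surjectivity and Poincar\'e duality), combined with Lemma \ref{lemma-generating} and the fact that the polynomial $D_fZ$ is determined by its Taylor coefficients at $\lambda=\alpha$. Your explicit remarks that $e^\omega$ is a unit and that the dimension count forces $R^W\cdot Z$ to exhaust the Fourier dual are details the paper leaves implicit, but they are the same argument.
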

\begin{proof} Without loss of generality assume that $f$ is homogeneous. Then by
Kirwan surjectivity and Poincar\'e duality, $f \in \ker \kappa$ if and only if
for all $g \in R^W$,
  \begin{equation}
    \int_{\FX_\alpha} \kappa(fg) e^\omega = 0.
  \end{equation}
By the above lemma, $\int_{\FX_\alpha} \kappa(fg) e^\omega = 0$ if any only if 
$(D_f D_g Z)(\alpha) = 0$ for all $g$. Then the Taylor expansion of $D_f Z$ about 
$\lambda=\alpha$ is identically $0$, hence $D_f Z = 0$ identically.
\end{proof}

\section{Jeffrey-Kirwan Residues} \label{sec-residues}

\subsection{The Residue Operation}
We desire an effective method to compute the generating function $Z$ associated to
any compact symplectic quotient $\FX_\alpha$ as in the preceding section. 
We begin by recalling the residue operation:
\begin{proposition}\cite[Proposition 3.2]{JeffreyKirwan96}
Let $\Lambda \subset \ft$ be a non-empty open cone and suppose that
$\beta_1, \ldots, \beta_N \in \ft^\ast$ all lie in the dual cone $\Lambda^\ast$.
Suppose that $\lambda \in \ft^\ast$ does not lie in any cone of dimension at most
$n-1$ spanned by a subset of $\{\beta_1, \ldots, \beta_N\}$. Let $\{u_1, \ldots, u_n\}$
be any system of coordinates on $\ft$ and let $du = du_1 \wedge \cdots \wedge du_n$ be 
the associated volume form. Then there exists a residue operation $\Res^\Lambda$
defined on meromorphic differential forms of the form
\begin{equation}
  h(u) = \frac{q(u) e^{i\lambda(u)}}{\prod_{j=1}^N \beta_j(u)} du
\end{equation}
where $q(u)$ is a polynomial. The operation $\Res^\Lambda$ is linear in its
argument and is characterized uniquely by the following properties:
\begin{enumerate}[(i)]
  \item If $\{\beta_1, \ldots, \beta_N\}$ does not span $\ft$ as a vector space then
    \begin{equation}\Res^\Lambda\left(\frac{u^J e^{i\lambda(u)}}{\prod_{j=1}^N \beta_j(u)} du\right) = 0\end{equation}
  \item For any multi-index $J$, we have
    \begin{equation} \Res^\Lambda\left(\frac{u^J e^{i\lambda(u)}}{\prod_{j=1}^N \beta_j(u)} du\right)
       =  \sum_{m\geq 0} \lim_{s \to 0^+} 
       \Res^\Lambda\left(\frac{u^J (i(\lambda(u))^m e^{is\lambda(u)}}
         {m! \prod_{j=1}^N \beta_j(u)} du\right). \end{equation}
  \item The limit 
    \begin{equation}\lim_{s \to 0^+} 
       \Res^\Lambda\left(\frac{u^J e^{is\lambda(u)}}{\prod_{j=1}^N \beta_j(u)} du\right) \end{equation}
    is zero unless $N - |J| = n$.
  \item If $N = n$ and $\{\beta_1, \ldots, \beta_n\}$ spans $\ft^\ast$ as a vector space then 
      \begin{equation}\Res^\Lambda\left(\frac{u^J e^{i\lambda(u)}}{\prod_{j=1}^n \beta_j(u)} du\right) = 0\end{equation}
      unless $\lambda$ is in the cone spanned by $\{\beta_1, \ldots, \beta_n\}$. If $\lambda$
      is in this cone, then the residue is equal to $\bar{\beta}^{-1}$, where $\bar{\beta}$
      is the determinant of an $n \times n$ matrix whose columns are the coordinates of
      $\beta_1, \ldots, \beta_n$ with respect to any orthonormal basis defining the same
      orientation as $\beta_1, \ldots, \beta_n$.
\end{enumerate}
\end{proposition}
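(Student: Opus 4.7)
The plan is to establish uniqueness first, then existence, exploiting the fact that properties (i)--(iv) give an algorithm that reduces any admissible form to the base case handled by (iv). Suppose $R$ is any linear operator satisfying (i)--(iv), and let $h(u) = q(u)e^{i\lambda(u)}/\prod_{j=1}^N \beta_j(u)\, du$ be given. First I would perform polynomial long division of $q(u)$ against the $\beta_j(u)$ and use partial fractions to express $h(u)$ as a finite linear combination of terms of the form $u^J e^{i\lambda(u)}/\prod_{j \in S} \beta_j(u)\, du$ for various subsets $S \subseteq \{1,\dots,N\}$. By property (i), only those $S$ for which $\{\beta_j\}_{j \in S}$ spans $\ft^*$ contribute, and after further cancellation among the $\beta_j$ I may assume $\{\beta_j\}_{j \in S}$ is a basis.

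Next, for each such term, property (ii) expands $e^{i\lambda(u)}$ as the series $\sum_m (i\lambda(u))^m/m! \cdot e^{is\lambda(u)}$ in the limit $s \to 0^+$; property (iii) ensures that only finitely many summands survive---namely those satisfying the degree-matching condition $N-|J|=n$; and property (iv) then evaluates each surviving summand by the explicit formula. This chain of reductions determines $R(h)$ unambiguously, giving uniqueness.

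For existence, I would define $\Res^\Lambda$ by running precisely this reduction procedure, and then verify it yields a well-defined operation. A conceptually cleaner route is to introduce an analytic description: choose a vector $\eta$ in the interior of $\Lambda$ and define
\begin{equation}
  \Res^\Lambda(h) := \lim_{\epsilon \to 0^+} \int_{\ft + i\epsilon\eta} h(u)\, du,
\end{equation}
interpreted as an oscillatory integral. The hypothesis that $\lambda$ lies outside the $(n-1)$-dimensional cones spanned by subsets of the $\beta_j$ ensures that this integral converges and defines a genuine meromorphic distribution, and the four properties are routine to verify from contour manipulation: (i) by pushing the contour to infinity in directions of exponential decay; (ii) from Taylor expansion of $e^{i(1-s)\lambda(u)}$ combined with dominated convergence; (iii) by homogeneity under rescaling $u \mapsto u/t$; and (iv) by iterated one-dimensional residue calculus at the simultaneous zeros of the basis $\{\beta_j\}_{j \in S}$.

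The main obstacle will be to prove that the two definitions agree---equivalently, that the algorithmic definition is independent of the choices made during partial fraction reduction. This amounts to showing that the contour integral depends only on the chamber of $\eta$ in the complement of the hyperplanes spanned by subsets of the $\beta_j$, not on $\eta$ itself. Crossing such a hyperplane picks up a residue contribution along the lower-dimensional face, and one must check that this is precisely compensated by a corresponding change in the partial-fraction decomposition. This combinatorial reconciliation, which is the technical heart of \cite{JeffreyKirwan96}, is where the geometry of the cone $\Lambda$ enters essentially, and where the need to avoid singular $\lambda$ is most visible.
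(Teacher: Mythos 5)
This proposition is imported verbatim from Jeffrey--Kirwan and the thesis offers no proof beyond the citation, so there is no in-paper argument to match yours against; what you have written is a reconstruction of the argument of \cite{JeffreyKirwan96}, and its overall architecture --- uniqueness by using (i)--(iii) to reduce everything to the base case (iv), existence via an oscillatory integral over a contour $\ft + i\eta$ with $\eta$ in the interior of $\Lambda$, and then a wall-crossing reconciliation --- is indeed the right one and is faithful to the original.

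There is, however, a genuine gap in your uniqueness reduction. In several variables, ``partial fractions'' against linear forms does not let you assume the surviving denominators are simple products over a basis: the natural identities (e.g.\ $\frac{1}{u_1u_2} = \frac{1}{u_1+u_2}\bigl(\frac{1}{u_1}+\frac{1}{u_2}\bigr)$) produce denominators that are products of \emph{powers} of at most $n$ independent forms, and property (iv) cannot evaluate a term with a repeated factor. The correct reduction --- the one this chapter itself uses in the proof of Lemma \ref{lemma-dual-cone} --- is linear-algebraic: since the $\beta_j$ span $\ft^\ast$, write $u_k = \sum_j c_{kj}\beta_j(u)$ and thereby cancel one denominator factor for each numerator factor, inducting on $|J|$. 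Moreover the order of operations matters: one should first apply (ii) and (iii), which force the surviving numerator degree to be exactly $N-n$, and only then run the cancellation, so that the output is a combination of terms $e^{is\lambda(u)}/\prod_{j\in I}\beta_j(u)$ with $|I|=n$ to which (iv) applies directly. A secondary inaccuracy: the hypothesis that $\lambda$ avoids the cones of dimension $\leq n-1$ spanned by subsets of the $\beta_j$ is not what makes the contour integral converge (that is governed by $\beta_j\in\Lambda^\ast$ and $\eta\in\Lambda$); its role is to keep $\lambda$ off the walls of the resulting piecewise-polynomial function, where the Heaviside factors in (iv) and in the iterated residues would be ambiguous.
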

Although the above properties determine the residue operation completely, they can be
difficult to work with. However, in the one-dimensional case it is essentially the usual
residue operation from complex analysis. Consider meromorphic functions of $z \in \BC$ which 
take the form
\begin{equation} f(z) = \sum_{j=1}^m g_j(z) e^{i\lambda_j z} \end{equation}
where $g_j(z)$ are rational functions and $\lambda_j \in \BR\setminus\{0\}$. We introduce
an operation $\Res_z^+$ defined by
\begin{equation} 
 \Res_z^+ f(z) dz = \sum_{\lambda_j \geq 0} \sum_{b \in \BC} 
   \Res\left(g_j(z) e^{i \lambda_j z} dz; z = b \right).
\end{equation}
The following proposition reduces the residue operation to a series of iterated one-dimensional
residues.
\begin{proposition} \label{prop-iterated-residue} \cite[Proposition 3.4]{JeffreyKirwan96}
Let 
\begin{equation} h(u) = \frac{q(u) e^{i\lambda(u)}}{\prod_{j=1}^N \beta_j(u)} \end{equation}
where $q(u)$ is a polynomial, and $\lambda$ is not in any proper subspace spanned by
a subset of $\{\beta_1, \ldots, \beta_N\}$. For a generic choice of coordinate system
$\{u_1, \ldots, u_n\}$ on $\ft$ for which $(0,\ldots,0,1) \in \Lambda$ we have
\begin{equation} \Res^\Lambda\left( h(u) du \right) 
    = \Delta \Res^+_{u_1} \cdots \Res^+_{u_n}
       h(u) du_1 \wedge \cdots \wedge du_n \end{equation}
where the variables $u_1, \ldots, u_{n-1}$ are held constant while calculating 
$\Res^+_{u_n}$, and $\Delta$ is the determinant of any $n \times n$ matrix
whose columns are the coordinates of an orthonormal basis of $\ft$ defining the
same orientation as the chosen coordinate system.
\end{proposition}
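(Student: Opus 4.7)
\medskip

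The plan is to verify that the iterated residue operation on the right-hand side satisfies the four characterizing properties (i)-(iv) of the Jeffrey-Kirwan residue $\Res^\Lambda$, and then invoke the uniqueness statement from the preceding proposition. Denote the candidate operation by
\begin{equation}
  \widetilde{R}^\Lambda(h(u)\,du) := \Delta \, \Res^+_{u_1} \cdots \Res^+_{u_n} h(u)\, du_1 \wedge \cdots \wedge du_n.
\end{equation}
Linearity of $\widetilde{R}^\Lambda$ is immediate from the linearity of each $\Res^+_{u_i}$, so the content is in the four properties.

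First I would fix a \emph{generic} coordinate system $\{u_1,\dots,u_n\}$ with $(0,\dots,0,1)\in\Lambda$, where generic means that no proper subset of $\{\beta_1,\dots,\beta_N\}$ has its span contained in a coordinate hyperplane and that $\lambda$ projects nontrivially at each stage. Under this genericity, when we perform the $u_n$-residue first, each linear form $\beta_j(u)$ either has a nonzero coefficient of $u_n$ (and so contributes a simple pole in $u_n$ at $u_n = -\beta_j'(u_1,\dots,u_{n-1})/(\beta_j)_n$) or is independent of $u_n$ (and passes through as a factor). The condition $(0,\dots,0,1) \in \Lambda$ forces $\lambda_n := \lambda(0,\dots,0,1) > 0$ (after a small perturbation), so $\Res^+_{u_n}$ picks up exactly those poles where the residue $e^{i\lambda_n u_n}$ at $u_n = u_n^\ast$ contributes. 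This reduces the problem to an $(n-1)$-dimensional residue of the same form, setting up an induction.

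Next I would verify the four properties in turn. Property (i): if $\{\beta_j\}$ fails to span $\ft^\ast$, then after a generic change of coordinates at least one $u_i$ appears in none of the $\beta_j$, and the corresponding $\Res^+_{u_i}$ of a pure exponential (times a polynomial) vanishes since there are no poles. Property (ii) is simply the Taylor expansion of $e^{i\lambda(u)}$ against the $s \to 0^+$ regularization; one must commute the limit with the finite sum of iterated residues, which is justified by observing that only finitely many terms in the Taylor series of $e^{i(1-s)\lambda(u)}$ contribute, by degree considerations. Property (iii) is a degree-counting argument: each $\Res^+_{u_i}$ reduces the number of poles by one and contributes a factor of some $(\beta_j)_n^{-1}$, and the overall scaling under $u \mapsto tu$ forces the limit to vanish unless $N - |J| = n$. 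Property (iv) is the base case: when $N = n$ and the $\beta_j$ form a basis, iterated one-dimensional residues at simple poles give a product of Jacobians that assembles, together with $\Delta$, into exactly $\bar\beta^{-1}$ on the cone spanned by the $\beta_j$ and vanishes otherwise, because the sign condition determining which poles are picked up by $\Res^+_{u_n},\dots,\Res^+_{u_1}$ is precisely the condition that $\lambda$ lie in the open cone they span.

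The main obstacle I expect is property (iv) combined with the genericity of coordinates: one must carefully track signs, orientations, and the appearance of $\Delta$ to recover the intrinsic determinant $\bar\beta$. In particular, one must check that $\widetilde{R}^\Lambda$ is independent of the choice of generic coordinate system compatible with $\Lambda$; the factor $\Delta$ is the precisely correction that absorbs any Jacobian between two such choices, and verifying this compatibility (for example, by reducing to change-of-basis within $GL(n,\BR)$ preserving $(0,\dots,0,1) \in \Lambda$ up to small perturbation) is the technical heart of the argument. Once (iv) is established, the inductive step for general $N \geq n$ follows by partial fractions in $u_n$ and reduction to dimension $n-1$, and the uniqueness from the preceding proposition finishes the proof.
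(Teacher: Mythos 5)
The paper does not prove this statement: it is quoted verbatim from Jeffrey--Kirwan (Proposition 3.4 of \cite{JeffreyKirwan96}) and used as a black box, so there is no in-paper argument to compare yours against. Your overall strategy --- check that the iterated residue is linear and satisfies the four characterizing properties of the preceding proposition, then invoke uniqueness --- is the natural and correct route, and it is essentially how this identification is established in the literature. Note also that, as you half-observe, coordinate-independence of the iterated residue does not need to be checked separately: it is a corollary of the identification with $\Res^\Lambda$ once uniqueness has been invoked, not a prerequisite for it.

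That said, two points need repair before this becomes a proof. First, your mechanism for property (i) is wrong as stated: in a \emph{generic} coordinate system every $\beta_j$ has a nonzero coefficient on every $u_i$, so it is not true that some coordinate ``appears in none of the $\beta_j$.'' The actual reason the iterated residue vanishes when $\{\beta_j\}$ spans a proper subspace of dimension $d<n$ is that each one-dimensional residue consumes one dimension of that span; after at most $d$ steps the remaining integrand is, in the current variable, a polynomial times an exponential with no poles, and $\Res^+$ of such a function is zero. (A two-variable example with a single $\beta_1=au_1+bu_2$, $b\neq 0$, already shows the failure of your version: $\Res^+_{u_2}$ picks up a genuine pole, and the vanishing only happens at the \emph{next} stage.) Second, property (iv) --- the $N=n$ base case, where the product of one-dimensional residue Jacobians together with $\Delta$ must assemble into $\bar\beta^{-1}$, and where the positivity conditions selecting which poles are crossed must be shown to coincide exactly with the condition that $\lambda$ lie in the cone spanned by $\beta_1,\dots,\beta_n$ --- is the technical heart of the argument, as you acknowledge, but it is only named and not carried out. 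Until that sign and orientation bookkeeping is done explicitly, the proposal is a plan rather than a proof.
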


\begin{example} \label{eg-res-pn} We compute the residue $e^{i\lambda u}/u^{n+1}$
with respect to the dual cone $\Lambda^\ast = \{ \lambda | \lambda > 0 \}$
using Proposition \ref{prop-iterated-residue}.
In this case, it is zero if $\lambda < 0$ and otherwise it is equal to the usual
residue from elementary complex analysis. Hence we have
\begin{equation}
 \Res^\Lambda\left( \frac{e^{i\lambda u}}{u^{n+1}} \right) =
 \theta(\lambda) \Res \left(\frac{e^{i\lambda u}}{u^{n+1}}; u=0\right) = \theta(\lambda) \frac{(-i\lambda)^n}{n!}, \end{equation}
where $\theta(\lambda)$ is the Heaviside step function, defined to be
equal to $1$ for $\lambda > 0$ and $0$ for $\lambda < 0$.
\end{example}

\begin{example} \label{eg-res-blowup} We compute the residue
\begin{equation} \Res^\Lambda \left( \frac{e^{i\lambda_1 u_1 +i\lambda_2 u_2}}{u_1^2 u_2 (u_2-u_1)} \right) \end{equation}
where $\Lambda$ is the dual cone 
$\Lambda^\ast = \{(\lambda_1, \lambda_2) \ | \ 0 < \lambda_2, \lambda_1 < \lambda_2 \}$.
Since in this coordinate system $(0,1) \in \Lambda$,  we may evaluate it via 
Proposition \ref{prop-iterated-residue}. We first take the residue with respect to $u_2$,
keeping $u_1$ fixed. There are two poles: $u_2 = 0$ and $u_2 = u_1$. In the first case
we obtain $-e^{i\lambda_1 u_1} / u_1^3$, and in the second $e^{i(\lambda_1+\lambda_2)u_1} / u_1^3$.
Taking the residue at $u_1 = 0$ in each case, we obtain
\begin{equation} \Res^\Lambda \left( \frac{e^{i\lambda_1 u_1 +i\lambda_2 u_2}}{u_1^2 u_2 (u_2-u_1)} \right) 
   = \theta(\lambda_1) \theta(\lambda_2) \frac{\lambda_1^2}{2}  
    - \theta(\lambda_1+\lambda_2) \theta(\lambda_2) \frac{(\lambda_1+\lambda_2)^2}{2}, \end{equation}
where the Heaviside factors are due to the fact that in each iterated one-dimensional
residue, only those poles for which $\lambda_j > 0$ contribute.
\end{example}

\begin{remark} In the examples above, the chosen coordinate systems do not 
satisfy the genericity assumption of Proposition \ref{prop-iterated-residue}. However,
this assumption is needed only to avoid ambiguous Heaviside factors of the form
$\theta(0)$. As long as we assume that $\lambda$ is sufficiently generic, we are free
to work in non-generic coordinate systems. However, the assumption that $(0,\ldots,1) \in \Lambda$
is crucial.
\end{remark}

We shall need the following lemma in the next section.
\begin{lemma} \label{lemma-dual-cone}
If $\lambda$ is not in the dual cone $\Lambda^\ast$ then
\begin{equation} \Res^\Lambda \left( \frac{u^J e^{i\lambda(u)}}{\prod_{j=1}^N \beta_j(u)} \right) = 0 \end{equation}
\end{lemma}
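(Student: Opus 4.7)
The plan is to reduce the computation to the iterated residue formula of Proposition \ref{prop-iterated-residue}, applied in a cleverly chosen coordinate system adapted to $\lambda$. Since $\lambda \notin \Lambda^\ast$, by definition of the dual cone there exists a vector $v \in \Lambda$ with $\lambda(v) < 0$. Because $\Lambda$ is an open cone and the condition $\lambda(w) < 0$ cuts out an open half-space, the intersection $\Lambda \cap \{w : \lambda(w) < 0\}$ is open and non-empty, so we may choose $v$ in this intersection satisfying any finite number of open genericity conditions. In particular, we may extend $v$ to a basis $\{e_1, \ldots, e_{n-1}, e_n = v\}$ of $\ft$ so that the resulting coordinate system is generic in the sense required by Proposition \ref{prop-iterated-residue}.

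In these coordinates, $(0, \ldots, 0, 1) = e_n = v \in \Lambda$, so Proposition \ref{prop-iterated-residue} gives
\begin{equation}
\Res^\Lambda\!\left( \frac{u^J e^{i\lambda(u)}}{\prod_{j=1}^N \beta_j(u)}\, du \right)
= \Delta \cdot \Res^+_{u_1} \cdots \Res^+_{u_n}\!\left( \frac{u^J\, e^{i(\lambda_1 u_1 + \cdots + \lambda_n u_n)}}{\prod_{j=1}^N \beta_j(u)}\, du_1 \wedge \cdots \wedge du_n \right),
\end{equation}
where $\lambda_k = \lambda(e_k)$; by our choice, $\lambda_n = \lambda(v) < 0$, and $\Delta \neq 0$.

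The key step is then to observe that the innermost residue $\Res^+_{u_n}$ already vanishes. When we evaluate this operator, we hold $u_1, \ldots, u_{n-1}$ fixed and view the integrand as a function of $u_n$ of the form
\begin{equation}
 g(u_n) \cdot e^{i \lambda_n u_n},
\end{equation}
where $g(u_n)$ is a rational function in $u_n$ (whose coefficients are meromorphic in the remaining variables). This expression involves a single exponential in $u_n$, whose frequency $\lambda_n$ is strictly negative. The definition of $\Res^+_{u_n}$ restricts the sum to frequencies $\geq 0$, so the sum is empty and the iterated residue vanishes. Consequently the full iterated residue is zero, and the lemma follows.

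I do not anticipate serious obstacles: the only delicate point is verifying that $v$ can be chosen to simultaneously lie in $\Lambda$, satisfy $\lambda(v) < 0$, and give rise to a generic coordinate system, but this is immediate since all three are open conditions on the non-empty open set $\Lambda \cap \{\lambda < 0\}$. The genericity of the coordinate system is needed only to avoid ambiguous $\theta(0)$ Heaviside contributions elsewhere, which as noted in the remark following Proposition \ref{prop-iterated-residue} is not essential as long as the overall $\lambda$ is assumed to be sufficiently generic, which it is by hypothesis.
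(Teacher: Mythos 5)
Your proof is correct, but it takes a genuinely different route from the one in the paper. The paper's argument is purely algebraic and stays inside the axiomatic characterization of $\Res^\Lambda$: using properties (ii) and (iii) it reduces to the case $|J|=N-n$, then repeatedly cancels numerator factors against denominator factors via the relations $u_k=\sum_j c_{kj}\beta_j(u)$ to express everything as a combination of terms $\prod_{j\in I}\beta_j(u)^{-1}$ with $|I|=n$, and finally invokes property (iv) together with the (implicit) fact that the cone spanned by any subset of the $\beta_j$ is contained in the convex cone $\Lambda^\ast$. You instead exploit the iterated-residue formula of Proposition \ref{prop-iterated-residue}: the failure of $\lambda\in\Lambda^\ast$ is witnessed by a single direction $v\in\Lambda$ with $\lambda(v)<0$, and making $v$ the last coordinate forces the innermost $\Res^+_{u_n}$ to vanish because its only exponential frequency is strictly negative. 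Both arguments are sound. Yours is shorter and pinpoints the geometric reason for the vanishing, but it relies on the standing genericity hypothesis on $\lambda$ and on the (routine, and correctly flagged) verification that a generic coordinate system can be chosen with $e_n=v\in\Lambda\cap\{\lambda<0\}$; the paper's version avoids any coordinate choice and any appeal to Proposition \ref{prop-iterated-residue}, at the cost of an induction on $|J|$ and the partial-fraction bookkeeping.
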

\begin{proof} If $\{\beta_1, \ldots, \beta_N\}$ do not span $\ft^\ast$ as a vector
space, then by property (i) the residue is zero and there is nothing to show. 
Otherwise, by properties (ii) and (iii) it suffices to consider residues of the form
\begin{equation} \Res^\Lambda \left( \frac{u^J e^{i\lambda(u)}}{\prod_{j=1}^N \beta_j(u)} \right) \end{equation}
with $|J| = N - n$. Since $\{\beta_1, \ldots, \beta_N\}$ span $\ft^\ast$ as a vector space,
we can write
\begin{equation} u_k = \sum_{j=1}^N c_{kj} \beta_j(u) \end{equation}
for some constants $c_{kj}$, for each $k = 1, \ldots, n$. Hence
\begin{equation} \frac{u_k}{\prod_{j=1}^N \beta_j(x)} = \sum_{j} \frac{c_{kj}}{\prod_{j\neq k} \beta_j(u)}. \end{equation}
By induction on $|J|$, $u^J / \prod_{j=1}^N \beta_j(u)$ can be written as 
linear combination of terms of the form $\prod_{j \in I} \beta_j(u)^{-1}$ where
$|I| = n$. By property (iv), the corresponding residues are $0$ if $\lambda$ is not
in the dual cone $\Lambda^\ast$.
\end{proof}

\subsection{The Linear Residue Formula}
We now recall the Jeffrey-Kirwan residue formula. Let $G$ be a compact group acting on a 
compact symplectic manifold $X$ with moment map $\mu$. Let $\alpha \in \fg^\ast$ be regular
central and let $\FX_\alpha = X \reda{\alpha} G$ be the symplectic reduction. Let $T \subset G$
be a maximal torus, and let $X^T$ be the $T$-fixed point set. Let $\mathcal{F}$ denote the
collection of connected components of $X^T$. For each component $F \in \mathcal{F}$,
we have the $T$-equivariant Euler class $e_F \in H^\ast_T(F)$ of the normal bundle. By the splitting
principle, we can assume that each $e_F$ factors as a product
\begin{equation}
  e_F = \prod_{i=1}^{n_F} (c^F_i + \beta^F_i)
\end{equation}
where the $c^F_i$ are the Chern classes and the $\beta^F_i$ the weights of the decomposition
of the pullback of the normal bundle into line bundles. Let $\mathcal{B}$ be the collection of all weights
appearing in this decomposition, for all components $F \in \mathcal{F}$. Let $\Lambda \subset \ft$
be any cone complementary to the hyperplanes $\{ \beta(u) = 0 \ | \ \beta \in \mathcal{B} \}$.

\begin{theorem} \cite{JeffreyKirwan95,JeffreyKirwan96} \label{thm-residue-formula}
Let $G$ act in a Hamiltonian fashion on a compact symplectic manifold
$(X, \omega)$, and let $f \in H_G^\ast$. Then
  \begin{equation}
    \int_{\FX_\alpha} \kappa(f) e^\omega = n_0 c_G 
      \Res^\Lambda \left( \varpi(u)^2 \sum_{F \in \mathcal{F}}  r^f_F du \right),
  \end{equation}
where $\varpi$ is the product of the positive roots of $G$,
$n_0$ is the order of the stabilizer in $G$ of a generic point of $\mu^{-1}(\alpha)$,
and the constant $c_G$ is defined by
\begin{equation}
 c_G = \frac{i^n}{|W| \mathrm{vol}(T)};
\end{equation}
$u$ is a coordinate on $\ft \otimes \BC$ and $\mathcal{F}$ is the set of components of the fixed
point set $X^T$ of the action of $T$ on $M$. If $F$ is one of these components the 
meromorphic function $r^f_F$ on $\ft \otimes \BC$ is defined by
\begin{equation}
  r^f_F(u) = e^{i\left(\mu_T(F)-\alpha\right)(u)} \int_F 
   \frac{i^\ast_F \left(\kappa(f)(u) e^\omega \right)}{e_F(u)}
\end{equation}
where $i_F: F \to M$ is the inclusion and $e_F$ the $T$-equivariant Euler class of the
normal bundle to $F$ in $M$. The individual terms of the sum on the right hand side
depend on the choice of $\Lambda$, but the overall sum does not.
\end{theorem}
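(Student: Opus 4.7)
The strategy, following Jeffrey and Kirwan, is to combine abelian localization on $X$ with a Fourier-theoretic extraction of the value at $\alpha$. The starting point is the equivariant symplectic form $\bar\omega(u) = \omega - \ev{\mu, u}$ for $u \in \fg$, so that the equivariantly closed form $f(u)\, e^{\bar\omega(u)}$, after a shift by $e^{i\ev{\alpha,u}}$, descends under $\mu^{-1}(\alpha) \onto \FX_\alpha$ to $\kappa(f)\, e^\omega$. The goal is to match two evaluations of this object: genuine integration over $\FX_\alpha$ on one side, and abelian localization on $X$ combined with a residue-type inversion on the other.

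First I would establish the distributional identity
\begin{equation*}
\int_{\FX_\alpha} \kappa(f) e^\omega = n_0 c_G \, \Res^\Lambda \left[\varpi(u)^2 f(u) \left(\int_X e^{i(\omega - \ev{\mu - \alpha, u})}\right) du\right],
\end{equation*}
in which $\Res^\Lambda$ plays the role of the formal inverse Fourier transform evaluated at $\alpha$. The factor $\varpi(u)^2$ appears through the Weyl integration formula when passing from $G$-equivariant to $T$-equivariant data (the squared Vandermonde being the Jacobian of the map from $T$-orbits to $G$-orbits), and $n_0$ corrects for the generic stabilizer of $G$ on $\mu^{-1}(\alpha)$. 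This step is essentially the content of Witten's nonabelian localization principle, and it can be derived either via Witten's stationary-phase argument on $|\mu-\alpha|^2$ or algebraically via the shifting trick combined with Kalkman's wall-crossing formula.

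Next I would apply ABBV abelian localization to the inner integral, producing
\begin{equation*}
\int_X e^{i(\omega - \ev{\mu-\alpha, u})} = \sum_{F \in \mathcal{F}} e^{i\ev{\mu_T(F)-\alpha, u}} \int_F \frac{e^{i\omega}}{e_F(u)},
\end{equation*}
with the overall sign chosen to match the theorem's convention. Multiplying by $f(u)$ and summing reproduces exactly the integrand $\varpi(u)^2 \sum_{F} r^f_F(u)\, du$ appearing in the statement. The axiomatic characterization (i)--(iv) of $\Res^\Lambda$ is precisely what one needs for the outer Fourier-type operation to be computed by this residue; Lemma \ref{lemma-dual-cone} ensures that only those components $F$ with $\mu_T(F) - \alpha \in \Lambda^\ast$ contribute nontrivially, mirroring the support condition on the distribution being inverted.

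The hard part will be rigorously justifying the distributional identity in the first step and, equally importantly, verifying that the total sum over $\mathcal{F}$ is independent of the choice of polarizing cone $\Lambda$. The individual terms $r^f_F$ involve phases $\mu_T(F)-\alpha$ that can lie on either side of a given separating hyperplane, so their residues depend sensitively on $\Lambda$; only the combined sum is $\Lambda$-invariant. Proving this invariance amounts to verifying a Kalkman-type wall-crossing identity, which requires analyzing what happens when $\Lambda$ crosses a wall spanned by a proper subset of $\CB$. A secondary technical issue is the requirement that $\alpha$ be generic with respect to $\CB$, but a small deformation together with the constancy of intersection pairings across a chamber of regular values handles this.
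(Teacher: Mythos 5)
This theorem is not proved in the thesis: it is quoted verbatim from Jeffrey--Kirwan \cite{JeffreyKirwan95,JeffreyKirwan96}, so there is no in-paper argument to compare yours against. Judged against the original references, your outline does capture the correct strategy --- Witten-style nonabelian localization to reduce the integral over $\FX_\alpha$ to a $\Res^\Lambda$ applied to $\varpi(u)^2 f(u)\int_X e^{i(\omega - \ev{\mu-\alpha,u})}$, followed by Atiyah--Bott--Berline--Vergne localization over the components of $X^T$ to produce the terms $r^f_F$. You have also correctly located where all the actual work lives: the ``distributional identity'' in your first step is the entire content of the nonabelian localization theorem, and in \cite{JeffreyKirwan95} it is established by integrating the equivariant form over $\fg$ against a Gaussian regulator $e^{-\epsilon|u|^2/2}$, splitting the result into a contribution from $\mu^{-1}(\alpha)$ plus exponentially suppressed terms from the other critical sets of $|\mu-\alpha|^2$, and then carefully taking $\epsilon \to 0$; none of that analysis appears in your sketch, so as written this is a proof strategy rather than a proof.

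One smaller point: you frame the $\Lambda$-independence of the total sum as a Kalkman-type wall-crossing identity to be verified separately. In the Jeffrey--Kirwan treatment this is not an extra step --- the formula is derived for an arbitrary admissible cone $\Lambda$, and since the left-hand side $\int_{\FX_\alpha}\kappa(f)e^\omega$ manifestly does not depend on $\Lambda$, the independence of the right-hand side is a corollary rather than a hypothesis. (Wall-crossing in $\alpha$, across walls in the image of $\mu_T$, is a genuinely separate phenomenon and is what the thesis exploits later via $\Res_\alpha$; it should not be conflated with varying $\Lambda$ at fixed $\alpha$.) Your closing remark about deforming $\alpha$ to a generic value is fine.
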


We would like to apply the residue formula to quotients of the form $\FX_\alpha = X \reda{\alpha} G$,
where $X = \BC^N$. Unfortunately, this does not satisfy the hypotheses of Theorem \ref{thm-residue-formula}
as $X$ is not compact. We will need the following.
\begin{theorem} \label{thm-linear-residue-formula}
Let $G$ be a compact group with maximal torus $T$, and suppose $G$ acts unitarily on 
$X = \BC^N$. Let $\alpha \in \fg^\ast$ be regular central and let $\FX_\alpha = X \reda{\alpha} G$. 
Let $\{\beta_1, \ldots, \beta_N\}$ be the multiset of weights of the action
of $T$ on $X$ (counted with multiplicity). Suppose further that there is some $\xi \in \ft$ 
such that the $\xi$-component of the moment map is proper and bounded below. 
Then for all $f \in H_G^\ast$, we have
  \begin{equation} \label{eqn-linear-residue-formula}
    \int_{\FX_\alpha}\kappa(f) e^\omega = n_0 c_G 
      \Res^\Lambda \left( \frac{\varpi^2(u) f(u) e^{-i\alpha(u)}}{\prod_{j=1}^N \beta_j(u)} \right), 
  \end{equation}
where $n_0$ and $c_G$ are as in Theorem \ref{thm-residue-formula}, and the cone
$\Lambda$ is defined by
\begin{equation}
  \Lambda = \{ u \in \ft \ | \ \beta_j(u) < 0, j = 1, \ldots, N\}.
\end{equation}
\end{theorem}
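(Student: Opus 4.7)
The strategy is to reduce to a compact setting by a symplectic cut and then invoke the original Jeffrey–Kirwan residue formula (Theorem \ref{thm-residue-formula}), isolating the contribution of the origin.

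First I would compactify $X = \BC^N$ using the hypothesis that some component of the moment map is proper and bounded below. Lemma \ref{lemma-cut} (or an easy variant adapted to a single proper $\xi$-component) provides, for a sufficiently large cut parameter $\sigma$, an identification of K\"ahler quotients $\FX_\alpha \iso X_\sigma \reda{\alpha} G$, with $X_\sigma$ compact. Applying Theorem \ref{thm-residue-formula} to $X_\sigma$ gives
\begin{equation}
\int_{\FX_\alpha} \kappa(f) e^\omega = n_0 c_G \, \Res^\Lambda\left(\varpi(u)^2 \sum_F r_F^f(u)\, du\right),
\end{equation}
where $F$ ranges over the connected components of $X_\sigma^T$.

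Next I would analyze $X_\sigma^T$. Under the inclusion $X \hookrightarrow X_\sigma$, and assuming (without loss of generality) all weights $\beta_j$ are non-zero so that $0$ is the unique $T$-fixed point of $X$, the fixed-point set decomposes as $X_\sigma^T = \{0\} \sqcup F_\infty$ where $F_\infty \subset X_\sigma \setminus X$ consists of components introduced by the cut. At the origin, the tangent space is $\BC^N$ with $T$-weights $\{\beta_1,\dots,\beta_N\}$, the $T$-equivariant Euler class of the normal bundle is $\prod_j \beta_j(u)$, and $\mu_T(0) = 0$. Thus the contribution from the origin is precisely
\begin{equation}
r_0^f(u) = \frac{f(u)\, e^{-i\alpha(u)}}{\prod_{j=1}^N \beta_j(u)},
\end{equation}
which is the integrand on the right-hand side of (\ref{eqn-linear-residue-formula}).

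The crucial remaining step is to verify that every component $F \subset F_\infty$ contributes zero to the residue. The cone $\Lambda = \{u : \beta_j(u) < 0,\ \forall j\}$ is chosen exactly so that the dual cone $\Lambda^\ast$ is the cone spanned by $-\beta_1,\dots,-\beta_N$; in particular $-\alpha \in \Lambda^\ast$ for our reduction parameter. By construction of the symplectic cut, components in $F_\infty$ arise from coordinate directions that have been ``pushed to infinity,'' and for large $\sigma$ their moment map values $\mu_T(F)$ are large positive multiples of certain weights $\beta_j$, so that $\mu_T(F) - \alpha$ lies \emph{outside} $\Lambda^\ast$. By Lemma \ref{lemma-dual-cone}, the residue $\Res^\Lambda(r_F^f\, du)$ then vanishes, and summing over $F_\infty$ leaves only the origin's contribution, yielding the formula.

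The main obstacle is the last step: cleanly identifying the fixed components in $F_\infty$ and checking the moment-map inequality $\mu_T(F) - \alpha \notin \Lambda^\ast$ uniformly in all cases, particularly when $G$ is nonabelian and the chosen circle $\chi \colon S^1 \hookrightarrow G$ produces a complicated cut. I would handle this by choosing the cutting circle to be generated by a vector in $\Lambda$ itself (always possible since $\Lambda$ is non-empty and open under our hypothesis), so that the moment map values of $F_\infty$ grow in the direction opposite to $\Lambda^\ast$ as $\sigma \to \infty$, making the genericity/vanishing argument transparent.
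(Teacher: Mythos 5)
Your proposal is correct and follows essentially the same route as the paper: compactify by a symplectic cut justified via Lemma \ref{lemma-cut}, apply Theorem \ref{thm-residue-formula} to the compact quotient, read off the origin's contribution (Euler class $\prod_j \beta_j(u)$, moment value $0$), and kill the fixed components at infinity by showing $\mu_T(F)-\alpha$ escapes the dual cone for large $\sigma$ so that Lemma \ref{lemma-dual-cone} applies. The paper uses the standard diagonal weight-one cutting circle and verifies the escape by pairing with $\xi$ (where $\beta_j(\xi)>0$ for all $j$), which is the same mechanism as your suggestion of cutting along a direction in $\Lambda$.
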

\begin{proof} The moment map for the $T$ action is given by
\begin{equation} \mu_T(x) = \frac{1}{2} \sum_{j=1}^N \beta_j |x_j|^2, \end{equation}
hence the $\xi$-component is
\begin{equation} \mu_T^\xi(x) = \frac{1}{2} \sum_{j=1}^N \beta(\xi) |x_\beta|^2. \end{equation}
Thus $\mu_T^\xi$ is proper and bounded below if and only if $\beta_j(\xi) > 0$ for
all $j=1,\ldots,N$. This shows that $-\xi \in \Lambda$ hence $\Lambda$ is non-empty.
Note that the dual cone $\Lambda^\ast$ contains $\{-\beta_1, \ldots, -\beta_N\}$.
To apply the residue formula, we compactify by taking a symplectic cut (see \S\ref{sec-cuts-modification}).
Let $\sigma > 0$ to be determined. We set
\begin{equation}
  X_\sigma := (X \oplus \BC) \reda{\sigma} S^1
\end{equation}
where the $S^1$ action on $X \oplus \BC$ is given by
\begin{equation} e^{i\theta} \cdot (x \oplus z) = (e^{i\theta} x) \oplus (e^{i\theta} z) \end{equation}
As a variety we have $X_\sigma \iso \BP^{m}$, but the K\"ahler metric and
hence symplectic form depend on the parameter $\sigma$. Any point $[x,z] \in X_\sigma$
can be represented by $x\oplus z \in X \oplus \BC$ satisfying
\begin{equation} \label{eqn-cut}
 \frac{1}{2} |x|^2 + \frac{1}{2} |z|^2 = \sigma.
\end{equation}
Note that this implies that for any $[x,z] \in X_\sigma$ a representative $x\oplus z$
satisfies
\begin{equation}
  \frac{1}{2} |x|^2 \leq \sigma.
\end{equation}
Moreover, $G$ acts naturally on $X_\sigma$ and the moment map for the action is given
by 
\begin{equation}
  \mu_G([x,z]) = \mu_G(x).
\end{equation}
By Lemma \ref{lemma-cut}, for $\sigma \gg \alpha$, we may
identify $X \reda{\alpha} G \iso X_\sigma \reda{\alpha} G$ and hence it
suffices to compute the intersection pairings of $X_\sigma \reda{\alpha} G$
for large $\sigma$. Since $X_\sigma$ is compact we may apply the usual
residue formula, Theorem \ref{thm-residue-formula}. We have to identify the
components of $X_\sigma^T$. In addition to the isolated fixed point 
$[0,\sqrt{2\sigma}] \in X_\sigma$ corresponding to the origin in $X$, 
the cutting procedure introduces additional fixed-points at infinity. These 
are all of the form
\begin{equation}
  F_\beta = \{ [x, 0] \ | \ x \in X(\beta) \}
\end{equation}
where $X(\beta) = \{ x \in X \ | \ t \cdot x = \beta(t) x \ \forall \ t \}$.
On a component $F_\beta$, we have
\begin{equation}
  \mu_T(F_\beta) = \frac{1}{2} \beta |x_\beta|^2 = \frac{1}{2} \sigma
\end{equation}
where we used equation (\ref{eqn-cut}) and the fact that the points of $F_\beta$
have the form $[x_\beta, 0]$. Now consider
\begin{equation}
  \ev{\mu_T(F_\beta) - \alpha, \xi} = \frac{1}{2} \sigma \beta(\xi) - \alpha(\xi).
\end{equation}
If $\sigma > 2 \alpha(\xi) / \beta(\xi)$ then the above quantity is strictly 
positive, and hence $\mu_T(F_\beta) - \alpha$ does not lie in the cone spanned
by $\{-\beta_1, \ldots, -\beta_N\}$. By Lemma \ref{lemma-dual-cone},
the residue of the term corresponding to $F_\beta$ is necessarily zero. Hence
none of the fixed-points at infinity contribute to the residue formula, and
the only remaining term is the contribution from the origin, which is given exactly
by equation (\ref{eqn-linear-residue-formula}).
\end{proof}

As stated, Theorem \ref{thm-linear-residue-formula} provides a method to calculate the
intersection pairings directly, but does not immediately yield the generating function for the intersection pairings.
To obtain this, we define a residue operation $\Res_\alpha$ as follows. Given a meromorphic
differential form $h(u) e^{i\lambda(u)} du$, the residue
$\Res^\Lambda\left( h(u) e^{i\lambda(u)} du \right)$ is a piecewise polynomial function on $\ft^\ast$.
We define $\Res_\alpha\left( h(u) e^{i\lambda(u)} du \right)$ to be the unique polynomial in $\lambda$
which is equal to $\Res^\Lambda \left(h(u) e^{i\lambda(u)} \right)$ when $\lambda$ is in the same
chamber as $\alpha$.

\begin{corollary} Suppose $\alpha \in \fg^\ast$ is regular central.
Then the generating function $Z(\lambda)$ of $\FX_\alpha$ is given by
  \begin{equation}
    Z(\lambda) = n_0 c_G 
      \Res_\alpha \left( \frac{\varpi^2(u) e^{-i\lambda(u)}}{\prod_{j=1}^N \beta_j(u)} \right)
  \end{equation}
\end{corollary}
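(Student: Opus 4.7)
The plan is to verify the identity by matching the Taylor coefficients of both sides at $\lambda = \alpha$. Both sides are polynomials in $\lambda$: the left-hand side $Z(\lambda)$ by finite-dimensionality of $\FX_\alpha$ (as already observed in the excerpt just after the definition of $Z$), and the right-hand side $\tilde Z(\lambda) := n_0 c_G \Res_\alpha\!\bigl(\varpi^2(u) e^{-i\lambda(u)}/\prod_{j=1}^N \beta_j(u)\bigr)$ by the very definition of $\Res_\alpha$, which is engineered to output a polynomial. Two polynomials coincide if and only if their Taylor expansions at any single point agree, so it suffices to show that $D_f Z(\alpha) = D_f \tilde Z(\alpha)$ for every $f \in R^W = H_G^\ast$.

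For the left-hand side, Lemma \ref{lemma-generating} combined with the linear residue formula (Theorem \ref{thm-linear-residue-formula}) gives
\begin{equation*}
D_f Z(\alpha) \;=\; \int_{\FX_\alpha} \kappa(f)\, e^\omega \;=\; n_0 c_G\, \Res^\Lambda\!\left(\frac{\varpi^2(u)\, f(u)\, e^{-i\alpha(u)}}{\prod_j \beta_j(u)}\right).
\end{equation*}
On the right-hand side, $D_f$ acts on the $\lambda$-variable while $\Res_\alpha$ acts on $u$, so they commute; furthermore, evaluating at $\lambda = \alpha$ puts us in the interior of the chamber of $\alpha$, where $\Res_\alpha$ collapses back to $\Res^\Lambda$. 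Computing $D_f$ on $e^{-i\lambda(u)}$ and specializing to $\lambda = \alpha$ therefore yields a companion residue expression; once these two expressions are reconciled, matching Taylor coefficients for every $f \in R^W$ forces $Z = \tilde Z$.

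The main technical obstacle is a sign: the naive computation $D_f e^{-i\lambda(u)} = f(-u) e^{-i\lambda(u)}$ produces $f(-u)$ where we want $f(u)$. My intended resolution is to exploit the Weyl-invariance of $f$: writing $f(u) = |W|^{-1} \sum_{s \in W} f(s \cdot u)$ and performing a change of variable $u \mapsto -s\cdot u$ inside the residue, we use the Weyl invariance of $\varpi^2$, of the unordered multiset $\{\beta_j\}$, and of $\alpha$ (central, hence Weyl-fixed) to transform the summands into one another. Contributions from $\prod_j \beta_j(-u) = (-1)^N \prod_j \beta_j(u)$, from the volume form $du \mapsto (-1)^n du$, and from the $i^n$ factor hidden in $c_G = i^n/(|W|\operatorname{vol} T)$ must cancel consistently to yield the desired identity. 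The chamber-dependence of $\Res^\Lambda$ under these substitutions is benign because $\Res_\alpha$ extracts only the polynomial behaviour within the chamber of $\alpha$ and is therefore insensitive to the Weyl-permutation of chambers. Once these sign and chamber-invariance issues are settled, the proof amounts to nothing more than a direct application of Lemma \ref{lemma-generating} together with Theorem \ref{thm-linear-residue-formula}.
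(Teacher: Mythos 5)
Your overall skeleton is the same as the paper's: both arguments apply $D_f$, invoke Lemma \ref{lemma-generating} and Theorem \ref{thm-linear-residue-formula} to identify the Taylor coefficients of both sides at $\lambda=\alpha$, and conclude by uniqueness of the polynomial $Z$. (One small point you share with the paper: matching $D_f$ for $f\in R^W$ only pins down the $W$-invariant part of the Taylor expansion at $\alpha$, so one should also record that both $Z$ and $\tilde Z$ are $W$-invariant as functions of $\lambda-\alpha$; this is easy but worth saying.)

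The genuine problem is your third paragraph. You are right that $D_f e^{-i\lambda(u)} = f(-u)\,e^{-i\lambda(u)}$ with the paper's convention $D_f=\sum a_I(-i\partial/\partial\lambda)^I$, but the proposed repair by Weyl-averaging and substituting $u\mapsto -s\cdot u$ cannot work. First, no map of the form $u\mapsto -s\cdot u$ preserves the weight multiset $\{\beta_j\}$: the standing hypothesis of Theorem \ref{thm-linear-residue-formula} is that some $\xi$ has $\beta_j(\xi)>0$ for all $j$, so all weights lie in an open half-space, while $\{-s^{-1}\beta_j\}$ lies in the opposite one. Second, the substitution sends $e^{-i\alpha(u)}$ to $e^{+i\alpha(u)}$ and replaces $\Lambda$ by $-\Lambda$; since $\Res^\Lambda$ kills terms whose exponent does not lie in the dual cone, this is not a benign relabelling of chambers but changes which poles contribute. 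The discrepancy is an honest factor $(-1)^{\deg f}$ that no change of variables removes: already for $\BP^n$ ($G=S^1$, $\varpi=1$, all $\beta_j=1$, $\Lambda=\{u<0\}$) one has
\begin{equation*}
\Res^\Lambda\!\left(\frac{(-u)^k e^{-i\alpha u}}{u^{n+1}}\right)=(-1)^k\,\frac{(-i\alpha)^{n-k}}{(n-k)!}\neq \Res^\Lambda\!\left(\frac{u^k e^{-i\alpha u}}{u^{n+1}}\right)
\end{equation*}
for odd $k\le n$. So as written, your argument (and, strictly, the paper's two-line proof, which silently asserts $D_f$ pulls down $f(u)$) computes $(-1)^{\deg f}\int_{\FX_\alpha}\kappa(f)e^\omega$ rather than $\int_{\FX_\alpha}\kappa(f)e^\omega$. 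The correct resolution is not geometric but notational: one must use the Fourier-dual operator adapted to $e^{-i\lambda(u)}$, i.e.\ $D_f=\sum a_I(+i\partial/\partial\lambda)^I$, or equivalently accept that the displayed $Z$ agrees with the true generating function only after the harmless involution $f\mapsto f(-\,\cdot\,)$; since this is a graded ring automorphism of $R^W$, the annihilator ideal --- which is all that is used downstream --- is unaffected. State this sign convention explicitly and the rest of your first two paragraphs goes through exactly as in the paper.
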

\begin{proof} By the formal properties of the residue operation, for any $f \in R^W$ we
have
\begin{equation}
D_f \Res_\alpha \left( \frac{\varpi^2(u) e^{-i\lambda(u)}}{\prod_{j=1}^N \beta_j(u)} \right)
 = \Res_\alpha \left( \frac{ \varpi^2(u) f(u) e^{-i\lambda(u)}}{\prod_{j=1}^N \beta_j(u)} \right).
\end{equation}
By Theorem \ref{thm-linear-residue-formula}, the evaluation at $\lambda=\alpha$ gives
the intersection pairing $\int_{\FX_\alpha} \kappa(f) e^\omega$, and this property determines
the generating function $Z(\lambda)$ uniquely.
\end{proof}

\begin{remark} Generating functions for intersection pairings were also studied in \cite{GS95}.
\end{remark}

\begin{corollary} \label{corollary-residue-quiver-cohomology}
Let $\CQ$ be any acyclic quiver with dimension vector $\bd$, and let $\alpha$ be
a generic moment map level. Then the Kirwan map $H^\ast(BG_\bd) \to H^\ast(\FX_\alpha(\CQ, \bd))$
is surjective, and the intersection pairings of Kirwan classes may be computed
algorithmically by the Theorem \ref{thm-linear-residue-formula}. In particular, the
ring structure on $H^\ast(\FX_\alpha(\CQ,\bd))$ may be computed algorithmically.
\end{corollary}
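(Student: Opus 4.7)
The plan is to verify three things in turn: that the hypotheses of Theorem \ref{thm-linear-residue-formula} are met for any acyclic quiver; that Kirwan surjectivity for $\FX_\alpha(\CQ,\bd)$ follows from the {\L}ojasiewicz machinery of Chapter \ref{ch-morse}; and that these together produce an algorithmic description of the cohomology ring via the Fourier dual generating function.

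First I would check the properness hypothesis of Theorem \ref{thm-linear-residue-formula}. Because $\CQ$ is acyclic, I can fix a topological ordering of $\CI$ and choose real numbers $c_i$ with $c_i < c_j$ whenever there is an edge $i \to j$. Let $\xi$ be the central element in a maximal torus of $G_\bd$ whose block at vertex $i$ is $\sqrt{-1}\, c_i \cdot \mathrm{Id}$. Every weight $\beta$ of the maximal torus on $\Rep(\CQ,\bd)$ has the form $\epsilon_j - \epsilon_i$ associated to some edge $i \to j$, so $\beta(\xi) = c_j - c_i > 0$. Since $\mu^\xi(x) = \tfrac{1}{2}\sum_\beta \beta(\xi) |x_\beta|^2$, this component is proper and bounded below, so the full moment map $\mu_\bd$ is proper. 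In particular $\FX_\alpha(\CQ,\bd)$ is compact for any regular $\alpha$ and Poincar\'e duality holds.

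Second I would deduce Kirwan surjectivity. Applying Theorem \ref{GlobalEstimate} to the unitary representation of $G_\bd$ on $\Rep(\CQ,\bd)$, the function $|\mu_\bd - \alpha|^2$ satisfies a global {\L}ojasiewicz inequality, and by Proposition \ref{FlowClosedness} it is flow-closed. Theorem \ref{KirwanSurjectivity} then yields surjectivity of $\kappa\colon H^\ast_{G_\bd} \to H^\ast(\FX_\alpha(\CQ,\bd))$.

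Finally, with compactness and Kirwan surjectivity in place, the Fourier-duality formalism of Section \ref{sec-residues} applies without modification. Theorem \ref{thm-linear-residue-formula} produces the generating polynomial
\begin{equation*}
  Z(\lambda) = n_0 c_{G_\bd}\, \Res_\alpha \left( \frac{\varpi^2(u)\, e^{-i\lambda(u)}}{\prod_j \beta_j(u)} \right),
\end{equation*}
and by the theorem immediately preceding this corollary, $\ker \kappa$ equals the annihilator of $Z$ in $H^\ast(BG_\bd)$. Since $H^\ast(BG_\bd)$ is a polynomial ring and $Z$ is an explicit polynomial computed by the iterated residue of Proposition \ref{prop-iterated-residue}, testing whether a Weyl-invariant polynomial $f$ lies in $\ker\kappa$ reduces to checking $D_f Z = 0$, and a complete list of relations may be extracted by standard effective commutative algebra (for instance, by Gr\"obner basis computation on the module $H^\ast(BG_\bd)\cdot Z$). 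The only step requiring genuine input is the topological-ordering argument producing $\xi$, which manifestly fails in the presence of oriented cycles; every other step is a direct packaging of Theorem \ref{thm-linear-residue-formula}, the {\L}ojasiewicz estimate, and the Fourier duality lemmas.
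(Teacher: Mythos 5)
Your argument is correct and is essentially the intended one: the only quiver-specific input is the properness of $\mu^\xi$ for $\xi$ built from a topological ordering of the acyclic quiver (which is exactly the hypothesis Theorem \ref{thm-linear-residue-formula} asks for), and the rest is a direct assembly of Theorem \ref{GlobalEstimate}, Theorem \ref{KirwanSurjectivity}, Theorem \ref{thm-linear-residue-formula}, and the Fourier-duality statement immediately preceding the corollary. The one point worth a remark is that the diagonal circle in $G_\bd$ acts trivially on $\Rep(\CQ,\bd)$ for a connected quiver, so strictly no level is a regular value of the full $G_\bd$-moment map; as elsewhere in the text one should quotient by $PG_\bd = G_\bd/S^1$, which affects neither your properness argument nor the residue computation.
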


\section{Hyperk\"ahler Residues} \label{sec-hyperkahler-residues} 

\subsection{Hyperk\"ahler Residues}
Let $X = \BC^N$ and $M = T^\ast \BC^N$. As before, we assume that a compact Lie group
$G$ acts linearly on $X$, and as before we let $T \subset G$ be a maximal torus. 
Then $M = T^\ast X$ which is naturally a flat hyperk\"ahler manifold.
As before we let $R = H^\ast(BT)$ and $R^W \iso H^\ast(BG)$. We have the
\emph{hyperk\"ahler Kirwan map}
\begin{equation}
  \khk: R^W \to H^\ast(\FM_\alpha).
\end{equation}
We would like to understand the kernel of the hyperk\"ahler Kirwan map by computing
cogenerators. Since $\FM_\alpha$ is non-compact, there is no natural way to integrate over 
it.\footnote{It is possible to define an equivariant integration theory \cite{HauselProudfoot}, but we wish
to work in ordinary cohomology. An equivariant version of the residue formula was developed in \cite{Martens}
as well as \cite{Szilagyi13}.} 
However, we can integrate over cycles. We define the \emph{hyperk\"ahler residue operation} to be
\begin{equation}
  Z: H_\ast(\FM) \to (R^W)^\vee, \ \ 
    \gamma \mapsto Z_\gamma(\lambda) = \frac{1}{|W|}\int_\gamma 
    \kappa \left(\sum_{s \in W} e^{i (\lambda-\alpha)( s\cdot u)} \right) e^\omega.
\end{equation}
As before, the following is an immediate consequence of the definition.
\begin{lemma} For any $f \in R^W$, we have
\begin{equation}
  D_f Z_\gamma(\alpha) = \int_\gamma \kappa(f) e^\omega.
\end{equation}
\end{lemma}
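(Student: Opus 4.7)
The proof is a direct adaptation of Lemma \ref{lemma-generating} to the non-compact hyperk\"ahler setting, the only subtlety being that $\gamma$ is a compactly supported cycle while $\FM_\alpha$ is non-compact. The plan is as follows.

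First I would compute how $D_f$ acts on each summand of the integrand. Writing $f = \sum_I a_I u^I$ so that $D_f = \sum_I a_I(-i\partial/\partial\lambda)^I$, a direct calculation gives
\begin{equation}
  D_f\, e^{i(\lambda-\alpha)(s\cdot u)} = f(s\cdot u)\, e^{i(\lambda-\alpha)(s\cdot u)}
\end{equation}
for each $s \in W$, since $(-i\partial/\partial\lambda_j) e^{i(\lambda-\alpha)(s\cdot u)} = (s\cdot u)_j e^{i(\lambda-\alpha)(s\cdot u)}$. Because $f \in R^W$ is Weyl-invariant, $f(s\cdot u) = f(u)$, and so
\begin{equation}
  D_f\!\left(\sum_{s\in W} e^{i(\lambda-\alpha)(s\cdot u)}\right) = f(u) \sum_{s \in W} e^{i(\lambda-\alpha)(s\cdot u)}.
\end{equation}

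Next I would exchange $D_f$ (acting in $\lambda$) with integration over $\gamma$ and with $\kappa$. Since $\gamma \in H_\ast(\FM_\alpha)$ has compact support and each partial derivative $\partial/\partial\lambda_j$ can be taken inside the integral by smoothness of the integrand in $\lambda$, we obtain
\begin{equation}
  D_f Z_\gamma(\lambda) = \frac{1}{|W|} \int_\gamma \kappa\!\left(f(u) \sum_{s \in W} e^{i(\lambda-\alpha)(s\cdot u)}\right) e^\omega.
\end{equation}

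Finally I would evaluate at $\lambda = \alpha$. Then each exponential reduces to $1$, the Weyl sum contributes a factor of $|W|$, and the factor $\kappa(f(u))$ pulls through to yield $D_f Z_\gamma(\alpha) = \int_\gamma \kappa(f) e^\omega$, as required. The only real thing to verify carefully is the interchange of differentiation and integration, which poses no obstacle since $\gamma$ has compact support so that all integrals are taken over a compact set, and the integrand is smooth in $\lambda$; no analytic subtlety analogous to the flow-closedness arguments of Chapter \ref{ch-morse} arises here.
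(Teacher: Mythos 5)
Your proof is correct and is essentially the argument the paper intends: it states this lemma is "an immediate consequence of the definition" and the computation is identical to the one given for Lemma \ref{lemma-generating} (differentiate the exponential, use Weyl invariance of $f$, evaluate at $\lambda=\alpha$), with integration over the compact cycle $\gamma$ replacing integration over $\FX_\alpha$.
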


\begin{theorem} \label{thm-residue-complete-cogenerators}
The residues $\{ Z_\gamma \ | \ \gamma \in H_\ast(\FM_\alpha) \}$ are a 
complete set of cogenerators for the image of the hyperk\"ahler Kirwan map.
\end{theorem}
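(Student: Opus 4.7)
The plan is to verify the two defining conditions for $\{Z_\gamma\}$ to be a complete set of cogenerators of $S := \im(\khk) \cong R^W/\ker(\khk)$: first, that each $Z_\gamma$ lies in the Fourier dual $S^\vee$; second, that the $R^W$-module these residues generate exhausts $S^\vee$. Throughout I work with real coefficients, as in the rest of the chapter.

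For the first condition I would compute $D_f Z_\gamma$ directly for $f \in R^W$. Since $f$ is Weyl-invariant and $D_f e^{i(\lambda-\alpha)(s\cdot u)} = f(u)\, e^{i(\lambda-\alpha)(s\cdot u)}$, differentiation commutes with integration and Kirwan, yielding
\begin{equation}
  D_f Z_\gamma(\lambda) \;=\; \int_\gamma \kappa(f)\cdot \kappa\!\left(\tfrac{1}{|W|}\sum_{s\in W} e^{i(\lambda-\alpha)(s\cdot u)}\right)e^\omega.
\end{equation}
If $f \in \ker\khk$ the integrand vanishes identically, so $D_f Z_\gamma \equiv 0$ and $Z_\gamma \in S^\vee$. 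Applied instead to a general $f$ and rewritten through the cap-product identity $\int_\gamma \kappa(f)\cdot(-) = \int_{\gamma \frown \kappa(f)}(-)$, the same formula becomes $D_f Z_\gamma = Z_{\gamma \frown \kappa(f)}$. Hence the collection $\{Z_\gamma : \gamma \in H_\ast(\FM_\alpha)\}$ is stable under the $R^W$-action on $S^\vee$, and the $R^W$-submodule it generates coincides with its $\BR$-linear span.

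It therefore suffices to prove that the linear map $\Phi \colon H_\ast(\FM_\alpha) \to S^\vee$, $\gamma \mapsto Z_\gamma$, is surjective. Set $U := \tfrac{1}{|W|}\kappa\!\left(\sum_{s \in W} e^{-i\alpha(s\cdot u)}\right)$; since $U$ has constant term $1$ and $e^\omega = 1 + \omega + \cdots$, the class $U e^\omega$ is a unit in $H^\ast(\FM_\alpha)$. Under the Fourier pairing $S \times S^\vee \to \BR$, $([f], g) \mapsto (D_f g)(0)$, the induced pairing on $S \times H_\ast(\FM_\alpha)$ is $([f], \gamma) \mapsto (D_f Z_\gamma)(0) = \int_\gamma \kappa(f)\cdot U \cdot e^\omega$. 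If $0 \neq [f] \in S$ then $\kappa(f) \neq 0$, and multiplication by the unit $U e^\omega$ keeps $\kappa(f)\cdot U \cdot e^\omega$ nonzero in $H^\ast(\FM_\alpha)$; non-degeneracy of the ordinary Kronecker pairing then furnishes a class $\gamma \in H_\ast(\FM_\alpha)$ with $\int_\gamma \kappa(f)\cdot U\cdot e^\omega \neq 0$. Thus the transpose $S \to H_\ast(\FM_\alpha)^\ast$ of $\Phi$ is injective, and dualizing in the finite-dimensional category gives the required surjectivity of $\Phi$.

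The main obstacle I anticipate is the finite-dimensionality input needed for the duality step, which rests on two facts: that $\im(\khk)$ is finite-dimensional over $\BR$ (so Lemma~\ref{lemma-fourier} applies), and that $H_\ast(\FM_\alpha)$ is likewise finite-dimensional so that injectivity transposes to surjectivity. Both are available in the settings of interest here --- the gradient-flow deformation retracts of Corollary~\ref{DeformationRetract} and Corollary~\ref{HomotopyCorollary} present $\FM_\alpha$ as a deformation retract of a finite-type space. Once these are granted, the remainder of the argument is a purely formal duality calculation powered by the invertibility of $U e^\omega$.
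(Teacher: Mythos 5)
Your proof is correct and follows essentially the same route as the paper's: the paper's own (much terser) argument likewise observes that a nonzero class $\kappa(f)$ pairs nontrivially with some $\gamma\in H_\ast(\FM_\alpha)$, so $D_fZ_\gamma\neq 0$, and concludes that the annihilator of the module generated by the residues is exactly $\ker\khk$. Your write-up simply makes explicit the two steps the paper leaves implicit --- that the span of the $Z_\gamma$ is already an $R^W$-submodule (via $D_fZ_\gamma=Z_{\gamma\frown\kappa(f)}$) and that having annihilator equal to $\ker\khk$ forces this submodule to be all of $S^\vee$ by the finite-dimensional perfect pairing --- both of which are valid.
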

\begin{proof} If $\kappa(f) \neq 0$, then there is some $\gamma \in H_\ast(\FM_\alpha)$ 
such that $\ev{\gamma, \kappa(f)} \neq 0$. Hence $D_f Z_\gamma \neq 0$, and we find that
the annihilator of the $(R^W)^\vee$-module generated by the residues is exactly equal
to the kernel of the Kirwan map.
\end{proof}

In the symplectic case, the residue formula gave an effective method to compute the
residue, and hence the cohomology ring. In the hyperk\"ahler case, we will develop an
algorithm to compute a complete set of cogenerators.

\begin{remark} It is possible to define formal $S^1$-equivariant integration theory
on hyper\"ahler quotients \cite{HauselProudfoot}. However, due to the complicated structure
of the fixed-point set (as evidenced by the calculations in Chapter \ref{ch-quiver}),
these equivariant integrals can be difficult to compute in general. Instead, we choose
to work with ordinary integration. The main feature of this approach is that much information
about the cohomology of hyperk\"ahler quotients can be deduced from the cohomology of
simpler, compact symplectic quotients. 
\end{remark}

\subsection{K\"ahler Quotients as Integration Cycles}

In this section we will show that a large set of hyperk\"ahler residues may be
calculated directly from the residue formula. Suppose that $V \subset M$ is a $G$-invariant linear subspace such that
$\muc|_V = 0$ and that $\mur|_V$ is proper. Assume furthermore that $\alpha$ is
a regular value of $\mur|_V$. Then define
\begin{equation}
  \FV_\alpha := V \reda{\alpha} G.
\end{equation}
This is a natural compact subvariety of $\FM_\alpha$, and hence defines a homology class
$[\FV_\alpha] \in H_\ast(\FM_\alpha)$. Now recall that $H^\ast(\FM_\alpha)$ is independent
of $\alpha$ as long as $\alpha$ is generic. 

\begin{definition} A \emph{VGIT residue} is a hyperk\"ahler residue $Z_\gamma$ associated
to a class $\gamma = [\FV_\alpha]$, for some choice of $V \subset M$ and $\alpha$ as
above. The \emph{VGIT ring} is the quotient $R^W / I$, where the ideal $I$ is the
annihilator of the $(R^W)^\vee)$-module generated by the VGIT residues.
\end{definition}

In the above definition, VGIT is short for variation of GIT. The reason for this terminology
is the following proposition, which follows immediately from the definition.
\begin{proposition}
The VGIT ideal is equal to
\begin{equation}
  \bigcap_{V \subset M, \alpha} \ker \left(\kappa: R^W \to H^\ast(\FV_\alpha) \right)
\end{equation}
where the intersection runs over all $V \subset M$ as above and $\alpha$ generic.
As a consequence, the ordinary Kirwan map $R^W \to H^\ast(\FV_\alpha)$ factors through the VGIT ring, i.e.
\begin{equation}
  R^W \onto R^W / I \onto H^\ast(\FV_\alpha),  
\end{equation}
where $I$ is the VGIT ideal.
\end{proposition}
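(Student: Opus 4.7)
The plan is to establish both containments separately, using the generating function identity $D_f Z_\gamma(\alpha) = \int_\gamma \kappa(f) e^\omega$ from the preceding lemma together with Poincar\'e duality on each compact K\"ahler quotient $\FV_\alpha$. A key preliminary observation, which I would verify first, is that the Kirwan class $\khk(f) \in H^\ast(\FM_\alpha)$ restricts under $\FV_\alpha \into \FM_\alpha$ to the ordinary Kirwan image $\kappa_V(f) \in H^\ast(\FV_\alpha)$. This is essentially formal: both arise from the class $f \in H^\ast(BG)$ via a commutative square of classifying maps, using that $M$ and $V$ are $G$-equivariantly contractible so that $H_G^\ast(M) \iso H_G^\ast(V) \iso H^\ast(BG)$.

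For the inclusion $I \subseteq \bigcap_{V,\alpha} \ker\kappa_V$, take $f \in I$. Since $I$ annihilates every VGIT residue, $D_f Z_{[\FV_\alpha]}(\lambda) \equiv 0$ as a polynomial in $\lambda$. Iterating the generating function identity for any $g \in R^W$ gives
\begin{equation}
  D_g D_f Z_{[\FV_\alpha]}(\alpha) = \int_{\FV_\alpha} \kappa_V(fg) e^\omega = 0.
\end{equation}
Since $\FV_\alpha$ is a compact K\"ahler quotient, the Kirwan map $R^W \to H^\ast(\FV_\alpha)$ is surjective and the target satisfies Poincar\'e duality, so vanishing of all intersection pairings with $\kappa_V(f)$ forces $\kappa_V(f) = 0$.

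For the reverse inclusion, take $f$ in the intersection on the right. Then $\kappa_V(f) = 0$ for every admissible pair $(V,\alpha)$, so all intersection pairings $\int_{\FV_\alpha} \kappa_V(fg) e^\omega$ vanish. These are precisely the Taylor coefficients of $D_f Z_{[\FV_\alpha]}$ at $\lambda = \alpha$; since $Z_{[\FV_\alpha]}$, and hence $D_f Z_{[\FV_\alpha]}$, is a polynomial, vanishing of its Taylor expansion about $\alpha$ forces it to vanish identically. This holds for every VGIT cycle, so $f$ annihilates the $(R^W)^\vee$-submodule generated by all VGIT residues, i.e.\ $f \in I$. The consequence about factorization is then immediate: $I \subseteq \ker(\kappa : R^W \to H^\ast(\FV_\alpha))$ for each particular $V, \alpha$, so this Kirwan map descends to $R^W/I$.

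The main subtlety is not analytic but bookkeeping: one must carefully distinguish between the hyperk\"ahler Kirwan map, the ordinary Kirwan map on $\FV_\alpha$, and the restriction map along $\FV_\alpha \into \FM_\alpha$, and verify that they are compatible, and one must be careful that the quantity $Z_\gamma(\lambda)$ really is polynomial in $\lambda$ (which follows because $\FV_\alpha$ is finite-dimensional and compact, so $e^\omega$ truncates and only finitely many Taylor coefficients are nonzero). After these points, the argument is just a standard Poincar\'e-duality argument applied on each compact cycle $\FV_\alpha$.
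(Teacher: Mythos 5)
Your proof is correct and matches what the paper intends: the paper offers no argument at all (it declares the proposition ``follows immediately from the definition''), and what you have written is exactly the expected unpacking, namely the earlier theorem that $\ann(R^W \cdot Z) = \ker\kappa$ for a compact linear quotient, applied to each $\FV_\alpha$, together with the compatibility of the hyperk\"ahler Kirwan map restricted along $\FV_\alpha \into \FM_\alpha$ with the ordinary Kirwan map of $V \reda{\alpha} G$. The one point you rightly single out --- that this compatibility is the only non-tautological step --- is indeed the content the paper is silently assuming.
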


The residues associated to the classes $[\FV_\alpha]$ may be computed by the linear 
residue formula, Theorem \ref{thm-linear-residue-formula}. Hence the VGIT ideal and
VGIT ring may be computed algorithmically. Moreover, there is a natural inclusion
$\ker \khk \subseteq I_{VGIT}$, and hence a natural surjection $\im(\khk) \onto R_{VGIT}$.

\begin{conjecture} \label{conjecture-vgit-kirwan}
Let $G$ act linearly on $M = T^\ast X$ as above, and assume that the restriction of the
real moment map to $X$ is proper. Then the image of the hyperk\"ahler Kirwan map is equal
to the VGIT ring.
\end{conjecture}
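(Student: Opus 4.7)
The strategy is to reduce the conjecture to a statement purely about homology. By Theorem \ref{thm-residue-complete-cogenerators}, the residues $\{Z_\gamma : \gamma \in H_\ast(\FM_\alpha)\}$ form a complete set of cogenerators for $\im(\khk)$, and the VGIT residues $\{Z_{[\FV_\alpha]}\}$ form a (possibly smaller) set of cogenerators for the VGIT ring. Since $\ker\khk \subseteq I_{\mathrm{VGIT}}$ holds tautologically, the conjecture is equivalent to showing that every class $Z_\gamma$ lies in the $R^W$-submodule of $(R^W)^\vee$ generated by the VGIT residues. The natural way to attack this is to show that the subvarieties $\FV_\alpha \subset \FM_\alpha$, as $V$ ranges over $G$-invariant linear subspaces of $M$ with $\muc|_V = 0$, generate $H_\ast(\FM_\alpha)$ as an $H_\ast^G$-module in a sufficiently strong sense.

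The plan is to exploit the $S^1$-action on $\FM_\alpha$ that scales the cotangent fibers, $s \cdot (x,y) = (x, sy)$. By the argument of \S\ref{sec-quiver-morse-circle}, the induced moment map $\mu_{S^1}$ is proper and bounded below on $\FM_\alpha$, so this action makes $\FM_\alpha$ circle compact. Morse theory with $\mu_{S^1}$ yields a Bialynicki-Birula-type decomposition $\FM_\alpha = \bigsqcup_F S_F$, where the $S_F$ are the stable manifolds of the connected components $F$ of $\FM_\alpha^{S^1}$; the homology $H_\ast(\FM_\alpha)$ is then generated by the fundamental classes $[\overline{S_F}]$. The first main step is to verify that each fixed component $F$ is itself a VGIT cycle: by Proposition \ref{prop-fixed-lift} and the analysis in \S\ref{sec-quiver-morse-circle}, the lift of any $[x,y] \in F$ is fixed by some twisted action $\phi: S^1 \to G \times S^1$, and the set of such compatible representatives is precisely a $G$-invariant linear subspace $V_F \subset M$; since $V_F$ lies in the fixed locus of an isometric $S^1$-action that rotates $\omegac$, one should check that $\muc|_{V_F} = 0$, so that $F = \FV_F$ is a genuine VGIT cycle.

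The second main step is to show that the residue of the closure $[\overline{S_F}]$ reduces to the residue of $[F]$ up to multiplication by an element of $R^W$. The stable manifold $S_F$ is a holomorphic affine bundle over $F$ whose rank is the number of negative $S^1$-weights on $T_FM$; passing to the closure and applying the Thom-Gysin sequence, one expects the relation
\begin{equation}
Z_{[\overline{S_F}]}(\lambda) = e_F^-(u) \cdot Z_{[F]}(\lambda),
\end{equation}
where $e_F^-$ is the equivariant Euler class of the negative normal bundle and $u$ acts on $(R^W)^\vee$ as the appropriate differential operator. Since $e_F^- \in R^W$, this places $Z_{[\overline{S_F}]}$ in the $R^W$-module generated by the VGIT residue $Z_{[F]}$, and summing over $F$ yields the desired statement.

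The main obstacle is the second step, and within it the verification that the Thom-Gysin relation holds at the level of the noncompact residue pairing rather than only integrally on compact pieces. One has to control the failure of Poincar\'e duality on the noncompact $\FM_\alpha$ and ensure that the pairing $\int_{\overline{S_F}} \khk(f) e^\omega$ is computed by the equivariant restriction to $F$ weighted by $e_F^-$ --- this is delicate because $\overline{S_F}$ need not be smooth and $\khk(f) e^\omega$ is not compactly supported. A secondary obstacle is the verification that $\muc$ vanishes on every compatible subspace $V_F$; this is automatic for the component $V_F = X$ yielding $\FX_\alpha$, and plausible for the other components by the weight-space decomposition argument, but in the nonabelian case where $\muc$ has a cubic-in-edges structure (cf.\ Remark \ref{rem-hyperkahler-double}) it requires a direct combinatorial check on the possible $\phi$. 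If both steps succeed, the conjecture follows; otherwise the same plan at least produces an explicit ideal sandwiched between $\ker\khk$ and $I_{\mathrm{VGIT}}$, which would still be of computational interest.
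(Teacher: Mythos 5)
This statement is stated in the thesis as a \emph{conjecture}: the paper offers no proof, only supporting evidence (Proudfoot's theorem for hypertoric varieties, Konno's for hyperpolygon spaces, and the computer verification in Theorem \ref{thm-vgit-rank-3-star-quiver} for small rank-3 star quivers, together with the observation that it fails when the properness hypothesis is dropped, e.g.\ for the ADHM quiver). So your proposal cannot be compared against an existing argument; it must stand on its own, and as written it does not.

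The first gap is in your identification of the $S^1$-fixed components $F$ with VGIT cycles. The locus of compatible representations $V_F$ is a linear subspace, but it is invariant only under the centralizer of the twisting homomorphism $\phi$, not under all of $G$, and $F$ is a quotient of $V_F$ by that (generally proper Levi) subgroup --- compare the type $(2,1)$ and $(1,1,1)$ components in \S\ref{sec-rank-3-star}, which are quotients by $U(2)\times U(1)\times(S^1)^n$ rather than $U(3)\times(S^1)^n$. Such components are not of the form $V\reda{\alpha}G$ for a $G$-invariant linear $V\subset M$, so they are not VGIT cycles in the sense of the definition. Moreover $\muc$ need not vanish on $V_F$: equivariance only gives $s\,\muc(m)=\mathrm{Ad}^\ast_{\phi(s)}\muc(m)$ on $V_F$, which constrains $\muc(m)$ to be a weight vector of weight one for the twisted coadjoint action, not to be zero.

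The second gap is in your passage from $[F]$ to $[\overline{S_F}]$. First, $H_\ast(\FM_\alpha)$ is not generated by the fundamental classes $[\overline{S_F}]$ alone; the Morse--Bott spectral sequence contributes the pushforward of all of $H_\ast(F)$ for each $F$, so even granting everything else you would only control a small part of the cogenerator module of Theorem \ref{thm-residue-complete-cogenerators}. Second, the claimed identity $Z_{[\overline{S_F}]}=e_F^-\cdot Z_{[F]}$ runs in the wrong direction: the self-intersection formula gives $\int_F i_F^\ast\beta=\int_{\overline{S_F}}\beta\cup\eta_F$ with $i_F^\ast\eta_F=e(N)$, i.e.\ it expresses the residue of the \emph{small} cycle as a differential operator applied to that of the big one, $Z_{[F]}=D_{e_F^-}Z_{[\overline{S_F}]}$ (up to correction terms from the lower strata of $\overline{S_F}$, which need not be smooth). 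To conclude that $Z_{[\overline{S_F}]}$ lies in the $R^W$-module generated by $Z_{[F]}$ you would need to invert the Euler class, which is not an element of $R^W$. Finally, note that your steps (2)--(3) nowhere use the properness of $\mur|_X$, which is exactly the hypothesis whose failure is responsible for the known ADHM counterexample; a correct proof must use it in an essential way.
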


\begin{remark} The hypothesis that the restriction of the real moment map to $X$ be
proper is essential. One may check explicitly that the Nakajima varieties associated to the
ADHM quiver (Figure \ref{fig-adhm-quiver}) do not satisfy $\im(\khk) = R_{VGIT}$. However,
because this quiver has a vertex self-loop, it is impossible to find a Lagrangian subspace
$X \subset M$ such that the restriction of $\mur$ is proper.
\end{remark}

This conjecture is motivated by the following results.
\begin{theorem}[Proudfoot \cite{ProudfootGIT}] Conjecture \ref{conjecture-vgit-kirwan} is true for hypertoric varieties.
\end{theorem}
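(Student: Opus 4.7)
The plan is to reduce the conjecture in the hypertoric setting to a combinatorial statement about the hyperplane arrangement $\CA$ and its Gale-dual weight matroid on $\CB$. By Theorem \ref{HypertoricCohomologyRing}, the image of $\khk$ admits the explicit presentation
\begin{equation}
\im(\khk) \iso R^T \big/ \langle u_\CJ : \CJ \text{ a proper flat of } \CB \rangle.
\end{equation}
The automatic surjection $\im(\khk) \onto R_{VGIT}$ gives the containment $\ker \khk \subseteq I_{VGIT}$, so the content of the theorem is the reverse containment $I_{VGIT} \subseteq \langle u_\CJ \rangle$. Thus it suffices to produce a supply of compact K\"ahler subquotients of $\FM_{\alpha'}$, as $\alpha'$ ranges over generic central values, whose Kirwan kernels intersect to exactly $\langle u_\CJ \rangle$.

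The natural supply of such subquotients comes from Lagrangian subspaces indexed by sign vectors. For each $\epsilon \in \{+,-\}^N$, set
\begin{equation}
V_\epsilon := \{(x,y) \in T^\ast\BC^N \suchthat y_i = 0 \text{ if } \epsilon_i = +,\ x_i = 0 \text{ if } \epsilon_i = -\} \iso \BC^N.
\end{equation}
Each $V_\epsilon$ is $T$-invariant; the complex moment map (\ref{MuCFormula}) vanishes identically on $V_\epsilon$ because $x_iy_i = 0$ pointwise; and the restricted real moment map has weights $\{\epsilon_i u_i\}$, so it is proper and bounded below precisely when $\alpha'$ lies in the open cone $\Lambda_\epsilon$ spanned by those weights. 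For such $\alpha'$, the reduction $\FX_{\epsilon,\alpha'} := V_\epsilon \reda{\alpha'} T$ is a compact toric K\"ahler variety embedded in $\FM_{\alpha'}$ as a VGIT cycle. Its Kirwan kernel $J_{\epsilon,\alpha'}$ is the Stanley-Reisner-type ideal of the corresponding toric fan, and the associated VGIT residue is computed by Theorem \ref{thm-linear-residue-formula}. Collectively, as $\epsilon$ and $\alpha'$ vary, these subquotients exhaust the toric components of the ``extended core'' of $\FM_{\alpha'}$ in the sense of Hausel-Sturmfels.

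The theorem then reduces to the combinatorial identity
\begin{equation}
\bigcap_{\epsilon,\, \alpha' \in \Lambda_\epsilon} J_{\epsilon,\alpha'} = \langle u_\CJ : \CJ \text{ a proper flat of } \CB \rangle.
\end{equation}
The inclusion $\supseteq$ is a direct unpacking of Gale duality: for each proper flat $\CJ$, the complement $\CJ^c$ contains a cocircuit, so $u_\CJ = \prod_{\beta \in \CJ^c} \beta$ appears as a Stanley-Reisner relation on every toric component $\FX_{\epsilon,\alpha'}$. The main obstacle is the reverse inclusion: given a monomial $u^I \notin \langle u_\CJ \rangle$, one must produce a pair $(\epsilon,\alpha')$ for which $I^c$ indexes a simplex of the normal fan of $\FX_{\epsilon,\alpha'}$, so that a single term in the linear residue formula produces a nontrivial integral of $u^I$ against a monomial of complementary degree. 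The key point is to use the combined freedom in $\epsilon$ and $\alpha'$: a basis $I^c$ of the matroid is realized as the vertex set of a bounded facet of some reoriented arrangement, at which point the corresponding toric fixed point contributes nontrivially to the residue. Packaging this systematically via the broken circuit basis of Hausel-Sturmfels---so that each standard monomial of $\im(\khk)$ is detected by a distinguished chamber---is what I expect to be the main technical work, but reduces entirely to well-established matroid combinatorics.
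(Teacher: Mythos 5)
The thesis does not actually prove this statement --- it is quoted from Proudfoot --- so there is no internal argument to compare against; your proposal has to stand on its own. Its skeleton is the right one and matches how the cited result is established: the sign-vector subspaces $V_\epsilon$ are exactly the Lagrangian coordinate subspaces on which $\muc$ vanishes, the restricted real moment map has weights $\{\epsilon_i u_i\}$, and the theorem does reduce, via Theorem \ref{HypertoricCohomologyRing}, to showing that the intersection of the Kirwan kernels of these toric quotients is no larger than $\langle u_\CJ\rangle$. Your $\supseteq$ direction is fine: for a proper flat $\CJ$ the span of $\{u_i : i \in \CJ\}$ is a proper subspace of $\ftd$, so a generic $\alpha'$ misses every cone it contains and $\prod_{i\in\CJ^c} u_i$ is a Stanley--Reisner relation for every admissible $\epsilon$.

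The gap is in the reverse inclusion, which is the entire content of the theorem and which you defer to ``well-established matroid combinatorics.'' Two concrete problems with the reduction as you state it. First, the ideals $J_{\epsilon,\alpha'}$ live in $H^\ast_T \cong \BQ[u_1,\dots,u_N]/\ker i^\ast$, where the classes $u_i$ satisfy linear relations; they are not monomial ideals, so their intersection need not be generated by monomials, and producing a detecting pair $(\epsilon,\alpha')$ for each monomial $u^I \notin \langle u_\CJ\rangle$ does not show $\bigcap J_{\epsilon,\alpha'} \subseteq \langle u_\CJ\rangle$. The clean route is the duality of the residue chapter: $f \in J_{\epsilon,\alpha'}$ iff $D_f$ annihilates the volume polynomial of the polytope $\Delta_{\epsilon,\alpha'}$, so $\bigcap J_{\epsilon,\alpha'}$ is the annihilator of the $R$-module spanned by all these volume polynomials, and one must show that module has dimension at least $\sum_k d_k = \dim H^\ast(\FM)$ (Corollary \ref{HypertoricPoincare}); that count is the actual theorem. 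Second, the family $\{V_\epsilon \cong \BC^N\}$ is too small for this count: the cogenerators one needs are the volume polynomials of \emph{all} bounded faces of the arrangement, and a bounded $k$-face need not be a face of a bounded chamber of any reorientation (already two generic lines in $\BR^2$ have a bounded vertex but no bounded chamber for any $\epsilon$; the same failure occurs whenever the weight matroid has a loop, where no $\mur|_{V_\epsilon}$ is even proper). One must therefore also include the lower-dimensional $T$-invariant coordinate subspaces permitted by the definition of the VGIT ideal, whose reductions realize the lower-dimensional compact core components. With the family enlarged in this way and the linear independence of the resulting $\sum_k d_k$ volume polynomials established, your argument closes; as written, it stops just short of the mathematical content.
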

\begin{theorem}[Konno \cite{KonnoPolygon}] Conjecture \ref{conjecture-vgit-kirwan} is true for hyperpolygon spaces.
\end{theorem}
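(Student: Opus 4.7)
The plan is to exhibit a natural family of linear Lagrangian subspaces $V_S \subset T^\ast \BC^{2n}$, indexed by subsets $S \subseteq [n]$, whose associated VGIT subvarieties together cogenerate the image of the hyperk\"ahler Kirwan map. Since the containment $\ker \khk \subseteq I_{VGIT}$ is tautological from the definitions, only the reverse containment $I_{VGIT} \subseteq \ker\khk$ requires proof. I would begin by recalling the setup: the hyperpolygon space $\FM_\alpha(n)$ is the hyperk\"ahler quotient of $T^\ast \BC^{2n}$ by $G = (U(2) \times (S^1)^n)/S^1$ corresponding to the rank-$2$ star quiver with $n$ external legs (Example \ref{ex-polygon-4}), with coordinates $(x_i, y_i)$ where $x_i \in \BC^2$ and $y_i \in (\BC^2)^\ast$. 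Konno has already established hyperk\"ahler Kirwan surjectivity and given explicit generators of $\ker\khk$, so the content of the present statement is that these generators coincide with the VGIT ideal.

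For each subset $S \subseteq [n]$, introduce the linear subspace
\[
V_S := \{(x,y) \in T^\ast \BC^{2n} \suchthat y_i = 0\ \text{for}\ i \in S,\ x_i = 0\ \text{for}\ i \in S^c\}.
\]
Each $V_S$ is $G$-invariant, and every component of the complex moment map $\muc$ vanishes identically on $V_S$, since every bilinear summand $x_i y_i$ contains a factor that is forced to zero. For $\alpha$ in an appropriate chamber and $\#S$ satisfying the usual polygon inequalities, the restriction $\mur|_{V_S}$ is proper with regular value $\alpha$, so
\[
\FV_{S,\alpha} := V_S \reda{\alpha} G
\]
is a compact K\"ahler subvariety of $\FM_\alpha(n)$; up to an obvious reparameterisation of the moment-map level it is a polygon space (possibly with some sign flips on the edge lengths, coming from the swap between $x_i$ and $y_i$ on $S^c$). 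These are precisely the irreducible components of the \emph{extended core} of $\FM_\alpha(n)$ studied by Harada-Proudfoot and Konno, and they are linear GIT quotients to which Theorem \ref{thm-linear-residue-formula} applies.

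Applying the linear residue formula to each $\FV_{S,\alpha}$ produces an explicit expression for the VGIT residue $Z_{[\FV_{S,\alpha}]}(\lambda)$ as a piecewise polynomial in $\lambda$. By Lemma \ref{lemma-generating} and Theorem \ref{thm-residue-complete-cogenerators}, a class $f \in R^W$ satisfies $D_f Z_{[\FV_{S,\alpha}]} \equiv 0$ if and only if $\kappa(f)|_{\FV_{S,\alpha}} = 0$. Hence $f \in I_{VGIT}$ if and only if $\kappa(f)$ restricts to zero on every core component $\FV_{S,\alpha}$, so what must be shown is that the product restriction map
\[
H^\ast(\FM_\alpha(n)) \longrightarrow \bigoplus_{S \subseteq [n]} H^\ast(\FV_{S,\alpha})
\]
is injective.

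The hard part is this last injectivity. My approach would be to use the gradient flow of the moment map for the scaling $S^1$-action on the cotangent fibres (which is flow-closed by the results of Chapter \ref{ch-morse}, and in particular by Corollary \ref{DeformationRetract}) to deformation retract $\FM_\alpha(n)$ onto the extended core $\bigcup_S \FV_{S,\alpha}$, as in \cite{HaradaProudfoot}. A Mayer-Vietoris argument on the stratification of the extended core by the $\FV_{S,\alpha}$, together with the fact that every intersection $\FV_{S,\alpha} \cap \FV_{S',\alpha}$ is itself a core component (for a smaller subset), would then show that the restriction map is injective; this essentially reproduces the key step of Konno's original proof. Once this is in hand, $I_{VGIT} \subseteq \ker \khk$, and combining with the tautological reverse inclusion gives $\im(\khk) = R^W/I_{VGIT} = R_{VGIT}$, as desired.
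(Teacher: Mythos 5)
The paper offers no proof of this statement: it is quoted verbatim from Konno \cite{KonnoPolygon}, so there is no internal argument to compare yours against, and your proposal has to stand on its own. Your overall reduction is the right one: the containment $\ker \khk \subseteq I_{VGIT}$ is the easy direction, and the substance is that a class whose restriction to every compact linear quotient $\FV_{S}$ vanishes must itself vanish, i.e.\ that $H^\ast(\FM_\alpha(n))$ is detected on the core components. This is indeed the standard geometric mechanism (it is how Proudfoot treats the hypertoric case, and it is closely tied to Konno's computation). However, two steps in your sketch have real gaps.

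First, for a fixed generic $\alpha$ with all $\alpha_i > 0$, every one of your quotients $V_S \reda{\alpha} G$ with $S \neq [n]$ is empty: on $V_S$ the $i$-th circle moment map equation for $i \in S^c$ reads $-|y_i|^2 = \alpha_i$, which has no solution. So the $\FV_{S,\alpha'}$ necessarily live at levels $\alpha'$ in other chambers, and they are \emph{not} literally the core components $U_S \subset \FM_\alpha(n)$ of Harada-Proudfoot (which are closures of strata at level $\alpha$, not linear quotients). To run your argument you must check that the chamber-crossing isomorphisms $H^\ast(\FM_\alpha) \iso H^\ast(\FM_{\alpha'})$ intertwine the hyperk\"ahler Kirwan maps, and that under them the class $[\FV_{S,\alpha'}]$ goes to $[U_S]$; neither is automatic and neither is addressed. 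Second, and more seriously, the injectivity of $H^\ast(\FM_\alpha(n)) \to \bigoplus_S H^\ast(U_S)$ \emph{is} the mathematical content of the theorem, and ``a Mayer-Vietoris argument \dots would then show that the restriction map is injective'' is not a proof: Mayer-Vietoris for a union gives injectivity in degree $k$ only when $H^{k-1}$ of the overlaps vanishes, and one must also control triple and higher intersections. The argument can likely be closed here because all intersections of core components have vanishing odd cohomology, so an induction on the number of components splits the relevant sequences, but you have not carried this out, and your own remark that this step ``essentially reproduces the key step of Konno's original proof'' concedes that the crux has been outsourced rather than established.
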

In Theorem \ref{thm-vgit-rank-3-star-quiver} we compute the VGIT rings of rank 3 star
quivers (for small values of $n$) and find that they satisfy this conjecture.

\subsection{Quiver Varieties}
We expect that Conjecture \ref{conjecture-vgit-kirwan} may be true for all acyclic quivers,
but unfortunately we do not have a general result.\footnote{But see \S\ref{sec-residue-examples} for some
special cases.} Instead, we will use abelianization to give an explicit algorithm
to compute a complete set of hyperk\"ahler residues.

Let $\FM_\alpha = M \rreda{\alpha} G$ be a Nakajima quiver variety, $T \subset G$ a
maximal torus, and suppose that we have chosen a circle action on $M$ so that $\FM_\alpha$ is 
circle compact (see \S\ref{sec-quiver-morse-circle}). Then we may consider
the $S^1$-equivariant cohomology $H_{S^1}^\ast(\FM_\alpha)$, as well as the $S^1$-equivariant
Kirwan map
\begin{equation}
  \kappa_{S^1}: H^\ast(BG) \otimes H^\ast(BS^1) \to H_{S^1}^\ast(\FM_\alpha).
\end{equation}
Now $H^\ast(BS^1) \iso \BQ[u]$, and by Theorem \ref{thm-formal-mu-perf} we have
$H_{S^1}^\ast(\FM_\alpha) \iso H^\ast(\FM_\alpha)[u]$ as a $\BQ[u]$-module.
Hence we have a natural ring map
\begin{equation}
  q: H_{S^1}^\ast(\FM_\alpha) \onto H^\ast(\FM_\alpha)
\end{equation}
which is given by quotienting by the ideal generated by $u \cdot 1 \in H_{S^1}^\ast(\FM_\alpha)$.
The main technical tool we need is the following.

\begin{theorem} \cite{HauselProudfoot} \label{thm-residue-abelianization}
The image of the $S^1$-equivariant Kirwan map
is isomorphic to
\begin{equation}
  H_{S^1}^\ast(M \rreda{\alpha} T)^W / \ann(e),
\end{equation}
where $e$ is the class given by
\begin{equation}
  e = \prod_{\alpha \in \Delta} \alpha(u-\alpha).
\end{equation}
\end{theorem}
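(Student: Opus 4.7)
The plan is to adapt Martin's nonabelian-to-abelian theorem for symplectic quotients to the hyperk\"ahler, $S^1$-equivariant setting, using circle compactness in place of compactness. The starting point is the intermediate space $Z/T$ where $Z := \muhk^{-1}(\alpha)$, together with an inclusion $\iota: Z/T \hookrightarrow M\rreda{\alpha} T$ and a projection $\pi: Z/T \twoheadrightarrow \FM_\alpha$ which realizes $Z/T$ as a $G/T$-bundle over $\FM_\alpha$. Both maps are $S^1$-equivariant, and $\pi^\ast$ identifies $H^\ast_{S^1}(\FM_\alpha)$ with the Weyl invariants in $H^\ast_{S^1}(Z/T)$. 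The strategy is to show that the map
\begin{equation}
\Phi := (\pi^\ast)^{-1}\circ \iota^\ast : H^\ast_{S^1}(M\rreda{\alpha} T)^W \longrightarrow H^\ast_{S^1}(\FM_\alpha)
\end{equation}
has image equal to $\mathrm{Im}(\kappa_{S^1})$ and kernel equal to $\ann(e)$.

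First I would establish surjectivity of $\Phi$ onto $\mathrm{Im}(\kappa_{S^1})$ by combining the $S^1$-equivariant Kirwan surjectivity for the torus quotient (Corollary \ref{SurjectivityForTori}, applied to $|\muhk|^2$ via Theorem \ref{GlobalEstimate}) with Weyl averaging: every class in $\mathrm{Im}(\kappa_{S^1})$ is pulled back from $H^\ast_{G\times S^1} = (H^\ast_{T\times S^1})^W$, and abelian Kirwan surjectivity realizes such a class as a $W$-invariant element of $H^\ast_{S^1}(M\rreda{\alpha} T)$, to which $\Phi$ assigns the prescribed element of $\mathrm{Im}(\kappa_{S^1})$ by commutativity of the pullback square.

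Second I would compute the $T \times S^1$-equivariant Euler class of the normal bundle $N$ of $\iota$. Splitting $\muhk = (\mur, \muc)$, the conormal directions decompose $S^1$-equivariantly into the piece cut out by the $\fg/\ft$-component of $\mur$ (on which $S^1$ acts trivially, since $\omegar$ is $S^1$-invariant) contributing Chern roots $\{\alpha : \alpha \in \Delta\}$, and the piece cut out by the $\fg/\ft$-component of $\muc$ (on which $S^1$ acts with weight one, since $\omegac$ rotates with weight one) contributing Chern roots $\{u - \alpha : \alpha \in \Delta\}$. Multiplying these out gives $\pm e$. A Martin-style fiber-integration argument now identifies $\ker \Phi$ with $\ann(e)$: the self-intersection formula $\iota^\ast \iota_\ast = \cup\, e(N)$, combined with integration along the fibers of $\pi$ and Poincar\'e duality on the flag variety $G/T$, shows that $\Phi(f) = 0$ if and only if $f \cdot e$ is annihilated by every Weyl-invariant class under the hyperk\"ahler intersection pairing, which by circle-compact formality is equivalent to $f \cdot e = 0$.

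The principal obstacle is making the pushforward $\pi_\ast$, the Thom isomorphism for $N$, and the Poincar\'e pairing rigorous on these non-compact (and possibly orbifold) quotients. The remedy is to leverage the $S^1$-action itself: Theorem \ref{thm-formal-mu-perf} yields $S^1$-equivariant formality and injectivity of restriction to the compact $S^1$-fixed loci, on which Poincar\'e duality, the Thom isomorphism, and the self-intersection formula all apply without issue. Transporting the required identities back along this localization, together with the circle-compactness package of Chapter \ref{ch-morse}, completes the proof.
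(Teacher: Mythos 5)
The paper does not prove this theorem --- it is quoted from \cite{HauselProudfoot} --- and your proposal is essentially a sketch of the argument given there: Martin's intermediate space $Z/T$, the correspondence $(\pi^\ast)^{-1}\circ\iota^\ast$, abelian Kirwan surjectivity for the torus quotient supplied by Theorem \ref{GlobalEstimate}, and an Euler-class/fiber-integration computation identifying the kernel with $\ann(e)$. The strategy is the right one, but two points need repair. First, the bookkeeping of where the factors of $e$ come from is off. The conormal bundle of $\iota$ contributes only $\varpi=\prod_{\alpha\in\Delta^+}\alpha$ from the real moment map directions (the bundle $(\fg/\ft)^\ast$ has real rank $|\Delta|$, hence only $|\Delta^+|$ Chern roots), together with $\prod_{\alpha\in\Delta}(u-\alpha)$ from the complex moment map directions; the second copy of $\varpi$ needed to produce $\prod_{\alpha\in\Delta}\alpha$ is the Euler class of the vertical tangent bundle of the $G/T$-fibration $\pi$ and enters through the fiber integration, not through $e(N)$. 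Your ``multiplying these out gives $\pm e$'' overcounts the normal bundle; a degree count exposes this, since $\codim(Z/T \subset M\rreda{\alpha}T)=3|\Delta|$ while $\deg e = 4|\Delta|$.

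Second, and more seriously, the closing step --- that $f\cdot e$ being annihilated by every Weyl-invariant class under the ``hyperk\"ahler intersection pairing'' is equivalent to $f\cdot e=0$ --- is precisely where Martin's compact argument breaks down and where the actual content of \cite{HauselProudfoot} lies. On the non-compact orbifold $M\rreda{\alpha}T$ there is no Poincar\'e pairing to invoke. What must be proved is that the $S^1$-equivariant pushforward, defined via localization to the compact fixed locus and valued in the fraction field of $H^\ast(BS^1)$, induces a pairing on $H^\ast_{S^1}(M\rreda{\alpha}T)$ with trivial kernel; this is where circle compactness, equivariant formality, and injectivity of restriction to the fixed-point set (Theorem \ref{thm-formal-mu-perf}) are actually assembled into a nondegeneracy statement. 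You name these ingredients in your final paragraph but never combine them into that statement, so the equivalence ``annihilated by all classes $\iff$ zero'' is asserted rather than proved. Fixing the proof means formulating and establishing this equivariant perfect-pairing lemma first, and then running the Martin argument entirely inside $S^1$-equivariant cohomology.
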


The above theorem gives a complete solution to the problem of computing hyperk\"ahler
residues of Nakajima quiver varieties.
\begin{theorem} \label{thm-residue-hyperkahler-algorithm}
The following algorithm produces a complete list of hyperk\"ahler
residues for any Nakajima quiver variety.
\begin{enumerate}
  \item Compute $H_{S^1}^\ast(M \rred T)$ using the techniques of Chapter \ref{ch-morse}.
  \item Compute a complete set of Weyl-invariant cogenerators of the above ring in dimensions
    less than or equal to $\dim_\BR \FM_\alpha + \deg(e)$.
  \item Apply the differential operator $D_e$ to the above set of cogenerators.
  \item Compute the $u$-independent linear combinations of the above cogenerators.
\end{enumerate}
\end{theorem}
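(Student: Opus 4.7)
The plan is to reduce, via Theorem~\ref{thm-residue-abelianization}, to a Fourier duality computation on the abelian quotient $M \rreda{\alpha} T$, whose cohomology is accessible by the Morse theory of Chapter~\ref{ch-morse}.

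First I would establish the identification
\begin{equation}
\im(\kappa) \iso \bigl(H_{S^1}^\ast(M \rreda{\alpha} T)^W \big/ \ann(e)\bigr)\big/\ev{u}.
\end{equation}
The inner quotient is Theorem~\ref{thm-residue-abelianization}. The outer quotient follows from the fact that the forgetful map $q : H_{S^1}^\ast(\FM_\alpha) \onto H^\ast(\FM_\alpha)$ is given by specializing $u \mapsto 0$ and satisfies $\kappa = q \circ \kappa_{S^1}$. By Theorem~\ref{thm-residue-complete-cogenerators}, producing a complete set of hyperk\"ahler residues is equivalent to producing a complete set of cogenerators of $\im(\kappa)$, so the task reduces to producing cogenerators of the double quotient above.

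Next I would justify each step of the algorithm in turn. Step~1 computes $A := H_{S^1}^\ast(M \rreda{\alpha} T)$ as an explicit quotient of $R[u]$ where $R = H^\ast(BT)$; since $T$ is abelian, the $S^1$-equivariant refinement of Theorem~\ref{GlobalEstimate} makes $|\muhk|^2$ equivariantly perfect, and a presentation of $A$ (and hence of $A^W$) emerges as in \S\ref{sec-toric}. Step~2 computes a complete set of Weyl-invariant cogenerators of $A^W$ via Lemma~\ref{lemma-fourier}, truncated in degrees $\leq \dim_\BR \FM_\alpha + \deg(e)$. Step~3 appeals to Lemma~\ref{lemma-residue-varpi} with $\varpi = e$: since $e$ is Weyl-invariant, applying the $R^W[u]$-action of $e$, equivalently the differential operator $D_e$, to the cogenerators of $A^W$ yields cogenerators of $A^W/\ann(e)$, which is exactly $\im(\kappa_{S^1})$. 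Step~4 invokes Lemma~\ref{lemma-residue-quotient-u}, which identifies the cogenerators of a further quotient by $\ev{u}$ with the $u$-independent elements of the dual module; hence taking $u$-independent linear combinations of the output of Step~3 produces cogenerators of $\im(\kappa)$.

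The main obstacle is justifying the degree truncation in Step~2. The $S^1$-equivariant cohomology $A$ is infinite-dimensional as a $\BQ$-vector space, so a priori Fourier cogenerators of $A^W$ can live in arbitrarily high degree, and one must argue that finite truncation suffices. The key observation is that cogenerators of $\im(\kappa) \subset H^\ast(\FM_\alpha)$ have polynomial degree at most $\dim_\BR \FM_\alpha$, this being the top non-vanishing degree. Tracing backward through Steps~3 and~4, any cogenerator of $A^W$ needed to recover a cogenerator of $\im(\kappa)$ after application of the degree-$\deg(e)$ operator $D_e$ and specialization $u \mapsto 0$ must lie in degree at most $\dim_\BR \FM_\alpha + \deg(e)$. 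Verifying carefully that no information is lost by this truncation---that every cogenerator of $\im(\kappa)$ genuinely arises in this way from a cogenerator of $A^W$ in the stated range---is the technical heart of the argument, and requires a degree-by-degree analysis of the $R^W[u]$-module structure on the relevant Fourier duals.
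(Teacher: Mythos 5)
Your proposal is correct and follows essentially the same route as the paper: reduce to $H_{S^1}^\ast(M \rreda{\alpha} T)^W/\ev{\ann(e),u}$ via Theorem \ref{thm-residue-abelianization} and formality, then apply Lemma \ref{lemma-residue-varpi} with $\varpi = e$ for Step 3 and Lemma \ref{lemma-residue-quotient-u} for Step 4. The paper's proof is a two-sentence assertion of exactly this; your additional care about why the degree truncation in Step 2 loses no information (since $A^W$ is infinite-dimensional, Lemma \ref{lemma-fourier} does not apply verbatim) addresses a point the paper leaves implicit, and is a worthwhile supplement rather than a divergence.
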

\begin{proof} Lemmas \ref{lemma-residue-quotient-u} and \ref{lemma-residue-varpi} ensure
that the above procedure produces a complete set of cogenerators for the ring
$H_{S^1}^\ast(M \rreda{\alpha} T)^W / \ev{\ann(e), u}$, and by Theorem \ref{thm-residue-abelianization} 
and formality this is equal to the image of the hyperk\"ahler Kirwan map.
\end{proof}


\section{Examples} \label{sec-residue-examples}

\subsection{Hypertoric Varieties}

\begin{example}
We consider the hyperplane arrangement pictured in figure \ref{fig-hyperplane}. The normals
are given by the column vectors of the matrix
\begin{equation} \left( \begin{array}{rrrr}
1 & 0 &  0 & -1 \\
0 & 1 & -1 & -1 
\end{array} \right) \end{equation}
The kernel is generated by the vectors $(1,0,-1,1)^T$ and $(0,1,1,0)^T$.
Hence this arrangement corresponds to the $S^1 \times S^1$ representation
\begin{equation} V = \BC(1,0) \oplus \BC(0,1) \oplus \BC(-1, 1) \oplus \BC(1,0). \end{equation}

\begin{figure}[!htbp]
\centering
\begin{tikzpicture}



  \draw[fill,color=gray] (0,0) -- (2, 0) -- (1, 1) -- (0, 1) -- cycle;

  \draw[thick] (-0.5,0) -- (3,0);
  \draw[thick] (0,-0.5) -- (0,3);
  \draw[thick] (-0.5, 1) -- (3, 1);
  \draw[thick] (-0.5, 2.5) -- ( 2.5, -0.5);

  \draw[decorate,decoration={brace, amplitude=6pt}] (-0.1, 0.1) -- (-0.1,0.9);
  \draw[decorate,decoration={brace, amplitude=6pt}] (-0.1, 1.1) -- (-0.1,1.9);

  \node (L1) at (-0.6, 0.5) {$\lambda_2$};
  \node (L2) at (-0.6, 1.5) {$\lambda_1$};

\end{tikzpicture}
\caption{The hyperplane arrangement corresponding to the blow-up of $\BP^2$ at a point.}
\label{fig-hyperplane}
\end{figure}
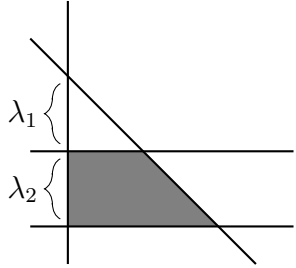

The moment map is given by
\begin{equation} \mu(x) = \frac{1}{2} \left( \begin{array}{c}
|x_1|^2 - |x_3|^2 + |x_4|^2 \\
|x_2|^2 + |x_4|^2
\end{array}\right) 
- \left( \begin{array}{r} \lambda_1 \\ \lambda_2 \end{array} \right) \end{equation}

To compute the symplectic volume, we have to calculate
\begin{eqnarray*}
\res{e^{+i\lambda_1 u_1 +i\lambda_2 u_2}}{u_1^2 u_2 (u_2 - u_1)}
  &=& -\theta(\lambda_2)\res{e^{i\lambda_1 u_1}}{u_1^3}
    +  \theta(\lambda_2)\res{e^{i(\lambda_1+\lambda_2)u_1}}{u_1^3} \\
  &=& \theta(\lambda_1|\lambda_2) \frac{\lambda_1^2}{2}
    - \theta(\lambda_1+\lambda_2|\lambda_2) \frac{(\lambda_1+\lambda_2)^2}{2}
\end{eqnarray*}
Thus we find that the symplectic volume is (proportional to)
\begin{equation} Z(\lambda) = \frac{1}{2}\left\{ \begin{array}{ll}
(\lambda_1+\lambda_2)^2 - \lambda_1^2, & \lambda_1 > 0, \lambda_2 > 0 \\
(\lambda_1+\lambda_2)^2, & \lambda_1 < 0, \lambda_1 + \lambda_2 > 0, \\
0, & \mathrm{otherwise}
\end{array} \right. \end{equation}
Hence there are two Jeffrey-Kirwan residues, $Z_1$ and $Z_2$. The wall-crossing
behavior of the residue formula is pictured in Figure \ref{fig-wall-crossing}.
Taking derivatives, we find
\begin{equation} \begin{array}{l|lllll}
 & \partial_1 & \partial_2 & \partial_1^2 & \partial_{12} & \partial_2^2 \\ \hline
Z_1 & \lambda_2 & \lambda_1+\lambda_2 & 0 & 1  & 1 \\
Z_2 & \lambda_1+\lambda_2 & \lambda_1+\lambda_2 & 1 & 1 & 1
\end{array} \end{equation}
Hence we find a single common relation in degree 4, $\partial_{12} = \partial_2^2$,
and of course in degrees $\geq 6$ every constant coefficient differential operator acts
as $0$. Hence the cohomology ring of the corresponding hypertoric variety is given by
\begin{equation} H^\ast(\FM) \iso \frac{\BC[u_1,u_2]}{\vev{u_1u_2-u_2^2, u_1^3, u_1^2 u_2}} \end{equation}



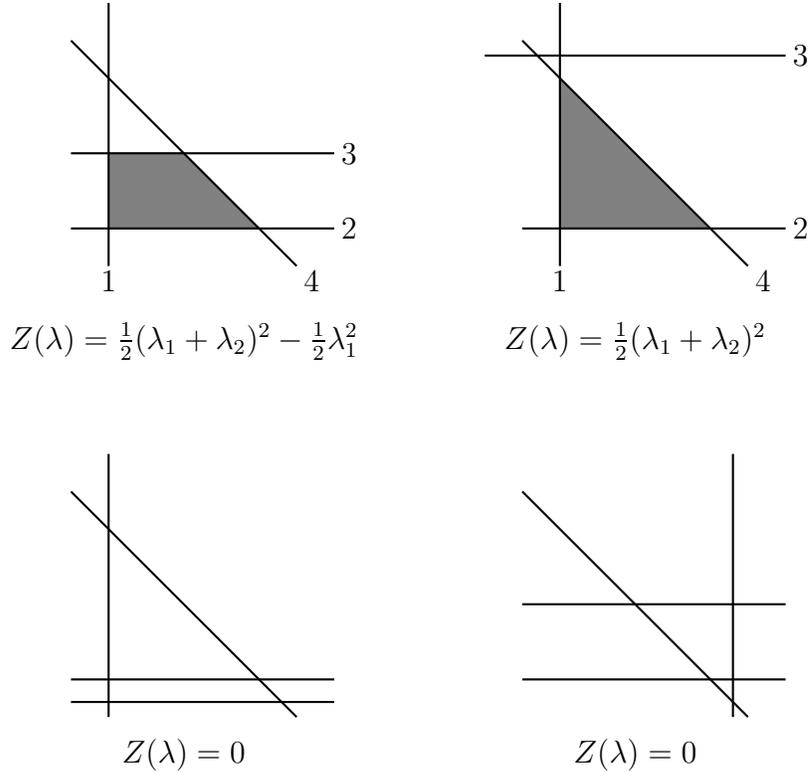
\begin{figure}[!htbp]
\centering
\begin{tikzpicture}


  \node (n1) at (1,-1.5) {$Z(\lambda) = \frac{1}{2} (\lambda_1+\lambda_2)^2 - \frac{1}{2} \lambda_1^2$};
  \node (n2) at (7,-1.5) {$Z(\lambda) = \frac{1}{2} (\lambda_1+\lambda_2)^2$};
  \node (n3) at (1,-7) {$Z(\lambda) = 0$};
  \node (n4) at (7,-7) {$Z(\lambda) = 0$};
  

  \draw[fill,color=gray] (0,0) -- (2, 0) -- (1, 1) -- (0, 1) -- cycle;

  \draw[thick] (-0.5,0) -- (3,0);
  \draw[thick] (0,-0.5) -- (0,3);
  \draw[thick] (-0.5, 1) -- (3, 1);
  \draw[thick] (-0.5, 2.5) -- ( 2.5, -0.5);

  \node (01) at ( 0.0,-0.7) {$1$}; 
  \node (02) at ( 3.2, 0.0) {$2$};
  \node (03) at ( 3.2, 1.0) {$3$};
  \node (04) at ( 2.7,-0.7) {$4$};

  \draw[fill,color=gray] (6,0) -- (8, 0) -- (6, 2) -- cycle;

  \draw[thick] ( 5.5,0) -- (9,0);
  \draw[thick] ( 6,-0.5) -- (6,3);
  \draw[thick] ( 5, 2.3) -- (9, 2.3);
  \draw[thick] ( 5.5, 2.5) -- ( 8.5, -0.5);

  \node (11) at ( 6.0,-0.7) {$1$}; 
  \node (12) at ( 9.2, 0.0) {$2$};
  \node (13) at ( 9.2, 2.3) {$3$};
  \node (14) at ( 8.7,-0.7) {$4$};

  \draw[thick] (-0.5,-6) -- (3,-6);
  \draw[thick] (0,-6.5) -- (0,-3);
  \draw[thick] (-0.5, -6.3) -- (3, -6.3);
  \draw[thick] (-0.5, -3.5) -- ( 2.5, -6.5);

  \draw[thick] ( 5.5,-6.0) -- (9,-6);
  \draw[thick] ( 8.3,-6.5) -- (8.3,-3);
  \draw[thick] ( 5.5,-5.0) -- (9, -5);
  \draw[thick] ( 5.5,-3.5) -- ( 8.5, -6.5);

\end{tikzpicture}
\caption{The three possible walls.}
\label{fig-wall-crossing}
\end{figure}
\end{example}

\subsection{Polygon Spaces}
We will show here that Theorem \ref{thm-linear-residue-formula} gives a very straightforward
calculation of the cohomology rings of polygon spaces. These correspond to a star quiver
with $n$ incoming arrows and dimension vector $\bd = (2, 1, \dots, 1)$. Let $x_1, \dots, x_n$
be coordinates on the vector space $X = \Rep(\CQ, \bd)$, where each $x_i$ is a $2 \times 1$
column vector. The action of $G_\bd$ on $X$ is given by
\begin{equation}
  x_i \mapsto g x_i h_i^{-1}
\end{equation}
where $g \in SU(2)$ and $h_i \in S^1$. (We restrict to $g \in SU(2)$ because the overall diagonal
in $G_\bd$ acts trivially.) Let $u_0, \cdots, u_n$ be coordinates on the maximal torus
of $SU(2) \times (S^1)^n$. Then the weights of the above action are $(u_i \pm u_0)$ for
$i = 1, \dots, n$. Hence the product of all weights is
\begin{equation}
  \prod_{\beta \in \CB} \beta = \prod_{i=1}^n (u_i^2-u_0^2).
\end{equation}
Also, we have $\varpi^2 = -u_0^2$. Hence we have to compute
\begin{equation}
  \Res \left( \frac{u_0^2 e^{-i\lambda_0 u_0 - i \sum_j \lambda_j u_j}}{\prod_{j=1}^n (u_j^2-u_0^2)} \right).
\end{equation}
For each $j$, we have a pole at $u_j = \pm u_0$. Taking the residues for $j=n$, we have
\begin{align}
  \Res \left( \frac{u_0^2 e^{-i\lambda_0 u_0 - i \sum_j \lambda_j u_j}}{\prod_{j=1}^n (u_j^2-u_0^2)} \right)
   &= \theta(\lambda_n) \Res \left( \frac{u_0^2 e^{-i(\lambda_0+\lambda_n) u_0 - i \sum_{j\neq n} \lambda_j u_j}}{2u_0\prod_{j\neq n} (u_j^2-u_0^2)} \right) \\
    & \ \ - \theta(\lambda_n)\Res \left( \frac{u_0^2 e^{-i(\lambda_0-\lambda_n) u_0 - i \sum_{j\neq n} \lambda_j u_j}}{2u_0\prod_{j\neq n} (u_j^2-u_0^2)} \right)
\end{align}
Next we take the residues at $u_{n-1} = \pm u_0$, and so on. Continuing inductively, we obtain
\begin{equation}
   \sum_{\sigma \in \{+,-\}^n} (-1)^\sigma
     \Res \left( \frac{u_0^2 e^{-i (\lambda_0 + \sigma \cdot \lambda) u_0 }}{2^n u_0^n} \right),
\end{equation}
where $\sigma \cdot \lambda = \sum_i \sigma_i \lambda_i$.
\begin{proposition} Let $\FX_\alpha$ be a polygon space. Then a cogenerator for its
cohomology ring is
  $D_{\lambda_0}^2 \sum_{\sigma \in \CS} (\lambda_0 + \sigma \cdot \lambda)^{n-1}$.
\end{proposition}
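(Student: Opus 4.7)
The plan is to complete the residue calculation that is already in progress in the displays immediately preceding the proposition, and then to recognize the resulting polynomial as $D_{\lambda_0}^2$ applied to the symmetric sum $\sum_{\sigma \in \CS}(\lambda_0+\sigma\cdot\lambda)^{n-1}$. Combining this with the symplectic analogue of Theorem~\ref{thm-residue-complete-cogenerators} (namely, the statement at the start of this chapter that the generating function $Z(\lambda)$ is a cogenerator for the image of the Kirwan map) will identify the proposed polynomial, up to a nonzero scalar and Weyl symmetrization, with a cogenerator for the cohomology of the polygon space.

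First I would apply Example~\ref{eg-res-pn} to each remaining one-variable residue.  Since $u_0^2/u_0^n = u_0^{-(n-2)}$, each summand appearing in the last display above the proposition takes the form $\Res\bigl(u_0^{-(n-2)} e^{-i(\lambda_0+\sigma\cdot\lambda)u_0}\bigr)$, and Example~\ref{eg-res-pn} evaluates this to $\theta_\sigma\cdot c\cdot (\lambda_0+\sigma\cdot\lambda)^{n-3}$, where $\theta_\sigma$ is a Heaviside factor encoding the chamber and $c$ is a universal constant. Thus in the chamber of $\alpha$ one obtains a polynomial proportional to $\sum_\sigma (-1)^\sigma\theta_\sigma(\lambda_0+\sigma\cdot\lambda)^{n-3}$, which is precisely the kind of object whose $R^W$-annihilator one wishes to identify.

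Second, I would invoke the elementary identity
\begin{equation*}
D_{\lambda_0}^2(\lambda_0+\sigma\cdot\lambda)^{n-1} \;=\; -(n-1)(n-2)\,(\lambda_0+\sigma\cdot\lambda)^{n-3},
\end{equation*}
which rewrites each chamber polynomial, up to the overall constant $-(n-1)(n-2)$, as $D_{\lambda_0}^2$ of a polynomial of degree $n-1$ in $\lambda_0+\sigma\cdot\lambda$. After Weyl symmetrization under the only non-trivial Weyl element $u_0 \mapsto -u_0$, this becomes the clean symmetric sum $\sum_\sigma(\lambda_0+\sigma\cdot\lambda)^{n-1}$ that appears in the statement, and applying Lemmas~\ref{lemma-fourier} and~\ref{lemma-residue-varpi} then shows that the proposed polynomial generates the same $R^W$-module as $Z(\lambda)$, completing the argument.

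The hard part will be the sign and chamber bookkeeping: reconciling the factors $(-1)^\sigma$ and $\theta_\sigma$ coming out of the residue with the unsigned, unrestricted sum in the statement. The expected mechanism is that the ``missing'' values of $\sigma$ (those with $\theta_\sigma=0$ in the chamber of $\alpha$) are, after the Weyl average $u_0 \mapsto -u_0$, either exchanged with the kept values of $\sigma$ or else produce wall-crossing contributions lying in the annihilator of the cogenerator---and hence do not alter the $R^W$-module generated. Making this precise and pinning down the overall normalization will finish the proof.
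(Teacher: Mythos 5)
Your overall strategy is the right one, and it is essentially the only proof the paper has: the proposition is stated as the punchline of the iterated-residue computation immediately preceding it, so ``the paper's proof'' consists precisely of the three steps you outline --- evaluate each one-variable residue via Example~\ref{eg-res-pn} to get $\theta_\sigma\cdot c\cdot(\lambda_0+\sigma\cdot\lambda)^{n-3}$, rewrite $(\lambda_0+\sigma\cdot\lambda)^{n-3}$ as a constant times $D_{\lambda_0}^2(\lambda_0+\sigma\cdot\lambda)^{n-1}$, and invoke the theorem that $f\in\ker\kappa$ iff $D_fZ=0$ to conclude that $Z$ cogenerates $H^\ast(\FX_\alpha)$. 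Up to that point there is nothing to criticize.

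The problem is the part you deferred. The mechanism you propose for the ``sign and chamber bookkeeping'' --- that the signs $(-1)^\sigma$ and the restriction imposed by $\theta_\sigma$ somehow wash out after Weyl averaging or land in the annihilator --- does not work, and it cannot work, because the signs are genuinely present in the answer. Identifying $\sigma$ with the subset $S=\{i:\sigma_i=-1\}$, the Heaviside factors select exactly the ``long'' subsets ($\alpha_S>\alpha_{S^c}$), and the residue is a nonzero constant times the \emph{signed} sum $\sum_{S\ \mathrm{long}}(-1)^{|S|}(\lambda_0+\sigma_S\cdot\lambda)^{n-3}$. Since long subsets of both parities occur, the signed and unsigned sums generate different $R^W$-modules. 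You can test this against the paper's own example $\alpha=(1,1,1,2,2)$, $n=5$: the signed sum over long subsets has $\lambda_0^2$-coefficient $-4$ and $\lambda_1\lambda_2$-coefficient $+8$, reproducing the displayed cogenerator (ratio $-2$), whereas the unsigned sum has $\lambda_1\lambda_2$-coefficient $0$ and a nonzero $\lambda_0\lambda_i$ part, so it is not proportional to the displayed polynomial and yields the wrong relations. So the correct conclusion of your argument is that the cogenerator is $D_{\lambda_0}^2\sum_{\sigma}(-1)^{\sigma}\theta_\sigma\,(\lambda_0+\sigma\cdot\lambda)^{n-1}$; the clean unsigned expression in the statement can only be read as implicitly carrying the factor $(-1)^\sigma$ inside the sum over $\CS$ (which the paper never defines). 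Your proof should keep the signs throughout rather than try to eliminate them; once you do, the chamber dependence is exactly the selection of long (equivalently, after $\sigma\mapsto-\sigma$, short) subsets and nothing further needs to be argued.
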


\begin{example} Take $\alpha = (1,1,1,2,2)$. Then according to the above proposition,
the cogenerator for the cohomology ring of $\FX_\alpha$ is (up to an irrelevant overall
constant)
\begin{equation}
\lambda_0^2 + \lambda_1^2 - 2\lambda_1\lambda_2 + \lambda_2^2 - 2\lambda_1\lambda_3 
 - 2\lambda_2\lambda_3 + \lambda_3^2 + \lambda_4^2 - 2\lambda_4\lambda_5 + \lambda_5^2
\end{equation}
Taking partial derivatives, we find the relations $\partial_4+\partial_5=0$ and
$\partial_0^2 - \partial_i^2 = 0$ for $i = 1, \dots, 5$. Hence the Poincar\'e polynomial
is $1 + 4t^2 + t^4$ and the cohomology ring is
\begin{equation}
  H^\ast(\FX_\alpha) = \frac{\BQ[u_0^2, u_1, \dots, u_5]}{\ev{u_0^2-u_i^2, u_4+u_5, \deg 6}}.
\end{equation}
\end{example}

\subsection{Rank 3 Star Quivers}

In this case, let $u_1, \dots, u_n$ be generators of $(S^1)^n$ and let $v_1, v_2$ be
generators of the maximal torus in $SU(3)$. Then $\varpi^2 = (v_1-v_2)(2v_1+v_2)(v_1+2v_2)$.
The weights are given by $u_i-v_1, u_i-v_2, u_i+v_1+v_2$ for each $i$.
For each $i$, there are three possible residues with respect to $u_i$: $u_i \to v_1, u_i \to v_2$, and
$u_i \to -v_1-v_2$. 
The residue as $u_n \to v_1$ produces the term
\begin{equation}
  \Res\left(\frac{\varpi^2 e^{-i\mu \cdot v -i\lambda \cdot u -i\lambda_n v_1}}{(v_1-v_2)(2v_1+v_2)} \right).
\end{equation}
The residue as $u_n \to v_2$ produces the term
\begin{equation}
  \Res\left(\frac{\varpi^2 e^{-i\mu \cdot v -i\lambda \cdot u -i\lambda_n v_2}}{(v_2-v_1)(v_1+2v_2)} \right).
\end{equation}
The residue as $u_n \to -v_1-v_2$ produces the term
\begin{equation}
  \Res\left(\frac{\varpi^2 e^{-i\mu \cdot v -i\lambda \cdot u +i\lambda_n (v_1+v_2)}}{(2v_1+v_2)(v_1+2v_2)} \right).
\end{equation}
Iterating as in the previous case, we see that the
residue will be a sum over disjoint subsets $S_1, S_2, S_3 \subset [n]$, where
$i \in S_1$ indicates that we take the residue $u_i \to v_1$, $i \in S_2$ denotes
the residue $u_i \to v_2$, and $i \in S_3$ denotes the residue $u_i \to -v_1-v_2$.
The denominator of such a term is
\begin{equation}
  (-1)^{|S_2|} (v_1-v_2)^{|S_1|+|S_2|} (2v_1+v_2)^{|S_1|+|S_3|} (v_1+2v_2)^{|S_2|+|S_3|}.
\end{equation}
The argument of the exponential of such a term is
\begin{equation}
  (S_1 \cdot \lambda-S_3 \cdot \lambda) v_1 + (S_2 \cdot \lambda - S_3 \cdot \lambda) v_2.
\end{equation}

It remains to take the residues at $v_2 \to v_1$, $v_2 \to -2v_1$, and $v_2 \to -\frac{1}{2} v_1$,
followed by the remaining residue at $v_1 \to 0$.

\begin{proposition} A cogenerator for the rank 3 star quiver is given by
\begin{equation}
  \sum_{S_1 \sqcup S_2 \sqcup S_3 = [n]} \left( R_1(S_1, S_2, S_3) + R_2(S_1, S_2, S_3) + R_3(S_1, S_2, S_3) \right)
\end{equation}
where 
\begin{align}
  \begin{split}
    R_1(S_1, S_2, S_2) &= \theta(S_1 \cdot \alpha + S_2 \cdot \alpha - 2 S_3 \cdot \alpha) 
    \theta(S_2 \cdot \alpha - S_3 \cdot \alpha) \\
    & \times \Res_{v_1 \to 0} \Res_{v_2 \to v_1}\left(f(S_1, S_2, S_2)\right) 
  \end{split} \\
  \begin{split}
    R_2(S_1, S_2, S_2) &= \theta(S_1 \cdot \alpha + S_3 \cdot \alpha - 2 S_2 \cdot \alpha) 
    \theta(S_2 \cdot \lambda - S_3 \cdot \lambda) \\
    & \times \Res_{v_1 \to 0} \Res_{v_2 \to -2v_1} \left(f(S_1, S_2, S_2)\right)
   \end{split} \\
   \begin{split}
     R_3(S_1, S_2, S_2) &= \theta(2S_1 \cdot \alpha - S_2 \cdot \alpha - 2 S_3 \cdot \alpha) 
     \theta(S_2 \cdot \alpha - S_3 \cdot \alpha) \\
     & \times \Res_{v_1 \to 0} \Res_{v_2 \to -v_1/2} \left(f(S_1, S_2, S_2)\right) 
   \end{split}
\end{align}
and $f(S_1, S_2, S_3)$ is the meromorphic function
\begin{equation}
  f(S_1, S_2, S_3) = \frac{\exp\left(-i(\mu_1 + S_1 \cdot \lambda - S_3 \cdot \lambda)v_1 -i (\mu_2 + S_2 \cdot \lambda - S_3 \cdot \lambda)v_2\right)}
   {(-1)^{|S_2|} (v_1-v_2)^{|S_1|+|S_2|-2} (2v_1+v_2)^{|S_1|+|S_3|-2} (v_1+2v_2)^{|S_2|+|S_3|-2}}
\end{equation}
\end{proposition}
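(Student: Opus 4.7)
The plan is to interpret the asserted formula as the VGIT residue of Section \ref{sec-hyperkahler-residues} associated to the K\"ahler locus $\FX_\alpha(n) \subset \FM_\alpha(n)$, so that the proposition becomes a direct application of the linear residue formula (Theorem \ref{thm-linear-residue-formula}) to the K\"ahler quotient, followed by the iterated one-dimensional residue procedure of Proposition \ref{prop-iterated-residue}. By Theorem \ref{thm-residue-complete-cogenerators} the resulting polynomial on $\ft^\ast$ is indeed a cogenerator for the image of $\khk$ in $H^\ast(\FM_\alpha(n))$, so once it is exhibited as the stated sum we are done.

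First I would collect the data needed to invoke Theorem \ref{thm-linear-residue-formula}. Let $T \subset SU(3) \times (S^1)^n$ denote the maximal torus with coordinates $(v_1, v_2, u_1, \ldots, u_n)$, where the diagonal entries of $\ft \subset \fsu_3$ are $(v_1, v_2, -v_1 - v_2)$. The representation $X = \Rep(\CQ, \bd)$ decomposes under $T$ into $n$ copies of the standard $SU(3)$-representation twisted by the $S^1$-characters $u_i$, so that the multiset of weights is $\{u_i - v_1,\, u_i - v_2,\, u_i + v_1 + v_2\}_{i=1}^n$. A component of $\mu_T$ along any $\xi$ with $u_i(\xi) < 0$ and $v_j(\xi) = 0$ is then proper and bounded below on $X$, so the hypotheses of Theorem \ref{thm-linear-residue-formula} hold with $\Lambda$ any cone in that chamber. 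Hence the generating function of $\FX_\alpha(n)$---and thus the VGIT residue $Z_{[\FX_\alpha(n)]}(\lambda, \mu)$---is (up to the overall constant $n_0 c_G$ absorbed into the phrase ``a cogenerator'') equal to
\begin{equation}
\Res^\Lambda \left( \frac{\varpi^2(v)\, e^{-i \mu \cdot v - i \lambda \cdot u}}{\prod_{i=1}^n (u_i - v_1)(u_i - v_2)(u_i + v_1 + v_2)} \, du \wedge dv \right).
\end{equation}

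Next I would evaluate this residue iteratively, integrating first over $u_n, u_{n-1}, \ldots, u_1$ and then over $v_2, v_1$. Each $u_i$-integration sees exactly three simple poles, at $v_1$, $v_2$, and $-v_1 - v_2$, and specifying which pole is chosen for each $i$ is the same as fixing a disjoint decomposition $[n] = S_1 \sqcup S_2 \sqcup S_3$. A direct computation of the residue factor at each pole---together with the sign $(-1)^{|S_2|}$ that arises from rewriting $(v_2 - v_1) = -(v_1 - v_2)$---produces the rational function $f(S_1, S_2, S_3)$ displayed in the statement, the shifts $-2$ in the exponents of the denominator coming from the cancellation of the three factors of $\varpi(v)$. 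This is precisely the intermediate expression recorded in the paragraph preceding the proposition.

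Finally, I would perform the two remaining residues in $v_2$ and $v_1$. The integrand has only three simple poles in $v_2$, namely $v_2 \in \{v_1,\, -2 v_1,\, -v_1/2\}$, giving the three summands $R_1, R_2, R_3$; the subsequent residue at $v_1 = 0$ is then a standard Taylor coefficient extraction that produces the polynomial factors in $(\mu, \lambda, S_j \cdot \lambda)$. The Heaviside prefactors encode the selection rule of $\Res^+$ used in Proposition \ref{prop-iterated-residue}: by Lemma \ref{lemma-dual-cone}, at each iterated stage only those poles contribute for which the coefficient of the remaining exponential variable lies in the appropriate half-line, and since we are evaluating the cogenerator in the chamber of $\alpha$ these conditions become linear inequalities in $\alpha$. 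The main technical obstacle will be matching, at each of the two stages, the resulting coefficient conditions with the explicit Heavisides $\theta(S_1 \cdot \alpha + S_2 \cdot \alpha - 2 S_3 \cdot \alpha)$, $\theta(2 S_1 \cdot \alpha - S_2 \cdot \alpha - 2 S_3 \cdot \alpha)$, and $\theta(S_2 \cdot \alpha - S_3 \cdot \alpha)$ appearing in $R_1, R_2, R_3$; once the coefficient of $v_1$ after each $v_2$-residue is written out in terms of $(S_1, S_2, S_3) \cdot \alpha$ and $\alpha_\emptyset$, these inequalities emerge after a short linear algebra exercise, completing the identification.
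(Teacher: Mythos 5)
Your proposal is correct and follows essentially the same route as the paper: apply the linear residue formula (Theorem \ref{thm-linear-residue-formula}) to the K\"ahler star quiver variety viewed as a VGIT integration cycle, iterate the one-dimensional residues over the $u_i$ (each choice of pole $v_1$, $v_2$, or $-v_1-v_2$ indexing the decomposition $[n]=S_1\sqcup S_2\sqcup S_3$ and yielding $f(S_1,S_2,S_3)$), and then take the remaining residues at $v_2\in\{v_1,-2v_1,-v_1/2\}$ and $v_1=0$ with Heaviside prefactors determined by the cone selection rule. This is exactly the computation sketched in the paragraphs preceding the proposition, so no further comparison is needed.
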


The above expression, though explicit, is rather difficult to use directly by hand,
but it is very easy to implement in computer algebra software.
By varying the moment map level $\alpha$ across all the chambers, we produce cogenerators
for the VGIT ring. By comparison with the Betti numbers from Theorem \ref{thm-rank-3-betti-numbers},
we can then deduce Kirwan surjectivity. Using our computer implementation, we find the following.
\begin{theorem} \label{thm-vgit-rank-3-star-quiver}
For $n=4,5,6$, the hyperk\"ahler Kirwan map is surjective and its kernel is equal to the VGIT ideal.
Moreover, the cohomology ring is level. For $n=5$ the cohomology ring is
\begin{equation}
  H^\ast(\FM_\alpha) \iso \BQ[v_1^2+v_1^2+(v_1+v_2)^2, v_1^3+v_2^3-(v_1+v_2)^3, u_1, \cdots, u_5] / I
\end{equation}
where $I$ is the ideal generated by all degree $6$ polynomials in the generators
as well as the following degree 4 polynomials
\begin{align}
  & 4u_2u_3 + 4u_2u_4 + 4u_3u_4 + 4u_2u_5 + 4u_3u_5 + 4u_4u_5 + 3v_1^2 + 3v_2^2, \\
  & 3u_1u_2 + u_2u_3 + u_2u_4 - 2u_3u_4 + u_2u_5 - 2u_3u_5 - 2u_4u_5, \\
  & 3u_1u_3 + u_2u_3 - 2u_2u_4 + u_3u_4 - 2u_2u_5 + u_3u_5 - 2u_4u_5, \\
  & -2u_2u_3 + 3u_1u_4 + u_2u_4 + u_3u_4 - 2u_2u_5 - 2u_3u_5 + u_4u_5, \\
  & -2u_2u_3 - 2u_2u_4 - 2u_3u_4 + 3u_1u_5 + u_2u_5 + u_3u_5 + u_4u_5.
\end{align}
\end{theorem}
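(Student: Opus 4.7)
The plan is to exploit the two-sided squeeze provided by the natural surjection $\im(\khk) \twoheadrightarrow R_{\mathrm{VGIT}}$ together with the Betti-number computation of Theorem \ref{thm-rank-3-betti-numbers}. More precisely, for each $n \in \{4,5,6\}$ I will compute an explicit complete set of VGIT cogenerators using the linear residue formula (Theorem \ref{thm-linear-residue-formula}), assemble them into a finite-dimensional $R^W$-module, compute the Hilbert series of the corresponding VGIT ring, and then compare it degree-by-degree with $P_t(\FM_\alpha(n))$ from Theorem \ref{thm-rank-3-betti-numbers}. Since we always have the chain of surjections $H^\ast(BG_\bd)^W \twoheadrightarrow \im(\khk) \twoheadrightarrow R_{\mathrm{VGIT}}$ and $\im(\khk) \hookrightarrow H^\ast(\FM_\alpha(n))$, an equality of Poincar\'e polynomials between $R_{\mathrm{VGIT}}$ and $\FM_\alpha(n)$ forces all three intermediate rings to agree, giving hyperk\"ahler Kirwan surjectivity and $\ker \khk = I_{\mathrm{VGIT}}$ simultaneously.

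To carry out step one, I would enumerate the chambers of the moment map in $\mathbb{R}^n$, viewed as walls in the parameter space for the K\"ahler quiver variety $\FX_\alpha(n) = X \reda{\alpha} G_\bd$ sitting as a compact Lagrangian subvariety of $\FM_\alpha(n)$ (this is the only class of VGIT integration cycle available here, since fixing a decomposition $\bd = (\bv,\bw)$ with $\CV$ the central node forces $V \subset T^\ast X$ to be the zero section). For each generic chamber, Theorem \ref{thm-linear-residue-formula} produces the explicit cogenerator $Z_\alpha(\lambda)$ as an iterated Jeffrey-Kirwan residue of the form worked out at the end of Section \ref{sec-hyperkahler-residues}, summed over disjoint triples $S_1 \sqcup S_2 \sqcup S_3 = [n]$ and over the three choices of inner residue $v_2 \to v_1$, $v_2 \to -2v_1$, $v_2 \to -v_1/2$. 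All of these evaluations are mechanical and are implemented in the Sage code already used for Theorem \ref{thm-rank-3-betti-numbers}; the only genuinely new ingredient is running through every chamber rather than a single extreme one. Levelness is automatic once the computation is carried out, because every $Z_\alpha$ is homogeneous of top degree $\dim_\BC \FM_\alpha(n) = \dim_\BR \FX_\alpha(n)$, so the cogenerator module is generated in top degree.

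Next, using Lemmas \ref{lemma-fourier}--\ref{lemma-residue-varpi}, I would compute the annihilator $I_{\mathrm{VGIT}} \subset R^W$ of the $R^W$-submodule of $(R^W)^\vee$ generated by the collection $\{Z_\alpha\}$ as $\alpha$ runs over the chambers, by taking successive partial derivatives and Gaussian-eliminating to find the relations in each degree. This yields both a presentation of $R_{\mathrm{VGIT}}$ and its Poincar\'e polynomial. The comparison with the explicit values
\begin{equation*}
P_t(\FM_\alpha(4)) = 1, \quad
P_t(\FM_\alpha(5)) = 1+5t^2+11t^4, \quad
P_t(\FM_\alpha(6)) = 1+6t^2+22t^4+51t^6+66t^8
\end{equation*}
from Theorem \ref{thm-rank-3-betti-numbers} then closes the argument for surjectivity and $\ker \khk = I_{\mathrm{VGIT}}$. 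For $n=5$, extracting the stated presentation reduces to writing down a Gr\"obner basis of $I_{\mathrm{VGIT}}$ in the $S_5$-equivariant generators $v_1^2+v_2^2+(v_1+v_2)^2$, $v_1^3+v_2^3-(v_1+v_2)^3$, $u_1,\ldots,u_5$; the five displayed degree-$4$ relations are exactly the Weyl-invariant part of the second-derivative kernel of the sum of chamber residues, and all monomials of degree $\geq 6$ vanish since $\dim_\BC \FM_\alpha(5) = 4$.

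The main obstacle is purely computational: the number of VGIT chambers and the number of $(S_1, S_2, S_3)$-triples both grow rapidly with $n$, and the raw output of the residue formula is a huge sum of Heaviside-gated polynomials whose linear dependencies have to be sifted out over $\BQ$. For $n=6$ already this requires care with intermediate coefficient blow-up, and it is the reason the theorem is only stated for small $n$; I expect that pushing the argument to $n=7$ would require either a more clever organization of the chamber structure (perhaps exploiting $S_n$-symmetry to collapse orbits of chambers before taking annihilators) or a passage to the algorithm of Theorem \ref{thm-residue-hyperkahler-algorithm} via $S^1$-equivariant abelianization, which sidesteps the chamber enumeration but introduces its own bookkeeping in the $u$-adic expansion.
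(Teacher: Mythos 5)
Your proposal matches the paper's approach: the paper likewise produces VGIT cogenerators by running the iterated Jeffrey--Kirwan residue of \S\ref{sec-residue-examples} over all chambers of $\alpha$, computes the annihilator ideal in Sage, and deduces surjectivity together with $\ker\khk = I_{\mathrm{VGIT}}$ by comparing Hilbert series with Theorem \ref{thm-rank-3-betti-numbers} via exactly your squeeze $R_{\mathrm{VGIT}} \twoheadleftarrow \im(\khk) \hookrightarrow H^\ast(\FM_\alpha)$, the stated $n=5$ presentation being read off from the resulting Gr\"obner basis. One small correction: the cogenerators $Z_\alpha$ are not homogeneous (their constant term is the symplectic volume of the cycle), so levelness is not automatic from top-degree generation as you claim, but is instead verified as an output of the same computation.
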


\begin{remark} The computer calculation can easily be extended to higher values of $n$
as well as higher ranks, but as the list of generators of the kernel of the Kirwan map
grows very quickly we have decided only to state the results for $r=3$ and $n=5$.
\end{remark}



\chapter{Star Quivers and Integrable Systems} \label{ch-integrable}

\section{Flags and Stars}

\subsection{The Springer Resolution}
We begin by recalling the Springer resolution, which is of fundamental importance in
geometric representation theory \cite{ChrissGinzburg}. Let $G$ be a simply connected complex semisimple
group with Lie algebra $\fg$.
The \emph{nilpotent cone} $\CN \subset \fg$ is a singular affine variety which carries
a natural Poisson structure induced by the Lie-Poisson structure on $\fg$. Let $\CB$ be the set
of all Borel sub-algebras of $\fg$. Then we define a variety $\tilde{\CN}$ by
\begin{equation}
  \tilde{\CN} = \{ (x, \fb) \in \CN \times \CB \suchthat x \in \fb \}.
\end{equation}
Note that there is a natural projection map $\mu: \tilde{\CN} \to \CN$.
\begin{theorem} The variety $\tilde{\CN}$ is isomorphic to $T^\ast(G/B)$,
where $G/B$ is the flag variety of $G$ (for $B$ a fixed Borel subgroup).
The map $\mu: T^\ast(G/B) \to \CN$ is a resolution of singularities.
Moreover, the map $\mu$ is the moment map for the canonical $G$-action on $T^\ast(G/B)$, and
in particular, it is a Poisson map.
\end{theorem}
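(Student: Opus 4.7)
The plan is to unpack each assertion using the standard $G$-equivariant description of $T^\ast(G/B)$ and the geometry of Borel subalgebras.

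First I would set up the isomorphism $\tilde{\CN}\iso T^\ast(G/B)$. Recall that $T^\ast(G/B)$ has the $G$-equivariant description $G\times_B(\fg/\fb)^\ast$. The Killing form $\kappa$ on $\fg$ is non-degenerate and $B$-invariant, and the annihilator of $\fb$ under $\kappa$ is exactly the nilradical $\fn=[\fb,\fb]$, so $\kappa$ induces a $B$-equivariant isomorphism $(\fg/\fb)^\ast\iso\fn$. Hence $T^\ast(G/B)\iso G\times_B\fn$. I would then define the map
\begin{equation}
  \Phi: G\times_B\fn\longrightarrow \CN\times\CB,\qquad [g,x]\mapsto(\Ad_g x,\Ad_g\fb),
\end{equation}
and check that $\Phi$ is well-defined, $G$-equivariant, and lands in $\tilde{\CN}$ by construction. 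To see that $\Phi$ is an isomorphism onto $\tilde{\CN}$, I would use that $G$ acts transitively on $\CB$ with stabilizer $B$, so a pair $(x,\fb')\in\tilde{\CN}$ is uniquely of the form $(\Ad_g x_0,\Ad_g\fb)$ with $x_0\in\fn$ determined up to $B$-action.

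Next I would verify that $\mu:T^\ast(G/B)\to\CN$, which under $\Phi$ is just projection to the first factor, is a resolution of singularities. Properness is immediate: $\mu$ factors as the inclusion $\tilde{\CN}\into \CN\times\CB$ followed by the projection $\CN\times\CB\to\CN$, and the latter is proper since $\CB\iso G/B$ is projective. Surjectivity onto $\CN$ follows from the classical fact that every nilpotent element of $\fg$ is contained in some Borel subalgebra. Smoothness of the source is automatic since $T^\ast(G/B)$ is a cotangent bundle. To get an isomorphism on a dense open, I would invoke the fact that a regular nilpotent element lies in a \emph{unique} Borel subalgebra, so $\mu$ is bijective (hence biregular, by Zariski's main theorem) over the regular nilpotent orbit, which is dense in $\CN$.

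For the moment map assertion, I would apply the standard formula for the cotangent lift of a Lie group action. For $G$ acting on any smooth manifold $M$, the lifted action on $T^\ast M$ is Hamiltonian with moment map $\Phi(m,p)(\xi)=\langle p,\xi_M(m)\rangle$, where $\xi_M$ is the fundamental vector field. Taking $M=G/B$ and using the identification $T^\ast(G/B)\iso G\times_B\fn$ with Killing-form pairing, a short calculation shows that the functional on $\fg$ associated to $[g,x]$ is $\kappa(\Ad_g x,\cdot)$, which under the Killing identification $\fgd\iso\fg$ is precisely $\Ad_g x=\mu([g,x])$. Once $\mu$ is identified as a moment map, the Poisson property is a general feature of moment maps for Hamiltonian actions with respect to the Lie--Poisson structure on $\fgd$, so this last statement requires no additional work.

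The main conceptual step, and the only one that is not purely formal, is the use of the theorem that a regular nilpotent lies in a unique Borel; everything else is an exercise in unwinding definitions and invoking standard facts about cotangent lifts and the Killing form.
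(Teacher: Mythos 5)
Your proof is correct and is the standard argument for the Springer resolution: the identification $T^\ast(G/B)\iso G\times_B\fn$ via the Killing form, properness from the projectivity of $\CB$, birationality from the uniqueness of the Borel containing a regular nilpotent, and the cotangent-lift moment map formula. The paper itself states this theorem without proof, deferring to the standard reference (Chriss--Ginzburg), so there is nothing to compare against; your sketch fills in exactly the argument that reference gives, and the only point worth tightening is that the appeal to Zariski's main theorem over the regular orbit should cite normality (or smoothness) of that orbit so that the bijective morphism is indeed an isomorphism in characteristic zero.
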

\begin{definition} The map $\mu: T^\ast(G/B) \to \CN$ is called the \emph{Springer resolution}.
\end{definition}
In the case of type $A$ (i.e., $\mathfrak{sl}_n$), the Springer resolution may be constructed
using Nakajima quiver varieties.\footnote{In fact, Nakajima varieties provide a large class
of \emph{conical symplectic resolutions}, of which the Springer resolution is a special case.}
Consider the type $A$ quiver pictured in Figure \ref{fig-integrable-type-a},
and let $\FX_\alpha$ be the K\"ahler quiver variety associated this quiver.
There is a natural residual action of $U(r)$ given by the action on the left-most vertex.
This action is Hamiltonian, with moment map $\mu: \FX_\alpha \to \fu_r$.
Similarly, the hyperk\"ahler quotient $\FM_\alpha$ has a residual hyperhamiltonian action
of $U(r)$, with real and complex moment maps given by
\begin{align}
  \mur([x, y, x_1, y_1, \cdots, x_l, y_l]) &= \frac{1}{2} xx^\ast - \frac{1}{2} y^\ast y \\
  \muc([x, y, x_1, y_1, \cdots, x_l, y_l]) &= xy
\end{align}

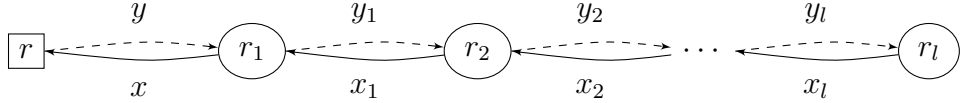
\begin{figure}[h]
\centering
\begin{tikzpicture}
  \coordinate (dx) at (3.0,0);
  \coordinate (dy) at (0, 0.5);
  \node[rectangle,draw] (n0) at (0,0) {$r$};
  \node[ellipse,draw] (n1) at ($(dx)$) {$r_1$};
  \node[ellipse,draw] (n2) at ($2*(dx)$) {$r_2$};
  \node               (n3) at ($3*(dx)$) {$\cdots$};
  \node[ellipse,draw] (n4) at ($4*(dx)$) {$r_l$};

  \node (edgex) at ($0.5*(dx) + (dy)$) {$y$};
  \node (edgey) at ($0.5*(dx) - (dy)$) {$x$};

  \foreach \s in {1,2,3,4} {
    \pgfmathsetmacro{\t}{\s-1}
    \doublearrow{n\s}{n\t};
  }

  \foreach \s in {1,2,3} {
    \ifnum \s=3
      \node (edgex) at ($0.5*(dx) + \s*(dx) + (dy)$) {$y_l$};
      \node (edgey) at ($0.5*(dx) + \s*(dx) - (dy)$) {$x_l$};
    \else
      \node (edgex) at ($0.5*(dx) + \s*(dx) + (dy)$) {$y_\s$};
      \node (edgey) at ($0.5*(dx) + \s*(dx) - (dy)$) {$x_\s$};
    \fi
  }

\end{tikzpicture}
\caption{Type $A$ quiver for the Springer resolution.}
  \label{fig-integrable-type-a}
\end{figure}

\begin{proposition} \label{prop-int-flag}
The moment map $\mu: \FX_\alpha \to \mathfrak{u}_r$ is an embedding
and identifies $\FX_\alpha$ with a $U(r)$-coadjoint orbit of flag type
$(r_l, r_{l-1}, \cdots, r_1, r)$. The hyperk\"ahler quotient
$\FM_\alpha$ is naturally identified with $T^\ast \FX_\alpha$. 
The complex moment map $\muc: \FM_\alpha \to \gl_r$ takes values in
the nilpotent cone, and in the case of a complete flag (i.e. $(1,2,\dots,r)$),
it is the Springer resolution for $\mathfrak{sl}_r$.
\end{proposition}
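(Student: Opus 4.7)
The plan is to establish the assertions in turn by direct analysis of the chain quiver. First, I would identify $\FX_\alpha$ with a partial flag variety: for generic positive $\alpha$, Harder--Narasimhan / King stability on this acyclic type A quiver forces each of $x, x_1, \dots, x_l$ to be injective, since any destabilizing subrepresentation in a chain must arise from a nontrivial kernel. The assignment
\[ [x, x_1, \dots, x_l] \mapsto \bigl( \im(xx_1\cdots x_l) \subset \cdots \subset \im(xx_1) \subset \im(x) \subset W \bigr) \]
is then $G_\bv$-invariant with a $G_\bv$-equivariant inverse, realizing $\FX_\alpha \iso \mathrm{Fl}(r_l, \dots, r_1; r)$.

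Next, the residual $U(r)$-moment map is $\mu = \frac{1}{2} xx^\ast$. Using the real moment map equations at the interior vertices inductively, one shows that $xx^\ast$ preserves each step of the associated flag and acts on each successive quotient $\im(xx_1\cdots x_{i-1})/\im(xx_1\cdots x_i)$ as a scalar built from partial sums of the $\alpha_j$; since a Hermitian endomorphism of $W$ is determined up to conjugation by its eigenvalue data, $\mu$ embeds $\FX_\alpha$ onto a single $U(r)$-coadjoint orbit, and matching dimensions of fixed eigenspaces identifies it with the orbit of flag type $(r_l, \dots, r_1, r)$. For the hyperk\"ahler statement, the natural inclusion $T^\ast \FX_\alpha \hookrightarrow \FM_{(\alpha,0)}$ from the hyperk\"ahler-analogue construction is an embedding of smooth holomorphic symplectic manifolds of equal dimension; as $\FM_{(\alpha,0)}$ is smooth and connected, it must coincide with its image. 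For nilpotency of $\muc$, the interior complex moment map equations $y_{i-1} x_{i-1} = x_i y_i$ (with $\alpha_\BC = 0$) yield inductively $(xy)^k = xx_1\cdots x_{k-1} y_{k-1} \cdots y_1 y$, so $\muc^{l+1} = 0$ since $V_{l+1} = 0$, placing $\muc$ in the nilpotent cone $\CN$. In the complete flag case $r_i = r - i$, $\FX_\alpha \iso G/B$, and sending a point of $T^\ast(G/B) \iso \FM_\alpha$ to the pair consisting of its complex moment map value together with the Borel subalgebra stabilizing the associated flag identifies $\muc$ with the Springer resolution.

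The main obstacle I anticipate is the eigenvalue bookkeeping in the second step: matching the partial sums of the $\alpha_i$ to the scalars by which $xx^\ast$ acts on successive quotients of the flag requires careful iteration of the interior moment map relations with the correct sign conventions, and then identifying the resulting Hermitian conjugacy class with the specific coadjoint orbit named in the statement. Surjectivity onto the orbit follows from equality of dimensions and connectedness once the image is known to lie inside it, but pinning down the precise orbit corresponding to a given $\alpha$ is where the substantive content lies; everything else is either a standard stability argument or a formal consequence of the hyperk\"ahler-analogue construction.
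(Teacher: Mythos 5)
Most of your outline is the standard route and is sound: stability forcing injectivity of the chain maps and hence the identification of $\FX_\alpha$ with a partial flag variety, reading off the eigenvalues of $\tfrac{1}{2}xx^\ast$ and their multiplicities from the interior real moment map equations to pin down the coadjoint orbit, the telescoping identity $(xy)^k = x x_1 \cdots x_{k-1} y_{k-1} \cdots y_1 y$ derived from $y_{i-1}x_{i-1} = x_i y_i$ for nilpotency, and the usual identification of the complex moment map on $T^\ast(G/B)$ with the Springer map. (The paper offers no proof of its own beyond calling this a standard exercise, so there is nothing to compare against there.)

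The genuine gap is in your argument that $\FM_{(\alpha,0)} \iso T^\ast\FX_\alpha$. You argue that the inclusion $T^\ast\FX_\alpha \into \FM_{(\alpha,0)}$ is an embedding of smooth connected holomorphic symplectic manifolds of equal dimension, and conclude surjectivity from connectedness. Equal dimension only gives that the image is \emph{open} (invariance of domain); it gives no closedness, and a nonempty open subset of a connected manifold need not be the whole thing. This reasoning pattern demonstrably fails for quiver varieties: for the rank-two star quiver, $T^\ast$ of the polygon space sits inside the hyperpolygon space as a proper open subset of the same dimension, with both sides smooth, connected, and holomorphic symplectic --- indeed for extreme $\alpha$ the polygon space is empty while the hyperpolygon space is not. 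So the equality $\FM_{(\alpha,0)} = T^\ast\FX_\alpha$ is a special feature of the chain quiver that must be proved, not a formal consequence of the hyperk\"ahler-analogue construction. The correct argument is to show directly that every stable point $(x,y)$ of $\muc^{-1}(0)$ has all $x_i$ injective: setting $S_i = \ker(x x_1 \cdots x_{i-1}) \subseteq V_i$, the relations $y_{i-1}x_{i-1} = x_i y_i$ imply that the collection $(S_i)$ is preserved by all the edge maps and is killed by the map to $W$, hence is a destabilizing subrepresentation unless every $S_i$ vanishes. Once injectivity holds on all of $\muhk^{-1}(\alpha,0)$, a point is a flag together with a strictly flag-preserving endomorphism $y$, i.e.\ a cotangent vector to the flag variety, and the identification follows.
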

\begin{proof} This is a standard exercise. 
\end{proof}

\begin{remark} \label{rmk-lie-poisson} Note that since $\muc: \FM_\alpha \to \gl_r$
is a moment map, it is in particular Poisson with respect to the Lie-Poisson structure
on $\gl_r$. That is,
\begin{equation}
  \{ \mu_{ij}, \mu_{kl} \} = \delta_{jk} \mu_{il} - \delta_{il} \mu_{kj},
\end{equation}
where $\mu_{ij}$ denotes the $(i,j)$th entry of the moment map $\mu$, and $\{\cdot,\cdot\}$
is the Poisson bracket. This observation will be crucial when we try to understand the Poisson
geometry of star quiver varieties in the sections that follow.
\end{remark}

\subsection{Star Quivers}
Next we consider a more general class of star quivers than those studied in Chapter \ref{ch-quiver}.

\begin{definition} A \emph{generalized star quiver} is a quiver $\CQ$ consists of
a single sink with $n$ incoming paths, such that the dimensions of the vertices are
strictly increasing as one moves toward the sink. We call the dimension of the sink
the \emph{rank}, and denote it by $r$. If the dimension vectors of all of the incoming
paths are complete, i.e. $(r, r-1, r-2, \dots, 1)$ we call the star quiver \emph{complete},
otherwise we call the star quiver \emph{incomplete}.
\end{definition}

\begin{figure}[h]
\centering
\begin{tikzpicture}
  \node[ellipse,draw] (sink) at (0,3.25) {$r$};
  \coordinate (dx) at (2,0);
  \coordinate (dy) at (0,1.5);

  \foreach \s in {1,2,3,4} {
    \ifnum \s=2
     \node (n) at ($\s*(dy) + (dx)$) {$\cdots$};
    \else
     \node[ellipse,draw] (n1) at ($(0,0)+\s*(dy) + (dx)$) {$r-1$};
     \node[ellipse,draw] (n2) at ($(0,0)+\s*(dy) + 2*(dx)$) {$r-2$};
     \node (n3) at ($(0,0)+\s*(dy) + 3*(dx)$) {$\cdots$};
     \node[ellipse,draw] (n4) at ($(0,0)+\s*(dy) + 4*(dx)$) {$1$};
     \path[draw,->,>=latex'] (n4) -- (n3) -- (n2) -- (n1) -- (sink);
    \fi
  }

\end{tikzpicture}
\caption{A complete star quiver.}
  \label{fig-complete-star-quiver}
\end{figure}
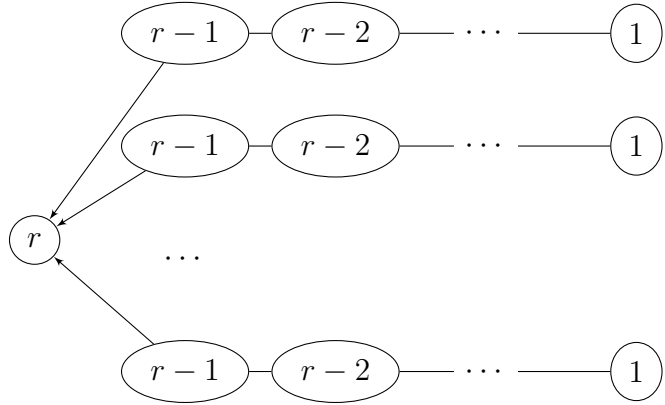

It follows immediately from Proposition \ref{prop-int-flag} that we have the 
following alternative description of $\FX_\alpha$ and $\FM_\alpha$.
\begin{proposition} \label{prop-int-flag-quotient}
The K\"ahler quiver variety associated to a generalized star quiver may
be identified, via reduction in stages, with the K\"ahler quotient of a product
of $U(r)$ coadjoint orbits by the diagonal $SU(r)$-action. Similarly, the hyperk\"ahler
quiver variety associated to a star quiver may be identified with the hyperk\"ahler
quotient of a product of cotangent bundles of flag varieties. In the case of a complete
star quiver, $\FM_\alpha$ has complex dimension $(n-2)r^2 -nr + 2$.
\end{proposition}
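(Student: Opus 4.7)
The plan is to apply reduction in stages, using Proposition \ref{prop-int-flag} to identify each incoming chain of the star quiver with a coadjoint orbit of $U(r)$. Concretely, let $\CQ$ be a generalized star quiver of rank $r$ with $n$ incoming paths $P_1, \dots, P_n$, and let $\bv$ be the given dimension vector. Decompose the gauge group as
\begin{equation}
G_\bv \;\cong\; U(r)_{\mathrm{sink}} \times \prod_{i=1}^n G_{P_i}^{\circ},
\end{equation}
where $G_{P_i}^{\circ}$ is the product of unitary groups at the non-sink vertices along $P_i$. The representation space $\Rep(\CQ, \bv)$ similarly decomposes as a product of the representation spaces $\Rep(P_i)$ attached to each path, which are linked only through their common action of $U(r)_{\mathrm{sink}}$.

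First I would reduce by the subgroup $\prod_i G_{P_i}^{\circ}$, which acts on each factor separately. By Proposition \ref{prop-int-flag}, each individual quotient
\begin{equation}
\Rep(P_i) \reda{\alpha_i} G_{P_i}^{\circ}
\end{equation}
is isomorphic, via its residual $U(r)_{\mathrm{sink}}$-moment map, to a coadjoint orbit $\mathcal{O}_i \subset \mathfrak{u}_r^\ast$ of the flag type prescribed by the dimension vector along $P_i$. Taking the product gives $\prod_i \mathcal{O}_i$ with the diagonal $U(r)$-action, and reduction in stages then identifies
\begin{equation}
\FX_\alpha \;\cong\; \Bigl(\prod_{i=1}^n \mathcal{O}_i\Bigr) \reda{0} U(r)_{\mathrm{sink}}.
\end{equation}
The overall diagonal $U(1) \subset G_\bv$ acts trivially on $\Rep(\CQ, \bv)$, and correspondingly the scalar $U(1) \subset U(r)_{\mathrm{sink}}$ lies in the stabilizer of every point of every coadjoint orbit $\mathcal{O}_i$ (for generic $\alpha_i$, the orbits meet the center trivially), so this action descends to $PU(r)$. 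For the moment-map reduction the distinction between $PU(r)$ and $SU(r)$ only affects the finite orbifold structure, so we may equivalently write the quotient as a reduction by the diagonal $SU(r)$, yielding the first claim.

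The hyperk\"ahler assertion follows by exactly the same argument applied to $T^\ast \Rep(\CQ, \bv)$: Proposition \ref{prop-int-flag} identifies each chain's hyperk\"ahler quotient with $T^\ast \mathcal{O}_i = T^\ast (G/P_i)$, and hyperk\"ahler reduction in stages then gives
\begin{equation}
\FM_\alpha \;\cong\; \Bigl(\prod_{i=1}^n T^\ast(U(r)/P_i)\Bigr) \rreda{0} SU(r).
\end{equation}
For the dimension count in the complete case, each complete flag variety $U(r)/T$ has complex dimension $r(r-1)/2$, so each cotangent bundle contributes $r(r-1)$, and the product has complex dimension $nr(r-1)$. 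A hyperk\"ahler quotient by $SU(r)$ (which has complex dimension $r^2 - 1$) reduces this by $2(r^2 - 1)$, giving
\begin{equation}
\dim_\BC \FM_\alpha \;=\; nr(r-1) - 2(r^2 - 1) \;=\; (n-2) r^2 - nr + 2,
\end{equation}
as claimed. The only subtle point I anticipate is bookkeeping around the central circles: one must verify that the diagonal scalar $U(1)$ identified above is exactly the kernel of the action, and that $\alpha$ can be chosen so that the intermediate reductions producing the $\mathcal{O}_i$ are smooth and the subsequent reduction by $SU(r)$ is also regular. Both are routine consequences of genericity of $\alpha$ and the smoothness hypothesis on $\FM_\alpha$, but they are what justifies freely interchanging the order of reduction.
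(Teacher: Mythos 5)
Your proof is correct and follows exactly the route the paper intends: the paper offers no written proof, stating only that the identification ``follows immediately from Proposition \ref{prop-int-flag}'' via reduction in stages, and your argument is a faithful elaboration of that, including the correct dimension count $nr(r-1) - 2(r^2-1) = (n-2)r^2 - nr + 2$. The remarks about the central $U(1)$ and genericity of $\alpha$ are sensible bookkeeping and consistent with how the paper handles the trivially-acting diagonal circle elsewhere.
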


\begin{remark} This identification allows for an easy proof of hyperk\"ahler Kirwan surjectivity
for these spaces, see \cite[Remark 4.3]{HauselProudfoot}.
\end{remark}

\subsection{Parabolic Bundles}
Let $X$ be a Riemann surface and $E \to X$ a rank $r$ holomorphic vector bundle. Let $D = \sum_{i=1}^n a_i$
be a fixed divisor, and assume that the $a_i$ are pairwise distinct. For each $i$, fix a 
flag type $n > n_1 > \cdots > n_{l_i} > 0$. 

\begin{definition}
A \emph{parabolic structure} on $E$ is a 
choice of flag 
\begin{equation}
  E_{a_i} = E_{a_i}^0 \supset E_{a_i}^1 \supset \cdots \supset E_{a_i}^{l_i} \supset 0,
\end{equation}
at each of the points $a_i$ of the divisor. 
A \emph{parabolic Higgs field} is a section
\begin{equation} \phi \in H^0(X, \End E \otimes \CO_X(D) \otimes K_X) \end{equation}
such that at each point $a_i$ of the divisor, we have
\begin{equation}
  \phi_i(E_{a_i}^j) \subseteq E_{a_i}^j,
\end{equation}
where $\phi_i$ is the residue of $\phi$ at the point $z=a_i$.
We say that $\phi$ is \emph{strictly parabolic} if $\phi_i(E_{a_i}^j) \subseteq E_{a_i}^{j+1}$
for all $i = 1, \dots n, j=0, \dots, l_i$.
\end{definition}

\begin{theorem} \label{thm-integrable-parabolic}
Let $\CQ$ be a generalized star quiver of type $(r,n)$ and let
$D = \sum_{i=1}^n a_i$ be a divisor on $\BP^1$. Then the K\"ahler quiver variety $\FX_\alpha$
is naturally identified with a moduli space of parabolic structures on the trivial rank
$r$ vector bundle $E \to \BP^1$, and the hyperk\"ahler quiver variety $\FM_\alpha$ is 
naturally identified with a moduli space of strictly parabolic Higgs bundles on $\BP^1$,
whose underlying bundle type is trivial.
\end{theorem}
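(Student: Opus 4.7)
The plan is to construct an explicit identification between the quiver data and parabolic data using a global trivialization of the trivial bundle $E \to \BP^1$. First, fix a global trivialization $E \iso \CO^r$, which identifies each fiber $E_{a_i}$ with $\BC^r$. A parabolic structure of type $(r > n_1 > \cdots > n_{l_i})$ at $a_i$ then becomes a point in the partial flag variety $\Fl_i := \Fl(r, n_1, \ldots, n_{l_i}; \BC^r)$, so the space of parabolic structures on the trivial bundle is naturally $\prod_{i=1}^n \Fl_i$. Automorphisms of $\CO^r$ act diagonally as $GL_r$, and the scalar subgroup acts trivially on each flag variety; thus the honest moduli is $(\prod \Fl_i)/PGL_r$, or symplectically, a $PU(r)$-quotient of $\prod \Fl_i$.

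Next I would realize each $\Fl_i$ as a $U(r)$-coadjoint orbit, with the parabolic weights dictated by the arm of the quiver. Concretely, Proposition~\ref{prop-int-flag} identifies each type-A quiver arm variety with a coadjoint orbit of the chosen flag type under the moment map for the residual $U(r)$-action. By reduction in stages (Proposition~\ref{prop-int-flag-quotient}), $\FX_\alpha$ is the symplectic quotient of $\prod_i \Fl_i$ by the diagonal $PU(r)$, with the moment map level determined by $\alpha$. Choosing the same parabolic weights in the bundle picture as produce the coadjoint orbit on the quiver side, the K\"ahler quotient above is precisely the $\alpha$-semistable parabolic moduli, establishing the $\FX_\alpha$ statement.

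For the hyperk\"ahler statement, replace each coadjoint orbit $\Fl_i$ by $T^\ast \Fl_i$. Under the cotangent identification, a covector at a flag $E^\bullet_{a_i}$ corresponds canonically to a nilpotent endomorphism $\phi_i \in \End(\BC^r)$ satisfying $\phi_i(E^j_{a_i}) \subseteq E^{j+1}_{a_i}$, i.e., a strictly parabolic residue. A strictly parabolic Higgs field on the trivial bundle has the form $\phi = \sum_i \frac{\phi_i}{z - a_i}\, dz$, and globality on $\BP^1$ (where $K_{\BP^1} = \CO(-2)$) forces the residue theorem constraint $\sum_i \phi_i = 0$ in $\gl_r$. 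I claim this is exactly the complex moment map equation for the diagonal $U(r)$-action on $\prod_i T^\ast \Fl_i$: on each factor the complex moment map is the Springer-type nilpotent residue $\phi_i$ (Proposition~\ref{prop-int-flag} and Remark~\ref{rmk-lie-poisson}), and the diagonal-action moment map is the sum. Since the $\phi_i$ are nilpotent, hence traceless, the trace component is automatically zero, which is why the reduction is genuinely by $PU(r)$. The real moment map equation implements the K\"ahler stability condition for the gauge group, so the full hyperk\"ahler quotient is the moduli of strictly parabolic Higgs bundles with trivial underlying bundle.

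The main obstacle is item (3): precisely matching the moment map conventions so that the residue theorem on $\BP^1$ becomes the quiver's complex moment map equation, and bookkeeping the scalars carefully. A dimension check provides useful confirmation: $\dim_\BC \prod_i T^\ast \Fl_i = nr(r-1)$ in the complete case, and reducing by $PU(r)$ of complex dimension $r^2 - 1$ yields $(n-2)r^2 - nr + 2$, agreeing with Proposition~\ref{prop-int-flag-quotient}. A secondary subtlety is the restriction to the open stratum of parabolic Higgs bundles whose underlying bundle is holomorphically trivial; this is forced by our use of a global trivialization, and is consistent with the standard stratification of parabolic Higgs moduli by underlying bundle type.
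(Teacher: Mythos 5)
Your proposal is correct and follows essentially the same route as the paper: trivialize $E$, identify parabolic structures with points of a product of flag varieties modulo change of trivialization, realize these as coadjoint orbits via Proposition \ref{prop-int-flag} and reduction in stages, and then construct the Higgs field as $\sum_i \mu_i\, dz/(z-a_i)$ with the regularity-at-infinity (residue) constraint $\sum_i \mu_i = 0$ recognized as the diagonal complex moment map equation. The extra bookkeeping you supply (the $PU(r)$ versus $U(r)$ distinction via tracelessness, the weight matching, and the dimension check) is consistent with, and slightly more explicit than, the paper's argument.
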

\begin{proof} Let $E \to X$ be a trivial bundle. Then we may choose some trivialization
such that $E \iso \BC^r \times \BP^1$. Hence under this trivialization, the set of 
possible parabolic structures is a product of flag varieties. The diagonal action of
$GL(r)$ corresponds to a change of trivialization, and hence the space of
parabolic structures is naturally identified with the GIT quotient of a product of flag
varieties by the diagonal $GL(r)$-action. By Proposition \ref{prop-int-flag-quotient}, this may be
identified with a star quiver variety $\FX_\alpha$ (see Remark \ref{rmk-symplectic-git}).

Now consider a product of cotangent bundles of flag varieties, $M = \prod_{i=1}^n T^\ast \FF_i$.
Let $\mu_i$ denote the complex moment map on $T^\ast \FF_i$. Let $z$ be an affine coordinate on $\BP^1$
(assume that $z=0$ is not a point in the divisor $D$). Then consider the $\gl_r$-valued meromorphic form
\begin{equation} \label{eqn-integrable-higgs-defn}
  \phi(z) dz = \sum_{i=1}^n \frac{\mu_i dz}{z-a_i}.
\end{equation}
Then $\phi(z) dz$ is well-defined as a
section of $\End E \otimes K(D)$ on the vanishing set $V( \sum_i \mu_i ) \subset M$,
and hence defines a Higgs field.\footnote{The condition $\sum_i \mu_i =0$ is easily seen to be
equivalent to the condition that $\phi(z) dz$ is regular at $z=\infty$.}
Moreover, by Proposition \ref{prop-int-flag},
$\mu_i(x_i,y_i)$ is a nilpotent matrix preserving the flag defined by $x_i \in \FF_i$,
and hence $\phi(z)$ is a strictly parabolic Higgs field. 
The residual action of $GL(r)$ corresponds to change of trivialization, so the GIT quotient
$V( \sum_i \mu_i ) \red GL(r)$ is a moduli space of strictly parabolic Higgs bundles.
But $\sum_i \mu_i$ is exactly the complex moment map for the diagonal $GL(r)$-action,
and hence this quotient may be identified with $M \rred U(r) \iso \FM_\alpha$.
\end{proof}

\begin{remark} This construction is a natural generalization of the rank 2 construction
in \cite{GodinhoMandini}, wherein it was also shown that
the stability condition on quiver representations is equivalent to parabolic slope stability,
for an appropriate choice of parabolic weights. We suspect that this remains true in the
higher rank case, but as we shall not need this result, we make no attempt to prove it.
\end{remark}

\begin{remark} The moduli space of strictly parabolic Higgs bundles is a symplectic leaf
in the moduli space of parabolic Higgs bundles \cite{LogaresMartens}, and it is known to
be completely integrable. However, we take the viewpoint that star quivers should
be regarded as ``toy models'' of Hitchin systems, so we will instead prove integrability
by direct calculation.
\end{remark}

\begin{remark} If we fix a line bundle $L$, then a \emph{Hitchin pair} is a pair
$(E, \phi)$ where $E$ is a vector bundle over a curve and $\phi \in H^0(\End E \otimes L)$.\footnote{Of
particular interest is the case $L = -K_X$, which corresponds to \emph{co-Higgs fields} \cite{RayanThesis}.
These may be identified with holomorphic bundles in the sense of generalized complex geometry \cite{GualtieriGCX}.}
The moduli space of (stable) Hitchin pairs carries a natural family of Poisson structures which are all completely 
integrable with respect to the Hitchin map \cite{Bottacin,Markman}.
The construction of this section can be generalized to give a construction
of Hitchin pairs (over $\BP^1$) associated to the quiver pictured in Figure \ref{fig-twisted-quiver}.
However, it is not clear whether this construction preserves Poisson structures.
\begin{figure}[h]
\centering
\begin{tikzpicture}
  \coordinate (c) at (0,0);
  \dotnodebelow{sink}{c}{$r$};

  \coordinate (dx) at (1.5,0);
  \coordinate (dy) at (0,0.75);

  \foreach \s in {-2,-1,0,1,2} {
    \ifnum \s=0
     \node (n) at ($(dx)+\s*(dy)$) {$\cdots$};
    \else
     \coordinate (c) at ($(dx) + \s*(dy)$);
     \dotnoderight{n}{c}{$r-1$};
     \soliddoublearrow{n}{sink};
    \fi
  }

  \coordinate (c1) at ($-0.25*(dx)+(dy)$);
  \coordinate (c2) at ($-0.5*(dx)$);
  \coordinate (c3) at ($-0.25*(dx)-(dy)$);

  \path[->,>=latex',draw] (sink) edge[loop left] (sink);

\end{tikzpicture}
\caption{A quiver representing a space of Hitchin pairs.}
  \label{fig-twisted-quiver}
\end{figure}
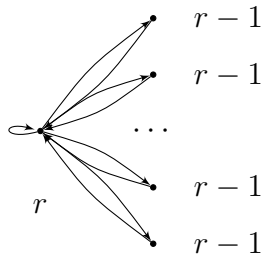
\end{remark}

\section{Complete Integrability}

\subsection{The Hitchin Fibration}
In this section, we describe the Hitchin map on $\FM_\alpha$ and show by direct
calculation that it is a coisotropic fibration.
\begin{definition} The \emph{Hitchin base} is
\begin{equation}
  B = \bigoplus_{i=1}^r H^0(\BP^1, K(D)^i),
\end{equation}
where $K$ is the canonical sheaf of $\BP^1$.
The \emph{Hitchin map} $h: H^0(\BP^1, \End E \otimes K(D)) \to B$
is the map
\begin{equation}
  \phi \mapsto ( \Tr(\phi), \Tr(\phi^2), \dots, \Tr(\phi^r) ).
\end{equation}
\end{definition}
Note that this map is invariant under $\phi \mapsto g^{-1} \phi g$, and hence descends
to a well defined map on $\FM_\alpha$, which we also denote by $h$. 

\begin{lemma} \label{lemma-hitchin-base}
On $\FM_\alpha$, the Hitchin map takes values in $B_0 \subset B$, where
\begin{equation}
  B_0 = \bigoplus_{i=1}^r H^0(\BP^1, K^i \otimes \CO(D)^{i-1}).
\end{equation}
\end{lemma}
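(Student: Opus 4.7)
The plan is to work locally in the affine coordinate $z$ on $\BP^1$ and exploit the nilpotency of the residues $\mu_i$. Recall from Proposition \ref{prop-int-flag} that each complex moment map $\mu_i \colon T^\ast \FF_i \to \gl_r$ takes values in the nilpotent cone $\CN$, and that $\phi(z)\,dz$ is defined on $\FM_\alpha$ by equation \eqref{eqn-integrable-higgs-defn}. Thus $\phi$ has at most simple poles at the marked points $a_i$, with nilpotent residues, and the moment map condition $\sum_i \mu_i = 0$ ensures regularity at infinity (equivalently, that $\phi(z)\,dz$ extends to a global section of $K \otimes \CO(D)$).

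First I would verify the local pole order at each puncture. Near $z = a_j$, expand
\begin{equation}
  \phi(z) = \frac{\mu_j}{z-a_j} + \nu_j + O(z-a_j),
\end{equation}
so that $\phi(z)^k$ has a pole of order at most $k$ at $a_j$ with leading term $\mu_j^k/(z-a_j)^k$. Taking traces, the most singular term of $\Tr(\phi^k)$ at $a_j$ is $\Tr(\mu_j^k)(z-a_j)^{-k}$. Because $\mu_j$ is nilpotent, $\Tr(\mu_j^k) = 0$ for every $k \geq 1$, so $\Tr(\phi^k)$ has a pole of order at most $k-1$ at $a_j$.

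Next I would check behavior at $z = \infty$. Passing to the coordinate $w = 1/z$, one sees that the residue of $\phi(z)\,dz$ at $w = 0$ is $-\sum_i \mu_i$, which vanishes on $\FM_\alpha$ by the complex moment map equation. Hence $\phi$ is regular at $\infty$, and so is $\Tr(\phi^k)$. Combining this with the bound on pole orders at the $a_j$ yields $\Tr(\phi^k) \in H^0(\BP^1, K^k \otimes \CO(D)^{k-1})$, which is exactly the claim.

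There is essentially no obstacle here: the entire content of the lemma is the nilpotency of the residues, which is precisely the input furnished by the Springer resolution (Proposition \ref{prop-int-flag}). The only minor bookkeeping is to ensure that the moment-map condition gives regularity at $\infty$, which is a direct calculation in the coordinate $w = 1/z$.
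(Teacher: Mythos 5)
Your proof is correct and follows the same route as the paper: nilpotency of the residues $\mu_j$ kills the leading coefficient $\Tr(\mu_j^k)$ of $\Tr(\phi^k)$ at each $a_j$, dropping the pole order to $k-1$, which is exactly the statement that $\Tr(\phi^k)$ lies in $H^0(\BP^1, K(D)^k \otimes \CO(-D)) \iso H^0(\BP^1, K^k \otimes \CO(D)^{k-1})$. The check at $z=\infty$ is the same moment-map observation the paper makes in a footnote to Theorem \ref{thm-integrable-parabolic}; your write-up just makes the local expansions explicit.
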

\begin{proof} Since the residue of $\phi$ is strictly parabolic at each of the marked points, 
it is in particular nilpotent, and hence the poles of $\Tr(\phi^i)$ have order at most $i-1$.
Hence $\Tr(\phi^i)$ may be naturally identified with a section of 
$K(D)^i \otimes \CO(-D) \iso K^i \CO(D)^{i-1}$.
\end{proof}

\begin{proposition} We have $\dim B_0 = \frac{1}{2}\left( (n-2) r^2 -nr + 2 \right)$.
In the case of a complete flag quiver variety $\FM_\alpha$, we have
$\dim \FM_\alpha = 2 \dim B_0$.
\end{proposition}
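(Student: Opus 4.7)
The plan is to unpack the summand explicitly on $\BP^1$ and apply the formula $h^0(\BP^1, \CO(d)) = \max(0, d+1)$. Since $K \iso \CO(-2)$ and $\CO(D) \iso \CO(n)$ (because $D$ has degree $n$), we get the identification
\begin{equation*}
  K^i \otimes \CO(D)^{i-1} \iso \CO\bigl((n-2)i - n\bigr),
\end{equation*}
so that $h^0(K^i \otimes \CO(D)^{i-1}) = \max\bigl(0, (n-2)i - n + 1\bigr)$. I would pause to note that the $i=1$ summand contributes $h^0(\CO(-2)) = 0$, which is also what one expects from the fact that $\Tr \phi$ has nilpotent residues and hence must be a global section of $K$ on $\BP^1$, which is zero.

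Next, for $i \geq 2$ and $n \geq 3$ we have $(n-2)i - n + 1 \geq n-3 \geq 0$, so each remaining summand contributes exactly $(n-2)i - n + 1$. Summing the resulting arithmetic progression gives
\begin{equation*}
  \dim B_0 \;=\; \sum_{i=2}^{r} \bigl((n-2)i - n + 1\bigr)
    \;=\; (n-2)\cdot\tfrac{(r-1)(r+2)}{2} - (r-1)(n-1),
\end{equation*}
and a routine expansion collapses this to $\tfrac{1}{2}\bigl((n-2)r^2 - nr + 2\bigr)$, as claimed.

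The second statement is then immediate. Proposition \ref{prop-int-flag-quotient} already records that in the complete-flag case $\dim_\BC \FM_\alpha = (n-2)r^2 - nr + 2$, which is precisely twice the value just computed. There is no real obstacle here — the only mild subtlety is keeping track of the vanishing of the $i=1$ term on $\BP^1$ and resisting the temptation to overcount.
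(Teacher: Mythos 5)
Your proof is correct and follows the same route as the paper: identify $K^i \otimes \CO(D)^{i-1}$ with $\CO((n-2)i-n)$, apply $h^0 = \deg + 1$, and sum the arithmetic progression from $i=2$ to $r$, then quote Proposition \ref{prop-int-flag-quotient} for the dimension of $\FM_\alpha$. You are in fact slightly more careful than the paper in noting the vanishing of the $i=1$ term and checking nonnegativity of the degrees for $i \geq 2$, $n \geq 3$.
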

\begin{proof} The line bundle $K^i \otimes \CO(D)^{i-1}$ has degree $-2i + n(i-1)$,
and hence 
\begin{equation}
  h^0(K^i \otimes \CO(D)^{i-1}) = (n-2)i -n+ 1.
\end{equation}
Summing these from $i=2$ to $i=r$, we obtain the result.
\end{proof}

Next we will show that the Hitchin map defines a coisotropic fibration. For convenience,
in this section we will work on $M = \prod_{i=1}^n T^\ast \FF_i$. Define the components $I_m$
of the Hitchin map via
\begin{equation}
  I_m (dz)^m = \Tr((\phi(z)dz)^m) \in H^0(\BP^1, K(D)^m \otimes \CO(D)^{m-1}).
\end{equation}
By construction, the $I_m$ are rational functions in $z$, and hence we may expand them
as formal power series $I_m(z) = \sum_n I_m^n z^n$. (By a change of coordinate if necessary,
we can assume that none of the $a_i$ is zero, so that $\phi(z)$ is regular at $0$.)
It will be convenient to treat $z$ as a formal parameter, so that $I_m(z)$ can be thought
of as a generating function for the coefficients $I_m^n$. \footnote{We think of the coefficients $I_m^i$
as giving coordinates on the Hitchin base $B$. Since they are the Taylor coefficients of 
a rational function, only finitely many of them are algebraically independent.}
Trivially, we have the following.
\begin{lemma} We have $\{I_m^i, I_n^j\}=0$ for all $i,j$ if and only if 
$\{I_m(z), I_n(w)\} = 0$ as an element of $\BC[[z,w]]$, where the formal parameters
$z,w$ are Casimirs of the Poisson bracket.
\end{lemma}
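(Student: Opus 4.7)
The plan is to unpack both sides using the formal power series expansion $I_m(z) = \sum_i I_m^i z^i$ and $I_n(w) = \sum_j I_n^j w^j$, and exploit the hypothesis that $z, w$ are Casimirs in a purely formal manner. Since the Poisson bracket is $\BC$-bilinear and $z, w$ are central, I expect the bracket to pass through the infinite sums termwise, yielding
\begin{equation}
\{ I_m(z), I_n(w) \} \;=\; \sum_{i,j \geq 0} \{I_m^i, I_n^j\}\, z^i w^j \quad \in \BC[[z,w]].
\end{equation}

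The main technical subtlety, which is really the only thing to verify carefully, is that one can interchange the Poisson bracket with the formal infinite sums. This is not an analytic convergence question here but a formal/algebraic one: the ring of interest is $\BC[[z,w]]$, and coefficient-by-coefficient equality is the definition of equality of formal power series. To justify the interchange, I would note that on the open dense subset of $\FM_\alpha$ where $z$ avoids the poles $a_1, \dots, a_n$, each $I_m(z)$ is a genuine function with a convergent Taylor expansion, and the bracket $\{\cdot,\cdot\}$ is a bidifferential operator which commutes with partial differentiation in the parameters $z, w$ (since these are Poisson-central). Thus each Taylor coefficient of $\{I_m(z), I_n(w)\}$ in $(z,w)$ equals the Poisson bracket of the corresponding Taylor coefficients of $I_m$ and $I_n$, giving the displayed identity.

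From the identity, the lemma is immediate: the formal power series $\{I_m(z), I_n(w)\} \in \BC[[z,w]]$ vanishes if and only if every coefficient $\{I_m^i, I_n^j\}$ vanishes. The forward direction gives one implication and the reverse direction the other.

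The hard part, such as it is, will simply be articulating the termwise-bracket identity cleanly; no deep obstacle arises because centrality of $z$ and $w$ reduces everything to a coefficient comparison. This lemma then serves as the bookkeeping device that will allow the subsequent proof of involutivity to be carried out at the level of the generating functions $I_m(z)$ and $I_n(w)$ rather than the infinite family of coefficients.
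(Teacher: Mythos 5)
Your argument is correct and is exactly the coefficient-by-coefficient comparison the paper has in mind; the paper in fact states this lemma with no proof at all, labelling it as trivial, so your termwise expansion $\{I_m(z), I_n(w)\} = \sum_{i,j} \{I_m^i, I_n^j\} z^i w^j$ supplies precisely the intended justification.
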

Next, we define a matrix-valued formal power series as follows:
\begin{equation}
  \Delta(z,w) = \frac{\phi(z)}{w-z} + \frac{\phi(w)}{z-w}.
\end{equation}
This matrix encodes the Poisson structure, in the following sense.
\begin{lemma} We have
  $\{ \phi_{ij}(z), \phi_{kl}(w) \} = \delta_{jk} \Delta_{il}(z,w) - \delta_{il} \Delta_{kj}(z,w)$.
\end{lemma}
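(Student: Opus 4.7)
The plan is a direct computation, so the main task is setting up the right bookkeeping. I would proceed in three steps.

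First, I would expand both sides using the definition $\phi(z) = \sum_{i'=1}^n \frac{\mu_{i'}}{z-a_{i'}}$ from equation (\ref{eqn-integrable-higgs-defn}). By bilinearity of the Poisson bracket and the fact that the formal parameters $z,w$ are Casimirs, we get
\[
\{\phi_{ij}(z),\phi_{kl}(w)\} = \sum_{i',j'=1}^n \frac{\{(\mu_{i'})_{ij},(\mu_{j'})_{kl}\}}{(z-a_{i'})(w-a_{j'})}.
\]
Second, I would use the fact that $M = \prod_{i=1}^n T^\ast\FF_i$ is a product (so $\mu_{i'}$ and $\mu_{j'}$ Poisson commute for $i'\ne j'$) together with the Lie--Poisson bracket from Remark \ref{rmk-lie-poisson} applied on each factor. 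This collapses the double sum to a single sum:
\[
\{\phi_{ij}(z),\phi_{kl}(w)\}
 = \sum_{i'=1}^n \frac{\delta_{jk}(\mu_{i'})_{il} - \delta_{il}(\mu_{i'})_{kj}}{(z-a_{i'})(w-a_{i'})}.
\]

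Third, the key analytic step is the partial fraction identity
\[
\frac{1}{(z-a)(w-a)} = \frac{1}{w-z}\left( \frac{1}{z-a} - \frac{1}{w-a}\right),
\]
which (for any matrix entry $M_{ab}$) gives
\[
\sum_{i'=1}^n \frac{(\mu_{i'})_{ab}}{(z-a_{i'})(w-a_{i'})}
 = \frac{\phi_{ab}(z)-\phi_{ab}(w)}{w-z}
 = \frac{\phi_{ab}(z)}{w-z} + \frac{\phi_{ab}(w)}{z-w} = \Delta_{ab}(z,w).
\]
Applying this with $(a,b)=(i,l)$ to the first term and $(a,b)=(k,j)$ to the second yields the claimed formula.

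There is no real obstacle; the only thing to be careful about is index placement in the Lie--Poisson bracket (which is an antisymmetric combination of two Kronecker-delta contractions), and the sign when rewriting $1/(w-z) - 1/(z-w)$ issues. Everything else is formal manipulation of power series in the Casimir variables $z,w$.
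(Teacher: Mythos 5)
Your proposal is correct and follows essentially the same computation as the paper: expand $\phi$ in terms of the $\mu_m$, apply the Lie--Poisson bracket factorwise (with the cross terms vanishing because the factors of $M=\prod_i T^\ast \FF_i$ Poisson commute), collapse the double sum, and resum. The only difference is that you make explicit the partial fraction identity $\frac{1}{(z-a)(w-a)}=\frac{1}{w-z}\bigl(\frac{1}{z-a}-\frac{1}{w-a}\bigr)$ that the paper leaves implicit in its final equality identifying the sum with $\Delta(z,w)$.
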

\begin{proof} We simply compute
\begin{align}
  \{ \phi_{ij}(z), \phi_{kl}(w) \} &= \sum_{m,n} \frac{\{(\mu_m)_{ij}, (\mu_n)_{kl} \}}{(z-a_m)(w-a_n)} \\
   &= \sum_{m,n} \frac{\delta_{mn} \delta_{jk} (\mu_m)_{il}-\delta_{mn} \delta_{il} (\mu_m)_{kj}}{(z-a_m)(w-a_n)} \\
   &= \sum_m \frac{\delta_{jk} (\mu_m)_{il}-\delta_{il} (\mu_m)_{kj}}{(z-a_m)(w-a_m)} \\
   &= \delta_{jk} \Delta_{il}(z,w) - \delta_{il} \Delta_{kj}(z,w),
\end{align}
as claimed.
\end{proof}

\begin{proposition} \label{prop-integrable-coisotropic}
The components of the Hitchin map Poisson commute,
hence the Hitchin map $h: \FM_\alpha \to B_0$ is a coisotropic fibration.
\end{proposition}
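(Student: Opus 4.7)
The plan is to prove $\{I_m(z), I_n(w)\} = 0$ as an element of $\BC[[z,w]]$ (with $z, w$ treated as Casimirs, per the lemma immediately preceding the statement), and then extract the desired Poisson commutativity of the coefficients $I_m^i$ from this identity. Coisotropy of the fibration follows formally, since the coefficients generate the pullback of the algebra of functions on $B_0$.

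First I would expand $\{\Tr \phi(z)^m, \Tr \phi(w)^n\}$ via the Leibniz rule, using the standard matrix-calculus identity
\begin{equation}
\frac{\partial \Tr \phi^m}{\partial \phi_{ij}} = m\,(\phi^{m-1})_{ji},
\end{equation}
which follows by differentiating the cyclic sum $(\phi^m)_{aa} = \sum \phi_{a i_1}\phi_{i_1 i_2}\cdots \phi_{i_{m-1} a}$. This reduces the computation to
\begin{equation}
\{I_m(z), I_n(w)\} = mn \sum_{i,j,k,l} (\phi(z)^{m-1})_{ji}\,(\phi(w)^{n-1})_{lk}\,\{\phi_{ij}(z), \phi_{kl}(w)\}.
\end{equation}
Substituting the bracket formula from the preceding lemma and contracting the Kronecker deltas, the two terms collapse to
\begin{equation}
\{I_m(z), I_n(w)\} = mn \Bigl[ \Tr\bigl(\phi(w)^{n-1}\phi(z)^{m-1}\Delta(z,w)\bigr) - \Tr\bigl(\phi(z)^{m-1}\phi(w)^{n-1}\Delta(z,w)\bigr) \Bigr].
\end{equation}

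Next I would substitute $\Delta(z,w) = \bigl(\phi(z)-\phi(w)\bigr)/(w-z)$ into each of these two traces and simplify using cyclicity of the trace. The first term becomes $(w-z)^{-1}\bigl[\Tr(\phi(w)^{n-1}\phi(z)^m) - \Tr(\phi(w)^n \phi(z)^{m-1})\bigr]$, and the second, after a single cyclic rotation, produces exactly the same expression. Hence the two terms cancel and $\{I_m(z), I_n(w)\} = 0$ identically. Extracting coefficients gives $\{I_m^i, I_n^j\} = 0$ for all $i, j, m, n$, which by the lemma preceding the proposition is what is required.

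To conclude that $h$ is a coisotropic fibration, note that the components $I_m^i$ pull back the coordinate functions on $B_0$ (in the sense of Lemma \ref{lemma-hitchin-base}), and their pairwise Poisson vanishing means that the ideal $h^\ast \mathcal{O}(B_0)$ is Poisson-closed, equivalently that $h^{-1}(b)$ is coisotropic for generic $b \in B_0$. The main obstacle, such as it is, is purely bookkeeping: keeping the index conventions consistent so that the two traces in the displayed cancellation are in fact manifestly the same up to cyclic rotation. No $r$-matrix machinery or appeal to the Hitchin system on the original side of the parabolic Higgs identification (Theorem \ref{thm-integrable-parabolic}) is required, though the calculation is a concrete shadow of the standard $r$-matrix argument with rational $r$-matrix $r_{12}(z,w) = P_{12}/(z-w)$.
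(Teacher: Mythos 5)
Your proposal is correct and follows essentially the same route as the paper's proof: expand $\{\Tr\phi(z)^m,\Tr\phi(w)^n\}$ by the Leibniz rule, contract against the bracket formula of the preceding lemma to obtain $mn\,\Tr\bigl(\Delta(z,w)[\phi(w)^{n-1},\phi(z)^{m-1}]\bigr)$, and observe that this trace vanishes. The only cosmetic difference is in the last step, where the paper splits $\Delta(z,w)$ into a piece commuting with $\phi(z)$ and a piece commuting with $\phi(w)$, while you write $\Delta(z,w)=(\phi(z)-\phi(w))/(w-z)$ and cancel the two traces directly by cyclicity; these are the same cancellation.
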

\begin{proof} First, note that 
\begin{equation}
  \Tr(\phi(z)^m) = \sum_{i_1, \dots, i_m} \phi_{i_1 i_2}(z) \phi_{i_2 i_3}(z) \cdots \phi_{i_m i_1}(z).
\end{equation}
Hence, taking the Poisson bracket $\{I_m(z), I_n(w)\}$, expanding, and applying the Leibniz rule,
we have
\begin{align}
 \begin{split}
  \{I_m(z), I_n(w)\} &= \sum_{\stackrel{a,i_1,\dots, i_m}{b,j_1, \dots, j_n}} 
     \phi_{i_1 i_2}(z) \cdots \widehat{\phi_{i_a i_{a+1}} (z)} \cdots \phi_{i_m i_1}(z) \\
     &\ \ \ \  \times \phi_{j_1 j_2}(w) \cdots \widehat{\phi_{j_b j_{b+1}} (w)} \cdots \phi_{j_m j_1}(w) \\
      &\ \ \ \  \times \left( \delta_{i_{a+1} j_b} \Delta_{i_a j_{b+1}}(z,w) - \delta_{i_a j_{b+1}} \Delta_{j_b i_{a+1}}(z,w) \right)
    \end{split} \\
   \begin{split}
      &= \sum_{a,b} \Tr\left(\phi^{m-1}(z) \Delta(z,w) \phi^{n-1}(w) \right) \\
       &\ \ - \sum_{a,b} \Tr\left( \phi^{n-1}(w) \Delta(z,w) \phi^{m-1}(z)\right) \end{split} \\
      &= mn \Tr\left(\Delta(z,w) [\phi(w)^{n-1}, \phi(z)^{m-1}] \right).
\end{align}
Now recall that $\Delta(z,w) = \phi(z)/(w-z) + \phi(w)/(z-w)$.
This is a sum of two terms, one commuting with $\phi(z)$ and the other commuting with $\phi(w)$, 
hence the above trace is identically zero.
\end{proof}

\subsection{Spectral Curves}
It remains to show that the generic fibers of $h: \FM_\alpha \to B_0$ are Lagrangian subvarieties.
To do this, we will use the standard technique of spectral curves \cite{HitchinStableBundles, BNB89}.
Let $L \to K(D)$ be the pullback of $K(D)$ to the total space of $K(D)$.
Then $L$ has a tautological section, which we denote by $\lambda$.
\begin{definition} Let $b \in B$ and denote the components of $b$ by $b_i$ for $i=1, \dots, n$. 
Then we define the \emph{spectral curve} $X_b \subset K(D)$ to be the subvariety defined by
\begin{equation}
  X_b = \{ (\lambda, z) \in K(D) \suchthat \lambda^r + b_1(z) \lambda^{r-1} + \dots + b_r(z) = 0 \}.
\end{equation}
\end{definition}

\begin{proposition} Suppose that $n \geq 3$, and let $b \in B_0$ be generic. 
Then the spectral curve $X_b$ is smooth and of genus $g(X_b) = \frac{1}{2}( (n-2)r^2 -nr + 2)$.
\end{proposition}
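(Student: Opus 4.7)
The plan is to establish smoothness via a local analysis at and away from the marked points, and then to compute the genus via the standard spectral correspondence for the pushforward of the structure sheaf. Write $L = K(D)$ and $S = \mathrm{Tot}(L)$, with projection $\pi: S \to \BP^1$ and tautological section $\lambda$. Since each $b_i$ lies in $H^0(\BP^1, K^i \otimes \CO(D)^{i-1}) \subset H^0(\BP^1, L^i)$, the equation $\lambda^r + b_1 \lambda^{r-1} + \cdots + b_r = 0$ cuts out a genuine spectral curve for $L$ in the sense of Beauville--Narasimhan--Ramanan, so the standard machinery applies.

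For smoothness over a point $z \in \BP^1 \setminus \{a_1, \dots, a_n\}$, $X_b$ is smooth precisely when the fiber polynomial has distinct roots, i.e.\ when the discriminant, a section of $L^{r(r-1)}$ of degree $r(r-1)(n-2) > 0$ (using $n \geq 3$), does not vanish. Genericity of $b$ moreover rules out tangencies of the discriminant locus, so the cover is \'etale outside $D$ away from a finite set of simply ramified points. The delicate case is over each $a_j$: choose a local coordinate with $z=0$ at $a_j$, use the local frame $dz/z$ of $L$ to write $\lambda = \xi \, dz/z$, and expand $b_i = g_i(z) (dz)^i / z^{i-1}$ for some $g_i$ holomorphic at $0$. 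Comparing coefficients against the local frame $(dz)^r/z^r$ of $L^r$, the defining equation becomes
\[
F(\xi, z) = \xi^r + z \sum_{i=1}^r g_i(z) \xi^{r-i} = 0.
\]
At $(\xi, z) = (0,0)$ one has $\partial_\xi F = 0$ for $r \geq 2$, but $\partial_z F = g_r(0)$, so smoothness holds as soon as $g_r(a_j) \neq 0$ for every $j$; this is exactly the condition that $b_r$ has a pole of \emph{exact} order $r-1$ at each marked point, a Zariski open condition on $B_0$. The same local equation shows that $X_b$ meets the fiber of $\pi$ over $a_j$ in the single point $(0,0)$, and the cover $p: X_b \to \BP^1$ is totally ramified there.

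For the genus, the spectral correspondence identifies
\[
p_\ast \OO_{X_b} \iso \bigoplus_{k=0}^{r-1} L^{-k},
\]
and on $\BP^1$ one has $\chi(L^{-k}) = 1 - k(n-2)$, so
\[
\chi(\OO_{X_b}) = \sum_{k=0}^{r-1}\bigl(1 - k(n-2)\bigr) = r - (n-2)\binom{r}{2}.
\]
Since $X_b$ is smooth and connected (connectedness follows from irreducibility of the generic characteristic polynomial for $n\geq 3$), $g(X_b) = 1 - \chi(\OO_{X_b}) = (n-2)\binom{r}{2} - r + 1$, which rearranges to $\tfrac{1}{2}\bigl((n-2)r^2 - nr + 2\bigr)$, matching $\dim B_0$ as required.

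The main obstacle is the local analysis at the marked points: one must verify that the parabolic pole-order restriction cutting $B_0$ out of the full Hitchin base $B$ is exactly what produces the linear $z$-term in the local model $F(\xi,z)$, rescuing smoothness at the otherwise totally ramified points. Once this is in place, the computation of $g(X_b)$ via $p_\ast \OO_{X_b}$ is immediate, and consistency with $\dim_\BC \FM_\alpha = 2 \dim_\BC B_0$ then underlies the completeness of the integrable system in the next statement.
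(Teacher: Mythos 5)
Your proof is correct, and it reaches both conclusions by a route that differs from the paper's in instructive ways. For smoothness, the paper invokes Bertini to dispose of everything away from the base locus of the linear system (which is exactly $D$), and then checks $df = db_r \neq 0$ at the points $(\lambda,z)=(0,a_i)$, requiring $b_r$ to vanish simply on $D$ as a section of $K(D)^r$; your explicit local model $F(\xi,z)=\xi^r + z\sum_i g_i(z)\xi^{r-i}$ with $\partial_z F(0,0)=g_r(0)$ is the same condition made concrete, and it has the added payoff of exhibiting total ramification of $p: X_b \to \BP^1$ over each $a_j$ — which immediately gives connectedness of $X_b$, a point both proofs need for the genus statement and which the paper leaves implicit. (Away from $D$ your discriminant discussion is a little looser than Bertini, but nothing essential hinges on it.) For the genus, the paper compactifies to the ruled surface $S=\BP(\CO\oplus K(D))$, writes $X_b \sim rC$, and applies adjunction using $C.C=n-2$ and $C.K_S=-n$; you instead use the Beauville--Narasimhan--Ramanan identification $p_\ast \OO_{X_b} \iso \bigoplus_{k=0}^{r-1} K(D)^{-k}$ and sum Euler characteristics. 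Both are standard and give $g=\tfrac12((n-2)r^2-nr+2)$; the adjunction route needs no smoothness input to compute the \emph{arithmetic} genus, while yours ties the computation directly to the spectral correspondence that reappears in the proof of Theorem \ref{thm-integrable-star-quiver}. One caveat shared by both arguments: the open condition $g_r(a_j)\neq 0$ is nonempty only when $\deg\bigl(K^r\otimes\CO(D)^{r-1}\bigr)=(n-2)r-n\geq 0$, which fails for $(r,n)=(2,3)$; this is a defect of the proposition's hypotheses rather than of your proof.
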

\begin{proof} We will adapt Hitchin's argument \cite{HitchinStableBundles} to our setting.
By the general Bertini theorem \cite[Chapter III]{Hartshorne}, a generic spectral curve will be smooth
except possibly at the base points of the linear system defined by $B_0$. By construction,
$B_0$ is the subspace of $\oplus_{i=2}^r H^0(\BP^1, K(D)^i)$ consisting of sections that
vanish on $D$, so the base locus of $B_0$ is exactly the divisor $D$. 
Let $f(\lambda, z)$ be the equation defining $X_b \subset S$. At any point
$a_i \in D$, all of the coefficients of the characteristic polynomial vanish, so that $\lambda=0$.
At such a point we have $df = d b_r$, and hence $df \neq 0$ as long as $b_r$ has only simple zeroes on $D$.
Since this is the generic behavior, we find that a generic spectral curve is smooth.

Let $S$ be the complex surface $S = \BP(\CO\oplus K(D))$, which is a compactification of the
total space of $K(D)$. Then $X_b \subset S$, and there is a curve $C \subset S$ isomorphic to $\BP^1$ 
corresponding to the zero section of $K(D)$. Note that as divisors on $S$, we have 
$X_b \sim r C$. From the basic theory of ruled surfaces \cite[Chapter V]{Hartshorne}, 
$C.C = \deg(K(D)) = n-2$, and hence by the adjunction formula $C.K_S = -n$.
Applying the adjunction formula to $X_b$, we find
\begin{equation}
  2g(X_b) - 2 = X_b.X_b + X_b.K_S = r^2 C.C + r C.K_S = r^2(n-2) -rn,
\end{equation}
and hence $g(X_b) = \frac{1}{2}((n-2)r^2-nr+2)$.
\end{proof}

\begin{remark} For incomplete quivers, it is not necessarily the case that generic
spectral curves are smooth. In the rank 3 incomplete case studied in Chapter \ref{ch-quiver},
one may check explicitly
that $\det(\phi)$ vanishes to \emph{second order} on $D$, and hence the spectral curve
is always singular on $D$. This complicates the situation, and for this reason we
restrict our attention to the complete flag case.
\end{remark}

\begin{theorem} \label{thm-integrable-star-quiver}
Let $\FM_\alpha(r,n)$ be the complete flag star quiver of rank $r$ with
$n$ incoming paths. Then the Hitchin map $h: \FM_\alpha(r,n) \to B_0$ is a coisotropic fibration
whose generic fibers are Lagrangian subvarieties of $\FM_\alpha(r,n)$. In particular,
$\FM_\alpha(r,n)$ is an algebraic completely integrable system.
\end{theorem}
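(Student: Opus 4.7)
The plan is to leverage what has already been established and reduce the theorem to a single dimension count. Proposition \ref{prop-integrable-coisotropic} shows that the Hitchin map is a coisotropic fibration, so its smooth fibers are coisotropic submanifolds of $\FM_\alpha(r,n)$. In a symplectic manifold, a coisotropic submanifold has dimension at least half the ambient dimension, with equality precisely for Lagrangians. We have already shown (in the complete flag case) that $\dim B_0 = \frac{1}{2}\dim \FM_\alpha(r,n)$, so it suffices to prove that the generic fiber $h^{-1}(b)$ has dimension equal to $\dim B_0$, or equivalently that $h$ is dominant.

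To pin down the generic fiber, I would invoke the spectral correspondence of Beauville--Narasimhan--Ramanan adapted to the parabolic setting. For a generic $b \in B_0$, the preceding proposition furnishes a smooth spectral curve $X_b \subset K(D)$ of genus $g(X_b) = \tfrac{1}{2}((n-2)r^2 - nr + 2) = \dim B_0$. The correspondence sends a line bundle $L \to X_b$ to the pair $(\pi_\ast L,\phi)$, where $\pi: X_b \to \BP^1$ is the spectral projection and $\phi$ is induced by multiplication by the tautological section $\lambda$ of $L = \pi^\ast K(D)$. Conversely, a Higgs field $\phi$ with characteristic data $b$ gives rise to its eigenline bundle $L$ on $X_b$. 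The strictly parabolic condition corresponds spectrally to the fact that $\lambda$ vanishes on $\pi^{-1}(a_i)$ for each $i$ (since the residues of $\phi$ are nilpotent), which is exactly the condition that $X_b \in B_0$ meets the zero section of $K(D)$ above each marked point. Thus the fiber $h^{-1}(b)$ is identified with an open subset of a torsor over a generalized Jacobian of $X_b$, of dimension $g(X_b) = \dim B_0$, giving both the upper bound on the fiber dimension and non-emptiness.

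Combining: coisotropic forces $\dim h^{-1}(b) \geq \dim B_0$, the spectral correspondence forces $\dim h^{-1}(b) \leq \dim B_0$, and therefore the generic fiber is coisotropic of exactly half-dimension, hence Lagrangian. Together with commutativity of the components of $h$, this proves $\FM_\alpha(r,n)$ is an algebraic completely integrable system.

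The main obstacle will be making the parabolic BNR correspondence precise under our two simultaneous constraints: (i) the underlying bundle $\pi_\ast L$ must be trivial on $\BP^1$, and (ii) the complete flag parabolic structure must be compatible with the (strictly parabolic) residues of $\phi$. Triviality of $\pi_\ast L$ is an open condition on $\Jac(X_b)$ (trivial splitting being generic among rank $r$ bundles of appropriate degree on $\BP^1$), and its non-emptiness for generic $b$ can be verified by constructing an explicit Higgs field with prescribed characteristic coefficients using companion matrix blocks at each $a_i$. The compatibility with the complete flag filtration refines the line bundle data by a Hecke modification at each ramification point, but these modifications cut out a codimension-zero subtorsor since the modifications are prescribed rather than free parameters. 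A cleaner alternative, which sidesteps these delicate points, is to embed $\FM_\alpha(r,n)$ as a symplectic leaf of the Poisson moduli space of parabolic Higgs bundles studied by Bottacin and Markman, where the Lagrangian property of the Hitchin fibration is already known, and then simply verify that the restriction of the Hitchin map to our leaf remains surjective onto $B_0$, which follows from the explicit parameterization in (\ref{eqn-integrable-higgs-defn}).
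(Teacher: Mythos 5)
Your proposal is correct and follows essentially the same route as the paper: Proposition \ref{prop-integrable-coisotropic} plus the dimension count on $B_0$ reduce everything to showing $h$ is dominant, and dominance is established via the spectral curve. The one place you diverge is the final inference. You propose to bound $\dim h^{-1}(b)$ from above by identifying the whole fiber with (an open subset of a torsor over) $\Jac(X_b)$, which requires both directions of the parabolic BNR correspondence and the delicate compatibility checks you list at the end. The paper only needs the easy direction: a generic $L \in \Jac(X_b)$ pushes forward to a trivial bundle with a strictly parabolic Higgs field whose residues are nilpotent, hence (by Proposition \ref{prop-int-flag}) realizable by points of $\prod_i T^\ast \FF_i$ with vanishing total residue; this shows $h^{-1}(b)$ is non-empty for generic $b$, and then Sard's theorem together with Zariski-openness of the full-rank locus of $dh$ gives that $dh$ is generically surjective, so the generic fiber is coisotropic of exactly half dimension, hence Lagrangian. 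That shortcut spares you from making the eigenline-bundle direction and the Hecke-modification bookkeeping precise, so if you pursue your version you should either carry out those verifications or adopt the non-emptiness-plus-Sard argument.
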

\begin{proof} By Lemma \ref{lemma-hitchin-base} and Proposition \ref{prop-integrable-coisotropic}, this is a coisotropic fibration
whose base is half the dimension of $\FM_\alpha(r,n)$, so we just have to show that
$dh$ generically has full rank. Since $\FM_\alpha$ may be identified with a GIT quotient of 
$Z \subset \prod_{i=1}^n T^\ast \FF_i$, it suffices to show that $h: Z \to B_0$ generically
has full rank.
Let $b \in B_0$ be a generic point such that the spectral curve $X_b$ is
smooth. Let $p: X_b \to \BP^1$ be the projection. By standard arguments (and in particular,
those of \cite{LogaresMartens}), any line bundle $L \in \Jac(X_b)$ determines a
degree $0$ vector bundle $E := p_\ast L$ on $\BP^1$, together with a canonical Higgs field
$\phi \in H^0(\BP^1, \End E \otimes K(D))$. For a generic $L$, the induced bundle
$E$ will be trivial, and choosing a trivialization we may write
\begin{equation}
  \phi(z)dz = \sum_{i=1}^n \frac{\phi_i dz}{z-a_i}.
\end{equation}
Since the coefficients of the characteristic polynomial vanish on $D$ (since
$b \in B_0 \subset B$), the residues $\phi_i$ are nilpotent. Hence by the Proposition \ref{prop-int-flag}, 
we may find points $m_i \in T^\ast\FF_i$ such that for each $i$, $\phi_i = \mu_i(m_i)$.
Since $\phi$ is a section of $\End E \otimes K(D)$, the sum of the residues is $0$, and hence
we have $(m_1, \cdots, m_n) \in Z$. In particular, the fiber $h^{-1}(b) \subset Z$ is non-empty.

This shows that the image of $h: Z \to B_0$ contains a Zariski open and hence dense subset
of $B_0$. By Sard's theorem, the image of $h: Z \to B_0$ must therefore contain at least one regular
value. Hence the set of $z \in Z$ such that $dh$ has full rank is non-empty, and since
this set is Zariski open, it is dense in $Z$.
\end{proof}
\addcontentsline{toc}{chapter}{Bibliography}
\bibliographystyle{alpha}
\bibliography{ut-thesis}

\end{document}